\newcommand{\I}{\mathds 1}
\newcommand{\C}{\mathbb C}
\newcommand{\Q}{\mathbb Q}
\newcommand{\R}{\mathbb R}
\newcommand{\Z}{\mathbb Z}
\newcommand{\bA}{\mathbb A}
\newcommand{\bG}{\mathbb G}
\newcommand{\bP}{\mathbb P}
\newcommand{\sA}{\mathcal A}
\newcommand{\sB}{\mathcal B}
\newcommand{\sC}{\mathcal C}
\newcommand{\sD}{\mathcal D}
\newcommand{\sE}{\mathcal E}
\newcommand{\sF}{\mathcal F}
\newcommand{\sG}{\mathcal G}
\newcommand{\sJ}{\mathcal J}
\newcommand{\sK}{\mathcal K}
\newcommand{\sL}{\mathcal L}
\newcommand{\sM}{\mathcal M}
\newcommand{\sO}{\mathcal O}
\newcommand{\sP}{\mathcal P}
\newcommand{\sR}{\mathcal R}
\newcommand{\sT}{\mathcal T}
\newcommand{\sV}{\mathcal V}
\newcommand{\sW}{\mathcal W}
\newcommand{\iso}{\xrightarrow{\sim}}
\newcommand{\nd}{\nobreakdash-\hspace{0pt}}
\renewcommand{\theenumi}{(\roman{enumi})}
\DeclareMathOperator{\Alt}{Alt}
\DeclareMathOperator{\Aut}{Aut}
\DeclareMathOperator{\card}{card}
\DeclareMathOperator*{\colim}{colim}
\DeclareMathOperator{\End}{End}
\DeclareMathOperator{\Ext}{Ext}
\DeclareMathOperator{\Gal}{Gal}
\DeclareMathOperator{\Hom}{Hom}
\DeclareMathOperator{\Id}{Id}
\DeclareMathOperator{\Iso}{Iso}
\DeclareMathOperator{\Ker}{Ker}
\DeclareMathOperator{\Lie}{Lie}
\DeclareMathOperator{\Mod}{Mod}
\DeclareMathOperator{\MOD}{MOD}
\DeclareMathOperator{\Pic}{Pic}
\DeclareMathOperator{\pr}{pr}
\DeclareMathOperator{\Rad}{Rad}
\DeclareMathOperator{\Rep}{Rep}
\DeclareMathOperator{\REP}{REP}
\DeclareMathOperator{\Spec}{Spec}
\DeclareMathOperator{\Sym}{Sym}
\DeclareMathOperator{\tr}{tr}
\theoremstyle{plain}
\newtheorem{thm}{Theorem}[section]
\newtheorem{cor}[thm]{Corollary}
\newtheorem{lem}[thm]{Lemma}
\newtheorem{prop}[thm]{Proposition}
\theoremstyle{definition}
\newtheorem{defn}[thm]{Definition}
\newtheorem{exmp}[thm]{Example}
\newtheorem{rem}[thm]{Remark}
\numberwithin{equation}{section}
\begin{document}

\title{Principal bundles under reductive groups}

\author{Peter O'Sullivan}
\address{Mathematical Sciences Institute \\
The Australian National University \\
Canberra ACT 2601, Australia}
\email{peter.osullivan@anu.edu.au}
\thanks{Supported by the Australian Research Council}

\subjclass[2010]{Primary 14H60, 14J60, 14L30}

\keywords{principal bundle, reductive group, groupoid}

\date{}

\dedicatory{}

\begin{abstract}
Let $k$ be a field of characteristic $0$.
We consider principal bundles over a $k$\nd scheme with reductive structure group (not necessarily of finite type).
It is shown in particular that for $k$ algebraically closed there exists 
on any complete connected $k$\nd scheme a universal such bundle.
As a consequence, an explicit description of principal bundles with reductive structure group 
over smooth projective curves of genus $0$ or $1$
is obtained.
\end{abstract}

\maketitle


\section{Introduction}

In 1957 Grothendieck \cite{Gro57} showed that if $G$ is a complex Lie group with reductive Lie algebra,
then  principal $G$\nd bundles over the Riemann sphere are classified
by conjugacy classes of homomorphisms from the complex multiplicative group to $G$. 
This result is equivalent to a purely algebraic one with the complex numbers replaced by an algebraically closed
field $k$ of characteristic $0$,
the Riemann sphere by the complex projective line over $k$, 
and $G$ by a reductive algebraic group over $k$.
Modulo the classification of \emph{vector} bundles on the projective line, Grothendieck's theorem 
can be expressed as follows:
the functor on reductive algebraic groups over $k$ up to conjugacy that sends $G$ to the 
set of isomorphism classes of principal $G$\nd bundles over the projective line is representable.
In this paper it will be shown that analogous 
results hold over much more general base schemes, 
provided that we allow reductive groups which need not be of finite type as structure groups.

Let $k$ be a field and $G$ be an affine $k$\nd group, i.e.\  an affine group scheme over $k$.
Such a $G$ is the filtered limit of its affine quotient $k$\nd groups of finite type.
We define principal $G$\nd bundles over a $k$\nd scheme $X$ by requiring local triviality
for the $fpqc$ topology.
The set of isomorphism classes of such bundles will be written $H^1(X,G)$.
When $G$ is of finite type and $k$ is of characteristic $0$, 
local triviality for the $fpqc$ topology is equivalent to that for the \'etale topology.
We may regard $H^1(X,G)$ as a functor on the category of affine $k$\nd groups up
to conjugacy, in which a morphism from $G'$ to $G$ is an equivalence class of $k$\nd homomorphisms
from $G'$ to $G$ under the action by conjugation of $G(k)$.

For the rest of this section, $k$ will be a field of characteristic $0$.
By a \emph{reductive $k$\nd group} we mean an affine $k$\nd group for which each $k$\nd quotient of finite type
is a  (not necessarily connected) reductive algebraic group over $k$.
We now have the following result (cf. Corollary~\ref{c:cohomologyrep}\ref{i:gpdcohomologyrep}).

\begin{thm}\label{t:repr}
Let $X$ be a scheme over an algebraically closed field $k$ of characteristic $0$ for which $H^0(X,\sO_X)$ is 
a henselian local $k$\nd algebra with residue field $k$.
Then the functor $H^1(X,-)$ on the category of reductive $k$\nd groups up to conjugacy is representable.
\end{thm}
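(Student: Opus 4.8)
The plan is to recast the statement in Tannakian terms, build the representing group out of the totality of reductive bundles on $X$, and then verify the universal property; characteristic $0$ and reductivity will be used to make the relevant category of bundles semisimple abelian, while the hypothesis that $H^{0}(X,\sO_{X})$ is henselian local with residue field $k$ will be what forces the classification to hold over the base field $k$.

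\emph{Reformulation.} For a reductive $k$\nd group $G$, a principal $G$\nd bundle $P$ on $X$ is the same datum as a $k$\nd linear symmetric monoidal functor $\omega_{P}\colon\Rep(G)\to\Ve(X)$, $V\mapsto P\times^{G}V$; pushforward along $\phi\colon G'\to G$ corresponds to precomposition with $\phi^{*}\colon\Rep(G)\to\Rep(G')$, and two homomorphisms of reductive $k$\nd groups are conjugate iff the induced functors on representation categories are isomorphic as symmetric monoidal functors. Hence it is enough to exhibit a reductive $k$\nd group $M$ and a symmetric monoidal functor $T\colon\Rep(M)\to\Ve(X)$ for which precomposition with $T$ gives, for every reductive $k$\nd group $G$, a bijection from isomorphism classes of symmetric monoidal functors $\Rep(G)\to\Rep(M)$ onto $H^{1}(X,G)$ (the latter identified with isomorphism classes of such functors $\Rep(G)\to\Ve(X)$); the universal element of $H^{1}(X,M)$ is then the class of $T$.

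\emph{Construction.} I would assemble $\Rep(M)$ from the system of all pairs $(G,P)$ with $G$ reductive and $P$ a $G$\nd bundle on $X$, using that the functors $\omega_{P}$ are compatible with the transition functors $\phi^{*}$ attached to homomorphisms $\phi\colon G\to G'$ with $\phi_{*}P\cong P'$, and that the forgetful functors $\Rep(G)\to\Ve(k)$ are compatible in the same way. The category one obtains by gluing should be shown to be essentially small, $k$\nd linear, abelian, semisimple, rigid symmetric monoidal with $\End(\I)=k$, and to carry the glued fibre functor to $\Ve(k)$; here $\mathrm{char}\,k=0$ and complete reducibility of representations of reductive groups give the abelian and semisimple structure, and one must verify that the indexing system is suitably cofiltered, for which the structure theory of reductive groups in characteristic $0$ is used. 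Then $\Rep(M)$ is this category, $M$ its Tannakian dual — automatically a pro\nd reductive, hence reductive, $k$\nd group — and $T$ the tautological functor to $\Ve(X)$, with $\xi\in H^{1}(X,M)$ the class of $T$.

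\emph{Universal property, and the main obstacle.} Surjectivity of precomposition with $T$ is built into the construction, as every $\omega_{P}$ is one of the functors being glued. The substantive point is injectivity: $\phi_{*}\xi\cong\psi_{*}\xi$ must imply that $\phi$ and $\psi$ are conjugate, equivalently $T$ must reflect isomorphism of symmetric monoidal functors with reductive source. I expect this to be the hard part, and the place where the henselian local hypothesis is indispensable — in its absence one is left with a groupoid over $\Spec H^{0}(X,\sO_{X})$ rather than a group over $k$, and the classification breaks down. The way I would argue it: rigidify a given isomorphism $\phi_{*}\xi\cong\psi_{*}\xi$ using that, because $H^{0}(X,\sO_{X})$ is henselian local with residue field $k$, homomorphisms and isomorphisms between the bundles in play are controlled by their reductions modulo the maximal ideal; then invoke the characteristic\nd $0$ structure of reductive groups — for a reductive $G$ and a $G$\nd bundle $P$ the reductions of $P$ to a minimal reductive subgroup form a single conjugacy class, and any homomorphism $M\to G$ with pushforward $P$ factors through such a subgroup up to conjugacy — and apply this to $P=\phi_{*}\xi$ to conclude that $\phi$ and $\psi$ coincide after conjugation.
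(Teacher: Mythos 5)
Your Tannakian reformulation is the same starting point as the paper's, but the proof has a genuine gap at its centre: the ``gluing'' is never actually performed, and performing it is equivalent to the hard part of the theorem. To form a colimit of the categories $\Rep(G)$ over pairs $(G,P)$ whose hom-spaces you can compute (and on which the tautological functor $T$ to $\Mod(X)$ is well defined), the indexing system must be filtered; filteredness on a parallel pair of morphisms amounts exactly to the statement that two homomorphisms $h_1,h_2\colon G''\to G$, where $G''$ is the structure group of a bundle $P''$ admitting no reduction to a proper reductive subgroup, are conjugate as soon as $h_{1*}P''\cong h_{2*}P''$ (cf.\ Corollary \ref{c:minhomconjcohgp}). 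So the construction already presupposes the injectivity you defer to the last step. Moreover ``complete reducibility in characteristic $0$'' does not make the glued category semisimple abelian: its homs compare associated bundles of different pairs on $X$, and if semisimplicity were automatic the theorem would hold with no hypothesis on $X$ at all, whereas (as noted after Theorem \ref{t:repr}) representability already forces Krull--Schmidt for vector bundles on $X$. In fact your construction uses the hypothesis that $H^0(X,\sO_X)$ is henselian local with residue field $k$ nowhere in establishing the structure of $\Rep(M)$ or of $T$.

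The injectivity argument is, as written, circular or at least missing its key lemma: the facts you invoke --- that the reductions of $P$ to minimal reductive subgroups form a single conjugacy class, and that any homomorphism with push-forward $P$ factors up to conjugacy through such a subgroup --- are Theorem \ref{t:reduction} and Corollaries \ref{c:minsubconjgp}, \ref{c:minhomconjcohgp}, which the paper deduces \emph{from} the representability theorem via the universal reductive groupoid of Theorem \ref{t:main}; you offer no independent proof, and ``rigidifying modulo the maximal ideal'' only yields the easy conservativity and idempotent-lifting statements (Lemmas \ref{l:conservative} and \ref{l:ssTann}). What is missing is the paper's real engine, the splitting theorem (Theorem \ref{t:splitting}): $\Mod(X)$ modulo its unique maximal tensor ideal is semisimple Tannakian, and the projection admits a tensor section through which every tensor functor from $\Rep(G)$, $G$ reductive, factors uniquely up to tensor isomorphism. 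Its proof (Lemma \ref{l:Dextend}, via Magid's theorem and Artin approximation over the henselian local ring) is where the hypothesis on $H^0(X,\sO_X)$ does its work, and it supplies simultaneously the existence of the universal bundle and the conjugacy statements you want to quote. To complete your route you would need either this theorem or a direct, independent proof of the conjugacy of minimal reductive reductions; as it stands the proposal assumes precisely that.
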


Explicitly, Theorem~\ref{t:repr} states that there exists a reductive $k$\nd group $G_0$ such that
we have a bijection
\begin{equation}\label{e:conjiso}
\Hom_k(G_0,G)/G(k) \iso H^1(X,G)
\end{equation}
which is natural in the reductive $k$\nd group $G$.
Such a $G_0$ is of course unique up to isomorphism.
Taking general linear groups for $G$ in \eqref{e:conjiso} gives a bijection from the set of isomorphism
classes of representations of $G_0$ to the set of isomorphism classes of vector bundles on $X$.
Since vector bundles in general have moduli while representations reductive groups of finite type do not,
the $k$\nd group $G_0$ thus cannot in general be of finite type.
It follows from \eqref{e:conjiso} that the largest profinite quotient of $G_0$ 
is the fundamental group of $X$.
The $k$\nd group $G_0$ itself may be regarded as an algebraic analogue of the loop space of a topological space.
Some condition on $X$ is necessary in Theorem~\ref{t:repr}
because \eqref{e:conjiso} implies for example that the Krull-Schmidt theorem must hold for vector bundles over $X$.

When $X$ is a point, or more generally the spectrum of a henselian local $k$\nd algebra with residue field $k$,
the universal reductive $k$\nd group $G_0$ is trivial.
When $X$ is the projective line, $G_0$
is $\bG_m$, and we recover Grothendieck's original result, 
in the slightly more general form where 
the reductive $k$\nd groups are not required to be of finite type (cf.\ Theorem~\ref{t:linerep}).

Suppose that $X$ an elliptic curve over $k$.
Then the universal reductive $k$\nd group $G_0$ is the product of $SL_2$
with a central extension of $k$\nd groups of multiplicative type.
To describe it explicitly,
write $D(M)$ for the $k$\nd group of multiplicative type with character group $M$.
For $n \ne 0$ the Weil pairing $e_n$ on the $n$\nd torsion $k$\nd subgroup ${}_nX$ of $X$ has values in
the $k$\nd group $D(\Z/n)$ of $n$th roots of unity, and passing to the limit gives a bilinear alternating
$k$\nd morphism
\[
e = \lim_{n \ne 0}e_n:\lim_{n \ne 0} {}_n X \times_k \lim_{n \ne 0} {}_n X \to \lim_{n \ne 0} D(\Z/n) = D(\Q/\Z).
\]
Further ${}_n X$ may be identified
with $D({}_nX(k))$ by assigning to the point $a$ of ${}_nX$ the point $e_n(-,a)$ of $D({}_nX(k))$,
so that there is a projection
\[
D(X(k)) \to D(\colim_{n \ne 0} {}_nX(k)) = \lim_{n \ne 0} {}_n X.
\]
We then have a bilinear alternating $k$\nd morphism
\[
u_X:D(X(k)) \times_k D(X(k)) \to D(\Q),
\]
which if we regard $D(\Q/\Z)$ as a $k$\nd subgroup of $D(\Q)$ is given by
\[
u_X(a,a') = \sqrt{e(\overline{a},\overline{a}{}')},
\]
where the bar denotes the image in $\lim_{n \ne 0} {}_n X$, 
and the square root in $D(\Q)$ is well-defined because $D(\Q)$ is uniquely divisible.
Now given commutative affine $k$\nd groups $G'$ and $G''$ and a bilinear alternating $k$\nd morphism 
\[
z:G' \times_k G' \to G'' ,
\]
we have a central extension $C_z$ of $G'$ by $G''$ with underlying $k$\nd scheme $G'' \times_k G'$ and product given by
\[
(g'',g')(h'',h') = (g''h''z(g',h'),g'h').
\]
In particular we have a central extension 
\[
1 \to D(\Q) \to C_{u_X} \to D(X(k)) \to 1.
\]
The universal reductive $k$\nd group $G_0$ of \eqref{e:conjiso} for the elliptic curve $X$ is then
\[
C_{u_X} \times_k SL_2.
\]
This can be deduced from Theorem~\ref{t:repr} using Atiyah's classification \cite{Ati57} of vector bundles on 
$X$ (cf.\ Theorem~\ref{t:ellrep}).

In general, an explicit determination of the $k$\nd group $G_0$ of \eqref{e:conjiso} does not seem possible.
It is however possible to describe appropriate quotients of $G_0$.
In particular, under mild finiteness conditions, the quotient of $G_0$ by the derived $k$\nd group of its 
identity component
is an extension of the fundamental group of $X$ by the 
protorus with group of characters the Picard group of the universal cover of $X$ (cf.\ Proposition~\ref{p:etunpull}).
When $X$ is an elliptic curve for example, this quotient is $C_{u_X}$, and the fundamental group is 
$\lim_{n \ne 0} {}_n X$.

Even when the $k$\nd group $G_0$ is not known explicitly, its existence has
strong consequences for principal bundles over $X$.
Suppose for example that $k$ and $X$ are as in Theorem~\ref{t:repr}, and let $G$ be a reductive $k$\nd group and $P$
be a principal $G$\nd bundle over $X$.
Then the set of pairs $(G',P')$ with $G'$ a reductive $k$\nd subgroup of $G$ and $P'$ a principal 
$G'$\nd subbundle of $P$ can be described as follows, starting from a minimal element $(G_1,P_1)$ of this set:
for any $(G',P')$ we have $P' = \theta(P_1gG')$ for some $k$\nd point 
$g$ of $G$ with $g^{-1}G_1g$ contained in $G'$ and 
$G$\nd automorphism $\theta$ of $P$ (cf.\ Corollary~\ref{c:minsubconjgpd}).
If further $X$ is complete and $G$ is of finite type then the $k$\nd group of $G$\nd automorphisms of $P$
exists and is affine of finite type and (cf.\ Lemma~\ref{l:minbggpd} and Example~\ref{e:GPAut}) 
it is the semidirect product of its unipotent radical and the centraliser of $G_1$ in $G$. 
Many other consequences are given in Sections~\ref{s:unmin}--\ref{s:pullback}.

The conclusion of Theorem~\ref{t:repr} no longer holds if the hypothesis 
that $k$ be algebraically closed is dropped, even for $X$ a point.
However if $X$ has a $k$\nd point $x$, and we write $\widetilde{H}^1(X,x,G)$ for the subset of $H^1(X,G)$ consisting 
of the classes of those principal $G$\nd bundles that have a $k$\nd point above $x$, then the following modified form of
Theorem~\ref{t:repr} holds (cf.\ Corollary~\ref{c:cohomologyrep}\ref{i:ptcohomologyrep}).

\begin{thm}\label{t:reprpt}
Let $X$ be a scheme over a field $k$ of characteristic $0$ for which $H^0(X,\sO_X)$ is 
a henselian local $k$\nd algebra.
Then for any $k$\nd point $x$ of $X$ the functor $\widetilde{H}^1(X,x,-)$ on the category of reductive $k$\nd groups up to 
conjugacy is representable.
\end{thm}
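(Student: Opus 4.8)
The plan is to deduce Theorem~\ref{t:reprpt} from Theorem~\ref{t:repr} by base change to the algebraic closure followed by Galois descent, the role of the point $x$ being to control the descent.

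Write $\overline{k}$ for an algebraic closure of $k$, $\overline{X} = X \times_k \overline{k}$, and $\overline{x}$ for the $\overline{k}$\nd point of $\overline{X}$ induced by $x$. First I would verify that $\overline{X}$ satisfies the hypothesis of Theorem~\ref{t:repr}: the $k$\nd point $x$ gives a surjection $H^0(X,\sO_X) \to k$, so $R := H^0(X,\sO_X)$ is henselian local with residue field $k$, and then $H^0(\overline{X},\sO_{\overline{X}}) = R \otimes_k \overline{k} = \colim_{k'} R \otimes_k k'$ over the finite subextensions $k'$ of $\overline{k}/k$; each $R \otimes_k k'$ is a finite $R$\nd algebra with reduction $k'$, hence henselian local with residue field $k'$, so $R \otimes_k \overline{k}$ is henselian local with residue field $\overline{k}$. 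Theorem~\ref{t:repr} thus gives a reductive $\overline{k}$\nd group $\overline{G_0}$ with $\Hom(\overline{G_0},H)/H(\overline{k}) \iso H^1(\overline{X},H)$ natural in reductive $\overline{k}$\nd groups $H$. Over $\overline{k}$ every $H$\nd bundle has a $\overline{k}$\nd point above $\overline{x}$, since $H^1(\overline{k},H)$ is trivial: this is classical for $H$ of finite type, and for $H = \lim_i H_i$ with $H_i$ reductive of finite type the transition maps are surjective on $\overline{k}$\nd points, so any $H$\nd torsor, being the limit of the trivial $H_i$\nd torsors along surjective maps, acquires a $\overline{k}$\nd point. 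Hence $\overline{G_0}$ also represents $\widetilde{H}^1(\overline{X},\overline{x},-)$.

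Next I would carry out the descent. The group $\Gamma = \Gal(\overline{k}/k)$ acts semilinearly on $\overline{X}$ fixing $\overline{x}$, and for $\sigma \in \Gamma$ there is a canonical identification of $\sigma^*\overline{X}$ with $\overline{X}$ sending $\sigma^*\overline{x}$ to $\overline{x}$. Therefore $\sigma^*\overline{G_0}$ again represents $\widetilde{H}^1(\overline{X},\overline{x},-)$, so there is a unique isomorphism $\lambda_\sigma \colon \overline{G_0} \iso \sigma^*\overline{G_0}$ of representing objects; uniqueness forces the cocycle identity $\lambda_{\sigma\tau} = (\sigma^*\lambda_\tau)\circ\lambda_\sigma$, giving a descent datum on the affine $\overline{k}$\nd group $\overline{G_0}$ relative to $\overline{k}/k$. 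This datum is effective by Galois descent — writing $\overline{G_0}$ as the filtered limit of its reductive finite\nd type quotients, which the canonical $\lambda_\sigma$ permute, one reduces to the classical case over a finite subextension — and yields a reductive $k$\nd group $G_0$ with $(G_0)_{\overline{k}} \cong \overline{G_0}$ compatibly with the descent data.

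It then remains to show that $G_0$ represents $\widetilde{H}^1(X,x,-)$. It is useful to observe that $\widetilde{H}^1(X,x,G)$ is exactly the kernel of $x^*\colon H^1(X,G) \to H^1(k,G)$, since $P$ has a $k$\nd point above $x$ precisely when the $G$\nd torsor $x^*P$ over $k$ is trivial. The identification $\overline{G_0} = (G_0)_{\overline{k}}$ produces a natural map $\Hom(G_0,G)/G(k) \to \widetilde{H}^1(X,x,G)$ — a homomorphism goes to its associated bundle, which is trivial above $x$ by construction — and I would prove this bijective by non-abelian Galois descent, comparing with the bijection $\Hom(\overline{G_0},G_{\overline{k}})/G(\overline{k}) \iso \widetilde{H}^1(\overline{X},\overline{x},G_{\overline{k}})$ over $\overline{k}$. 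The main obstacle is precisely this comparison. For injectivity, a conjugator over $\overline{k}$ between two homomorphisms defined over $k$ is well defined modulo the centralizer, itself a reductive $k$\nd group, and satisfies a cocycle condition; one must show the resulting class in its first Galois cohomology vanishes. For surjectivity, one must show that a $\Gamma$\nd invariant conjugacy class of $\overline{k}$\nd homomorphisms coming from a bundle trivial above $x$ descends to a homomorphism over $k$. In both cases the mechanism is the rigidity supplied by the point: a $k$\nd rational trivialization of $P$ above $x$ makes the classifying homomorphism genuinely $\Gamma$\nd equivariant rather than merely equivariant up to conjugacy, because an isomorphism of torsors carrying one rational point to another is unique. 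This is also why Theorem~\ref{t:repr} itself fails over a non-closed field: without the point the classifying homomorphism is ambiguous up to conjugacy, and the obstruction to making it $\Gamma$\nd equivariant is the possibly nontrivial $H^1(k,G)$. Naturality in $G$ of the bijection so obtained is routine.
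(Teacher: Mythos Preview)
Your approach differs substantially from the paper's, and the descent step has a genuine gap.

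The paper does not deduce Theorem~\ref{t:reprpt} from Theorem~\ref{t:repr}. Both are special cases of Theorem~\ref{t:cohomologyrep}, which follows from the existence of a universal reductive groupoid over $H$ (Theorem~\ref{t:main}), constructed directly over $k$ via the splitting theorem for the tensor category $\Mod_H(X)$. With $H = X$ the initial pregroupoid, the $k$\nd point $x$ forces the residue field of $H^0(X,\sO_X)$ to be $k$, so Theorem~\ref{t:main} yields a universal reductive groupoid $K_0$ over $X$; the equivalence \eqref{e:GPpequiv} between pointed principal bundles and transitive affine groupoids then shows that $(K_0)_{x,x}$, with universal element the class of $(K_0)_{-,x}$, represents $\widetilde{H}^1(X,x,-)$ (Lemma~\ref{l:Htilde}). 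No passage to $\overline{k}$ and no Galois descent are involved.

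The gap in your argument is the assertion of a \emph{unique} isomorphism $\lambda_\sigma\colon \overline{G_0} \iso \sigma^*\overline{G_0}$ of representing objects. The functor $\widetilde{H}^1(\overline{X},\overline{x},-)$ is defined on reductive $\overline{k}$\nd groups \emph{up to conjugacy}, so its representing object is determined only up to a unique \emph{conjugacy class} of isomorphisms; your $\lambda_\sigma$ is therefore only well defined modulo inner automorphisms of $\overline{G_0}$, and the cocycle identity holds only in $\Out(\overline{G_0})$. Lifting to an honest descent datum in $\Aut(\overline{G_0})$ carries an obstruction you have not addressed. Your instinct to rigidify via $\overline{x}$ is sound---an isomorphism of \emph{pointed} torsors is indeed unique---but what must be descended is the group $\overline{G_0}$ together with its universal bundle, and making that rigidification precise is exactly what the groupoid formalism \eqref{e:GPpequiv} encodes; once one sets it up, the direct construction over $k$ is shorter. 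The same issue underlies the ``main obstacle'' you flag in your last step: without the rigidification, the cocycle in the centraliser need not be a coboundary. A smaller point: the identification $H^0(\overline{X},\sO_{\overline{X}}) = R \otimes_k \overline{k}$ requires $X$ quasi-compact (cf.\ \eqref{e:H0Hscalext}), which Theorem~\ref{t:reprpt} does not assume.
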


For $X$ a point, the reductive $k$\nd group that represents the functor of Theorem~\ref{t:reprpt} is again trivial,
and for $X$ the projective line, it is again $\bG_m$ (cf.\ Theorem~\ref{t:linerep}). 

For $X$ an elliptic curve, the reductive $k$\nd group that represents the functor of Theorem~\ref{t:reprpt}
is again $C_{u_X} \times_k SL_2$,
provided that in the definition of $C_{u_X}$ we replace $D(X(k))$ by the $k$\nd group of multiplicative
type $D(X(\overline{k}))$ with Galois module of characters $X(\overline{k})$,
where $\overline{k}$ is an algebraic closure of $k$ (cf.\ Theorem~\ref{t:ellrep}).

In the general case, where $k$ need not be algebraically closed and $X$ need not have a $k$\nd point, 
it is possible 
to proceed by replacing principal bundles with groupoids.
A groupoid over a $k$\nd scheme $X$ is a $k$\nd scheme $K$ with source and target 
$k$\nd morphisms $d_1$ and $d_0$ from $K$ to $X$,
together with an identity $X \to K$ and a composition 
\[
K \times_{{}^{d_1}X^{d_0}} K \to K
\]
which is associative and has inverses.
The target and source morphisms define on $K$ a structure of scheme over $X \times_k X$,
and its pullback along the diagonal is a group scheme $K^\mathrm{diag}$ over $X$.
We have a category of groupoids over $X$ up to conjugacy, where a morphism from $K'$ to $K$
is an equivalence class under conjugation by cross-sections of $K^\mathrm{diag}$.
Suppose that $X$ is non-empty.
Then a groupoid $K$ over $X$ is said to be transitive affine if it is affine and faithfully flat over $X \times_k X$.
Any principal $G$\nd bundle $P$ over $X$ gives rise to a transitive affine groupoid $\underline{\Iso}_G(P)$ over $X$,
whose points in a $k$\nd scheme $S$ above $(x_1,x_0)$ are the isomorphisms from $P_{x_0}$ to $P_{x_1}$ 
of principal $G$\nd bundles over $S$.
A transitive affine groupoid $K$ over $X$ will be called reductive if the fibres of $K^\mathrm{diag}$
are reductive.
Then we have the following result, which can be shown to contain Theorems~\ref{t:repr} and \ref{t:reprpt} 
(cf.\ Theorem~\ref{t:main}\ref{i:mainexist}).

\begin{thm}\label{t:init}
Let $X$ be a scheme over a field $k$ of characteristic $0$ for which $H^0(X,\sO_X)$ is 
a henselian local $k$\nd algebra with residue field $k$.
Then the category of reductive groupoids over $X$ up to conjugacy has an initial object.
\end{thm}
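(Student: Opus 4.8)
The plan is to pass to representation categories, reformulating the statement as the existence of a \emph{terminal} object in a category of tensor categories lying over the category of vector bundles on $X$, and to construct that object as a universal ``semisimplification''.

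First I would set up the Tannakian dictionary. To a reductive transitive affine groupoid $K$ over $X$ one attaches the category $\Rep(K)$ of its representations on vector bundles over $X$, together with the forgetful tensor functor $\omega_K\colon\Rep(K)\to\mathrm{Bun}(X)$, where $\mathrm{Bun}(X)$ is the category of vector bundles on $X$. As the fibres of $K^{\mathrm{diag}}$ are reductive, $\Rep(K)$ is a $k$\nd linear abelian semisimple rigid tensor category with $\End(\I)=H^0(X,\sO_X)$, and $\omega_K$ is faithful and exact; conjugate homomorphisms of groupoids induce isomorphic tensor functors. The key point, a non\nd neutral form of Tannakian reconstruction in which the fibre functor takes values in $\mathrm{Bun}(X)$ rather than in $\Ve_k$, is that $K\mapsto(\Rep(K),\omega_K)$ is an anti\nd equivalence from the category of reductive groupoids over $X$ up to conjugacy onto the category $\sP$ whose objects are the pairs $(\sC,\omega)$ consisting of such a category $\sC$ and such a functor $\omega$, and whose morphisms $(\sC,\omega)\to(\sC',\omega')$ are the isomorphism classes of pairs $(F,\iota)$ with $F\colon\sC\to\sC'$ a tensor functor and $\iota\colon\omega'\circ F\iso\omega$ an isomorphism of tensor functors. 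Under this anti\nd equivalence an initial reductive groupoid over $X$ corresponds to a terminal object of $\sP$, so it suffices to produce one.

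Next I would construct the terminal object. Let $\sR$ be the smallest strictly full rigid tensor subcategory of $\mathrm{Bun}(X)$, closed under direct summands, that contains $\omega(A)$ for every object $(\sC,\omega)$ of $\sP$ and every object $A$ of $\sC$. The hypothesis that $H^0(X,\sO_X)$ be henselian local with residue field $k$, which I would use first to show that $\mathrm{Bun}(X)$ is a Krull\nd Schmidt category, guarantees that $\sR$ is well behaved and that $\End_\sR(\I)=H^0(X,\sO_X)$. I would then invoke the structure theory of rigid tensor categories in characteristic $0$: a category such as $\sR$ admits a universal semisimple tensor category mapping to it, that is, a semisimple pair $(\sC_0,\omega_0)$ in $\sP$, with $\omega_0$ essentially surjective onto $\sR$, through which every object of $\sP$ factors by a tensor functor over $\mathrm{Bun}(X)$, uniquely up to isomorphism. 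Informally, $\sC_0$ is obtained from $\sR$ by formally splitting off the semisimple constituents detected by idempotent endomorphisms and passing to the resulting semisimple hull, the faithfulness of $\omega_0$ being part of the construction. Since every object of $\sP$ has image contained in $\sR$, this universal property makes $(\sC_0,\omega_0)$ a terminal object of $\sP$. Transporting it back through the anti\nd equivalence yields a reductive groupoid $K_0$ over $X$ admitting a unique morphism, up to conjugacy, to every reductive groupoid over $X$; that is, $K_0$ is an initial object.

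I expect the main obstacle to be the construction of $(\sC_0,\omega_0)$: one must show that amalgamating tensor categories over $\mathrm{Bun}(X)$ and passing to semisimple hulls does not leave the class of representation categories of reductive groupoids — equivalently, that the universal semisimple tensor category over $\sR$ exists, is again semisimple, and has $\End(\I)=H^0(X,\sO_X)$ — and, so that it corresponds to an affine and not merely large groupoid, that the construction stays within a small category. This is exactly where the hypothesis on $H^0(X,\sO_X)$ is needed: it is what makes $\mathrm{Bun}(X)$, hence $\sR$, a Krull\nd Schmidt category with the correct endomorphism ring of the unit object, which is the input the structure theory requires. Setting up the non\nd neutral Tannakian reconstruction of the second paragraph is a further technical ingredient, but a more routine one.
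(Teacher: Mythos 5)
Your overall strategy coincides with the paper's: pass through Tannakian duality to reduce the problem to finding a terminal object among pairs (semisimple tensor category, tensor functor to $\Mod(X)$), and obtain that terminal object as a semisimplification of $\Mod(X)$ equipped with a tensor functor back to $\Mod(X)$. The genuine gap is that the decisive step is not proved but ``invoked''. The universal semisimple tensor category over $\Mod(X)$ is, concretely, the quotient $\overline{\Mod(X)}=\Mod(X)/\Rad(\Mod(X))$ (shown to be semisimple Tannakian in Lemma~\ref{l:ssTann}) together with a tensor \emph{section} $V$ of the projection $Q:\Mod(X)\to\overline{\Mod(X)}$, with the property that every $k$\nd tensor functor $T$ from a semisimple Tannakian category with $QT$ faithful factors through $V$, uniquely up to tensor isomorphism. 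This is exactly Theorem~\ref{t:splitting}, and it is not an off-the-shelf piece of structure theory: the classical Andr\'e--Kahn splitting theorem applies when $\End(\I)=k$, whereas here $\End_{\Mod(X)}(\I)=H^0(X,\sO_X)$ is only a henselian local $k$\nd algebra with residue field $k$. The paper must prove a strictly more general splitting theorem, and its proof (Section~\ref{s:splitting}) is the technical heart of the whole argument: it reduces to a statement about reductive group actions on affine schemes (Lemma~\ref{l:Dextend}), proved via Magid's theorem on simple $G$\nd algebras, a completion argument (Lemma~\ref{l:complete}) and Artin's approximation theorem. The henselian hypothesis enters there in an essential way, not merely through Krull--Schmidt for $\Mod(X)$ as your proposal suggests. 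So the step you yourself flag as ``the main obstacle'' is precisely the content of the theorem, and leaving it to ``the structure theory of rigid tensor categories in characteristic $0$'' is a genuine gap.

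Two further corrections. First, for a transitive (in particular reductive) groupoid $K$ over $X$ one has $\End_{\Mod_K(X)}(\I)=H^0_K(X,\sO_X)=k$, not $H^0(X,\sO_X)$: transitivity forces invariant sections to be constants. Correspondingly the terminal pair is $(\overline{\Mod(X)},V)$ with $\End(\I)$ equal to the residue field $k$; demanding $\End(\I)=H^0(X,\sO_X)$ of a semisimple rigid category, as you do for $\sC_0$ (and for $\Rep(K)$), is inconsistent whenever $H^0(X,\sO_X)\neq k$, since a local ring that is the endomorphism ring of the unit of a semisimple category must be a field. Second, the anti-equivalence you assert between reductive groupoids up to conjugacy and such pairs is itself a nontrivial input (the paper's Lemmas~\ref{l:Tannes} and \ref{l:Tannff}, resting on Deligne's non-neutral Tannakian theory); it is the more routine of the two ingredients, as you say, but it should be justified by that reference rather than taken for granted, since it is what converts the terminal pair back into an initial reductive groupoid.
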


For the formulation of general theorems, it is simple and convenient to work with groupoids over $X$
as in Theorem~\ref{t:init},
but this does not directly give a description of principal bundles over $X$.
However an equivalent form of Theorem~\ref{t:init} analogous to Theorem~\ref{t:repr}
can be obtained by replacing the reductive $k$\nd groups of Theorem~\ref{t:repr} by reductive
groupoids over the spectrum of an algebraic closure of $k$.

Fix an algebraic closure $\overline{k}$ of $k$.
Groupoids over $\Spec(\overline{k})$ will also be called groupoids over $\overline{k}$.
A notion of principal bundle on a $k$\nd scheme $X$ under a transitive affine groupoid $F$ over $\overline{k}$ can be defined,
and the isomorphism classes of such bundles form a set $H^1(X,F)$.
For $F$ the constant groupoid over $\overline{k}$ defined 
by the affine $k$\nd group $G$, this set coincides with $H^1(X,G)$.
In general, in contrast to the case of affine $k$\nd groups, the set $H^1(X,F)$ for $F$ a transitive affine groupoid over 
$\overline{k}$ may be empty.
The following is an equivalent version of Theorem~\ref{t:init} (cf.\ Corollary~\ref{c:cohomologyrep}\ref{i:gpdcohomologyrep}).

\begin{thm}\label{t:reprgpd}
Let $X$ and $k$ be as in Theorem~\textnormal{\ref{t:init}}
and $\overline{k}$ be an algebraic closure of $k$.
Then the functor $H^1(X,-)$ on the category of reductive groupoids over $\overline{k}$ up to conjugacy
is representable.
\end{thm}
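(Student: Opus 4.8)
The plan is to deduce Theorem~\ref{t:reprgpd} from Theorem~\ref{t:init}, to which it is in fact equivalent. I would exhibit an equivalence of categories between the category of reductive groupoids over $X$ up to conjugacy and the category $\sP$ whose objects are pairs $(F,P)$, with $F$ a reductive groupoid over $\overline{k}/k$ and $P$ an isomorphism class in $H^1(X,F)$, a morphism $(F,P) \to (F',P')$ in $\sP$ being a conjugacy class of morphisms $u:F \to F'$ with $u_*P = P'$. An equivalence of categories carries initial objects to initial objects, so by Theorem~\ref{t:init} the category $\sP$ has an initial object $(F_0,P_0)$. To say that $(F_0,P_0)$ is initial in $\sP$ is to say that for every reductive groupoid $F$ over $\overline{k}/k$ and every $P \in H^1(X,F)$ there is exactly one conjugacy class of morphisms $u:F_0 \to F$ with $u_*P_0 = P$; equivalently, that the map sending the class of $u$ to $u_*P_0$ is a bijection from the set of conjugacy classes of morphisms $F_0 \to F$ onto $H^1(X,F)$. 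This bijection is natural in $F$ because $(vu)_*P_0 = v_*(u_*P_0)$, and when $H^1(X,F)$ is empty its source is empty as well, since a morphism $u:F_0 \to F$ would produce the element $u_*P_0$ of $H^1(X,F)$. Hence $F_0$ represents $H^1(X,-)$, with universal element $P_0$, and this is the assertion of Theorem~\ref{t:reprgpd}.

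It remains to produce the equivalence. In one direction a transitive affine groupoid $K$ over $X$ is sent to a pair $(F_K,P_K)$. When $X$ has a point $\overline{x}:\Spec\overline{k} \to X$ this is simply the restriction of $K$ along $\overline{x} \times_k \overline{x}$, a scheme over $\Spec(\overline{k} \otimes_k \overline{k})$ and hence a groupoid over $\overline{k}/k$, together with the restriction of $K$ along $\id_X \times_k \overline{x}$, which inherits from the composition law of $K$ the structure of a principal bundle on $X$ under $F_K$. In general, since under the hypotheses of the theorem $X$ need not possess a rational point, one instead passes to $X_{\overline{k}}$, for which $H^0(X_{\overline{k}},\sO_{X_{\overline{k}}})$ is again henselian local but now has algebraically closed residue field, and recovers $F_K$ and $P_K$ by descent along $X_{\overline{k}} \to X$ --- equivalently, from the action of $\Gal(\overline{k}/k)$, which is only well defined up to inner automorphisms and so produces a groupoid rather than a group. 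In the other direction $(F,P)$ is sent to the transitive affine groupoid $\underline{\Iso}_F(P)$ over $X$ whose fibre over $(x_0,x_1)$ parametrises the isomorphisms $P_{x_1} \iso P_{x_0}$; it is affine and faithfully flat over $X \times_k X$ because $P$ is so over its base, and the fibres of its diagonal group scheme, being inner forms of $F^{\mathrm{diag}}$, are reductive. One then checks that these two assignments are mutually quasi-inverse and compatible with the passage to conjugacy classes of morphisms, the ambiguity in the isomorphism $u_*P \cong P'$ on one side matching conjugation by cross-sections of the diagonal group scheme on the other.

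The \emph{main obstacle} is precisely this construction and verification, together with the preliminary task of defining the set $H^1(X,F)$ and its functoriality for transitive affine groupoids $F$ over $\overline{k}/k$ --- in particular the identity $H^1(X,G_{[\overline{k}]}) = H^1(X,G)$ --- in a way that uses no rational point of $X$. The descent argument extracting $F_K$ from $K$, and the proof that $\underline{\Iso}_F(P)$ is transitive affine with reductive diagonal fibres, are of the same $fpqc$\nd descent character as the constructions underlying Theorem~\ref{t:init} and must be carried out carefully; granting them, the passage from the initiality in Theorem~\ref{t:init} to the representability asserted here is, as above, purely formal.
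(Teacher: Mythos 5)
Your formal reduction of the representability statement to Theorem~\ref{t:init} is sound, and it does mirror the paper's own structure: the paper treats the two statements as equivalent, and its proof (Theorem~\ref{t:cohomologyrep}, part (ii), applied with $H=X$) replaces $X$ by a universal reductive groupoid (Theorem~\ref{t:main}, i.e.\ Theorem~\ref{t:init}) and then invokes the correspondence between transitive affine groupoids over $X$ and pairs $(F,P)$ consisting of a groupoid over $\overline{k}/k$ and a principal $F$\nobreakdash-bundle (Lemmas~\ref{l:Hnatisogpd} and~\ref{l:Hac}). Your analysis of the category $\sP$ of pairs, including the observation that emptiness of $H^1(X,F)$ forces emptiness of $\Hom(F_0,F)$, is also correct.

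The genuine gap is in how you propose to obtain the equivalence itself, specifically its essential surjectivity: given a reductive groupoid $K$ over $X$, you must produce $(F_K,P_K)$ with $K\cong\underline{\Iso}_{F_K}(P_K)$. Your recipe -- restrict $K$ along a $\overline{k}$\nobreakdash-point of $X$, and in general recover the pair ``by descent along $X_{\overline{k}}\to X$'' -- fails in exactly the cases the theorem is designed to cover: the hypotheses do not force $X$ to have any $\overline{k}$\nobreakdash-point (no finite-type assumption is made), and base change does not help, since $X_{\overline{k}}$ has a $\overline{k}$\nobreakdash-point over $\overline{k}$ if and only if $X$ has one over $k$; nor is there any natural Galois descent datum to exploit, because $K$ lives over $X$, not over $\overline{k}$. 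This step is also not ``of the same $fpqc$\nobreakdash-descent character'' as the constructions behind Theorem~\ref{t:init}: in the paper it is a separate substantive result, Theorem~\ref{t:neutral} (every transitive affine groupoid over $X/k$ admits a principal bundle over $\overline{k}$, whence Lemma~\ref{l:Isoequivac}), proved by writing $K$ as a filtered limit of finite-type quotients $K_\lambda\subset G_\lambda{}_{[X]}$, descending the homogeneous spaces $G_\lambda{}_{[X]}/K_\lambda$ to $G_\lambda$\nobreakdash-schemes over the algebraically closed field, and choosing a compatible system of base points via the inverse-limit Lemma~\ref{l:limnonempty}, which uses algebraic closedness in an essential way. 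The same limit lemma underlies $H^1(\overline{k},G)=1$ (Lemma~\ref{l:printriv}) and the bijection \eqref{e:HGrep}, which you implicitly need for the full faithfulness of your equivalence (matching conjugacy classes of morphisms $\underline{\Iso}_F(P)\to\underline{\Iso}_{F'}(P')$ with conjugacy classes of $u:F\to F'$ pushing $[P]$ to $[P']$). So the missing ingredient is not a routine descent verification but precisely Theorem~\ref{t:neutral} and its supporting Lemma~\ref{l:limnonempty}; once those are in place, your deduction from Theorem~\ref{t:init} goes through as you describe.
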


Theorem~\ref{t:reprgpd} leads to a description of principal bundles under a reductive $k$\nd group
over $X$, in the following way.
We consider pairs $(D,E)$ with $D$ an affine $\overline{k}$\nd group and $E$ an extension of topological groups
\[
1 \to D(\overline{k})_{\overline{k}} \to E \to \Gal(\overline{k}/k) \to 1,
\]
where the group $D(\overline{k})_{\overline{k}}$ of $\overline{k}$\nd points over $\overline{k}$
is given the pro-discrete topology defined by the quotients of $D$ of finite type,
and where the following condition is satisfied:
conjugation of $D(\overline{k})_{\overline{k}}$ by an element of $E$ above $\sigma$ in $\Gal(\overline{k}/k)$
arises from an automorphism of the scheme $D$ above the automorphism $\sigma^{-1}$ of $\overline{k}$. 
Such pairs will be called Galois extended $\overline{k}$\nd groups (cf.\ Definitions~\ref{d:Galext} and 
\ref{d:Galextloose}, and Proposition~\ref{p:Galextiso}).
There is an equivalence from the category of transitive affine groupoids over $\overline{k}$
to the category of Galois extended $\overline{k}$\nd groups, which sends $F$ to a pair $(D,E)$ with $D = F^\mathrm{diag}$
(cf.\ Proposition~\ref{p:grpdgalequ}).
The constant groupoid defined by $G$ is sent to
\[
(G_{\overline{k}},G(\overline{k}) \rtimes \Gal(\overline{k}/k)).
\]
For $X$ as in Theorem~\ref{t:init}, there then exists a pair $(D_0,E_0)$ with $D_0$ reductive such that we have a 
bijection
\begin{equation}\label{e:conjisopair}
\Hom((D_0,E_0),
(G_{\overline{k}},G(\overline{k}) \rtimes \Gal(\overline{k}/k)))/
G(\overline{k})
\iso H^1(X,G)
\end{equation}
which is natural in the reductive $k$\nd group $G$.
The push forward of $E_0$ obtained by factoring out the identity component of $D_0$ is 
the arithmetic fundamental group of $X$.
If $X$ is quasi-compact, the hypotheses of Theorem~\ref{t:repr} are satisfied with $X$ and $k$ replaced by $X_{\overline{k}}$
and $\overline{k}$, and $D_0$ is the $\overline{k}$\nd group that represents the functor $H^1(X_{\overline{k}},-)$ 
on reductive $\overline{k}$\nd groups up to conjugacy.
When $k = \overline{k}$ is algebraically closed, \eqref{e:conjisopair} reduces to \eqref{e:conjiso}.

When $X$ is a point, the $\overline{k}$\nd group $D_0$ of \eqref{e:conjisopair} is trivial,
and the set on the left of \eqref{e:conjisopair} is the continuous cohomology set $H^1(\Gal(\overline{k}/k),G(\overline{k}))$.

Let $X$ be a smooth projective curve over $k$ of genus $0$.
Then \eqref{e:conjisopair} holds with $D_0 = \bG_m{}_{\overline{k}}$, and $E_0$ the extension of $\Gal(\overline{k}/k)$
by $\bG_m(\overline{k})$ with class in in the Brauer group 
$H^2(\Gal(\overline{k}/k),\bG_m(\overline{k}))$ of $k$ that of the Severi--Brauer variety $X$ (cf.\ Theorem~\ref{t:gen0rep}).

Let $X$ be a smooth projective curve over $k$ of genus $1$.
Then $X$ is a principal homogeneous space under its Jacobian $X_0$.
For $n \ne 0$ there is a short exact sequence
\[
1 \to C_{u_{X_0}}(\overline{k}) \xrightarrow{n_{u_{X_0}}} C_{u_{X_0}}(\overline{k}) \to {}_nX_0(\overline{k}) \to 1
\]
of topological groups, with $n_{u_{X_0}}$ the $k$\nd homomorphism defined by the power $n^2$ on the factor $D(\Q)$ of $C_{u_{X_0}}$ and
the power $n$ on the factor $D(X_0(\overline{k}))$.
For some $n$, the class of $X$ in the Weil--Ch\^atelet group $H^1(\Gal(\overline{k}/k),X_0(\overline{k}))$
of $X_0$ is the image of an element $\alpha_n$ of $H^1(\Gal(\overline{k}/k),{}_nX_0(\overline{k}))$.
If $\varphi$ is a continuous $1$\nd cocycle of $\Gal(\overline{k}/k)$ with values in ${}_nX_0(\overline{k})$ with class 
$-\alpha_n$,
then \eqref{e:conjisopair} holds with $(D_0,E_0)$ the product
\[
((C_{u_{X_0}}){}_{\overline{k}},E) \times (SL_2{}_{\overline{k}},SL_2(\overline{k}) \rtimes \Gal(\overline{k}/k))
\]
where $E$ is the extension of $\Gal(\overline{k}/k)$ by $C_{u_{X_0}}(\overline{k})$ obtained by taking the semidirect product 
of $\Gal(\overline{k}/k)$ with the above short exact sequence and then pulling back along the continuous homomorphism
from $\Gal(\overline{k}/k)$ to ${}_nX_0(\overline{k}) \rtimes \Gal(\overline{k}/k)$ that sends $\sigma$ to 
$\varphi(\sigma)\sigma$
(cf.\ Theorem~\ref{t:gen1rep}).

The above theorems are proved
by considering the $k$\nd linear tensor category $\Mod(X)$ of vector bundles over $X$. 
For $G$ reductive, principal $G$\nd bundles over $X$ may be identified with $k$\nd linear tensor functors from the category
of representations of $G$ to $\Mod(X)$.
For $X$ as in the above theorems, $\Mod(X)$ has a unique maximal tensor ideal, and 
its quotient $\overline{\Mod(X)}$ by this ideal is a semisimple Tannakian category over $k$.
The universal reductive $k$\nd group of Theorem~\ref{t:repr} for example is then characterised by the property
that its category of representations is equivalent as a tensor category to $\overline{\Mod(X)}$.
The required universal property follows from  the splitting
theorem for tensor categories of Andr\'e and Kahn \cite{AndKah} and \cite{O}, 
in a slightly more general form proved in Section~\ref{s:splitting} below.

The version of the above theorems proved below is in fact a rather more general one.
We start with a groupoid $H$ (not necessarily transitive affine) over $X$, or more generally
what will be called a pregroupoid over $X$, and consider principal bundles over $X$ with an action of $H$,
or transitive affine groupoids over $X$ with a morphism from $H$.
The results as stated above are then included as the case where $H$ is the initial groupoid $X$ over $X$.
For appropriate $H$ (cf.\ Examples~\ref{ex:equivariant} and \ref{ex:connection}), we obtain analogous results for equivariant principal bundles,
or principal bundles equipped with a connection.
In this more general form, Theorem~\ref{t:init} contains the generalised
Jacobson--Morosov theorem of \cite{AndKah} and \cite{O10}, by taking for $X$ a point and for $H$ an affine $k$\nd group.

The paper is organised as follows.
Sections~\ref{s:grpdpre} to \ref{s:bunundergpd} give the basic definitions for groupoids, pregroupoids
and principal bundles.
It is technically convenient here to work in the category of schemes over a base.
In Sections~\ref{s:specfield} to \ref{s:grpdgalext}, we specialise to the case where the base is 
a field.
From Section~\ref{s:redsub} on, we assume that the base field has characteristic $0$.
The splitting theorem, in the form required, is proved in Section~\ref{s:splitting}.
The main theorem, the existence of universal reductive groupoids, is proved 
along with its corollaries in Section~\ref{s:unmin}.
These are applied to principal bundles in Section~\ref{s:appprinbun},
and further applications are given in Sections~\ref{s:gauge} and \ref{s:pullback}.
An explicit description of principal bundles under reductive groups over smooth projective curves of genus $0$ or $1$
is given in Sections~\ref{s:curves0} and \ref{s:curves1}.

\section{Groupoids and pregroupoids}\label{s:grpdpre}

This section contains the basic definitions and terminology for groupoids and pregroupoids, which will be used throughout
the paper.
For this foundational material, it is convenient to work initially in an arbitrary category with fibre products,
and to specialise later to the case of the category of schemes over a base.

Fix a category $\sC$ with fibre products.
Write $\Delta_2$ for the category with objects the ordered sets
$[0] = \{0\}$, $[1] = \{0,1\}$ and $[2] = \{0,1,2\}$, with morphisms 
the order-preserving maps between them.
A contravariant functor from  $\Delta_2$ to $\sC$ will be called
a \emph{pregroupoid in $\sC$},
and a natural transformation of such functors a \emph{morphism of pregroupoids in $\sC$}.
A pregroupoid in $\sC$ is thus the same as a $2$\nd truncated simplicial object in $\sC$.
The image of an object or morphism in $\Delta_2$ under a pregroupoid will usually be written
as a subscript.
A pregroupoid $H$ in $\sC$ thus consists of objects $H_{[0]}$, $H_{[1]}$, $H_{[2]}$ in $\sC$
and morphisms 
\[
\xymatrix{
H_{[0]} \ar[r] &  H_{[1]} \ar@<0.75ex>[l]  \ar@<-0.75ex>[l] \ar@<0.75ex>[r] \ar@<-0.75ex>[r]
& H_{[2]} \ar@<1.5ex>[l] \ar[l] \ar@<-1.5ex>[l] 
}
\]
satisfying the well-known compatibilities. 
As usual, we write
\[
d_i:H_{[n+1]} \to H_{[n]}
\]
for $H_{\delta_i}$ with $\delta_i:[n] \to [n+1]$ the injective map in $\Delta_2$ whose image omits $i$,
and
\[
s_i:H_{[n]} \to H_{[n+1]}
\]
for $H_{\sigma_i}$ with $\sigma_i:[n+1] \to [n]$ the surjective map in $\Delta_2$ for which 
$\sigma_i(i) = \sigma_i(i+1)$.
Occasionally it will be useful to write
\[
e_i:H_{[n]} \to H_{[0]}
\]
for $H_{\varepsilon_i}$ with $\varepsilon_i:[0] \to [n]$ the map whose image is $\{i\}$.
For $n = 1$ we have $e_0 = d_1$ and $e_1 = d_0$.
To every pregroupoid $H$ there is associated a pregroupoid 
\[
H^\mathrm{op}
\]
with $(H^\mathrm{op})_{[n]}$, $d_i:(H^\mathrm{op})_{[n+1]} \to (H^\mathrm{op})_{[n]}$ and 
$s_i:(H^\mathrm{op})_{[n]} \to (H^\mathrm{op})_{[n+1]}$ given
respectively by $H_{[n]}$, $d_{n+1-i}:H_{[n+1]} \to H_{[n]}$ and $s_{n-i}:H_{[n]} \to H_{[n+1]}$.

Recall that a groupoid is a category in which every morphism is an isomorphism.
We may regard a groupoid $K$ as a set $K_{[0]}$ of objects, a set $K_{[1]}$
of arrows, a source map $d_1$ and a target map $d_0$ from 
$K_{[1]}$ to $K_{[0]}$, an identity map $s_0$ from $K_{[0]}$ to $K_{[1]}$, where $s_0(u)$ has source
and target $u$, and a composition map 
\[
\circ:K_{[1]} \times_{{}^{d_1}{K_{[0]}}^{d_0}}K_{[1]} \to K_{[1]}
\] 
from the set of composable pairs of arrows
to $K_{[1]}$,
where $v \circ w$ has source that of $w$ and target that of $v$,
such that $\circ$ is associative with left and right identities given by $s_0$, 
and has inverses (necessarily unique).
An element $v$ of $K_{[1]}$ may be pictured as an arrow  
\[
\cdot \xleftarrow{v} \cdot
\]
pointing to the left, and a composable pair $(v,w)$ as a pair of arrows $\cdot \xleftarrow{v} \cdot \xleftarrow{w} \cdot$.
A morphism of groupoids is a functor between their underlying categories.
Explicitly, a morphism from $K$ to $K'$ consists of maps $K_{[0]} \to K'{}\!_{[0]}$ and
$K_{[1]} \to K'{}\!_{[1]}$ which commute with the source, target and composition maps.
Such a morphism necessarily commutes with the identity map and preserves inverses.

A \emph{groupoid $K$ in $\sC$} consists of objects $K_{[0]}$ and $K_{[1]}$ of objects and arrows in $\sC$,
source and target morphisms $d_1$ and $d_0$ from $K_{[1]}$ to $K_{[0]}$, an identity morphism $s_0$ from $K_{[0]}$ to $K_{[1]}$, 
and a composition morphism $\circ$ from $K_{[1]} \times_{{}^{d_1}{K_{[0]}}^{d_0}}K_{[1]}$ to $K_{[1]}$, such that the points in any object 
of $\sC$ form a groupoid.
A morphism $K \to K'$ of groupoids in $\sC$ is a pair of morphisms $K_{[0]} \to K'{}\!_{[0]}$
and $K_{[1]} \to K'{}\!_{[1]}$ which define on points in any object of $\sC$
a morphism of groupoids, or equivalently which commute with the morphisms
$d_0$, $d_1$, and $\circ$.

Any groupoid $K$ in $\sC$ may be regarded as a pregroupoid in $\sC$ by taking
\[
K_{[2]} = K_{[1]} \times_{{}^{d_1}{K_{[0]}}^{d_0}}K_{[1]}
\]
with $d_0$, $d_1$ and $d_2$ from $K_{[2]}$ to $K_{[1]}$ respectively the first projection,
$\circ$ and the second projection,
and $s_0$ and $s_1$ from $K_{[1]}$ to $K_{[2]}$ respectively 
$(1_{K_{[1]}},s_0 \circ d_1)$ and $(s_0 \circ d_0,1_{K_{[1]}})$.
A morphism $K \to K'$ of groupoids in $\sC$ then extends uniquely to a morphism of pregroupoids
in $\sC$, with $K_{[2]} \to K'{}\!_{[2]}$ the fibre product of morphisms $K_{[1]} \to K'{}\!_{[1]}$
over $K_{[0]} \to K'{}\!_{[0]}$.
Thus we obtain a fully faithful functor from groupoids in $\sC$ to pregroupoids in $\sC$.
Any pregroupoid in the essential image of this functor will also be called a groupoid,
so that the category of groupoids in $\sC$ may be regarded as a full subcategory of 
that of pregroupoids in $\sC$.

A morphism from a pregroupoid $H$ to a groupoid $K$ in $\sC$
is completely determined by its components at $[0]$ and $[1]$.
Explicitly, such a morphism is a pair of morphisms $f_0$ from $H_{[0]}$ to $K_{[0]}$
and $f_1$ from $H_{[1]}$ to $K_{[1]}$, commuting with $d_0$ and $d_1$, such that
\[
f_1(d_1(v)) = f_1(d_0(v)) \circ f_1(d_2(v)).
\]
for every point $v$ of $H_{[2]}$.
Such a pair necessarily commutes with $s_0$, because $s_0$ sends
a point $u$ of $K_{[0]}$ to the unique point $w$ of $K_{[1]}$
with $d_0(w) = u = d_1(w)$ such that $w = w \circ w$.

Let $X$ be an object of $\sC$.
By a \emph{pregroupoid over $X$} we mean a pregroupoid $H$ in $\sC$ with $H_{[0]} = X$.
A morphism $H' \to H$ of pregroupoids over $X$ is a morphism of pregroupoids from $H'$ to $H$
for which $H'{}\!_{[0]} \to H_{[0]}$ is the identity $1_X$.
The category of pregroupoids over $X$ is thus a subcategory, in general non-full, of the category
of pregroupoids in $\sC$.
It contains as a full subcategory the category of groupoids over $X$, i.e.\ of groupoids in $\sC$ with object of 
objects $X$.
A groupoid over $X$ will usually be denoted by its object of arrows.

The category of pregroupoids over $X$ has an initial object, 
the pregroupoid $H$ with $H_{[n]} = X$ for each $n$ and the $d_i$ and $s_i$ the identity.
This initial object is a groupoid over $X$, which we write simply as $X$.

Suppose that $\sC$ has a final object, and hence all finite limits.
Then the category of pregroupoids over $X$ has a final object, which we 
write as
\[
[X].
\] 
It is a groupoid over $X$, which has a unique point with given source and target.
Explicitly $[X]_{[n]}$ is the product $\prod_{i \in [n]}X = X^{n+1}$,
and $[X]_{\varphi}:[X]_{[m]} \to [X]_{[n]}$ for $\varphi:[n] \to [m]$ sends the point
$(x_j)_{j \in [m]}$ to the point $(x_{\varphi(i)})_{i \in [n]}$.
We write the points $(x_i)_{i \in [1]}$ of $[X]_{[1]}$ and $(x_i)_{i \in [2]}$ of $[X]_{[2]}$ respectively as
$(x_1,x_0)$ and $(x_2,x_1,x_0)$, numbered from right to left.
Thus for example the image in $[X]_{[1]}$ of a point $v$ of a groupoid $K$ over $X$ is
$(d_0(v),d_1(v))$ with the source $d_1(v)$ on the right and the target $d_0(v)$ on the left,
corresponding to the picture of arrows pointing to the left.

Let $X' \to X$ be a morphism in $\sC$.
Then for any pregroupoid $H$ over $X$ there is a pregroupoid $H'$ over $X'$, the \emph{pullback of $H$ along $X' \to X$},
which is universal among pregoupoids over $X'$ equipped with a morphism to $H$ with component $X' \to X$ at $[0]$.
Explicitly, $H'{}\!_{[n]}$ is the fibre product over $H_{[n]}$ of the pullbacks $e_i{}\!^*X'$ for $i \in [n]$,
so that its points are $(n+2)$\nd tuples $(v,(x'{}\!_i)_{i \in [n]})$ with $v$ a point of $H_{[n]}$ and
$x'{}\!_i$ a point of $X'$ above $e_i(v)$, while $H'{}\!_\varphi$ for $\varphi:[n] \to [m]$ sends
$(v,(x'{}\!_j)_{j \in [m]})$ to $(H_\varphi(v),(x'{}\!_{\varphi(i)})_{i \in [n]})$ and $H' \to H$ sends 
$(v,(x'{}\!_i)_{i \in [n]})$ to $v$.
If $H$ is a groupoid then $H'$ is a groupoid, with composition 
$(v,x'{}\!_2,x'{}\!_1) \circ (w,x'{}\!_1,x'{}\!_0) = (v \circ w,x'{}\!_2,x'{}\!_0)$.
When $\sC$ has a final object, the pullback of $H$ along $X' \to X$ is the fibre product
\[
H \times_{[X]} [X']
\]
of pregroupoids in $\sC$, with morphism to $H$ the projection.

Let $H$ be a pregroupoid over $X$.
By a \emph{pregroupoid over $H$} we mean a pregroupoid $H'$ over $X$ equipped with a morphism $H \to H'$
of pregroupoids over $X$.
A morphism of pregroupoids over $H$ is a morphism of their underlying pregroupoids over $X$
compatible with the morphisms from $H$.
When $H = X$, this notion reduces to the one of pregroupoid over $X$ already defined.

\begin{rem}
In what follows, we usually fix a pregroupoid $H$ in a category $\sC$ of relative schemes, and consider groupoids over $H$.
This is equivalent to equipping groupoids, or principal bundles which give rise to them,
with extra structure of a given type, such as an equivariant structure or a 
connection (Examples~\ref{ex:equivariant} and \ref{ex:connection} below).
We need to take for $H$ a pregroupoid, and not merely a groupoid, because for example
the $H$ that define connections are not in general groupoids.
Using the $2$\nd coskeleton functor, we may regard pregroupoids in $\sC$ as simplicial objects in $\sC$.
There would however be no gain in generality in taking for $H$ a simplicial object in $\sC$ rather than a pregroupoid in $\sC$,
because a morphism from such an $H$ to a groupoid $K$ in $\sC$ is the same as morphism from the $2$\nd truncation of $H$ to $K$.
\end{rem}

It will always be assumed in what follows that the fibre products of the form $Z \times_Z Z'$ and $Z' \times_Z Z$ are chosen to coincide with $Z'$, with projections to $Z'$ the identity.
If $H$ is a pregroupoid over $X$, it will be assumed, unless the contrary is indicated, 
that
\[
H_{[1]} \times_X X' = H_{[1]} \times_{{}^{d_1}X} X'
\]
for $H_{[1]}$ on the left of a fibre product over $X$, and that
\[
X' \times_X H_{[1]} = X' \times_{X^{d_0}} H_{[1]}
\]
for $H_{[1]}$ on the right of a fibre product over $X$.

Given a morphism $f:Z' \to Z$ in $\sC$, there is pullback functor $f^* = Z' \times_Z -$ from 
objects in $\sC$ over $Z$ to objects over $Z'$.
We have $f^*Z = Z'$, and $(1_Z)^*$ is the identity functor.
For every composable pair of morphisms $f$ and $f'$,
there is a pullback isomorphism from $(f \circ f')^*$ to $f'{}^*f^*$,
defined by the universal properties of $f^*$, $f'{}^*$ and $(f \circ f')^*$,
and satisfying the usual compatibilities.

Let $H$ be a pregroupoid over $X$ in $\sC$.
Given an object $X'$ in $\sC$ over $X = H_{[0]}$, an \emph{action of $H$ on $X'$} is an isomorphism
\[
\alpha:d_1{}\!^*X' \iso d_0{}\!^*X'
\]
over $H_{[1]}$ such that if for $i = 0,1,2$ we write $\alpha_i$ for the morphism
\begin{equation}\label{e:alphai}
(d_1 \circ d_i)^*X' \iso d_i{}\!^*d_1{}\!^*X' \xrightarrow{d_i{}\!^*\alpha} d_i{}\!^*d_0{}\!^*X' \iso (d_0 \circ d_i)^*X'
\end{equation}
of objects over $H_{[2]}$ defined using the pullback isomorphisms, then
\begin{equation}\label{e:alphacomp}
\alpha_1 = \alpha_0 \circ \alpha_2.
\end{equation}
Modulo the pullback isomorphisms, $s_0{}\!^*\alpha$ is then the identity of $X'$.

An object over $X$ in $\sC$ with an action of $H$ will be called an \emph{$H$\nd object}.
A morphism of $H$\nd objects from $X''$ with action $\alpha'$ to
$X'$ with action $\alpha$ is a morphism $f:X'' \to X'$ over $X$ such that 
$\alpha \circ d_1{}\!^*f = d_0{}\!^*f \circ \alpha'$.
The category of $H$\nd objects has finite limits.
If $H = X$ is the initial pregroupoid over $X$, then the category
of $H$\nd objects may be identified with the category of objects over $X$ in $\sC$.

A group object in the category of $H$\nd objects, or what is the same a group object over $X$ in $\sC$
equipped with an action of $H$ which is an isomorphism of group objects over $H_{[1]}$,
 will also be called an \emph{$H$\nd group}.

Given a pregroupoid $H'$ over $X'$ and a morphism $h:H' \to H$ of pregroupoids in $\sC$, 
there is a pullback functor $h^*$ from $H$\nd objects to $H'$\nd objects, which sends $Z$ to 
its pullback along $X' \to X$ with action of $H'$ given, modulo the pullback isomorphisms,
by the pullback along $H'{}\!_{[1]} \to H_{[1]}$ of the action of $H$ on $Z$.

An $H$\nd object is the same as an object $X'$ over $X$ in $\sC$ and a morphism
\[
\widetilde{\alpha}:H_{[1]} \times_X X' \to X'
\]
such that the square
\[
\xymatrix{
H_{[1]} \times_X X' \ar[d] \ar[r]^-{\widetilde{\alpha}} & X' \ar[d] \\
H_{[1]} \ar[r]^{d_0} & X
}
\]
with the left arrow the projection is cartesian,
and such that 
\[
\widetilde{\alpha}(d_1(w),x') = \widetilde{\alpha}(d_0(w),\widetilde{\alpha}(d_2(w),x'))
\]
for every point $w$ of $H_{[2]}$ and $x'$ of $X'$ above $e_0(w)$.
Indeed a morphism $\alpha:d_1{}\!^*X' \to d_0{}\!^*X'$ over $H_{[1]}$ with projection 
$\widetilde{\alpha}$ onto $X'$ is an isomorphism
if and only if the  square is cartesian, and \eqref{e:alphacomp} holds if and only if $\widetilde{\alpha}$
satisfies the associativity condition.
We also call $\widetilde{\alpha}$ the action of $H$ on $X'$.
A morphism of $H$\nd objects from $X''$ with action $\widetilde{\alpha}'$ to
$X'$ with action $\widetilde{\alpha}$ is then a morphism $f:X'' \to X'$ over $X$ such that 
$\widetilde{\alpha} \circ (H_{[1]} \times_X f) = f \circ \widetilde{\alpha}'$.
We often write $\widetilde{\alpha}(v,x')$ simply as $vx'$.

An $H^\mathrm{op}$\nd object will also be called a \emph{right $H$\nd object}.
A right $H$\nd object is the same as an object $X'$ over $X$ in $\sC$ and a morphism
\[
X' \times_X H_{[1]} \to X'
\]
satisfying conditions analogous to those satisfied by $\widetilde{\alpha}$ above.

Let $H$ be a pregroupoid over $X$ and $X'$ be an $H$\nd object.
Then with the $H_{[n]}$ regarded as objects over $X$ using $e_0$, 
there is a unique pregroupoid $H'$ over $X'$ with
\[
H'{}\!_{[n]} = H_{[n]} \times_X X'
\]
for $n = 0,1,2$, such that
\begin{enumerate}
\renewcommand{\theenumi}{(\arabic{enumi})}
\item
the projections $H'{}\!_{[n]} \to H_{[n]}$ define a morphism of pregroupoids $H' \to H$,
\item
$e_0:H'{}\!_{[n]} \to X'$ is the projection for each $n$,
\item
$d_0:H'{}\!_{[1]} \to X'$ is the action of $H$ on $X'$.
\end{enumerate}
We write this pregroupoid $H'$ as
\[
H \times_X X'.
\]
However $H \times_X X'$ is not the fibre product of pregroupoids in $\sC$,
and there is in general no morphism from $H$ to the initial pregroupoid $X$ over $X$.
If $H$ is a groupoid then $H \times_X X'$ is a groupoid, with composition $(v,wx') \circ (w,x') = (v \circ w,x')$.
An $(H \times_X X')$\nd object is the same as an $H$\nd object $X''$ equipped with a morphism of $H$\nd objects $X'' \to X'$.

We have a functor $H \times_X -$ from $H$\nd objects in $\sC$ to pregroupoids in $\sC$
equipped with a morphism to $H$, where $H \times_X f$ has component $H_{[n]} \times_X f$ at $[n]$.
It is fully faithful, with essential image those $H' \to H$ such that the square
\begin{equation}\label{e:Hcart}
\begin{gathered}
\xymatrix{
H'{}\!_{[m]} \ar[d] \ar[r]^{H'{}\!_{\varphi}} & H'{}\!_{[n]} \ar[d] \\
H_{[m]} \ar[r]^{H_{\varphi}} & H_{[n]}
}
\end{gathered}
\end{equation}
is cartesian for every $\varphi:[n] \to [m]$ in $\Delta_2$.

Let $K$ be a groupoid over $X$ and $J$ be a $K$\nd group.
The \emph{semidirect product}
\[
J \rtimes_X K
\]
\emph{of $K$ by $J$} is the groupoid over $X$ with object of arrows $J \times_X K$, source and target those of the projection onto $K$, identity defined by the identities of $J$ and $K$, and composition
defined by $(w,v) \circ (w',v') = (w(vw'),v \circ v')$.

\begin{rem}
In what follows, $\sC$ will be the category of schemes over some base $S$.
Any morphism $S \to S'$ defines by composition a functor, which preserves fibre products,
from  $\sC$ to the category $\sC'$ of schemes over $S'$.
A pregroupoid $H$ in $\sC$ may thus also be regarded as a pregroupoid in $\sC'$.
Most of the notions defined above are independent of whether $H$ is regarded as a pregroupoid in $\sC$ or $\sC'$,
as are the notion of $H$\nd module, and the pullback and push forward functors defined below.
The main exceptions are the final pregroupoid $[X]$ over $X$,
and the notion of transitive groupoid defined in Section~\ref{s:transgrpd} below.
\end{rem}

\emph{For the rest of this section we fix a scheme $S$, and pregroupoids and groupoids will, unless otherwise stated,
be in the category of schemes over $S$}.

Let $X$ be a scheme over $S$ and $H$ be a pregroupoid over $X$.
An $H$\nd object will also be called an \emph{$H$\nd scheme}.
An $H$\nd scheme will be called affine, finite, etc.\ if it is so as a scheme over $X$.
By an \emph{$H$\nd subscheme} of an $H$\nd scheme $X'$ we mean a subscheme $X''$ of $X'$ such that the action 
$d_1{}\!^*X' \iso d_0{}\!^*X'$ of $H$
on $X'$ restricts to an isomorphism $d_1{}\!^*X'' \iso d_0{}\!^*X''$.
This isomorphism is an action of $H$ on $X''$, and is the unique such action for which embedding 
is a morphism of $H$\nd schemes.
An $H$\nd subscheme of $X'$ is the same as an $(H \times_X X')$\nd subscheme of the final
$(H \times_X X')$\nd scheme $X'$.

Let $S'$ be a scheme over $S$.
Then the base extension functor $(-)_{S'}$ from schemes over $S$ to schemes over
$S'$ preserves finite limits.
To any pregroupoid (resp.\ groupoid) $H$ over $X$ there is associated a pregroupoid (resp.\ groupoid) $H_{S'}$ over $X_{S'}$ in the category of schemes over $S'$,
and to any $H$\nd scheme $X'$ an $H_{S'}$\nd scheme $X'{}\!_{S'}$.

Let $K$ be a groupoid over $X$.
By a \emph{subgroupoid of $K$} we mean a subscheme $K'$ of $K$ such the composite of any two composable
points of $K'$ lies in $K'$, the identities of $K$ lie in $K'$,
and the inverse of any point of $K'$ lies in $K'$. 
Such a $K'$ has a unique structure of groupoid over $X$ for which the embedding is a 
morphism of groupoids over $X$.
The equaliser of $d_0,d_1:K \to X$ is a subgroupoid
\[
K^\mathrm{diag}
\]
of $K$, and its identity and composition define on it a structure of group scheme over $X$.
Further $K^\mathrm{diag}$ has a structure of $K$\nd group with the point $v$ of $K$ acting as
$v \circ - \circ v^{-1}$.
If $X = S$, then $K = K^\mathrm{diag}$ is simply a group scheme over $X$.


Any cross-section $v$ of $K^\mathrm{diag}$ defines an inner automorphism
\[
v \circ - \circ v^{-1}:K = X \times_X K \times_X X \to K
\]
of the groupoid $K$ over $X$.
Let $H$ be pregroupoid over $X$.
Composition with the inner automorphisms defines an action $h \mapsto {}^vh$ of the group of cross-sections $v$ of 
$K^\mathrm{diag}$ on the set of morphisms $h$ from $H$ to $K$.
An orbit under this action will be called a morphism up to conjugacy from $H$ to $K$.
If $f:K \to K'$ is a morphism of groupoids over $X$,
then ${}^wf \circ {}^vh = {}^{wf(v)}(f \circ h)$.
We have a category of \emph{groupoids up to conjugacy over $H$}, with objects the groupoids over $H$, where a morphism
from $K'$ to $K$ is an orbit of the set of morphisms over $H$ from $K'$ to $K$ under the action of the group of
$H$\nd invariant cross-sections of $K^\mathrm{diag}$, i.e. those that fix the structural morphism $H \to K$ of $K$.

For any scheme $X$,
the category of $\sO_X$\nd modules has a structure of tensor category with the usual tensor product
$\sV \otimes_{\sO_X} \sW$ of $\sO_X$\nd modules $\sV$ and $\sW$.
We may assume the tensor product chosen so that the unit $\sO_X$ is strict.

Let $p:X' \to X$ be a morphism of schemes. 
The pullback functor $p^*$ from the category of $\sO_X$\nd modules to the category of $\sO_{X'}$\nd modules is left adjoint to the push forward $p_*$.
Write 
\[
\eta_p:\Id \to p_*p^*
\]
for the unit of this adjunction and
\[
\varepsilon_p:p^*p_* \to \Id
\]
for the counit.
We have canonical morphisms $\sO_X \to p_*\sO_{X'}$ and
\[
p_*\sV' \otimes_{\sO_X} p_*\sW' \to p_*(\sV' \otimes_{\sO_{X'}} \sW')
\]
defining a structure of lax (i.e.\ the defining morphisms need not be isomorphisms)
tensor functor on $p_*$, and canonical isomorphisms $\sO_{X'} \iso p^*\sO_X$ and
\[
p^*\sV \otimes_{\sO_{X'}} p^*\sW \iso p^*(\sV \otimes_{\sO_X} \sW)
\]
defining a structure of tensor functor on $p^*$, 
such that $\eta_p$ and $\varepsilon_p$ are compatible with the tensor products.
Replacing $p^*$ by a tensor isomorphic tensor functor, we may assume that $\sO_{X'} \iso p^*\sO_X$ is the identity. 
When $p$ and hence $p_*$ is the identity, we may take $p^*$ to be the identity.

With the usual composition of adjunctions, $p'{}^*p^*$ is left adjoint to $(p \circ p')_* = p_*p'{}\!_*$, so that there is a canonical isomorphism, the pullback isomorphism, between $(p \circ p')^*$ and $p'{}^*p^*$.
It is a tensor isomorphism, because it is defined using the units and counits for $p$, $p'$,
and $p \circ p'$.
There are similar tensor isomorphisms for any two decompositions of a given morphism
as the composite of a string of morphisms, and such isomorphisms satisfy the usual compatibilities.

Let $H$ be a pregroupoid over $X$.
By an \emph{action} of $H$ on an $\sO_X$\nd module $\sV$, we mean an isomorphism
\[
\alpha:d_1{}\!^*\sV \iso d_0{}\!^*\sV
\]
of $\sO_{H_{[1]}}$\nd modules such that if $\alpha_i$ is defined by \eqref{e:alphai} with $X'$ replaced by 
$\sV$, then \eqref{e:alphacomp} holds.
Modulo the pullback isomorphisms, $s_0{}\!^*(\alpha)$ is then the identity of $\sV$.

By an \emph{$H$\nd module} we mean a quasi-coherent $\sO_X$\nd module equipped with an action of $H$.
A morphism from $\sV'$ with action $\alpha'$ to $\sV$ with action $\alpha$ is a morphism $f:\sV' \to \sV$ of $\sO_X$\nd modules
such that $\alpha \circ d_1{}\!^*f = d_0{}\!^*f \circ \alpha'$.
We denote by
\[
\MOD_H(X)
\]
the category of $H$\nd modules.
When $H = X$, we may identify $\MOD_H(X)$ with the category $\MOD(X)$ of quasi-coherent $\sO_X$\nd modules.

Let $\sV$ and $\sW$ be $H$\nd modules.
Then the tensor product $\sV \otimes_{\sO_X} \sW$ of their underlying $\sO_X$\nd modules has a canonical structure of 
$H$\nd module, with the action given, modulo the isomorphisms defining the tensor structures on $d_1{}\!^*$
and $d_0{}\!^*$, by the tensor products of the actions on $\sV$ and $\sW$.
We thus obtain a tensor product on $\MOD_H(X)$, with unit $\sO_X$ strict, and associativity constraints and symmetries those of the underlying $\sO_X$\nd modules.

An algebra in the tensor category $\MOD_H(X)$ of a given type, for example unitary and associative, is the same as a quasi-coherent
$\sO_X$\nd algebra $\sR$ of the same type equipped with an isomorphism $\alpha:d_1{}\!^*\sR \iso d_0{}\!^*\sR$
of $\sO_{H_{[1]}}$\nd algebras which satisfies \eqref{e:alphacomp}.
Such an algebra will be called an \emph{$H$\nd algebra}.
If $\sR$ is a unitary associative $H$\nd algebra, a module over $\sR$ in $\MOD_H(X)$ is the same as a module $\sM$ over
the $\sO_X$\nd algebra $\sR$ equipped with a structure of $H$\nd module for which the actions of $H$ on $\sR$ and $\sM$
define an isomorphism from the $d_1{}\!^*\sR$\nd module $d_1{}\!^*\sM$ to the $d_0{}\!^*\sR$\nd module $d_0{}\!^*\sM$.
Such a module will be called an \emph{$(H,\sR)$\nd module}.
In what follows, algebras will be assumed to be unitary and associative, unless otherwise stated.

The anti-equivalence $\Spec$ from commutative quasi-coherent $\sO_X$\nd algebras to affine schemes over $X$ defines an anti-equivalence
from commutative $H$\nd algebras to affine $H$\nd schemes, where $\sR$ with action $\alpha$ of $H$ is sent to $\Spec(\sR)$ with
action given, modulo the canonical isomorphism between $\Spec(d_i{}\!^*\sR)$ and $d_i{}\!^*\Spec(\sR)$,
by $\Spec(\alpha^{-1})$.

Locally free $\sO_X$\nd modules of finite type will also be called vector bundles over $X$.
An $H$\nd module whose underlying $\sO_X$\nd module is a vector bundle will also be called a \emph{representation of $H$}.
The representations of $H$ then form a full subcategory 
\[
\Mod_H(X)
\] 
of $\MOD_H(X)$.
When $H = X$, we may identify $\Mod_H(X)$ with the category $\Mod(X)$ of vector bundles over $X$.
If $\sV$ is a representation of $H$, then the dual $\sV^\vee$ of its underlying vector bundle has a canonical structure
of representation of $H$, with action given, modulo the canonical isomorphism between $(d_i{}\!^*\sV)^\vee$
and $d_i{}\!^*(\sV^\vee)$, by the inverse of the transpose of the action of $H$ on $\sV$.
The usual canonical homomorphisms involving tensor products and duals are then morphisms of $H$\nd modules.

For every $H$\nd module $\sV$, we write
\[
H^0_H(X,\sV)
\]
for the subgroup of $H^0(X,\sV)$ consisting of those sections $v$ for which the action
of $H$ sends $d_1{}\!^*v$ to $d_0{}\!^*v$.
Then $H^0_H(X,\sO_X)$ is an $H^0(S,\sO_S)$\nd subalgebra of $H^0(X,\sO_X)$, and 
$H^0_H(X,\sV)$ is an $H^0_H(X,\sO_X)$\nd submodule of $H^0(X,\sV)$, which may be identified with
the hom-group $\Hom_H(\sO_X,\sV)$.
The endomorphism ring $\End_H(\sV)$ of $\sV$ is an $H^0_H(X,\sO_X)$\nd subalgebra 
of $\End_{\sO_X}(\sV)$.

Let $\sV$ be a vector bundle over $X$.
Arguing locally over $X$ shows that there is a groupoid 
\[
\underline{\Iso}_X(\sV)
\]
over $X$ whose points in a scheme $Z$ over $S$ above $(x_1,x_0)$ are the isomorphisms from $\sV_{x_0}$ to $\sV_{x_1}$
of vector bundles over $Z$.
A representation of $H$ is then a vector bundle $\sV$ over $X$ equipped with a structure on $\underline{\Iso}_X(\sV)$
of groupoid over $H$.

\begin{exmp}\label{ex:descent}
An $[X]$\nd module is a quasi-coherent $\sO_X$\nd module equipped with a descent datum from
$X$ to $S$, and an $[X]$\nd scheme is a scheme over $X$ equipped with a descent datum from
$X$ to $S$.
\end{exmp}

\begin{exmp}\label{ex:equivariant}
Let $G$ be a group scheme over the base scheme $S$.
We may regard $G$ as a groupoid over $S$,
and a $G$\nd scheme is then a $G$\nd scheme in the usual sense.
If $X$ is such a $G$\nd scheme, then $(G \times_S X)$\nd modules are the same as $G$\nd equivariant 
quasi-coherent $\sO_X$\nd modules, and $(G \times_S X)$\nd schemes are the same as $G$\nd equivariant
schemes over $X$.
\end{exmp}

\begin{exmp}\label{ex:connection}
Suppose that $X$ is smooth over $S$.
Write $X_{(2)}$ for the ``first infinitesimal neighbourhood of $X$ in $[X]$'',
i.e.\ for the pregroupoid over $X$ with $X_{(2)}{}_{[n]}$ the first infinitesimal neighbourhood
of the diagonal $X$ in the $(n+1)$th power $[X]_{[n]}$ of $X$ over $S$, and $d_i$ and $s_i$ induced by those of $[X]$.
Then $X_{(2)}{}_{[2]}$ is the amalgamated sum $X_{(2)}{}_{[1]} \amalg_X X_{(2)}{}_{[1]}$ with embeddings $s_0$ and $s_1$.
Pulling back along the embeddings shows that an isomorphism $d_1{}\!^*\sV \iso d_0{}\!^*\sV$
for a quasi-coherent $\sO_X$\nd module $\sV$ defines an action of $X_{(2)}$ on $\sV$ provided that its pullback
along the diagonal is the identity modulo the pullback isomorphisms.
An $X_{(2)}$\nd module is thus the same as a quasi-coherent $\sO_X$\nd module equipped with a connection over $S$.
There is a similar description for $X_{(2)}$\nd schemes.
In general, $X_{(2)}$ is not a groupoid.
\end{exmp}

Suppose given a commutative square
\begin{equation}\label{e:basechangesqu}
\begin{gathered}
\xymatrix{
X' \ar[d]_{p} & Y' \ar[d]^q \ar[l]_{u'} \\
X  & Y \ar[l]_{u}
}
\end{gathered}
\end{equation}
in the category of schemes.
It induces a square of categories and functors with $X$ for example replaced by the category of $\sO_X$\nd modules,
and $p$, $q$, $u$ and $u'$ by $p_*$, $q_*$, $u_*$ and $u'{}\!_*$.
By pasting the unit $\eta_{u'}$ on top of this last square and the counit $\varepsilon_u$ on the bottom,
we obtain the base change natural transformation \cite[I~9.3.1.1]{DG71}
\[
\beta = \beta_{p,u';u,q} = (\varepsilon_u q_*u'{}^*) \circ (u^*p_*\eta_{u'}):u^*p_* \to q_*u'{}^*.
\]
It is compatible with the tensor products, because $\eta_{u'}$ and $\varepsilon_u$ are.
Similarly, by pasting $\eta_{u'}$ on top, $\varepsilon_u$ on the bottom, $\eta_p$ on the left, and $\varepsilon_q$ on the right,
we obtain the pullback tensor isomorphism
\[
\gamma:q^*u^* \iso u'{}^*p^*.
\]
We then have commutative squares of functors and natural transformations
\begin{equation}\label{e:betagamma}
\begin{gathered}
\xymatrix{
u^* \ar[d]_{\eta_q u^*} \ar[r]^-{u^*\eta_p} & u^*p_*p^* \ar[d]^{\beta p^*} \\
q_*q^*u^* \ar[r]^{q_*\gamma} & q_*u'{}^*p^*
}
\qquad \qquad
\xymatrix{
q^*u^*p_* \ar[d]_{q^*\beta} \ar[r]^{\gamma p_*} & u'{}^*p^*p_* \ar[d]^{u'{}^*\varepsilon_p} \\
q^*q_*u'{}^* \ar[r]^-{\varepsilon_q u'{}^*} & u'{}^*
}
\end{gathered}
\end{equation}
where the left square commutes by the triangular identity for $\eta_q$ and $\varepsilon_q$,
and the right square by the triangular identity for $\eta_p$ and $\varepsilon_p$.
Given also $v'$, $v$ and $r$ with $q \circ v' = v \circ r$, we have a commutative diagram 
\begin{equation}\label{e:basechangetrans}
\begin{gathered}
\xymatrix@C+2em@R+0.25cm{
(u \circ v)^*p_* \ar[d]^{\wr} \ar[rr]^{\beta_{p,u' \circ v';u \circ v,r}} & & r_*(u' \circ v')^* \ar[d]^{\wr} \\
v^*u^*p_* \ar[r]^{v^*\beta_{p,u';u,q}}  & v^*q_*u'{}^* \ar[r]^{\beta_{q,v';v,r}u'{}^*} & r_*v'{}^*u'{}^* 
}
\end{gathered}
\end{equation}
with vertical arrows defined using the pullback isomorphisms.
This follows from the fact that, modulo the pullback isomorphisms, $\eta_{u' \circ v'}$ is given by appropriately
pasting $\eta_{u'}$ and $\eta_{v'}$, and $\varepsilon_{u \circ v}$ by appropriately pasting  $\varepsilon_u$
and $\varepsilon_v$.

Suppose that the square \eqref{e:basechangesqu} is cartesian.
If $p$ is affine, or if $p$ is quasi-compact and quasi-separated and $u$ is flat,
then $(\beta_{p,u';u,q})_{\sV'}$ is an isomorphism for every quasi-coherent $\sO_{X'}$\nd module $\sV'$ \cite[I~9.3.2]{DG71}.

Let $H'$ be a pregroupoid over $X'$ and $f:H' \to H$ be a morphism of pregroupoids.
Write $p:X' \to X$, $q:H'{}\!_{[1]} \to H_{[1]}$ and $r:H'{}\!_{[2]} \to H_{[2]}$
for the components of $f$ at $[0]$, $[1]$ and $[2]$,
and $d'{}\!_i$ instead of $d_i$ for the morphisms $H'{}\!_{[n+1]} \to H'{}\!_{[n]}$. 
To any action $\alpha$ of $H$ on an $\sO_X$\nd module $\sV$, there is associated an action of $H'$ on $p^*\sV$,
given, modulo the pullback isomorphism from $d'{}\!_i{}\!^*p^*$ to $q^*d_i{}\!^*$ for $i = 0,1$, by $q^*(\alpha)$.
Thus $f$ induces a pullback functor 
\begin{equation}\label{e:Hpull}
f^*:\MOD_H(X) \to \MOD_{H'}(X').
\end{equation}
It has a structure of tensor functor, defined using that on $p^*$.
Given also $f':H'' \to H'$ inducing $p':X'' \to X'$, we have a pullback isomorphism $(f \circ f')^* \iso f'{}^*f^*$,
with components those of $(p \circ p')^* \iso p'{}^*p^*$.
If $p = 1_X$, so that $H' \to H$ is a morphism of pregroupoids over $X$, then $f^*$ is simply
restriction along $H' \to H$, and is the identity on the underlying $\sO_X$\nd modules.

Suppose that either $p$ is affine, or that $p$ is quasi-compact and quasi-separated and the 
$d_i:H_{[n+1]} \to H_{[n]}$ are flat.
Suppose further that the squares formed by $p$, $d'{}\!_i$, $d_i$, $q$ for $i = 0,1$ and by $q$, $d'{}\!_j$, $d_j$, $r$  for $j = 0,1,2$
are cartesian.
Then the $(\beta_{p,d'{}\!_i;d_i,q})_{\sV'}$ and $(\beta_{q,d'{}\!_j;d_j,r})_{\sW'}$ are isomorphisms for $\sV'$ and $\sW'$ 
quasi-coherent.
To every $H'$\nd module $\sV'$ with action $\alpha'$ we associate a structure $\alpha$ of $H$\nd module on $p_*\sV'$
by requiring that the square
\begin{equation}\label{e:pushdef}
\begin{gathered}
\xymatrix{
d_1{}\!^*p_*\sV' \ar[d]_{(\beta_{p,d'{}\!_1;d_1,q})_{\sV'}} \ar[r]^{\alpha} 
&  d_0{}\!^*p_*\sV' \ar[d]^{(\beta_{p,d'{}\!_0;d_0,q})_{\sV'}} \\
q_*d'{}\!_1{}\!^*\sV' \ar[r]^{q_*(\alpha')} &  q_*d'{}\!_0{}\!^*\sV'
}
\end{gathered}
\end{equation}
commute:
the condition \eqref{e:alphacomp} holds because 
taking $u = d_i$, $v = d_j$, $u' = d'{}\!_i$, $v' = d'{}\!_j$ in \eqref{e:basechangetrans}
for $i = 0,1$ and using the naturality of $\beta_{q,d'{}\!_j;d_j,r}$ shows that for $j = 0,1,2$ there is a commutative square 
similar to \eqref{e:pushdef} with top arrow $\alpha_j$ and bottom arrow $r_*(\alpha'{}\!_j)$. 
It follows from the naturality of the $\beta$ that $p_*$ sends morphisms of $H'$\nd modules to morphisms of 
$H$\nd modules, and from the compatibility of the $\beta$ with tensor products that the lax tensor structure on $p_*$
is compatible with the $H$\nd module structures.
Thus we have a lax tensor functor
\begin{equation}\label{e:Hpush}
f_*:\MOD_{H'}(X') \to \MOD_H(X).
\end{equation}
which is $p_*$ on the underlying $\sO_{X'}$\nd modules.
Further $(\eta_p)_\sV$ is a morphism of $H$\nd modules for every $H$\nd module $\sV$ by \eqref{e:pushdef} with $\sV' = p^*\sV$,
the left square of \eqref{e:betagamma} with $u = d_i$ and $u' = d'{}\!_i$, and the naturality of $\eta_q$.
Similarly using the right square of \eqref{e:betagamma} shows that $(\varepsilon_p)_{\sV'}$ is a morphism of 
$H'$\nd modules for every $H'$\nd module $\sV'$.
Thus \eqref{e:Hpush} is right adjoint to \eqref{e:Hpull}, with the unit of the adjunction defined using $\eta_p$ and the counit
using $\varepsilon_p$.
Given also $f':H'' \to H'$ with $f'{}\!_*$ defined, we have $f_*f'{}\!_* = (f \circ f')_*$, because the isomorphism 
$f_*f'{}\!_*\iso (f \circ f')_*$ defined by $(f \circ f')^* \iso f'{}^*f^*$ 
using the units and counits is the identity on underlying $\sO_X$\nd modules.

Let $X'$ be an $H$\nd scheme with structural morphism $p$.
If $H' = H \times_X X'$ with $f:H' \to H$ given by the projections,
the squares \eqref{e:Hcart}, and in particular the squares $p$, $d'{}\!_i$, $d_i$, $q$ and $q$, $d'{}\!_j$, $d_j$, $r$, are cartesian.
We then usually write $p^*$ for $f^*$ of \eqref{e:Hpull} and, when the required conditions on $p$ and the $d_i$ hold,
$p_*$ for $f_*$ of \eqref{e:Hpush}.
Similarly if $p':X'' \to X'$ is an $H$\nd morphism, we usually write $p'{}^*$ for $(H \times_X p')^*$ and,
when the required conditions hold, $p'{}\!_*$ for $(H \times_X p')_*$.

Suppose that the above conditions on $p$ and the $d_i$ hold. 
Considering $H$\nd morphisms $\sO_X \to p_*\sV'$and using the above adjunction gives an equality
\begin{equation}\label{e:Hpushequ}
H^0_{H \times_X X'}(X',\sV') = H^0_H(X,p_*\sV')
\end{equation}
of subgroups of $H^0(X',\sV') = H^0(X,p_*\sV')$.

Suppose that $p$ is affine.
The lax tensor structure of $p_*$ defines on $p_*\sO_{X'}$ a structure of commutative $H$\nd algebra,
with underlying $\sO_X$\nd algebra the usual one.
Similarly there is a structure  of $(H,p_*\sO_{X'})$\nd module on $p_*\sV'$ for every  $(H \times_X X')$\nd module $\sV'$.
If $\sV$ is an $H$\nd module, then the $(H,p_*\sO_{X'})$\nd morphism corresponding
to the $H$\nd morphism $(\eta_p)_{\sV}$ is an isomorphism
\begin{equation}\label{e:pOXV}
p_*\sO_{X'} \otimes_{\sO_X} \sV \iso p_*p^*\sV,
\end{equation}
which reduces to the usual one on the underlying $p_*\sO_{X'}$\nd modules.
The functor from $(H \times_X X')$\nd modules to $(H,p_*\sO_{X'})$\nd modules induced by $p_*$
is fully faithful:
if $l:p_*\sV' \to p_*\sW'$ is an $(H,p_*\sO_{X'})$\nd morphism, then $l = p_*(l')$ for a unique
$\sO_{X'}$\nd morphism $l':\sV' \to \sW'$,
and $l'$ is an $(H \times_X X')$\nd morphism
because $(\varepsilon_p)_{\sV'}$ and $l' \circ (\varepsilon_p)_{\sV'} = (\varepsilon_p)_{\sW'} \circ p^*(l)$ are 
and $(\varepsilon_p)_{\sV'}$ is an epimorphism of $\sO_{X'}$\nd modules.

Let $\sR$ be a finite locally free commutative $\sO_X$\nd algebra.
The trace morphism
\[
\tr_{\sR}:\sR \to \sO_X
\]
of $\sO_X$\nd modules sends a section $s$ of $\sR$ to the trace of multiplication by $s$. 
It is compatible with pullback.
If $\sR$ is an $H$\nd algebra, then $\tr_{\sR}$ is an $H$\nd morphism.

Let $\sR$ be a finite \'etale $\sO_X$\nd algebra. 
Locally in the \'etale topology, $\sR$ is a finite product of copies of $\sO_X$.
Thus if $\sV$ a quasi-coherent $\sR$\nd module and $\sW$
a quasi-coherent $\sO_X$\nd module, then for every $\sO_X$\nd morphism $j:\sV \to \sW$ we have
\[
j = (\tr_{\sR} \otimes_{\sO_X} \sW) \circ l
\]
for a unique $\sR$\nd morphism $l:\sV \to \sR \otimes_{\sO_X} \sW$.
Further if we write $\mu$ for the action of $\sR$ on $\sV$ and $\tau$ for $l$ when $\sW = \sV$ and $j = 1_{\sV}$, then
\begin{equation}\label{e:VRVretr}
\sV \xrightarrow{\tau} \sR \otimes_{\sO_X} \sV \xrightarrow{\mu} \sV
\end{equation}
is the identity.
If $\sR$ is an $H$\nd algebra, $\sV$ is an $(H,\sR)$\nd module, $\sW$ is an $H$\nd module, and $j$ is an 
$H$\nd morphism, then $l$ is an $(H,\sR)$\nd morphism.

\begin{lem}\label{l:Rsummand}
Let $H$ be a pregroupoid over $X$ and $X'$ be a finite \'etale $H$\nd scheme with structural morphism $p:X' \to X$.
Then every $(H \times_X X')$\nd module $\sV'$ is a direct summand of $p^*p_*\sV'$.
\end{lem}

\begin{proof}
Since $p_*$ from $(H \times_X X')$\nd modules to $(H,p_*\sO_{X'})$\nd modules is fully faithful,
we may apply \eqref{e:pOXV} with $\sV = p_*\sV'$ and \eqref{e:VRVretr} with  $\sR = p_*\sO_{X'}$ and $\sV = p_*\sV'$.
\end{proof}

Since pullback preserves limits, the forgetful functor from $H$\nd schemes to schemes over $X$ creates limits.
In particular limits of affine $H$\nd schemes exist and are affine.
Similarly the forgetful functors from $\MOD_H(X)$ to $\MOD(X)$, and from  $H$\nd algebras to quasi-coherent 
$\sO_X$\nd algebras,
create colimits.
If the $d_i:H_{[1]} \to X$ are flat, and at least one of the $e_i:H_{[2]} \to X$ is flat, then the forgetful functor 
from $\MOD_H(X)$ to $\MOD(X)$ creates finite limits, and $\MOD_H(X)$ is abelian.

Call a morphism $h:\sV' \to \sV$ of quasi-coherent $\sO_X$\nd modules universally injective if $h|U \otimes_{\sO_U} \sW$ 
is injective for every open subscheme $U$ of $X$ and quasi-coherent $\sO_U$\nd module $\sW$. 
Any pullback of a universally injective morphism, and any faithfully flat morphism of commutative quasi-coherent 
$\sO_X$\nd algebras, 
is universally injective.

\begin{lem}\label{l:colimH0HV}
Let $H$ be a pregroupoid over $X$ and $(\sV_\lambda)_{\lambda \in \Lambda}$ be a filtered system of $H$\nd modules with colimit $\sV$.
Suppose that $X$ is quasi-compact and that $\sV_\lambda \to \sV$ is universally injective for each $\lambda \in \Lambda$.
Then the canonical map
\[
\colim_{\lambda \in \Lambda} H^0_H(X,\sV_\lambda) \to H^0_H(X,\sV)
\]
is bijective.
\end{lem}

\begin{proof}
It is enough to show that every element $v$ of $H^0_H(X,\sV)$ is the image of an element of some $H^0_H(X,\sV_\lambda)$.
If we regard sheaves over $X$ as local homeomorphisms with target $X$, we may identify $v$ with a cross-section of $\sV$,
and $(\sV_\lambda)$ with a filtered system of open subspaces of $\sV$ with union $\sV$.
Since $X$ is quasi-compact, we have $v^{-1}(\sV_\lambda) = X$ for some $\lambda$. 
Thus $v$ factors through $\sV_\lambda$, and hence is the image of a $v_\lambda$ in $H^0(X,\sV_\lambda)$.
That $v_\lambda$ lies in $H^0_H(X,\sV_\lambda)$, i.e.\ that the action of $H$ sends $d_1{}\!^*(v_\lambda)$ 
to $d_0{}\!^*(v_\lambda)$, follows from the fact that the action of $H$ sends $d_1{}\!^*(v)$ 
to $d_0{}\!^*(v)$,
because $\sV_\lambda \to \sV$ is an $H$\nd morphism and $d_0{}\!^*\sV_\lambda \to d_0{}\!^*\sV$ is injective.
\end{proof}

Given $H$\nd modules $\sV$ and $\sV'$, the assignment to every $H$\nd scheme $X'$ of the abelian group 
$\Hom_{H \times_X X'}(p^*\sV',p^*\sV)$ extends, using the pullback isomorphisms, 
to a functor from $H$\nd schemes to abelian groups.

\begin{lem}\label{l:colimHomR}
Let $H$ be a pregroupoid over $X$ and $(X_\lambda)_{\lambda \in \Lambda}$ be a filtered inverse system of affine $H$\nd schemes with limit $X'$.
Denote by $p:X' \to X$ and $p_\lambda:X_\lambda \to X$ the structural morphisms of $X'$ and $X_\lambda$.
Suppose that $X$ is quasi-compact and that each projection $X' \to X_\lambda$ is faithfully flat.
Then the canonical map
\[
\colim_{\lambda \in \Lambda}\Hom_{H \times_X X_\lambda}(p_\lambda{}\!^*\sV',p_\lambda{}\!^*\sV)
\to \Hom_{H \times_X X'}(p^*\sV',p^*\sV)
\]
is bijective for every $H$\nd module $\sV$ and representation $\sV'$ of $H$.
\end{lem}

\begin{proof}
Replacing $\sV$ by $\sV'{}^\vee \otimes_{\sO_X} \sV$  and $\sV'$ by $\sO_X$, we reduce to showing that
\[
\colim_{\lambda \in \Lambda}H^0_{H \times_X X_\lambda}(X_\lambda,p_\lambda{}\!^*\sV) \to H^0_{H \times_X X'}(X',p^*\sV)
\]
is bijective.
By \eqref{e:Hpushequ} and \eqref{e:pOXV}, this becomes
\[
\colim_{\lambda \in \Lambda}H^0_H(X,p_\lambda{}_*\sO_{X_\lambda} \otimes_{\sO_X} \sV) \to H^0_H(X,p_*\sO_{X'} \otimes_{\sO_X} \sV),
\]
which is bijective by Lemma~\ref{l:colimH0HV}.
\end{proof}

\begin{lem}\label{l:HRrepfp}
Let $H$ be a pregroupoid over $X$ and $(X_\lambda)_{\lambda \in \Lambda}$ be a filtered inverse system of affine $H$\nd schemes with limit $X'$.
Suppose that $X$ is quasi-compact and quasi-separated, that $H_{[1]}$ is quasi-compact,
and that each projection $\pr_\lambda:X ' \to X_\lambda$ is faithfully flat.
Then every representation of $H \times_X X'$ is isomorphic to $(\pr_\lambda)^*\sV$ for some $\lambda \in \Lambda$
and representation $\sV$ of $H \times_X X_\lambda$.
\end{lem}

\begin{proof}
Let $\sV'$ be a representation of $H \times_X X'$.
For some $\lambda_0$ and vector bundle $\sV_0$ over $X_{\lambda_0}$ there exists an $\sO_{X'}$\nd isomorphism 
\[
\iota:\sV' \iso (\pr_{\lambda_0})^*\sV_0
\]
\cite[IV~8.5.2(ii)]{EGA}.
By transport of structure, there is a unique action $\alpha'$ of $H \times_X X'$ on 
$(\pr_{\lambda_0})^*\sV_0$ for which 
$\iota$ is an $(H \times_X X')$\nd isomorphism.
The case $H = X$ of Lemma~\ref{l:colimHomR} with $X$, $(X_\lambda)$, $\sV$ and $\sV'$ replaced by $H_{[1]} \times_X X_{\lambda_0}$,
$(H_{[1]} \times_X X_{\lambda})$ for $\lambda \ge \lambda_0$, $d_0{}\!^*\sV_0$ and $d_1{}\!^*\sV_0$ shows that, 
modulo pullback isomorphisms, $\alpha'$ is of
the form $\pr_\lambda{}\!^*(\alpha)$ for some $\lambda \ge \lambda_0$ and 
$\sO_{(H_{[1]} \times_X X_\lambda)}$\nd morphism 
\[
\alpha:d_1{}\!^*\sV \to d_0{}\!^*\sV,
\]
where $\sV$ is the pullback of $\sV_0$ along $X_\lambda \to X_{\lambda_0}$.
Then $\alpha$ defines a structure of $(H \times_X X_\lambda)$\nd module on $\sV$ such that the pullback isomorphism
from $(\pr_{\lambda_0})^*\sV_0$ to $(\pr_\lambda)^*\sV$ is an $(H \times_X X')$\nd isomorphism:
the condition \eqref{e:alphacomp} for $\alpha$ follows from that for $\alpha'$ by faithful flatness of $\pr_\lambda$.
\end{proof}

\section{Transitive groupoids}\label{s:transgrpd}

This section contains the basic definitions and terminology for the $fpqc$ topology and for transitive affine
groupoids, which are closely related to principal bundles.
It is technically convenient to work here in the category of schemes over an 
arbitrary base, although for the applications the base will be the spectrum of a field.

\begin{defn}
A morphism of schemes $X' \to X$ will be called an \emph{$fpqc$ covering morphism} if every point of $X$ is contained
in an open subscheme $U$ with the following property:
there exists a faithfully flat quasi-compact morphism $U' \to U$ which factors through the restriction of
$X' \to X$ above $U$.
\end{defn}

The property of being $fpqc$ covering is local on the target.
Any pullback of an $fpqc$ covering morphism is $fpqc$ covering.
Any morphism $X' \to X$ through which an $fpqc$ covering morphism $X'' \to X$ factors is $fpqc$ covering.
A morphism $X' \to X$ with $X$ affine is $fpqc$ covering if and only if there exists a faithfully flat morphism $X'' \to X$
with $X''$ affine which factors through it.
The composite of two $fpqc$ covering morphisms is $fpqc$ covering. 
If the pullback of $X' \to X$ along an $fpqc$ covering morphism is $fpqc$ covering, 
then $X' \to X$ is $fpqc$ covering.

Let $X' \to X$ be an $fpqc$ covering morphism.
Then the pullback functors induced by $X' \to X$ from quasi-coherent $\sO_X$\nd modules to quasi-coherent $\sO_{X'}$\nd modules,
and from schemes over $X$ to schemes over $X'$, are both faithful.
A quasi-coherent $\sO_X$\nd module is a vector bundle if and only its pullback onto $X'$ is, 
and a scheme over $X$ is for example affine or faithfully flat if and only if its pullback onto $X'$ is.

Morphisms of any of the following three types are $fpqc$ covering:
\begin{enumerate}
\renewcommand{\theenumi}{(\Alph{enumi})}
\item\label{i:surjopen}
surjective morphisms $u$ whose source can be decomposed as a disjoint union of open subschemes $U$ with $u|U$ an open immersion,
\item\label{i:ffqc}
faithfully flat quasi-compact morphisms,
\item\label{i:retr}
retractions.
\end{enumerate}
For $p:X' \to X$ $fpqc$ covering, there is a composite $X'' \to X$ of a morphism of type 
\ref{i:surjopen} with one of type \ref{i:ffqc} which factors through $p$.
Taking the fibre product of $X'$ and $X''$ over $X$ we obtain a commutative square with one side $p$,
two sides the composite of a morphism of type \ref{i:surjopen} with one of type \ref{i:ffqc},
and one side of type \ref{i:retr}. 
Thus if $p$ is $fpqc$ is covering, then
\begin{equation}\label{e:fhh}
p \circ p' = p''
\end{equation}
for appropriate composites $p'$ and $p''$ of morphisms of type \ref{i:surjopen}, \ref{i:ffqc} or \ref{i:retr}.

Recall that $X' \to X$ is said to  be an effective epimorphism if it is the coequaliser of a pair of morphisms,
which may be taken to be the projections from $X' \times_X X'$.
Universal effective epimorphisms are stable under composition, and $p$ is an effective epimorphism 
if $p \circ p'$ is.
Any morphism of type \ref{i:surjopen}, \ref{i:ffqc} \cite[VIII~5.3]{SGA1} or \ref{i:retr},
and hence any $fpqc$ covering morphism, is a universal effective epimorphism.

For the rest of this section we fix a base scheme $S$, and $X$ will always be a scheme over $S$.
Unless otherwise stated, pregroupoids and groupoids will be in the category of schemes over $S$.
Thus $[X]$ denotes the groupoid over $X$ with $[X]_{[n]}$ the product of $n+1$ copies of $X$ over $S$.
The pullback of a pregroupoid $H$ over $X$ along $X' \to X$ is then the fibre product $H \times_{[X]} [X']$.

A groupoid $K$ over $X$ will be called affine if the morphism
\[
(d_0,d_1):K \to X \times_S X
\]
is affine.
It will be called \emph{transitive} if both the structural morphism $X \to S$ of $X$ and $(d_0,d_1)$ 
are $fpqc$ covering morphisms.

Let $K$ be a groupoid over $X$ and $h:Z \to K$ be a morphism over $S$ from a scheme $Z$ over $S$.
Write $K_i$ for the pullback of $K$ along $d_i \circ h:Z \to X$, and $q_i:K_i \to K$ for the projection. 
Then we have an isomorphism of groupoids over $Z$
\begin{equation}\label{e:Kconjiso}
\varphi_{K,h}:K_1 \iso K_0
\end{equation}
with $q_0(\varphi_{K,h}(v)) = h(d_0(v)) \circ q_1(v) \circ h(d_1(v))^{-1}$.
It is natural in $(K,h)$ for fixed $Z$.
For $Y$ a $K$\nd scheme, pullback of the action of $K$ on $Y$ along $h$ defines an isomorphism
\begin{equation}\label{e:conjisorep}
q_1{}\!^*Y \iso \varphi_{K,h}{}\!^*q_0{}\!^*Y
\end{equation}
of $K_1$\nd schemes which is natural in $Y$,
and similarly for $K$\nd modules.

Let $K$ be a transitive groupoid over $X$ and $p:X' \to X$ be a morphism
with $X' \to S$ $fpqc$ covering.
Then there exists an $h:Z\to K$ such that
\begin{equation}\label{e:dhba}
(d_0 \circ h,d_1 \circ h) = (c,p \circ c')
\end{equation}
with $c:Z \to X$ and $c':Z \to X'$ $fpqc$ covering:
take for $Z$ the fibre product of $K$ with $X \times_S X'$ over $X \times_S X$,
and for $h$ and $(c,c')$ the projections.

\begin{lem}\label{l:morphpull}
Let $K$ be a transitive groupoid over $X$ and $X' \to X$ be a morphism with $X' \to S$ $fpqc$ covering.
\begin{enumerate}
\item\label{i:morphpullgpd}
A morphism $f:K \to K'$ of groupoids over $X$ is an isomorphism  (resp.\ $fpqc$ covering, resp.\ faithfully flat,
resp.\ a closed immersion,
resp.\ of finite presentation, resp.\  affine, resp.\ finite \'etale) if and only if $f \times_{[X]} [X']$ is.
\item\label{i:morphpullsch}
A morphism $j:Y \to Y'$ of $K$\nd schemes  is an isomorphism  (resp.\ $fpqc$ covering, resp.\ faithfully flat, 
resp.\ a closed immersion,
resp.\ of finite presentation, resp.\  affine, resp.\ finite \'etale) if and only if $j \times_X X'$ is.
\end{enumerate}
\end{lem}

\begin{proof}
Write $p$ for $X' \to X$.
Both \ref{i:morphpullgpd} and \ref{i:morphpullsch} are clear when $p$ is $fpqc$ covering.
Let $h$ be such that \eqref{e:dhba} holds with $c$ and $c'$ $fpqc$ covering.
Then by naturality of \eqref{e:Kconjiso} applied to $f$, and of \eqref{e:conjisorep}
applied to $j$, we may replace $p$ by $c$.
\end{proof}

Since an $fpqc$ covering morphism is a universal effective epimorphism, we have descent
for morphisms of schemes along any such morphism. 
That we have descent for affine schemes or quasi-coherent sheaves along any $fpqc$ covering morphism 
will follow by Example~\ref{ex:descent} with $X'$ for $X$ from
Lemma~\ref{l:prereppull} below with $H = X = S$.

\begin{lem}\label{l:prereppull}
Let $H$ be a pregroupoid over $X$ and $H'$ be the pullback of $H$ along a morphism $X' \to X$.
Suppose that either of the following conditions holds:
\begin{enumerate}
\renewcommand{\theenumi}{(\alph{enumi})}
\item\label{i:prereppullcov}
$X' \to X$ is $fpqc$ covering;
\item\label{i:prereppulltrans}
$X' \to S$ is $fpqc$ covering and $H$ is a transitive groupoid over $X$.
\end{enumerate}
Then pullback along the projection $H' \to H$ defines an equivalence from the category of affine $H$\nd schemes (resp.\ $H$\nd modules, 
resp.\ representations of $H$) to the category of affine $H'$\nd schemes (resp.\ $H'$\nd modules, resp.\ representations of $H'$).
\end{lem}

\begin{proof}
Write $p$ for $X' \to X$.
We may suppose that \ref{i:prereppullcov} holds:
the case where \ref{i:prereppulltrans} holds reduces to that where \ref{i:prereppullcov} holds
by taking $h$ as in \eqref{e:dhba} with $c$ and $c'$ $fpqc$ covering, 
and using \eqref{e:conjisorep} or its analogue for $K$\nd modules.
By \eqref{e:fhh},
we may further suppose that $p$ is of type \ref{i:surjopen}, \ref{i:ffqc} or \ref{i:retr}.

Suppose first that $H = X$, so that $H$ and $H'$ may be regarded as groupoids in the category of schemes over $X$,
and it is to be shown that we have descent along $p$.
In case \ref{i:surjopen}, this follows from gluing along open subschemes.
In case \ref{i:ffqc}, it follows from \cite[VIII~1.1 and 2.1]{SGA1}.
For \ref{i:retr}, note that the functor defined by a right inverse $b$ to $p$ 
is quasi-inverse to that defined by $p$,
because by \eqref{e:conjisorep} with $X$, $X'$, $X'$, $H'$ and $(1_{X'},b \circ p)$ for $S$, $X$, $Z$, $K$ and $h$,
or its analogue for $K$\nd modules, the endofunctor defined by $b \circ p$ is an equivalence.

For $H$ arbitrary, 
the morphism $X \to H$ defines a cartesian square of pregroupoids
\[
\xymatrix{
X \times_{[X]} [X'] \ar[d] \ar[r] &   H' \ar[d] \\
X      \ar[r]    &  H
}
\]
with vertical arrows the projections.
Write $d'{}\!_i$ instead of $d_i$ for the morphisms $H'{}\!_{[n+1]} \to H'{}\!_{[n]}$,
and $q:H'{}\!_{[1]} \to H_{[1]}$ and $u:(X \times_{[X]} [X'])_{[1]} \to H'{}\!_{[1]}$
for the respective components of the right and top arrows at $[1]$.
We write the proof for the case of affine $H$\nd schemes: the other cases are similar. 
By assumption, $p$, and hence also $q$, is of type \ref{i:surjopen}, \ref{i:ffqc} or \ref{i:retr}.
By the case where $H = X$ with $q$ for $p$, we thus have descent for affine schemes along $q$.

Let $Y$ and $Y'$ be affine $H$\nd schemes.
Since $q$ is $fpqc$ covering, any morphism of $Y \to Y'$ of schemes over $X$ with $p^*Y \to p^*Y'$
a morphism of $H'$\nd schemes is a morphism of $H$\nd schemes.
By the case where $H = X$, the required full faithfulness thus follows from the square.

By the case where $H = X$, the pullback of any affine $H'$\nd scheme along the top arrow of the square
is isomorphic to the pullback of an affine scheme over $X$ along the left arrow.
For the required essential surjectivity, it is thus enough to prove the following:
given an affine scheme $Y$ over $X$ and a structure of $H'$\nd scheme
\[
\alpha':d'{}\!_1{}\!^*p^*Y \iso d'{}\!_0{}\!^*p^*Y
\]
on $p^*Y$ with $u^*(\alpha')$ the pullback isomorphism, 
there exists an isomorphism
\[
\alpha:d_1^{}\!^*Y \iso d_0^{}\!^*Y
\]
of schemes over $H_{[1]}$ such that $\alpha'$ coincides, modulo the pullback isomorphisms, with $q^*(\alpha)$.
Indeed since $H'{}\!_{[2]} \to H_{[2]}$ is $fpqc$ covering, the required condition \eqref{e:alphacomp} for $\alpha$ will follow from that for $\alpha'$.
By descent along $q$,
such an $\alpha$ will exist provided that
for every scheme $Z$ over $S$ and morphism $h:Z \to H_{[1]}$ over $S$ the following condition is satisfied:
if
\[
h' = (h,x'{}\!_1,x'{}\!_0):Z \to H'{}\!_{[1]}
\]
is a morphism over $S$ with $q \circ h' = h$, then the isomorphism
\[
\alpha'{}\!_{h'}:(d_1 \circ h)^*Y \iso (d_0 \circ h)^*Y
\]
defined modulo the pullback isomorphisms by $h'{}^*(\alpha')$ is independent of $h'$.

If $x'$ and $\widetilde{x}'$ are morphisms $Z \to X'$ over $S$ with $p \circ x' = x = p \circ \widetilde{x}'$, 
then $\alpha'{}\!_{(s_0 \circ x,\widetilde{x}',x')}$ is the identity, 
because by hypothesis $u^*(\alpha')$ is the pullback isomorphism.
Also if $j'$ is a morphism $Z \to H'{}\!_{[2]}$ over $S$, then by \eqref{e:alphacomp} with $\alpha'$ for $\alpha$
\[
\alpha'{}\!_{d_0 \circ j'} \circ \alpha'{}\!_{d_2 \circ j'} = \alpha'{}\!_{d_1 \circ j'}.
\]
Taking $(s_0 \circ h,x'{}\!_1,\widetilde{x}'{}\!_0,x'{}\!_0)$ for $j'$ shows that $\alpha'{}\!_{h'}$ 
does not depend on $x'{}\!_0$,
and taking $(s_1 \circ h,\widetilde{x}'{}\!_1,x'{}\!_1,x'{}\!_0)$ for $j'$ shows that it does not depend on $x'{}\!_1$.
\end{proof}

Let $H$ be a pregroupoid over $X$.
An $H$\nd scheme will be called \emph{constant} if it is isomorphic to the pullback $T \times_S X$ of a scheme $T$ over $S$ 
along $H \to S$.
Similarly we define for example constant $H$\nd groups or constant $H$\nd modules.
Suppose that $X \to S$ is $fpqc$ covering.
Then since by Lemma~\ref{l:prereppull} pullback along $[X] \to S$ defines an equivalence from affine schemes over $S$ 
to affine $[X]$\nd schemes, an affine $H$\nd scheme $X'$ is constant if and only if the action of $H$ on $X'$
factors through an action of $[X]$.
More precisely an isomorphism $X' \iso T \times_S X$ of affine $H$\nd schemes is the same up to isomorphism of schemes over $S$
as a factorisation of the action of $H$ on $X'$ through $[X]$.
When $H$ is a transitive groupoid, such a factorisation is unique if it exists.

A groupoid $K$ over $X$ will be called \emph{commutative} if the group scheme $K^\mathrm{diag}$ over $X$ is commutative.
If $K$ is a commutative transitive affine groupoid over $X$, then the $K$\nd group $K^\mathrm{diag}$ is constant.

A groupoid $K$ over $X$ will be called \emph{constant} if it is isomorphic to the pullback $G \times_S [X]$ of 
a group scheme $G$ over $S$ along $X \to S$.
An isomorphism $i:G \times_S [X] \iso K$ of groupoids over $X$ is the same as a pair $(j,\iota)$ with
$j:[X] \to K$ a structure of groupoid over $[X]$ on $K$ and $\iota:G \times_S X \iso j^*K^{\mathrm{diag}}$ an isomorphism of 
$[X]$\nd groups:
\[
i(g,x_1,x_0) = \iota(g,x_1) \circ j(x_1,x_0) = j(x_1,x_0) \circ \iota(g,x_0).
\] 
Suppose that $X \to S$ is $fpqc$ covering.
Then any constant groupoid over $X$ is transitive.
Further by Lemma~\ref{l:prereppull},
an affine groupoid $K$ over $X$ is constant if and only if it has a structure of groupoid over $[X]$.
Given such a structure, there exists, uniquely up to unique isomorphism, a pair consisting of an affine group
scheme $G$ over $S$ and an isomorphism $G \times_S [X] \iso K$ of groupoids over $[X]$.

Let $K$ be a groupoid over $X$.
If $Z$ is a $K$\nd scheme and $z:X \to Z$ is a cross-section of $Z$ over $X$, then the points $v$ of $K$
such that $v$ sends $z(d_1(v))$ to $z(d_0(v))$ are those of a subgroupoid of $K$,
which may be identified with the pullback of the groupoid $K \times_X Z$ over $Z$ along $z$. 
We call this subgroupoid the \emph{stabiliser} of $z$.

Let $K$ be a transitive groupoid over $X$ and $Y$ be a $K$\nd scheme.
By Lemma~\ref{l:morphpull}\ref{i:morphpullsch} with $Y$ for $X'$ and $X$ for $Y'$,
the morphism $Y \to X$ is $fpqc$ covering if and only if $Y \to S$ is.
The $K$\nd scheme $Y$ will be called \emph{transitive} if the groupoid $K \times_X Y$ over $Y$ is transitive,
and \emph{simply transitive} if further
\[
K \times_X Y = [Y].
\]
For any morphism $X' \to X$ with $X' \to S$ $fpqc$ covering, $Y$ is a transitive (resp.\ simply transitive) $K$\nd scheme
if and only if $Y\times_X X'$ is a transitive (resp.\ simply transitive) $(K \times_{[X]} [X'])$\nd scheme,
by Lemma~\ref{l:morphpull}\ref{i:morphpullsch} with $X$ for $Y'$ and Lemma~\ref{l:morphpull}\ref{i:morphpullgpd} with the projection $Y \times_X X' \to Y$ for $X' \to X$ and
$K \times_X Y \to [Y]$ for $f$.
If $Y$ is a transitive (resp.\ simply transitive) $K$\nd scheme then $Y_{S'}$ is a 
transitive (resp.\ simply transitive) $K_{S'}$\nd scheme for any $S'$ over $S$, and the converse holds when $S' \to S$
is $fpqc$ covering.

Let $K$ be a transitive affine groupoid over $X$, and $Y$ be a simply transitive $K$\nd scheme.
If $Y$ has a cross-section $y$, then for some affine group scheme $G$ over $S$ the pair $(K,Y)$ is
isomorphic to the pullback $(G \times_S [X],G \times_S X)$ along $X \to S$ of $(G,G)$ with $G$ acting on itself by translation,
because the stabiliser of $y$ is a subgroupoid $[X]$ of $K$ for which $y$ is an $[X]$\nd morphism.
Since the pullback of $Y \to X$ along itself has a cross-section, 
it follows for example that any simply transitive $K$\nd scheme is affine,
and that any morphism between simply transitive $K$\nd schemes is an isomorphism.

Any transitive affine groupoid $K$ over $X$ becomes constant after base change along an appropriate $fpqc$ covering morphism
$S' \to S$ and then pullback along an appropriate $fpqc$ covering morphism $X' \to X_{S'}$.
Indeed we may take $S' = X$, so that $X_{S'}$ over $S'$ has a cross-section $t$ and hence $K_{S'}$ has by pullback along $t$
using Lemma~\ref{l:prereppull} a simply transitive $K_{S'}$\nd scheme $Z$, 
and then $X' = Z$, so that $Z \times_{X_{S'}} X'$ over $X'$ has a cross-section.

\begin{lem}\label{l:isorep}
Let $K$ be a transitive affine groupoid over $X$,
$Y$ be simply transitive $K$\nd scheme, and $Z$ be an affine $K$\nd scheme with $Z \to S$ $fpqc$ covering.
Then the functor on the category of schemes over $S$ that sends $S'$ to the set of $K_{S'}$\nd morphisms from $Y_{S'}$ 
to $Z_{S'}$ is representable by a scheme which is affine over $S$ with structural morphism $fpqc$ covering.
\end{lem}

\begin{proof}
By Lemma~\ref{l:prereppull}, we reduce after replacing $X$ by $Y$ and $K$, $Y$ and $Z$ by their pullbacks onto
$Y$ to the case where $(K,Y)$ is $(G \times_S [X],G \times_S X)$ for some $G$, and then to the case where further 
$X = S$. 
Then $Z$ represents the required  functor.
\end{proof}

Let $K$ be a transitive groupoid over $X$ and $K'$ be a transitive subgroupoid of $K$ over $X$.
A pair $(Y,y)$ consisting of a transitive $K$\nd scheme $Y$ and a section $y$ of $Y$ over $X$ with
stabiliser $K'$ will be called a quotient of $K$ by $K'$.
If $Z$ is a $K$\nd scheme and $z$ is section of $Z$ over $X$ whose stabiliser contains $K'$, we have
a commutative square
\[
\xymatrix{
K \times_X K'  \ar[d] \ar[r] &  K \ar[d]^{p} \\
K    \ar[r]^p    &  Z
}
\]
with $p$ the restriction of the action of $K$ on $Z$ to the subscheme $K = K \times_X X$ of $K \times_X Z$ defined by $z$, 
and the top and left arrows the composition and the first projection.
When $(Z,z) = (Y,y)$, the square is cartesian, and $p$ is an $fpqc$ covering morphism
and hence an effective epimorphism, so that $p$ is the coequaliser of the top and left arrows.
Thus $(Y,y)$ is initial in the category of pairs $(Z,z)$.
In particular $(Y,y)$ is unique up to unique isomorphism.
We write $K/K'$ for $Y$ and call $y$ the base cross-section of $K/K'$.
When $X = S$, the notion of quotient $K/K'$ reduces to the usual one for group schemes over $S$.

The quotient $K/K'$ is preserved when it exists by base extension and pullback.
Conversely if the quotient of either the base extensions of $K$ and $K'$ along an $fpqc$ covering morphism 
or their pullbacks along a morphism $X' \to X$ with $X' \to S$ $fpqc$ covering exists and is affine,
then $K/K'$ exists and is affine, by descent for affine schemes along an $fpqc$ covering morphism and Lemma~\ref{l:prereppull}.

Let $K$ be a groupoid over $X$.
By the kernel of a morphism $K \to K_1$ of groupoids over $X$ we mean the kernel of the induced morphism on diagonals.
It is a $K$\nd subgroup of $K^\mathrm{diag}$.
By a quotient of $K$ by a $K$\nd subgroup $N$ of $K^\mathrm{diag}$ we mean a morphism 
$K \to K_1$ of groupoids over $X$ which is $fpqc$ covering
and has kernel $N$.
For any morphism $K \to K_1$ of groupoids over $X$ which is trivial on $N$ we have a commutative square
of groupoids over $X$
\[
\xymatrix{
N \rtimes_X K \ar[d] \ar[r] & K \ar[d] \\
K \ar[r] & K_1
}
\]
with top arrow the projection and left arrow given by $(n,v) \mapsto n \circ v$.
It is cartesian when $K \to K_1$ has kernel $N$.
A quotient of $K$ by $N$ is the coequaliser of the left and top arrows of the square,
and hence is universal among morphisms $K \to K_1$ trivial on $N$.
Such a quotient is thus unique up to unique isomorphism,
and will be written $K \to K/N$.
When it exists, a quotient by a $K$\nd subgroup of the diagonal is preserved by base extension and pullback.

\section{Torsors}

This section and the two following contain the foundational material that will be required on torsors
and principal bundles.
The notion of a torsor under an affine group scheme is an absolute one.
When working over a base scheme, principal bundles are the torsors under constant affine group schemes.

Let $X$ be a scheme and $J$ be an affine group scheme over $X$.
An action of $J^\mathrm{op}$ on a scheme $P$ over $X$ will also be called a right action of $J$ on $P$.
Thus a right action of $J$ on $P$ is a morphism
\begin{equation}\label{e:Jact}
P \times_X J \to P
\end{equation}
over $X$ which satisfies the usual unit and associativity conditions.
A scheme over $X$ equipped with a right action $J$ will be called a \emph{right $J$\nd scheme}.
A morphism of right $J$\nd schemes is a morphism of the underlying schemes over $X$
which is compatible with the action of $J$.
The composition of $J$ defines a structure of right $J$\nd scheme on $J$.
A right $J$\nd scheme will be called
\emph{trivial} if it is isomorphic to $J$.
Pullback along a morphism $X' \to X$ defines a functor $X' \times_X -$ from right $J$\nd schemes to right 
$(X' \times_X J)$\nd schemes.
By evaluation at the identity, a morphism $X' \times_X J \to X' \times_X P$ of right $(X' \times_X J)$\nd schemes 
may be identified with a morphism $X' \to P$ of schemes over $X$.
Taking $X' = P$, we have a morphism of right $(P \times_X J)$\nd schemes
\begin{equation}\label{e:PJmorph}
P \times_X J \to P \times_X P 
\end{equation}
for every right $J$\nd scheme $P$, corresponding to the identity of $P$, with first component the first projection and second component the action \eqref{e:Jact}.

Let $u:J \to J'$ is a morphism of affine group schemes over $X$.
Then we have a restriction functor along $u$ from right $J'$\nd schemes to right $J$\nd schemes.
If $P$ is a $J$\nd scheme and $P'$ is a $J'$\nd scheme, then a morphism $q:P \to P'$ of schemes over $X$
will be said to be \emph{compatible with $u$} if it is a morphism of $J$\nd schemes from $P$ to the restriction of $P'$
along $u$.
It is equivalent to require that the action of $J'$ on $P'$ composed with $q \times u$
 coincide with $q$ composed with the action of $J$ on $P$.
A morphism of right $J$\nd schemes is the same as a morphism of the underlying schemes over $X$ 
compatible with $1_J$.

A \emph{$J$\nd torsor} is a right $J$\nd scheme $P$ for which there exists an $fpqc$ covering morphism $X' \to X$
such that the right $(X' \times_X J)$\nd scheme $X' \times_X P$ is trivial.
Any morphism of $J$\nd torsors is an isomorphism.
A right $J$\nd scheme $P$ is a $J$\nd torsor if and only if $P \to X$ is $fpqc$ covering
and \eqref{e:PJmorph} is an isomorphism.
We write 
\[
H^1(X,J)
\]
for the set of isomorphism classes of $J$\nd torsors.
It is a pointed set with base point the class of the trivial $J$\nd torsors.

Define a category of \emph{affine torsors over $X$} as follows: an object is a pair $(J,P)$ with $J$ an affine group scheme over 
$X$ and $P$ a $J$\nd torsor, and a morphism from $(J,P)$ to $(J',P')$ is a pair $(u,q)$ with $u$ a morphism from $J$ to $J'$
and $q$ a morphism from $P$ to $P'$ over $X$ compatible with $u$.

\begin{lem}\label{l:push}
Let $J$ be an affine group scheme over $X$ and $P$ be a $J$\nd torsor.
Then the forgetful functor from the category of affine torsors over $X$ equipped with a morphism from $(J,P)$
to the category of affine group schemes over $X$ equipped with a morphism from $J$ is a surjective equivalence.
\end{lem}

\begin{proof}
To prove the full faithfulness, we reduce by pullback along an $fpqc$ covering morphism $p$ which trivialises $P$
and descent along $p$ to the case where $P = J$, which is clear.

By the full faithfulness, a morphism $(u,q)$ from $(J,P)$ with given $u$ is unique up to unique isomorphism if it exists.
To prove the surjectivity on objects,  we thus reduce again by descent for affine schemes to the case where $P = J$.
\end{proof}

Let $P$ be a $J$\nd torsor and $u:J \to J'$ be a morphism of group schemes over $X$.
By Lemma~\ref{l:push}, a pair consisting of a $J'$\nd torsor $P'$ and a morphism $P \to P'$ compatible with $u$ exists,
and is unique up to unique $J'$\nd isomorphism.
We say that $P'$ is the \emph{push forward of $P$ along $u$}.
The push forward of $P$ along an inner automorphism of $J$ is $P$ itself, because
if $j$ is a cross-section of $J$, then the action $P \to P$ of $j^{-1}$ is compatible with 
conjugation $J \to J$ by $j$.

We have a functor $H^0(X,-)$ from affine group schemes over $X$ to groups, defined by taking cross-sections.
By assigning to the morphism $u$ the map defined by push forward along $u$,
we also have a functor $H^1(X,-)$ from affine group schemes over $X$ to pointed sets.
It preserves finite products, so that $H^1(X,J)$ has a structure of abelian group when $J$ is commutative.
It also sends inner automorphisms of $J$ to the identity of $H^1(X,J)$.
Thus we may regard $H^1(X,-)$ as a functor
on the category of affine group schemes over $X$ up to conjugacy, where a morphism from $J'$ to $J$ is an orbit
in the set of morphisms $J' \to J$ of affine 
group schemes over $X$ under the action by composition of the group of inner automorphisms of $J$.
We have the usual exact cohomology sequence for $H^0(X,-)$ and $H^1(X,-)$, functorial in short exact sequences
of affine group schemes over $X$ 
\[
1 \to J'' \to J \to J' \to 1,
\]
which are defined by requiring $J \to J'$ to be $fpqc$ covering with kernel $J'' \to J$.

For the rest of this section we fix a base scheme $S$, and $X$ will be a scheme over $S$. 
Pregroupoids and groupoids will be in the category of schemes over $S$.

Let $H$ be a pregroupoid over $X$ and $J$ be an affine $H$\nd group.
By an \emph{$(H,J)$\nd scheme} we mean an $H$\nd scheme $P$ together with a structure of right $J$\nd scheme on $P$
for which the action \eqref{e:Jact} of $J$ on $P$ is a morphism of $H$\nd schemes.
An equivalent formulation of the last condition is that 
with $d_i{}\!^*P$ regarded as a right $d_i{}\!^*J$\nd scheme, the isomorphism $d_1{}\!^*P \iso d_0{}\!^*P$  over $H_{[1]}$
defining the $H$\nd structure of $P$ should be compatible with the isomorphism $d_1{}\!^*J \iso d_0{}\!^*J$ over $H_{[1]}$ 
defining the $H$\nd structure of $J$.
A morphism $P \to P'$ of $(H,J)$\nd schemes is a morphism of schemes over $X$ which is 
a morphism of $H$\nd schemes and of right $J$\nd schemes.
An $(H,J)$\nd scheme will be called an \emph{$(H,J)$\nd torsor} if its underlying right $J$\nd scheme is a $J$\nd torsor.
Any morphism of $(H,J)$\nd torsors is an isomorphism.
We write
\[
H^1_H(X,J)
\]
for the set of isomorphism classes of $(H,J)$\nd torsors.
It is a pointed set with base point the class of the trivial $(H,J)$\nd torsors, i.e.\ those $(H,J)$\nd isomorphic to $J$.

\begin{lem}\label{l:Hpush}
Let $H$ be a pregroupoid over $X$ and $u:J \to J'$ be a morphism of affine $H$\nd groups.
Let $P$ be an $(H,J)$\nd torsor, $P'$ be a $J'$\nd torsor,
and $q:P \to P'$ be a morphism over $X$ compatible with $u$.
Then there is a unique structure of $H$\nd scheme on $P'$ such that $P'$ is an $(H,J')$\nd torsor
and $q$ is a morphism of $H$\nd schemes.
\end{lem}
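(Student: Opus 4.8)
The plan is to produce the sought $H$\nd structure on $P'$ directly, as an isomorphism $\beta\colon d_1{}\!^*P'\iso d_0{}\!^*P'$ over $H_{[1]}$, by invoking the uniqueness assertion \ref{i:pushunique} of Lemma~\ref{l:push}, and then to read off the cocycle identity, the $(H,J')$\nd torsor property, and the uniqueness of $\beta$ from the same source.

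Write $\alpha\colon d_1{}\!^*P\iso d_0{}\!^*P$ for the isomorphism over $H_{[1]}$ defining the $H$\nd structure of $P$, and $\gamma\colon d_1{}\!^*J\iso d_0{}\!^*J$, $\gamma'\colon d_1{}\!^*J'\iso d_0{}\!^*J'$ for those defining the $H$\nd group structures of $J$ and $J'$; that $u$ is a morphism of affine $H$\nd groups means exactly $\gamma'\circ d_1{}\!^*u=d_0{}\!^*u\circ\gamma$. Since torsors and push forwards pull back to torsors and push forwards along a morphism of base schemes, $d_1{}\!^*P'$ is, via $d_1{}\!^*q$, a push forward of $d_1{}\!^*P$ along $d_1{}\!^*u$, and likewise at $0$. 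As $\alpha$ is compatible with $\gamma$ and $d_0{}\!^*q$ with $d_0{}\!^*u$, the composite $d_0{}\!^*q\circ\alpha\colon d_1{}\!^*P\to d_0{}\!^*P'$ is compatible with $d_0{}\!^*u\circ\gamma=\gamma'\circ d_1{}\!^*u$. Applying Lemma~\ref{l:push}\ref{i:pushunique} over the base $H_{[1]}$, with $u_1=d_1{}\!^*u$, $u_2=d_0{}\!^*u\circ\gamma$, $v=\gamma'$, $q_1=d_1{}\!^*q$ and $q_2=d_0{}\!^*q\circ\alpha$, yields a unique morphism $\beta\colon d_1{}\!^*P'\to d_0{}\!^*P'$ over $H_{[1]}$ that is compatible with $\gamma'$ and satisfies $\beta\circ d_1{}\!^*q=d_0{}\!^*q\circ\alpha$. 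Being a morphism of $d_1{}\!^*J'$\nd torsors (after restriction along $\gamma'$), $\beta$ is automatically an isomorphism.

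It then remains to verify the cocycle identity $\beta_1=\beta_0\circ\beta_2$ over $H_{[2]}$, and again I would argue by uniqueness. Each of $\beta_1$ and $\beta_0\circ\beta_2$ is a morphism out of the appropriate pullback of $P'$ to $H_{[2]}$, which is a push forward of the corresponding pullback of $P$; both are compatible with the same composite of pullbacks of $\gamma'$ (this composite relation for $\gamma'$ being its own cocycle identity, valid because $\gamma'$ is the $H$\nd group structure of $J'$); and they agree after precomposition with the relevant pullback of $q$, by the defining relation $\beta\circ d_1{}\!^*q=d_0{}\!^*q\circ\alpha$ together with the cocycle identity $\alpha_1=\alpha_0\circ\alpha_2$ for $\alpha$. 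Hence they coincide, by Lemma~\ref{l:push}\ref{i:pushunique}. Once $\beta$ is a cocycle it defines an $H$\nd structure on $P'$; that $P'$ is then an $(H,J')$\nd torsor is precisely the compatibility of $\beta$ with $\gamma'$, i.e.\ that the action of $J'$ on $P'$ is an $H$\nd morphism, and that $q$ is a morphism of $H$\nd schemes is precisely the relation $\beta\circ d_1{}\!^*q=d_0{}\!^*q\circ\alpha$. For uniqueness, any $H$\nd structure on $P'$ with these two properties is given by an isomorphism over $H_{[1]}$ satisfying the two conditions characterising $\beta$ in Lemma~\ref{l:push}\ref{i:pushunique}, and so equals $\beta$.

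The only step that is not pure formalism is the cocycle verification over $H_{[2]}$: the canonical pullback isomorphisms relating the three faces $d_0,d_1,d_2\colon H_{[2]}\to H_{[1]}$ must be inserted with some care so that both $\beta_1$ and $\beta_0\circ\beta_2$ are exhibited as morphisms out of a single push forward, compatible with a single group morphism, to which the uniqueness clause applies. Everything else is a direct application of the universal property of push forward established in Lemma~\ref{l:push}.
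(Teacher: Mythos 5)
Your proof is correct and is essentially the paper's own argument: the paper's proof consists of the single line ``Apply Lemma~\ref{l:push}\ref{i:pushunique} to the pullbacks of $P$ and $P'$ onto the $H_{[i]}$,'' and your construction of $\beta$ over $H_{[1]}$, the cocycle check over $H_{[2]}$, and the uniqueness claim are precisely the intended expansion of that line.
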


\begin{proof}
Apply Lemma~\ref{l:push} to the pullbacks of $P$ and $P'$ onto the $H_{[i]}$.
\end{proof}

Let $P$ be an $(H,J)$\nd torsor and $u:J \to J'$ be a morphism of affine $H$\nd groups.
By Lemmas~\ref{l:push} and \ref{l:Hpush}, a pair consisting of an $(H,J')$\nd torsor $P'$ and an $H$\nd morphism 
$P \to P'$ compatible with $u$ exists,
and is unique up to unique isomorphism of $(H,J)$\nd schemes.
We say that $P'$ is the \emph{push forward of $P$ along $u$}.

Call a cross-section $z$ of an $H$\nd scheme $Z$ $H$\nd invariant if the action of $H$ on $Z$ sends $d_1{}\!^*z$
to $d_0{}\!^*z$.
The inner automorphism induced by an $H$\nd invariant cross-section of an $H$\nd group is an automorphism of $H$\nd groups,
and will be called an $H$\nd inner automorphism.
The push forward of an $(H,J)$\nd torsor $P$ along an $H$\nd inner automorphism of $J$ is $P$ itself.

We have a functor $H^0_H(X,-)$ from affine $H$\nd groups to groups, defined by taking $H$\nd invariant cross-sections.
By assigning to the morphism $u$ the map defined by push forward along $u$,
we also have a functor $H^1_H(X,-)$ from affine $H$\nd groups to pointed sets.
It preserves finite products, so that $H^1_H(X,J)$ has a structure of abelian group when $J$ is commutative.
It also sends $H$\nd inner automorphisms of $J$ to the identity of $H^1(X,J)$.
Thus we may regard $H^1(X,-)$ as a functor
on the category of affine $H$\nd groups up to conjugacy, where a morphism from $J'$ to $J$ is an orbit 
under the action by composition of the group of $H$\nd inner automorphisms of $J$ on the set of morphisms $J' \to J$ of affine 
$H$\nd groups.
Again we have an exact cohomology sequence for $H^0_H(X,-)$ and $H^1_H(X,-)$, functorial in short exact sequences of 
affine $H$\nd groups.

When $H = X$, the notions of $(H,J)$\nd scheme and $(H,J)$\nd torsor reduce
to those of right $J$\nd scheme and $J$\nd torsor, and $H^i_H(X,-)$ coincides with $H^i(X,-)$.

\begin{lem}\label{l:schemetorspull}
Let $H$ and $X'$ be as in Lemma~\textnormal{\ref{l:prereppull}}.
Let $J$ be an affine $H$\nd group and $J'$ be its pullback along $X' \to X$.
Then an $(H,J)$\nd scheme is a $J$\nd torsor 
if and only if its pullback along $X' \to X$ is a $J'$\nd torsor.
\end{lem}

\begin{proof}
Let $P$ be an $(H,J)$\nd scheme.
It to be shown
that $P \to X$ is $fpqc$\nd covering if and only if its pullback along $X' \to X$ is,
and that \eqref{e:PJmorph} an isomorphism if and only if its pullback along $X' \to X$ is.
This is clear when \ref{i:prereppullcov} of Lemma~\ref{l:prereppull} holds, and when \ref{i:prereppulltrans}
of Lemma~\ref{l:prereppull} holds
it follows from Lemma~\ref{l:morphpull}\ref{i:morphpullsch} with $K = H$,
because \eqref{e:PJmorph} is a morphism of $H$\nd schemes.
\end{proof}

\begin{lem}\label{l:torspull}
Let $H$, $X'$ and $H'$ be as in Lemma~\textnormal{\ref{l:prereppull}}.
Let $J$ be an affine $H$\nd group and $J'$ be its pullback along $X' \to X$.
Then pullback along $X' \to X$ induces an equivalence from the category of $(H,J)$\nd torsors
to the category of $(H',J')$\nd torsors.
\end{lem}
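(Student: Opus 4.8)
The plan is to deduce the lemma from Lemma~\ref{l:prereppull}. Full faithfulness will be immediate from the full faithfulness of the pullback functor on $H$\nd schemes, and the only extra input needed for essential surjectivity is the observation that an $(H,J)$\nd torsor has \emph{affine} underlying $H$\nd scheme, so that the \emph{equivalence} of Lemma~\ref{l:prereppull}, and not merely its full faithfulness, applies. Indeed any $J$\nd torsor $P$ is affine over $X$: there is an $fpqc$ covering morphism $X'' \to X$ along which $P$ becomes isomorphic to $J$, which is affine over $X$, and affineness of a scheme over $X$ may be tested after an $fpqc$ covering base change; the same holds over $X'$ for $J'$\nd torsors.

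For full faithfulness, recall that a morphism of $(H,J)$\nd torsors $P_1 \to P_2$ is a morphism $q$ of the underlying $H$\nd schemes which is compatible with the $J$\nd actions. Since the action of $J$ on an $(H,J)$\nd scheme is by definition an $H$\nd morphism, the two morphisms $P_1 \times_X J \to P_2$ whose equality expresses this compatibility are morphisms of $H$\nd schemes; by the full faithfulness of the pullback functor on $H$\nd schemes (Lemma~\ref{l:prereppull}), their equality is equivalent to the equality of their pullbacks along $X' \to X$. Combined with the bijectivity of $q \mapsto q_{X'}$ on morphisms of $H$\nd schemes, this shows that $q$ is a morphism of $(H,J)$\nd torsors if and only if $q_{X'}$ is a morphism of $(H',J')$\nd torsors, so the restricted functor is fully faithful.

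For essential surjectivity, let $P'$ be an $(H',J')$\nd torsor. Being a $J'$\nd torsor it is affine over $X'$, so by the equivalence part of Lemma~\ref{l:prereppull} we have $P' = Q_{X'}$ for an affine $H$\nd scheme $Q$, unique up to isomorphism of $H$\nd schemes. Then $Q \times_X J$ is again an affine $H$\nd scheme with $(Q \times_X J)_{X'} = Q_{X'} \times_{X'} J' = P' \times_{X'} J'$, so the $J'$\nd action on $P'$, being an $H'$\nd morphism $(Q \times_X J)_{X'} \to Q_{X'}$, is the pullback $m_{X'}$ of a unique morphism of $H$\nd schemes $m:Q \times_X J \to Q$. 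That $m$ is a right action, that the morphism $Q \times_X J \to Q \times_X Q$ with first component the first projection and second component $m$ is an isomorphism, and that $Q \to X$ is an $fpqc$ covering morphism, are all assertions about morphisms of $H$\nd schemes and about affine schemes over $X$; they may be checked after pullback along $X' \to X$ — using the full faithfulness of Lemma~\ref{l:prereppull} on $H$\nd schemes, its equivalence on affine $H$\nd schemes, the permanence properties of $fpqc$ covering morphisms, and in the transitive case the reduction furnished by Lemma~\ref{l:tranpulliso} — and there they hold because $P'$ is a $J'$\nd torsor. Being a morphism of $H$\nd schemes, $m$ is automatically an $H$\nd morphism, so $Q$ equipped with $m$ is an $(H,J)$\nd torsor with $Q_{X'} \cong P'$.

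The essential surjectivity argument is where the work lies: one has to verify that each defining condition of a $J$\nd torsor descends along $X' \to X$, and in the transitive case this means passing through Lemma~\ref{l:tranpulliso} to reduce to ordinary faithfully flat descent. I expect no conceptual obstacle — the single idea is that a $J$\nd torsor, being locally for the $fpqc$ topology isomorphic to the affine scheme $J$, is affine over $X$ — so that the main effort is the routine bookkeeping of the two cases allowed in the hypothesis of Lemma~\ref{l:prereppull}.
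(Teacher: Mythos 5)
Your proof is correct and follows essentially the same route as the paper: reduce via Lemma~\ref{l:prereppull} to an equivalence on affine $(H,J)$\nd schemes (using that torsors are affine over $X$), then check that the two torsor conditions — the structural morphism being $fpqc$ covering and \eqref{e:PJmorph} being an isomorphism — can be tested after pullback, directly in the $fpqc$ case and via Lemma~\ref{l:tranpulliso} in the transitive case, since these are morphisms of $H$\nd schemes. The paper's proof is just a terser version of the same argument.
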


\begin{proof}
By Lemma~\ref{l:prereppull}, pullback along $X' \to X$ induces an equivalence from
affine $(H,J)$\nd schemes to affine $(H',J')$\nd schemes.
It thus suffices to apply Lemma~\ref{l:schemetorspull}.
\end{proof}

\begin{rem}\label{r:simpletrantorsor}
Let $K$ be an affine groupoid over $X$.
Any $K$\nd scheme has a canonical structure of $(K,K^\mathrm{diag})$\nd scheme, with the right action 
of the point $v$ of $K^\mathrm{diag}$ defined as the action of the point $v^{-1}$ of $K$.
Suppose that $K$ is transitive.
Then a $K$\nd scheme is simply transitive if and only if it is a $K^\mathrm{diag}$\nd torsor.
This can be seen by reducing after base change and pullback using Lemma~\ref{l:schemetorspull} to the case where 
$X = S$.
\end{rem}

\section{Principal bundles}\label{s:prinbun}

This section contains foundational material on principal bundles, and their
connection with transitive affine groupoids.
We fix a base scheme $S$, and $X$ is a scheme over $S$ whose structural morphism is $fpqc$ covering.
Pregroupoids and groupoids are in the category of schemes over $S$, unless otherwise indicated.

Let $G$ be an affine group scheme over $S$.
By a right $G$\nd scheme over $X$ we mean a scheme $P$ over $X$ with a structure of right $G$\nd scheme
on $P$ over $S$ such that the action 
\begin{equation}\label{e:Gact}
P \times_S G \to P
\end{equation}
is a morphism over $X$.
A morphism of right $G$\nd schemes over $X$ is a morphism of right $G$\nd schemes which is a morphism over $X$.
A right $G$\nd scheme over $X$ is the same as a right $(G \times_S X)$\nd scheme.
It is also the same as a right $G$\nd scheme equipped with a morphism of right $G$\nd schemes to 
$X$ with the trivial right action of $G$. 
Pullback along a morphism $X' \to X$ defines a functor from right $G$\nd schemes over $X$ to right $G$\nd schemes over $X'$.

By a \emph{principal $G$\nd bundle over $X$} we mean a right $G$\nd scheme over $X$ which is a 
$(G \times_S X)$\nd torsor.
Any morphism of principal $G$\nd bundles over $X$ is an isomorphism.
A right $G$\nd scheme $P$ over $X$ is a principal $G$\nd bundle over $X$ if and only if $P \to X$
is $fpqc$ covering and the morphism from $P \times_S G$ to $P \times_X P$ over $P$ that sends $(z,g)$
to $(z,zg)$ is an isomorphism.
The pointed set $H^1(X,G \times_S X)$ of isomorphism classes of principal $G$\nd bundles over $X$ will usually be written simply as
\[
H^1(X,G),
\]
and the group $H^0(X,G \times_S X)$ of cross-sections of $G \times_S X$ as $H^0(X,G)$.

We have push forward of principal $G$\nd bundles over $X$ along a morphism $G \to G'$ of affine group schemes over $X$, defined by push forward of $G \times_S X$\nd torsors.
Using push forward we define a functor $H^1(X,-)$ on affine group schemes over $S$,
which factors through affine group schemes over $S$ up to conjugacy.

Let $H$ be a pregroupoid over $X$.
Given a scheme $P$ over $X$ equipped with a structure of $H$\nd scheme and a structure of 
right $G$\nd scheme over $X$,
it is equivalent to require that the action $d_1{}\!^*P \iso d_0{}\!^*P$ of $H$ on $P$ be a morphism of right $G$\nd schemes over 
$H_{[1]}$, 
or that \eqref{e:Gact} be a morphism of $H$\nd schemes for the action of $H$ on $P \times_S G$ through $P$, 
or that the two morphism
\[
H_{[1]} \times_X P \times_S G \to P
\]
defined by the actions by first factoring through $H_{[1]} \times_X P$ or $P \times_S G$ should coincide.
We then say that $P$ is an \emph{$(H,G)$\nd scheme}.
A morphism of $(H,G)$\nd schemes is a morphism of right $G$\nd schemes which is a morphism of $H$\nd schemes.
An $(H,G)$\nd scheme is the same as an $(H,G \times_S X)$\nd scheme with $G \times_S X$ the constant $H$\nd group.

By a \emph{principal $(H,G)$\nd bundle} we mean an $(H,G)$\nd scheme whose underlying right $G$\nd scheme
is a principal $G$\nd bundle over $X$. 
A principal $(H,G)$\nd bundle is the same as an $(H,G \times_S X)$\nd torsor.
The pointed set $H^1_H(X,G \times_S X)$ of isomorphism classes of principal $(H,G)$\nd bundles over $X$ 
will usually be written simply as
\[
H^1_H(X,G),
\]
and the group $H^0_H(X,G \times_S X)$ of $H$\nd invariant cross-sections of $G \times_S X$ as $H^0_H(X,G)$.

We have push forward of principal $(H,G)$\nd bundles along a morphism $G \to G'$ of affine group schemes over $S$, defined by push forward of $(H,G \times_S X)$\nd torsors.
Using push forward we define a functor $H^1_H(X,-)$ on affine group schemes over $S$,
which factors through affine group schemes over $S$ up to conjugacy.

Let $\sV$ be a vector bundle over $X$ of constant rank $n$.
Arguing locally over $X$ shows that the scheme
\[
\underline{\Iso}_X(\sO_X^n,\sV)
\]
over $X$ with points above $x$ the isomorphisms from $(\sO_X^n)_x$ to $\sV_x$ exists, and is a principal $GL_n$\nd bundle over $X$
for the action by composition.
If $\sV$ is a representation of $H$, then $\underline{\Iso}_X(\sO_X^n,\sV)$ is a principal $(H,GL_n)$\nd bundle.
We thus obtain a functor $\underline{\Iso}_X(\sO_X^n,-)$, compatible with pullback, 
from the category of representations of rank $n$ of $H$ and isomorphisms
between them to the category of principal $(H,GL_n)$\nd bundles.
It is a equivalence: for full faithfulness reduce to the case $H = X$ and then to the case of trivial vector bundles,
and for essentially surjectivity reduce by Lemma~\ref{l:prereppull} to the case of a trivial underlying $GL_n$\nd bundle.
Thus we have a bijection from the set of isomorphism classes of representations of $H$ of rank $n$ to
\begin{equation}\label{e:H1HXGLn}
H^1_H(X,GL_n)
\end{equation}
which sends the class of $\sV$ to the class of $\underline{\Iso}_X(\sO_X^n,\sV)$.

Let $P$ be a principal $G$\nd bundle over $X$.
Define the transitive affine groupoid 
\[
\underline{\Iso}_G(P)
\]
over $X$ of $G$\nd isomorphisms of $P$ as follows:
the points of $\underline{\Iso}_G(P)$ in $T$ above the point $(x_1,x_0)$ of $X \times_S X$ in $T$ are the isomorphisms
from $P_{x_0}$ to $P_{x_1}$ of principal $G$\nd bundles over $T$, and the identities and composition of $\underline{\Iso}_G(P)$
are those of isomorphisms of principal $G$\nd bundles.
The existence of $\underline{\Iso}_G(P)$ follows
from Lemma~\ref{l:isorep} with $X \times_S X$, $X \times_S X$ and $(G_{X \times_S X})^\mathrm{op}$
for $S$, $X$ and $K$, and the pullbacks of $P$ along the projections for $Y$ and $Z$.
Formation of $\underline{\Iso}_G(P)$ commutes with base change and pullback.
The $G$\nd automorphisms of $P$ are the cross-sections of $\underline{\Iso}_G(P)^\mathrm{diag}$.
If $H$ is a is a pregroupoid over $X$, then a structure of principal $(H,G)$\nd bundle on $P$ is the same as
a structure 
\begin{equation}\label{e:HIsoGP}
H \to \underline{\Iso}_G(P)
\end{equation}
of groupoid over $H$ on $\underline{\Iso}_G(P)$.
In particular, there is a canonical structure of principal $(\underline{\Iso}_G(P),G)$\nd bundle on $P$,
with \eqref{e:HIsoGP} the identity.
The morphism \eqref{e:HIsoGP} corresponding to a given structure of principal $(H,G)$\nd bundle on $P$ is then
the unique one along which the canonical action restricts to that of $H$.

Let $h:G \to G'$ be a morphism of affine group schemes
over $S$, $P$ be a principal $G$\nd bundle and $P'$ a principal $G'$\nd bundle over $X$,
and $q:P \to P'$ be a morphism over $X$ compatible with $h$.
By Lemma~\ref{l:Hpush} with $u = h \times_S X$ there is a unique morphism
\[
\underline{\Iso}_h(q):\underline{\Iso}_G(P) \to \underline{\Iso}_{G'}(P')
\]
of groupoids over $X$ such that, for the corresponding structure on $P'$ of principal $(\underline{\Iso}_G(P),G')$\nd bundle,
$q$ is an $\underline{\Iso}_G(P)$\nd morphism. 
If $H$ is a pregroupoid over $X$, $P$ is a principal $(H,G)$\nd bundle and $P'$ is a principal $(H,G')$ bundle, 
then by Lemma~\ref{l:Hpush} $q$ is a morphism of $H$\nd schemes if and only if $\underline{\Iso}_h(q)$ is 
a morphism of groupoids over $H$.

Define a category of \emph{affine principal bundles over $X$} as follows:
the objects are pairs $(G,P)$ with $G$ an affine group scheme over $S$ and $P$ a principal $G$\nd bundle over $X$,
and a morphism from $(G,P)$ to $(G',P')$ is a pair $(h,q)$ with $h:G' \to G'$ a morphism of group schemes over $S$
and $q:P \to P'$ a morphism over $X$ compatible with $h$.
Then $\underline{\Iso}_-(-)$ is a functor from this category to transitive affine groupoids over $X$.
It is compatible with base extension and pullback,
but in general is neither full, faithful, nor essentially surjective.

To each $g$ in $G(S)$ there is associated an inner automorphism of $(G,P)$,
which acts as conjugation by $g$ on $G$ and as $g^{-1}$ on $P$.
If $(h,q)$ is an inner automorphism of $(G,P)$ then $\underline{\Iso}_h(q)$ is the identity.
If $q$ is an automorphism of the principal $G$\nd bundle $P$,
then the automorphism $\underline{\Iso}_{1_G}(q)$ of $\underline{\Iso}_G(P)$ is conjugation by the cross-section
$q$ of its diagonal.

Let $K$  be a transitive affine groupoid over $X$.
Reducing by base change and Lemma~\ref{l:torspull} to the case where $K$ is constant shows that any simply transitive $K$\nd scheme
has a structure of principal $(K,G)$\nd bundle for some affine group scheme $G$ over $S$.
By the following lemma, $K$ is thus of the form $\underline{\Iso}_G(P)$ for some $G$ and $P$ if and only 
there exists a simply transitive $K$\nd scheme.

\begin{lem}\label{l:simpleprin}
Let $K$ be a transitive affine groupoid over $X$ and $P$ be a principal $(K,G)$\nd bundle.
Then $P$ is a simply transitive $K$\nd scheme if and only if the action morphism $K \to \underline{\Iso}_G(P)$
is an isomorphism.
\end{lem}

\begin{proof}
By base change along $P \to S$ and pullback, we reduce using Lemma~\ref{l:morphpull}\ref{i:morphpullgpd}
to the case where $X = S$ and $P = G$ has a cross-section, which is clear.
\end{proof}

Let $P$ be a principal $G$\nd bundle over $X$.
We have a morphism of groupoids
\[
f:X \times_{[X]} [P] \to G
\]
which sends $(x,z_1,z_0)$ to the unique point $g$ of $G$ with $z_0 = z_1g$.
By Lemma~\ref{l:prereppull} applied to $P \to X$, there exists for every $G$\nd module $\sV$
a pair, unique up to unique isomorphism, consisting of a quasi-coherent $\sO_X$\nd module
\[
P \times_S^G \sV
\]
and an isomorphism of $(X \times_{[X]} [P])$\nd modules
\[
i_{\sV}:f^*\sV \iso p^*(P \times_S^G \sV),
\]
where $p:X \times_{[X]} [P] \to X$ is the projection.
Thus $P \times_S^G \sV$ is the usual associated quasi-coherent $\sO_X$\nd module 
defined by descent from $P$ to $X$ by identifying the points $(zg,v)$ and $(z,gv)$ of the pullback of $\sV$ onto $P$.
It is functorial in $P$, $G$ and $\sV$, and its formation is compatible with base change and pullback.
We have $P \times_S^G \sO_S = \sO_X$, with $i_{\sO_S}$ the identity.

Given an affine $G$\nd scheme $Y$, there exists similarly a pair, unique up to unique isomorphism,
consisting of a scheme $P \times_S^G Y$ affine over $X$ and an isomorphism $i_Y$ of $(X \times_{[X]} [P])$\nd schemes
from $f^*Y$ to $p^*(P \times_S^G Y)$.
The diagram
\[
\xymatrix{
P \times_S G \times_S Y \ar@<1ex>[r] \ar@<-1ex>[r] & P \times_S Y \ar[r] & P \times_S^G Y
}
\]
with the right arrow defined by $i_Y$ and the left arrows by the actions of $G$ on $P$ and $Y$
is a coequaliser diagram which is preserved by pullback, because the right arrow 
is $fpqc$ covering and the square with two sides the right arrow and two sides the left arrows is cartesian. 
Again $P \times_S^G Y$ is functorial in $P$, $G$ and $Y$, and its formation is compatible with base change and pullback.

Let $G_0$ be an affine group subscheme of $G$ such that the quotient $G/G_0$ exists and is affine.
Applying $P \times_S^G -$ to $G \to G/G_0$ shows that the quotient $P/G_0$ of $P$ exists, i.e.\ that there 
exists an $fpqc$ covering morphism
\begin{equation}\label{e:Gzeroquot}
P \to P/G_0
\end{equation}
such that the square with two sides \eqref{e:Gzeroquot} and the other two the projection and the action 
from $P \times_S G_0$ to $P$ is cartesian.
Then $P$ is a principal $G_0$\nd bundle over $P/G_0$ with 
$\underline{\Iso}_{G_0}(P) = \underline{\Iso}_G(P) \times_X (P/G_0)$.

Suppose that $P$ has a structure of $(H,G)$\nd bundle for a pregroupoid $H$ over $X$.
Then $f$ factors through the morphism of pregroupoids 
\[
f_H:H \times_{[X]} [P] \to G 
\]
that sends $(h,z_1,z_0)$ to the unique point $g$ of $G$ with $hz_0 = z_1g$.
The action of $H$ on $P$ defines a structure of $H$\nd module on $P \times_S^G \sV$.
It is the unique such structure for which $i_{\sV}$ is an isomorphism of $(H \times_{[X]} [P])$\nd modules
\[
f_H{}\!^*\sV \iso p_H{}\!^*(P \times_S^G \sV),
\]
where $p_H:H \times_{[X]} [P] \to H$ is the projection.
We thus have a functor $P \times_S^G -$ from $G$\nd modules to $H$\nd modules,
and an isomorphism from $f_H{}\!^*$ to $p_H{}\!^*(P \times_S^G -)$.
If $H$ is transitive affine and $P$ is a simply transitive $H$\nd scheme, 
and in particular if $H = \underline{\Iso}_G(P)$, then $f_H$ induces
an isomorphism to $G \times_S [P]$, so that by Lemma~\ref{l:prereppull} $P \times_S^G -$ is an equivalence.
Restricting to representations gives a functor
\begin{equation}\label{e:assocvecbun}
P \times_S^G -:\Mod_G(S) \to \Mod_H(X)
\end{equation}
which is an equivalence when $H = \underline{\Iso}_G(P)$.

Similarly we have a functor $P \times_S -$ from affine $G$\nd schemes to affine $H$\nd schemes and an isomorphism
from $f_H{}\!^*$ to $p_H{}\!^*(P \times_S^G -)$, with $P \times_S^G -$ an equivalence when $H$ is transitive affine and
$P$ is a simply transitive $H$\nd scheme.

If $G'$ is an affine group scheme over $S$ and $Y$ is a principal $(G,G')$\nd bundle, then $P \times_S Y$
has a structure of principal $(H,G')$\nd bundle.
We then have a functor $- \times_S^G Y$ from principal $(H,G)$\nd bundles to principal $(H,G')$\nd bundles.
We have $P \times_S^G G = P$, with $i_G$ defined by the action of $G$ on $P$, and there is a natural isomorphism
\[
(P \times_S^G Y) \times_S^{G'} Y' \iso P \times_S^G (Y \times_S^{G'} Y').
\]
A principal $(G,G')$\nd bundle $Q$ is simply transitive as a $G$\nd scheme if and only it is a principal
$(G',G)$\nd bundle for the right action of $G$ and left action of $G'$ on $Q$ given by the inverse involutions.
When this is so, $- \times_S^G Q$ is an equivalence, with quasi-inverse $- \times_S^{G'} Q$ where $Q$ is regarded as a principal
$(G',G)$\nd bundle.

The $\underline{\Iso}_G(P)$\nd scheme $P \times_S^G Y$ is simply transitive 
if and only if $Y$ is a simply transitive $G$\nd scheme, because $f_{\underline{\Iso}_G(P)}$ induces an isomorphism to 
$G \times_S [P]$.
Since an isomorphism $\underline{\Iso}_G(P) \iso \underline{\Iso}_{G'}(P')$ of groupoids over $X$ is the same as a simply transitive action of 
$\underline{\Iso}_G(P)$ on the principal $G'$\nd bundle $P'$, it follows that $\underline{\Iso}_G(P)$ and 
$\underline{\Iso}_{G'}(P')$ are isomorphic over $H$ if and only if $P$ and $P'$ are twists of one another,
i.e.\ $P'$ is isomorphic as a principal $(H,G')$\nd bundle to $P \times_S ^G Q$ for a principal $(G,G')$\nd bundle $Q$
which is simply transitive as a $G$\nd scheme.

Given a section $x$ of $X$ over $S$, define as follows a category of \emph{affine principal bundles over $X$ 
pointed above $x$}.
The objects are triples $(G,P,z)$ with $(G,P)$ an affine principal bundle over $X$ and $z$ a section of $P$ over $S$ above $x$. 
A morphism from $(G,P,z)$ to $(G',P',z')$ is a morphism $(h,q)$ from $(G,P)$ to $(G',P')$ for which
$q(z) = z'$. 
We have a functor
\begin{equation}\label{e:GPpequiv}
(G,P,z) \mapsto \underline{\Iso}_G(P)
\end{equation}
from this category to the category of transitive affine groupoids over $X$.
It is an equivalence,
with quasi-inverse the functor 
\[
K \mapsto (K_{x,x},K_{-,x},1_x)
\]
where $K_{x,x}$ is the fibre of $K$ above
$(x,x)$, and $K_{-,x}$ is the fibre of $d_1$ above $x$, regarded as a principal $K_{x,x}$\nd bundle
with structural morphism $d_0$ and action of $K_{x,x}$ by composition:
we have a natural isomorphism
\begin{equation}\label{e:KKKiso}
K \iso \underline{\Iso}_{K_{x,x}}(K_{-,x}),
\end{equation}
given by the structure of principal $(K,K_{x,x})$\nd bundle on $K_{-,x}$,
as follows from Lemma~\ref{l:morphpull}\ref{i:morphpullgpd} by taking fibres above $(x,x)$,
while we have a natural isomorphism
\[
(\underline{\Iso}_G(P)_{x,x},\underline{\Iso}_G(P)_{-,x},1_x) \iso (G,P,z)
\]
given by evaluating at $z$.
If $H$ is a pregroupoid over $X$, we have a similar category with objects $(G,P,z)$
where $P$ is a principal $(H,G)$\nd bundle, 
and \eqref{e:GPpequiv} defines an equivalence from it to transitive affine groupoids over $H$.

Let $H$ be a pregroupoid over $X$.
Define as follows a category of \emph{affine principal bundles over $(H,x)$ up to conjugacy}.
The objects are those pairs $(G,P)$ with $G$ an affine group scheme over $S$ and $P$ a principal
$(H,G)$\nd bundle for which $P$ has a section over $S$ above $x$.
A morphism from $(G,P)$ to $(G',P')$ is a morphism $j:G \to G'$ up to conjugacy of affine groups schemes over $S$
such that the push forward of $P$ along any representative of $j$ is $(H,G')$\nd isomorphic to $P'$. 
We may regard $\underline{\Iso}_-(-)$ as a functor from this category to transitive affine groupoids over $H$ up to conjugacy by assigning to $j$ the class of 
$\underline{\Iso}_h(q)$ with $h$ a representative of $j$ and $q:P \to P'$ an $H$\nd morphism
compatible with $h$.

\begin{lem}\label{l:IsoHequivpt}
Let $H$ be a pregroupoid over $X$ and $x$ be a section of $X$ over $S$.
Then the functor $\underline{\Iso}_-(-)$ from the category of affine principal bundles over $(H,x)$ up to conjugacy to 
the category of transitive affine groupoids over $H$ up to conjugacy is an equivalence.
\end{lem}

\begin{proof}
The fullness and essential surjectivity follow from the equivalence \eqref{e:GPpequiv}.

Let $h_1$ and $h_2$ be morphisms from $G$ to $G'$ such that morphisms $q_1$ and $q_2$ exist from $P$ to $P'$
compatible respectively with $h_1$ and $h_2$ for which $\underline{\Iso}_{h_1}(q_1)$ is conjugate to 
$\underline{\Iso}_{h_1}(q_1)$ 
by a cross-section $t$ of $\underline{\Iso}_{G'}(P')^\mathrm{diag}$.
Then
\[
\underline{\Iso}_{h_2}(q_2) = \underline{\Iso}_{h_1}(t \circ q_1).
\]
Fix sections $z$ and $z'$ over $S$ of $P$ and $P'$ above $x$.
Then the composites of appropriate inner automorphisms of $(G',P')$
with $(h_2,q_2)$ and $(h_1,t \circ q_1)$ are morphisms from $(G,P,z)$ to $(G',P',z')$ whose images under 
\eqref{e:GPpequiv} coincide, and which thus themselves coincide.
Hence $h_1$ and $h_2$ are conjugate.
This proves the faithfulness.
\end{proof}

\begin{lem}\label{l:relgrpdequiv}
Let $H$, $X'$ and $H'$ be as in Lemma~\textnormal{\ref{l:prereppull}}.
Then pullback along $X' \to X$ induces an equivalence from the category of transitive affine groupoids over $H$ to the category of 
transitive affine groupoids over $H'$. 
\end{lem} 

\begin{proof}
To prove the full faithfulness, we reduce to the case where $X'$ has a section over $S$ 
by base change along an appropriate $S' \to S$ and descent along 
\[
X_{S'} \times_{S'} X_{S'} \to X \times_S X
\]
for morphisms.
Then by the equivalence \eqref{e:GPpequiv} for
principal bundles with an action of $H$, 
and the similar equivalence for $H'$,
the full faithfulness follows from Lemma~\ref{l:prereppull}.

By the full faithfulness, if $K'$ is a transitive affine groupoid over $H'$,
then a pair consisting of a transitive affine groupoid $K$ over $H$ and an isomorphism 
from $K'$ to $K \times_{[X]} [X']$ over $H'$ is unique up to unique isomorphism if it exists.
To prove the essential surjectivity, we thus reduce by base change and descent for affine schemes 
and for morphisms of schemes to the case where $X'$ has a section over $S$.
By the equivalence \eqref{e:GPpequiv}, 
it then suffices to apply Lemma~\ref{l:torspull} with $J$ constant.
\end{proof}

Let $K$ be a groupoid over $X$.
Call a morphism $K^\mathrm{diag} \to J$ of $K$\nd groups \emph{compatible with conjugation} if the two actions 
\[
K^\mathrm{diag} \times_X J \to J
\] 
of $K^\mathrm{diag}$ on $J$ given by restricting from $K$ to $K^\mathrm{diag}$ and by restricting the action by conjugation of $J$ along $K^\mathrm{diag} \to J$ coincide. 
Any morphism $K^\mathrm{diag} \to J$ of $K$\nd groups which is $fpqc$ covering is compatible with conjugation.
We obtain a functor from the category of groupoids over $K$ to the category of those $K$\nd morphisms from $K^\mathrm{diag}$
to a $K$\nd group which are compatible with conjugation,
by assigning to $K'$ the $K$\nd morphism
$K^\mathrm{diag} \to K'{}^\mathrm{diag}$ defined by restriction, with the action of $K$ on $K'{}^\mathrm{diag}$
that defined by restriction of the action of $K'$ by conjugation.
It is an equivalence when $X = S$.
Suppose that $K$ is transitive affine, and let $X' \to X$ be a morphism with $X' \to S$
$fpqc$ covering.
Then by Lemma~\ref{l:prereppull}, a morphism $K^\mathrm{diag} \to J$ of affine $K$\nd groups is compatible with conjugation 
if and only if the morphism of $(K \times_{[X]} [X'])$\nd groups obtained from it by pullback along $X' \to X$ is, because 
the two actions of $K^\mathrm{diag}$ on $J$ are both morphisms of $K$\nd schemes.

\begin{lem}\label{l:Kgrpdequiv}
Let $K$ be a transitive affine groupoid over $X$.
Then restriction to the diagonal induces an equivalence from the category of transitive affine
groupoids over $K$ to the category of those $K$\nd morphisms
from $K^\mathrm{diag}$ to an affine $K$\nd group which are compatible with conjugation.
\end{lem}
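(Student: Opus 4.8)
The strategy is to reduce everything to the already-established case $X = S$ by means of the equivalence \eqref{e:GPpequiv} together with faithfully flat descent, exactly as in the proof of Lemma~\ref{l:relgrpdequiv}. First I would observe that the functor in question is the composite of two functors: the functor from transitive affine groupoids over $K$ to the category of groupoids over $K$ (the forgetful one, which is fully faithful since a transitive affine groupoid over $K$ is just such a groupoid together with a property), followed by the functor from groupoids over $K$ to $K$\nd morphisms $K^\mathrm{diag} \to J$ compatible with conjugation described in the paragraph preceding the statement. So the content to prove is: restriction to the diagonal, $K' \mapsto (K^\mathrm{diag} \to K'{}^\mathrm{diag})$, is an equivalence onto the subcategory of those compatible-with-conjugation morphisms $K^\mathrm{diag} \to J$ for which $J$ is \emph{affine} and the resulting groupoid over $K$ is transitive affine.

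For full faithfulness I would first reduce, by base extension along a suitable $fpqc$ covering $S' \to S$ and faithfully flat descent of morphisms of groupoids (Lemma~\ref{l:prereppull} and its consequences on transitive affine groupoids, together with Lemma~\ref{l:relgrpdequiv}), to the case where $X$ has a section $x$ over $S$. Using the isomorphism \eqref{e:KKKiso} I then write each transitive affine groupoid over $K$ as $\underline{\Iso}_G(P)$ for a principal $(K,G)$\nd bundle $P$ trivial above $x$, and, as in the proof of Lemma~\ref{l:relgrpdequiv}, a morphism over $K$ between two such is $\underline{\Iso}_h(q)$ for a unique pair $(h,q)$. On the diagonal side, the fibre of $\underline{\Iso}_G(P)^\mathrm{diag}$ above $x$ is $G$ (after pulling back along $x$), and compatibility with conjugation is exactly the statement that the action of $K^\mathrm{diag}$ on $K'{}^\mathrm{diag}$ by restriction matches the conjugation action; since $K$ is transitive, by Lemma~\ref{l:tranpulliso} this condition can be checked after pullback along $x$, i.e.\ on the group $G$. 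Thus morphisms of $K$\nd groups $K^\mathrm{diag} \to G$ compatible with conjugation correspond, after pullback along $x$, to morphisms of $G$\nd groups $G \to G'$ in the sense of the $X = S$ case, and the full faithfulness follows from the stated equivalence when $X = S$ together with the descent of Lemma~\ref{l:prereppull}.

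For essential surjectivity, given a $K$\nd morphism $u:K^\mathrm{diag} \to J$ compatible with conjugation, with $J$ affine, I must produce a transitive affine groupoid $K'$ over $K$ with $K'{}^\mathrm{diag} \cong J$ compatibly. By the full faithfulness just proved, such a $K'$ is unique up to unique isomorphism if it exists, so the construction descends, and I may again reduce by faithfully flat descent to the case where $X$ has a section $x$ over $S$; pulling back along $x$ further reduces to $X = S$. In that case the $X = S$ equivalence stated in the excerpt directly produces the desired groupoid over $K = G$, namely (via \eqref{e:GPpequiv}) of the form $\underline{\Iso}_{G'}(P')$ with $G' = $ the fibre of $J$ and $P'$ the principal $(G,G')$\nd bundle obtained by pushing $G$ forward along $u$; that $P'$ is trivial above $x$ because over $S$ it has the evident section. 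The transitive affine groupoid over $K$ obtained by ascending back up the descent then restricts to $J$ on the diagonal.

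The main obstacle is the verification that the descent arguments genuinely apply at each reduction: one must check that "transitive affine groupoid over $K$", "$K$\nd group with affine underlying scheme", and "compatible with conjugation" are all preserved and reflected by pullback along $X' \to X$ (resp.\ base change along $S' \to S$), which is precisely what Lemmas~\ref{l:prereppull}, \ref{l:tranpulliso} and \ref{l:relgrpdequiv} are arranged to give, but assembling them in the right order — descending the morphism $u$ to define a \emph{candidate} $K'$ globally and then checking its diagonal is $J$ on the nose — is the delicate bookkeeping step. Everything else is the formal transport through \eqref{e:GPpequiv} and \eqref{e:KKKiso}.
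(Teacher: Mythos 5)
Your proposal is correct and follows essentially the same route as the paper: reduce by base change along an $fpqc$ covering $S' \to S$ and faithfully flat descent to the case where $X$ has a section over $S$, pull back along that section (Lemmas~\ref{l:prereppull}, \ref{l:tranpulliso} and \ref{l:relgrpdequiv} with $X' = S$) to the case $X = S$, prove full faithfulness first, and then use the resulting uniqueness up to unique isomorphism to make essential surjectivity descend. The only difference is presentational: you unwind parts of Lemma~\ref{l:relgrpdequiv} via \eqref{e:KKKiso} and $\underline{\Iso}_h(q)$ where the paper simply cites that lemma.
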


\begin{proof}
When $X$ has a section over $S$, it is enough to apply Lemmas~\ref{l:prereppull} and \ref{l:relgrpdequiv} with $X' = S$.
The full faithfulness follows in general by reducing after base change and descent to the case where $X$ has a section.
By the full faithfulness, if a morphism from $K^\mathrm{diag}$ of affine $K$\nd groups extends to a morphism from $K$ of transitive affine groupoids over $X$, 
it does so uniquely up to unique isomorphism.
The essential surjectivity thus also follows by reducing to the case where $X$ has a section.
\end{proof}

\section{Principal bundles under a groupoid}\label{s:bunundergpd}

In this section we consider a notion of principal bundle under a transitive affine groupoid.
We fix a base scheme $S$, and $X$ and $\overline{S}$ are schemes over $S$ whose structural morphisms are
$fpqc$ covering.
Pregroupoids and groupoids are in the category of schemes over $S$ unless otherwise indicated.

Let $F$ be a transitive affine groupoid over $\overline{S}$.
By a \emph{right $F$\nd scheme over $X$} we mean a scheme $P$ over $X \times_S \overline{S}$ together with
a structure of right $F$\nd scheme on $P$ such that the action 
\[
P \times_{\overline{S}} F \to P
\]
is a morphism over $X$.
A morphism of right $F$\nd schemes over $X$ is a morphism of right $F$\nd schemes which is a morphism over $X$.
A right $F$\nd scheme over $X$ is the same as a right $F_X$\nd scheme,
with $F_X$ the transitive affine groupoid in schemes over $X$ with underlying groupoid 
$X \times_S F$ in schemes over $S$.
Restriction to $F^\mathrm{diag}$ defines on any right $F$\nd scheme over $X$ an underlying structure of 
right $F^\mathrm{diag}$\nd scheme over $X \times_S \overline{S}$ in the category of schemes over $\overline{S}$.

By a \emph{principal $F$\nd bundle over $X$} we mean a right $F$\nd scheme over $X$ whose underlying
right $F^\mathrm{diag}$\nd scheme over $X \times_S \overline{S}$ in the category of schemes over $\overline{S}$ is a principal $F^\mathrm{diag}$\nd bundle over 
$X \times_S \overline{S}$ in the sense of Section~\ref{s:prinbun}.
When $\overline{S} = S$, so that $F$ is an affine group scheme over $S$, this notion of principal $F$\nd bundle
over $X$ reduces to that of Section~\ref{s:prinbun}.
Any morphism of principal $F$\nd bundles over $X$ is an isomorphism.
By Remark~\ref{r:simpletrantorsor},
a right $F$\nd scheme over $X$ is a principal $F$\nd bundle over $X$ if and only if it is simply transitive 
as an $(F_X)^\mathrm{op}$\nd scheme in the category of schemes over $X$.

Pullback along a morphism $X' \to X$ with $X' \to S$ $fpqc$ covering defines a functor from 
right $F$\nd schemes over $X$
to right $F$\nd schemes over $X'$, and from principal $F$\nd bundles over $X$ to principal $F$\nd bundles over $X'$. 

If $F'$ is the pullback of $F$ along a morphism $\overline{S}{}' \to \overline{S}$ with $\overline{S}{}' \to S$ $fpqc$ covering,
then pullback along $\overline{S}{}' \to \overline{S}$ defines a functor from right $F$\nd schemes over $X$ to right 
$F'$\nd schemes over $X$, and from principal $F$\nd bundles over $X$ to principal $F'$\nd bundles over $X$.

\begin{lem}\label{l:gpdprinGG'}
Let $F$ be a transitive affine groupoid over $\overline{S}$, and
$\overline{S}{}' \to \overline{S}$ be a morphism with $\overline{S}{}' \to S$ $fpqc$ covering.
Denote by $F'$ the pullback of $F$ along $\overline{S}{}' \to \overline{S}$.
Then pullback along $\overline{S}{}' \to \overline{S}$ induces an equivalence from the category of principal
$F$\nd bundles over $X$ (resp.\ right $F$\nd schemes over $X$) to the category of principal $F'$\nd bundles over $X$
(resp.\ right $F'$\nd schemes over $X$).
\end{lem}

\begin{proof}
By Lemma~\ref{l:prereppull} with $X$, $X \times_S \overline{S}$, $X \times_S \overline{S}{}'$ and $(F_X)^\mathrm{op}$
for $S$, $X$, $X'$ and $H$, pullback induces an equivalence from 
affine right $F_X$\nd schemes to affine right $F'{}\!_X$\nd schemes.
Since an $(F_X)^\mathrm{op}$\nd scheme 
is simply transitive if and only if its pullback is a simply
transitive $(F'{}\!_X)^\mathrm{op}$\nd scheme, the result follows.
\end{proof}

Define as follows a category of \emph{affine principal bundles over $(X,\overline{S})$}.
The objects are pairs $(F,P)$ with $F$ a transitive affine
groupoid  over $\overline{S}$ and $P$ a principal $F$\nd bundle over $X$.
A morphism from $(F,P)$ to $(F',P')$ 
is a pair $(h,q)$ with $h$ a morphism from $F$ to $F'$ over $\overline{S}$ and $q$ a morphism from $P$ to $P'$ 
over $X \times_S \overline{S}$ compatible with $h$,
in the sense that $q$ is a morphism of right $F$\nd schemes over $X$ from $P$ to the restriction of $P'$ along $h$,
or equivalently that the action of $F'$ composed  with $q \times h$ 
coincides with $q$ composed with the action of $F$.

\begin{lem}\label{l:gpdpush}
Let $(F,P)$ be an affine principal bundle over $(X,\overline{S})$.
Then the forgetful functor from the category of affine principal bundles over $(X,\overline{S})$
equipped with a morphism from $(F,P)$ to the category of transitive affine groupoids over $\overline{S}$
equipped with a morphism from $F$ is a surjective equivalence.
\end{lem}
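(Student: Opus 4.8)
The plan is to check that the forgetful functor, which I denote $U$, is faithful, full and surjective on objects; these three properties together make it a surjective equivalence. I write an object of the target as a pair $(F,h)$ with $h\colon G\to F$ a morphism of transitive affine groupoids over $\overline S/S$, and an object of the source as $((F,P'),(h,q))$ with $q\colon P\to P'$ compatible with $h$, so that $U$ carries the latter to $(F,h)$. The substantive point is surjectivity on objects: given $(F,h)$ one must construct a principal $F$\nd bundle $P'$ over $X$ and a morphism $q\colon P\to P'$ compatible with $h$, i.e.\ a push forward of $P$ along $h$.

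To produce this push forward I would first push forward the underlying torsor. Restriction to diagonals turns $h$ into a morphism $h^{\mathrm{diag}}\colon G^{\mathrm{diag}}\to F^{\mathrm{diag}}$ of affine group schemes over $\overline S$, hence, after pullback along $X\times_S\overline S\to\overline S$, of affine group schemes over $X\times_S\overline S$; Lemma~\ref{l:push} then gives an $(F^{\mathrm{diag}})_{X\times_S\overline S}$\nd torsor $\overline P$ over $X\times_S\overline S$ and a morphism $q\colon P\to\overline P$ compatible with it. It remains to equip $\overline P$ with a structure of right $F$\nd scheme over $X$ inducing this torsor structure, making $q$ compatible with $h$, and making $\overline P$ a principal $F$\nd bundle. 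The $F$\nd action is obtained by transporting the $G$\nd action on $P$ along $q$: since $\overline P$ is a quotient of the $fpqc$ covering $P\times_{\overline S}F^{\mathrm{diag}}\to\overline P$, on which all the relevant morphisms are visible, the action morphism and its groupoid associativity can be written down and checked by faithfully flat descent, using that $h$ is a morphism of groupoids. It may be organisationally cleaner to reduce first: pulling $X$ back along $P\to X$ and using the pullback equivalences of Lemmas~\ref{l:prereppull} and \ref{l:relgrpdequiv} (note that the target category does not involve $X$ and so is unchanged) reduces to the case where the underlying torsor of $P$ is trivial, where a trivialising section together with the compatibility condition for a morphism of groupoids makes the $F$\nd structure on $\overline P$ explicit; and a further pullback making $X$ map $fpqc$-coveringly to $\overline S$, via Lemmas~\ref{l:gpdprindiag}, \ref{l:gpdprinGG'} and \ref{l:tranaffprinff}, reduces the structure groupoid to its diagonal group scheme over $\overline S$.

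Given surjectivity on objects, fullness and faithfulness are essentially formal, via the uniqueness clause of Lemma~\ref{l:push}: every object of the source is isomorphic to one in which $q$ exhibits $P'$ as a push forward of $P$ along $h^{\mathrm{diag}}$, so it suffices to treat those. For such objects a morphism of the source lying over a fixed $a\colon(F_1,h_1)\to(F_2,h_2)$ is a morphism $b\colon P_1'\to P_2'$ compatible with $a$ and satisfying $b\circ q_1=q_2$; since $P_1'$ is a quotient of $P\times_{\overline S}F_1^{\mathrm{diag}}$, equivariance forces $b$ to be determined by $b\circ q_1=q_2$, which gives faithfulness, while conversely the prescription $b(p\cdot f)=q_2(p)\cdot a^{\mathrm{diag}}(f)$ is well defined (as $a^{\mathrm{diag}}\circ h_1^{\mathrm{diag}}=h_2^{\mathrm{diag}}$) and compatible with the full action of $F_1$ (as $a$ respects the groupoid structures), which gives fullness. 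The main obstacle is thus the first step, promoting the torsor push forward of Lemma~\ref{l:push} to a push forward of principal $F$\nd bundles compatible with the groupoid actions; the reductions via Lemmas~\ref{l:prereppull}, \ref{l:relgrpdequiv}, \ref{l:gpdprindiag} and \ref{l:tranaffprinff} to the group-scheme situation of Section~\ref{s:prinbun} are the natural way to carry it out.
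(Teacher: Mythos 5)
Your overall division of labour is sensible, and the full-faithfulness half goes through much as you say, via the uniqueness clause of Lemma~\ref{l:push} and descent of morphisms. The gap is in surjectivity on objects, in both of your routes, and it sits exactly where the difficulty of the lemma lies. In route (a), descent along $P\times_{\overline S}F^{\mathrm{diag}}\to\overline P$ cannot by itself produce the action of the whole groupoid $F$ on $\overline P$: a point of $F$ joining two \emph{distinct} points of $\overline S$ is not visible on $P\times_{\overline S}F^{\mathrm{diag}}$ through the $G$\nd action on $P$ and the $F^{\mathrm{diag}}$\nd translations alone, since it need not lie in the image of $h$ composed with a diagonal element over the given test scheme. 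To define the action you must in addition use the transitivity of $G$ to factor such a point fpqc\nd locally as $h(g)\circ f$ with $f$ in $F^{\mathrm{diag}}$, set $(q(p)f_0)\cdot\varphi := q(pg)\cdot\bigl(h(g)^{-1}f_0\varphi\bigr)$ or the like, and then verify independence of all the choices, associativity, and that the result is a morphism over $X$; this is precisely what ``can be written down and checked by faithfully flat descent'' hides.

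In route (b) the cited reductions do not do what you want. Lemma~\ref{l:prereppull} and Lemma~\ref{l:relgrpdequiv} compare structures over $X$ with structures over $X'$ \emph{equipped with descent data} (an action of the pulled-back pregroupoid), and Lemma~\ref{l:relgrpdequiv} in any case concerns groupoids over $X$, not principal $F$\nd bundles for a groupoid $F$ over $\overline S$. Pullback along the fpqc covering $P\to X$ is certainly not an equivalence on principal $F$\nd bundles over $X$, and asserting that it becomes one on the comma categories under $(G,P)$ is essentially the statement being proved, so the reduction as written is circular. What is missing is the descent argument itself: construct the push forward after base change or pullback, use the already-established uniqueness to build a descent datum on it, and invoke effectivity of fpqc descent for schemes affine over $X\times_S\overline S$, then check the bundle condition locally. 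That is how the paper's proof runs, in a slightly different order: it base changes along $\overline S\to S$ and descends (full faithfulness first, then surjectivity on objects) to reduce to the case where $\overline S$ has a section over $S$, then pulls back along that section using Lemmas~\ref{l:gpdprinGG'} and~\ref{l:tranaffprinff} to reduce to $\overline S=S$, where the statement is the push-forward formalism of Section~\ref{s:prinbun}. Your argument becomes correct if you replace the appeal to ``pullback equivalences'' by this explicit descent step (or simply follow the base-change-along-$\overline S\to S$ route and quote the group-scheme case at the end).
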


\begin{proof}
By base change along $\overline{S} \to S$ and descent, we reduce first the full faithfulness
and then the surjectivity on objects to the case where $\overline{S}$ has a cross-section over $S$.
By Lemma~\ref{l:gpdprinGG'} with $\overline{S}{}' = S$,
we then reduce to the case where $\overline{S} = S$, which follows from Lemma~\ref{l:push}.
\end{proof}

Given a principal $F$\nd bundle $P$ over $X$ and a morphism $h:F \to F'$ of transitive affine groupoids
over $\overline{S}$, there exists by Lemma~\ref{l:gpdpush} a pair, unique up to unique $F'$\nd isomorphism,
consisting of a principal $F'$\nd bundle $P'$ and a morphism $P \to P'$ compatible with $h$.
We say that $P'$ is the \emph{push forward of $P$ along $h$}.
The push forward of $P$ along an inner automorphism of $F$ is $P$ itself: 
for every cross-section $v$ of $F^\mathrm{diag}$ we have an inner automorphism
of $(F,P)$  which acts by conjugation with $v$ on $F$ and as $v^{-1}$ on $P$.

Let $H$ be a pregroupoid over $X$.
Given a right $F$\nd scheme $P$ over $X$ equipped with a structure of $H$\nd scheme,
it is equivalent to require that the action $d_1{}\!^*P \iso d_0{}\!^*P$ of $H$ on $P$ be a morphism of right $F$\nd schemes over 
$H_{[1]}$, 
or that the two morphisms
\[
H_{[1]} \times_X P \times_{\overline{S}} F \to P
\]
defined by the actions by first factoring through $H_{[1]} \times_X P$ or $P \times_{\overline{S}} F$ should coincide.
We then say that $P$ is an $(H,F)$\nd scheme.
A morphism of $(H,F)$\nd schemes is a morphism of right $F$\nd schemes which is a morphism of $H$\nd schemes.
By a \emph{principal $(H,F)$\nd bundle} we mean an $(H,F)$\nd scheme which is a principal $F$\nd bundle over $X$.

\begin{lem}\label{l:gpdHpush}
Let $H$ be a pregroupoid over $X$ and $h:F \to F'$ be a morphism of transitive affine groupoids over $\overline{S}$.
Let $P$ be a principal $(H,F)$\nd bundle, $P'$ be a principal $F'$\nd bundle over $X$,
and $q:P \to P'$ be a morphism over $X \times_S \overline{S}$ compatible with $h$.
Then there is a unique structure of $H$\nd scheme on $P'$ such that $P'$ is a principal $(H,F')$\nd bundle
and $q$ is a morphism of $H$\nd schemes.
\end{lem}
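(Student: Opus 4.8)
The plan is to prove this exactly as the groupoid analogue of Lemma~\ref{l:Hpush}: transport the $H$\nd structure from $P$ to $P'$ along $q$, using the strong uniqueness of the push forward of a principal bundle along a morphism of transitive affine groupoids supplied by Lemma~\ref{l:gpdpush} and the remark following it.

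Write $\alpha : d_1^*P \iso d_0^*P$ for the action of $H$ on $P$; it is an isomorphism over $H_{[1]}$ which, since $P$ is a principal $(H,F)$\nd bundle, is a morphism of right $F$\nd schemes over $X$. Pulling $P$, $P'$ and $q$ back along $d_0$ and $d_1$ from $H_{[1]}$ to $X$, the $d_i^*P$ are principal $F$\nd bundles and the $d_i^*P'$ principal $F'$\nd bundles over $H_{[1]}$, and $d_i^*q$ is compatible with $h$. The composite $d_0^*q \circ \alpha : d_1^*P \to d_0^*P'$ is then again compatible with $h$, being an $F$\nd isomorphism followed by a morphism compatible with $h$. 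Hence $d_1^*P'$ via $d_1^*q$, and $d_0^*P'$ via $d_0^*q \circ \alpha$, are both push forwards of $d_1^*P$ along $h$, so there is a unique $F'$\nd isomorphism $\beta : d_1^*P' \iso d_0^*P'$ over $H_{[1]}$ with $\beta \circ d_1^*q = d_0^*q \circ \alpha$.

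Next I would verify the cocycle identity $\beta_1 = \beta_0 \circ \beta_2$ over $H_{[2]}$. Pulling the relation $\beta \circ d_1^*q = d_0^*q \circ \alpha$ back along the three face maps from $H_{[2]}$ to $H_{[1]}$ and using $\alpha_1 = \alpha_0 \circ \alpha_2$, one checks by a diagram chase that $\beta_1$ and $\beta_0 \circ \beta_2$ are characterised by the same compatibility with the pullbacks of $q$, and so agree by uniqueness of the push forward over $H_{[2]}$. Thus $\beta$ is a structure of $H$\nd scheme on $P'$; it is a morphism of right $F'$\nd schemes over $X$ by construction, so $P'$ with this structure is a principal $(H,F')$\nd bundle, and $\beta \circ d_1^*q = d_0^*q \circ \alpha$ says precisely that $q$ is a morphism of $H$\nd schemes. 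For uniqueness, any structure $\beta'$ of $H$\nd scheme on $P'$ making it a principal $(H,F')$\nd bundle for which $q$ is an $H$\nd morphism is an $F'$\nd isomorphism obeying $\beta' \circ d_1^*q = d_0^*q \circ \alpha$, hence equal to $\beta$.

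The one point requiring care — and the main obstacle — is that Lemma~\ref{l:gpdpush} is stated under the running hypothesis that the base maps to $S$ by an $fpqc$ covering morphism, whereas $H_{[1]} \to S$ and $H_{[2]} \to S$ need not be $fpqc$ covering. I would therefore first record that the push forward of a principal $F$\nd bundle along $h$ is unique up to unique isomorphism over an \emph{arbitrary} scheme over $S$: this follows by pulling back along an $fpqc$ covering morphism on which the bundle trivialises and reducing to trivial bundles, exactly as in the proof of Lemma~\ref{l:push}\ref{i:pushunique}, or alternatively by base changing to make the structural morphism $fpqc$ covering and then applying Lemma~\ref{l:gpdpush}. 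Everything else is a routine diagram chase with the pullback isomorphisms.
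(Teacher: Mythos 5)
Your proof is correct and is essentially the paper's own argument: the paper's proof consists precisely of applying Lemma~\ref{l:gpdpush} to the pullbacks of $P$ and $P'$ onto the $H_{[i]}$, which is exactly the transport-of-structure over $H_{[1]}$ and cocycle check over $H_{[2]}$ that you spell out. Your closing remark about the running $fpqc$ hypothesis on the base is a sensible point of care that the paper leaves implicit (the uniqueness of push forward does hold over an arbitrary scheme over $S$, by the descent argument you indicate), but it does not change the route.
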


\begin{proof}
Apply Lemma~\ref{l:gpdpush} to the pullbacks of $P$ and $P'$ onto the $H_{[i]}$.
\end{proof}

If $P$ is a principal $(H,F)$\nd bundle and $h:F \to F'$ is a morphism
of transitive affine groupoids over $\overline{S}$, then by Lemmas~\ref{l:gpdpush} and \ref{l:gpdHpush}
there exists a pair, unique up to unique $(H,F')$\nd isomorphism, consisting of a principal $(H,F')$\nd bundle
$P'$ over $X$ and a morphism $P \to P'$ of $H$ \nd schemes which is compatible with $h$. 
We say that $P'$ is the \emph{push forward of $P$ along $h:F \to F'$}.  

The set of isomorphism classes of principal $F$\nd bundles over $X$ will be written
\[
H^1(X,F)
\]
and the set of isomorphism classes of principal $(H,F)$\nd bundles as
\[
H^1_H(X,F).
\]
Using push forward, we have a functors $H^1(X,-)$ and $H^1_H(X,-)$ on transitive affine groupoids 
up to conjugacy over $\overline{S}$.
When $\overline{S} = S$, so that $F$ is an affine group scheme over $S$, the sets $H^1(X,F)$ and $H^1_H(X,F)$
coincide with those of Section~\ref{s:prinbun}.
For arbitrary $\overline{S}$ however, these sets do not come equipped with a base point, and may even be empty.

\begin{lem}\label{l:gpdprinHGG'}
Let $H$ be a pregroupoid over $X$ and $F$ be a transitive affine groupoid over $\overline{S}$.
Let $\overline{S}{}' \to \overline{S}$ be a morphism with $\overline{S}{}' \to S$ $fpqc$ covering
and $F'$ be the pullback of $F$ along $\overline{S}{}' \to \overline{S}$.
Then pullback along $\overline{S}{}' \to \overline{S}$ induces an equivalence from the category of principal
$(H,F)$\nd bundles to the category of principal $(H,F')$\nd bundles.
\end{lem}

\begin{proof}
Apply Lemma~\ref{l:gpdprinGG'} to pullbacks along the $H_{[i]} \to X$.
\end{proof}

\begin{lem}\label{l:gpdprinHH'G}
Let $H$, $X'$ and $H'$ be as in Lemma~\textnormal{\ref{l:prereppull}},
and $F$ be a transitive affine groupoid over $\overline{S}$.
Then pullback along $X' \to X$ induces an equivalence from the category of principal
$(H,F)$\nd bundles to the category of principal $(H',F)$\nd bundles.
\end{lem}

\begin{proof}
By base change along $\overline{S} \to S$ and descent, we reduce to the case where 
$\overline{S}$ has a section over $S$.
Applying Lemma~\ref{l:gpdprinHGG'} to pullback along such a section, we reduce further to the case where
$\overline{S} = S$.
Then $F$ is an affine group scheme over $S$, and it suffices to apply Lemma~\ref{l:torspull} with $J$ constant.
\end{proof}

Let $P$ be a principal $F$\nd bundle over $X$.
We define the transitive affine groupoid 
\[
\underline{\Iso}_F(P)
\]
over $X$ of $F$\nd isomorphisms of $P$ as follows:
the points of $\underline{\Iso}_F(P)$ in $T$ above the point $(x_1,x_0)$ of $X \times_S X$ in $T$ are the 
isomorphisms from $P_{x_0}$ to $P_{x_1}$ of principal $F$\nd bundles over $T$,
and the identities and composition of $\underline{\Iso}_F(P)$ are
those of isomorphisms of principal $F$\nd bundles.
The existence of $\underline{\Iso}_F(P)$ follows from Lemma~\ref{l:isorep} with $X \times_S X$, $X \times_S X \times_S \overline{S}$
and $(F_{X \times_S X})^\mathrm{op}$ for $S$, $X$ and $K$,
and the pullbacks of $P$ along the projections for $Y$ and $Z$.
A structure of principal $(H,F)$ bundle on $P$ is the same as
a structure 
\begin{equation}\label{e:gpdHIsoGP}
H \to \underline{\Iso}_F(P)
\end{equation}
of groupoid over $H$ on $\underline{\Iso}_F(P)$.
In particular, there is a canonical structure of principal $(\underline{\Iso}_F(P),F)$\nd bundle on $P$,
with \eqref{e:gpdHIsoGP} the identity.

Let $(h,q):(F,P) \to (F',P')$ be a morphism of affine principal bundles over $(X,\overline{S})$.
Then by Lemma~\ref{l:gpdHpush} there is a unique morphism
\[
\underline{\Iso}_h(q):\underline{\Iso}_F(P) \to \underline{\Iso}_{F'}(P')
\]
of groupoids over $X$ such that, for the corresponding structure on $P'$ of principal $(\underline{\Iso}_F(P),F')$\nd bundle,
$q$ is an $\underline{\Iso}_F(P)$\nd morphism.
If $P$ is a principal $(H,F)$\nd bundle and $P'$ is a principal $(H,F')$ bundle, 
then by Lemma~\ref{l:gpdHpush}, $q$ is a morphism of $H$\nd schemes if and only if $\underline{\Iso}_h(q)$ is a morphism of groupoids over $H$.

The assignments $(F,P) \mapsto \underline{\Iso}_F(P)$ and $(h,q) \mapsto \underline{\Iso}_h(q)$
define a functor $\underline{\Iso}_-(-)$ from affine principal bundles over $(X,\overline{S})$
to transitive affine groupoids over $X$.  
Its formation commutes with base change and pullback along morphisms $X' \to X$ or
$\overline{S}{}' \to \overline{S}$.
If $(h,q)$ is an inner automorphism of $(F,P)$ then $\underline{\Iso}_h(q)$ is the identity.
If $q$ is an automorphism of the principal $F$\nd bundle $P$,
then the automorphism $\underline{\Iso}_{1_F}(q)$ of $\underline{\Iso}_F(P)$ is conjugation by the cross-section
$q$ of its diagonal.
When $\overline{S} = S$, the functor $\underline{\Iso}_-(-)$ reduces to that of Section~\ref{s:prinbun}.

Let $K$ be a transitive affine groupoid over $X$.
The composition of $K$ defines a canonical structure of principal
$(K,K)$\nd bundle on $K$, with $\overline{S} = X$.
The action \eqref{e:gpdHIsoGP} then becomes an isomorphism
\begin{equation}\label{e:KKK}
K \iso \underline{\Iso}_K(K)
\end{equation}
as can be seen by reducing after base change and pullback to the case where $K$ is constant,
and then to the case where $X = S$.

Given a point $\overline{x}$ of $X$ in $\overline{S}$, define as follows a category of \emph{affine principal bundles over 
$(X,\overline{S})$ pointed above $\overline{x}$}.
The objects are triples $(F,P,\overline{z})$ with $(F,P)$ an affine principal bundle over $(X,\overline{S})$ and 
$\overline{z}$ a point of $P$ in $\overline{S}$ above $(\overline{x},1_{\overline{S}})$. 
A morphism from $(F,P,\overline{z})$ to $(F',P',\overline{z}{}')$ is a morphism $(h,q)$ from 
$(F,P)$ to $(F',P')$ for which
$q(\overline{z}) = \overline{z}{}'$. 
We have a functor
\begin{equation}\label{e:FPpequiv}
(F,P,\overline{z}) \mapsto \underline{\Iso}_F(P)
\end{equation}
from this category to the category of transitive affine groupoids over $X$.
It is an equivalence,
with quasi-inverse the functor 
\[
K \mapsto (K_{\overline{x} \times \overline{x}},K_{- \times \overline{x}},1_{\overline{x}})
\]
where $K_{\overline{x} \times \overline{x}}$ is the pullback of $K$ along $\overline{x}$, and
$K_{- \times \overline{x}}$ is the principal $K_{\overline{x} \times \overline{x}}$\nd bundle over $X$ obtained from the
principal $K$\nd bundle $K$ over $X$ by pullback along $\overline{x}$.
Indeed the canonical $(K,K)$\nd structure on $K$ defines by pullback a
$(K,K_{\overline{x} \times \overline{x}})$\nd structure on $K_{- \times \overline{x}}$,
and hence by \eqref{e:KKK} a natural isomorphism
\[
K \iso \underline{\Iso}_{K_{\overline{x} \times \overline{x}}}(K_{- \times \overline{x}}),
\]
while evaluating at $\overline{z}$ gives a natural isomorphism
\[
(\underline{\Iso}_F(P)_{\overline{x} \times \overline{x}},\underline{\Iso}_F(P)_{- \times \overline{x}},1_{\overline{x}}) 
\iso (F,P,\overline{z}).
\]
When $\overline{S} = S$, \eqref{e:FPpequiv} reduces to the equivalence \eqref{e:GPpequiv}.

\section{Transitive affine groupoids with base the spectrum of a field}\label{s:specfield}

\emph{In this section $k$ is a field and $X$ is a non-empty $k$\nd scheme}.

\medskip

From this section on we work in the category of schemes over a base field $k$.
In particular pregroupoids and groupoids will be understood to be in this category, unless otherwise indicated.

If $k'$ is an extension of $k$, it will often be convenient to refer for example to a groupoid over $\Spec(k')$ as a groupoid over $k'$.
Similarly we often write for example $H^1(k',G)$ for $H^1(\Spec(k'),G)$.

A $k$\nd scheme is non-empty if and only if it is faithfully flat over $k$ 
if and only if its structural morphism is $fpqc$ covering.

Recall that small limits of schemes affine over a scheme $S$ exist and are affine over $S$.
If $(f_\lambda)$ is a filtered inverse system of morphisms over $S$ 
of schemes affine over $S$, and if each $f_\lambda$ is faithfully flat, then $\lim_\lambda f_\lambda$ is faithfully flat:
the flatness is clear and for the surjectivity it suffices to note after passing to fibres that if $1 = 0$ in
a filtered colimit of rings $A_\lambda$, then $1 = 0$ in some $A_\lambda$.

Let $K$ be a transitive affine groupoid over $X$.
As in Section~\ref{s:transgrpd}, there exists an extension $k'$ and an $fpqc$ covering morphism $X' \to X_{k'}$
such that the pullback of $K_{k'}$ onto $X'$ is a constant groupoid in $k'$\nd schemes.
Thus $K$ is faithfully flat over $X \times_k X$.
Further any two fibres of the group scheme $K^\mathrm{diag}$ over $X$ in an extension of $k$ become isomorphic after
some extension of scalars.

The limit in the category of pregroupoids over $X$ of a filtered inverse system of transitive affine groupoids
over $X$ exists and is a transitive affine groupoid.

Let $K$ be a transitive affine groupoid over $X$.
Then any $K$\nd subgroup of a $K$\nd group is closed, i.e.\ the embedding is a closed immersion:
reduce using Lemma~\ref{l:morphpull}\ref{i:morphpullsch} to the case where $X$
is the spectrum of an extension $k'$ of $k$, which follows from the corresponding result for $k'$\nd groups
\cite[\href{https://stacks.math.columbia.edu/tag/047T}{Lemma 047T}]{stacks-project}.
Similarly every morphism of affine $K$\nd groups factors, necessarily uniquely up to unique isomorphism,
as a faithfully flat morphism followed by a closed immersion:
reduce using Lemmas~\ref{l:morphpull}\ref{i:morphpullsch} and \ref{l:prereppull}
to the case where $X = \Spec(k')$, which follows using the uniqueness from
the corresponding result for affine $k'$\nd groups \cite[14.1]{Wat}. 
In the same way a quotient of an affine $K$\nd group $J$ by a normal $K$\nd subgroup $N$, 
i.e.\ a $K$\nd group $J/N$ and a faithfully flat morphism 
\[
J \to J/N
\]
of $K$\nd groups with kernel $N$, exists and is affine, by \cite[14.1 and 16.3]{Wat}.  
If $J_1$ and $J_2$ are $K$\nd subgroups of an affine $K$\nd group $J$, and if $J_2$ normalises $J_1$, then we have a 
$K$\nd subgroup $J_1J_2$ of $J$ given by the image of the $K$\nd morphism from the semidirect product $J_1 \rtimes_X J_2$ to $J$.

By Lemma~\ref{l:Kgrpdequiv}, the quotient $K/N$ of a transitive affine groupoid $K$ over $X$ by a 
$K$\nd subgroup $N$ of $K^\mathrm{diag}$
exists and is transitive affine, and by Lemma~\ref{l:morphpull}\ref{i:morphpullgpd}
\[
K \to K/N
\]
is faithfully flat.
Any morphism of transitive affine groupoids over $X$ with trivial kernel is a closed immersion,
as follows after passing to a fibre from the case of  affine $k$\nd groups \cite[15.3]{Wat}.
Taking the quotient by the kernel thus gives a factorisation, necessarily unique up to unique isomorphism,
of an arbitrary morphism of transitive affine groupoids over $X$
as a faithfully flat morphism followed by a closed immersion.

Let $K$ be a transitive affine groupoid over $X$.
Then the $d_i:K \to X$ are faithfully flat.
It follows that the category $\MOD_K(X)$ is abelian and has small colimits, and that the forgetful
functor to $\MOD(X)$ is exact and preserves colimits.
Reducing using Lemma~\ref{l:prereppull} to the case where $X$ is the spectrum of a field shows further that the full
subcategory $\Mod_K(X)$ of $\MOD_K(X)$ is abelian, and that the embedding is exact.
Similarly, for $\sV$ in $\Mod_K(X)$ the functor $\Hom_K(\sV,-)$ on $\MOD_K(X)$ preserves filtered colimits. 
Every $K$\nd module is the filtered colimit of its $K$\nd submodules in $\Mod_K(X)$ \cite[3.9]{Del90}.

Let $H$ be a pregroupoid over $X$ and $\sV$ be an $H$\nd module.
Then for $V$ a $k$\nd vector space
we have a homomorphism of $k$\nd vector spaces
\begin{equation}\label{e:HsVtensV}
H^0_H(X,\sV) \otimes_k V \to H^0_H(X,\sV \otimes_k V),
\end{equation}
natural in $\sV$ and $V$, which sends $w \otimes v$ to the image of $w$ under
the morphism of $H$\nd modules from  $\sV$ to $\sV \otimes_k V$ defined by the embedding of $v$ into $V$.
It is an isomorphism when $V$ is finite-dimensional, or when $X$ is quasi-compact
(write $V$ as the filtered colimit of its finite-dimensional subspaces and use Lemma~\ref{l:colimH0HV}).

Let $k'$ be an extension of $k$.
We may regard $X_{k'}$ a constant $H$\nd scheme, and the underlying groupoid in $k$\nd schemes of $H_{k'}$ may
then be identified with $H \times_X X_{k'}$.
If $p:X_{k'} \to X$ is the projection,
we usually write $p^*\sV$ as $\sV_{k'}$.
When $V = k'$, \eqref{e:HsVtensV} becomes, by \eqref{e:Hpushequ} and \eqref{e:pOXV}, the canonical 
homomorphism
\begin{equation}\label{e:Hexthom}
H^0_H(X,\sV) \otimes_k k' \to H^0_{H_{k'}}(X_{k'},\sV_{k'})
\end{equation}
induced by $(-)_{k'}$.
If $k'$ is finite over $k$, or if $X$ is quasi-compact,
taking $\sW^\vee \otimes_{\sO_X} \sV$ for $\sV$ in \eqref{e:Hexthom} shows that $(-)_{k'}$ induces an isomorphism
\begin{equation}\label{e:Homextiso}
\Hom_H(\sW,\sV) \otimes_k k' \iso \Hom_{H_{k'}}(\sW_{k'},\sV_{k'})
\end{equation}
for every $H$\nd module $\sV$ and representation $\sW$ of $H$.

\begin{lem}\label{l:extquot}
Let $K$ be a transitive affine groupoid over $X$, and $k'$ be an extension of $k$.
Then every representation of $K_{k'}$ is a $K_{k'}$\nd submodule (resp.\ $K_{k'}$\nd quotient)
of $\sV_{k'}$ for some representation $\sV$ of $K$.
\end{lem}

\begin{proof}
If $\sV'$ is a representation of $K_{k'}$,
the counit $p^*p_*\sV' \to \sV'$
for $p:X_{k'} \to X$ is an epimorphism,
so that $\sV'$ is a $K_{k'}$\nd quotient of $(p_*\sV')_{k'}$.
Writing $p_*\sV'$ as the filtered colimit of its $K$\nd submodules of finite type gives the result
for $K_{k'}$\nd quotients.
The result for $K_{k'}$\nd submodules follows by taking duals.
\end{proof}

Let $K$ be a transitive affine groupoid over $X$.
A $K$\nd module $\sV$ will be called \emph{faithful} if distinct points of $K$ with the same source and 
target have distinct actions on $\sV$.
It is equivalent to require that points of $K^\mathrm{diag}$ distinct from the identity should act non-trivially on 
$\sV$.
More generally a family $(\sV_\gamma)$ of $K$\nd modules will be called faithful if distinct points of $K$ with the same source and target have distinct actions on some $\sV_\gamma$.
A family $(\sV_\gamma)$ of representations of $K$ is faithful if and only if the morphism
\[
K \to \prod_{\gamma} \underline{\Iso}_X(\sV_{\gamma})
\]
with components the actions is a closed immersion.

Every affine $k$\nd group $G$ has a faithful family of representations: 
the regular $G$\nd module $k[G]$ is the filtered colimit of its finite-dimensional $G$\nd submodules.
Thus by extension of scalars using Lemma~\ref{l:extquot} and pullback onto a point using 
Lemmas~\ref{l:morphpull}\ref{i:morphpullgpd} and \ref{l:prereppull}, every transitive affine groupoid over $X$
has a faithful family of representations.

\begin{lem}\label{l:subquot}
Let $K$ be a transitive affine groupoid over $X$ and $K'$ be a transitive affine subgroupoid of $K$.
Then every representation of $K'$ is a $K'$\nd subquotient of a representation of $K$.
\end{lem}

\begin{proof}
Let $\sV'$ be a representation of $K'$.
It is enough to show that $\sV'$ is a $K'$\nd subquotient of a $K$\nd module $\sV$, because if $\sV'$
is a $K'$\nd submodule of the $K'$\nd quotient $\overline{\sV}$ of $\sV$, then for a sufficiently large
$K$\nd submodule $\sV_0$ of $\sV$ of finite type, $\sV'$ is contained in the image of $\sV_0$ in $\overline{\sV}$.
If $k'$ is an extension of $k$ and $p:X_{k'} \to X$ is the projection, then 
the unit $\sV' \to p_*p^*\sV'$ is a monomorphism, so that $\sV'$ is a $K'$\nd submodule of $p_*\sV'{}\!_{k'}$.
Thus we may suppose after extension of scalars that $X$ has a $k$\nd point, and hence by Lemma~\ref{l:prereppull}
that $X = \Spec(k)$.
Then $K$ is an affine $k$\nd group $G$ and $K'$ is a $k$\nd subgroup $G'$, and the result is well known:
any $G'$\nd module $V'$ is a $G'$\nd submodule of $k[G'] \otimes_k V'{}\!_0$ 
with $V'{}\!_0$ the trivial $G'$\nd module on the underlying 
$k$\nd vector space of $V'$, while $k[G']$ is a $G'$\nd quotient of $k[G]$.
\end{proof}

A transitive affine groupoid $K$ over $X$ will be said to be \emph{of finite type} if it is of finite type 
as a scheme over $X \times_k X$.
By extension of scalars and pullback, it is equivalent to require that $K$ be
of finite presentation as a scheme over $X \times _k X$,
or again that some fibre of $K$ above the diagonal be of finite type.

Every point $\ne 1$ of a transitive affine groupoid $K$ over $X$ has image $\ne 1$
in $\underline{\Iso}(\sV)$ for some representation $\sV$ of $K$, and hence image $\ne 1$ in some
quotient of finite type of $K$.
The canonical morphism from $K$ to the limit of its quotients of finite type is thus a closed immersion.
It is also faithfully flat, and hence an isomorphism, because it is a filtered limit of faithfully flat morphisms.
Thus every transitive affine groupoid $K$ over $X$ is the filtered limit of its quotients of finite type.
Further every affine $K$\nd group $J$ is the filtered limit of its $K$\nd quotients of finite type,
as follows by considering the semidirect product $J \rtimes_X K$.

Let $K$ be a transitive affine groupoid over $X$.
Then by extension of scalars and pullback, any affine $K$\nd scheme of finite type is of finite presentation.
Pullback onto an affine scheme shows \cite[IV~8.8.2(i)]{EGA} that for $Z$ an affine $K$\nd scheme of finite type, 
the functor $\Hom_K(-,Z)$ sends filtered limits $\lim_\lambda Z_\lambda$ of affine $K$\nd schemes to colimits of sets,
and in particular that every morphism from $\lim_\lambda Z_\lambda$ to $Z$ factors through some $Z_\lambda$.
A similar result for affine $K$\nd groups follows.

Let $K$ be a transitive affine groupoid over $X$ of finite type.
Then reducing using Lemma~\ref{l:Kgrpdequiv} to a question about affine 
$(\lim_\lambda K_\lambda)$\nd groups shows that the functor $\Hom_X(-,K)$
sends filtered limits $\lim_\lambda K_\lambda$ of transitive affine groupoids over $X$ to colimits of sets,
and in particular that every morphism from $\lim_\lambda K_\lambda$ to $K$ factors through some $K_\lambda$.

\begin{lem}\label{l:finneutr}
Let $K$ be a transitive affine groupoid over $X$ of finite type.
Then for some finite extension $k'$ of $k$ there exists a simply transitive $K_{k'}$\nd scheme.
\end{lem}

\begin{proof}
By Lemma~\ref{l:prereppull}, we may assume that $X$ is affine.
Writing $X$ as the filtered limit of affine $k$\nd schemes of finite type then shows 
\cite[IV~8.8.2, 8.10.5(vi), 11.2.6(ii)]{EGA}
that there is a $k$\nd scheme $X_0$ of finite type such that $K$ is the pullback along a morphism $X \to X_0$
of a transitive affine groupoid $K_0$ over $X_0$ of finite type.
Replacing $X$ and $K$ by $X_0$ and $K_0$, we may further assume that $X$ is of finite type.
Then $X_{k'}$ has a $k'$\nd point over $k'$ for some finite extension $k'$ of $k$, and pullback along
its inclusion shows that $K_{k'}$ has a simply transitive $K_{k'}$\nd scheme.
\end{proof}

An affine $k$\nd group will be called  pro\'etale if each of its $k$\nd quotients of finite type is finite \'etale.
We write $G^\mathrm{con}$ for the identity component of an affine $k$\nd group $G$. 
The quotient $G/G^\mathrm{con}$ is a pro\'etale $k$\nd group \cite[6.7 and Ch~6, Ex~7]{Wat} which we write $G_\mathrm{\acute{e}t}$.

Let $K$ be a transitive affine groupoid over $X$.
It can be seen as follows that for every affine $K$\nd group $J$ over $X$ there is a unique normal $K$\nd subgroup
$J^\mathrm{con}$ of $J$ with fibres the identity components of the fibres of $J$. 
We reduce by Lemma~\ref{l:prereppull} to the case where $X$ is the spectrum of an extension $k'$ of $k$.
Then $J$ is an affine $k'$\nd group, and we take for $J^\mathrm{con}$ its identity component.
That $J^\mathrm{con}$ so defined is a $K$\nd subgroup of $J$ follows from the fact that
a morphism of group schemes $J_1 \to J_0$ where $J_1$ has connected fibres and $J_0$ is finite \'etale is trivial, because
its kernel is an open subscheme of $J_1$ containing every fibre.
Formation of $J^\mathrm{con}$ commutes with extension of scalars and pullback along morphisms
of transitive affine groupoids.

Recall \cite[10.1]{Wat} that the derived $k$\nd group of an affine $k$\nd group $G$ is the normal $k$\nd subgroup $G^\mathrm{der}$ defined as follows.
Write $f_n:G^{2n} \to G$ for the morphism of $k$\nd schemes defined by taking the product of $n$ commutators.
It is compatible with the actions by conjugation of $G$ on $G^{2n}$ and $G$.
Then $G^\mathrm{der}$ is the smallest closed $k$\nd subscheme of $G$ through which each $f_n$ factors.
Writing $f_n = \Spec(\varphi_n)$ and using the fact that
tensor products over $k$ preserve intersections of $k$\nd subspaces shows that 
$G^\mathrm{der}$ is a normal $k$\nd subgroup of $G$ whose formation commutes with extension of scalars.
The projection from $G$ onto $G/G^\mathrm{der}$ is universal among $k$\nd homomorphisms
from $G$ to a commutative affine $k$\nd group.
Thus $G^\mathrm{der}$ is the intersection of those normal $k$\nd subgroups $N$ of $G$ for which $G/N$ is commutative.

Let $K$ be a transitive affine groupoid over $X$ and $J$ be an affine $K$\nd group.
Write $J^\mathrm{der}$ for the intersection of those normal $K$\nd subgroups $N$ of $J$ for which
$J/N$ is commutative. 
If $X$ is the spectrum of an extension $k'$ of $k$, 
it follows from the fact that tensor products over $k'$ preserve intersections of $k'$\nd subspaces
that the derived $k'$\nd group of the $k'$\nd group $J$ 
is a $K$\nd subgroup, and hence coincides with $J^\mathrm{der}$.
Thus formation of $J^\mathrm{der}$ commutes with extension of scalars and pullback along morphisms
of transitive affine groupoids.

A groupoid $K$ over $X$ will be said to be \emph{finite \'etale} (resp.\ \emph{pro\'etale})
if it is transitive affine and the fibres of $K^\mathrm{diag}$ are finite \'etale (resp.\ pro\'etale).
Reducing to the case where $K$ is constant shows that $K$ is finite \'etale if and only if it is surjective and
finite \'etale as a scheme over $X \times_k X$.

Let $K$ be a transitive affine groupoid over $X$.
We also write $K^\mathrm{con}$ for the $K$\nd subgroup $(K^\mathrm{diag})^\mathrm{con}$ of $K^\mathrm{diag}$.
The quotient groupoid
\[
K_\mathrm{\acute{e}t} = K/K^\mathrm{con}
\]
is pro\'etale, and the projection $K \to K_\mathrm{\acute{e}t}$ is universal among morphisms from $K$ to a
pro\'etale groupoid.
Similarly we also write $K^\mathrm{der}$ for $(K^\mathrm{diag})^\mathrm{der}$.
The quotient
\[
K_\mathrm{ab} = K/K^\mathrm{der}
\]
is commutative, and the projection $K \to K_\mathrm{ab}$ is universal among morphisms from $K$ to a
commutative groupoid.
Formation of $K_\mathrm{\acute{e}t}$ and $K_\mathrm{ab}$ commutes with pullback and extension of scalars.

Let $G$ be a smooth affine $k$\nd group.
Then any principal $G$\nd bundle $P$ over $X$ is smooth.
Given $x \in X$, there thus exists a $z \in P$ above $x$ with residue field a finite separable extension of that of $x$
\cite[IV~17.15.10(iii)]{EGA} and a morphism $X' \to P$ with image containing $z$ such that $X' \to X$
is \'etale \cite[IV~17.11.4]{EGA}.
Thus $P$ is locally trivial in the \'etale topology.
Conversely any right $G$\nd scheme over $X$ which is locally trivial in the \'etale topology 
is a principal $G$\nd bundle.
Thus for $G$ smooth, principal $G$\nd bundles in the sense used here coincide with principal $G$\nd bundles
in the usual \'etale sense, and $H^1(X,G)$ coincides with the usual \'etale cohomology set.
This is so in particular when $k$ is of characteristic $0$ and $G$ is of finite type.

\begin{prop}\label{p:printriv}
Let $G$ be an affine $k$\nd group.
Suppose that $k$ is algebraically closed.
Then any principal $G$\nd bundle over $k$ is trivial.
\end{prop}

\begin{proof}
Let $P$ be a principal $G$\nd bundle over $k$.
Consider the set $\sP$ of pairs $(N,q)$ with $N$ a normal $k$\nd subgroup of $G$ and $q$ 
a $k$\nd morphism $P \to G/N$ to the trivial principal $(G/N)$\nd bundle compatible with the projection $G \to G/N$.
With $(N,q) \le (N',q')$ when $N' \subset N$ and $q$ factors through $q'$,
the set $\sP$ is inductively ordered.
Let $(N_0,q_0)$ be an element of $\sP$ with $N_0 \ne 1$.
We show that $(N_0,q_0)$ is not maximal in $\sP$.
Any maximal element of $\sP$ will then define a trivialisation of $P$.

Writing $G$ as the limit of its $k$\nd quotients of finite type shows that $G$ has a normal $k$\nd subgroup
$N$ with $G/N$ of finite type such that
\[
N_0 \cap N \ne N_0.
\]
Denote by $P'$ the push forward of $P$ 
along the projection from $G$ to $G/N$, and by $q$ the morphism $P \to P'$ compatible with the projection.
By Lemma~\ref{l:push},  the composite of the
projection $G/N_0 \to G/(N_0N)$ with $q_0$ factors as
\[
P \xrightarrow{q} P' \xrightarrow{r} G/(N_0N)
\] 
with $r$ compatible with the projection from $G/N$ onto $G/N_0N$.
The fibre of $r$ above the identity of $G/N_0N$ is non-empty and of finite type because $r$ is faithfully flat
and $P'$ is of finite type, and hence contains a $k$\nd point $z$.
Using $z$, we may identify $P'$ with $G/N$ in such a way that $r$ is the projection.
Then $(N_0 \cap N,q_1)$, with
\[
q_1 = (q_0,q):P \to G/(N_0 \cap N) = G/N_0 \times_{G/(N_0N)} G/N,
\]
lies in $\sP$ and is strictly greater than $(N_0,q_0)$.
\end{proof}

\begin{cor}\label{c:kgrpac}
Suppose that $k$ is algebraically closed.
\begin{enumerate}
\item\label{i:kgrpacsurj}
Any surjective $k$\nd homomorphism of affine $k$\nd groups induces a surjective homomorphism on groups of $k$\nd points.
\item\label{i:kgrpacdense}
The set of $k$\nd rational points in  any affine $k$\nd group is Zariski dense. 
\end{enumerate}
\end{cor}

\begin{proof}
\ref{i:kgrpacsurj}
To prove that a surjective homomorphism $h:G \to G'$ of affine $k$\nd groups
induces a surjective homomorphism on $k$\nd points, we may suppose that $G'$ is 
reduced and hence that $h$ is faithfully flat. 
The fibre of $h$ above a $k$\nd point of $G'$ is then a principal bundle under $\Ker h$, and has thus a $k$\nd point
by Proposition~\ref{p:printriv}.

\ref{i:kgrpacdense}
Any affine $k$\nd group $G$ is the limit $\lim_\lambda G_\lambda$ of its $k$\nd quotients of finite type.
The inverse images in $G$ of the open subsets $U_\lambda$ of the $G_\lambda$ form a base for the topology of $G$.
Since each non-empty $U_\lambda$ contains a $k$\nd point of $G_\lambda$, the required result follows from \ref{i:kgrpacsurj}.
\end{proof}

\begin{prop}\label{p:const}
Suppose that $k$ is algebraically closed.
Then every transitive affine groupoid over an algebraically closed extension of $k$ is constant.
\end{prop}

\begin{proof}
Let $K$ be a transitive affine groupoid over an algebraically closed extension $\overline{k}$ of $k$.
The set $\sK$ of transitive affine subgroupoids of $K$
is inductively ordered by reverse inclusion:
a chain $\sC$ in $\sK$ is bounded above by the filtered limit $\lim_{K' \in \sC^\mathrm{op}} K'$.
We show that any transitive affine groupoid $K_0 \ne [\overline{k}]$ over $\overline{k}$ 
contains a transitive affine subgroupoid $\ne K_0$.
It will follow that any maximal element of $\sK$ is a subgroupoid $[\overline{k}]$ of $K$, so that $K$ is constant.

Writing $K_0$ as the limit of its quotients of finite type shows that it has a quotient $K_1 \ne [\overline{k}]$ 
of finite type.
By Lemma~\ref{l:finneutr}, $K_1$ has a simply transitive $K_1$\nd scheme $Z$.
Since $Z$ is non-empty and of finite type over $\overline{k}$, it has a $\overline{k}$\nd point over $\overline{k}$.
Thus $K_1$ is constant, and hence has a subgroupoid $[\overline{k}]$.
Taking the inverse image of $[\overline{k}]$ under $K_0 \to K_1$ gives the required subgroupoid. 
\end{proof}

\begin{cor}\label{c:neutral}
Let $K$ be a transitive affine groupoid over $X$.
Suppose that $k$ is algebraically closed.
Then a simply transitive $K$\nd scheme exists, and is unique up to isomorphism. 
\end{cor}

\begin{proof}
By Lemma~\ref{l:prereppull}, we reduce first to the case where $X$ is the spectrum of an algebraically closed extension of $k$, 
and then using Proposition~\ref{p:const} to the case where $X = \Spec(k)$.
This case follows from Proposition~\ref{p:printriv}, by Remark~\ref{r:simpletrantorsor}. 
\end{proof}

Let $\overline{k}$ be an algebraically closed extension of $k$.
If $F$ is a transitive affine groupoid over $\overline{k}$, then any principal $F$\nd bundle over $\overline{k}$
has a cross-section above the diagonal, by Proposition~\ref{p:printriv} with $\overline{k}$ for $k$ and 
$F^\mathrm{diag}$ for $G$.
Given a transitive affine groupoid $F'$ over $\overline{k}$, a principal $(F',F)$\nd bundle $P$,
and a section $z$ of $P$ above the diagonal, 
there is a unique morphism $(h,q)$ from $(F',F')$ to $(F,P)$ of affine principal bundles over $(X,\overline{k})$
with $q$ a morphism of $F'$\nd schemes sending the identity of $F'$ to $z$.
Explicitly, for $v'$ a point of $F'$ above $(x_1,x_0)$,
\begin{equation}\label{e:qhdef}
q(v') = v'z(x_0,x_0) = z(x_1,x_1)h(v').
\end{equation}
With $\Hom^\mathrm{conj}_{\overline{k}}$ the set of morphisms over $\overline{k}$ up to conjugacy,
we thus have a bijection
\begin{equation}\label{e:FFrep}
\Hom^\mathrm{conj}_{\overline{k}}(F',F) \iso H^1_{F'}(\overline{k},F),
\end{equation}
natural in $F$ and $F'$,
which sends the conjugacy class of $h$ to the class of the push forward of the principal $(F',F')$\nd bundle $F'$
along $h$.
The bijection \eqref{e:FFrep} is compatible with pullback along $k$\nd homomorphisms $\overline{k} \to \overline{k}{}'$
of algebraically closed extensions of $k$.
Taking $F' = G \times_k [\overline{k}]$ with $G$ an affine $k$\nd group in \eqref{e:FFrep} and using 
Lemma~\ref{l:gpdprinHH'G}
gives a representation
\begin{equation}\label{e:HomconjHkG}
\Hom^\mathrm{conj}_{\overline{k}}(G \times_k [\overline{k}],F) \iso H^1_G(k,F)
\end{equation}
of the functor $H^1_G(k,-)$ on 
transitive affine groupoids over $\overline{k}$ up to conjugacy, with universal element the class of 
$G \times_k \overline{k}$.

Two constant affine groupoids $G \times_k [\overline{k}]$ and $G' \times_k [\overline{k}]$ over $\overline{k}$
are isomorphic if and only if $G$ and $G'$ are inner forms of one another,
i.e.\ if and only if a principal $(G,G')$\nd bundle exists which is a simply transitive $G$\nd scheme, or equivalently which is a principal $(G',G)$\nd bundle for the left and right actions of $G$ and $G'$ defined by the inverse involutions.
Explicitly, \eqref{e:qhdef} with $F =  G' \times_k [\overline{k}]$, $F' = G \times_k [\overline{k}]$, and $x_0 = x_1$ shows that
if \eqref{e:HomconjHkG} with $F = G' \times_k [\overline{k}]$ sends the class of $h$ to the class of $Q$ in 
$H^1_G(k,G') = H^1_G(k,G' \times_k [\overline{k}])$, then $h$ is an isomorphism if and only if 
$Q$ is a simply transitive $G$\nd scheme.

Suppose  that $X$ has a $k$\nd point.
Then for $F$ a transitive affine groupoid over $\overline{k}$, pulling back onto a $k$\nd point and taking $G = 1$ in
\eqref{e:HomconjHkG} shows that a principal $F$\nd bundle over $X$
exists if only if $F$ is constant.

\begin{cor}\label{c:gpdacequiv}
Let $\overline{k}$ and $\overline{k}{}'$ be algebraically closed extensions of $k$.
Then pullback along a $k$\nd homomorphism $\overline{k} \to \overline{k}{}'$ 
defines an equivalence from the category of transitive affine groupoids over $\overline{k}$
up to conjugacy to the category of transitive affine groupoids over $\overline{k}{}'$ up to conjugacy.
\end{cor}

\begin{proof}
The full faithfulness
follows from \eqref{e:FFrep} together with Lemmas~\ref{l:gpdprinHGG'} and \ref{l:gpdprinHH'G}.

Let $F'$ be a transitive affine groupoid over $\overline{k}{}'$.
The inverse image $\overline{k} \times_{[\overline{k}]} F'$ in $F'$ of the diagonal of $[\overline{k}]$ 
is a subgroupoid of $F'$
which is a transitive affine groupoid over $\overline{k}{}'$ in the category of $\overline{k}$\nd schemes.
By Proposition~\ref{p:const} with $\overline{k}$ for $k$, there is thus a morphism
from the final groupoid $\overline{k} \times_{[\overline{k}]} [\overline{k}{}']$ over $\overline{k}{}'$  
in the category of $\overline{k}$\nd schemes to $\overline{k} \times_{[\overline{k}]} F'$.
Composing with the embedding into $F'$, we obtain a structure
of groupoid over $\overline{k} \times_{[\overline{k}]} [\overline{k}{}']$ on $F'$.
Thus $F'$ lies in the essential image of the pullback functor from $\overline{k}$ to $\overline{k}{}'$,
by Lemma~\ref{l:relgrpdequiv} with $H = X = \Spec(\overline{k})$.
\end{proof}

By Lemma~\ref{l:gpdprinHGG'}, the study of the cohomology sets $H^1_H(X,F)$ for $F$ a transitive affine groupoid
over a non-empty $k$\nd scheme $\overline{S}$ reduces to the case where $\overline{S}$ is the spectrum
of an algebraically closed extension $\overline{k}$ of $k$.
Corollary~\ref{c:gpdacequiv} shows that the resulting theory is independent of the choice of $\overline{k}$.
When $k$ is algebraically closed for example, we may take $\overline{k} = k$, so that the groupoids $F$
are affine $k$\nd groups.

The above independence of the choice of $\overline{k}$ can be made more precise as follows.
Let $\overline{k}{}'$ be an algebraically closed extension of $k$ and $\overline{S}$ be a non-empty $k$\nd scheme.
Then any transitive affine groupoid $F$ over $\overline{S}$ has a $\overline{k}{}'$\nd point $v$ above a given 
$\overline{k}{}'$\nd point $(s_1,s_0)$ of $\overline{S} \times_k \overline{S}$, 
because the fibre $F_{s_1,s_0}$ of $F$ above $(s_1,s_0)$ is an $F_{s_0,s_0}$\nd torsor,
which is trivial by Proposition~\ref{p:printriv}.
Conjugation as in \eqref{e:Kconjiso} with such a $v$ defines an isomorphism 
from the pullback of $F$ along $s_0$ to its pullback along $s_1$, which up to conjugacy is independent of the choice of $v$.
In particular if we take $\overline{S} = \Spec(\overline{k})$ and write $[\sigma]^*$ 
for the functor of Corollary~\ref{c:gpdacequiv} with $\sigma$ for $\overline{k} \to \overline{k}{}'$,
we obtain for each pair $\sigma_0$ and $\sigma_1$ of $k$\nd homomorphisms from $\overline{k}$
to $\overline{k}{}'$ a canonical natural isomorphism 
\[
\theta_{\sigma_1,\sigma_0}:[\sigma_0]^* \iso [\sigma_1]^*.
\]
Further $\theta_{\sigma_0,\sigma_0}$ is the identity, we have 
$\theta_{\sigma_2,\sigma_1} \circ \theta_{\sigma_1,\sigma_0} = \theta_{\sigma_2,\sigma_0}$,
the $\theta$ are compatible, modulo the pullback isomorphisms, with composition with $[\tau]^*$ 
for $k$\nd homomorphisms $\tau$ to $\overline{k}$ or from $\overline{k}{}'$, and they are compatible 
with the natural bijections
\[
H^1_H(X,F) \iso H^1_H(X,[\sigma]^*F)
\]
given by Lemma~\ref{l:gpdprinHGG'}.
Note that given two algebraically closed extensions of $k$, there always exists a $k$\nd homomorphism
from at least one of them to the other.

Let $H$ be a pregroupoid over $X$ and $k'$ be an extension of $k$.
Define as follows a category of \emph{affine principal bundles over $(H,k')$ up to conjugacy}.
The objects are pairs $(F,P)$ with $F$ a transitive affine groupoid over $k'$ and $P$ a principal
$(H,F)$\nd bundle.
A morphism from $(F,P)$ to $(F',P')$ is a morphism $j:F \to F'$ of groupoids over 
$k'$ up to conjugacy such that the push forward of $P$ along any representative of $j$ is isomorphic to $P'$. 
We may regard $\underline{\Iso}_-(-)$ as a functor from this category to the category of transitive affine groupoids over $H$ up to conjugacy 
by assigning to $j$ the class of $\underline{\Iso}_h(q)$ with $h$ a representative of $j$ and $q:P \to P'$ an $H$\nd morphism
compatible with $h$.

\begin{cor}\label{c:IsoHequivac}
Let $H$ be a pregroupoid over $X$ and $\overline{k}$ be an algebraically closed extension of $k$.
Then the functor $\underline{\Iso}_-(-)$ from the category of affine principal bundles over $(H,\overline{k})$ up to conjugacy 
to the category of transitive affine groupoids over $H$ up to conjugacy is an equivalence.
\end{cor}

\begin{proof}
By Lemma~\ref{l:gpdprinHGG'} and Corollary~\ref{c:gpdacequiv}, me may after pulling back onto a sufficiently large
algebraically closed extension $\overline{k}{}'$ of $k$ and replacing $\overline{k}$ by $\overline{k}{}'$
suppose that $X$ has a $\overline{k}$\nd point $\overline{x}$.
By Proposition~\ref{p:printriv} with $G = F^\mathrm{diag}$, every principal $F$\nd bundle over $X$ has then a 
$\overline{k}$\nd point above $(\overline{x},1_{\overline{k}})$.
The fullness and essential surjectivity thus follow from the equivalence \eqref{e:FPpequiv}.

Let $h_1$ and $h_2$ be morphisms from $F$ to $F'$ such that morphisms $q_1$ and $q_2$ exist from $P$ to $P'$
compatible respectively with $h_1$ and $h_2$ for which $\underline{\Iso}_{h_2}(q_2)$ is conjugate to 
$\underline{\Iso}_{h_1}(q_1)$ 
by a cross-section $t$ of $\underline{\Iso}_{F'}(P')^\mathrm{diag}$.
Then 
\[
\underline{\Iso}_{h_2}(q_2) = \underline{\Iso}_{h_1}(t \circ q_1).
\]
Fix $k$\nd points $\overline{z}$ of $P$ and $\overline{z}{}'$ of $P'$ above $(\overline{x},1_{\overline{k}})$.
The composites of appropriate inner automorphisms of $(F',P')$
with $(h_2,q_2)$ and $(h_1,t \circ q_1)$ are morphisms from $(F,P,\overline{z})$ to $(F',P',\overline{z}{}')$ whose images under \eqref{e:FPpequiv} coincide, 
and which thus themselves coincide.
Hence $h_1$ and $h_2$ are conjugate.
This proves the faithfulness.
\end{proof}

\section{Fundamental groupoids}\label{s:fundgrpd}

\emph{In this section $k$ is a field, $X$ is a non-empty $k$\nd scheme, and $H$ is a pregroupoid over $X$.}

\medskip

In this section we prove the existence of initial objects in certain full subcategories of the category
of transitive affine groupoids over $H$. 
These initial objects are similar to the fundamental group schemes of Nori \cite{Nor82},
and are closely connected with the universal torsors of 
Colliot-Th\'el\`ene and Sansuc \cite{CS87}
in the case of groupoids of multiplicative type,
and with the theta groups of Mumford \cite{Mum66} in the case of groupoids of pro\'etale by multiplicative type.
They occur as quotients of the universal reductive groupoids of Section~\ref{s:unmin},
but their existence is much easier to prove, because they 
are unique up to unique isomorphism and not merely up to conjugacy.

An $H$\nd scheme $Y$ will be called \emph{$H$\nd connected} if it has no non-empty open and closed $H$\nd subscheme
$\ne Y$.
It is equivalent to require that $Y$ have no decomposition as the disjoint union
of two non-empty $H$\nd schemes.
An $H$\nd scheme $Y$ is $H$\nd connected if and only if $Y$ is $(H \times_X Y)$\nd connected
if and only if $H^0_{H \times_X Y}(Y,\sO_Y)$ has no idempotent $\ne 0,1$.
If $Y \to Y'$ is a surjective morphism of $H$\nd schemes and $Y$ is $H$\nd connected, then $Y'$ is $H$\nd connected.
If $k'$ is a purely inseparable extension of $k$, then $Y$ is $H$\nd connected if and only if $Y_{k'}$
is $H_{k'}$\nd connected.

For any integer $r \ge 0$, the open and closed subscheme of $X$ where a given representation of $H$
has rank $r$ is an $H$\nd subscheme of $X$.
Thus if $X$ is $H$\nd connected, then any representation of $H$ has constant rank.

An $H$\nd scheme $Y$ will be called \emph{geometrically $H$\nd connected} if $Y_{k'}$ is $H_{k'}$\nd connected for every finite 
separable extension $k'$ of $k$.
It is equivalent to require that $H^0_{H \times_X Y}(Y,\sO_Y)$
should contain no finite \'etale $k$\nd subalgebra other than $k$.
If $k'$ is an extension of $k$, and if either $k'$ is finite over $k$ or $Y$ is quasi-compact,
then $Y$ is geometrically $H$\nd connected if and only if $Y_{k'}$ is geometrically $H_{k'}$\nd connected,
because \eqref{e:Hexthom} with $Y$ for $X$ and $H \times_X Y$ for $H$ is an isomorphism.

Recall that a scheme over $X$ is finite \'etale if and only if it is \'etale locally over $X$ a 
finite disjoint union of copies of $X$.
Finite limits of finite \'etale schemes over $X$ are finite \'etale.
Any morphism of finite \'etale schemes over $X$ factors uniquely as a surjective finite \'etale morphism 
followed by the embedding of an open and closed subscheme.

\begin{defn}\label{d:Hproetale}
An $H$\nd scheme $X'$ will be called \emph{pro\'etale} if it is the limit of a filtered inverse system
$(X_\lambda)_{\lambda \in \Lambda}$ of finite \'etale $H$\nd schemes with each projection $X' \to X_\lambda$ faithfully flat.
\end{defn}

If $X'$ is the limit of a filtered inverse system $(X_\lambda)$ of finite \'etale schemes over $X$
with each $X' \to X_\lambda$ faithfully flat, and if $Y'$ is an open and closed subscheme of $X'$,
then arguing locally over $X$ shows that each $Y' \to X_\lambda$ factors uniquely as a faithfully flat morphism
$Y' \to Y_\lambda$ followed by the embedding of an open and closed subscheme,
and that $Y'$ is the limit of the system $(Y_\lambda)$.
It follows that any open and closed $H$\nd subscheme of a pro\'etale $H$\nd scheme is pro\'etale.

Suppose that $X$ is $H$\nd connected.
Then given $(X_\lambda)_{\lambda \in \Lambda}$ as in Definition~\ref{d:Hproetale} with limit $X'$, 
it can be seen as follows that the canonical map
\begin{equation}\label{e:colimfinet}
\colim_{\lambda \in \Lambda}\Hom_H(X_\lambda,Y) \to \Hom_H(X',Y)
\end{equation}
is bijective for every finite \'etale $H$\nd scheme $Y$.
It is enough to show that every $H$\nd morphism $f:X' \to Y$ factors as a morphism of schemes over $X$ through some $X_\lambda$.
Since $Y$ is of finite type, $f$ has such a factorisation on every affine open subscheme of $X$.
Consider the functor on
schemes over $X$ that sends $Z$ to the set of factorisations of $f \times_X Z$ through $(\pr_\lambda) \times_X Z$.
Restricting to affine open subschemes of $X$ and then replacing $X'$ by $X_{\lambda'}$ for $\lambda'$
sufficiently large and arguing \'etale locally shows that it is represented by an open and closed 
subscheme $U_\lambda$ of $X$.
Further $U_\lambda$ is an $H$\nd subscheme of $X$.
For $\lambda$ large, $U_\lambda \ne \emptyset$ and hence $U_\lambda = X$.

Let $Z$ be a finite \'etale scheme over $X$.
Then the groupoid 
\[
\underline{\Iso}_X(Z)
\]
over $X$, with points in $T$ above $(x_1,x_0)$ the set of isomorphisms
from $Z_{x_0}$ to $Z_{x_1}$ of schemes over $T$, exists, and it is finite \'etale
when $Z$ has constant rank over $X$.
An action of a pregroupoid over $X$ on $Z$ is the same as a morphism over $X$ to $\underline{\Iso}_X(Z)$.

Let $K$ be a transitive affine groupoid over $X$.
Then $X$ is geometrically $K$\nd connected.
For any finite \'etale $K$\nd scheme, and hence any pro\'etale $K$\nd scheme,
the action of $K$ factors uniquely through an action of $K_\mathrm{\acute{e}t}$.
If $Y = \Spec(\sR)$ is an affine $K$\nd scheme, then $\sR$ is the filtered colimit of its 
finitely generated $K$\nd subalgebras $\sR_0$, and $Y$ is pro\'etale if and only if each
$\sR_0$ is finite \'etale.
Thus by  Lemmas~\ref{l:morphpull}\ref{i:morphpullsch} and \ref{l:prereppull} 
$Y$ is pro\'etale as a $K$\nd scheme if and only if it is pro\'etale as an $X$\nd scheme,
and if $k'$ is an extension of $k$, then $Y$ is pro\'etale if and only if $Y_{k'}$ 
is pro\'etale.

\begin{lem}\label{l:pureinsext}
Let $k'$ be a purely inseparable extension of $k$.
Then extension of scalars from $k$ to $k'$ induces equivalences from the category of pro\'etale groupoids over $H$
to the category of pro\'etale groupoids over $H_{k'}$ and from the category of pro\'etale $H$\nd schemes to the
category of pro\'etale $H_{k'}$\nd schemes.
\end{lem}

\begin{proof}
By topological invariance of the \'etale site \cite[VIII~1.1]{SGA4}, extension of scalars induces for any $k$\nd scheme $Z$
an equivalence from finite \'etale schemes over $Z$ to finite \'etale schemes over $Z_{k'}$.
Reducing to the case where $Z$ is affine shows that a similar equivalence holds with ``finite \'etale schemes'' replaced by 
``filtered limits of finite \'etale schemes''.
The required equivalences follow.
\end{proof}

\begin{lem}\label{l:transproet}
Let $K$ be a transitive affine groupoid over $X$.
\begin{enumerate}
\item\label{i:transproetcon}
A pro\'etale $K$\nd scheme is transitive if and only if it is non-empty and geometrically $K$\nd connected.
\item\label{i:transproetact}
A transitive affine $K$\nd scheme is pro\'etale if and only if $K$ acts on it through $K_{\mathrm{\acute{e}t}}$.
\end{enumerate}
\end{lem}

\begin{proof}
\ref{i:transproetcon}
By Lemmas~\ref{l:morphpull}\ref{i:morphpullsch} and \ref{l:prereppull} we reduce to the case where $X$ is quasi-compact, and then by Lemma~\ref{l:colimHomR} with $\sV = \sV' = \sO_X$ to the case of a finite \'etale $K$\nd scheme $X'$.
After extension of scalars and pullback, we reduce further to the case where $X = \Spec(k)$ and
$X$ is a finite coproduct of copies of $X$,
which is clear.

\ref{i:transproetact} The ``only if'' has been noted above.
To prove the ``if'', we may suppose that $K = K_{\mathrm{\acute{e}t}}$ is pro\'etale.
By extension of scalars and pullback using Lemmas~\ref{l:morphpull}\ref{i:morphpullsch} and \ref{l:prereppull}, 
we reduce to the case where $X = \Spec(k)$ and the transitive $K$\nd scheme $X'$ 
has a $k$\nd point.
Then $K$ is a pro\'etale $k$\nd group $G$, and $X' = G/G'$ for a $k$\nd subgroup $G'$ of $G$.
Writing $G$ as the filtered limit of its finite \'etale quotients $G_\lambda$ shows that
$X' = \lim_\lambda G_\lambda/G'{}\!_\lambda$ with $G'{}\!_\lambda$ the image of $G'$ in $G_\lambda$.
\end{proof}

\begin{lem}\label{l:Cequiv}
Let $\sG$ be a full subcategory of the category of transitive affine groupoids over $H$ which is closed
under the formation of finite products, filtered limits, transitive affine subgroupoids over $H$,
and quotient groupoids.
Then the following conditions are equivalent:
\begin{enumerate}
\renewcommand{\theenumi}{(\alph{enumi})}
\item\label{i:Cinitial}
$\sG$ has an initial object;
\item\label{i:Cintersect}
for any $K$ of finite type in $\sG$, the intersection of two transitive affine subgroupoids over $H$ of $K$
is transitive affine;
\item\label{i:Cequaliser}
for any $K$ and $K'$ of finite type in $\sG$, the equaliser of two morphisms from $K$ to $K'$ over $H$
is transitive affine.
\end{enumerate}
\end{lem}

\begin{proof}
\ref{i:Cinitial} $\implies$ \ref{i:Cintersect}:
If $K_0$ is initial in $\sG$, then the intersection of two subgroupoids in $\sG$ of $K$
is transitive because $K_0 \to K$ factors through it.

\ref{i:Cintersect} $\implies$ \ref{i:Cequaliser}:
The equaliser of two morphisms from $K$ to $K'$ is embedded in $K \times_{[X]} K'$ as the intersection of their graphs.

\ref{i:Cequaliser} $\implies$ \ref{i:Cinitial}:
Assume \ref{i:Cequaliser}.
Denote by $\sG_0$ the full subcategory
of $\sG$ consisting of those $K$ in $\sG$ of finite type that have no transitive affine subgroupoid over $H$ other than $K$.
Given $K$ of finite type in $\sG$, there is at most one morphism from $K_0$ to $K$ in $\sG$ for any $K_0$ in $\sG_0$,
and there is a $K_0$ in $\sG_0$ for which such a morphism exists, because $K$ has a subgroupoid in $\sG_0$.
Now $\sG_0$ has a small skeleton, and by \ref{i:Cequaliser} its dual is filtered.
Thus $\lim_{K_0 \in \sG_0}K_0$ lies in $\sG$, and there exists a unique
morphism from it to any $K$ of finite type in $\sG$, and hence to any $K$ in $\sG$.
\end{proof}

Let $k'$ be a finite extension of $k$ and $G'$ be an affine $k'$\nd group.
Then the Weil restriction $R_{k'/k}G'$ exists and is an affine $k$\nd group.
Suppose that $G' \times_{k'} X_{k'}$ has a non-constant $H_{k'}$\nd invariant cross-section,
i.e.\ that
\begin{equation}\label{e:strincl}
H^0_{H_{k'}}(X_{k'},G') \supsetneqq G'(k')_{k'}.
\end{equation}
Then for any transitive affine groupoid $K$ over $X$ it can be seen as follows that there exist 
two distinct morphism from $K$ to $R_{k'/k}G' \times_k [X]$ over $H$, i.e.\ that
\begin{equation}\label{e:Homge}
\card(\Hom_H(K,(R_{k'/k}G') \times_k [X])) > 1.
\end{equation}
We may suppose that $K = [X]$.
By \eqref{e:strincl} and the universal property $R_{k'/k}G'$, there exists a non-constant $H$\nd invariant
cross-section $\alpha$ of the diagonal $(R_{k'/k}G') \times_k X$ of $(R_{k'/k}G') \times_k [X]$. 
The constant morphism $[X] \to (R_{k'/k}G') \times_k [X]$ and its conjugate by $\alpha$ are then 
distinct morphisms over $H$.

\begin{prop}\label{p:etfunexist}
The category of pro\'etale groupoids over $H$ has an initial object
if and only if $X$ is geometrically $H$\nd connected.
\end{prop}

\begin{proof}
Suppose that $X$ is geometrically $H$\nd connected.
We show that \ref{i:Cintersect} of Lemma~\ref{l:Cequiv} holds with $\sG$ the category of pro\'etale groupoids over $H$.
Let $K_1$ and $K_2$ be transitive affine subgroupoids over $H$ of $K$.
By extension of scalars and pullback onto a point,
the $K$\nd schemes $K/K_i$ exist and are finite \'etale.
The base cross-sections $s_i$ of the $K/K_i$ are $H$\nd morphisms.
Writing $K/K_1 \times_X K/K_2$ as the disjoint union of its $K$\nd connected open and closed $K$\nd subschemes $Z$ 
gives  for some $Z$ a factorisation 
\[
X \xrightarrow{s} Z \to K/K_1 \times_X K/K_2
\]
of the $H$\nd morphism $(s_1,s_2)$.
Further $Z$ is geometrically $K$\nd connected, and hence by Lemma~\ref{l:transproet}\ref{i:transproetcon} a transitive $K$\nd scheme, 
because $s$ induces a $k$\nd homomorphism from $H^0_{K \times_X Z}(Z,\sO_Z)$ to $H^0_H(X,\sO_X)$.
The stabiliser of $s$ is then a transitive subgroupoid over $H$ of $K$ contained in $K_1 \cap K_2$.
Hence $K_1 \cap K_2$ is transitive.

Conversely if $X$ is not geometrically $H$\nd connected,
there exists a finite separable extension $k'$ of $k$ such that \eqref{e:strincl} and hence \eqref{e:Homge} holds for 
$G' \ne 1$ finite discrete.
Thus the category of pro\'etale groupoids over $H$ cannot have an initial object.
\end{proof}

Recall that an affine $k$\nd group $G$ is said to be diagonalisable if every representation of $G$ (i.e.\ 
finite-dimensional $G$\nd module) is a direct sum of $1$\nd dimensional representations, and that $G$ is said to be 
of multiplicative type if the $k'$\nd group $G_{k'}$ is diagonalisable for some extension $k'$ of $k$.
The class of $k$\nd groups of multiplicative type is closed under the formation of $k$\nd subgroups, $k$\nd quotients,
and limits. 
If $k'$ is an extension of $k$, then $G$ is of multiplicative type if and only if $G_{k'}$ is.

By a \emph{groupoid of multiplicative type over $X$} we mean a transitive affine groupoid over $X$ with fibres
above the diagonal of multiplicative type.
The class of groupoids of multiplicative type over $X$ is closed under the formation of transitive affine
subgroupoids, quotient groupoids, products, and filtered limits.
If $k'$ is an extension of $k$, then a groupoid $K$ over $X$ is of multiplicative type if and only
if the groupoid $K_{k'}$ over $X_{k'}$ is.

Let $K$ be a groupoid of multiplicative type over $X$ and $\sV$ be a representation of $K$. 
Then there is an extension $k'$ of $k$ such that the representation $\sV_{k'}$ of $K_{k'}$
is a direct sum of representations of rank $1$, as follows by reducing after extension of scalars to the case where $X$ has a $k$\nd point.
Considering $\End_K(\sV)$ shows further that $k'$ may be taken to be finite separable.

Let $K$ be a transitive affine groupoid over $X$.
If $K'$ is a transitive affine subgroupoid of $K$ for which $K'{}^\mathrm{diag}$ is a $K$\nd subgroup of $K^\mathrm{diag}$,
then the image of $K'$ in $K/K'{}^\mathrm{diag}$ is a subgroupoid $[X]$ of $K/K'{}^\mathrm{diag}$ with inverse image
$K'$ in $K$.
When $K$ is commutative,
$K^\mathrm{diag}$ is a constant $K$\nd group, and $K'{}^\mathrm{diag}$ is a $K$\nd subgroup of it
for any transitive affine subgroupoid $K'$ of $K$.

\begin{prop}\label{p:multinit}
The category of groupoids of multiplicative type over $H$ has an initial object if and only if 
\begin{equation}\label{e:multinit}
H^0_{H_{k'}}(X_{k'},(\bG_m)_{k'}) = k'{}^*
\end{equation}
for every finite separable extension $k'$ of $k$.
\end{prop}

\begin{proof}
Suppose that \eqref{e:multinit} holds.
We show that \ref{i:Cintersect} of Lemma~\ref{l:Cequiv} is satisfied with $\sG$ the category of groupoids of 
multiplicative type over $H$.

We first show that for $K$ as in  \ref{i:Cintersect} of Lemma~\ref{l:Cequiv},
any two morphisms $[X] \to K$ over $H$ coincide.
To do this we may suppose after a finite separable extension of scalars
that $K = \underline{\Iso}_X(\sL)$ for a line bundle $\sL$ over $X$.
It is then to be shown that two actions of $[X]$ on $\sL$ coincide provided that their restrictions to $H$ coincide.
By Lemma~\ref{l:prereppull}, there is an $[X]$\nd isomorphism $\alpha$ between the representations of $[X]$
defined by the two actions.
By \eqref{e:multinit}, the $H$\nd automorphism underlying $\alpha$ is multiplication by an element of $k^*$.
The two actions of $[X]$ thus coincide as required.

Now let $K_1$ and $K_2$ be transitive affine subgroupoids over $H$ of $K$.
If we write $J_i$ for the $K$\nd subgroup $K_i{}\!^\mathrm{diag}$ of $K^\mathrm{diag}$, then
\[
r:K \to K/J_1 \times_{K/(J_1J_2)} K/J_2
\]
with components the projections is faithfully flat by Lemma~\ref{l:morphpull}\ref{i:morphpullgpd}.
The images of the $K_i$ in the $K/J_i$ are subgroupoids $[X]$ of the $K/J_i$ over $H$, which by the above with 
$K/(J_1J_2)$ for $K$ define a subgroupoid $[X]$ of the target of $r$.
The inverse image of $[X]$ under $r$ is $K_1 \cap K_2$, which is thus transitive.

Conversely if $k'$ is a finite separable extension of $k$ for which \eqref{e:multinit} does not hold,
then \eqref{e:strincl} and hence \eqref{e:Homge} holds with
$G' = (\bG_m)_{k'}$.
Thus the category of groupoids of multiplicative type over $H$
cannot have an initial object.
\end{proof}

An affine $k$\nd group will be said to be \emph{of pro\'etale by multiplicative type} if its identity component
is of multiplicative type.
The class of $k$\nd groups of pro\'etale by multiplicative type is closed under the formation of $k$\nd subgroups,
$k$\nd quotients, and limits.
If $k'$ is an extension of $k$, then $G$ is of pro\'etale by multiplicative type if and only if $G_{k'}$ is.

By a \emph{groupoid of pro\'etale by multiplicative type over $X$} we mean a transitive affine groupoid $K$ over $X$ with fibres above the diagonal of pro\'etale by multiplicative type.
The class of groupoids of pro\'etale by multiplicative type over $X$ is closed under the formation of transitive affine
subgroupoids, quotient groupoids, products, and filtered limits.
If $k'$ is an extension of $k$, then a groupoid $K$ over $X$ is of pro\'etale by multiplicative type if and only
if the groupoid $K_{k'}$ over $X_{k'}$ is.

Let $Y' \to Y$ be a finite \'etale morphism.
Since $Y'$ is locally on $Y$ in the \'etale topology a finite disjoint union of copies of $Y$,
the Weil restriction $R_{Y'/Y}Z$ of a scheme $Z$ affine over $Y'$ exists and is affine over $Y$.

\begin{prop}\label{p:etmultinit}
The category of groupoids of pro\'etale by multiplicative type over $H$ has an initial object
if and only if $X$ is geometrically $H$\nd connected and 
\begin{equation}\label{e:etmultinit}
H^0_{H_{k'} \times_{X_{k'}} X'}(X',(\bG_m)_{k'}) = k'{}^*
\end{equation}
for every finite separable extension $k'$ of $k$ and non-empty geometrically $H_{k'}$\nd connected 
finite \'etale $H_{k'}$\nd scheme $X'$.
\end{prop}

\begin{proof}
Suppose that $X$ is geometrically $H$\nd connected and \eqref{e:etmultinit} holds
for $k'$ and $X'$ as in the Proposition.
We show that \ref{i:Cequaliser} of Lemma~\ref{l:Cequiv} holds with $\sG$ the category of groupoids of pro\'etale by 
multiplicative type over $H$.
To show that the equaliser of morphisms $h_1$ and $h_2$ from $K$ to $K'$ over $H$ is transitive affine,
we may by Lemmas~\ref{l:finneutr}, \ref{l:pureinsext} and \ref{l:transproet}\ref{i:transproetact} 
suppose after a finite separable extension of scalars that 
there exists a simply transitive $K_\mathrm{\acute{e}t}$\nd scheme $Z$,
finite \'etale over $X$.
After a further finite separable extension of scalars we may suppose
that $Z$ has an $H$\nd connected component $X'$ which is geometrically $H$\nd connected.
Replacing $X$ and $H$ by $X'$ and $H \times_X X'$, and $Z$, $K$, $K'$ , $h_1$ and $h_2$ by their pullbacks along 
$X' \to X$, we may suppose that $Z$ has an $H$\nd section.
The embedding of its stabiliser is a morphism
\[
e:[X] \to K_\mathrm{\acute{e}t}
\]
of groupoids over $H$,
and $(h_1)_\mathrm{\acute{e}t} \circ e = (h_2)_\mathrm{\acute{e}t} \circ e$ by Proposition~\ref{p:etfunexist}.
Replacing $K$ and $K'$ by $K \times_{K_{\mathrm{\acute{e}t}}} [X]$ and 
$K' \times_{K'{}\!_{\mathrm{\acute{e}t}}} [X]$, we may suppose finally that $K$ and $K'$ are of 
multiplicative type.
The equaliser of $h_1$ and $h_2$ is then transitive by Proposition~\ref{p:multinit}. 

Conversely suppose that the category of groupoids of pro\'etale by multiplicative type over $H$ has an initial object $K$.
Then $K_\mathrm{\acute{e}t}$ is initial in the category of pro\.etale groupoids over $H$, so that
$X$ is geometrically $H$\nd connected by Proposition~\ref{p:etfunexist}.
Let $k'$ be a finite separable extension of $k$ and $X'$ be a non-empty geometrically connected finite \'etale $H_{k'}$\nd scheme.
As an $H$\nd scheme, $X'$ is finite \'etale, 
and the action $H \to \underline{\Iso}_X(X')$ of $H$ on $X'$ factors uniquely through an action of $K$.
Further $K$ acts on the Weil restriction
\[
J = R_{X'/X}(\bG_m \times_X X')
\]
through its action on $X'$.
As a $K_{k'}$\nd scheme, $X'$ is transitive, so that
\[
H^0_K(X,J) = H^0_{K \times_X X'}(X',\bG_m) = H^0_{K_{k'} \times_{X_{k'}} X'}(X',(\bG_m)_{k'}) = k'{}^*
\]
by transitivity in $k'$\nd schemes of $K_{k'} \times_{X_{k'}} X'$.
Suppose \eqref{e:etmultinit} does not hold.
Then 
\[
H^0_H(X,J) = H^0_{H \times_X X'}(X',\bG_m) = H^0_{H_{k'} \times_{X_{k'}} X'}(X',(\bG_m)_{k'}) \supsetneqq k'{}^*.
\]
The embedding $K \to J \rtimes_X K$ and its conjugate by an element of $H^0_H(X,J)$ outside $H^0_K(X,J)$ are
then distinct morphisms over $H$,
which is impossible because $K$ is initial and $J \rtimes_X K$ is of pro\'etale by multiplicative type.
Thus \eqref{e:etmultinit} holds.
\end{proof}

Let $X'$ be a finite \'etale scheme over $X$ of constant rank.
Then $\underline{\Iso}_X(X')$ is a finite \'etale groupoid over $X$.
Let $X_1$ be a closed subscheme of $X'$ which is \'etale of constant rank over $X$.
Then $X_1$ is also an open subscheme of $X'$, so that $X' = X_1 \amalg X_2$
with $X_2$ finite \'etale of constant rank over $X$.
The embeddings of $X_1$ and $X_2$ into $X'$ then define an open and closed immersion
\begin{equation}\label{e:isoembed}
\underline{\Iso}_X(X_1) \times_{[X]} \underline{\Iso}_X(X_2) \to
\underline{\Iso}_X(X')
\end{equation}
of finite \'etale groupoids over $X$.
If $X'$ has a structure of $H$\nd scheme, then $X_1$ is a closed $H$\nd subscheme of $X$ if and only the action
\[
H \to \underline{\Iso}_X(X')
\]
of $H$ on $X'$ factors through \eqref{e:isoembed}.

Let $K$ be a groupoid over $H$. 
Restricting the action from $K$ to $H$ defines a faithful forgetful functor $\omega$ from finite \'etale $K$\nd schemes
to finite \'etale $H$\nd schemes.
The functor $\omega$ admits transport of structure,
i.e.\ given an isomorphism $i$ with source $\omega(Z)$,
there is a unique isomorphism $j$ with source $Z$ such that $\omega(j) = i$.
Thus if $\omega$ is fully faithful (resp.\ essentially surjective) then it is injective (resp.\ surjective) on objects.
It follows that $\omega$ is an equivalence if and only if it is an isomorphism of categories.
Similar remarks apply much more generally, and in particular to the forgetful functor from pro\'etale $K$\nd schemes to pro\'etale
$H$\nd schemes.

\begin{lem}\label{l:funact}
Let $K$ be a pro\'etale groupoid over $H$.
Then the following conditions are equivalent:
\begin{enumerate}
\renewcommand{\theenumi}{(\alph{enumi})}
\item\label{i:funactff}
The forgetful functor from the category of finite \'etale $K$\nd schemes to the category of finite \'etale $H$\nd schemes  
is fully faithful;
\item\label{i:funactmin}
$X$ is geometrically $H$\nd connected, and $K$ has no pro\'etale subgroupoid over $H$ other than itself.
\end{enumerate}
\end{lem}

\begin{proof}
Assume \ref{i:funactff}.
Then any two actions $K$ on a finite \'etale scheme over $X$
extending a given action $H$ coincide.
Thus $X$ is geometrically $H$\nd connected:
otherwise there would be a finite separable extension $k'$ of $k$ such that \eqref{e:strincl}
and hence \eqref{e:Homge} holds for $G' \ne 1$ finite discrete,
and hence with $G = R_{k'/k}G'$ two distinct actions
\[
K \to \underline{\Iso}_G(G \times_k X) = G \times_k [X]
\]
of $K$ on $G \times_k X$ extending the constant action of $H$.
Let $K'$ be a pro\'etale subgroupoid of $K$ over $H$.
To prove that $K' = K$, we may suppose after replacing $K$ by a quotient that 
$K$ is of finite type.
Then the $K$\nd scheme $K/K'$ exists and is finite \'etale.
Its base cross-section 
\[
X \to K/K'
\]
is a morphism of $H$\nd schemes, and hence by \ref{i:funactff} of $K$\nd schemes. Thus $K' = K$.

Assume \ref{i:funactmin}. 
Then by Proposition~\ref{p:etfunexist}, the category of pro\'etale groupoids over $H$ has an initial object $K_0$.
The unique morphism from $K_0$ to $K$ over $H$ is faithfully flat.
To prove \ref{i:funactff}, we may thus suppose after replacing $K$ by $K_0$ that $K$ is initial in the category
of pro\'etale groupoids over $H$.
Since $X$ is geometrically $H$\nd connected,
any finite \'etale $H$\nd scheme has constant rank.
Any $H$\nd subscheme $X_1$ of a finite \'etale $K$\nd scheme $X'$ is thus a $K$\nd subscheme,
because the action of $K$ on $X'$ factors through \eqref{e:isoembed}.
Since an $H$\nd morphism $Y \to Z$ of $K$\nd schemes is a $K$\nd morphism
if its graph is a $K$\nd subscheme of $Y \times_X Z$, \ref{i:funactff} follows.
\end{proof}

\begin{prop}\label{p:funact}
Let $K$ be a pro\'etale groupoid over $H$.
Denote by $\omega$ the forgetful functor from the category of finite \'etale $K$\nd schemes to the category of finite \'etale 
$H$\nd schemes.
Then the following conditions are equivalent:
\begin{enumerate}
\renewcommand{\theenumi}{(\alph{enumi})}
\item\label{i:funactequ}
$\omega$ is an equivalence of categories;
\item\label{i:funactiso}
$\omega$ is an isomorphism of categories;
\item\label{i:funactinit}
$K$ is initial in the category of pro\'etale groupoids over $H$.
\end{enumerate}
\end{prop}

\begin{proof}
That \ref{i:funactequ} and \ref{i:funactiso} are equivalent has been seen above.
Assume \ref{i:funactinit}.
Then $X$ is geometrically $H$\nd connected by Proposition~\ref{p:etfunexist}.
Since $1_K$ must factor through any pro\'etale subgroupoid of $K$ over $H$, Lemma~\ref{l:funact}
shows that $\omega$ is fully faithful.
For any finite \'etale $H$\nd scheme $X'$, the action of $H \to \underline{\Iso}_X(X')$ of $H$ on $X'$ 
factors uniquely through an action of $K$.
Thus $\omega$ is bijective on objects.
This proves \ref{i:funactiso}.

Assume \ref{i:funactiso}.
To prove \ref{i:funactinit}, it is enough to show there is a unique morphism from $K$
to every finite \'etale groupoid $K'$ over $H$.
By Lemmas~\ref{l:finneutr}, \ref{l:pureinsext} and \ref{l:transproet}\ref{i:transproetact} there exists 
for some finite separable extension $k'$ of $k$ a simply transitive $K'{}\!_{k'}$\nd scheme $X'$,
finite \'etale over $X_{k'}$.
Restricting the action of $K'{}\!_{k'}$ to $K'$ gives an embedding
\[
K' \to \underline{\Iso}_X(X').
\]
Since $\omega$ is bijective on objects, there is a unique morphism $l:K \to \underline{\Iso}_X(X')$ over $H$.
By Lemma~\ref{l:funact} and Proposition~\ref{p:etfunexist} the category of pro\'etale groupoids over $H$ has an initial object $K_0$, 
and by Lemma~\ref{l:funact} 
the unique morphism $j:K_0 \to K$ over $H$ is faithfully flat.
Since $l \circ j$ factors through $K'$, so also does $l$.
\end{proof}

\begin{cor}\label{c:etfunact}
Let $K$ be a transitive affine groupoid over $H$ for which $K_\mathrm{\acute{e}t}$ is initial in 
the category of pro\'etale groupoids over $H$.
Then the forgetful functor from the category of pro\'etale $K$\nd schemes to the category of
pro\'etale $H$\nd schemes is an isomorphism of categories.
\end{cor}

\begin{proof}
By Lemma~\ref{l:transproet}, we may
replace $K$ by $K_\mathrm{\acute{e}t}$.
As seen above, it is enough to prove that the forgetful functor is an equivalence.
Since $X$ is $H$\nd connected by Proposition~\ref{p:etfunexist}, this follows from Proposition~\ref{p:funact} and \eqref{e:colimfinet}.
\end{proof}

Let $Y' \to Y$ be a finite \'etale morphism.
Since the Weil restriction functor $R_{Y'/Y}$ is right adjoint to the pullback functor from schemes affine over $Y$ 
to schemes affine over $Y'$,
it commutes with finite limits and is compatible with pullback along morphisms to $Y$.
Hence if $k'$ is a finite separable extension of $k$, extension of scalars from the category of 
affine groupoids over $X$ to the category of affine groupoids in $k'$\nd schemes over $X_{k'}$ has a right adjoint,
which we write as $R_{[X]_{k'}/[X]}$.

Let $k'$ be an algebraic extension of $k$.
Then $k'$ is the filtered colimit of its finite subextensions $k_\lambda$.
Let $K'$ be a transitive affine groupoid of finite type in $k'$\nd schemes over $X_{k'}$,
and suppose that $X$ is quasi-compact and quasi-separated and $H_{[1]}$ is quasi-compact.
It can be seen as follows that  $K'$
is isomorphic to $K_\mu \times_{k_\mu} k'$
for some $\mu$ and transitive affine groupoid of finite type $K_\mu$ in $k_\mu$\nd schemes over 
$X_{k_\mu}$, and for any such $\mu$ and $K_\mu$ 
that every morphism from $H_{k'}$ to $K_\mu \times_{k_\mu} k'$ over $X_{k'}$
arises from a morphism  from $H_{k_\nu}$ to $K_\mu \times_{k_\mu} k_\nu$ over $X_{k_\nu}$
for some $\nu \ge \mu$.
The scheme $K'$ over $X_{k'} \times_{k'} X_{k'}$ is of finite presentation, faithfully flat and affine,  
and hence \cite[IV~8.8.2(ii)]{EGA} arises for some $\mu$ by extension of
scalars from a scheme $K_\mu$ over $X_{k_\mu} \times_{k_\mu} X_{k_\mu}$
which is of finite presentation, faithfully flat and affine.
Reducing to the case where $X$ and $Z$ are affine 
shows that if $Z$ is a scheme over $X_{k_\mu}  \times_{k_\mu} X_{k_\mu} $ which is quasi-compact, 
then any morphism from
$Z \times_{k_\mu} k'$ to $K_{\mu} \times_{k_\mu} k'$ over $X_{k'} \times_{k'} X_{k'}$ arises from a morphism 
from $Z \times_{k_\mu} k_{\nu}$ to $K_{\mu} \times_{k_\mu} k_{\nu}$ over $X_{k_\nu} \times_{k_\nu} X_{k_\nu}$ for some 
$\nu \ge \mu$.
Taking $K_{\mu} \times_{X_{k_\mu}} K_{\mu}$ for $Z$ and replacing $\mu$ by $\nu$ shows that the 
composition of $K'$ is defined over $k_{\mu}$, and similarly for the identity.
Taking $(H_{[1]})_{k_{\mu}}$ for $Z$ gives the statement for $H$.

\begin{prop}\label{p:funscalext}
Let $K$ be a groupoid over $H$ which is pro\'etale (resp.\ of multiplicative type, resp. of pro\'etale by multiplicative type)
and $k'$ be a separable algebraic extension of $k$.
Suppose that either of the following conditions holds:
\begin{enumerate}
\renewcommand{\theenumi}{(\alph{enumi})}
\item\label{i:funscalextfin}
$k'$ is finite over $k$;
\item\label{i:funscalextqc}
$X$ is quasi-compact and quasi-separated and $H_{[1]}$
is quasi-compact.
\end{enumerate}
Then $K$ is initial in the category of groupoids over $H$ which are pro\'etale 
(resp.\ of multiplicative type, resp.\ of pro\'etale by multiplicative type)
if and only if $K_{k'}$ is initial in the category of groupoids over $H_{k'}$ which are 
pro\'etale (resp.\ of multiplicative type, resp.\ of pro\'etale by multiplicative type).
\end{prop}

\begin{proof}
We write the proof for the pro\'etale case: the other cases are almost identical.
Suppose that $K$ is initial in the category of pro\'etale groupoids over $H$.
Let $K'$ be a finite \'etale groupoid over $X_{k'}$.
To show that $K_{k'}$ is initial in the category of pro\'etale groupoids over $H_{k'}$, it is enough to show
that every morphism from $H_{k'}$ to $K'$ over $X_{k'}$ factors uniquely through $K_{k'}$.
If \ref{i:funscalextfin} holds, this follows from the fact that
every morphism from $H$ to $R_{[X]_{k'}/[X]}K'$ over $X$ factors uniquely through $K$.
Suppose that \ref{i:funscalextqc} holds, and write $k'$ as the filtered colimit 
of its finite subextensions $k_\lambda$.
Since $K'$ is as above isomorphic to $K_\mu \times_{k_\mu} k'$ for some 
$\mu$ and finite \'etale groupoid $K_\mu$ over $X_{k_\mu}$,
and since every morphism from $H_{k'}$ or $K_{k'}$ to $K_\mu \times_{k_\mu} k'$ over $X_{k'}$
arises from a morphism  from $H_{k_\nu}$ or $K_{k_\nu}$ to $K_\mu \times_{k_\mu} k_\mu$ over $X_{k_\nu}$
for some $\nu \ge \mu$, the required result follows from the finite case.

Conversely suppose that $K_{k'}$ is initial in the category of pro\'etale groupoids over $H_{k'}$.
Then by Lemma~\ref{l:Cequiv}, the category of pro\'etale groupoids over $H$ has an initial object $K_0$.
By the ``only if'', $K_0{}_{k'}$ is initial in the category of pro\'etale groupoids over $H_{k'}$.
The unique morphism $h:K_0 \to K$ over $H$ is thus an isomorphism because $h_{k'}$ is an isomorphism.
\end{proof}

Let $K$ be a groupoid over $X$ and $X'$ be a finite \'etale $K$\nd scheme.
Then the projection $K \times_X X' \to K$ is finite \'etale.
Let $\widetilde{K}$ be a groupoid over $X'$ equipped with an affine morphism
$\widetilde{K} \to K \times_X X'$ of groupoids over $X'$.
We define in the following way a structure of groupoid over $X$ on the Weil restriction 
$R_{K \times_X X'/K}\widetilde{K}$, 
such that 
\[
R_{K \times_X X'/K}\widetilde{K} \to K
\]
is a morphism of groupoids over $X$.
By definition, a point of $R_{K \times_X X'/K}\widetilde{K}$ in a $k$\nd scheme $S$ above the point $v$ of $K$ is an assignment $v'$
to every $S' \to S$ and point $x'$ of $X'$ in $S'$ above $d_1(v)$ of a point $v'(x')$ of $\widetilde{K}$ in $S'$ 
above $(v,x')$, which is compatible with morphisms $S'' \to S'$ over $S$.
Then $1_x(x') = 1_{\widetilde{K}}$, and
the composite of $w'$ above $w$ with $v'$ above $v$ is $w' \circ v'$ above $w \circ v$ with
\[
(w' \circ v')(x') = w'(vx') \circ v'(x').
\]
If $\widetilde{H}$ is a pregroupoid over $X$ equipped with a morphism $\widetilde{H} \to K$ over $X$,
and $\Hom_X^K$ for example denotes the set of morphisms of pregroupoids over $X$ compatible with the morphisms to $K$,
then we have a bijection
\begin{equation}\label{e:Weilgrpdpull}
\Hom_{X'}^{K \times_X X'}(\widetilde{H} \times_X X',\widetilde{K}) \iso \Hom_X^K(\widetilde{H},R_{K \times_X X'/K}\widetilde{K}),
\end{equation}
natural in $\widetilde{H}$, where $f:\widetilde{H} \times_X X' \to \widetilde{K}$ corresponds to 
$\widetilde{H} \to R_{K \times_X X'/K}\widetilde{K}$ which sends the point $h$ of $\widetilde{H}_{[1]}$ to the point
$x' \mapsto f(h,x')$ of $R_{K \times_X X'/K}\widetilde{K}$.

Suppose now that $K$ and $\widetilde{K}$ are transitive affine, that $X'$ is a transitive $K$\nd scheme, and that 
$\widetilde{K} \to K \times_X X'$ is faithfully flat.
Then $R_{K \times_X X'/K}\widetilde{K} \to K$ is faithfully flat, so that $R_{K \times_X X'/K}\widetilde{K}$
is a transitive affine groupoid over $X$.
Let $x$ be a point of $X$ in an algebraically closed extension $k'$ of $k$.
Then since $K^\mathrm{con}$ acts trivially on $X'$, the inverse image in 
$(R_{K \times_X X'/K}\widetilde{K})_{x,x}$ of the identity component of $K_{x,x}$ is a $k'$\nd subgroup of the 
product over $k'$ of the $\widetilde{K}_{x',x'}$ as $x'$ runs over the $k'$\nd points of $X'$ above $x$.
It follows that if $\widetilde{K}$ is pro\'etale, or of pro\'etale by multiplicative type, then so is
$R_{K \times_X X'/K}\widetilde{K}$.   

Let $X'$ be the limit of a filtered inverse system $(X_\lambda)$ of affine $H$\nd schemes,
with the projections $X' \to X_\lambda$ faithfully flat.
Suppose that $X$ is quasi-compact and quasi-separated and $H_{[1]}$ is quasi-compact.
Then by an argument similar to the one for extension of scalars preceding Proposition~\ref{p:funscalext},
every transitive affine groupoid of finite type over $X'$ is isomorphic to $K_\mu \times_{[X_\mu]} [X']$ for some 
$\mu$ and transitive affine groupoid of finite type $K_\mu$ over $X_\mu$,
and for any such $\mu$ and $K_\mu$,
every morphism from $H \times_X X'$ to $K_\mu \times_{[X_\mu]} [X']$ over $X'$ arises from a morphism
from $H \times_X X_\nu$ to $K_\mu \times_{[X_\mu]} [X_\nu]$ over $X_\nu$ for some $\nu \ge \mu$.

\begin{prop}\label{p:funpull}
Let $K$ be a groupoid over $H$ which is pro\'etale (resp.\ of pro\'etale by multiplicative type)
and $X'$ be a transitive pro\'etale $K$\nd scheme.
Suppose that either of the following conditions holds:
\begin{enumerate}
\renewcommand{\theenumi}{(\alph{enumi})}
\item\label{i:funpullfin}
$X'$ is finite over $X$;
\item\label{i:funpullqc}
$X$ is quasi-compact and quasi-separated and $H_{[1]}$
is quasi-compact.
\end{enumerate}
Then $K$ is initial in the category of groupoids over $H$ which are pro\'etale (resp.\ of pro\'etale by multiplicative type)
if and only if $K \times_X X'$ is initial in the category of groupoids over $H \times_X X'$ which are
pro\'etale (resp.\ of pro\'etale by multiplicative type).
\end{prop}

\begin{proof}
We write the proof for the pro\'etale case: the other case is almost identical.
Suppose that $K$ is initial in the category of pro\'etale groupoids over $H$.
Let $K'$ be a finite \'etale groupoid over $X'$.
To show that $K \times_X X'$ is initial in the category of pro\'etale groupoids over $H \times_X X'$, 
it is enough to show
that every morphism from $H \times_X X'$ to $K'$ over $X'$ factors uniquely through $K \times_X X'$.
Suppose that \ref{i:funpullfin} holds.
If $\widetilde{K}$ is a pro\'etale groupoid over $X'$ equipped with a faithfully flat morphism to $K \times_X X'$,
then by the universal property of $K$ and the naturality of \eqref{e:Weilgrpdpull} in $\widetilde{H}$, composition with
$H \times_X X'\to K \times_X X'$ induces a bijection
\[
\Hom_{X'}^{K \times_X X'}(K \times_X X',\widetilde{K}) \iso \Hom_{X'}^{K \times_X X'}(H \times_X X',\widetilde{K}).
\]
Taking $\widetilde{K} = K' \times_{[X']} (K \times_X X')$ with
morphism to $K \times_X X'$ the projection now gives what is required.
Suppose that \ref{i:funpullqc} holds, and write $X'$ as the filtered limit of finite \'etale $K$\nd schemes
$X_\lambda$ with faithfully flat projections.
Since $K'$ is as above isomorphic to $K_\mu \times_{[X_\mu]} [X']$ for some 
$\mu$ and finite \'etale groupoid $K_\mu$ over $X_\mu$,
and since every morphism from $H \times_X X'$ or $K \times_X X'$ to $K_\mu \times_{[X_\mu]} [X']$ over 
$X'$ arises from a morphism  from $H \times_X X_\nu$ or $K \times_X X_\nu$ to 
$K_\mu \times_{[X_\mu]} [X_\nu]$ over $X_\nu$
for some $\nu \ge \mu$, the required result follows from the finite case.

Conversely suppose $K \times_X X'$ is initial in the category of pro\'etale groupoids over $H \times_X X'$.
Then by Lemma~\ref{l:Cequiv} and pullback, the category of pro\'etale groupoids over $H$ has an initial object $K_0$,
and by Proposition~\ref{p:etfunexist} $X'$ is geometrically $H$\nd connected.
The unique $h:K_0 \to K$ over $H$ 
defines on $X'$ a structure of transitive $K_0$\nd scheme by Lemma~\ref{l:transproet}\ref{i:transproetcon},
with $K_0{} \times_X X'$ initial in the category of pro\'etale groupoids over 
$H \times_X X'$ by the ``only if''.
Thus $h$ is an isomorphism because $h \times_X X'$ is.
\end{proof}

We say that $X$ is \emph{geometrically simply $H$\nd connected} if every finite \'etale $H$\nd scheme is constant.
If $X$ is geometrically simply $H$\nd connected,
then it is geometrically $H$\nd connected.
By Lemma~\ref{l:funact} and Proposition~\ref{p:funact}, $X$ is geometrically simply $H$\nd connected if and only if 
$[X]$ is initial in the category of pro\'etale groupoids over $H$.

Suppose $X$ is geometrically $H$\nd connected.
We call a non-empty $H$\nd scheme $X'$ a \emph{geometric universal $H$\nd cover of $X$} if $X'$ is pro\'etale
and geometrically simply $(H \times_X X')$\nd connected.

Suppose $X$ is geometrically $H$\nd connected, 
and that $X$ is quasi-compact and quasi-separated and $H_{[1]}$ is quasi-compact.
By Proposition~\ref{p:etfunexist}, 
the category of pro\'etale groupoids over $H$ has an initial object $K$,
and by Corollary~\ref{c:etfunact}, any pro\'etale $H$\nd scheme $X'$ is the restriction of a unique
pro\'etale $K$\nd scheme.
By Proposition~\ref{p:funpull}, $X'$ is a geometric universal $H$\nd cover of $X$ if and only if
it is a simply transitive $K$\nd scheme.
If $k$ is separably closed, it follows by Corollary~\ref{c:neutral} and Lemmas~\ref{l:pureinsext} and
\ref{l:transproet}\ref{i:transproetact}
that a geometric universal $H$\nd cover of $X$ exists, and is unique up to (not necessarily unique) $H$\nd isomorphism.
Similarly by Lemma~\ref{l:prereppull}, if $x$ is a $k$\nd point of $X$, 
then a $k$\nd pointed geometric universal $H$\nd cover of $(X,x)$ exists, 
i.e.\ a  geometric universal $H$\nd cover equipped with a $k$\nd point above $x$.
By Corollary~\ref{c:etfunact} it is initial in the category of pairs $(X',x')$ with $X'$ a pro\'etale 
$H$\nd scheme and $x'$ a $k$\nd point of $X'$ above $x$.

The set of isomorphism classes of representation of rank $1$ of $H$ forms an abelian group under 
the tensor product, which we write as $\Pic_H(X)$. 
We have
\[
\Pic_H(X) = H^1_H(X,\bG_m),
\]
by the identification of isomorphism classes of representations of rank $n$ of $H$ with elements of \eqref{e:H1HXGLn}.
When $H = X$, this group is the usual Picard group $\Pic(X)$ of $X$, and when $X = \Spec(k)$ and $H$ is a $k$\nd group $G$,
it is the group 
\[
\Hom_k(G,\bG_m)
\]
of characters of $G$.

Let $K$ be a commutative transitive affine groupoid over $X$.
Then $K^\mathrm{diag}$ is a constant $K$\nd group:
\[
K^\mathrm{diag} = Z(K) \times_k X
\]
for a commutative affine $k$\nd group $Z(K)$.
In particular if $\sL$ is a line bundle over $X$,
\[
\underline{\Iso}_X(\sL)^\mathrm{diag} = \bG_m \times_k X
\]
with $\bG_m \times_k X$ acting homothetically on $\sL$.
By assigning to the class of $\rho:K \to \underline{\Iso}_X(\sL)$
the character $\chi$ of $Z(K)$ with $\chi \times_k X = \rho^\mathrm{diag}$, we obtain a homomorphism
\begin{equation}\label{e:Picchar}
\Pic_K(X) \to \Hom_k(Z(K),\bG_m).
\end{equation}
Two representations $\sL$ and $\sL'$ of rank $1$ of $K$ are isomorphic if they are isomorphic after some extension of scalars,
because by Lemma~\ref{l:prereppull} and \eqref{e:Homextiso}, formation of $\Hom_K(\sL,\sL')$ commutes
with extension of scalars.
Reducing after extension of scalars to the case where $X = \Spec(k)$ thus shows that \eqref{e:Picchar} is injective. 
When $k$ is separably closed, \eqref{e:Picchar} is an isomorphism:
to see that $\chi$ lies in its image, we may suppose after pushing $K$ forward along $\chi \times_k X$ using 
Lemma~\ref{l:Kgrpdequiv} that
$Z(K)$ is $\bG_m$, so that $K$ is of multiplicative type and has a faithful family of representations
of rank $1$, whose corresponding characters generate the character group of $Z(K)$.

Fix a separable closure $\overline{k}$ of $k$.
A Galois module is a (discrete) abelian group with a continuous action of the Galois group $\Gal(\overline{k}/k)$. 
To every affine $k$\nd group $G$ there is associated its Galois module of characters, consisting of the characters
of $G_{\overline{k}}$ over $\overline{k}$ with the Galois group acting through its action on $\overline{k}$.
By assigning to each $k$\nd group of multiplicative type its Galois module of characters, we obtain an equivalence
from $k$\nd groups of multiplicative type to Galois modules.
We write
\[
D(M)
\]
for the $k$\nd group of multiplicative type with Galois module of characters $M$.
It comes equipped with an isomorphism of Galois modules
\[
M \iso \Hom_{\overline{k}}(D(M)_{\overline{k}},\bG_m{}_{\overline{k}})
\]
which will be written as $\mu \mapsto \chi_\mu$.

Let $K$ be a transitive affine groupoid over $X$.
Then $\Gal(\overline{k}/k)$ acts on $\Pic_{K_{\overline{k}}}(X_{\overline{k}})$ through its action on $\overline{k}$.
The action is continuous, because by Lemmas~\ref{l:prereppull} and \ref{l:HRrepfp} and \eqref{e:Homextiso}, 
$\Pic_{K_{\overline{k}}}(X_{\overline{k}})$
is the filtered colimit of $\Pic_{K_{k'}}(X_{k'})$ as $k'$ runs over the finite Galois subextensions of $\overline{k}$.
Suppose that $K$ is of multiplicative type.
Then since \eqref{e:Picchar} with $X_{\overline{k}}$ for $X$ and $K_{\overline{k}}$ for $K$ is an isomorphism,
we have
\begin{equation}\label{e:diagPic}
K^\mathrm{diag} = D(\Pic_{K_{\overline{k}}}(X_{\overline{k}})) \times_k X
\end{equation}
where for $\mu$ the class of 
$\overline{\rho}:K_{\overline{k}} \to \underline{\Iso}_{X_{\overline{k}}}(\overline{\sL})$ 
we have $\chi_\mu \times_{\overline{k}} X_{\overline{k}} = \overline{\rho}^\mathrm{diag}$.

When it exists, we denote the initial object in the category of pro\'etale groupoids 
(resp.\ groupoids of multiplicative type,
resp.\ groupoids of pro\'etale by multiplicative type) over $H$ by $\pi_{H,\mathrm{\acute{e}t}}(X)$ 
(resp.\ $\pi_{H,\mathrm{mult}}(X)$, resp.\ $\pi_{H,\mathrm{\acute{e}tm}}(X)$).
When $\pi_{H,\mathrm{\acute{e}tm}}(X)$ exists, both  $\pi_{H,\mathrm{\acute{e}t}}(X)$ and $\pi_{H,\mathrm{mult}}(X)$ exist, by Lemma~\ref{l:Cequiv}.
We then have
\[
\pi_{H,\mathrm{\acute{e}t}}(X) = \pi_{H,\mathrm{\acute{e}tm}}(X)_\mathrm{\acute{e}t},
\]
and when $k$ is of characteristic $0$ we have further
\[
\pi_{H,\mathrm{mult}}(X) = \pi_{H,\mathrm{\acute{e}tm}}(X)_\mathrm{ab}.
\]
Suppose that $X$ is geometrically simply $H$\nd connected.
Then $\pi_{H,\mathrm{\acute{e}t}}(X) = [X]$,
and if either $\pi_{H,\mathrm{\acute{e}tm}}(X)$ or $\pi_{H,\mathrm{mult}}(X)$ exists then both exist,
and they coincide.

Suppose that $\pi_{H,\mathrm{mult}}(X)$ exists.
Then any action $H \to \underline{\Iso}_X(\sL)$ of $H$ on a line bundle $\sL$ extends uniquely to an action of 
$\pi_{H,\mathrm{mult}}(X)$ on $\sL$.
By transport of structure, we thus have an isomorphism
\begin{equation}\label{e:PicmultH}
\Pic_{\pi_{H,\mathrm{mult}}(X)}(X) \iso \Pic_H(X)
\end{equation}
defined by restriction from $\pi_{H,\mathrm{mult}}(X)$ to $H$. 

Suppose that $X$ is quasi-compact and quasi-separated and that $H_{[1]}$ is quasi-compact.
Then by Lemma~\ref{l:HRrepfp}, the action of $\Gal(\overline{k}/k)$ on
$\Pic_{H_{\overline{k}}}(X_{\overline{k}})$ through $\overline{k}$ defines on it a structure of
Galois module.
If $\pi_{H,\mathrm{mult}}(X)$ exists, then by Proposition~\ref{p:funscalext} and \eqref{e:PicmultH},
restriction to $H_{\overline{k}}$ induces an isomorphism of Galois modules from
$\Pic_{\pi_{H,\mathrm{mult}}(X)_{\overline{k}}}(X_{\overline{k}})$ to $\Pic_{H_{\overline{k}}}(X_{\overline{k}})$.
Thus by \eqref{e:diagPic}
\begin{equation}\label{e:multPic}
\pi_{H,\mathrm{mult}}(X)^\mathrm{diag} = D(\Pic_{H_{\overline{k}}}(X_{\overline{k}})) \times_k X
\end{equation}
where for $\mu$ the class of the restriction
of $\overline{\rho}:\pi_{H,\mathrm{mult}}(X)_{\overline{k}} \to \underline{\Iso}_{X_{\overline{k}}}(\overline{\sL})$ 
to $H_{\overline{k}}$ we have $\chi_\mu \times_{\overline{k}} X_{\overline{k}} = \overline{\rho}^\mathrm{diag}$.

\section{Groupoids and Galois extended groups}\label{s:grpdgalext}

\emph{In this section $k$ is a perfect field and $\overline{k}$ is an algebraic closure of $k$.}

\medskip

In this section we describe an equivalence between transitive affine groupoids over 
$\overline{k}$ and Galois extended $\overline{k}$\nd groups, which are affine
$\overline{k}$\nd groups $D$ equipped with extra structure.
When $k$ is of characteristic $0$, this extra structure is simply a topological extension $E$
of the Galois group $\Gal(\overline{k}/k)$ by the group $D(\overline{k})_{\overline{k}}$ of 
$\overline{k}$\nd points over $\overline{k}$.
This case will be used in Sections~\ref{s:curves0} and \ref{s:curves1} to classify principal bundles
under a reductive group over smooth projective curves of genus $0$ or $1$.

We begin, in this paragraph and the three following, with some preliminary remarks on local $k$\nd ringed spaces,
which are in fact valid over an arbitrary base field $k$.
The forgetful functor $Z \mapsto |Z|$ from the category of local $k$\nd ringed spaces to the category of topological
spaces has a fully faithful right adjoint $\Theta \mapsto \Theta_{/k}$,
where $\Theta_{/k}$ is the local $k$\nd ringed space with underlying topological space $\Theta$ and structure
sheaf the constant sheaf $k$.
The counit $|\Theta_{/k}| \to \Theta$ is the identity.
For any continuous map $\Theta' \to \Theta$ and morphism $Z \to \Theta_{/k}$ of local $k$\nd ringed spaces,
the fibre product $\Theta'{}\!_{/k} \times_{\Theta_{/k}} Z$ exists: its underlying topological space
is $\Theta' \times_{\Theta} |Z|$ and its structure sheaf is the inverse image of that of $Z$ along the projection.
Formation of $\Theta_{/k}$ is compatible with extension of scalars.
If $\Theta$ is discrete then $\Theta_{/k}$ is a disjoint union of copies of $\Spec(k)$, 
and if $\Theta$ is profinite
(i.e.\ compact totally disconnected) then $\Theta_{/k}$ is a profinite $k$\nd scheme.
In general, however, $\Theta_{/k}$ need not be a scheme, even if $\Theta$ is totally disconnected.

Let $M$ be a topological group.
Then $M_{/k}$ has a canonical structure of group object in the category of local $k$\nd ringed spaces.
If $M$ is discrete, then an action of $M$ on a local $k$\nd ringed space $Z$, i.e.\ a homomorphism from
$M$ to $\Aut_k(Z)$, is the same as an action $M_{/k} \times_k Z \to Z$
of $M_{/k}$ on $Z$.
In general, we say that an action of $M$ on $Z$ is \emph{continuous} if the action
\[
(M^d)_{/k}\times_k Z \to Z
\]
of the underlying discrete space $M^d$ of $M$ on $Z$ factors through the epimorphism 
\[
(M^d)_{/k} \times_k Z \to M_{/k} \times_k Z.
\]
A continuous action of $M$ on $Z$ is thus the same as an action
\[
M_{/k} \times_k Z \to Z
\] 
of $M_{/k}$ on $Z$.
A continuous right action of $M$ is defined similarly, and is the same as a continuous action of $M^\mathrm{op}$.

The right adjoint $\Theta \mapsto \Theta_{/k}$ to the forgetful functor from local $k$\nd ringed spaces 
to topological spaces has itself a right adjoint.
It sends $Z$ to $Z(k)$, equipped the topology, which we call the 
\emph{Krull topology}, 
with an open base formed by the sets of $k$\nd points in a given open subset $U$ of $Z$
at which given sections $f_1,f_2, \dots ,f_n$ of $\sO_Z$ above $U$ take given values 
$\alpha_1,\alpha_2, \dots ,\alpha_n$ in $k$.
The counit $X(k)_{/k} \to X$ sends a $k$\nd point to its underlying $k$\nd rational point,
and the unit $\Theta \to (\Theta_{/k})(k)$ is the homeomorphism sending a point to its corresponding $k$\nd point.
If $Z'$ is an open local ringed subspace of $Z$, then $Z'(k)$ is an open subset of $Z(k)$.
If $Z$ is a $k$\nd scheme which is locally of finite type, then $Z(k)$ is discrete.
If $Z$ is an affine $k$\nd scheme, then $Z$ as the filtered limit in the category of local $k$\nd ringed spaces
of affine $k$\nd schemes $Z_\lambda$ of finite type, 
and $Z(k)$ is the filtered limit of the discrete spaces $Z_\lambda(k)$.

More generally, let $k'$ be an extension of $k$.
Then the functor $\Theta \mapsto \Theta_{/k'}$ from topological spaces to local $k$\nd ringed spaces
has a right adjoint.
It sends $Z$ to $Z(k')$ equipped with the topology, which we again call the Krull topology,
with an open base formed by the sets of $k'$\nd points in a 
given open subset $U$ of $Z$
at which given sections $f_1,f_2, \dots ,f_n$ of $\sO_Z$ above $U$ take given values 
$\alpha'{}\!_1,\alpha'{}\!_2, \dots ,\alpha'{}\!_n$ in $k'$.
If $Z$ has a structure of local $k'$\nd ringed space, then the set $Z(k')_{k'}$ of $k'$\nd points over $k'$, 
equipped with the Krull topology in the sense of local $k'$\nd ringed spaces, is a subspace of $Z(k')$.
If $Z$ is a $k$\nd scheme, then the topological space $Z(k')$ coincides with $Z_{k'}(k')_{k'}$.

We write $\Gal(k'/k)$ for the Galois group, equipped with the Krull topology in the usual sense,
of a Galois extension $k'$ of $k$.
The group $\Gal(\overline{k}/k)$ acts on the set $Z(\overline{k})$ 
of $\overline{k}$\nd points of any local $k$\nd ringed space $Z$, 
with $\sigma$ in $\Gal(\overline{k}/k)$ sending $w$ in $Z(\overline{k})$ to 
\[
{}^\sigma w = w \circ \Spec(\sigma).
\]
This action is continuous for the Krull topology on $Z(\overline{k})$, because it sends basic open sets
to basic open sets, and every basic open set is fixed by an open subgroup of $\Gal(\overline{k}/k)$.
We have 
\[
f({}^\sigma w) = {}^\sigma (f(w))
\]
for any morphism $f$ of local $k$\nd ringed spaces.

We regard groupoids over $\overline{k}$ as $\overline{k}$\nd schemes using $d_0$.
In particular the final object
\[
[\overline{k}] = \Spec(\overline{k}) \times_k \Spec(\overline{k})
\]
in the category of groupoids over $\overline{k}$ is regarded as a $\overline{k}$\nd scheme using the first projection.
Then there is a unique isomorphism
\begin{equation}\label{e:GalGammaschiso}
\Gal(\overline{k}/k)_{/\overline{k}} \iso [\overline{k}]
\end{equation}
of $\overline{k}$\nd schemes which induces on $\overline{k}$\nd points over $\overline{k}$ a homeomorphism
\begin{equation}\label{e:GalGammaptiso}
\Gal(\overline{k}/k) \iso [\overline{k}](\overline{k})_{\overline{k}}
\end{equation}
sending $\sigma$ to $(1_{\Spec(\overline{k})},\Spec(\sigma))$.
This can be seen by taking the limit over finite Galois extensions $k_0 \subset \overline{k}$ of $k$
of isomorphisms of finite $\overline{k}$\nd schemes from $\Gal(k_0/k)_{/\overline{k}}$
to $\Spec(\overline{k}) \times_k \Spec(k_0)$.

Let $F$ be a transitive affine groupoid over $\overline{k}$.
Then the composite of the inverse of the homeomorphism \eqref{e:GalGammaptiso} with the continuous map $F(\overline{k})_{\overline{k}} \to [\overline{k}](\overline{k})_{\overline{k}}$
defined by $(d_0,d_1)$ is a continuous map
\begin{equation}\label{e:gammamap}
\gamma:F(\overline{k})_{\overline{k}} \to \Gal(\overline{k}/k)
\end{equation}
such that $d_1(u) = \Spec(\gamma(u))$ for every $u$ in $F(\overline{k})_{\overline{k}}$.
Let $u$ and $v$ be elements of $F(\overline{k})_{\overline{k}} \subset F(\overline{k})$.
Define the product $uv$ of $u$ and $v$ as
\begin{equation}\label{e:udotvdef}
uv = u \circ {}^{\gamma(u)} v,
\end{equation}
where $\circ$ denotes the composition of $F$.
With this product and the Krull topology, $F(\overline{k})_{\overline{k}}$ is a topological group,
and $\gamma$ is a continuous homomorphism.
To check for example the continuity of the product, note that the topological embedding 
$F(\overline{k})_{\overline{k}} \to F(\overline{k})$ composed with the product factors as
\[
F(\overline{k})_{\overline{k}} \times F(\overline{k})_{\overline{k}} \to 
F(\overline{k}) \times_{\Spec(\overline{k})(\overline{k})} F(\overline{k}) \to F(\overline{k}),
\]
where the first arrow sends $(u,v)$ to $(u,{}^{\gamma(u)}v)$ and is continuous because $\gamma$
and the action of $\Gal(\overline{k}/k)$ on $F(\overline{k})$ are continuous, and the second arrow
is the composition and is continuous because it is defined by a morphism of $k$\nd schemes.

It will be shown in Lemma~\ref{l:grpdretr} below that for any transitive affine groupoid $F$ over $\overline{k}$
there exists a section $t:[\overline{k}] \to F$
of the morphism of $\overline{k}$\nd schemes $F \to [\overline{k}]$.
By \eqref{e:GalGammaschiso}, any such $t$ factors uniquely through the counit
\begin{equation}\label{e:Kcounit}
(F(\overline{k})_{\overline{k}})_{/\overline{k}} \to F
\end{equation}
for $F$ in the category of local $\overline{k}$\nd ringed spaces.
Further any point of $F$ in a $k$\nd scheme can be written uniquely in the form 
\begin{equation}\label{e:dtu}
d \circ t(u)
\end{equation}
for points $d$ of $F^\mathrm{diag}$ and $u$ of $[\overline{k}]$ with $d_0(u) = d_1(d) = d_0(d)$.

We note that if $Z$ is a filtered limit of finite \'etale schemes $Z_\lambda$ over $\overline{k}$,
then any surjective morphism $Y \to Z$ of finite presentation is a retraction,
because \cite[IV~8.8.2(ii)]{EGA} it is defined over some $Z_\lambda$.

\begin{lem}\label{l:grpdretr}
Let $F$ be a transitive affine groupoid over $\overline{k}$.
Then the morphism of $k$\nd schemes $(d_0,d_1):F \to [\overline{k}]$ has a section $[\overline{k}] \to F$ whose restriction to the diagonal of $[\overline{k}]$ is the identity of $F$.
\end{lem} 

\begin{proof}
It is enough to show that a section $t:[\overline{k}] \to F$ exists: the section that sends 
$(u,v)$ to $t(u,v) \circ t(v,v)^{-1}$ is then the identity above the diagonal.

Consider the partially ordered set $\sP$ of pairs $(N,s)$ with $N$ an $F$\nd subgroup of $F^\mathrm{diag}$
and $s$ a section of the morphism of schemes $(d_0,d_1):F/N \to [\overline{k}]$, where $(N,s) \le (N',s')$
when $N' \subset N$ and $s'$ lifts $s$. 
Since any filtered limit of quotients of $F$ is a quotient of $F$, the set $\sP$ is inductively ordered.

Let $(N,s)$ be a maximal element of $\sP$.
It is enough to show that $N = 1$.
Let $N_0$ be an $F$\nd subgroup of $F^\mathrm{diag}$ with $F/N_0$ of finite type, and write $N'$ for $N \cap N_0$.
By Lemma~\ref{l:morphpull}\ref{i:morphpullgpd}, $F/N' \to F/N$ is of finite presentation.
Thus by \eqref{e:GalGammaschiso} and the remark above,
$F/N' \to F/N$ has a section above the image of $s$, which defines a lifting $s'$ of $s$.
Then $(N',s') \ge (N,s)$, so that $N' = N$.	
Thus $N \subset N_0$ for any $N_0$ with $F/N_0$ of finite type, so that $N = 1$ as required.								\end{proof}

Given $\overline{k}$\nd schemes $Z$ and $Z'$ and $\sigma$ in $\Gal(\overline{k}/k)$, define a \emph{$\sigma$\nd morphism
from $Z$ to $Z'$} as a $k$\nd morphism $f$ from $Z$ to $Z'$ such that the square formed by $f$, the structural morphisms 
of $Z$ and $Z'$, and $\Spec(\sigma^{-1})$, commutes.
Such a $\sigma$\nd morphism $f$ induces a map from $Z(\overline{k})_{\overline{k}}$ to $Z'(\overline{k})_{\overline{k}}$, 
which sends $z$ in $Z(\overline{k})_{\overline{k}}$ to the unique $z'$ in $Z'(\overline{k})_{\overline{k}}$ for which the square
formed by $f$, $z$, $z'$, and $\Spec(\sigma^{-1})$, commutes.
Equivalently, if we identify $Z(\overline{k})_{\overline{k}}$ and $Z'(\overline{k})_{\overline{k}}$ with subsets of the 
topological spaces $Z$ and $Z'$, the map is the one induced on these subsets.
The composite of a $\sigma'$\nd morphism from $Z'$ to $Z''$ with a $\sigma$\nd morphism from $Z$ to $Z'$ is a 
$(\sigma'\sigma)$\nd morphism from $Z$ to $Z''$.
If a $\sigma$\nd morphism is an isomorphism of $k$\nd schemes, its inverse is a $\sigma^{-1}$\nd morphism,
and we speak a $\sigma$\nd isomorphism, or a $\sigma$\nd automorphism when the source and target coincide.

Given $\overline{k}$\nd groups $D$ and $D'$, a $\sigma$\nd morphism $h:D \to D'$ of $\overline{k}$\nd schemes will be called
a \emph{$\sigma$\nd morphism of $\overline{k}$\nd groups} if the squares formed by $h \times_{\Spec(\sigma^{-1})} h$, 
the products
of $D$ and $D'$, and $h$ commutes.
For such an $h$, the map from $D(\overline{k})_{\overline{k}}$
to $D'(\overline{k})_{\overline{k}}$ induced by $h$ is a group homomorphism.

We may identify a $\sigma$\nd morphism from $Z$ to $Z'$ with a morphism of $\overline{k}$\nd schemes
\[
\Spec(\sigma)^*Z \to Z'
\]
from the pullback of $Z$ along $\Spec(\sigma)$, or what is the same the push forward of $Z$ along $\Spec(\sigma^{-1})$, to $Z'$.
A $\sigma$\nd morphism $D \to D'$ of $\overline{k}$\nd groups is then a $\overline{k}$\nd homomorphism
from $\Spec(\sigma)^*D$ to $D'$.

Denote by $\overline{\sT}_k$ the full subcategory of the category of local $k$\nd ringed spaces
consisting of those $Z$ for which $\sO_Z$ is $k$\nd isomorphic to the
constant $k$\nd sheaf $\overline{k}$.
The morphism $f^{-1}\sO_Z \to \sO_{Z'}$ induced on structure sheaves by any morphism $f:Z' \to Z$ in $\overline{\sT}_k$
is an isomorphism, because any $k$\nd homomorphism between algebraic closures of $k$ is an isomorphism.
It follows that pullbacks along any morphism in $\overline{\sT}_k$ exist in the category of local $k$\nd ringed spaces, 
and that $Z' \times_Z Z''$ lies in $\overline{\sT}_k$ when $Z$, $Z'$ and $Z''$ do.
Further binary products, and hence non-empty finite limits, of local $k$\nd ringed spaces in $\overline{\sT}_k$ exist
and lie in $\overline{\sT}_k$,
because by \eqref{e:GalGammaschiso} the product of $\Spec(\overline{k})$ with itself lies in $\overline{\sT}_k$.
Thus we have a category of groupoids in $\overline{\sT}_k$ over $\overline{k}$ with final object $[\overline{k}]$,
and left or right actions of such groupoids on a $\overline{k}$\nd scheme are defined as in 
Section~\ref{s:grpdpre}.

Let $E'$ and $E''$ be topological groups.
By an \emph{extension}
\[
1 \to E'' \to E \to E' \to 1
\] 
\emph{of $E'$ by $E''$} we mean a topological group $E$ together with continuous homomorphisms
$E \to E'$ and $E'' \to E$ such that $E'' \to E$ is a topological isomorphism onto the kernel of $E \to E'$ and
$E \to E'$ is locally on $E'$ a retraction of topological spaces.
It is equivalent to require that $E$ be a principal $E''$\nd bundle over $E'$ in the usual topological sense
for the action by right translation of $E''$ on $E$.
This notion will be used in what follows only when $E'$ is profinite, and in that case $E \to E'$ has necessarily 
a section globally when is has one locally on $E'$.

Let $E$ be an extension of the profinite group $\Gal(\overline{k}/k)$ by a topological group $E'$.
Considering first the case where $E = \Gal(\overline{k}/k)$ shows that there is a unique structure
of groupoid in $\overline{\sT}_k$ over $\overline{k}$ on $E_{/\overline{k}}$ such that $E_{/\overline{k}} \to [\overline{k}]$ 
is \eqref{e:GalGammaschiso} composed with the projection
$E_{/\overline{k}}$ to $\Gal(\overline{k}/k)_{/\overline{k}}$ and
the composition and identity have
underlying map of topological spaces the product and identity of $E$.
The diagonal of $E_{/\overline{k}}$ is $E'{}\!_{/\overline{k}}$.
We then have a fully faithful functor $E \mapsto E_{/\overline{k}}$ from topological extensions of 
$\Gal(\overline{k}/k)$ to groupoids in $\overline{\sT}_k$ over $\overline{k}$.
If $Z$ is a $\overline{k}$\nd scheme,
the projection of $E_{/\overline{k}}$ onto $E_{/k}$ defines an isomorphism
\begin{equation}\label{e:EkbarEkiso}
E_{/\overline{k}} \times_{\overline{k}} Z \iso E_{/k} \times_k Z
\end{equation}
of local $k$\nd ringed spaces.
Using this isomorphism, we may identify an action of the groupoid $E_{/\overline{k}}$ on $Z$ with a continuous action of 
the topological group $E$ on the underlying $k$\nd scheme of $Z$
such that $e$ in $E$ above $\sigma$ in $\Gal(\overline{k}/k)$ acts as a $\sigma$\nd automorphism of $Z$.

\begin{defn}\label{d:Galext}
By a \emph{Galois extended $\overline{k}$\nd group} $(D,E)$ we mean an affine $\overline{k}$\nd group $D$,
an extension of topological groups
\[
1 \to  D(\overline{k})_{\overline{k}} \to E \to \Gal(\overline{k}/k) \to 1,
\]
and a continuous action of $E$ on the underlying $k$\nd scheme of $D$ with $e$ in $E$ above $\sigma$ in 
$\Gal(\overline{k}/k)$
acting as a $\sigma$\nd automorphism $a_e$ of the $\overline{k}$\nd group $D$, such that:
\begin{enumerate}
\renewcommand{\theenumi}{(\alph{enumi})}
\item\label{i:DEgrpconj}
the automorphism of $D(\overline{k})_{\overline{k}}$ induced by $a_e$ is conjugation by $e$;
\item\label{i:DEkbargrpconj} 
for $e$ in $D(\overline{k})_{\overline{k}}$, the $\overline{k}$\nd automorphism $a_e$ of $D$ is conjugation by $e$. 
\end{enumerate}
\end{defn}

A morphism of Galois extended $\overline{k}$\nd groups from $(D,E)$ to $(D',E')$
is a pair $(h,l)$ with $h$ a $\overline{k}$\nd homomorphism from $D$ to $D'$ and $l$ a continuous homomorphism from $E$ to $E'$,
such that the diagram
\begin{equation}\label{e:DEhomdiagram}
\begin{gathered}
\xymatrix{
1 \ar[r] &  D(\overline{k})_{\overline{k}} \ar[d]^{h} \ar[r] & E \ar[d]^{l} \ar[r]
& \Gal(\overline{k}/k) \ar@{=}[d] \ar[r] & 1  \\
1 \ar[r] & D'(\overline{k})_{\overline{k}} \ar[r] & E' \ar[r] & \Gal(\overline{k}/k) \ar[r] & 1
}
\end{gathered}
\end{equation}
commutes, and such that $h$ is compatible with the action of $e$ on $D$ and $l(e)$ on $D'$ for any $e$ in $E$.

By \eqref{e:EkbarEkiso} with $Z = D$, a continuous action of $E$ on $D$ with $e$ above $\sigma$ acting 
as a $\sigma$\nd automorphism of the $\overline{k}$\nd group $D$
may be identified with an action
\begin{equation}\label{e:DEalphaact}
\alpha:E_{/\overline{k}} \times_{\overline{k}} D \to D
\end{equation}
of the groupoid $E_{/\overline{k}}$ in $\overline{\sT}_{\overline{k}}$ over $\overline{k}$ on the $\overline{k}$\nd group $D$.
If we write
\begin{equation}\label{e:Dcounit}
\varepsilon:(D(\overline{k})_{\overline{k}})_{/\overline{k}} \to D
\end{equation}
for the counit, \ref{i:DEgrpconj} of Definition~\ref{d:Galext} is then equivalent to the condition
\begin{equation}\label{e:DEactcompat}
\alpha(w,\varepsilon(v)) = \varepsilon(w \circ v \circ w^{-1})
\end{equation}
on points $w$ of $E_{/\overline{k}}$ and $v$ of its diagonal $(D(\overline{k})_{\overline{k}})_{/\overline{k}}$,
and \ref{i:DEkbargrpconj} of Definition~\ref{d:Galext} is equivalent to the condition
\begin{equation}\label{e:DEconjcompat}
\alpha(v,d) = \varepsilon(v)d\varepsilon(v)^{-1}
\end{equation}
on points $v$ of $(D(\overline{k})_{\overline{k}})_{/\overline{k}}$ and $d$ of $D$.
Given also a Galois extended $\overline{k}$\nd group $(D',E')$ with action $\alpha'$ and counit $\varepsilon'$,
a morphism of Galois extended $\overline{k}$\nd groups from $(D,E)$ to $(D',E')$ may be identified with a 
$\overline{k}$\nd homomorphism $h:D \to D'$ together with a morphism $\lambda:E_{/\overline{k}} \to E'{}\!_{/\overline{k}}$
of groupoids in $\overline{\sT}_k$ over $\overline{k}$, such that
\begin{equation}\label{e:DEmorphcounit}
h(\varepsilon(v)) = \varepsilon'(\lambda(v))
\end{equation}
for points $v$ of $(D(\overline{k})_{\overline{k}})_{/\overline{k}}$, and
\begin{equation}\label{e:DEmorphact}
h(\alpha(w,d)) = \alpha'(\lambda(w),h(d)).
\end{equation}
for points $w$ of $E_{/\overline{k}}$ and $d$ of $D$.

\begin{rem}
It has been shown \cite[Cor~5]{Gil} that limits exist in the category of local ringed spaces.
Thus we have a notion of groupoid in the category of local $k$\nd ringed spaces.
Conditions \eqref{e:DEactcompat} and \eqref{e:DEconjcompat} then state that the counit $\varepsilon$ is a morphism
of $E_{/\overline{k}}$\nd groups from $(E_{/\overline{k}})^\mathrm{diag}$ to $D$ 
which is compatible with conjugation in the sense of Lemma~\ref{l:Kgrpdequiv}.
From this point of view, a quasi-inverse for the functor of Proposition~\ref{p:grpdgalequ} below is a push forward 
along $\varepsilon$ of groupoids over $\overline{k}$.
Only groupoids in the category $\overline{\sT}_k$ above or in the category of $k$\nd schemes,
and the actions of these on $\overline{k}$\nd schemes, will be needed for what follows.
Thus the only limits actually needed are fibre products in $\overline{\sT}_k$ and in the category of $k$\nd schemes
and pullbacks of local $k$\nd ringed spaces along a morphism in $\overline{\sT}_k$, whose existence has been seen above.
\end{rem}

To every transitive affine groupoid $F$ over $\overline{k}$ is associated a Galois extended $\overline{k}$\nd group
$(F^\mathrm{diag},F(\overline{k})_{\overline{k}})$, where the product of $F(\overline{k})_{\overline{k}}$ is given by
\eqref{e:udotvdef}, the first arrow in
\[
1 \to F^\mathrm{diag}(\overline{k})_{\overline{k}} \to F(\overline{k})_{\overline{k}} \to \Gal(\overline{k}/k) \to 1
\]
is the embedding, the second is the homomorphism $\gamma$ of \eqref{e:gammamap}, 
and the action 
\[
\Spec(\gamma(v))^*F^\mathrm{diag} \iso F^\mathrm{diag}
\]
on $F^\mathrm{diag}$ of $v$ in $F(\overline{k})_{\overline{k}}$ is the action at $v$ of $F$ on $F^\mathrm{diag}$.
The first arrow is a topological isomorphism onto the kernel of $\gamma$ because it is defined
by the embedding of the fibre of $F$ above a $\overline{k}$\nd point of $[\overline{k}]$ over $\overline{k}$,
and $\gamma$ is a retraction of topological spaces by Lemma~\ref{l:grpdretr}.
The action of $F(\overline{k})_{\overline{k}}$ on $F^\mathrm{diag}$ is the action $\alpha$ of the form \eqref{e:DEalphaact} with $D = F^\mathrm{diag}$ and $E = F(\overline{k})_{\overline{k}}$, given by restricting the action of $F$ on $F^\mathrm{diag}$ along the counit \eqref{e:Kcounit}.
It satisfies \ref{i:DEgrpconj} of Definition~\ref{d:Galext} because \eqref{e:DEactcompat} holds for $\alpha$ by naturality of the counit and compatibility of \eqref{e:Kcounit} with composition, 
and \ref{i:DEkbargrpconj} because \eqref{e:DEconjcompat} is immediate.

We have a functor from the category of transitive affine groupoids over $\overline{k}$ to the category
of Galois extended $\overline{k}$\nd groups which sends $F$ to $(F^\mathrm{diag},F(\overline{k})_{\overline{k}})$
and $F \to F'$ to the pair $(h,l)$ with $h$ the restriction to the diagonals and $l$ the map induced on $\overline{k}$\nd points over $\overline{k}$.

\begin{prop}\label{p:grpdgalequ}
The functor from the category of transitive affine groupoids over $\overline{k}$ to
the category of Galois extended $\overline{k}$\nd groups that sends $F$ to $(F^\mathrm{diag},F(\overline{k})_{\overline{k}})$
is an equivalence of categories.
\end{prop}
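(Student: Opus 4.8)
The plan is to exhibit a quasi-inverse $G$: since an equivalence is the same as a functor admitting a quasi-inverse, it then suffices to construct $G$ on objects and morphisms and to produce natural isomorphisms $G\circ T\cong\mathrm{id}$ and $T\circ G\cong\mathrm{id}$, where $T$ denotes the functor of the proposition. On objects, given a Galois extended $\overline{k}$-group $(D,E)$, I would let $G(D,E)$ be the scheme over $\Gamma=\Spec(\overline{k})\times_k\Spec(\overline{k})$, regarded as a graph over $\overline{k}$ via $d_0$, obtained by twisting $D\times_{\overline{k}}\Gamma$ by the extension: a point of $G(D,E)$ in a $k$-scheme over the point $u$ of $\Gamma$ is a point $d$ of the pullback of $D$ along $d_0(u)$, the identities are the identities of $D$, and the composition is $(d,u)\circ(d',u')=\bigl(d\,\alpha(u,d')\,c(u,u'),\,u.u'\bigr)$, with $\alpha$ the action \eqref{e:DEalphaact} of the groupoid $E_{/\overline{k}}$ in $\overline{\sT}_k$ on $D$ and $c$ the cocycle of the extension $E$ (read off after choosing a set-theoretic splitting). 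The associativity of this composition and the existence of inverses are exactly the conditions \eqref{e:DEactcompat} and \eqref{e:DEconjcompat}, i.e.\ \ref{i:DEgrpconj} and \ref{i:DEkbargrpconj}, together with the cocycle identity for $c$. To see that $G(D,E)$ is representable by an affine scheme — so that it is a transitive affine groupoid over $\overline{k}$ — write $D=\lim_\lambda D_\lambda$ as the filtered limit of its $\overline{k}$-quotients of finite type and $E$ as the corresponding filtered limit of extensions $E_\lambda$ of $\Gal(\overline{k}/k)$ by the discrete groups $D_\lambda(\overline{k})_{\overline{k}}$; then $G(D,E)=\lim_\lambda G(D_\lambda,E_\lambda)$, and each $G(D_\lambda,E_\lambda)$ is affine because $D_\lambda$ is affine of finite type and the cocycle $c_\lambda$, taking values in a discrete group, is locally constant on the profinite scheme $\Gamma\cong\Gal(\overline{k}/k)_{/\overline{k}}$ (see \eqref{e:GalGammaschiso}), so defines a morphism of schemes.

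On morphisms, a morphism $(h,\lambda)$ of Galois extended $\overline{k}$-groups is sent to $(d,u)\mapsto(h(d),\lambda(u))$; this is a morphism of groupoids by \eqref{e:DEmorphcounit} and \eqref{e:DEmorphact}, and functoriality is clear. The natural isomorphism $T\circ G\cong\mathrm{id}$ is essentially a tautology: $G(D,E)^{\mathrm{diag}}$ is the fibre of $G(D,E)$ over the diagonal, which is $D$; $G(D,E)(\overline{k})_{\overline{k}}$ is the set of pairs $(d,u)$ with $d\in D(\overline{k})_{\overline{k}}$ and $u\in\Gal(\overline{k}/k)$ under the twisted law, which is $E$; and the Krull topology on $G(D,E)(\overline{k})_{\overline{k}}$ agrees with the topology of $E$ because, layer by layer, $D_\lambda(\overline{k})_{\overline{k}}$ is discrete ($D_\lambda$ being of finite type) and $\Gal(\overline{k}/k)$ is profinite, so the pro-discrete topology on $D(\overline{k})_{\overline{k}}=\lim_\lambda D_\lambda(\overline{k})_{\overline{k}}$ is reproduced. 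For the natural isomorphism $G\circ T\cong\mathrm{id}$, let $F$ be a transitive affine groupoid over $\overline{k}$; choose by Lemma~\ref{l:grpdretr} a section $t$ of $(d_0,d_1)\colon F\to\Gamma$, normalized to be the identity over the diagonal. By \eqref{e:dtu} every point of $F$ is uniquely of the form $d\circ t(u)$, so $(d,u)\mapsto d\circ t(u)$ is an isomorphism of $\overline{k}$-schemes from $G(F^{\mathrm{diag}},F(\overline{k})_{\overline{k}})$ to $F$; expanding $\bigl(d\circ t(u)\bigr)\circ\bigl(d'\circ t(u')\bigr)$ using the composition of $F$ — and using that $\alpha$ for the Galois extended group of $F$ is the restriction along the counit \eqref{e:Kcounit} of the action of $F$ on $F^{\mathrm{diag}}$, while $c$ is the cocycle of the extension $F(\overline{k})_{\overline{k}}$ — shows this map is compatible with the groupoid structures. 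One then checks independence of the choice of $t$ up to natural isomorphism, which yields naturality.

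I expect the main obstacle to be the claim that $G(D,E)$ is representable by an affine scheme carrying the correct topology on its $\overline{k}$-points — equivalently, that the twisting cocycle coming from the topological extension $E$ is ``algebraic''. This is precisely where the defining hypotheses of a Galois extended $\overline{k}$-group are used decisively: the pro-discrete topology on $D(\overline{k})_{\overline{k}}$ defined by the finite-type quotients of $D$, together with the profiniteness of $\Gal(\overline{k}/k)$, is what permits the reduction to finite-type layers in which representability and the topology statement are elementary. The compatibility conditions \ref{i:DEgrpconj} and \ref{i:DEkbargrpconj} contribute nothing beyond what is needed for the twisted law on $G(D,E)$ to be a groupoid composition, so no further input is required; the remainder of the proof is the bookkeeping of checking that the various constructions are mutually inverse and natural.
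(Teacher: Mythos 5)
Your construction on objects coincides with the paper's: the twisted groupoid $D\times_{\overline{k}}\Gamma$ with composition $(d,u)\circ(d',u')=(d\,\alpha(s(u),d')\,\delta(u,u'),\,u\circ u')$, built from a continuous splitting $s$ of $E\to\Gal(\overline{k}/k)$, is exactly how the paper proves essential surjectivity; and the algebraicity you single out as the main obstacle is automatic in the paper's framework, since $s$, the counit $\varepsilon$ and the action $\alpha$ are morphisms of local ringed spaces over $\overline{k}$, so the cocycle $\delta$ and the composition law are morphisms of schemes (your reduction to finite-type layers is a workable substitute). The genuine gap is in your treatment of morphisms. The assignment $(d,u)\mapsto(h(d),\lambda(u))$ does not typecheck — the second coordinate of a point of $G(D,E)$ lies in $\Gamma$, not in $E_{/\overline{k}}$ — and under the only sensible reading, $(d,u)\mapsto(h(d),u)$, it is in general \emph{not} a morphism of groupoids: a morphism $(h,l)$ of Galois extended $\overline{k}$-groups has no reason to intertwine the chosen splittings $s$ of $E$ and $s'$ of $E'$, and your map respects the twisted composition only when it does. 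The correct formula is $(d,u)\mapsto(h(d)\,b(u),u)$, where $b$ is the $D'(\overline{k})_{\overline{k}}$-valued correction determined by $\lambda(s(u))=\varepsilon'(b(u))\circ s'(u)$; checking that this is a groupoid morphism (via \eqref{e:DEmorphact} and \eqref{e:DEmorphcounit}), that $G$ so corrected is actually functorial, and that your isomorphisms $G\circ T\cong\mathrm{id}$ and $T\circ G\cong\mathrm{id}$ — whose dependence on the choices of $t$ and $s$ you only gesture at — are natural, is precisely where the real content of the proposition sits, and none of it is carried out.

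The paper sidesteps this bookkeeping by not constructing a quasi-inverse at all: it proves full faithfulness directly. Given $F$, $F'$ and a morphism $(h,\lambda)$ of the associated Galois extended groups, it chooses by Lemma~\ref{l:grpdretr} a section $t$ of $F\to\Gamma$ (the identity over the diagonal) and \emph{transports} it to $F'$, setting $t'$ equal to $\lambda\circ t_0$ composed with the counit \eqref{e:Kcounit} for $F'$, rather than fixing an independent splitting of $F'$; then \eqref{e:dtu} forces $f(d\circ t(u))=h(d)\circ t'(u)$, uniqueness is immediate, and \eqref{e:DEmorphact}, \eqref{e:DEmorphcounit} show $f$ preserves composition. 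Combined with the object-level construction (your $G$ on objects), this yields the equivalence with no correction cochains, no functoriality check for $G$, and no naturality verifications. Either repair your definition of $G$ on morphisms with the cochain $b$ and carry it through those checks, or adopt the paper's fully-faithful-plus-essentially-surjective packaging.
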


\begin{proof}

Let $F$ and $F'$ be transitive affine groupoids over $\overline{k}$, and let $h:F^\mathrm{diag} \to F'{}^\mathrm{diag}$
and $\lambda:(F(\overline{k})_{\overline{k}})_{/\overline{k}} \to F'(\overline{k})_{\overline{k}})_{/\overline{k}}$ define a
morphism from
$(F^\mathrm{diag},F(\overline{k})_{\overline{k}})$ to $(F'{}^\mathrm{diag},F'(\overline{k})_{\overline{k}})$.
To prove the full faithfulness, it is to be shown that
there is a unique morphism $f:F \to F'$ of groupoids over $\overline{k}$ which induces $h$ and $\lambda$.

By Lemma~\ref{l:grpdretr}, $F \to [\overline{k}]$ has a section $t:[\overline{k}] \to F$
which is the identity above the diagonal.
Factor $t$ as $t_0:[\overline{k}] \to (F(\overline{k})_{\overline{k}})_{/\overline{k}}$ followed by
the counit \eqref{e:Kcounit},
and write $t'$ for $\lambda \circ t_0$ followed by the counit for $F'$.
By the factorisation \eqref{e:dtu}, an $f$ inducing $h$ and $\lambda$ is unique if it exists, because
it must send $d \circ t(u)$ to $h(d) \circ t'(u)$ for points $d$ of $F^\mathrm{diag}$ and $u$ of 
$[\overline{k}]$ with $d_0(u) = d_1(d) = d_0(d)$.
The morphism $f$ so defined preserves composition because
\[
h(t(u) \circ d \circ t(u)^{-1}) = t'(u) \circ h(d) \circ t'(u)^{-1}
\]
by \eqref{e:DEmorphact} and
\[
h(t(u) \circ t(u') \circ t(u \circ u')^{-1}) = t'(u) \circ t'(u') \circ t'(u \circ u')^{-1}
\]
by \eqref{e:DEmorphcounit}.
It is thus a morphism of groupoids over $\overline{k}$.

Let $(D,E)$ be a Galois extended $\overline{k}$\nd group.
To prove the essential surjectivity, 
it is to be shown that there exists a transitive affine groupoid $F$ over $\overline{k}$
with $(F^\mathrm{diag},F(\overline{k})_{\overline{k}})$ isomorphic to $(D,E)$.

The continuous map $E \to \Gal(\overline{k}/k)$ has a section 
which preserves the identity.
Applying $(-)_{/\overline{k}}$ and using \eqref{e:GalGammaschiso}, we obtain a section
\[
s:[\overline{k}] \to E_{/\overline{k}},
\]
which is the identity above the diagonal, to the morphism of $k$\nd schemes $E_{/\overline{k}} \to [\overline{k}]$.
With $\varepsilon$ the counit \eqref{e:Dcounit}, define a morphism 
$\delta:[\overline{k}] \times_{\overline{k}} [\overline{k}] \to D$ by
\[
\delta(u,u') = \varepsilon(s(u) \circ s(u') \circ s(u \circ u')^{-1}),
\]
and take
\[
F = D \times_{\overline{k}} [\overline{k}]
\]
with $F \to [\overline{k}]$ the projection, identity $(1,1)$ and composition $\circ:F \times_{\overline{k}} F \to F$
\[
(d,u) \circ (d',u') = (d\alpha(s(u),d')\delta(u,u'),u \circ u'),
\]
where $\alpha$ is the action \eqref{e:DEalphaact}.
The composition is associative because
\[
\delta(u,u')\delta(u \circ u',u'') =  \alpha(s(u),\delta(u',u''))\delta(u,u' \circ u'')
\]
by \eqref{e:DEactcompat}, and
\[
\delta(u,u')\alpha(s(u \circ u'),d'')\delta(u,u')^{-1} =  \alpha(s(u),\alpha(s(u'),d''))
\]
by \eqref{e:DEconjcompat} and the associativity of the action $\alpha$.
Also inverses exist:
\[
(d,u)^{-1} = (1,u)^{-1} \circ (d,1)^{-1} = (\varepsilon(s(u)^{-1} \circ s(u^{-1})^{-1}),u^{-1}) \circ (d^{-1},1).
\]
Thus $F$ is a transitive affine groupoid over $\overline{k}$.
Since \eqref{e:Kcounit} sends the point $(v,u)$ of
\[
(F(\overline{k})_{\overline{k}})_{/\overline{k}} = (D(\overline{k})_{\overline{k}})_{/\overline{k}} \times_{\overline{k}} [\overline{k}]
\]
to $(\varepsilon(v),u)$, the composition 
of $(F(\overline{k})_{\overline{k}})_{/\overline{k}}$ is given by
\[
(v,u) \circ (v',u') = (v \circ s(u) \circ v' \circ s(u') \circ s(u \circ u')^{-1},u \circ u').
\]
We thus have an isomorphism $(F(\overline{k})_{\overline{k}})_{/\overline{k}} \iso E_{/\overline{k}}$ sending
$(v,u)$ to $v \circ s(u)$.
Along with $F^\mathrm{diag} \iso D$ sending $(d,1)$ to $d$, this gives
an isomorphism from $(F^\mathrm{diag},F(\overline{k})_{\overline{k}})$ to $(D,E)$.
Indeed \eqref{e:DEmorphcounit} is clear, while \eqref{e:DEmorphact} holds for points $(v,1)$ of 
$(F(\overline{k})_{\overline{k}})_{/\overline{k}}$ by \eqref{e:DEconjcompat},
for points $(1,u)$ because the conjugate in $F$ of $(d,1)$ by $(1,u)$ is $(\alpha(s(u),d),1)$, 
and hence for any $(v,u) = (v,1) \circ (1,u)$ by associativity of the actions.
\end{proof}

\begin{rem}
The functor $D \mapsto D(\overline{k})_{\overline{k}}$ from the category pro\'etale $\overline{k}$\nd groups
to the category of profinite topological groups is an equivalence.
It follows that the forgetful functor $(D,E) \mapsto E$ from the category of Galois extended $\overline{k}$\nd groups $(D,E)$ 
with $D$ pro\'etale to the category of profinite extensions of $\Gal(\overline{k}/k)$ is an equivalence.
Thus Proposition~\ref{p:grpdgalequ} gives by restriction an equivalence $F \mapsto F(\overline{k})_{\overline{k}}$
from the category pro\'etale groupoids over $\overline{k}$ to the category of profinite extensions of $\Gal(\overline{k}/k)$.
More generally for arbitrary, not necessarily perfect $k$, if $k_s$ is the separable closure and $k'$ the perfect closure 
of $k$ in $\overline{k}$, then  Lemma~\ref{l:pureinsext} with $X = \Spec(k_s)$ and $H = X$ shows that $F \mapsto F(k_s)_{k_s}$
gives an equivalence from the category of pro\'etale groupoids over $k_s$ to the category of profinite extensions of $\Gal(k_s/k)$.
\end{rem}

\begin{defn}\label{d:Galextloose}
By a \emph{loosely Galois extended $\overline{k}$\nd group} we mean a pair $(D,E)$ with $D$ an affine $\overline{k}$\nd group
and $E$ a topological extension $\Gal(\overline{k}/k)$ by $D(\overline{k})_{\overline{k}}$,
such that conjugation of $D(\overline{k})_{\overline{k}}$ by any $e$ in $E$ above $\sigma$ in $\Gal(\overline{k}/k)$
is induced by a $\sigma$\nd automorphism of the $\overline{k}$\nd group $D$.
\end{defn}

A morphism of loosely Galois extended $\overline{k}$\nd groups from $(D,E)$ to $(D',E')$
is a pair $(h,l)$ with $h$ a $\overline{k}$\nd homomorphism from $D$ to $D'$ and $l$ a continuous homomorphism from $E$ to $E'$,
such that the diagram \eqref{e:DEhomdiagram} commutes.

If $D$ and $D'$ are affine $\overline{k}$\nd groups with $D$ reduced, then two $\sigma$\nd morphisms of $\overline{k}$\nd groups
$D \to D'$ which induce the same homomorphism $D(\overline{k})_{\overline{k}} \to D'(\overline{k})_{\overline{k}}$ coincide:
their equaliser is a closed subscheme of $D$ containing the subset $D(\overline{k})_{\overline{k}}$,
which is dense by Corollary~\ref{c:kgrpac}\ref{i:kgrpacdense}. 
It follows that for $(D,E)$ a loosely Galois extended $\overline{k}$\nd group with $D$ reduced, there is
for each $e$ in $E$ above $\sigma$ unique $\sigma$\nd automorphism $a_e$ of the $k$\nd group $D$
which induces conjugation by $e$ on $D(\overline{k})_{\overline{k}}$.
Explicitly,
\[
a_e({}^\sigma d) = ede^{-1}
\]
for each $d$ in $D(\overline{k})_{\overline{k}}$.
Further the $a_e$ satisfy \ref{i:DEgrpconj} and \ref{i:DEkbargrpconj} of Definition~\ref{d:Galext}.
Hence they define a structure of Galois extended $\overline{k}$\nd group on $(D,E)$ provided that the action $e \mapsto a_e$
is continuous.
Similarly if $(D,E)$ and $(D',E')$ are Galois extended $\overline{k}$\nd groups
and $D$ is reduced, then any morphism of loosely
Galois extended $\overline{k}$\nd groups from $(D,E)$ to $(D',E')$ is a morphism of Galois extended $\overline{k}$\nd groups.

By discarding the action of $E$ on $D$, we have a faithful forgetful functor 
from the category of Galois extended $\overline{k}$\nd groups
to the category of loosely Galois extended $\overline{k}$\nd groups.
By the above remarks it is fully faithful and injective on objects on the full subcategory
of those Galois extended $\overline{k}$\nd groups $(D,E)$ with $D$ reduced.

\begin{lem}\label{l:fgsubgrpdense}
Suppose that $k$ is algebraically closed of characteristic $0$.
Then for any affine $k$\nd group $G$ of finite type, there exists a finitely generated subgroup 
of the group of $k$\nd rational points of $G$ which is Zariski dense in $G$.
\end{lem}

\begin{proof}
The class $\sG$ of those affine $k$\nd groups $G$ of finite type for which the conclusion of the Lemma holds
contains $\bG_m$ and $\bG_a$.
Since the inverse image of a dense subset under an open map is dense, lifting a finite set of generators shows
that $\sG$ is closed under the formation of extensions.
A connected affine $k$\nd group of finite type thus lies in $\sG$ if it is soluble, and hence by the Bruhat decomposition
if it is reductive.
Since $\sG$ contains the finite $k$\nd groups, the result follows. 
\end{proof}

\begin{prop}\label{p:Galextiso}
Suppose that $k$ is of characteristic $0$.
Then the forgetful functor from the category of Galois extended $\overline{k}$\nd groups to the category
of loosely Galois extended $\overline{k}$\nd groups is an isomorphism of categories.
\end{prop}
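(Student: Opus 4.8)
The plan is to show that the forgetful functor is bijective on objects and fully faithful. Since $k$ has characteristic $0$, every affine $\overline{k}$\nd group is reduced, so in both categories every object $(D,E)$ has $D$ reduced. By the remarks immediately preceding the proposition the forgetful functor is therefore faithful and full, and injective on objects: for reduced $D$ the family $(a_e)_{e \in E}$, where for $e$ above $\sigma$ the map $a_e$ is the unique $\sigma$\nd automorphism of the $\overline{k}$\nd group $D$ inducing conjugation by $e$ on $D(\overline{k})_{\overline{k}}$, is determined by $(D,E)$ and automatically satisfies conditions \ref{i:DEgrpconj} and \ref{i:DEkbargrpconj} in the definition of a Galois extended $\overline{k}$\nd group. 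Hence it remains only to prove surjectivity on objects: given a loosely Galois extended $\overline{k}$\nd group $(D,E)$, the assignment $e \mapsto a_e$ is a \emph{continuous} action of $E$ on the underlying $k$\nd scheme of $D$, so that $(D,E)$ underlies a Galois extended $\overline{k}$\nd group.

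To prove continuity I would first note that $e \mapsto a_e$ is an action of the abstract group $E$ on $D$: for $e,e'$ above $\sigma,\sigma'$ the $\sigma\sigma'$\nd automorphisms $a_e \circ a_{e'}$ and $a_{ee'}$ induce the same homomorphism on the Zariski dense (Lemma~\ref{l:kgrpac}\ref{i:kgrpacdense}) subgroup $D(\overline{k})_{\overline{k}}$, hence coincide. Continuity of this action amounts to the assertion that for every section $b$ of $\sO_D$ over $D$ the map $e \mapsto a_e^{\#}(b)$ into $\sO_D(D)$ is locally constant. Writing $\sO_D(D)$ as the filtered colimit of the coordinate rings of the affine $\overline{k}$\nd quotient groups $D_\lambda$ of $D$ of finite type, and using that we have an action of $E$, it suffices to treat a fixed $b$ lying in the coordinate ring of some $D_\lambda$ and to prove local constancy near $e = 1$. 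Since $\Gal(\overline{k}/k)$ is profinite, the topological extension $E$ admits a continuous section $s$ with $s(1) = 1$; as $v \mapsto a_v$ for $v \in D(\overline{k})_{\overline{k}}$ is continuous, being by condition \ref{i:DEkbargrpconj} conjugation by $v$, obtained from the counit $\varepsilon$ of \eqref{e:Dcounit} and the fixed morphisms defining the group law of $D$, and each $e$ above $\sigma$ equals $a_v \cdot a_{s(\sigma)}$ with $v$ depending continuously on $e$, it is enough to show that $a_{s(\sigma)}^{\#}$ fixes $b$ for $\sigma$ in a neighbourhood of $1$ in $\Gal(\overline{k}/k)$.

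This last point, where uniformity is the issue, is the technical heart, and here characteristic $0$ enters a second time. I would choose $\nu \ge \lambda$ large enough that $D_\lambda$, $D_\nu$, the transition morphism $D_\nu \to D_\lambda$ and the element $b$ are all defined over a finite subextension $\overline{k}^{U}$ of $\overline{k}$, with $U$ open in $\Gal(\overline{k}/k)$, and — using continuity of the conjugation action of $E$ on $D(\overline{k})_{\overline{k}} = \lim_\nu D_\nu(\overline{k})$ — large enough that for $e$ near $1$ the induced map of $a_e$ on $D(\overline{k})_{\overline{k}}$ carries $\ker(D(\overline{k})_{\overline{k}} \to D_\nu(\overline{k}))$ into $\ker(D(\overline{k})_{\overline{k}} \to D_\lambda(\overline{k}))$, so that the $\sigma$\nd morphism $D \xrightarrow{a_e} D \to D_\lambda$ factors through $D \to D_\nu$. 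By Lemma~\ref{l:fgsubgrpdense} the finite type $\overline{k}$\nd group $D_\nu$ has a finitely generated Zariski dense subgroup of $\overline{k}$\nd points; lifting finitely many generators along the surjection $D(\overline{k})_{\overline{k}} \to D_\nu(\overline{k})$ (Lemma~\ref{l:kgrpac}\ref{i:kgrpacsurj}) and invoking continuity of conjugation again, one obtains a neighbourhood of $1$ in $E$, which after shrinking so that its image in $\Gal(\overline{k}/k)$ lies in $U$ and fixes the finitely many coordinates of the relevant $\overline{k}$\nd points, has the property that for $e = s(\sigma)$ in it the resulting $\sigma$\nd morphism $D_\nu \to D_\lambda$ agrees, on the dense subgroup above, with the composite of $D_\nu \to D_\lambda$ and the $\sigma$\nd automorphism of $D_\nu$ coming from its $\overline{k}^U$\nd structure; by density and reducedness it agrees everywhere, whence $D \xrightarrow{a_{s(\sigma)}} D \to D_\lambda$ equals $D \to D_\lambda$ composed with the corresponding $\sigma$\nd automorphism of $D_\lambda$. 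Since this automorphism fixes $b$ (as $b$ is defined over $\overline{k}^U$ and $\sigma \in U$), we get $a_{s(\sigma)}^{\#}(b) = b$, establishing continuity and hence the proposition.

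The main obstacle is precisely this uniformity: for each individual $\overline{k}$\nd point the automorphism $a_e$ is close to the identity once $e$ is close to $1$, but $D_\nu(\overline{k})$ is in general infinite, so a single neighbourhood of $1$ working for all of $D(\overline{k})_{\overline{k}}$ is not available a priori; it is produced by replacing $D_\nu(\overline{k})$ by a finitely generated Zariski dense subgroup (Lemma~\ref{l:fgsubgrpdense}, which needs characteristic $0$) and detecting equality of morphisms to the reduced finite type group $D_\lambda$ by their effect on that dense subgroup together with density.
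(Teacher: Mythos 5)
Your argument is correct, and its technical heart coincides with the paper's: after observing that in characteristic $0$ every $\overline{k}$\nd group is reduced, so that only the continuity of $e \mapsto a_e$ is at issue, both proofs fix a continuous section $s$ of $E \to \Gal(\overline{k}/k)$, split $e = v\,s(\sigma)$ so that only the $s(\sigma)$\nd part is problematic, and then identify $a_{s(\sigma)}$, for $\sigma$ small, with the standard Galois twist of a finite type quotient by comparing the two $\sigma$\nd morphisms on a finitely generated Zariski dense subgroup of $\overline{k}$\nd points (Lemma~\ref{l:fgsubgrpdense}, which is where characteristic $0$ enters, together with Lemma~\ref{l:kgrpac}). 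Where you genuinely diverge is in the reduction from a general pro\nd algebraic $D$ to the finite type situation. The paper shows that inside every normal $N$ with $D/N$ of finite type there is an $E$\nd stable $N_0$ with $D/N_0$ of finite type (via $N'' = \bigcap_i a_{s(\sigma_i)}(N')$ and $N_0 = \bigcap_{e} a_e(N)$), so that $(D/N_0, E/N_0(\overline{k})_{\overline{k}})$ is again loosely Galois extended and the action on $D$ is a limit of continuous actions on finite type quotients. You avoid constructing $E$\nd stable subgroups altogether: you verify directly the criterion that $e \mapsto a_e^{\#}(b)$ is locally constant for each coordinate function $b$ pulled back from some $D_\lambda$, replacing $E$\nd stability by the two\nd level observation that conjugation by $e$ near $1$ carries $\Ker(D(\overline{k})_{\overline{k}} \to D_\nu(\overline{k}))$ into $\Ker(D(\overline{k})_{\overline{k}} \to D_\lambda(\overline{k}))$, so that $(D \to D_\lambda)\circ a_e$ factors through $D_\nu$. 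What your route buys is that you never need the quotient pair to be a loosely Galois extended group nor the (implicit in the paper) fact that a limit of continuous actions is continuous; what it costs is that you must unwind the sheaf\nd theoretic definition of continuity into your local\nd constancy criterion for affine $D$, which you assert rather than prove (it is true, by quasi\nd compactness of $|D|$ and the fact that an affine morphism is determined by its global comorphism, but it deserves a line), and you should also make explicit that the kernel containment on $\overline{k}$\nd points yields $a_e(\Ker(D \to D_\nu)) \subset \Ker(D \to D_\lambda)$ scheme\nd theoretically by the same density\nd plus\nd reducedness argument (Lemma~\ref{l:kgrpac}\ref{i:kgrpacdense}) you invoke later. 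Neither point is a gap in substance.
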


\begin{proof}
Since any affine $\overline{k}$\nd group is reduced, it remains by the remarks following
Definition~\ref{d:Galextloose} only to show
that for any loosely Galois extended $\overline{k}$\nd group $(D,E)$, the action $e \mapsto a_e$ defined above
is continuous.
Fix a section $s:\Gal(\overline{k}/k) \to E$ with $s(1) = 1$ of the continuous map $E \to \Gal(\overline{k}/k)$.

Suppose first that $D$ is of finite type over $\overline{k}$.
Then $D(\overline{k})_{\overline{k}}$ is discrete, and
\[
D = D_0 \times_{k_0} \overline{k}
\]
for a finite subextension $k_0$ of $\overline{k}$ and affine $k_0$\nd group of finite type $D_0$.
By Lemma~\ref{l:fgsubgrpdense}, $D(\overline{k})_{\overline{k}}$ has a finitely generated subgroup 
$\Phi$ which is dense in $D$.
Increasing $k_0$ if necessary, we may suppose that a finite set of generators of $\Phi$, and hence
$\Phi$ itself, is contained in $D_0(k_0)_{k_0} \subset D(\overline{k})_{\overline{k}}$.
Similarly we may suppose that conjugation by $s(\sigma)$ for $\sigma$ in $\Gal(\overline{k}/k_0)$ fixes $\Phi$.
For $\sigma$ in $\Gal(\overline{k}/k_0)$ we then have
\[
a_{s(\sigma)} = D_0 \times_{k_0} \Spec(\sigma^{-1}),
\]
because the two sides are $\sigma$\nd automorphisms of $D$ which coincide on $\Phi$.
The restriction of the action of $E$ on $D$ to the inverse image $E_0$ of $\Gal(\overline{k}/k_0)$ in $E$ 
is thus continuous, because it is given by an action
\[
(E_0)_{/k} \times_k D \to (D(\overline{k})_{\overline{k}})_{/k} \times_k \Gal(\overline{k}/k_0)_{/k} \times_k D
\to (D(\overline{k})_{\overline{k}})_{/k} \times_k D \to D
\]
of $(E_0)_{/k}$, where the first arrow is defined by the continuous map that sends $e$ above $\sigma$ to 
$(es(\sigma)^{-1},\sigma)$, the second by the action of $\Gal(\overline{k}/k_0)$ on $D$ through its
action on $\Spec(\overline{k})$, and the third by the action of $D(\overline{k})_{\overline{k}}$ 
on $D$ by conjugation.
Since $E_0$ is an open subgroup of $E$, the action of $E$ on $D$ is also continuous.

To prove the continuity for arbitrary $D$, we show that
for every normal $\overline{k}$\nd subgroup $N$ of $D$ with $D/N$ of finite type,
there exists a normal $\overline{k}$\nd subgroup $N_0 \subset N$ of $D$ with $D/N_0$ of
finite type such that the closed subgroup $N_0(\overline{k})_{\overline{k}}$ of $E$ is normal.
Since $D(\overline{k})_{\overline{k}}/N_0(\overline{k})_{\overline{k}} = (D/N_0)(\overline{k})_{\overline{k}}$ by
Corollary~\ref{c:kgrpac}\ref{i:kgrpacsurj}, we have for such an $N_0$ a loosely Galois extended $\overline{k}$\nd group
$(D/N_0,E/N_0(\overline{k})_{\overline{k}})$.
The action of $E$ on $D$ will then be the limit of continuous actions of the $E/N_0(\overline{k})_{\overline{k}}$ 
on $D/N_0$, and hence will be continuous.

The assignment $(\sigma,u) \mapsto s(\sigma)us(\sigma)^{-1}$ defines a continuous map
\[
\Gal(\overline{k}/k) \times D(\overline{k})_{\overline{k}} \to D(\overline{k})_{\overline{k}}.
\]
Since $N(\overline{k})_{\overline{k}}$ is an open subgroup of $D(\overline{k})_{\overline{k}}$, its inverse image 
under this map is open, and hence contains $\Sigma \times N'(\overline{k})_{\overline{k}}$ for
some open normal subgroup $\Sigma$ of $\Gal(\overline{k}/k)$ and normal $\overline{k}$\nd subgroup $N'$ of $D$ with $D/N'$
of finite type.
Then if $\sigma_1,\sigma_2, \dots,\sigma_n$ are representatives for the cosets of $\Sigma$, we have
\[
eN(\overline{k})_{\overline{k}}e^{-1} \supset \bigcap_{i = 1}^n s(\sigma_i)N'(\overline{k})_{\overline{k}}s(\sigma_i)^{-1}
\]
for every $e$ in $E$.
Now
\[
N'' = \bigcap_{i = 1}^n a_{s(\sigma_i)}(N')
\]
is a normal $\overline{k}$\nd subgroup of $D$ with $D/N''$ of finite type,
and by Corollary~\ref{c:kgrpac}\ref{i:kgrpacdense} the inclusion $a_e(N) \supset N''$ holds for every $e$ in $E$, because 
it holds on $\overline{k}$\nd points over $\overline{k}$.
It thus suffices to take for $N_0$ the intersection of the $a_e(N)$ for $e$ in $E$.
\end{proof}

\begin{defn}\label{d:Galextact}
Let $(D,E)$ be a Galois extended $\overline{k}$\nd group.
By a \emph{right $(D,E)$\nd scheme} we mean a $\overline{k}$\nd scheme $Z$, together with a right action
\[
c:Z \times_{\overline{k}} D \to Z 
\]
of the $\overline{k}$\nd group $D$ on $Z$
and a continuous right action of $E$ on the underlying $k$\nd scheme of $Z$ with $e$ in $E$ above $\sigma$ in $\Gal(\overline{k}/k)$
acting as a $\sigma^{-1}$\nd automorphism  $b_e$ of $Z$, such that:
\begin{enumerate}
\renewcommand{\theenumi}{(\alph{enumi})}
\item\label{i:DEactDact}
$b_e \circ c = c \circ (b_e \times_{\Spec(\sigma)} a_{e^{-1}})$, where $a_e$ is the action of $e$ on $D$;
\item\label{i:DEactDpts} 
$b_e = c(-,e)$ for $e$ in $D(\overline{k})_{\overline{k}}$.
\end{enumerate}
\end{defn}

A morphism of right $(D,E)$\nd schemes is a morphism of $\overline{k}$\nd schemes which is compatible
with the actions of $D$ and $E$.

If $D$ is reduced and $Z$ is separated, then \ref{i:DEactDact} in Definition~\ref{d:Galextact}
is redundant:
the equaliser of the two morphisms of \ref{i:DEactDact} is a closed subscheme of $Z \times_{\overline{k}} D$,
and hence coincides with it, because by \ref{i:DEactDpts} in Definition~\ref{d:Galextact} and 
\ref{i:DEgrpconj} in Definition~\ref{d:Galext}, this equaliser contains the fibre above each point of 
the dense subset $D(\overline{k})_{\overline{k}}$ of $D$.

Just as a continuous action of $E$ on the underlying $k$\nd scheme of a $\overline{k}$\nd group $D$ for which $e$ above $\sigma$ 
acts as a $\sigma$\nd automorphism of $D$ may be identified with an
action \eqref{e:DEalphaact} of the groupoid $E_{/\overline{k}}$ on $D$,
a continuous right action of $E$ on the underlying $k$\nd scheme of a $\overline{k}$\nd scheme $Z$ for which 
$e$ above $\sigma$ acts as a $\sigma^{-1}$\nd automorphism of $Z$ may be identified with a right action
\begin{equation}\label{e:DEbetaact}
\beta:Z \times_{\overline{k}} E_{/\overline{k}} \to Z
\end{equation}
of $E_{/\overline{k}}$ on $Z$.
Then \ref{i:DEactDact} in Definition~\ref{d:Galextact} is equivalent the condition
\begin{equation}\label{e:DEactDact}
\beta(c(z,d),w) = c(\beta(z,w),\alpha(w^{-1},d))
\end{equation}
on points $z$, $d$ and $w$ of $Z$, $D$ and $E_{/\overline{k}}$, and \ref{i:DEactDpts} to the condition
\begin{equation}\label{e:DEactDpts}
\beta(z,v) = c(z,\varepsilon(v))
\end{equation}
on points $z$ of $Z$ and $v$ of $(D(\overline{k})_{\overline{k}})_{/\overline{k}}$, where
$\alpha$ is the action \eqref{e:DEalphaact} and $\varepsilon$ is the
counit \eqref{e:Dcounit}.
A morphism $Z \to Z'$ of right $(D,E)$\nd schemes is then a morphism $Z \to Z'$ of $\overline{k}$\nd schemes
compatible with the actions of $D$ and $E_{/\overline{k}}$.

Let $F$ be a transitive affine groupoid over $\overline{k}$.
If $Z$ is a right $F$\nd scheme,
we have a right action of $F^\mathrm{diag}$ on $Z$ by restriction, and a right action of $F(\overline{k})_{\overline{k}}$ on $Z$
with $w$ acting as the $\gamma(w)^{-1}$\nd automorphism of $Z$ defined by the action 
\[
Z \iso \Spec(\gamma(w))^*Z
\]
at $w$ of $F$ on $Z$, where $\gamma$ is \eqref{e:gammamap}.
The action of $F(\overline{k})_{\overline{k}}$ is continuous because it arises from an action of the form
\eqref{e:DEbetaact} with $E = F(\overline{k})_{\overline{k}}$, given by restricting the action of $F$ along the counit 
\eqref{e:Kcounit}.
Further \eqref{e:DEactDact} and hence \ref{i:DEactDact} of Definition~\ref{d:Galextact} is satisfied by associativity of the 
action of $F$ and the compatibility of the counit \eqref{e:Kcounit} with composition,
and \eqref{e:DEactDpts} and hence \ref{i:DEactDpts} is satisfied by naturality of the counit.
Thus we obtain a functor from the category of right $F$\nd schemes to the
category of right $(F^\mathrm{diag},F(\overline{k})_{\overline{k}})$\nd schemes.

\begin{prop}\label{p:grpdgalschequ}
Let $F$ be a transitive affine groupoid over $\overline{k}$.
Then passing from the action of $F$ to the actions of $F^\mathrm{diag}$ 
and $F(\overline{k})_{\overline{k}}$
defines an isomorphism from the category of right $F$\nd schemes to the category of
right $(F^\mathrm{diag},F(\overline{k})_{\overline{k}})$\nd schemes.
\end{prop}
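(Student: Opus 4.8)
The plan is to run the relative analogue of the proof of Proposition~\ref{p:grpdgalequ}, now with coefficients in the right $F$\nd scheme $Z$ rather than in $F$ itself. Write $D = F^{\mathrm{diag}}$ and $E = F(\overline{k})_{\overline{k}}$, so that $(D,E)$ is the Galois extended $\overline{k}$\nd group attached to $F$. By Lemma~\ref{l:grpdretr} fix a section $t\colon\Gamma\to F$ of $(d_0,d_1)$ which is the identity above the diagonal, and factor it over $\overline{k}$ as $t = \varepsilon_F\circ t_0$, where $\varepsilon_F$ is the counit \eqref{e:Kcounit} and $t_0\colon\Gamma\to E_{/\overline{k}}$ is again the identity above the diagonal. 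The crucial input is the unique factorisation \eqref{e:dtu}: every point of $F$ in a $k$\nd scheme is uniquely of the form $d\circ t(u)$ with $d$ a point of $D$ and $u$ a point of $\Gamma$ satisfying $d_0(u)=d_1(d)=d_0(d)$; in particular it gives an isomorphism of $\overline{k}$\nd schemes $F\iso F^{\mathrm{diag}}\times_{\overline{k}}\Gamma$ sending $d\circ t(u)$ to $(d,u)$.

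Since the functor in question is the identity on underlying $\overline{k}$\nd schemes and on the underlying maps of morphisms, it is automatically faithful, and it suffices to prove that it is full and bijective on objects. First I would note that for a right $F$\nd scheme $Z$ with action $a\colon Z\times_{\overline{k}}F\to Z$, restriction gives the $D$\nd action $c$, and restriction along $\varepsilon_F$ gives the $E_{/\overline{k}}$\nd action $\beta$, characterised by $\beta(w,v)=a(w,\varepsilon_F(v))$; then the right action axiom together with $t=\varepsilon_F\circ t_0$ yields $a(z,d\circ t(u)) = \beta(c(z,d),t_0(u))$, so $a$ is completely determined by the pair $(c,\beta)$. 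This gives injectivity on objects at once, and also fullness: a morphism of underlying $\overline{k}$\nd schemes that is compatible with the $D$\nd actions and with the $E$\nd actions is then automatically compatible with the $F$\nd actions.

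For surjectivity on objects I would, given a right $(D,E)$\nd scheme $(Z,c,\beta)$, \emph{define} $a\colon Z\times_{\overline{k}}F\to Z$ by $a(z,d\circ t(u)) = \beta(c(z,d),t_0(u))$, which makes sense by the uniqueness in \eqref{e:dtu}. It is a morphism of $\overline{k}$\nd schemes, being the composite of the isomorphism $F\iso F^{\mathrm{diag}}\times_{\overline{k}}\Gamma$ with $(z,d,u)\mapsto\beta(c(z,d),t_0(u))$: the latter is a composite of morphisms of locally ringed spaces over $\overline{k}$ whose source and target are the \emph{schemes} $Z\times_{\overline{k}}F$ and $Z$, hence a morphism of schemes because schemes form a full subcategory of locally ringed spaces. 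One checks that $a$ restricts to $c$ on $D$ (put $u=1$, using $t(1)=1$, $t_0(1)=1$), that it induces $\beta$ (put $d=1$), and that the unit axiom holds. Associativity of $a$ is the single real computation: expanding $(d\circ t(u))\circ(d'\circ t(u')) = \bigl(d\,\alpha(t_0(u),d')\,\delta(u,u')\bigr)\circ t(u\circ u')$, where $\alpha$ is the action of $E_{/\overline{k}}$ on $D$ and $\delta(u,u') = \varepsilon\bigl(t_0(u)\circ t_0(u')\circ t_0(u\circ u')^{-1}\bigr)$ is the $F^{\mathrm{diag}}$\nd valued cocycle measuring the non\nd multiplicativity of $t$ (with $\varepsilon$ the counit \eqref{e:Dcounit}), one reduces the required identity to the conjunction of \eqref{e:DEactDact}, \eqref{e:DEactDpts}, the associativity of $\beta$ as an $E_{/\overline{k}}$\nd action, and the relations among $\alpha$ and $\delta$ already verified for the composition of $F$ in the proof of Proposition~\ref{p:grpdgalequ}.

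The hard part, such as it is, is this last associativity verification: it is entirely formal, but one must manipulate several compatibility conditions simultaneously and keep the order of composition and the twists by $\alpha$ straight. Everything else --- faithfulness, fullness, injectivity on objects, the unit axiom, and the scheme versus locally ringed space point --- is routine given the factorisation \eqref{e:dtu} and the framework already set up for Proposition~\ref{p:grpdgalequ}.
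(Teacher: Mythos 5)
Your proposal is correct and follows essentially the same route as the paper's proof: the section $t$ from Lemma~\ref{l:grpdretr} factored through the counit, the unique factorisation \eqref{e:dtu} giving full faithfulness and uniqueness of the action, the formula $z\mapsto\beta(c(z,d),t_0(u))$ defining the action for surjectivity on objects, and associativity reduced to \eqref{e:DEactDact} and \eqref{e:DEactDpts}. Your associativity check is organised via the cocycle $\delta$ and the action $\alpha$ as in Proposition~\ref{p:grpdgalequ}, whereas the paper verifies the same two identities directly, but this is only a cosmetic difference.
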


\begin{proof}
By Lemma~\ref{l:grpdretr}, $F \to [\overline{k}]$ has a section $t:[\overline{k}] \to F$,
which is the identity above the diagonal.
It factors as $t_0:[\overline{k}] \to (F(\overline{k})_{\overline{k}})_{/\overline{k}}$ followed by
the counit \eqref{e:Kcounit}.

Let $Z$ and $Z'$ be right $F$\nd schemes.
By the factorisation \eqref{e:dtu}, a morphism $Z \to Z'$ of $\overline{k}$\nd schemes 
is compatible with the actions of $F$ on $Z$ and $Z'$ provided it is compatible with the actions of $F^\mathrm{diag}$ and
$(F(\overline{k})_{\overline{k}})_{/\overline{k}}$.
This proves the full faithfulness.

Let $Z$ be a right $(F^\mathrm{diag},F(\overline{k})_{\overline{k}})$\nd scheme.
By the factorisation \eqref{e:dtu}, a right $F$\nd scheme with underlying right
$(F^\mathrm{diag},F(\overline{k})_{\overline{k}})$\nd scheme $Z$ is unique if it exists,
because for points $d$ of $F^\mathrm{diag}$ and $u$ of $[\overline{k}]$ with $d_0(u) = d_1(d) = d_0(d)$, the point $d \circ t(u)$
of $F$ must act as
\[
z \mapsto \beta(c(z,d),t_0(u)),
\]
where $c$ is the action of $F^\mathrm{diag}$ and $\beta$ is the action of $(F(\overline{k})_{\overline{k}})_{/\overline{k}}$.
The action so defined is associative, because
\[
\beta(c(z,t(u) \circ d \circ t(u)^{-1}),t_0(u)) = c(\beta(z,t_0(u)),d)
\]
by \eqref{e:DEactDact}, and
\[
\beta(c(z,t(u) \circ t(u') \circ t(u \circ u')^{-1}),t_0(u \circ u')) 
=
\beta(z,t_0(u) \circ t_0(u'))
\]
by \eqref{e:DEactDpts}.
It thus defines a structure of right $F$\nd scheme on $Z$
with underlying right $(F^\mathrm{diag},F(\overline{k})_{\overline{k}})$\nd scheme the given one.
This proves the bijectivity on objects.
\end{proof}

\begin{rem}\label{r:Ksubgpdef}
Let $K$ be a transitive affine groupoid over a $k$\nd scheme $X$.
Proposition~\ref{p:grpdgalschequ} can sometimes be used to define $K$\nd subgroups of $K^\mathrm{diag}$.
Suppose for example given for every algebraically closed extension $\overline{k}{}'$ of $k$ and 
affine $\overline{k}{}'$\nd group
$G$ a normal $\overline{k}{}'$\nd subgroup $N(G)$ of $G$, stable under $\overline{k}{}'$\nd isomorphisms and pullback along $k$\nd homomorphisms $\overline{k}{}' \to \overline{k}{}''$.
Then there exists a unique $K$\nd subgroup of $K^\mathrm{diag}$ with fibre
$N((K^\mathrm{diag})_x)$ at $x$ for every $\overline{k}{}'$ and $x$ in $X(\overline{k}{}')$, 
and it is preserved by pullback and extension of scalars:
reduce by Lemma~\ref{l:prereppull} and Corollary~\ref{c:gpdacequiv} to 
the case where $X = \Spec(\overline{k})$, which follows from Proposition~\ref{p:grpdgalschequ}
with $F = K^\mathrm{op}$.
\end{rem}

Let $X$ be a $k$\nd scheme and $(D,E)$ be a Galois extended $\overline{k}$\nd group.
By a right $(D,E)$\nd scheme over $X$ we mean a scheme $P$ over $X \times_k \overline{k}$
and a structure of right $(D,E)$\nd scheme on $P$
such that the actions $P \times_{\overline{k}} D \to P$
of $D$ and $P \to P$ of any $e$ in $E$ are morphisms over $X$.
Given a $k$\nd morphism $X' \to X$, we have a pullback functor from right $(D,E)$\nd schemes
over $X$ to right $(D,E)$\nd schemes over $X'$.

Let $X$ be a non-empty $k$\nd scheme, $H$ be a pregroupoid over $X$ and $(D,E)$ be a Galois extended $\overline{k}$\nd group.
By a \emph{principal $(H,D,E)$\nd bundle} we mean a right $(D,E)$\nd scheme $P$ over $X$ whose
underlying right $D$\nd scheme is a principal $D$\nd bundle over $X \times_k \overline{k}$, together with
a structure of $H$\nd scheme on $P$ whose defining isomorphism $d_1{}\!^*P \iso d_0{}\!^*P$ is a morphism
of right $(D,E)$\nd schemes.
A morphism of principal $(H,D,E)$\nd bundles is a morphism of right $(D,E)$\nd schemes
over $X$ which is also an $H$\nd morphism.
Such a morphism is necessarily an isomorphism.

When $D$ is reduced, condition \ref{i:DEactDact} in the definition of the right $(D,E)$\nd scheme underlying a principal
$(H,D,E)$\nd bundle $P$ over $X$ is redundant: arguing locally over $X$, we reduce to the case where $P$ is
a separated $\overline{k}$\nd scheme, which has been seen.

\begin{cor}\label{c:grpdgalprin}
Let $X$ be a non-empty $k$\nd scheme, $H$ be a pregroupoid over $X$, and $F$ be a
transitive affine groupoid over $\overline{k}$.
Then passing from the action of $F$ to the actions of $F^\mathrm{diag}$ and $F(\overline{k})_{\overline{k}}$
defines an isomorphism from the category of principal $(H,F)$\nd bundles to the category of
principal $(H,F^\mathrm{diag},F(\overline{k})_{\overline{k}})$\nd bundles.
\end{cor}

\begin{proof}
By the factorisation \eqref{e:dtu}, a scheme over 
$X \times_k \overline{k}$ with a structure right $F$\nd scheme is a right $F$\nd scheme over $X$ if it is a right 
$(F^\mathrm{diag},F(\overline{k})_{\overline{k}})$\nd scheme over $X$.
It thus suffices to apply Proposition~\ref{p:grpdgalschequ} to right $F$\nd schemes over $X$ and their pullbacks
along $d_0, d_1:H_{[1]} \to X$.
\end{proof}

The existence and uniqueness up to unique isomorphism of a push forward of a principal $(H,D,E)$\nd bundle $P$ along\
a morphism $(h,l):(D,E) \to (D',E')$, i.e.\ a pair consisting of a principal $(H,D',E')$\nd bundle $P'$ and 
a morphism $P \to P'$ compatible with $h$, $l$, and the actions of $H$, $D$, $D'$, $E$ and $E'$,
follows from Propositions~\ref{p:grpdgalequ} and \ref{p:grpdgalschequ} and Corollary~\ref{c:grpdgalprin}, 
together with push forward for principal $(H,F)$\nd bundles.
There is thus a functor from the category of Galois extended $\overline{k}$\nd groups to the category of 
sets which sends $(D,E)$ to the set 
\[
H^1_H(X,D,E)
\]
of isomorphism classes of principal $(H,D,E)$\nd bundles over $X$, and which is defined on morphisms using push forward.
It factors through the category of Galois extended $\overline{k}$\nd groups up to conjugacy, where a morphism to $(D,E)$ is 
a conjugacy class under the action of $D(\overline{k})_{\overline{k}}$.
By Corollary~\ref{c:grpdgalprin}, passing from the action of $F$ to the actions of $F^\mathrm{diag}$ and 
$F(\overline{k})_{\overline{k}}$ defines a bijection
\begin{equation}\label{e:grpdgalH1}
H^1_H(X,F) \iso H^1_H(X,F^\mathrm{diag},F(\overline{k})_{\overline{k}})
\end{equation}
which is natural in the transitive affine groupoid $F$ over $\overline{k}$.

Let $G$ be an affine $k$\nd group.
Then $\Gal(\overline{k}/k)$ acts on $G(\overline{k})$ through $\overline{k}$, and
\[
((G \times_k [\overline{k}])^\mathrm{diag},(G \times_k [\overline{k}])(\overline{k})_{\overline{k}}) = 
(G_{\overline{k}},G(\overline{k}) \rtimes \Gal(\overline{k}/k)).
\]
Thus by Lemma~\ref{l:gpdprinHGG'} and Corollary~\ref{c:grpdgalprin} we have an equivalence $P \mapsto P_{\overline{k}}$
from principal $(H,G)$\nd bundles to principal 
$(H,G_{\overline{k}},G(\overline{k}) \rtimes \Gal(\overline{k}/k))$\nd bundles, where the map induced on 
$P_{\overline{k}}(\overline{k})_{\overline{k}} = P(\overline{k})$ by the $\sigma^{-1}$\nd automorphism of $P_{\overline{k}}$
defined by $g\sigma$ sends $z$ to ${}^{\sigma^{-1}}(zg)$.
We may identify a morphism
\[
(1_{\overline{k}},\Gal(\overline{k}/k)) \to (G_{\overline{k}},G(\overline{k}) \rtimes \Gal(\overline{k}/k))
\] 
with a continuous section of topological groups $s$ of
$G(\overline{k}) \rtimes \Gal(\overline{k}/k)$ over $\Gal(\overline{k}/k)$, or equivalently with the continuous 
$1$\nd cocycle $\varphi$
of $\Gal(\overline{k}/k)$ with values in $G(\overline{k})$ where $s(\sigma) = \varphi(\sigma)\sigma$.
By Proposition~\ref{p:grpdgalequ}, \eqref{e:HomconjHkG} with $1$ and  
$G \times_k [\overline{k}]$ for $G$ and $F$ thus gives a natural bijection 
\begin{equation}\label{e:Hbij}
H^1(\Gal(\overline{k}/k),G(\overline{k})) \iso H^1(k,G)
\end{equation}
from the set of $s$, or equivalently of $\varphi$, modulo conjugation by $G(\overline{k})$. 
Given $P$ with a $\overline{k}$\nd point $z$, the class of $P$ corresponds to the class of the unique $s$ with $z$ fixed by 
every $s(\sigma)$,
or equivalently to the unique $\varphi$ with ${}^\sigma z = z\varphi(\sigma)$ for every $\sigma$.

\section{Reductive subgroupoids}\label{s:redsub}

\emph{In this section $k$ is a field of characteristic $0$, $X$ is a non-empty $k$\nd scheme, and
$H$ is a pregroupoid over $X$.}

\medskip

For the remainder of the paper, we work over a field of characteristic $0$.

In this section we determine the conditions under which a transitive affine groupoid over $H$ contains
a reductive subgroupoid over $H$.
Every reductive subgroupoid of a transitive affine groupoid is contained in a Levi subgroupoid,
defined analogously to the Levi subgroups of an affine $k$\nd group (Corollary~\ref{c:redsubgrpd}), 
and any two Levi subgroupoids are conjugate (Corollary~\ref{c:redconj}).
The obstruction to the existence of a reductive subgroupoid over $H$ is an appropriate cohomology class (Corollary~\ref{c:Leviclass}), 
and there is an equivalent condition using representations (Theorem~\ref{t:requiv}).

Since $k$ is of characteristic $0$, every affine $k$\nd group is reduced, and every surjective homomorphism
of affine $k$\nd groups is faithfully flat.

An affine $k$\nd group $G$ will be called \emph{reductive} if its category of representations is 
semisimple abelian, i.e. if every short exact sequence of representations of $G$ splits.
It is equivalent to require that $\Hom_G(V,-)$ be exact for every representation $V$ of $G$,
or (since $\Hom_G(V,-)$ is $(V^\vee \otimes_k -)^G$) that $(-)^G$ be exact, 
or that $V^G \to k$ be surjective for every surjective homomorphism $V \to k$ of representations of $G$.
An affine $k$\nd group is reductive if and only if each of its quotients of finite type is reductive.

An affine $k$\nd group $G$ is called \emph{unipotent} if $V^G \ne 0$ for every representation $V \ne 0$ of $G$.
It is equivalent to require that every representation of $G$ be a successive extension of trivial representations $k$.
An affine $k$\nd group is unipotent if and only if each of its quotients of finite type is unipotent.
An affine $k$\nd group of finite type is unipotent if and only if it is a successive extension of additive groups $\bG_a$.
A commutative unipotent $k$\nd group of finite type is a product of groups $\bG_a$.

Any affine $k$\nd group $G$ has a unique normal unipotent $k$\nd subgroup, its \emph{unipotent radical} $R_u G$, 
with $G/R_uG$ reductive,
and any normal unipotent $k$\nd subgroup of $G$ is contained in $R_uG$.
This follows from the case where $G$ is of finite type.

Reductive $k$\nd groups are stable under the formation of normal $k$\nd subgroups,
$k$\nd quotients and  extensions,
and unipotent $k$\nd groups under formation of $k$\nd subgroups,
$k$\nd quotients and extensions. 
For $k'$ an extension of $k$, a $k$\nd group $G$ is reductive (resp.\ unipotent)
if and only if $G_{k'}$ is a reductive (resp.\ unipotent) $k'$\nd group.

A transitive affine groupoid $K$ over $X$ will be called \emph{reductive} if the fibres of $K^\mathrm{diag}$
are reductive.
Reducing after a finite extension of scalars using Lemmas~\ref{l:prereppull} and \ref{l:finneutr} and \eqref{e:Hpushequ}
to the case where $X = \Spec(k)$ shows that $K$ is reductive if and only if
$H^0_K(X,-)$ on $\Mod_K(X)$ is exact if and only if $\Mod_K(X)$ is semisimple.
When this is so, a Zorn's Lemma argument shows that every $K$\nd module is a coproduct of irreducible 
representations of $K$, so that $\MOD_K(X)$ is semisimple.

Let $K$ be a transitive affine groupoid over $X$.
An affine $K$\nd group will be called \emph{unipotent} if its fibres are unipotent.
By Remark~\ref{r:Ksubgpdef}, $K^\mathrm{diag}$ has a unique
$K$\nd subgroup with fibre $R_u((K^\mathrm{diag})_x)$ at any point $x$ of $X$ in an extension of $k$,
the \emph{unipotent radical $R_uK$ of $K$}.
It is a unipotent $K$\nd subgroup with $K/R_uK$ is reductive.

Let $\sV$ be a vector bundle over $X$.
Then the spectrum 
\[
\bA(\sV) = \Spec(\Sym (\sV^\vee))
\]
of the symmetric algebra of $\sV^\vee$ has a structure of commutative 
group scheme over $X$ with points in $Z$ the additive group of cross-sections of $\sV_Z$.
We then have a functor  $\bA$
from the category of vector bundles over $X$ to the category of commutative affine group schemes over $X$,
which can be seen as follows to be fully faithful.
Since the question is local, it is enough to show that every endomorphism of the group scheme
\[
\bA(\sO_X) = \bG_a \times_k X
\]
arises from a unique endomorphism of $\sO_X$.
The subgroup $\Z$ of $\bG_a(k)$ is Zariski dense in $\bG_a$.
Thus two endomorphisms of $\bG_a \times_k X$ coincide provided that they coincide along the cross section 
defined by $1 \in \bG_a(k)$,
because their equaliser is then a closed subscheme of $\bG_a \times_k X$ which contains the fibres
above $\Z \subset \bG_a(k)$.

More generally, we have a fully faithful functor $\bA$
from the category of representations of $H$ to the category of commutative affine $H$\nd groups.
If $K$ is a transitive affine groupoid over $X$, then reducing to the case where $X$ is the 
spectrum of a field shows that $\bA$ is an equivalence 
from the category of representations of $K$ to the category of commutative unipotent $K$\nd groups of finite type.

By an \emph{extension} of a representation $\sV'$ of $H$ by a representation $\sV''$ of $H$, we mean 
representation $\sE$ of $H$ together with morphisms of $H$\nd modules $\sV'' \to \sE$
and $\sE \to \sV'$ whose underlying morphisms of $\sO_X$\nd modules form a short exact sequence
\[
0 \to \sV'' \to \sE \to \sV' \to 0.
\]
We then have the usual push forward and pullback for extensions.
If $H$ is a transitive affine groupoid, so that $\Mod_H(X)$ is an abelian category, 
this notion of extension reduces to the usual one in an abelian category.
Write
\[
H^1_H(X,\sV)
\]
for the set of isomorphism classes of extensions of $\sO_X$ by the representation $\sV$ of $H$.
Using push forward of extensions, we may regard $H^1_H(X,-)$ as a functor from representations
of $H$ to abelian groups.
If $K$ is a reductive groupoid over $X$, then
\begin{equation}\label{e:HKXVtriv}
H^1_K(X,\sV) = 0
\end{equation}
for every representation $\sV$ of $K$.

Fix a representation $\sV$ of $H$.
Then $\bA(\sV)$ is a commutative affine $H$\nd group.
We have a functor from the category of extensions
of $\sO_X$ by $\sV$ to the category of $(H,\bA(\sV))$\nd torsors
which sends $\sE$ to the fibre of 
\[
\bA(\sE) \to \bA(\sO_X)
\]
above the cross-section defined by $1 \in \bG_a(k)$, equipped with the action of $\bA(\sV)$ by translation.
This functor is an equivalence:
for the full faithfulness reduce first to the case where $H = X$, and then arguing locally to the case where the
extensions split, and for the essential surjectivity reduce using Lemmas~\ref{l:prereppull} and \ref{l:torspull}
to the case where the underlying $\bA(\sV)$\nd torsor is trivial.
Thus we have an isomorphism
\begin{equation}\label{e:VViso}
H^1_H(X,\sV) \iso H^1_H(X,\bA(\sV))
\end{equation}
which is natural in $\sV$.

\begin{prop}\label{p:unipzero}
Let $K$ is a reductive groupoid over $X$ and $U$ be a unipotent $K$\nd group.
Then every $(K,U)$\nd torsor is trivial.
\end{prop}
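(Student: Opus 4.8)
The plan is to reduce the statement to the classical fact that a prounipotent $k$\nd group has trivial $H^1$ over a point together with an inductive dévissage up the lower central (or derived) series of $U$. First I would write $U$ as a filtered limit $U = \lim_\lambda U_\lambda$ of its $K$\nd quotients of finite type; since a filtered inverse limit of $(H,U_\lambda)$\nd torsors is an $(H,\lim_\lambda U_\lambda)$\nd torsor (the remark following \eqref{e:PJmorph} in Section~\ref{s:specfield}), and a principal $(K,U)$\nd bundle is the same as a $(K,U_X)$\nd torsor with $U_X$ the constant $K$\nd group, it suffices to treat the case where $U$ is unipotent of finite type. Then, using that $U$ has a finite filtration by $K$\nd subgroups with successive quotients $K$\nd groups of additive type (i.e.\ finite\nd dimensional $K$\nd representations regarded as vector groups), and the long exact cohomology sequence of Section~\ref{s:torsors} associated to a short exact sequence $1 \to U'' \to U \to U' \to 1$ of affine $K$\nd groups, the problem reduces by induction to the case where $U = \mathbf{V}(\sV)$ is the vector group attached to a representation $\sV$ of $K$.

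For the vector group case I would identify $H^1_K(X,\mathbf{V}(\sV))$ with a suitable $H$\nd module cohomology group, or more directly argue as follows. By Theorem~\ref{t:neutral} there is a principal $K$\nd bundle $P$ over $\overline{k}$ for an algebraic closure $\overline{k}$ of $k$; pulling back along $P \to X$ and $P \to \Spec(\overline{k})$ and using the natural isomorphisms \eqref{e:gpdGIsoG} (with $F$ replaced by the affine $\overline{k}$\nd group obtained from $\mathbf{V}(\sV)$), one reduces to computing $H^1_{\underline{\Iso}_K(P)}$\nd style cohomology over $\Spec(\overline{k})$, i.e.\ to the case $X = \Spec(\overline{k})$ and $K = G$ an affine $\overline{k}$\nd group with $U$ a prounipotent $G$\nd group. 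In that case the statement is the classical one that over an algebraically closed (indeed any) field of characteristic $0$ every torsor under a unipotent group is trivial: one sees this for $U = \mathbf{G}_a$ because $H^1(\overline{k},\mathbf{G}_a)=0$, hence for all unipotent $U$ of finite type by the filtration, hence for prounipotent $U$ by passing to the limit as above, and the $G$\nd equivariance adds nothing new since the obstruction classes live in groups that already vanish without the action.

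The step I expect to be the main obstacle is making the reduction from $X$ to $\Spec(\overline{k})$ genuinely rigorous: one must be careful that "principal $(K,U)$\nd bundle" is being compared along the equivalences of Lemmas~\ref{l:gpdprinHH'G} and \ref{l:biprin} correctly, that the filtration of $U$ by $K$\nd subgroups exists (this uses char $0$ and that $U$ is a $K$\nd group, so one takes e.g.\ the descending central series, which is $K$\nd stable), and that the connecting maps in the cohomology sequences behave well under the limit over $\lambda$. Once the vector\nd group case over a point is in hand, the rest is a formal dévissage. Alternatively — and this may be cleaner — one can avoid Theorem~\ref{t:neutral} entirely and argue that for $U = \mathbf{V}(\sV)$ with $\sV$ a representation of the \emph{reductive} groupoid $K$, the relevant cohomology vanishes because $\sV$, being a representation of a reductive groupoid, is a direct summand of a trivial module after pullback to a suitable cover; I would check whether the semisimplicity of representations of reductive groupoids (which is implicit in the definition of reductive and used throughout Sections~\ref{s:funclass}--\ref{s:redmorph}) gives the vanishing of $H^1_K(X,\mathbf{V}(\sV))$ directly, since that would make the reductivity hypothesis do real work and shorten the argument considerably.
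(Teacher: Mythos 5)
There are two genuine problems. The first is in your main route for the vector-group case: the assertion that ``the $G$\nd equivariance adds nothing new since the obstruction classes live in groups that already vanish without the action'' is false, and if it were true the proposition would hold with no reductivity hypothesis on $K$, which it does not. Take $X=\Spec(k)$, $K=\bG_a$, $U=\bG_a$ with trivial $K$\nd action, and let $P=\bA^1$ with both $U$ and $K$ acting by translation: this is a $(K,U)$\nd torsor, trivial as a $U$\nd torsor, but with no $K$\nd invariant point, so $H^1_K(X,U)\neq 1$. The obstruction for a $(K,\bV(\sV))$\nd torsor to be \emph{equivariantly} trivial is an extension class of $\sO_X$ by $\sV^\vee$ in the category of $K$\nd modules, i.e.\ it lies in $\Ext^1_K(\sO_X,\sV^\vee)=H^1_K(X,\sV^\vee)$, not in any non-equivariant $H^1$; its vanishing is exactly where reductivity of $K$ enters. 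Your hedged alternative at the end --- that semisimplicity of $K$\nd modules kills $H^1_K(X,\bV(\sV))$ directly --- is not something ``to check'': it is the paper's actual argument (there $N/N'=\Spec(\Sym\sV)$ and $H^1_K(X,\sV^\vee)=0$ by semisimplicity), and it makes the whole detour through Theorem~\ref{t:neutral} and the reduction to $\Spec(\overline{k})$ unnecessary (note also that \eqref{e:gpdGIsoG} concerns constant groupoid coefficients, not a $K$\nd group such as $U$, so it does not apply as cited).

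The second problem is the passage from finite type to prounipotent $U$. Knowing that each push forward $P_\lambda$ of a $(K,U)$\nd torsor $P$ along $U\to U_\lambda$ is trivial does not give a $K$\nd invariant section of $P$: you need \emph{compatible} invariant sections, and an inverse limit of non-empty sets over a filtered index set can be empty even when the transition maps are surjective; the remark you cite (a filtered limit of torsors is a torsor) says nothing about this. This difficulty is precisely why the paper does not argue quotient-by-quotient: it runs a Zorn's lemma argument on pairs $(N,s)$, with $N$ a normal $K$\nd subgroup of $U$ and $s$ an invariant section of $P/N$, and shows a maximal pair has $N=1$ by peeling off one commutative finite-type layer at a time (cut out using the lower central series of a sufficiently large finite-type quotient), so that at each step only the vanishing of $H^1_K(X,\sV^\vee)$ for a single representation is needed. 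If you want to keep a limit-style argument you would have to supply a genuine finiteness/representability input, e.g.\ represent the functor of invariant sections by an affine scheme and use that a filtered limit of non-empty affine schemes is non-empty (compare the proof of Proposition~\ref{p:redconj}, which however invokes the present proposition, or Lemma~\ref{l:limnonempty}, which needs $k$ algebraically closed); as written, the reduction ``it suffices to treat $U$ of finite type'' is a gap.
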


\begin{proof}
Let $P$ be a $(K,U)$\nd torsor.
It is to be shown that $P$ has a $K$\nd invariant cross-section.
Consider the set $\sP$ of pairs $(N,s)$ with
$N$ a normal $K$\nd subgroup of $U$ and $s$ a $K$\nd invariant cross-section of the
push forward $P_N$ of $P$ along the projection $U \to U/N$.
Write $(N,s) \le (N',s')$ when $N$ contains $N'$ and $s$ is the image of $s'$
under the projection from $P_{N'}$ to $P_N$.
Then $\sP$ is inductively ordered.
Let $(N,s)$ be maximal element of $\sP$.

Suppose that $N \ne 1$.
Then for a sufficiently large $K$\nd quotient $\overline{U}$ of $U$ of finite type, the image $\overline{N}$
of $N$ under the projection $U \to \overline{U}$ is $\ne 1$.
Since a normal $K$\nd subgroup of $\overline{U}$ is the same as a $(\overline{U} \rtimes_X K)$\nd subgroup,
the derived series
\[
1 = \overline{U}{}^{(n)} \subset \dots \subset \overline{U}{}^{(1)} \subset \overline{U}{}^{(0)} = \overline{U}
\]
of $\overline{U}$, with $\overline{U}{}^{(i+1)} = \overline{U}{}^{(i)}{}^\mathrm{der}$,
consists of normal $K$\nd subgroups.
If $t$ is the largest $i$ for which $\overline{U}{}^{(i)}$ contains $\overline{N}$,
the intersection $N' \ne N$ of $N$ with the inverse image of $\overline{U}{}^{(t+1)}$
in $U$ is thus a normal $K$\nd subgroup of $U$, and $N/N'$ is a commutative unipotent 
$K$\nd group of finite type.
The inverse image of $s$ under the projection from $P_{N'}$ to $P_N$
is a $(K,N/N')$\nd torsor $Q$.
Since
\[
H^1_K(X,N/N') = 0
\]
by \eqref{e:HKXVtriv} and \eqref{e:VViso}, $Q$ has a $K$\nd invariant cross-section $s'$.
We may regard $s'$ as a section of $P_{N'}$ with image $s$ in $P_N$.
Then $(N',s')$ is strictly greater than $(N,s)$ in $\sP$, contradicting the maximality of $(N,s)$.
Thus $N = 1$, and $s$ is the required section of $P$.
\end{proof}

\begin{cor}\label{c:redconj}
Let $K$ and $K'$ be transitive affine groupoids over $X$,
and $f_1$ and $f_2$ be morphisms from $K'$ to $K$ over $X$.
Suppose that $K'$ is reductive, and that the composites of the projection from $K$ to $K/R_uK$
with $f_1$ and $f_2$ coincide.
Then $f_1$ and $f_2$ are conjugate by a cross-section of $R_uK$.
\end{cor}

\begin{proof}
Restriction along $f_1$ defines a structure of affine $K'$\nd group on $R_uK$.
We then have a $(K',R_uK)$\nd torsor with underlying $(R_uK)$\nd torsor $R_uK$
where the point $v$ of $K'$ sends the point 
$u$ of $R_uK$ to 
\[
f_2(v) \circ u \circ f_1(v)^{-1}.
\] 
By Proposition~\ref{p:unipzero}, it has a $K'$\nd invariant cross-section $u_1$.
Then $f_2$ is conjugate to $f_1$ by $u_1$.
\end{proof}

Let $K$ be a transitive affine groupoid over $X$.
By a \emph{Levi subgroupoid} of $K$ we mean a subgroupoid $L$ of $K$ such that the restriction 
to $L$ of the projection from $K$ onto $K/R_u K$ is an isomorphism.
We then have $K = R_uK \rtimes_X L$. 
A Levi subgroupoid of $K$ may be regarded as a right inverse to the projection of $K$ onto $K/R_uK$.
By Corollary~\ref{c:redconj}, any two Levi subgroupoids of $K$ are conjugate by a cross-section of $R_uK$.

\begin{lem}\label{l:Levitech}
Let $K$ be a transitive affine groupoid over $H$.
Suppose for every transitive affine subgroupoid $K'$ of $K$ over $H$ with $R_uK'$ contained in $R_uK$,
and quotient $K''$ of $K'$of finite type with $R_uK''$ commutative, that $K''$ has a Levi subgroupoid over $H$.
Then $K$ has a Levi subgroupoid over $H$.
\end{lem}

\begin{proof}
Consider the set $\sP$ of pairs $(N,L)$ with
$N$ a $K$\nd subgroup of $R_uK$ and $L$ a Levi subgroupoid of $K/N$ over $H$.
Write $(N,L) \le (N',L')$ when $N$ contains $N'$ and $L$ is the image of $L'$
under the projection from $K/N'$ to $K/N$.
Then $\sP$ is inductively ordered,
and hence has a maximal element $(N_0,L_0)$.

Suppose that $N_0 \ne 1$.
Then $N_0$ has a $K$\nd quotient $N_0/N_1 \ne 1$ of finite type.
Increasing $N_1$ if necessary, we may assume that $N_0/N_1$ is commutative.
Write $K'$ and $K_1$ for the respective inverse images of $L_0$ under the projections from $K$ and $K/N_1$ to $K/N_0$,
Then $R_uK' = N_0$ and $R_uK_1 = N_0/N_1$.
For a sufficiently large quotient $K''$ of $K_1$ of finite type, 
the projection from $K_1$ to $K''$ induces an isomorphism from $R_uK_1$ to $R_uK''$.
Since $K''$ is a quotient of $K'$, it has by hypothesis a Levi subgroupoid $L''$ over $H$.
The inverse image $L_1$ of $L''$ under the projection from $K_1$ to $K''$ is then a Levi subgroupoid of $K_1$,
and hence also of $K/N_1$, over $H$.
Then $(N_1,L_1)$ is strictly greater than $(N_0,L_0)$ in $\sP$, contradicting the maximality of $(N_0,L_0)$.
Thus $N_0 = 1$, and $L_0$ is a Levi subgroupoid of $K$.
\end{proof}

\begin{lem}\label{l:Levialg}
Any transitive affine groupoid over an algebraically closed extension of $k$ has a Levi subgroupoid.
\end{lem}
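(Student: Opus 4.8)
The plan is to follow the inductive scheme of the proof of Proposition~\ref{p:unipzero}. Write $X$ for the spectrum of the given algebraic extension of $k$, let $F$ be a transitive affine groupoid over $X$, let $R_u F$ be its prounipotent radical, a normal $F$\nd subgroup of $F^\mathrm{diag}$, and put $\overline{F} = F/R_u F$, which is reductive. A Levi subgroupoid of $F$ is the same as the image of a section over $X$ of the projection $p \colon F \to \overline{F}$ (such a section is a closed immersion with the required inverse on its image), so it is enough to produce such a section.

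The feature that singles out the case of an algebraic extension is that the category $\Mod_{\overline{F}}(X)$ is semisimple. Indeed, writing $\overline{F}$ as the filtered limit of its reductive quotients of finite type and noting that each of these is defined over a finite subextension of $k$, one reduces to a reductive groupoid of finite type over the spectrum of a finite extension of $k$; after the faithfully flat base change to an algebraic closure $\overline{k}$ of $k$ the structure ring becomes a finite product of copies of $\overline{k}$, so by Lemma~\ref{l:prereppull} the category of representations becomes equivalent to the representation category of a reductive $\overline{k}$\nd group, which is semisimple, and semisimplicity descends. Consequently $\Ext^i_{\overline{F}}$, and with it $H^i_{\overline{F}}(X,-)$, vanishes on representations of $\overline{F}$ for $i \ge 1$, so every extension of $\overline{F}$ by a commutative unipotent $\overline{F}$\nd group $\Spec(\Sym \sV)$, with $\sV$ a representation of $\overline{F}$, splits: passing to diagonals gives an extension of affine group schemes over $X$ by a vector group on which a proreductive group scheme acts, which splits because the relevant Hochschild $H^2$ vanishes, and the splitting is unique because the corresponding $H^1$ vanishes; being unique it is stable under the conjugation isomorphisms \eqref{e:conjisogrpd} of the groupoid and hence comes from a section over $X$ of the extension of groupoids.

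Granting this, consider the set $\sP$ of pairs $(N,s)$ with $N$ a normal $F$\nd subgroup of $R_u F$ and $s$ a section over $X$ of the projection $F/N \to \overline{F}$, ordered by $(N,s) \le (N',s')$ when $N \supseteq N'$ and $s$ is obtained from $s'$ by composition with $F/N' \to F/N$. It contains $(R_u F, \id)$, and it is inductively ordered, since a filtered limit of quotients of $F$ is again a quotient of $F$; let $(N,s)$ be a maximal element. Suppose $N \ne 1$. Then, exactly as in the proof of Proposition~\ref{p:unipzero}, there is a normal $F$\nd subgroup $N' \subsetneq N$ of $R_u F$ with $N/N'$ of finite type and central in $R_u F/N'$, so that $N/N' = \Spec(\Sym \sV)$ for a representation $\sV$ of $\overline{F}$. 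Pulling back the extension $1 \to N/N' \to F/N' \to F/N \to 1$ along $s$ gives an extension of $\overline{F}$ by $\Spec(\Sym \sV)$, which by the previous paragraph has a section over $X$; composing that section with the projection to $F/N'$ yields a section of $F/N' \to \overline{F}$ lifting $s$, contradicting maximality. Hence $N = 1$, and $s \colon \overline{F} \to F$ is the required section of $p$.

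The step I expect to be the main obstacle is the second paragraph: establishing that over the spectrum of an algebraic extension of $k$ the representation category of a reductive groupoid is semisimple, and hence that extensions of such a groupoid by a vector group split. Over a general base this fails, and the obstruction to the existence of a Levi subgroupoid is exactly the cohomology class of Corollary~\ref{c:Levifun}.
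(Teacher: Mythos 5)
There is a genuine gap, and it sits exactly at the step you yourself flag as the main obstacle: the passage from the diagonal to the whole groupoid. First, the uniqueness claim is false. For an extension of $\overline{F}{}^{\mathrm{diag}}$ by a vector group $\bV(\sV)$ on which $\overline{F}$ acts, vanishing of Hochschild $H^1$ gives that all splittings are conjugate under sections of the vector group, not that the splitting is unique; the extension is not central (the action of $\overline{F}$ on $\sV$ is nontrivial in general, e.g.\ already for $\bG_m$ acting with weight $1$ on $\bG_a$ there are many splittings of the semidirect product). So there is no canonical splitting to which the conjugation isomorphisms \eqref{e:conjisogrpd} could be applied. Second, and more seriously, even a conjugation-stable Levi subgroup $L_0$ of $\widetilde{F}{}^{\mathrm{diag}}$ does not yield a Levi subgroupoid: a subgroupoid requires data over all of $X \times_k X$, and normalization of $L_0$ by $\widetilde{F}$ gives no candidate for the off-diagonal part. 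The extreme case makes this plain: if $\overline{F}{}^{\mathrm{diag}}$ is trivial, i.e.\ $R_uF = F^{\mathrm{diag}}$ and $\overline{F} = X \times_k X$, the diagonal splitting is trivially unique and stable, yet the existence of a Levi subgroupoid is the assertion that an extension of the pair groupoid $X \times_k X$ (essentially of a Galois group) by a prounipotent group splits — a genuine cohomological fact about the Galois direction, true in characteristic $0$ but not a formal consequence of anything on the diagonal (and false for general coefficient groups). This is also what Corollary~\ref{c:Levifun} says after the fact: the obstruction is a torsor class over $X$, invisible to the diagonal. Your semisimplicity of $\Mod_{\overline{F}}(X)$ would only do the job if you also proved that extensions of the \emph{groupoid} $\overline{F}$ by $\bV(\sV)$ are classified by a degree-$2$ cohomology computed by $\Ext$ in $\MOD_{\overline{F}}(X)$, with splittings up to conjugacy governed by $\Ext^1$; no such classification is established in the paper or in your proposal.

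For comparison, the paper's proof keeps your Zorn skeleton but replaces the defective step by a Galois fixed-point argument: after reducing to $K$ of finite type with commutative radical over a finite extension $k_1$, it passes to a finite Galois $k'/k$ over which $K_{k'}$ becomes a \emph{constant} groupoid over the finite discrete scheme $X_{k'}$, applies the classical Levi decomposition there, and then observes that the set $\sL$ of Levi subgroupoids of $K_{k'}$ is a transitive set under the $k'$-vector space $V$ of sections of $R_uK_{k'}$, with stabilizers subspaces and everything compatible with $\Gal(k'/k)$; since $H^1$ of a finite group with $\Q$-vector-space coefficients vanishes, $\sL^{\Gal(k'/k)} \ne \emptyset$. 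Your argument never engages this off-diagonal/Galois direction, which is where the hypothesis that the base is an algebraic extension actually gets used.
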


\begin{proof}
Let $K$ be a transitive affine groupoid over an algebraically closed extension $\overline{k}$ of $k$.
To prove that $K$ has a Levi subgroupoid, we may suppose by Lemma~\ref{l:Levitech} with $H = X$ that $K$ is of finite type
and $R_uK$ is commutative.
Then by Lemma~\ref{l:finneutr}, a simply transitive $K_{k'}$\nd scheme $Z$ exists for some finite Galois extension $k'$ of $k$.
Since $Z$ is of finite type over $\overline{k} \otimes_k k'$, it has a cross-section, so that $K_{k'}$ is constant.
By the classical Levi decomposition \cite[VIII.4.3]{Hoc81}, the set $\sL$ of Levi subgroupoids of $K_{k'}$
is thus non-empty.
The Galois group $\Gal(k'/k)$ acts on $\sL$ by its action on $K_{k'}$ through $k'$,
and the fixed points of $\sL$ are the Levi subgroupoids of $K_{k'}$ which are defined over $k$.

Since $R_uK$ is a commutative unipotent $K$\nd group of finite type, 
it is of the form $\bA(\sV)$ for a representation $\sV$ of $K$.
The action of $k$ on $\sV$ defines structure of $k$\nd vector space on the abelian group of cross-sections
of $R_uK_{k'}$ over $\overline{k} \otimes_k k'$, with the subgroup of $K_{k'}$\nd invariant cross-sections a $k$\nd vector subspace.
The quotient $k$\nd vector space $W$ then acts by conjugation on $\sL$, and by Corollary~\ref{c:redconj}
the action is simply transitive.
Further $\Gal(k'/k)$ acts $k$\nd linearly on $W$ through $k'$, and
\[
\sigma(wL) = \sigma(w)\sigma(L)
\]
for every $\sigma$ in $\Gal(k'/k)$, $w$ in $W$, and $L$ in $\sL$.
Choose an $L_0$ in $\sL$.
If we write $L''-L'$ for the unique $w$ in $W$ with $wL' = L''$,
there is a unique $L_1$ in $\sL$ for which
\[
\sum_{\sigma \in \Gal(k'/k)}(\sigma L_0 - L_1) = 0.
\]
Then $L_1$ is fixed by $\Gal(k'/k)$, 
and hence is defined over $k$.
\end{proof}

Let $K$ be a transitive affine groupoid over $X$.
To any $K$\nd scheme we may associate a $(K,R_uK)$\nd scheme with the same action of $K$,
where the right action of the point $u$ of $R_uK$ is defined as the left action of the point $u^{-1}$ of $K$.
We thus obtain a fully faithful functor from $K$\nd schemes to $(K,R_uK)$\nd schemes.
A $K$\nd scheme will be called a \emph{Levi $K$\nd scheme} if its associated $(K,R_uK)$\nd scheme is a $(K,R_uK)$\nd torsor.
If $X'$ is a non-empty scheme over $X$ and $K'$ is the pullback of $K$ onto $X'$, it follows from
Lemma~\ref{l:schemetorspull} that a $K$\nd scheme is a Levi $K$\nd scheme if and only if its pullback onto $X'$ 
is a Levi $K'$\nd scheme.
If $k'$ is an extension of $k$, then $Z$ is a Levi $K$\nd scheme if and only if $Z_{k'}$ is a Levi $K_{k'}$\nd scheme.
Any Levi $K$\nd scheme is transitive affine.

\begin{prop}\label{p:Levischeme}
Let $K$ be a transitive affine groupoid over $X$.
Then a Levi $K$\nd scheme exists, and is unique up to $K$\nd isomorphism.
If $Z$ is a Levi $K$\nd scheme, then the stabiliser of any cross-section of $Z$ is a Levi subgroupoid of $K$,
and any Levi subgroupoid of $K$ is the stabiliser of some cross-section of $Z$.
\end{prop}

\begin{proof}
A subgroupoid $L$ of $K$ is a Levi subgroupoid if and only if every point of $K$ can be written uniquely in the
form $u \circ l$ for points $u$ of $R_uK$ and $l$ of $L$.
It follows that the stabiliser of any cross-section of a Levi $K$\nd scheme is a Levi subgroupoid of $K$.
It also follows that for $L$ a Levi subgroupoid of $K$, the quotient $K/L$ exists  and is a Levi $K$\nd scheme:
we may take $K/L = R_uK$ with base cross-section the identity, where $u \circ l$ sends the point $u'$ of $R_uK$
to $u \circ l \circ u' \circ l^{-1}$.

It remains only to prove the existence and uniqueness statements.
By Lemma~\ref{l:prereppull}, we may suppose after pullback that $X$ is the spectrum of an algebraically closed extension of $k$.
Then $K$ has a Levi subgroupoid by Lemma~\ref{l:Levialg}, and the existence follows.
To show that two Levi $K$\nd schemes $Z_1$ and $Z_2$ are isomorphic, we may suppose after pullback that $Z_1$
and $Z_2$ have cross-sections, and hence are of the form $K/L_1$ and $K/L_2$ for Levi subgroupoids $L_1$ and $L_2$ of $K$.
By Corollary~\ref{c:redconj}, 
$L_2$ is the conjugate of $L_1$
by a cross-section $u$ of $R_uK$, so that
if $z_i$ is the base cross-section of $K/L_i$, the stabiliser of both $uz_1$ and $z_2$ is $L_2$,
and hence there is a unique isomorphism of $K$\nd schemes from $K/L_1$ to $K/L_2$ which sends $uz_1$ to $z_2$.
\end{proof}

Let $K$ be a transitive affine groupoid over $X$.
We define the \emph{Levi class} in 
\[
H^1_K(X,R_uK)
\]
as the class of the $(K,R_uK)$\nd torsor associated to any Levi $K$\nd scheme.
It is well-defined by Proposition~\ref{p:Levischeme}.
The conjugacy statement of the following Corollary also follows directly from Corollary~\ref{c:redconj}.

\begin{cor}\label{c:Leviclass}
Let $K$ be a transitive affine groupoid over $H$.
Then $K$ has a Levi subgroupoid over $H$ if and only if the Levi class in $H^1_K(X,R_uK)$ has image
in $H^1_H(X,R_uK)$ the base point.
When this is so, any two Levi subgroupoids of $K$ over $H$ are conjugate by an element of $H^0_H(X,R_uK)$.
\end{cor}

\begin{proof}
Immediate from Proposition~\ref{p:Levischeme}.
\end{proof}

The following Corollary shows that a transitive affine groupoid has a Levi subgroupoid if and only if it has a 
reductive subgroupoid.

\begin{cor}\label{c:redsubgrpd}
Any reductive subgroupoid of a transitive affine groupoid over $X$ is contained in a Levi subgroupoid.
\end{cor}

\begin{proof}
Take for $H$ in Corollary~\ref{c:Leviclass} a reductive subgroupoid of $K$, and apply Proposition~\ref{p:unipzero}.
\end{proof}

\begin{rem}\label{r:Levireduce}
Let $G$ be an affine $k$\nd group.
By Corollary~\ref{c:redsubgrpd}, $G$ has a Levi $k$\nd subgroup $G_0$,
and by Proposition~\ref{p:Levischeme}, the quotient $G/G_0$ exists and is affine.
If $P$ is a principal $G$\nd bundle over $X$, the quotient $P/G_0$ thus exists and is a Levi 
$\underline{\Iso}_G(P)$\nd scheme, with the projection \eqref{e:Gzeroquot} 
of $P$ onto $P/G_0$ a morphism of $\underline{\Iso}_G(P)$\nd schemes.
The inverse image in $P$ of a cross-section $z$ of $P/G_0$ is then a principal $G_0$\nd subbundle $P_0$ of $P$, with
the subgroupoid $\underline{\Iso}_{G_0}(P_0)$ of $\underline{\Iso}_G(P)$ the stabiliser of $z$.
By Proposition~\ref{p:Levischeme}, the Levi subgroupoids of $\underline{\Iso}_G(P)$ are thus those of the form 
$\underline{\Iso}_{G_0}(P_0)$ for a principal $G_0$\nd subbundle $P_0$ of $P$.
\end{rem}

\begin{lem}\label{l:affquotfilt}
Let $G$ be an affine $k$\nd group of finite type, $G'$ be a $k$\nd subgroup of $G$,
and $G_0$ be a normal $k$\nd subgroup of $G$.
Denote by $G_1$ the $k$\nd quotient $G/G_0$ and by $G'{}\!_1$ the image
of $G'$ under the projection $G \to G_1$.
Suppose that $G_0/(G_0 \cap G')$ and $G_1/G'{}\!_1$ are affine.
Then $G/G'$ is affine.
\end{lem}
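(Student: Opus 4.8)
The plan is to interpolate the quotient $G/(G'G_0)$ between $G$ and $G/G'$, to observe that it is affine, and then to show that the canonical morphism $G/G' \to G/(G'G_0)$ is an affine morphism; affineness of $G/G'$ then follows since its target is affine.

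First I would record the elementary group theory. Since $G_0$ is normal in $G$, the product $G'G_0$ is a $k$\nd subgroup of $G$, so the quotient $G/(G'G_0)$ exists. As a homogeneous space under $G$ it is isomorphic, via the projection $\pi\colon G \to G_1 = G/G_0$, to $G_1/G'_1$: indeed $G'_1 = \pi(G') = G'G_0/G_0$, and $(G/G_0)/(G'G_0/G_0)$ is $G$\nd equivariantly isomorphic to $G/(G'G_0)$ by the standard isomorphism theorem for a quotient by a subgroup containing $G_0$. Hence $G/(G'G_0)$ is affine by hypothesis.

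Next I would prove that the canonical morphism $p\colon G/G' \to G/(G'G_0)$ is affine. The quotient morphism $q\colon G \to G/(G'G_0)$ is faithfully flat and quasi-compact, hence $fpqc$ covering, and affineness of a morphism is local for the $fpqc$ topology on the target; so it suffices to check that the base change of $p$ along $q$ is affine. Now $q$ exhibits $G$ as a torsor under right translation by $G'G_0$, and a direct computation identifies the fibre product $G \times_{G/(G'G_0)}(G/G')$ with $G \times_k (G'G_0/G')$, the point $(h, aG')$ corresponding to $(h, haG')$; thus the base change of $p$ along $q$ is the projection $G \times_k (G'G_0/G') \to G$. Since $G_0 \to G'G_0/G'$ is surjective with kernel $G_0 \cap G'$, we have $G'G_0/G' \cong G_0/(G_0 \cap G')$, which is affine by hypothesis; therefore the base change of $p$ is affine, and hence $p$ is affine.

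Finally, $p$ is an affine morphism whose target $G/(G'G_0)$ is affine, so its source $G/G'$ is affine, as required. The step calling for the most care — the only genuine obstacle — is the identification of the base change of $p$ along $q$ with the trivial bundle $G \times_k (G'G_0/G')$ over $G$, i.e.\ the fact that the homogeneous bundle $G/G' \to G/(G'G_0)$ trivialises after pullback along the torsor $q$, together with the isomorphism $G'G_0/G' \cong G_0/(G_0 \cap G')$; the remainder is formal descent of affine morphisms and the two hypotheses.
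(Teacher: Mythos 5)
Your proof is correct and is essentially the paper's own argument: the paper also reduces to showing $G/G' \to G/(G_0G')$ is affine by exhibiting its pullback along the faithfully flat map $G \to G/(G_0G')$ as the projection $G \times (G_0/(G_0\cap G')) \to G$ (your trivialisation of the homogeneous bundle over the torsor, via $G'G_0/G' \cong G_0/(G_0\cap G')$), and then identifies $G/(G_0G')$ with the affine scheme $G_1/G'_1$. No gaps; the two proofs differ only in presentation.
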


\begin{proof}
Both $G/(G_0G')$ and $(G_0G')/G'$ are affine, because they are isomorphic to $G_1/G'{}\!_1$ and $G_0/(G_0 \cap G')$.
We have a cartesian square of $k$\nd schemes
\[
\xymatrix{
G/G'  \ar[d]  &  G \times_k ((G_0G')/G') \ar[d] \ar[l] \\
G/(G_0G')  &       G \ar[l]
}
\]
with the top arrow the morphism of $G$\nd schemes induced by the embedding of
$(G_0G')/G'$ into $G/G'$ and the other arrows the projections.
The left arrow is affine because the right arrow is affine.
Thus $G/G'$ is affine.
\end{proof}

\begin{lem}\label{l:affquot}
Let $K$ be a transitive affine groupoid over $X$ and $K'$ be a transitive affine subgroupoid
of $K$ over $X$ with $R_uK'$ contained in $R_uK$.
Then the $K$\nd scheme $K/K'$ exists and is affine over $X$.
\end{lem}

\begin{proof}
After extension of scalars and pullback we may suppose $X = \Spec(k)$, so that $K$ is an affine 
$k$\nd group $G$ and $K'$ is a $k$\nd subgroup $G'$.
Writing $G$ as the filtered limit of its $k$\nd quotients of finite type, we may suppose further that $G$
is of finite type. 
It is then enough to show that $R_uG/(R_uG \cap G')$ is affine, because by Matsushima's criterion \cite[I~2]{Lun73}, 
the hypotheses of Lemma~\ref{l:affquotfilt} are then satisfied with $G_0 = R_uG$.
Thus we may suppose finally that $G$ is unipotent,
when we argue by induction on its dimension:
if $G$ is of dimension $> 0$, then the hypotheses of Lemma~\ref{l:affquotfilt} are satisfied with 
$G_0$ the derived group of $G$,
because $G_0$ is of dimension strictly less than that of $G$ while $G_1$ is commutative.
\end{proof}

Let $\sA$ and $\sA'$ be abelian categories, $T:\sA \to \sA'$ be an exact functor,
and $T':\sA' \to \sA$ be a right adjoint to $T$.
Then there is a canonical homomorphism
\begin{equation}\label{e:exthom}
\Ext^1_\sA(M,T'(N)) \to \Ext^1_{\sA'}(T(M),N)
\end{equation}
given by applying $T$ to an extension of $M$ by $T'(N)$ and then pushing forward
along the counit $TT'(N) \to N$.
Suppose that $T'$ is also exact.
Then \eqref{e:exthom} is an isomorphism, with inverse given by applying $T'$ to
an extension of $T(M)$ by $N$ and then pulling back along the unit $M \to T'T(M)$.

\begin{lem}\label{l:Shapiro}
Let $K$and $K'$ be as in Lemma~\textnormal{\ref{l:affquot}}.
Then for every representation $\sV'$ of $K'$ and $v'$ in $H^1_{K'}(X,\sV')$ there exist a representation $\sV$ of $K$, 
a $v$ in $H^1_K(X,\sV)$, and a $K'$\nd morphism $f$ from $\sV$ to $\sV'$, such that $f$ sends $v$ to $v'$.
\end{lem}

\begin{proof} 
By Lemma~\ref{l:affquot}, the $K$\nd scheme $X' = K/K'$ exists and is affine over $X$.
The embedding of $K'$ into $K$ factors in the category of groupoids in $k$\nd schemes as
\[
(X,K') \to (X',K \times_X X') \to (X,K),
\]
where the first arrow is the embedding of the pullback along the base cross-section of $X'$ and
the second arrow is the projection.
If we write $\sA$, $\sA'$ and $\sA''$ for $\MOD_K(X)$, $\MOD_{K'}(X)$ and $\MOD_{K \times_X X'}(X')$,
and $p:X' \to X$ for the structural morphism,
then the restriction functor $T$ from $\sA$ to $\sA'$ factors up to isomorphism as  $p^*:\sA \to \sA''$ induced
by the second arrow followed by the equivalence $\sA'' \to \sA'$ induced by the first.
As in \eqref{e:Hpush}, $p^*$ has a right adjoint $p_*$, which is exact because $p$ is affine.
It follows that $T$ has an exact right adjoint $T'$.

Since both $T$ and $T'$ are exact, \eqref{e:exthom}
shows that restricting to $K'$ and pushing forward along the counit $TT'(\sV') \to \sV'$ gives an isomorphism
\[
\Ext^1_\sA(\sO_X,T'(\sV')) \iso \Ext^1_{\sA'}(\sO_X,\sV') = H^1_{K'}(X,\sV').
\]
Let $\sE$ be an extension of $\sO_X$ by $T'(\sV')$ in $\sA$ whose class corresponds under this isomorphism
to $v'$.
Writing $\sE$ as the filtered colimit of its $K$\nd submodules in $\Mod_K(X)$ shows that there is such 
a $K$\nd submodule $\sE_0$ such that the restriction to $\sE_0$ of $\sE \to \sO_X$ is non-zero, and hence an epimorphism.
Now take for $\sV$ the kernel of $\sE_0 \to \sO_X$, for $v$ the class of $\sE_0$, and for $f$ the restriction
of the counit to $T(\sV)$.
\end{proof}

\begin{thm}\label{t:requiv}
A transitive affine groupoid $K$ over $H$
has a reductive subgroupoid over $H$ if and only if restriction from $K$ to $H$ splits every
short exact sequence of representations of $K$.
\end{thm}

\begin{proof}
Since the category of representations of any reductive groupoid over $X$ is semisimple abelian,
the ``only if'' is clear.

Conversely, suppose that restriction to $H$ splits every short exact sequence of representations of $K$.
Let $K'$ and $K''$ be as in Lemma~\ref{l:Levitech}.
Then $R_uK'' = \bA(\sV')$ for a representation $\sV'$ of $K''$.
By Lemma~\ref{l:Levitech}, it will be enough to show that
\[
H^1_{K''}(X,\sV') \to H^1_H(X,\sV')
\]
induced by $H \to K''$ is $0$, because by \eqref{e:VViso} and Corollary~\ref{c:Leviclass},
$K''$ will then have a Levi subgroupoid over $H$.
Let $v''$ be an element of $H^1_{K''}(X,\sV')$.
If $v'$ is the image of $v''$ in $H^1_{K'}(X,\sV')$ under $K' \to K''$,
choose a $\sV$, $v$ and $f$ as in Lemma~\ref{l:Shapiro}.
By hypothesis, and the definition of the $H^1$ as groups of extensions, 
the image of $v$ in $H^1_H(X,\sV)$, and hence in $H^1_H(X,\sV')$, is $0$.
Thus the image of $v'$ and hence $v''$ in $H^1_H(X,\sV')$ is $0$. 
\end{proof}

Suppose that $X = \Spec(k)$ and that $H$ is an affine $k$\nd group.
Then a transitive affine groupoid $K$ over $H$ is the same as a $k$\nd homomorphism from $H$ to an affine $k$\nd group,
and $K$ has a reductive subgroupoid over $H$ if and only if the $k$\nd homomorphism is reductive 
in the sense of \cite[2.1.1]{O10}, i.e.\  factors through a reductive $k$\nd group.
Theorem~\ref{t:requiv} then reduces to the equivalence of (a) and (b) in \cite[2.1.6]{O10}, and Corollary~\ref{c:rext}
below to \cite[2.1.7]{O10}.

\begin{cor}\label{c:rext}
Let $K$ be a transitive affine groupoid over $H$
and $k'$ be an extension of $k$.
Then $K$ has a reductive subgroupoid over $H$ if and only if $K_{k'}$ has a
reductive subgroupoid over $H_{k'}$.
\end{cor}

\begin{proof}
The ``only if'' is immediate.
Conversely, suppose that $K_{k'}$ has a reductive subgroupoid over $H_{k'}$.
Let $\sE$ be an extension of a representation $\sV$ of $K$ by a representation $\sV'$.
Then there exists a morphism $g':\sV_{k'} \to \sE_{k'}$ of $H_{k'}$\nd modules right inverse to $\sE_{k'} \to \sV_{k'}$.
If $r:k' \to k$ is a $k$\nd linear map left inverse to the embedding $e:k \to k'$,
and we regard $g'$ as a morphism of $H$\nd modules, then
\[
\sV \xrightarrow{\sV \otimes_k e} \sV \otimes_k k' \xrightarrow{g'} \sE \otimes_k k' \xrightarrow{\sE \otimes_k r} \sE 
\]
is right inverse to $\sE \to \sV$.
Thus $K$ has a reductive subgroupoid over $H$ by Theorem~\ref{t:requiv}.
\end{proof}

\begin{cor}
Let $G$ be an affine $k$\nd group and $P$ be a principal $(H,G)$\nd bundle.
Then $P$ has a principal $(H,G_0)$\nd subbundle for some reductive $k$\nd subgroup $G_0$ of $G$ if and only if
the functor $P \times_k^G -$ from the category of representations of $G$ to the category of representations of $H$
splits every short exact sequence.
\end{cor}

\begin{proof}
As seen in \eqref{e:assocvecbun}, $P \times_k^G -$ factors as an equivalence from representations of $G$
to representations of $\underline{\Iso}_G(P)$ followed by restriction to $H$.
Since by Corollary~\ref{c:redsubgrpd} and Remark~\ref{r:Levireduce} a subbundle of the required form exists if and only if
$\underline{\Iso}_G(P)$ has a reductive subgroupoid over $H$, it suffices to apply Theorem~\ref{t:requiv}.
\end{proof}

\section{The splitting theorem}\label{s:splitting}

\emph{In this section $k$ is a field of characteristic $0$.}

\medskip

In this section we prove the splitting theorem in the form that will be required,
by suitably adapting the proof given in \cite{O11}.
The main idea there was to reduce a categorical problem about tensor categories to a geometrical problem
about actions of reductive groups on affine schemes.
Here the reduction to a geometrical problem applies almost unmodified,
and it is only necessary to adapt the proof of \cite[4.4.4]{O11}
to give the more general Lemma~\ref{l:Dextend} below.

We use the terminology of \cite[\S 2]{O11} for tensor categories.
Thus a \emph{$k$\nd pretensor category} is a $k$\nd linear category with a bilinear tensor product $\otimes$ and a unit $\I$,
together with associativity and commutativity constraints.
A \emph{$k$\nd tensor functor} between $k$\nd pretensor categories is a $k$\nd linear functor
together with constraints ensuring that the tensor product is preserved up to isomorphism,
and a \emph{tensor isomorphism} between $k$\nd tensor functors is a natural isomorphism compatible with the tensor products.
It is assumed that the units are strict and are strictly preserved by $k$\nd tensor functors.
A \emph{tensor ideal} in a $k$\nd pretensor category is an ideal which is stable under tensor product with
an arbitrary morphism.
The quotient by a tensor ideal is a $k$\nd pretensor category.

A \emph{$k$\nd tensor category} is a $k$\nd pretensor category which is pseudo-abelian, i.e. finite direct sums exist,
and every idempotent endomorphism has an image.
If $H$ is a pregroupoid over a $k$\nd scheme $X$, then $\MOD_H(X)$ with tensor product $\otimes_{\sO_X}$ and 
identity $\sO_X$ is a $k$\nd tensor category. 
Every $k$\nd pretensor category $\sC$ can be embedded as a full $k$\nd pretensor subcategory
in a $k$\nd tensor category $\sC'$, its pseudo-abelian hull,
characterised by the property that every object of $\sC'$ is a direct summand of a direct sum of objects of $\sC$.
Any $k$\nd tensor functor from $\sC$ to a $k$\nd tensor category factors uniquely up to tensor isomorphism 
through $\sC'$.

A \emph{dual} of an object $M$ of a $k$\nd pretensor category $\sC$ is an object $M^\vee$ of $\sC$
together with a unit $\I \to M^\vee \otimes M$ and a counit $M \otimes M^\vee \to \I$, satisfying identities 
analogous to those for an adjunction.
The usual properties of duals then hold, so that we have for example a natural isomorphism
\begin{equation}\label{e:dualiso}
\Hom_{\sC}(M,N) \iso \Hom_{\sC}(\I,M^\vee \otimes N).
\end{equation}
for $N$ in $\sC$.
When a dual of $M$ exists, it is unique up to unique isomorphism, and $M$ is then said to be \emph{dualisable}.
Dualisable objects are stable under direct sums, direct summands, and tensor products, and are preserved 
by $k$\nd tensor functors.
If $M$ is dualisable, there is defined for every endomorphism $f$ of $M$ a \emph{trace}
\[
\tr(f) \in \End_\sC(\I).
\] 
We have $\tr(f'' \circ f') = \tr(f' \circ f'')$ for $f':M \to N$ and $f'':N \to M$ with $M$ and $N$ dualisable.

A $k$\nd pretensor category $\sC$ is said to be \emph{rigid} if each object of $\sC$ is dualisable.
The full $k$\nd tensor subcategory $\Mod_H(X)$ of $\MOD_H(X)$ is rigid, 
with duals given by the usual duals of a vector bundle over $X$
and traces the usual ones.
By \eqref{e:dualiso}, an ideal $\sJ$ in a rigid $k$\nd pretensor category may be identified
with the subfunctor $\sJ(\I,-)$ of $\Hom(\I,-)$.

Let $\sC$ be a rigid $k$\nd pretensor category with $\End_\sC(\I)$ a local $k$\nd algebra.
Then $\sC$ has a unique maximal tensor ideal $\Rad(\sC)$.
A morphism $\I \to M$ lies in $\Rad(\sC)$ if and only if it has no left inverse.
In general, $f:M \to N$ lies in $\Rad(\sC)$ if and only if $\tr(g \circ f)$
lies in the maximal ideal of $\End_\sC(\I)$ for every $g:N \to M$.
We have $\Rad(\sC) = 0$ if and only if $\sC$ has no tensor ideals other than $0$ and $\sC$,
and $\End_\sC(\I)$ is then a field. 
In particular if $\Rad(\sC) = 0$ then every $k$\nd tensor functor $\sC \to \sC'$ with $\sC'$
non-zero is faithful.
If $\sC$ is semisimple abelian, then $\Rad(\sC) = 0$,
because $\I$ is indecomposable in $\sC$ and hence every non-zero $\I \to M$ has a left inverse.
In general, we write the quotient $k$\nd pretensor category of $\sC$ by $\Rad(\sC)$ as
\[
\overline{\sC} = \sC/\Rad(\sC).
\]
Then $\Rad(\overline{\sC}) = 0$, and $\End_{\overline{\sC}}(\I)$ is the residue field of $\End_\sC(\I)$.

An object $M$ of a $k$\nd tensor category is called \emph{positive} if $M$ is dualisable and some
exterior power (defined as the image of the antisymmetrising idempotent) of $M$ is $0$.
Positive objects are stable under direct sums, direct summands, and tensor products, and are preserved 
by $k$\nd tensor functors to a $k$\nd tensor category.

For a pregroupoid $H$ over $X$, the positive objects of $\Mod_H(X)$ are those of bounded rank as vector bundles over $X$.
In particular if $X$ is $H$\nd connected, then every object of $\Mod_H(X)$ is positive.

If $\sC$ is a $k$\nd tensor category which is essentially small (i.e.\ has a small skeleton)
and in which every object is positive, then for some affine $k$\nd scheme $X$ there
exists a faithful conservative functor from $\sC$ to $\Mod(X)$ \cite[4.1.3]{O11}.
Properties of positive objects in an arbitrary $k$\nd tensor category may be deduced from this result
by passing to an appropriate full $k$\nd tensor subcategory.  
For example the Cayley--Hamilton theorem holds for positive objects: if $M$ is dualisable with $(n+1)$th exterior power $0$,
then any endomorphism $f$ of $M$ is annulled by a monic polynomial of degree $n$,
with coefficients the traces of exterior powers of $f$ \cite[p.39]{O11}.
Similarly the trace of a nilpotent endomorphism of a positive object is nilpotent.
If $\End(\I)$ is indecomposable, then $\tr(1_M)$ for $M$ positive is the least integer $n \ge 0$ for which 
the $(n+1)$th exterior power of $M$ is $0$.

\begin{prop}\label{p:posfincons}
Let $\sC$ be a $k$\nd tensor category with every object positive
for which $\End_\sC(\I)$ is a local $k$\nd algebra.
\begin{enumerate}
\item\label{i:posfin}
The hom-spaces of $\overline{\sC}$ are finite-dimensional over $\End_{\overline{\sC}}(\I)$.
\item\label{i:poscons}
The projection from $\sC$ onto $\overline{\sC}$ reflects isomorphisms.
\end{enumerate}
\end{prop}

\begin{proof}
\ref{i:posfin} 
Every object in the pseudo-abelian hull $\widetilde{\sC}$ of $\overline{\sC}$ is positive.
By \eqref{e:dualiso} it is enough to prove that $\Hom_{\widetilde{\sC}}(\I,M)$ is finite-dimensional 
over $\End_{\widetilde{\sC}}(\I)$ for each $M$ in $\widetilde{\sC}$.
Since $\Rad(\widetilde{\sC}) = 0$,
every non-zero morphism from $\I$ to an object of $\widetilde{\sC}$ is the embedding of a direct summand.
By induction on $\tr(1_M)$, the dimension of $\Hom_{\widetilde{\sC}}(\I,M)$ over $\End_{\widetilde{\sC}}(\I)$ is thus bounded above by $\tr(1_M)$.

\ref{i:poscons}
It is enough to show that $1+f$ is invertible for every $f:M \to M$ in $\sC$ in the kernel of the projection.
Write $\mathfrak{o}$ for $\End_\sC(\I)$ and $\mathfrak{o}'$ for the commutative $\mathfrak{o}$\nd subalgebra
of $\End_\sC(M)$ generated by $f$.
By the Cayley--Hamilton Theorem, $f$ is annulled by a polynomial with leading coefficient $1$ and
other coefficients in the maximal ideal $\mathfrak{m}$ of $\mathfrak{o}$.
Thus $\mathfrak{o}'$ is finite over $\mathfrak{o}$, 
and $f$ is nilpotent in $\mathfrak{o}'/\mathfrak{m}\mathfrak{o}'$.
It follows that $1+f$ is invertible in $\mathfrak{o}'/\mathfrak{m}\mathfrak{o}'$, and hence in $\mathfrak{o}'$.
\end{proof}

A pseudo-abelian category with semisimple endomorphism rings is semisimple abelian,
because every object is a finite direct sum of indecomposable objects, and every non-zero morphism 
between indecomposable objects is an isomorphism, as follows by considering the endomorphism ring of
their direct sum. 
Further the Krull--Schmidt theorem holds for such a category, 
i.e.\ the commutative monoid of isomorphism classes of objects under direct sum is free.

Recall that a commutative local ring is said to be henselian if every finite commutative algebra over it
is a finite product of local rings.

\begin{prop}\label{p:posssks}
Let $\sC$ be a $k$\nd tensor category with every object positive
for which $\End_\sC(\I)$ is a henselian local $k$\nd algebra.
\begin{enumerate}
\item\label{i:posss}
$\overline{\sC}$ is semisimple abelian.
\item\label{i:posks}
Every object of $\sC$ is a finite direct sum of indecomposable objects, and such a decomposition is unique
up to isomorphism.
\end{enumerate}
\end{prop}

\begin{proof}
By Cayley--Hamilton, any endomorphism of $M$ in $\sC$ is finite over $\End_\sC(\I)$, and hence contained
in a commutative $k$\nd subalgebra of $\End_{\sC}(M)$ which is finite product of local $k$\nd algebras.
Idempotent endomorphisms can thus be lifted from $\overline{\sC}$ to $\sC$, so that $\overline{\sC}$ is pseudo-abelian.
Let $R$ be an endomorphism ring of $\overline{\sC}$.
By Proposition~\ref{p:posfincons}\ref{i:posfin}, $R$ is finite over the field $\End_{\overline{\sC}}(\I)$. 
Thus $R$ has an ideal $J$ consisting of nilpotent elements with $R/J$ semisimple.
For $f$ in $J$ we then have
\[
\tr(g \circ f) = 0
\]
for every $g$ in $R$.
Since $\Rad(\overline{\sC}) = 0$, this shows that $J = 0$ and $R$ is semisimple.
By the above remarks, \ref{i:posss} follows.
The Krull--Schmidt theorem for $\overline{\sC}$ also follows, 
and by Proposition~\ref{p:posfincons}\ref{i:poscons} this implies \ref{i:posks}.
\end{proof}

Let $G$ be an affine $k$\nd group.
The forgetful functor from $G$\nd modules to $k$\nd vector spaces
creates colimits and finite limits.
The category of $G$\nd modules also has arbitrary (small) limits, but in general these are
\emph{not} preserved by the forgetful functor.
If we denote by a subscript $0$ the underlying $k$\nd vector space of a $G$\nd module,
then the canonical $k$\nd linear map from $(\lim_\lambda V_\lambda)_0$ to $\lim_\lambda V_{\lambda0}$
is injective with image the filtered union of the subspaces $\varphi(V_0)$ for $V$ a $G$\nd module 
(which may be taken finite-dimensional) and $\varphi$ a $k$\nd linear map from $V_0$ to $\lim_\lambda V_{\lambda0}$
with each component a $G$\nd homomorphism.

By a $G$\nd algebra we mean a $G$\nd module $R$ equipped with a structure of $k$\nd algebra
for which the multiplication $R \otimes_k R \to R$ and identity $k \to R$ are $G$\nd homomorphisms.
Given a $G$\nd algebra $R$, a $(G,R)$\nd module is a module $M$ over the $k$\nd algebra $R$
such that the action $R \otimes_k M \to M$ of $R$ on $M$ is a $G$\nd homomorphism.
A $(G,R)$\nd submodule of $R$ will be called a $G$\nd ideal of $R$.
The forgetful functor from $G$\nd algebras to $G$\nd modules creates limits and filtered colimits,
and similarly for the forgetful functor from $(G,R)$\nd modules to $G$\nd modules.

A $G$\nd algebra will be called finitely generated if it is finitely generated
as a $k$\nd algebra.
A finitely generated $G$\nd algebra has a finite-dimensional $G$\nd submodule
which generates it as an algebra over $k$.
A  $(G,R)$\nd module will be called finitely generated if it is finitely generated as an $R$\nd module.
A finitely generated $(G,R)$\nd module has a finite-dimensional $G$\nd submodule which generates it over $R$.

Let $R$ be a finitely generated commutative $G$\nd algebra.
Then $\Hom_{G\textrm{-alg}}(R,-)$ preserves filtered colimits of commutative $G$\nd algebras.
This is clear when $R$ is the symmetric algebra $\Sym V$ on a finite-dimensional $G$\nd module $V$, because
then $\Hom_{G\textrm{-alg}}(R,-)$ is naturally isomorphic to $\Hom_G(V,-)$.
In general, there are finite-dimensional $G$\nd modules $V$ and $V'$ such that $R$ is the coequaliser in 
the category of commutative $G$\nd algebras of two morphisms from $\Sym V'$ to $\Sym V$,
and it suffices to note that in the category of sets finite limits commute with filtered colimits.

\begin{lem}\label{Gsubspace}
Let $G$ be an affine $k$-group and $W$ be a $G$\nd module.
Suppose that $k$ is algebraically closed.
Then every $G(k)$\nd subspace of $W$ is a $G$\nd submodule.
\end{lem}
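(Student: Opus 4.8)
The plan is to reduce the statement to a density argument using the structure theory of affine $k$\nd groups, exactly as in the proof of Lemma~\ref{l:kgrpac}. First I would write $G$ as the filtered limit $\lim_\lambda G_\lambda$ of its $k$\nd quotients of finite type, with $V$ a $G$\nd module being the filtered colimit of its finite-dimensional $G$\nd submodules; since a $G(k)$\nd subspace $W$ of $V$ is a $G$\nd submodule precisely when $W \cap V_0$ is a $G$\nd submodule of $V_0$ for every finite-dimensional $G$\nd submodule $V_0$, and since the action of $G$ on any such $V_0$ factors through some $G_\lambda$, we reduce to the case where $G$ is of finite type and $W$ is finite-dimensional. (The statement of the lemma says ``$G$\nd module'', but the reduction shows it suffices to treat the finite-dimensional case.)

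Next, with $G$ of finite type acting linearly on a finite-dimensional vector space $V$, the point is that the condition ``$W$ is $G$\nd stable'' is closed: the stabiliser $\Stab_G(W)$ of the subspace $W$ in the Grassmannian $\Gr(V)$ is a closed $k$\nd subscheme of $G$, because $\Gr(V)$ is a projective $k$\nd scheme and the orbit map sends $g$ to $g \cdot W$, so $\Stab_G(W)$ is the fibre over the $k$\nd point $[W]$ of a morphism of $k$\nd schemes. By hypothesis $W$ is $G(k)$\nd stable, so $G(k) \subseteq \Stab_G(W)(k)$. Now invoke Lemma~\ref{l:kgrpac}\ref{i:kgrpacdense}: since $k$ is algebraically closed, $G(k)$ is Zariski dense in $G$. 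Hence the closed subscheme $\Stab_G(W)$ contains a dense subset of $G$, so $\Stab_G(W) = G$, which is exactly the assertion that $W$ is a $G$\nd submodule.

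The main obstacle — though a minor one — is making precise the claim that $\Stab_G(W)$ is a closed subscheme of $G$, i.e.\ that ``fixing the subspace $W$'' is a scheme-theoretically closed condition. This is standard: one may realise $\Gr_d(V)$ (for $d = \dim W$) as a closed subscheme of $\bP(\Lambda^d V)$ via Pl\"ucker, so that $[W]$ corresponds to a line $\Lambda^d W \subseteq \Lambda^d V$, and $\Stab_G(W)$ is the preimage under $G \to \bP(\Lambda^d V)$, $g \mapsto g \cdot [\Lambda^d W]$, of the closed point $[\Lambda^d W]$; alternatively, choosing complementary subspaces one checks directly that the vanishing of the composite $W \hookrightarrow V \xrightarrow{g} V \twoheadrightarrow V/W$ is a closed condition on $g$. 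Either way, once $\Stab_G(W)$ is known to be closed, density of $G(k)$ finishes the proof, and no further calculation is needed.
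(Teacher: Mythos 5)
Your proof is correct and, up to packaging, is the paper's own argument: the reduction to $G$ of finite type acting on a finite-dimensional module is identical, and your second formulation (the vanishing of the composite $W \hookrightarrow V \xrightarrow{g} V \twoheadrightarrow V/W$ is a closed condition on $g$, verified on the dense set of $k$\nd points) is exactly how the paper concludes, phrased there as: the action morphism restricted to $G \times W_0$ factors through $W_0$ because it does so on $k$\nd points and $G \times W_0$ is reduced. The one thing you leave implicit is that the step ``a closed subscheme of $G$ containing the Zariski dense set $G(k)$ equals $G$'' requires $G$ to be reduced; without that, $\mathrm{Stab}_G(W)$ could be a proper closed subscheme with underlying space all of $|G|$, and indeed the lemma itself fails in characteristic $p$ (take $G = \mu_p$, where $G(k)$ is trivial). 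Since the section's standing hypothesis is that $k$ has characteristic $0$, every affine $k$\nd group is reduced and the step is fine — the paper invokes this reducedness explicitly — and your appeal to Lemma~\ref{l:kgrpac} for density of $G(k)$ is a harmless substitute for the paper's direct check on the $k$\nd points of the reduced scheme $G \times W_0$.
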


\begin{proof}
Let $W_0$ be a $G(k)$\nd subspace of $W$.
Choosing a basis of $W$ containing a basis of $W_0$ and considering the matrix of the automorphism
of $W \otimes_k \sO_G$ defining the action of $G$ shows that $G$ has a closed subscheme $Z$
such that $g$ in $G(S)$ lies in $Z(S)$ if and only if the action of $g$ on $W \otimes_k \sO_S$
restricts to an automorphism of $W_0 \otimes_k \sO_S$.
Since  $G(k)$ is dense in $G$ by Corollary~\ref{c:kgrpac}\ref{i:kgrpacdense} and $G$ is reduced, $Z = G$.
\end{proof}

Let $G$ be a reductive $k$\nd group of finite type.
If $R$ is a finitely generated commutative $G$\nd algebra,
then $R^G$ is a finitely generated $k$\nd algebra \cite[II~Theorem~3.6]{ShaAlgIV},
and $M^G$ is a finitely generated $R^G$\nd module
for $M$ a finitely generated $(G,R)$\nd module \cite[II~Theorem~3.25]{ShaAlgIV}.
If $R$ is a commutative $G$\nd algebra which is finitely generated over $R^G$, it can be seen as follows
that $\Hom_G(V,M)$ is a finitely generated $R^G$\nd module for $V$ a finite-dimensional $G$\nd module and
$M$ a finitely generated $(G,R)$\nd module.
Since \eqref{e:dualiso} with $V$ for $M$ and $M$ for $N$ is an isomorphism of $R^G$\nd modules, we may suppose that $V = k$.
If $R_0$ is a finitely generated $G$\nd subalgebra of $R$ which generates $R$ over $R^G$, and $M_0$
is a finite-dimensional $G$\nd submodule of $M$ which generates $M$ as a $(G,R)$\nd module, it then
suffices to apply $(-)^G$ to 
\[
R^G \otimes_k R_0 \otimes_k M_0 \to M \to 0
\]
and use the fact that $(R_0 \otimes_k M_0)^G$ is finitely generated over $(R_0)^G$.

Let $G$ be a reductive $k$\nd group and $R$ be a commutative $G$\nd algebra for which $R^G$
is a local $k$\nd algebra.
Then $R$ has a unique maximal $G$\nd ideal, which contains the maximal ideal of $R^G$.
Indeed if $\sJ$ is the set of $G$\nd ideals $J \ne R$ of $R$ and $\mathfrak{m}$ is the maximal ideal of $R^G$, 
then applying $(-)^G$ shows that the canonical $G$\nd homomorphisms from $\coprod_{J \in \sJ} J$ and from
$\mathfrak{m} \otimes_k R$ to $R$ are not surjective.

For the proof of Lemma~\ref{l:complete} below we need the following fact from commutative
algebra  \cite[III, \S 3 prop.\~5 and IV, \S 1 prop.\~2, cor.\~2]{BAC-1}:
given an ideal $J$ in a noetherian commutative ring $R$ and a finitely generated $R$\nd module $M$,
we have 
\begin{equation}\label{e:capJM}
\bigcap_{n=1}^\infty J^rM = 0
\end{equation}
if and only if $J + \mathfrak{p} \ne R$
for every associated prime $\mathfrak{p}$ of $M$.

\begin{lem}\label{l:complete}
Let $G$ be a reductive $k$-group of finite type, $R$ be a commutative $G$-algebra,
$J \ne R$ be a $G$-ideal of $R$,
and $M$ be a finitely generated $(G,R)$-module.
Suppose that $R^G$ is a complete noetherian local $k$-algebra
with residue field $k$, and that $R$ is finitely generated
as an algebra over $R^G$.
Then $M$ is the limit in the category of $G$\nd modules of its quotients $M/J^nM$.
\end{lem}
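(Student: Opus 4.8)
The plan is to show that the canonical $G$\nd module homomorphism
\[
M \to \lim_n M/J^nM
\]
is both injective and surjective, where the limit is taken in the category of $G$\nd modules.

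First I would dispose of injectivity. Its kernel is $\bigcap_n J^nM$, which is a $(G,R)$\nd submodule of $M$. Since $R$ is finitely generated over the noetherian ring $R^G$, it is noetherian, and $M$ is finitely generated over $R$, so \eqref{e:capJM} applies: the intersection is $0$ provided $J + \mathfrak p \ne R$ for every associated prime $\mathfrak p$ of $M$. To see this, suppose $J + \mathfrak p = R$ for some associated prime $\mathfrak p$. The set of associated primes is finite, and $G$ permutes it (acting through a finite quotient since there are finitely many), so after replacing $\mathfrak p$ by the intersection of its $G$\nd orbit we may work with a $G$\nd ideal; more directly, $R/\mathfrak p$ is a finitely generated $R^G$\nd algebra which is a domain, so its invariant ring is a domain finite type over the complete local ring $R^G$, and the image of $J$ there is a $G$\nd ideal. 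Taking $G$\nd invariants and using exactness of $(-)^G$, the relation $J + \mathfrak p = R$ would force $J^G + (\text{image of }\mathfrak p)^G$ to contain $1$, hence the image of $J$ in the local ring $(R/\mathfrak p)^G$ would be the unit ideal; but $J \ne R$ and one checks, using that $R$ is integral-ish over $R^G$ (finite type, and the relevant fibres are the spectra of finitely generated algebras over a field, cf.\ the Nullstellensatz argument at the end of Section~\ref{s:transgrpd}), that a proper $G$\nd ideal restricts to a proper ideal on any $G$\nd stable quotient whose invariant ring is local. This gives a contradiction, so $\bigcap_n J^nM = 0$.

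Next, surjectivity. This is where the $G$\nd module limit differs from the limit of underlying vector spaces, since the forgetful functor does not preserve arbitrary limits. Let $V$ be any finite-dimensional $G$\nd module. Applying $\Hom_G(V,-)$, which preserves finite limits, I would show
\[
\Hom_G(V,M) \to \lim_n \Hom_G(V,M/J^nM) = \lim_n \Hom_G(V,M)/J^n\Hom_G(V,M)
\]
is surjective, where the identification of the right-hand side uses exactness of $(-)^G$ together with \eqref{e:Hominv}: namely $\Hom_G(V,M) \cong (V^\vee \otimes M)^G$ and $J^n(V^\vee\otimes M)^G$ maps onto $(J^n(V^\vee\otimes M))^G$ since $(-)^G$ is exact. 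By the discussion preceding the lemma, $N := (V^\vee \otimes M)^G$ is a finitely generated module over the noetherian $R^G$, and $R^G$ is $J^G$\nd adically complete (being complete local with $J^G$ contained in the maximal ideal), so $N \to \lim_n N/(J^G)^nN$ is surjective by the standard completeness statement for finitely generated modules over a complete noetherian local ring; and $J^nN = (J^G)^n N$ because $N$ is an $R^G$\nd module and the action of $J$ on $N$ factors through $J^G$ (as $N$ is $G$\nd invariant). A $G$\nd module homomorphism $V \to \lim_n M/J^nM$ is the same as a compatible family in $\lim_n \Hom_G(V,M/J^nM)$, so this shows every such homomorphism lifts to $\Hom_G(V,M)$.

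Finally I would assemble these pieces. Write $L = \lim_n M/J^nM$ in the category of $G$\nd modules, and let $\iota:M \to L$ be the canonical map. Every $G$\nd module is the filtered colimit of its finite-dimensional $G$\nd submodules, and $\Hom_G(V,-)$ preserves filtered colimits for $V$ finite-dimensional; applying this to $L$, each finite-dimensional $G$\nd submodule $V \subseteq L$ is, by the previous paragraph's surjectivity with that $V$, the image of some $G$\nd homomorphism $V \to M$ composed with $\iota$—more precisely the inclusion $V \hookrightarrow L$ factors through $\iota$. Since $\iota$ is injective by the first step, this factorization identifies $V$ with a $G$\nd submodule of $M$ mapping isomorphically onto $V$; taking the union over all such $V$ shows $\iota$ is surjective, hence an isomorphism. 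The main obstacle I expect is the surjectivity step, specifically getting the identification $\lim_n \Hom_G(V,M/J^nM) \cong \lim_n N/(J^G)^nN$ cleanly—this rests on exactness of $(-)^G$ for reductive $G$ (to commute invariants past the quotients $M/J^nM$ and past $\otimes V^\vee$) and on the fact that $J$ acts on $G$\nd invariants through $J^G$, after which it is just the classical completeness of finitely generated modules over a complete noetherian local ring.
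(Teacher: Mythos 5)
Your surjectivity step contains a genuine gap, and it sits exactly at the point the lemma is really about. From exactness of $(-)^G$ and \eqref{e:Hominv} you do get
\[
\Hom_G(V,M/J^nM) \;\cong\; N/\bigl(J^n(V^\vee\otimes M)\bigr)^G, \qquad N = (V^\vee\otimes M)^G ,
\]
but your further identification of $\bigl(J^n(V^\vee\otimes M)\bigr)^G$ with $(J^G)^nN$ (``$J$ acts on $N$ through $J^G$'', ``$J^n(V^\vee\otimes M)^G$ maps onto $(J^n(V^\vee\otimes M))^G$'') is false: invariants do not commute with multiplication by a $G$\nd stable ideal. Take $G = SL_2$, $V$ the standard representation, $R = \Sym V$, $J = \bigoplus_{m\ge 1}S^mV$, $M = R$; all hypotheses of the lemma hold ($R^G = k$), yet $J^G = 0$ while $\bigl(J\cdot(V^\vee\otimes M)\bigr)^G = \Hom_G(V,J) \ne 0$. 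So the filtration $\bigl(J^n(V^\vee\otimes M)\bigr)^G$ of $N$ is in general strictly larger than the $(J^G)$\nd adic (or $\mathfrak m$\nd adic) one and is not an adic filtration over $R^G$ at all; consequently the classical completeness of the finitely generated $R^G$\nd module $N$ cannot be invoked directly, and your whole surjectivity argument collapses at its first move. This is precisely the difficulty the actual proof is organised around: the adic identification you want is valid only when $J = \mathfrak m R$ is extended from $R^G$ (then $\mathfrak m^n\otimes_k-$ commutes with $\Hom_G(V,-)$ and one gets $\Hom_G(V,M/\mathfrak m^nM)\cong N/\mathfrak m^nN$); for arbitrary $J$ one first treats the case where $R^G$ is finite over $k$ (there surjectivity is automatic from exactness and finite\nd dimensionality of $N$, and only the intersection $\bigcap_n J^nM = 0$ needs the associated\nd prime argument), and the general case is then obtained by comparing and interchanging the two inverse limits over $M/(J^nM + \mathfrak m^rM)$, not by a one\nd shot completeness argument.

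A smaller point: in your injectivity step, the ``more directly'' variant does not parse, since $R/\mathfrak p$ is not a $G$\nd algebra when the associated prime $\mathfrak p$ is not $G$\nd stable, so ``its invariant ring'' is undefined; you must first extend scalars to $\overline{k}$ and replace $\mathfrak p$ by the intersection of its $G(k)$\nd orbit (which is a $G$\nd ideal by density of $k$\nd points, Lemma~\ref{Gsubspace}), and then the locality of $R^G$ gives $J + \mathfrak p_0 \ne R$ and hence $J + g\mathfrak p \ne R$ for some $g$. That part is repairable and is essentially the paper's argument; the surjectivity gap is the substantive one.
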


\begin{proof}
We note that the conclusion of the lemma will hold
provided that for every finite-dimensional $G$\nd module $V$ the canonical homomorphism
\begin{equation}\label{e:Homlimiso}
\Hom_G (V,M) \rightarrow \lim_n \Hom_G (V,M/J^nM)
\end{equation} 
is bijective.
Indeed \eqref{e:Homlimiso} will then be bijective for arbitrary $V$, as follows by writing 
$V$ as the filtered colimit of its finite-dimensional $G$\nd submodules. 
We write $\mathfrak m$ for the maximal ideal of $R^G$.
The proof proceeds in three steps.

(1) Suppose that $R^G$ is finite over $k$.
Let $V$ be a finite-dimensional $G$\nd module.
Then $\Hom_G(V,M)$ is finite-dimensional over $k$ 
because it is as above a finitely generated $R^G$-module.
Hence \eqref{e:Homlimiso} is surjective because $\Hom_G(V,-)$ is exact.
To see that \eqref{e:Homlimiso} is injective,
it suffices to check that \eqref{e:capJM} holds.
To do this
we may after extending the scalars suppose
that $k$ is algebraically closed.
Let $\mathfrak{p}$ be an associated prime of $M$.
Then
\[
\mathfrak{p}_0 = \bigcap_{g \in G(k)} g\mathfrak p
\]
is stable under $G(k)$, and hence by Lemma~\ref{Gsubspace} is a $G$-ideal of $R$.
Since $R^G$ is local,
\[
R^G \rightarrow (R/J)^G \times (R/\mathfrak{p}_0)^G
\]
is not surjective. 
Thus
$R \rightarrow R/J \times R/\mathfrak{p}_0$
is not surjective, so that 
\[
J + \mathfrak{p}_0 \ne R.
\]
Since each $g\mathfrak p$ lies in the finite set of associated primes of $M$,
we therefore have $J + g\mathfrak{p} \ne R$ for some $g \in G(k)$.
Thus
\[
J + \mathfrak{p} = g^{-1}(J + g\mathfrak{p}) \ne R.
\]
Hence \eqref{e:capJM} holds as required.

(2) Suppose that $J = \mathfrak m R$.
Let $V$ be a finite-dimensional $G$\nd module.
Since $\Hom_G(V,-)$ preserves the exact sequence
\[
\mathfrak{m}^n \otimes_k M \rightarrow M \to M/\mathfrak{m}^nM \to 0,
\]
for each $n$ the projection $M \to M/\mathfrak{m}^nM$ induces an isomorphism
\[
\Hom_G (V,M)/\mathfrak{m}^n\Hom_G (V,M) \xrightarrow{\sim}
\Hom_G (V,M/{\mathfrak m}^nM).
\]
Thus \eqref{e:Homlimiso} is bijective,
because $\Hom_G (V,M)$ is as above a finitely generated $R^G$\nd module
and hence complete for the $\mathfrak m$-adic topology. 

(3) Now consider the general case. Write $R_r$ and $M_r$
for $R/\mathfrak{m}^rR$ and $M/\mathfrak{m}^rM$, and $J_r$
for the image of $J$ in $R_r$. 
Then $J_r \ne R_r$, because $J$ and $\mathfrak{m}^rR$ are contained in the unique maximal
$G$\nd ideal of $R$.
By (2), and by (1) with $R_r$ for $R$, $J_r$ for $J$ and $M_r$ for $M$, we have canonical isomorphisms
of $G$\nd modules
\begin{equation}\label{e:limriso}
M \xrightarrow{\sim} \lim_r M_r \iso \lim_r\lim_n M_r/(J_r)^nM_r.
\end{equation}
Also $M_{(n)} = M/J^nM$ is a finitely generated $R$-module
for each $n$, so that by (2) with $M_{(n)}$ for $M$, the second arrow of
\begin{equation}\label{e:limniso}
M \rightarrow \lim_n M_{(n)} \to \lim_n\lim_r M_{(n)}/\mathfrak{m}^rM_{(n)}
\end{equation}
is an isomorphism.
On the other hand,
by \eqref{e:limriso} and the canonical isomorphisms
\[
M_r/(J_r)^nM_r \xleftarrow{\sim} M/(J^nM +\mathfrak{m}^rM)
\xrightarrow{\sim} M_{(n)}/\mathfrak{m}^rM_{(n)},
\]
the composite of the two arrows \eqref{e:limniso} is an isomorphism.
Thus the first arrow of \eqref{e:limniso}
is an isomorphism, as required.
\end{proof}

Let $G$ be an affine $k$\nd group.
A commutative $G$\nd algebra $R$ will be called \emph{simple} if $R \ne 0$ and $R$ has no $G$\nd ideal
other than $0$ and $R$.
Suppose that $G$ is of finite type and $R$ is a simple commutative $G$\nd algebra with $R^G = k$.
Then by a theorem of Magid \cite[4.5]{Mag}, $\Spec(R)$ is a homogeneous (i.e.\ transitive) $G$\nd scheme,
and in particular $R$ is a finitely generated $k$\nd algebra \cite[4.6]{Mag}.

Let $A$ be the henselisation at a $k$\nd point of $k$\nd scheme of finite type, and
$F$ be a functor from commutative $A$\nd algebras to sets.
Then if $\widehat{A}$ is the completion of $A$, it follows from Artin's approximation theorem \cite[1.12]{Art69}
that $F(A)$ is non-empty provided that $F(\widehat{A})$ is non-empty and $F$ preserves filtered colimits.

The following lemma reduces when $R^G = k$ to \cite[4.4.4]{O11}.
Since some of the steps in the proof of \cite[4.4.4]{O11} apply essentially unmodified
to the proof of Lemma~\ref{l:Dextend}, we simply refer at the relevant places to \cite{O11}.
Note that what are here called reductive $k$\nd groups (resp.\ reductive $k$\nd groups of finite type)
were called proreductive $k$\nd groups (resp.\ reductive $k$\nd groups) in \cite{O11}.

\begin{lem}\label{l:Dextend}
Let $G$ be a reductive $k$-group, $R$ be a commutative $G$-algebra, $D$ be a $G$-subalgebra of $R$, and 
$p:R \rightarrow \overline{R}$ be the projection onto a simple quotient $G$-algebra of $R$.
Suppose that $R^G$ is a henselian local $k$\nd algebra with residue field $k$, and that the restriction
of $p$ to $D$ is injective.
Then $R$ has a $G$-subalgebra $D'$ containing $D$
such that the restriction of $p$ to $D'$ is an isomorphism.
\end{lem}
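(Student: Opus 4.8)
The plan is to reduce to the case where $G$ is of finite type and $R^G$ is complete, then to construct $D'$ as the limit of finite-type approximations using a deformation-by-one-step argument, with Artin approximation bridging the gap back to the henselian case. First I would reduce to $G$ of finite type: write $G$ as the filtered limit $\lim_\lambda G_\lambda$ of its quotients of finite type, so that $R$ is a filtered colimit of $G_\lambda$\nd subalgebras $R_\lambda$; since $\overline{R}$ is a simple quotient it is finitely generated over $k$ (by Magid's theorem, using $R^G = k$ on the quotient after passing to residue field, or directly), and $D$ and $p|_D$ descend to finite levels, so we may replace $G$ by some $G_\lambda$. Next, using Lemma~\ref{l:complete}, replace $R^G$ by its completion $\widehat{R^G}$ and $R$, $D$, $\overline{R}$ by $R \otimes_{R^G} \widehat{R^G}$ etc.; the hypotheses are preserved (the maximal $G$\nd ideal of $R$ is still well-defined, and $p$ remains injective on $D$ after faithfully flat base change), and a $D'$ over the completion will, by Artin approximation applied to the functor of "$G$\nd subalgebras of $R$ containing $D$ mapping isomorphically under $p$", which preserves filtered colimits since everything in sight is finitely generated, yield a $D'$ over the henselian ring.

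So assume now $G$ is reductive of finite type and $R^G$ is a complete noetherian local $k$\nd algebra with residue field $k$. The core construction is to build an increasing chain of $G$\nd subalgebras $D = D_0 \subseteq D_1 \subseteq \cdots$ of $R$, where $D_n$ maps isomorphically onto $\overline{R}/\overline{J}{}^{\,n+1}$ under the composite $D_n \hookrightarrow R \xrightarrow{p} \overline{R} \to \overline{R}/\overline{J}{}^{\,n+1}$, where $\overline{J}$ is the maximal ideal of the local ring $\overline{R}$ (note $\overline{R}$ is a finitely generated $k$\nd algebra on which $G$ acts with $\overline{R}{}^G$ a quotient of $R^G$, hence local). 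At each stage, lifting $D_n$ to $D_{n+1}$ amounts to splitting a square-zero extension of $G$\nd algebras: the obstruction and the set of splittings are controlled by the $G$\nd cohomology of a module of the form $\overline{J}{}^{\,n+1}/\overline{J}{}^{\,n+2}$, which is a finite-dimensional $G$\nd module, hence a finitely generated $(G, \overline{R}{}^G)$\nd module; since $G$ is reductive, the relevant André--Quillen-type $G$\nd cohomology in degrees $1$ and $2$ is computed by taking $G$\nd invariants of an ordinary cotangent complex, and reduces to ordinary (non-equivariant) obstruction theory over $\overline{R}{}^G = \overline{R}{}^G$ for the induced maps on invariants. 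Here is where I expect to invoke the smoothness coming from the simplicity of $\overline{R}$: by Magid's theorem $\Spec(\overline{R})$ is a homogeneous space under $G$, hence smooth over $k$, so the relevant deformations are unobstructed and the liftings exist. Then set $D' = \lim_n D_n \subseteq R$; by Lemma~\ref{l:complete} applied with $J = \overline{J}R$ lifted appropriately (or rather, identifying $D'$ with $\lim_n \overline{R}/\overline{J}{}^{\,n+1} = \widehat{\overline{R}}$ and checking $D' \subseteq R$), the restriction of $p$ to $D'$ is an isomorphism onto $\overline{R}$ — using that $\overline{R}$ is already complete with respect to $\overline{J}$ only after completion, so one must be slightly careful and perhaps first replace $\overline{R}$ by its $\overline{J}$\nd adic completion, which is legitimate since the original $\overline{R}$, being essentially of finite type and local, injects into its completion, and $D$ still injects.

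The main obstacle, as in \cite[Lemma~4.4.4]{O11}, is precisely the step-by-step lifting: one must show that the $G$\nd subalgebra $D_n$ of $R$ — not merely an abstract $G$\nd algebra, but one sitting inside the fixed ambient $R$ — can be enlarged inside $R$ so as to surject onto the next infinitesimal neighbourhood of $\overline{R}$. The existence of the lift as an abstract deformation is the smoothness input; the point is to realise it by an actual subalgebra of $R$, which requires lifting the generators of $D_{n+1}$ along $R \to R/(\text{appropriate ideal})$ compatibly with the $G$\nd action, and this uses exactness of $(-)^G$ for reductive $G$ together with the completeness of $R^G$ (so that $R$ is the limit of its quotients $R/\mathfrak{m}^rR$, Lemma~\ref{l:complete}) to pass from approximate lifts mod $\mathfrak{m}^r$ to genuine ones. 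I would follow the structure of the argument in \cite{O11} closely, the only change being that the base ring of the geometric problem is $R^G$ (a complete local ring) rather than $k$, so that "homogeneous space under $G$" is replaced by "smooth $G$\nd scheme over $\operatorname{Spec}(R^G)$ with good quotient", and the vanishing of obstructions in $H^2_G$ is replaced by the corresponding statement over the Artinian quotients $R^G/\mathfrak{m}^r$, then assembled by the limit.
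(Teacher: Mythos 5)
There is a genuine gap at the heart of your core construction: you filter the target $\overline{R}$ by powers of ``the maximal ideal $\overline{J}$ of the local ring $\overline{R}$'', but $\overline{R}$ is not local. The hypothesis is only that $\overline{R}$ is a \emph{$G$-simple} quotient (no nonzero proper $G$-ideals), with $\overline{R}{}^G = k$; by Magid's theorem $\Spec(\overline{R})$ is then a homogeneous space under $G$. For example $\overline{R} = k[t,t^{-1}]$ with $\bG_m$ scaling $t$ is $G$-simple with $\overline{R}{}^G = k$, yet it has infinitely many maximal ideals and none of them is $G$-stable, so there is no $G$-equivariant $\overline{J}$-adic tower at all. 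Even if one picks some maximal ideal, the tower $\overline{R}/\overline{J}{}^{\,n+1}$ converges to the $\overline{J}$-adic completion of $\overline{R}$ rather than to $\overline{R}$; your own caveat (``first replace $\overline{R}$ by its completion'') changes the statement to be proved, and that completion is not a rational $G$-algebra in any case. The filtration that makes the induction work is the opposite one, used in the paper's case (2): filter the \emph{source} by powers of $J = \Ker p$, invoke Lemma~\ref{l:complete} (with $R^G$ complete noetherian and $R$ finitely generated over it) to write $R = \lim_n R/J^n$ in the category of $G$-modules, and lift the copy of $\overline{R}$ step by step through the square-zero extensions $R/J^{n+1} \to R/J^n$, the square-zero step being exactly case (1) of \cite[Lemma~4.4.4]{O11}.

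Two further steps are also underdeveloped. Artin's approximation theorem applies to the henselisation at a $k$-point of a scheme of finite type, not to an arbitrary henselian local $k$-algebra, and it requires a functor commuting with filtered colimits; since neither $D$ nor $R$ is assumed finitely generated, ``everything in sight is finitely generated'' is not available, and one must first cut down to finitely generated data. The paper does this in its cases (3)--(4): it replaces $R$ by a $G$-subalgebra $R_1$ generated over a henselisation $A_0$ of $(R_0)^G$ of finite type (with $R_0$ generated by $D$ and lifts of generators of $\overline{R}$), and only then defines the approximation functor $F(B)$ of $G$-algebra maps $\overline{R} \to R_1 \otimes_{A_0} B$ compatible with $D$ and $p$. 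Similarly, the reduction to $G$ of finite type is not achieved by ``descending $R$, $D$, $p$ to a finite level'': the action of $G$ on $R$ need not factor through any finite-type quotient; the paper's case (5) (step (3) of \cite[Lemma~4.4.4]{O11}) instead passes to a normal subgroup acting trivially on $\overline{R}$ and works with the corresponding invariants, which is also where the reduction to $D$ finitely generated is effected.
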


\begin{proof}
Write $J$ for the $G$\nd ideal $\Ker p$ of $R$ and $A$ for the $k$\nd algebra $R^G$.
Since $\overline{R}$ is a simple $G$\nd algebra and $A$ is local, $J$ is as above the unique maximal 
$G$\nd ideal of $R$,
and it contains the maximal ideal of $A$.
Hence $\overline{R}{}^G = k$, because $A \to \overline{R}{}^G$ is surjective.
By the theorem of Magid,  $\overline{R}$ will thus be a finitely generated $G$\nd algebra
provided that $G$ is of finite type.
We consider successively the following cases:
\begin{enumerate}
\renewcommand{\theenumi}{(\arabic{enumi})}
\item
$G$ is of finite type, $J^2 = 0$, and $D$ is a
finitely generated $k$\nd algebra
\item
$G$ is of finite type, $A$ is a complete
noetherian local $k$\nd algebra, $R$ is a finitely generated
$A$-algebra, and $D$ is a finitely generated $k$\nd algebra
\item
$G$ is of finite type, $A$ is the henselisation
at a $k$-point of a $k$-scheme of finite type,
$R$ is a finitely generated $A$\nd algebra,
and $D$ is a finitely generated $k$\nd algebra
\item
$G$ is of finite type and $D$ is a finitely generated $k$\nd algebra
\item
the general case.
\end{enumerate}
When $A = k$, the cases (1), (4) and (5) above are respectively the cases (1), (2) and (3) of 
\cite[4.4.4]{O11}.

\quad (1) 
Step (1) of the proof of \cite[4.4.4]{O11} applies unmodified.

\quad (2) 
By Lemma~\ref{l:complete}, $R$ is the limit in the category of
$G$\nd algebras of its quotients $R/J^n$.
If $D_n$ is the image of $D$ in $R/J^n$, it is thus enough to show
that any $G$\nd subalgebra $D'{}\!_n \supset D_n$ of $R/J^n$ with
$D'{}\!_n \to \overline{R}$ an isomorphism can be lifted to
a $G$\nd subalgebra $D'{}\!_{n+1} \supset D_{n+1}$ of $R/J^{n+1}$ with
$D'{}\!_{n+1} \to \overline{R}$ an isomorphism.
To do this, apply (1) with the inverse
image of $D'{}\!_n$ in $R/J^{n+1}$ for $R$ and $D_{n+1}$ for $D$.

\quad (3) 
Given a commutative $A$\nd algebra $B$, denote by $F(B)$ the set of those $G$\nd algebra
homomorphisms 
\[
a:\overline{R} \to R \otimes_A B
\]
such that the restrictions to $D$ of $a \circ p$
and the canonical homomorphism $R \to R \otimes_A B$ coincide,
and such that $(p \otimes_A B) \circ a$ is the canonical homomorphism
$\overline{R} \to \overline{R} \otimes_A B$.
Then $B \mapsto F(B)$ may be regarded as a functor
from commutative $A$\nd algebras to sets.
It is enough to show that $F(A)$ is non-empty, because with the identification $R = R \otimes_A A$
we may take $D' = a(\overline{R})$ for any $a$ in $F(A)$.
By Artin's approximation theorem, it will suffice
to show that $F$ commutes with filtered colimits and that
$F(\widehat{A})$ is non-empty, where $\widehat{A}$
is the completion of $A$.

Since $\overline{R}$ is a finitely generated $G$\nd algebra, $\Hom_{G\textrm{-alg}}(\overline{R},-)$
preserves filtered colimits of commutative $G$\nd algebras.
Thus if $B$ is the filtered colimit of commutative $A$\nd algebras $B_\lambda$,
then the canonical map
\[
\colim_\lambda \Hom_{G\textrm{-alg}}(\overline{R}, R \otimes_A B_\lambda) \to 
\Hom_{G\textrm{-alg}}(\overline{R},R \otimes_A B)
\]
is bijective.
Since $D$ and $\overline{R}$ are finitely generated $k$\nd algebras,
any element of $\Hom_{G\textrm{-alg}}(\overline{R}, R \otimes_A B_\lambda)$
whose image in $\Hom_{G\textrm{-alg}}(\overline{R}, R \otimes_A B)$ lies in $F(B)$
has image in $\Hom_{G\textrm{-alg}}(\overline{R}, R \otimes_A B_{\lambda'})$
which lies in $F(B_{\lambda'})$ for $\lambda'  \ge \lambda$ sufficiently large.
Thus $F$ preserve filtered colimits.

The restriction $A \to \overline{R}$ of $p$ to $A$ factors through
the augmentation $A \to k$.
Thus $p$ and $\widehat{A} \to k \to \overline{R}$ define a homomorphism of $G$\nd algebras 
\[
\widehat{p}:\widehat{R} = R \otimes_A  \widehat{A} \to \overline{R},
\]
which factors as $p \otimes_A  \widehat{A}$ followed by an isomorphism.
The canonical homomorphism from $R$ to $\widehat{R}$
embeds $D$ as a $G$\nd subalgebra of $\widehat{R}$
to which the restriction of $\widehat{p}$ is injective.
Applying $(-)^G$ to the projection of
$R \otimes_k \widehat{A}$ onto $\widehat{R}$
shows that $\widehat{R}^G$ is the image of
$\widehat{A}$ in $\widehat{R}$.
Since $\widehat{p}$ is surjective,
we may thus apply (2) with $\widehat{R}$ for $R$ and $\widehat{p}$ for $p$ to obtain
a $G$\nd subalgebra $D'$ of $\widehat{R}$ such that the restriction of $\widehat{p}$
to $D'$ is an isomorphism.
The composite of the embedding of $D'$ with the inverse of this isomorphism is then an element of $F(\widehat{A})$.
Thus $F(\widehat{A})$ is non-empty.

\quad (4) 
Write $R_0$ for the $G$-subalgebra of $R$
generated by $D$ and a lifting to $R$ of a finite set
of generators of $\overline{R}$.
Then $R_0$ is a finitely generated $G$-algebra,
so that $(R_0)^G$ is a finitely generated $k$-algebra.
Since $A$ is henselian, the embedding of $(R_0)^G$ into $A$ extends to a homomorphism to $A$
from the henselisation  of $(R_0)^G$ at the kernel of $(R_0)^G \to A \to k$.
Its image is a $k$-subalgebra $A_0$ of $A$
containing $(R_0)^G$ which is the henselisation at a $k$\nd point of a $k$\nd scheme of finite type.
If $R_1$ is the $G$-subalgebra of $R$ generated by
$A_0$ and $R_0$,
then the restriction $p_1$ of $p$ to $R_1$ is surjective, $R_1$ contains $D$,
and $R_1$ is a finitely generated $A_0$\nd algebra.
Applying $(-)^G$ to the canonical $G$\nd homomorphism from $R_0 \otimes_k A_0$ to $R_1$ 
shows further that  $(R_1)^G = A_0$, 
Thus by (3) with $R_1$ and $p_1$ for $R$ and $p$, there is a $G$-subalgebra
$D' \supset D$ of $R_1 \subset R$
such that the restriction of $p$ to $D'$ is an isomorphism.

\quad (5)
Step (3) of the proof of  \cite[4.4.4]{O11} applies with
the references to (2) replaced by references to (4), and the second paragraph
replaced by the following one:

Let $B \supset D_\mu$ be a $G$\nd subalgebra of $R$ for which $B^{H_\mu}$ contains a simple $G/H_\mu$\nd subalgebra
containing $D_\mu$.
Then we have a unique factorisation
\[
\Spec(B) \xrightarrow{f_{\mu,B}} X_\mu \to \Spec(D_\mu)
\]
of the $G$\nd morphism $\Spec(B) \to \Spec(D_\mu)$.
By (4) with $G/H_\mu$, $R^{H_\mu}$ and $D_\mu$ for $G$, $R$ and $D$, the condition on $B$ is satisfied with $B = R$.
\end{proof}

Let $\sC_1$ and $\sC_2$ be $k$\nd pretensor categories.
Then there is a $k$\nd pretensor category
$\sC_1 \otimes_k \sC_2$
with set of objects the product of the sets
of objects of $\sC_1$ and $\sC_2$, hom-spaces the tensor products over $k$ of those of $\sC_1$ and $\sC_2$,
and composition, identities, tensor products, associativities and symmetries defined component-wise.
Given $k$\nd tensor functors $T_i:\sC_i \to \sC$ for $i=1,2$, we then have $T_i = TI_i$ for $i = 1,2$ for
a $k$\nd tensor functor $T:\sC_1 \otimes_k \sC_2 \to \sC$, where $I_1$ is $(\I,-)$ and $I_2$ is $(-,\I)$ \cite[p.9]{O11}.
If $\sC_i$ is rigid for $i = 1,2$, then $\sC_1 \otimes_k \sC_2$ is rigid, because each of its objects
is of the form $I_1(M_1) \otimes I_2(M_2)$.
If further $\End_{\sC_i}(\I) = k$ and $\Rad(\sC_i) = 0$ for $i = 1,2$, then $\End_{\sC_1 \otimes_k \sC_2}(\I) = k$
and $\Rad(\sC_1 \otimes_k \sC_2) = 0$.

The following lemma is a modified form of \cite[4.4.5]{O11}. 
It was there assumed that $\End_\sC(\I) = k$, but there was a less strict condition
than positivity of the objects, and the result was proved in a slightly sharper form with ``right quasi-inverse''
replaced by ``right inverse''.
The proof given there also applies here, with the modifications indicated below.
Note that what are here written $\MOD_G(k)$ and $\Mod_G(k)$ were written $\REP_k(G)$ and $\Rep_k(G)$ in 
\cite{O11}.
By a right quasi-inverse to a $k$\nd tensor functor is meant a right inverse up to tensor isomorphism.

\begin{lem}\label{l:liftsplit}
Let $\sC$ and $\sD$ be essentially small $k$\nd tensor categories with every object positive. 
Suppose that $\End_\sC(\I)$ is a henselian local $k$\nd algebra with residue field $k$.
Then any lifting $\sD \to \sC$ along the projection $Q:\sC \to \overline{\sC}$ of a faithful
$k$\nd tensor functor $\sD \to \overline{\sC}$
factors up to tensor isomorphism through some right quasi-inverse to $Q$.
\end{lem}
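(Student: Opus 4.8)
The plan is to reduce the statement, via Tannakian duality, to the geometric content packaged in Lemma~\ref{l:Dextend}. First I would set up the dictionary. Let $L:\sD\to\sC$ be the given lifting of a faithful tensor functor $\overline{L}:\sD\to\overline{\sC}$, so $Q\circ L$ is tensor-isomorphic to $\overline{L}$; after adjusting by that isomorphism I may assume $Q\circ L=\overline{L}$ on the nose. By Lemma~\ref{l:ssTann}, $\overline{\sC}$ is semisimple Tannakian over $k$; replacing $\sD$ by its image under $\overline{L}$ (which is legitimate since $\overline{L}$ is faithful and we only seek a factorisation up to tensor isomorphism) I may assume $\sD$ is a tensor subcategory of $\overline{\sC}$ and $\overline{L}$ is the inclusion. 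Since $\sD$ is an essentially small rigid $k$-linear tensor subcategory of a semisimple Tannakian category, $\sD$ generates a Tannakian subcategory $\sD^{\otimes}$ of $\overline{\sC}$, say $\Rep(G)$ for a reductive $k$-group $G$ (a quotient of the fundamental group of $\overline{\sC}$); and $\overline{\sC}$ itself is $\Rep(\Pi)$ for a reductive $k$-group $\Pi$ surjecting onto $G$. The task is then to find a tensor functor $\Rep(G)\to\sC$ splitting $Q$ over $\Rep(G)$ and restricting (up to tensor isomorphism) to $L$ on $\sD$.

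Next I would encode the whole situation as a pair of $G$-algebras. A $k$-tensor functor out of $\Rep(G)$ into $\sC$ is, by the usual $\Ind$-completion argument, the same as a commutative algebra object in $\Ind(\sC)$ equipped with a compatible $G$-coaction making it "the coordinate ring of a $G$-torsor internal to $\sC$"; concretely one passes to the category of $\Ind(\sC)$-valued fibre functors and uses that $\sC$ has a canonical fibre functor into itself. I would run the standard reduction (as in \cite{O11}) that turns the lifting problem into the following: we have a reductive $k$-group $G$, a commutative $G$-algebra $R$ (built from $\sC$, with $R^{G}=\End_{\sC}(\I)$, which is henselian local with residue field $k$ by hypothesis), a $G$-subalgebra $D\subseteq R$ (coming from the functor $L$ on $\sD$), and the projection $p:R\to\overline R$ onto a simple quotient $G$-algebra corresponding to the point of $\overline{\sC}$; the restriction of $p$ to $D$ is injective because $L$ lifts the faithful $\overline{L}$. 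Then Lemma~\ref{l:Dextend} produces a $G$-subalgebra $D'\supseteq D$ of $R$ with $p|_{D'}$ an isomorphism. Translating back, $D'$ is precisely a tensor-sectional datum: $\Spec$ of $D'$ (internal to $\sC$) gives a $k$-tensor functor $s:\overline{\sC}_{\text{or }\Rep(G)}\to\sC$ with $Q\circ s\cong\mathrm{id}$ and $s|_{\sD}\cong L$, i.e. $L$ factors up to tensor isomorphism through the right inverse $s$ of $Q$.

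I would then discharge the remaining bookkeeping: verifying that the $G$-algebra $R$ attached to $\sC$ indeed satisfies $R^{G}=\End_{\sC}(\I)$ and that positivity of all objects of $\sC$ is what guarantees the Cayley--Hamilton/finiteness inputs used implicitly (this is exactly why the hypothesis "every object is positive" appears, replacing the $\End=k$ hypothesis of \cite[Lemma~4.4.5]{O11}); checking that the quotient $\overline R$ is $G$-simple with $\overline R^{G}=k$, which follows from $\Rad(\overline{\sC})=0$ and $\End_{\overline{\sC}}(\I)=k$ together with Magid's theorem as invoked in Lemma~\ref{l:Dextend}; and checking that $p|_{D}$ injective corresponds to faithfulness of $\overline{L}$. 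Finally I would note that the passage from "$G$ reductive of finite type" to general reductive $G$ is handled inside Lemma~\ref{l:Dextend} itself (its case~(5)), so nothing extra is needed here.

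The main obstacle is not any single calculation but the faithful transcription of the Tannakian side: setting up the equivalence between (i) tensor functors $\Rep(G)\to\sC$ lifting a given one, and (ii) $G$-subalgebras $D'$ of the ambient $G$-algebra $R$ with prescribed behaviour under $p$, in a way robust enough that $\End_{\sC}(\I)$ need not be a field. Concretely, the delicate point is identifying $R^{G}$ with $\End_{\sC}(\I)$ and confirming that the simple quotient $\overline R$ arising from $Q:\sC\to\overline{\sC}$ has $\overline R^{G}=k$, since it is this that lets us feed the data into Lemma~\ref{l:Dextend}; once that is in place the proof is a direct quotation of that lemma, exactly as the corresponding step of \cite[Lemma~4.4.5]{O11} quotes \cite[Lemma~4.4.4]{O11}.
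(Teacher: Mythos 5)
Your overall strategy coincides with the paper's: the paper proves this lemma by quoting the last three paragraphs of the proof of \cite[Lemma~4.4.5]{O11} verbatim, with Lemma~\ref{l:Dextend} substituted for \cite[Lemma~4.4.4]{O11}, so "reduce the categorical statement to the geometric statement about reductive group actions" is exactly right. The problem is that this reduction \emph{is} the proof, and the parts of it you commit to contain genuine gaps. First, replacing $\sD$ by its image under the faithful functor $\overline{L}$ is not legitimate: the image has more morphisms than $\sD$, and $L$ is not defined on them; extending $L$ to those morphisms is itself a lifting problem of the kind being solved. (This is repairable, since one only needs the factorisation $\sD \to \overline{\sC}$, but as written it is circular.) Second, you assume $\overline{\sC} \simeq \Rep(\Pi)$ and that the subcategory generated by the image of $\sD$ is $\Rep(G)$ for reductive $k$\nd groups; Lemma~\ref{l:ssTann} only gives that $\overline{\sC}$ is semisimple Tannakian over $k$, and $k$ is not assumed algebraically closed, so no $k$\nd valued fibre functor need exist and neutrality is an unjustified hypothesis. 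Third, even granting neutrality, your construction is designed to produce a tensor functor on $\Rep(G)$, the Tannakian subcategory generated by the image of $\sD$, splitting $Q$ there; the lemma demands a right inverse to $Q$ defined on all of $\overline{\sC}$ through which $L$ factors, and you never address the extension from $\Rep(G)$ to $\overline{\sC}$ (re-running the same argument with $\Rep(G)$ in place of $\sD$ is circular, since that is the lemma itself). Your own hedge about whether the section is defined on $\overline{\sC}$ or only on $\Rep(G)$ signals this unresolved point.

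Beyond these, the core dictionary — building from $\sC$ a commutative $G$\nd algebra $R$ with $R^G = \End_\sC(\I)$, identifying the simple quotient $\overline{R}$ with $\overline{R}{}^G = k$ attached to $Q$, encoding $L$ as a $G$\nd subalgebra $D$ with $p|_D$ injective, and translating a $D'$ with $p|_{D'}$ an isomorphism back into a tensor right inverse of $Q$ compatible with $L$ — is asserted via "the usual $\Ind$\nd completion argument" but never constructed. Note that $\sC$ is only pseudo-abelian (think of vector bundles), so the standard correspondence between tensor functors out of $\Rep(G)$ and torsor algebras does not apply verbatim and the construction has to be carried out by hand; this, together with the verifications you defer as "bookkeeping," is precisely the content the paper imports from the quoted paragraphs of \cite{O11}. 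As it stands, the proposal restates the problem as "set up the dictionary and apply Lemma~\ref{l:Dextend}" without establishing the dictionary, and the reductions it does spell out (image replacement, neutrality, section only over the generated subcategory) either fail or fall short of the statement.
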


\begin{proof}
The first three paragraphs of the proof of \cite[Lemma~4.4.5]{O11} concern the reduction to the 
quasi-inverse and positive cases, and are irrelevant here.
If $E:\sD \to \sC$ is the lifting, the remaining two paragraphs may be applied here, 
provided that $P$ is replaced by $Q$,
and the following modifications are made.
In the fourth paragraph, replace the second-last sentence by ``Since each object of $\sB$ and $\sD$ is positive,
so also is each object of $\widetilde{\sD}$''.
In the fifth paragraph, replace ``$\Rep_k(G)$'' by $\Mod_G(k)$'', the ``$= k$'' in ``$\End_{\sF_R}(\I) = k$''
and ``$R^G = \Hom_G(k,R) = k$'' by ``is a henselian local $k$\nd algebra with residue field $k$'',
and the reference to Lemma~4.4.4 by a reference to Lemma~\ref{l:Dextend}.
\end{proof}

\begin{thm}\label{t:splitting}
Let $\sC$ be an essentially small $k$\nd tensor category with every object positive.
Suppose that $\End_\sC(\I)$ is a henselian local $k$\nd algebra with residue field $k$.
Then the projection $Q:\sC \to \overline{\sC}$ has a right quasi-inverse.
If $U$ is such a right quasi-inverse, and $\sD$ is an essentially small rigid $k$\nd tensor 
category,
then any $k$\nd tensor functor $T:\sD \to \sC$ with $QT$ faithful 
factors up to tensor isomorphism through $U$,
and such a factorisation is unique up to tensor isomorphism.
\end{thm}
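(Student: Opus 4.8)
The plan is to deduce the theorem from Lemma~\ref{l:liftsplit}, using Lemma~\ref{l:ssTann} to know that $\overline{\sC}$ is semisimple Tannakian (so that $\End_{\overline{\sC}}(\I) = k$, every object of $\overline{\sC}$ is positive, and the projection $Q$ is full and essentially surjective), and Lemma~\ref{l:conservative} to recognise isomorphisms from the fact that their images under $Q$ are isomorphisms. For the existence of a right inverse, I would note that the $k$\nd tensor functor $\Ve_k \to \overline{\sC}$ sending a vector space $W$ to $W \otimes_k \I$ is faithful, since $\End_{\overline{\sC}}(\I) = k$, and that it has the evident lifting along $Q$, namely the $k$\nd tensor functor $\Ve_k \to \sC$ with $W \mapsto W \otimes_k \I$, whose composite with $Q$ is the given functor because the residue map $\End_\sC(\I) \to k$ restricts to the identity on $k$. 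As $\Ve_k$ is an essentially small $k$\nd tensor category in which every object is positive, Lemma~\ref{l:liftsplit} applies to this lifting and produces a right inverse $V$ to $Q$.

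For the factorisation, let $V$ be any right inverse to $Q$ and $T \colon \sD \to \sC$ a $k$\nd tensor functor with $\sD$ essentially small and $QT$ faithful; every object of $\sD$ is then positive, this being forced by the faithfulness of $QT$ together with the positivity of the objects of $\overline{\sC}$. Thus $T$ is a lifting along $Q$ of the faithful tensor functor $QT$, so by Lemma~\ref{l:liftsplit} it factors up to tensor isomorphism through some right inverse $W$ to $Q$, say $T \cong W \circ QT$; by the uniqueness of the right inverse, established below, $W$ is tensor isomorphic to $V$, and hence $T \cong V \circ QT$. For the uniqueness of the factorisation, if $T \cong V \circ S$ for a $k$\nd tensor functor $S \colon \sD \to \overline{\sC}$, then applying $Q$ and using $QV \cong \id_{\overline{\sC}}$ gives $S \cong QVS \cong QT$, so $S$ is determined up to tensor isomorphism.

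The remaining and most delicate point---the Tannakian counterpart of the conjugacy of Levi subgroups, and the place where the geometric ingredient Lemma~\ref{l:Dextend} ultimately enters---is that any two right inverses $V$ and $V'$ to $Q$ (each faithful, since $QV \cong QV' \cong \id_{\overline{\sC}}$) are tensor isomorphic. One way to see this is to observe that the right inverse produced in the proof of Lemma~\ref{l:liftsplit} for the faithful tensor functor $\id_{\overline{\sC}}$ depends, up to tensor isomorphism, only on $\sC$ and not on the lifting chosen, so that $V$ and $V'$, each being a lifting of $\id_{\overline{\sC}}$ along $Q$, are both isomorphic to it. Alternatively one argues directly: using the fullness of $Q$, lift the isomorphisms $QV(M) \cong M \cong QV'(M)$ to morphisms $V(M) \to V'(M)$ in $\sC$, which are isomorphisms by Lemma~\ref{l:conservative} because on the positive objects in question they differ from identities by elements of $\Rad(\sC)$; then the functor on $k$\nd algebras representing tensor isomorphisms from $V$ to $V'$ is an affine $k$\nd scheme which is a torsor under the prounipotent $k$\nd group of tensor automorphisms of $V$ reducing to the identity modulo $\Rad(\sC)$, hence trivial by Proposition~\ref{p:unipzero} applied with $X = \Spec(k)$, so that a tensor isomorphism $V \iso V'$ exists.
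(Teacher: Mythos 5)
Your existence argument (apply Lemma~\ref{l:liftsplit} to the evident lifting of $\Mod(k) \to \overline{\sC}$) and your treatment of the factorisation and of its uniqueness (compose with $Q$) are exactly the paper's. The gap is in the uniqueness of the right inverse, which is the one genuinely delicate point, and neither of your two suggested arguments closes it. Your first argument is circular: the statement of Lemma~\ref{l:liftsplit} only says that a lifting factors through \emph{some} right inverse, and the assertion that the right inverse produced by its proof depends, up to tensor isomorphism, only on $\sC$ and not on the lifting is nowhere proved and is essentially equivalent to the uniqueness you are trying to establish; you cannot cite the internals of a proof (given in \cite{O11}, not reproduced here) for a property its statement does not assert. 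Your second argument is also unsupported at its crucial step: Proposition~\ref{p:unipzero} with $X=\Spec(k)$ tells you that a torsor under a prounipotent affine $k$\nd group has a $k$\nd point, but to invoke it you must first know (i) that the tensor isomorphisms $V \iso V'$ reducing to the identity modulo $\Rad(\sC)$ are the $k$\nd points of an affine $k$\nd scheme acted on by such a group --- not automatic, since the Hom spaces of $\sC$ are modules over the henselian local ring $\End_\sC(\I)$ and may be infinite dimensional over $k$, and no representability statement of this kind is available in the paper --- and, more seriously, (ii) that this scheme really is a torsor, i.e.\ becomes nonempty after a faithfully flat base extension. Your objectwise isomorphisms $V(M) \iso V'(M)$ (which are fine, by fullness of $Q$ and Lemma~\ref{l:conservative}) give no monoidal natural transformation, and the existence of a \emph{tensor} isomorphism after some base change is precisely the crux; without it the vanishing of $H^1(k,U)$ says nothing.

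The paper avoids all of this by reducing uniqueness to a second application of the same Lemma~\ref{l:liftsplit}: given two right inverses $V_1$ and $V_2$, it produces a single essentially small $k$\nd tensor category $\sE$ with all objects positive and a $k$\nd tensor functor $U:\sE \to \sC$ with $QU$ faithful through which \emph{both} $V_1$ and $V_2$ factor; Lemma~\ref{l:liftsplit} applied to $U$ then yields one right inverse $W$ through which $U$, hence each $V_i$, factors up to tensor isomorphism, and composing with $Q$ gives $V_1 \cong W \cong V_2$. If you want to salvage your write-up, replace your uniqueness paragraph by such a common-factorisation argument (the point being to get both $V_1$ and $V_2$ through the \emph{same} right inverse, which applying Lemma~\ref{l:liftsplit} to each $V_i$ separately does not achieve), or else supply genuine proofs of the representability and local triviality claims in your torsor approach, which would amount to redoing the hard part of the splitting theorem by another route.
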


\begin{proof}
A right quasi-inverse to $Q$ exists by Lemma~\ref{l:liftsplit} with $\sD = \Mod(k)$.

Let $U_1$ and $U_2$ be right quasi-inverses to $Q$.
The pseudo-abelian hull $\sE$ of $\overline{\sC} \otimes_k \overline{\sC}$ is a rigid $k$\nd tensor category
with $\End_{\sE}(\I) = k$ and $\Rad(\sE) = 0$.
There is a $k$\nd tensor functor $\overline{\sC} \otimes_k \overline{\sC} \to \sC$
through which both $U_1$ and $U_2$ factor, and hence a $k$\nd tensor functor $E:\sE \to \sC$
through which both $U_1$ and $U_2$ factor up to tensor isomorphism.
Further $QE$ is faithful because $\Rad(\sE) = 0$, so that every object of $\sE$ is positive.
By Lemma~\ref{l:liftsplit} there is a right quasi-inverse $W$ to $Q$
through which $E$, and hence each of $U_1$ and $U_2$, factors up to tensor isomorphism.
Composing with $Q$ shows that $U_1$ and $U_2$ are tensor isomorphic to $W$, and hence to each other.

Every object of $\sD$ is positive, because $QT$ is faithful.
Since a right quasi-inverse to $Q$ is unique up to tensor isomorphism,
the existence of the required factorisation of $T$ thus follows from Lemma~\ref{l:liftsplit}.
Its uniqueness up to tensor isomorphism follows by composing with $Q$. 
\end{proof}

\section{Universal and minimal reductive groupoids}\label{s:unmin}

\emph{In this section $k$ is a field of characteristic $0$, $X$ is a non-empty $k$\nd scheme, and
$H$ is a pregroupoid over $X$.}

\medskip

In this section we prove that the category of reductive groupoids over
$H$ up to conjugacy has an initial object provided that $H^0_H(X,\sO_X)$ is a henselian local $k$\nd algebra
with residue field $k$.
As well as the classification theorems of Sections~\ref{s:curves0} and \ref{s:curves1}, 
this result has many consequences which will be developed in this and the next three sections.
A particularly important application is to those reductive groupoids over $H$ that do not
strictly contain a reductive subgroupoid over $H$, and to the principal bundles that give rise to them.

The main result, Theorem~\ref{t:main}, is proved by combining the well-known dictionary between 
transitive affine groupoids and fibre functors on Tannakian categories \cite[1.12]{Del90} with 
Theorem~\ref{t:splitting}.
We begin by recalling some basic definitions and results for Tannakian categories.

Let  $h:H' \rightarrow H$ be a morphism of pregroupoids over $X$.
Then the $k$\nd tensor functor $h^*$ from $\Mod_H(X)$ to $\Mod_{H'}(X)$ induced by $h$ is strict,
i.e.\ strictly preserves the tensor product.
We have $(h \circ h')^* = h'{}^* h^*$.
If $H' = X$, then $h^*$ may be identified with the forgetful $k$\nd tensor functor, which we write
\[
\omega_H:\Mod_H(X) \rightarrow \Mod(X).
\] 
We have $\omega_H = \omega_{H'}h^*$.
If $K$ is a groupoid over $H$, we also write
\[
\omega_{K/H}:\Mod_K(X) \to \Mod_H(X)
\]
for $f^*$ with $f:H \to K$ the structural morphism of $K$.

The functor $\omega_H$ admits transport of structure,
i.e.\ given a $\sV$ in $\Mod_H(X)$ and an isomorphism $i_0$ in $\Mod(X)$ with source $\omega_H\sV$,
there is a unique isomorphism $i$ in $\Mod_H(X)$ with source $\sV$ for which $\omega_H(i) = i_0$.
Thus given a $k$\nd tensor functor $T$ to $\Mod_H(X)$ and a tensor isomorphism $\theta_0$ with
source $\omega_HT$, there is a unique tensor isomorphism $\theta$ with source $T$ for which $\omega_H\theta = \theta_0$.
Similarly for $\omega_{K/H}$.

By assigning to each point of $H_{[1]}$ in a scheme $Z = (Z,z_1,z_0)$ over $X \times_k X$ its action
on the underlying $\sO_X$\nd module of a representation of $H$, we obtain a map 
\[
\eta_{H,Z}:H_{[1]}(Z)_{X \times_k X} \to \Iso^\otimes(z_0{}\!^*\omega_H,z_1{}\!^*\omega_H)
\]
which is natural in $Z$, where $\Iso^\otimes$ denotes the set of tensor isomorphisms.
The map $\eta_{H,Z}$ is also natural in $H$, and is compatible with composition, in the sense that
\[
\eta_{H,Z_1}(d_1(v)) = \eta_{H,Z_0}(d_0(v)) \circ \eta_{H,Z_2}(d_2(v))
\]
for $v$ in $H_{[2]}(Z)$, where  $Z_i = (Z,d_0(d_i(v)),d_1(d_i(v)))$.
The image of $1_{H_{[1]}}$ under $\eta_{H,H_{[1]}}$ is the tautological tensor isomorphism
\[
\tau_H:d_1^*\omega_H \iso d_0^*\omega_H
\]
with components given by the action of $H$.
If $K$ is a transitive affine groupoid over $X$, then $\eta_{K,Z}$ is bijective for every $Z$ \cite[1.12(iii)]{Del90}.

A \emph{Tannakian category over $k$} is an essentially small abelian rigid $k$\nd tensor category $\sD$ with 
$\End(\I) = k$
for which the following condition holds:
for some extension $k'$ of $k$, there exists a faithful $k$\nd tensor functor from $\sD$ to $\Mod(k')$.
The last condition is equivalent \cite[7.1]{Del90} to the following one:
each object of $\sD$ is positive in the sense of Section~\ref{s:splitting}. 
If $K$ is a transitive affine groupoid over $X$, then $\Mod_K(X)$ is a Tannakian category over $k$, which is semisimple
if and only if $K$ is reductive.

Let $\sD$ be a Tannakian category over $k$.
A \emph{fibre functor of $\sD$ over $X$} is a $k$\nd tensor functor from $\sD$ to $\Mod(X)$ 
which preserves short exact sequences, 
where a short exact sequence in $\Mod(X)$ is defined as one which is exact in $\MOD(X)$.
When $\sD$ is semisimple, every $k$\nd tensor functor from $\sD$ to $\Mod(X)$ is a fibre functor.
Any fibre functor of $\sD$ over $X$ factors \cite[1.12(i),(ii)]{Del90} as
\begin{equation}\label{e:fibrefac}
\sD \to \Mod_K(X) \xrightarrow{\omega_K} \Mod(X)
\end{equation}
with the first arrow an equivalence and $K$ a transitive affine groupoid over $X$.

\begin{lem}\label{l:Tf*}
Let $K$ be a transitive affine groupoid over $X$ and 
$T$ be a $k$\nd tensor functor from $\Mod_K(X)$ to $\Mod_H(X)$ with $\omega_H T = \omega_K$ (resp. with\ $\omega_H T$
tensor isomorphic to $\omega_K$).
Then there exists a unique morphism $f$
(resp.\ a morphism $f$) from $H$ to $K$ over $X$ with $T = f^*$ (resp.with \ $T$ tensor isomorphic to $f^*$).
\end{lem}

\begin{proof}
By transport of structure for $\omega_H$, we may suppose after replacing if necessary $T$ by a tensor isomorphic 
functor that $\omega_H T = \omega_K$.
Any $k$\nd tensor functor $T$ with $\omega_H T = \omega_K$ is strict, and the identity
on underlying $\sO_X$\nd modules, and hence completely determined by $\tau_H T$.
Thus, we have $T = f^*$ for $f:H \to K$ over $X$ if and only 
if for each $Z = (Z,z_1,z_0)$ the square 
\[
\xymatrix{
H_{[1]}(Z)_{X \times_k X} \ar[d]_{\eta_{H,Z}}    \ar[r]^{f} &   K(Z)_{X \times_k X} \ar[d]_{\eta_{K,Z}}^{\wr}  \\
\Iso^\otimes(z_0{}\!^*\omega_H,z_1{}\!^*\omega_H) \ar[r]^{-T} &  \Iso^\otimes(z_0{}\!^*\omega_K,z_1{}\!^*\omega_K)
}
\]
commutes: the ``only if'' follows by naturality of $\eta_{H,Z}$ in $H$,
and the ``if'' by taking $Z = H_{[1]}$ and evaluating at $1_{H_{[1]}}$.
Since the bottom three arrows are natural in $Z$ and compatible with composition,
the existence and uniqueness of $f$ follow.
\end{proof}

Let $\sD$ be a Tannakian category over $k$ and $T:\sD \to \Mod_H(X)$ be a $k$\nd tensor functor with $\omega_HT$ a fibre functor.
It can be seen as follows that $T$ factors as 
\begin{equation}\label{e:fibrefacH}
\sD \to \Mod_K(X) \xrightarrow{\omega_{K/H}} \Mod_H(X)
\end{equation}
with the first arrow an equivalence and $K$ a transitive affine groupoid over $H$.
By transport of structure for $\omega_{K/H}$, it suffices to give such a factorisation up to tensor isomorphism.
By \eqref{e:fibrefac}, $\omega_HT = \omega_KI$ for some $K$ over $X$ and equivalence $I$.
If $I'$ is quasi-inverse to $I$, then $TI'$ is by Lemma~\ref{l:Tf*} tensor isomorphic
to $\omega_{K/H}$ for some structure of groupoid over $H$ on $K$.
Thus $T$ is tensor isomorphic to $\omega_{K/H}I$.

Let $K$ be a groupoid over $X$
and $v$ be a cross-section of $K^\mathrm{diag}$.
If $f$ is a morphism from $H$ to $K$ over $X$ and ${}^v\! f$ is its conjugate by $v$, then
for each representation $\sV$ of $K$, the automorphism of its underlying $\sO_X$\nd module
induced by $v$ is an $H$\nd isomorphism from $f^*\sV$ to $({}^v\! f)^*\sV$.
Thus $v$ induces a tensor isomorphism from $f^*$ to $({}^v\! f)^*$ with component at a representation of $K$
the action of $v$ on its underlying $\sO_X$\nd module.

\begin{lem}\label{l:tensisoconj}
Let $f_1$ and $f_2$ be morphisms from $H$ to a transitive affine groupoid $K$ over $X$.
Then any tensor isomorphism from $f_1^*$  to $f_2^*$
is induced by a cross-section $v$ of $K^\mathrm{diag}$ with $f_2 = {}^v\! f_1$,
and such a $v$ is unique.
\end{lem}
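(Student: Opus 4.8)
The plan is to reduce the statement, via the forgetful functor, to the representability of the group of tensor automorphisms of the canonical fibre functor $\omega_K$ — exactly the ingredient already invoked in the proof of Lemma~\ref{l:Ff*}. Since $f_1$ and $f_2$ are morphisms over $X$ we have $\omega_H\circ f_1^{*}=\omega_K=\omega_H\circ f_2^{*}$, so applying $\omega_H$ to a tensor isomorphism $\psi\colon f_1^{*}\iso f_2^{*}$ produces a tensor automorphism $\beta=\omega_H(\psi)$ of $\omega_K\colon\Mod_K(X)\to\Mod(X)$. First I would record the case $S=X$, $s_0=s_1=1_X$ of the canonical isomorphism $K_{[1]}(S)_{(s_0,s_1)}\iso\Iso^\otimes(s_1^{*}\omega_K,s_0^{*}\omega_K)$ of \cite[1.12(iii)]{Del90} used in the proof of Lemma~\ref{l:Ff*}: pulling back along the diagonal, it gives that the canonical map from the set of cross-sections of $K^\mathrm{diag}$ over $X$ to $\Aut^\otimes(\omega_K)$ is bijective. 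Let $\alpha$ be the cross-section of $K^\mathrm{diag}$ corresponding to $\beta$; thus the action of $\alpha$ on each representation $\sV$ of $K$, read as an $\sO_X$\nd automorphism of $\sV$, equals $\beta_\sV=\omega_H(\psi)_\sV$.

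Next I would check that this $\alpha$ does the job. Conjugation by $\alpha$ gives a morphism $g=\inn(\alpha)\circ f_1\colon H\to K$ over $X$, with $g(h)=\alpha(d_0(h))\circ f_1(h)\circ\alpha(d_1(h))^{-1}$ on points; a direct verification shows that the action of $\alpha$ on $K$\nd modules assembles into a tensor isomorphism $\psi_\alpha\colon f_1^{*}\iso g^{*}$, and by construction $\omega_H(\psi_\alpha)=\beta=\omega_H(\psi)$. Hence $\psi\circ\psi_\alpha^{-1}\colon g^{*}\iso f_2^{*}$ is a tensor isomorphism with $\omega_H(\psi\circ\psi_\alpha^{-1})=\mathrm{id}$. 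But a tensor isomorphism of pullback functors which is the identity after $\omega_H$ forces the two $H$\nd module structures on each $\sV$ to coincide, so $g^{*}=f_2^{*}$ as functors and $\psi=\psi_\alpha$; and from $g^{*}=f_2^{*}$ the uniqueness clause of Lemma~\ref{l:Ff*} (taken with $\varphi=\mathrm{id}$) yields $g=f_2$, that is, $f_2=\inn(\alpha)\circ f_1$. Thus $\psi$ is induced by $\alpha$ and $f_2=\inn(\alpha)\circ f_1$, as required.

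For uniqueness, suppose $\alpha'$ is a cross-section of $K^\mathrm{diag}$ with $f_2=\inn(\alpha')\circ f_1$ which also induces $\psi$. Applying $\omega_H$ shows that the action of $\alpha'$ on $\omega_K$ equals $\beta$, so $\alpha'$ corresponds to $\beta$ under the bijection of the first paragraph, whence $\alpha'=\alpha$. The only point that needs genuine care is that first paragraph: correctly extracting, from Deligne's representability statement for the fibre functor of a transitive affine groupoid (which is not assumed to be of finite type), the identification ``cross-sections of $K^\mathrm{diag}$ over $X$ $=$ tensor automorphisms of $\omega_K$''; once that is in place the rest is bookkeeping with variances together with Lemma~\ref{l:Ff*}.
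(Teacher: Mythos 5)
Your proof is correct and follows essentially the same route as the paper: extract the tensor automorphism $\omega_H\psi$ of $\omega_K$, use \cite[1.12(iii)]{Del90} to get the unique cross-section $\alpha$ of $K^\mathrm{diag}$ inducing it, and then invoke the uniqueness clause of Lemma~\ref{l:Ff*} to identify $\inn(\alpha)\circ f_1$ with $f_2$ and $\psi$ with the isomorphism induced by $\alpha$. The only cosmetic difference is that you first show $g^{*}=f_2^{*}$ on the nose and apply Lemma~\ref{l:Ff*} with $\varphi=\mathrm{id}$, whereas the paper applies it once with $F=f_1^{*}$ and $\varphi=\omega_H\theta$, comparing the two pairs directly.
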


\begin{proof}
Let $\theta:f_1^* \iso f_2^*$ be a tensor isomorphism.
Then there is a unique cross-section $v$ of $K^\mathrm{diag}$ with $\eta_{K,X}(v) = \omega_H\theta$.
If $\theta':f_1^* \iso ({}^v\! f_1)^*$ is the tensor isomorphism induced by $v$, then
$\omega_H \theta' = \omega_H \theta$,
so that $\theta' = \theta$ by transport of structure for $\omega_H$.
Thus $f_2^* = ({}^v\! f_1)^*$, and hence $f_2 = {}^v\! f_1$ by the uniqueness of Lemma~\ref{l:Tf*}.
\end{proof}

Suppose that $H^0_H(X,\sO_X)$ is a henselian local $k$\nd algebra with residue field $k'$.
Then by Proposition~\ref{p:posssks}, the Krull--Schmidt theorem holds for
$\Mod_H(X)$, and the quotient $\overline{\Mod_H(X)}$ by its unique maximal tensor ideal is a semisimple abelian 
$k$\nd tensor category with $\End(\I) = k'$ and every object positive.
In particular if $k' = k$, then $\overline{\Mod_H(X)}$ is a semisimple Tannakian category over $k$.

\begin{defn}
Let $K$ be a reductive groupoid over $H$.
We say that $K$ is a \emph{universal reductive groupoid over $H$}, or is \emph{universally reductive over $H$},
if for every reductive groupoid $K'$ over $H$
there exists a morphism from $K$ to $K'$ over $H$, and if such a morphism is unique up to
conjugacy.
We say that $K$ is a \emph{minimal reductive groupoid over $H$}, or is
\emph{minimally reductive over $H$}, if it does not strictly contain a reductive
subgroupoid over $H$.
\end{defn}

A reductive groupoid over $H$ is universally reductive if and only if it is initial in the category of reductive
groupoids over $H$ up to conjugacy.
A universal reductive groupoid over $H$ is thus unique up to isomorphism if it exists. 
Any universal reductive groupoid over $H$ is minimally reductive over $H$.
If $f:K \to K'$ is a morphism of reductive groupoids over $H$ and $K$ is minimally reductive,
then $K'$ is minimally reductive if and only if $f$ is surjective.
By Zorn's lemma, every reductive groupoid over $H$ contains a minimal reductive groupoid over $H$.

The existence of a universal reductive groupoid over $H$ does not imply that $X$ is $H$\nd connected:
if $k$ is algebraically closed, $H = X$ and $X$ is the disjoint union of copies of $\Spec(k)$, then by
Proposition~\ref{p:printriv} $[X]$ is universally reductive over $H$.

\begin{prop}\label{p:unconjbij}
Let $K$ be a universal reductive groupoid over $H$.
\begin{enumerate}
\item\label{i:unconj}
Any two morphisms over $H$ from $K$ to a transitive affine groupoid $K'$ over $H$ are conjugate.
\item\label{i:unbij}
$\omega_{K/H}$ induces a bijection from isomorphism classes of representations of $K$
to isomorphism classes of representations of constant rank of $H$.
\end{enumerate}
\end{prop}

\begin{proof}
\ref{i:unconj}
Let $f_1$ and $f_2$ be morphisms from $K$ to $K'$ over $H$.
By Corollary~\ref{c:redsubgrpd}, $f_1(K)$ and $f_2(K)$ are contained in Levi subgroupoids $K_1$ and $K_2$ of $K'$,
and by Corollary~\ref{c:Leviclass}, $K_1$ and $K_2$ are conjugate over $H$.
The result follows.

\ref{i:unbij}
Any structure $H \to \underline{\Iso}_X(\sV)$
of representation of $H$ on a vector bundle $\sV$ of constant rank over $X$ factors, 
uniquely up to conjugacy, through $H \to K$.
The result thus follows by transport of structure for $\omega_{K/H}$.
\end{proof}

Let $f:K' \to K$ be a morphism of transitive affine groupoids over $X$.
Then by Lemmas~\ref{l:Tf*} and \ref{l:tensisoconj}, $f$ is an isomorphism if and only if $f^*$ is an equivalence.
Suppose that $K'$ is reductive.
Then $f$ is surjective if and only if $f^*$ is fully faithful:
to see the ``if'', we may suppose that $f$ is the embedding of a subgroupoid,
so that by Lemma~\ref{l:subquot} every representation of $K'$ is a direct summand of a representation of $K$
and hence $f^*$ is an equivalence.

When $H^0_H(X,\sO_X)$ is a local $k$\nd algebra, it follows from Theorem~\ref{t:requiv} that a transitive affine groupoid 
$K$ over $H$ has a reductive subgroupoid over $H$ if and only if the projection from
$\Mod_H(X)$ to $\overline{\Mod_H(X)}$ composed with $\omega_{K/H}$ is faithful. 
Theorem~\ref{t:main}\ref{i:maincrit} below gives similar conditions, under a more stringent hypothesis on $H^0_H(X,\sO_X)$,
for $K$ to be universally or minimally reductive over $H$.

\begin{thm}\label{t:main}
Suppose that $H^0_H(X,\sO_X)$ is a henselian local $k$\nd algebra
with residue field $k$.
\begin{enumerate}
\item\label{i:mainexist}
A universal reductive groupoid over $H$ exists.
\item\label{i:maincrit}
A transitive affine groupoid $K$ over $H$ is universally (resp.\ minimally) reductive over $H$ if and only if
the projection from $\Mod_H(X)$ to $\overline{\Mod_H(X)}$ composed with $\omega_{K/H}$
is an equivalence of categories (resp.\ fully faithful).
\end{enumerate}
\end{thm}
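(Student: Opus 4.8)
The plan is to prove Theorem~\ref{t:main} by combining the splitting theorem (Theorem~\ref{t:splitting}) with the recognition lemmas for Tannakian categories (Lemmas~\ref{l:Tannes}, \ref{l:Tannff}, and \ref{l:Ff*}). First I would set $\sC = \Mod_H(X)$; since $H^0_H(X,\sO_X)$ is a henselian local $k$\nd algebra with residue field $k$, we have $\End_\sC(\I) = H^0_H(X,\sO_X)$, and every object of $\sC$ is positive because its rank $\tr(1_M)$ is constant (as noted before Lemma~\ref{l:conservative}). Thus Theorem~\ref{t:splitting} applies: the projection $Q:\sC \to \overline{\sC}$ has a right inverse $V:\overline{\sC} \to \sC$, unique up to tensor isomorphism, and by Lemma~\ref{l:ssTann} the category $\overline{\sC}$ is semisimple Tannakian. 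Applying $\omega_H$ to $V$ gives a fibre functor $\omega_H V$ of $\overline{\sC}$ with values in $\Mod(X)$, so by Lemma~\ref{l:Tannes} (with $\sD = \overline{\sC}$ and $T = V$) there is a transitive affine groupoid $K_0$ over $H$ and a tensor equivalence $\overline{\sC} \iso \Mod_{K_0}(X)$ through which $V$ factors as $\omega_{K_0/H}$ composed with that equivalence. I would take $K_0$ to be the claimed universal reductive groupoid.

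For part~\ref{i:maincrit}, the ``if'' direction for the universal case: suppose $K$ is a transitive affine groupoid over $H$ with $Q \circ \omega_{K/H}:\Mod_K(X) \to \overline{\sC}$ an equivalence. Then $\omega_{K/H}$ has a section $V'$ up to tensor isomorphism (compose the inverse equivalence with $V$), and $\omega_H V' = \omega_H$ is a fibre functor, so $K$ is reductive by Theorem~\ref{t:requiv}\ref{i:requivretr} — indeed any nonzero morphism of representations of $K$ with target $\sO_X$ becomes split in $\overline{\sC}$ (where it is a split epi of semisimple objects), hence, applying $V$, is split in $\Mod_H(X)$. For the universal property, given any reductive $K'$ over $H$, the category $\Mod_{K'}(X)$ is semisimple so $Q$ restricted to it is faithful; the composite $\Mod_{K'}(X) \to \Mod_H(X) \xrightarrow{Q} \overline{\sC} \simeq \Mod_K(X)$ is a $k$\nd tensor functor $F$ with $\omega_{K/H} F$ tensor isomorphic to $\omega_{K'/H}$ (via Theorem~\ref{t:splitting}'s uniqueness applied to $\omega_H F$ and $\omega_H$), so Lemma~\ref{l:Tannff} yields a morphism $K \to K'$ over $H$, unique up to conjugacy. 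Conversely, if $K$ is universally reductive, then $K_0 \to K$ and the universal property of $K_0$ force $\omega_{K/H}$ to induce, after $Q$, an equivalence: one checks using Proposition~\ref{p:unconj} and Lemma~\ref{l:Tannff} that the morphisms $K_0 \to K$ and $K \to K_0$ compose to something conjugate to the identity, so $K_0$ and $K$ are isomorphic as groupoids over $H$, and $K_0$ satisfies the equivalence property by construction.

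For the minimal case, I would use that $K$ minimally reductive means $K$ has no proper reductive subgroupoid over $H$; equivalently, writing $K$ as a quotient of $K_0$ (using part~\ref{i:mainexist} and Proposition~\ref{p:unconj}, the morphism $K_0 \to K$ is surjective iff $K$ is minimal — this is the observation recorded right after the definition), the functor $\omega_{K/H}$ composed with $Q$ is the composite $\Mod_K(X) \hookrightarrow \Mod_{K_0}(X) \simeq \overline{\sC}$, which is fully faithful exactly when $K_0 \to K$ is surjective. Conversely, if $Q \circ \omega_{K/H}$ is fully faithful, then $\Mod_K(X)$ is a full Tannakian subcategory of $\overline{\sC}$, hence semisimple, so $K$ is reductive; and any reductive subgroupoid $K'' \subsetneq K$ over $H$ would give a strictly larger full subcategory $\Mod_{K''}(X) \supsetneq \Mod_K(X)$ inside $\overline{\sC}$ (strictly, since $K'' \to K$ surjective would contradict $K'' \subsetneq K$ being a subgroupoid), contradicting fullness being the maximal such; more precisely one argues that fullness of $Q\circ\omega_{K/H}$ forces the morphism $K_0 \to K$ to be surjective, so $K$ is minimal.

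The main obstacle I expect is the bookkeeping in part~\ref{i:maincrit} to pass cleanly between ``$K_0 \to K$ is surjective'', ``$\omega_{K/H}$ is fully faithful'', and ``$Q\circ\omega_{K/H}$ is fully faithful'', and to verify that the equivalence $\overline{\sC} \simeq \Mod_{K_0}(X)$ produced by Lemma~\ref{l:Tannes} is genuinely compatible with all the comparison functors — i.e.\ that the various tensor isomorphisms of fibre functors supplied by the uniqueness clause of Theorem~\ref{t:splitting} and by Lemma~\ref{l:Ff*} can be chosen coherently. The conjugacy (rather than strict uniqueness) in the universal property means one must be careful that ``morphism over $H$, unique up to conjugacy'' is exactly what Lemma~\ref{l:Tannff} delivers, which fortunately it does. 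Everything else is a routine unwinding of the Tannakian dictionary as set up in Section~\ref{s:splitting}.
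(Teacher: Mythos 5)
Your proposal is correct and follows essentially the same route as the paper's own proof: the splitting theorem (Theorem~\ref{t:splitting}) together with Lemmas~\ref{l:ssTann}, \ref{l:Tannes} and \ref{l:Tannff} produces the universal reductive groupoid from a right inverse $V$ of the projection $Q$, and the minimality criterion is obtained by factoring $Q\omega_{K/H}$ through the universal groupoid and identifying full faithfulness with surjectivity of $K_0 \to K$; the paper merely packages the first step as an equivalence between reductive groupoids over $H$ up to conjugacy and the opposite of the category of pairs $(\sD,T)$ with $\sD$ semisimple Tannakian, whereas you unwind it by hand. One local slip: Theorem~\ref{t:requiv} only yields a reductive \emph{subgroupoid} of $K$ over $H$, not that $K$ itself is reductive, so your citation of it at that point does not give what you claim; the needed conclusion follows instead from the observation you yourself use in the minimal case, namely that when $Q\omega_{K/H}$ is fully faithful (a fortiori an equivalence) it identifies $\Mod_K(X)$ with a full subcategory of the semisimple category $\overline{\Mod_H(X)}$ closed under sums and summands, so $\Mod_K(X)$ is semisimple and $K$ is reductive.
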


\begin{proof}
Consider the category $\sT_H$ whose objects are pairs $(\sD,T)$ with $\sD$ a semisimple Tannakian 
category over $k$ and $T$ a $k$\nd tensor functor from $\sD$ to $\Mod_H(X)$, where a morphism
from $(\sD,T)$ to $(\sD',T')$ is a tensor isomorphism class of $k$\nd tensor functors $E:\sD \to \sD'$ with
$T'E$ tensor isomorphic to $T$.
If $\sR_H$ is the category of reductive groupoids over $H$ up to conjugacy, we have a contravariant functor
\[
\Phi:\sR_H \to \sT_H
\]
which sends $K$ to $(\Mod_K(X),\omega_{K/H})$ and the class of $f$ to the class of $f^*$. 

By \eqref{e:fibrefacH}, $\Phi$ is essentially surjective.
By Lemma~\ref{l:Tf*} with $H = K'$, any morphism $\Phi(K) \to \Phi(K')$ in $\sT_H$ is the class of $j^*$
for some morphism $j:K' \to K$ of groupoids over $X$.
Hence $\Phi$ is fully faithful by Lemma~\ref{l:tensisoconj}.
Thus if $K$ is a reductive groupoid over $H$,
then $K$ is universally reductive $\iff$
$K$ is initial in $\sR_H$ $\iff$ $\Phi(K)$ is final in $\sT_H$.
By Theorem~\ref{t:splitting} with $\sC = \Mod_H(X)$, the category $\sT_H$ has a final object, and if $Q$ is the projection,
then $(\sD,T)$ is final $\iff$ $QT$ is an equivalence.
Hence \ref{i:mainexist} and the criterion of \ref{i:maincrit} for universality.

To prove the criterion of \ref{i:maincrit} for a minimality we may suppose that $K$ is reductive.
By \ref{i:mainexist}, there is a morphism $f:K' \to K$ over $H$ with $K'$ universally reductive over $H$.
Then $K$ minimally reductive over $H$
$\iff$ $f$ surjective $\iff$
$f^*$ fully faithful ($K'$ is reductive) $\iff$
$Q\omega_{K'/H}f^* = Q\omega_{K/H}$ fully faithful.
\end{proof}

\begin{exmp}
Suppose that $X = \Spec(k)$ and $H$ is an affine $k$\nd group.
Then a universal reductive groupoid over $H$ is the same
as a pro-reductive envelope of $H$ in the sense of \cite[19.3.2]{AndKah} or a universal 
reductive homomorphism with source $H$ in the sense of \cite[2.1.1]{O10}.
Theorem~\ref{t:main}\ref{i:mainexist} then reduces to \cite[19.3.1]{AndKah} or \cite[2.2.8]{O10}.
Similarly, the equivalence of \ref{i:unequivun} and \ref{i:unequivbij} in Corollary~\ref{c:unequiv} below
reduces to \cite[19.3.4]{AndKah} and Corollary~\ref{c:unequiv} to \cite[2.2.11]{O10},
Corollary~\ref{c:minequiv} reduces to the equivalence of (a), (b) and (f) in \cite[2.3.4]{O10},
Corollary~\ref{c:minhomconj}\ref{i:minhomconj} reduces to \cite[2.3.1]{O10} and 
Corollary~\ref{c:minhomconj}\ref{i:minhomconjcoh} is contained in the implication (a) $\implies$ (c)
of \cite[2.3.4]{O10}, Corollary~\ref{c:minsubconj} reduces to \cite[20.1.3.a)]{AndKah} or \cite[2.3.2]{O10},
Corollary~\ref{c:minscalext} to \cite[2.3.5]{O10}, and Corollary~\ref{c:unscalext} to \cite[19.6.1]{AndKah}
or \cite[2.2.16]{O10}.
\end{exmp}

If $X$ is $H$\nd connected, then by Proposition~\ref{p:unconjbij}\ref{i:unbij}
a necessary condition for a universal reductive
groupoid over $H$ to exist is that the Krull-Schmidt theorem should hold for representations of $H$.
Example~\ref{ex:local} below shows that this condition is not sufficient,
and why the condition on $H^0_H(X,\sO_X)$ in Theorem~\ref{t:main} is reasonable.

\begin{exmp}\label{ex:local}
Suppose that $X$ is a local $k$\nd scheme.
If a universal reductive groupoid $K_0$ over $X$ exists, then $K_0 = [X]$, because any vector bundle $\sV$
over $X$ is trivial, so that any morphism from $K_0$ to $\underline{\Iso}_X(\sV)$ over $X$ factors through $[X]$.
It follows that a universal reductive groupoid over $X$ exists if and only if both the following conditions hold:
\begin{enumerate}
\renewcommand{\theenumi}{(\alph{enumi})}
\item\label{i:redconst}
every reductive groupoid over $X$ is constant;
\item\label{i:pulltriv}
a principal bundle over $k$ under a reductive $k$\nd group is trivial if its pullback onto $X$ is trivial.
\end{enumerate}
Indeed the existence of a morphism over $X$ from $[X]$ to every
reductive groupoid over $X$ is equivalent to \ref{i:redconst}, and
the uniqueness up to conjugacy is equivalent to \ref{i:pulltriv}.
If there exists a non-constant finite \'etale scheme $X'$ over $X$,
then \ref{i:redconst} does not hold because $\underline{\Iso}_X(X')$ is not constant.
Suppose now that $X$ is the spectrum of a henselian local $k$\nd algebra with residue field $k'$.
Then $X$ can be given a structure of $k'$\nd scheme.
Thus if $k' \ne k$ is a Galois extension of $k$, then \ref{i:pulltriv} does not hold for the
principal $\Gal(k'/k)_{/k}$\nd bundle $\Spec(k')$ over $k$.
Also if $K$ is a reductive groupoid over $X$,
then its restriction $K'$ above $X \times_{k'} X$ is constant in $k'$\nd schemes by 
Theorem~\ref{t:main}\ref{i:mainexist} and \ref{i:redconst}. 
Thus \ref{i:redconst} and \ref{i:pulltriv} hold if and only if they hold with $X$
replaced by $\Spec(k')$.
Indeed if  \ref{i:redconst} holds with $\Spec(k')$ for $X$, then pulling back onto the closed point $x$
shows that any $K$ has a simply transitive $K$\nd scheme $Z$ with a section above $x$,
and since $Z$ has a structure of $K'$\nd scheme, it is constant in $k'$\nd schemes and 
hence has a section above $X$.
In particular if both $k$ and $k'$ are algebraically closed, then \ref{i:redconst} and \ref{i:pulltriv}
hold by Propositions~\ref{p:const} and \ref{p:printriv}.
\end{exmp}

Call an object of a $k$\nd tensor category \emph{trivial} if it is 
a direct sum of copies of $\I$.
Let $T:\sD \to \sD'$ be a faithful $k$\nd tensor functor between Tannakian categories over $k$.
Suppose that $\sD'$ is semisimple.
Then $T$ is fully faithful if and only if it preserves non-trivial indecomposables:
the ``only if'' is clear, and for the ``if'' note that by \eqref{e:dualiso},
$T$ is fully faithful provided that it induces a bijection
\[
\Hom_\sD(\I,M) \iso \Hom_{\sD'}(\I,T(M))
\]
for $M$ indecomposable in $\sD$.

\begin{cor}\label{c:unequiv}
Let $K$ be a transitive affine groupoid over $H$,
and suppose that $H^0_H(X,\sO_X)$ is as in Theorem~\textup{\ref{t:main}}. 
Then the following conditions are equivalent:
\begin{enumerate}
\renewcommand{\theenumi}{(\alph{enumi})}
\item\label{i:unequivun}
$K$ is universally reductive over $H$;
\item\label{i:unequivbij}
$K$ has a reductive subgroupoid over $H$, and
$\omega_{K/H}$ induces a bijection on isomorphism classes of representations;
\item\label{i:unequivmin}
$K$ is minimally reductive over $H$, and every representation of $H$ is a direct summand of a representation of $K$.
\end{enumerate}
\end{cor}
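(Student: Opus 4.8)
\textbf{Proof proposal for Corollary~\ref{c:unequiv}.}

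The plan is to prove the cycle \ref{i:unequivun} $\implies$ \ref{i:unequivbij} $\implies$ \ref{i:unequivmin} $\implies$ \ref{i:unequivun}, using Theorem~\ref{t:main} and the remarks on faithful tensor functors into semisimple Tannakian categories immediately preceding the corollary. Throughout, write $Q$ for the projection from $\Mod_H(X)$ to $\overline{\Mod_H(X)}$, which by Lemma~\ref{l:ssTann} is semisimple Tannakian (after, if necessary, enlarging $k$ to the residue field of $H^0_H(X,\sO_X)$), and recall from Lemma~\ref{l:conservative} that $Q$ induces a bijection on isomorphism classes of objects.

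For \ref{i:unequivun} $\implies$ \ref{i:unequivbij}: if $K$ is universally reductive over $H$, then by Theorem~\ref{t:main}\ref{i:maincrit} the composite $Q\,\omega_{K/H}$ is an equivalence of categories, hence induces a bijection on isomorphism classes of objects; since $Q$ also induces such a bijection, $\omega_{K/H}$ does too. For \ref{i:unequivbij} $\implies$ \ref{i:unequivmin}: suppose $\omega_{K/H}$ induces a bijection on isomorphism classes. Surjectivity of this bijection gives at once that every representation of $H$ is isomorphic to $\omega_{K/H}$ of a representation of $K$, so in particular is a direct summand of one. For minimality, one must show $Q\,\omega_{K/H}$ is fully faithful; since $K$ is reductive, $\omega_{K/H}$ is faithful (the category of $K$\nd modules being semisimple, and $\omega_{K/H}$ being a faithful restriction functor up to the descent equivalences), hence so is $Q\,\omega_{K/H}$, which therefore maps into the semisimple Tannakian $\overline{\Mod_H(X)}$. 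By the criterion recalled before the corollary, $Q\,\omega_{K/H}$ is fully faithful provided it sends indecomposable representations of $K$ not isomorphic to $\sO_X$ to indecomposables not isomorphic to $\sO_X$; but $Q$ preserves and reflects indecomposability (it is bijective on isomorphism classes and, by Lemma~\ref{l:conservative}, reflects isomorphisms, hence reflects and preserves idempotent splittings), and $\omega_{K/H}$ is bijective on isomorphism classes by hypothesis, so it too preserves indecomposability and the unit. Then minimality follows from Theorem~\ref{t:main}\ref{i:maincrit}.

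For \ref{i:unequivmin} $\implies$ \ref{i:unequivun}: by Theorem~\ref{t:main}\ref{i:mainexist} there is a universally reductive groupoid $K_0$ over $H$ and a morphism $f:K_0 \to K$ over $H$, which is surjective because $K$ is minimally reductive. It suffices to show $f$ is an isomorphism, equivalently that $f^*:\Mod_K(X) \to \Mod_{K_0}(X)$ is essentially surjective (it is already fully faithful since $f$ is surjective and $K$ is reductive). Since $K_0$ is universally reductive, $Q\,\omega_{K_0/H}$ is an equivalence, so every representation of $H$ is, up to the isomorphism reflected by $Q$, in the image of $\omega_{K_0/H}$; combined with the hypothesis that every representation of $H$ is a direct summand of a representation of $K$, and that $\omega_{K_0/H}\,f^* = \omega_{K/H}$, a straightforward diagram chase shows every representation of $K_0$ is a direct summand of one coming from $K$ via $f^*$. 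As $f^*$ is fully faithful and its essential image is closed under direct summands (being the representations of $K_0$ on which the kernel $\Ker f$ acts trivially), essential surjectivity follows.

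The main obstacle I expect is the passage \ref{i:unequivmin} $\implies$ \ref{i:unequivun}: one must correctly leverage ``direct summand of a representation of $K$'' together with the universality of $K_0$ to conclude $\Ker f = 1$, rather than merely that $f$ is surjective. The cleanest route is the tensor-categorical one above — identify the essential image of $f^*$ as a full tensor subcategory closed under subquotients and summands, show it contains a direct summand of each representation of $K_0$, and invoke that a full Tannakian subcategory of $\Mod_{K_0}(X)$ closed under summands and containing a summand of every object must be all of $\Mod_{K_0}(X)$; this last point uses that $K_0$ is reductive, so the semisimple decomposition of any representation forces the subcategory to contain every irreducible, hence everything. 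The other implications are essentially bookkeeping with Theorem~\ref{t:main} and Lemmas~\ref{l:conservative} and \ref{l:ssTann}.
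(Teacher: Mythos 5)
Your proposal is correct, and for \ref{i:unequivun} $\Rightarrow$ \ref{i:unequivbij} and \ref{i:unequivbij} $\Rightarrow$ \ref{i:unequivmin} it is essentially the paper's argument: both reduce to Theorem~\ref{t:main}\ref{i:maincrit}, the fact that the projection $Q:\Mod_H(X) \to \overline{\Mod_H(X)}$ is bijective on isomorphism classes, and the criterion preceding the corollary (a faithful tensor functor into a semisimple Tannakian category is fully faithful once it sends indecomposables not isomorphic to $\I$ to indecomposables not isomorphic to $\I$). Where you genuinely diverge is \ref{i:unequivmin} $\Rightarrow$ \ref{i:unequivun}. The paper deduces it directly from Theorem~\ref{t:main}\ref{i:maincrit}: minimality gives full faithfulness of $Q\omega_{K/H}$, and the direct-summand hypothesis gives essential surjectivity, since every object of $\overline{\Mod_H(X)}$ is $Q$ of a representation of $H$, hence a direct summand of $Q\omega_{K/H}(\sV)$ for some representation $\sV$ of $K$, and the essential image of a fully faithful functor out of the pseudo-abelian $\Mod_K(X)$ is closed under direct summands; so $Q\omega_{K/H}$ is an equivalence and $K$ is universal. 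You instead invoke Theorem~\ref{t:main}\ref{i:mainexist} to produce a universal $K_0$ with a morphism $f:K_0 \to K$ over $H$, surjective by minimality, and show $f^*$ is an equivalence, hence $f$ an isomorphism via Tannakian duality (Lemma~\ref{l:Tannff}); your lifting of the summand relation from $\Mod_H(X)$ to $\Mod_{K_0}(X)$ works precisely because $Q\omega_{K_0/H}$ is an equivalence, hence full and isomorphism-reflecting. Your route is valid but heavier: it routes through the existence statement and groupoid duality, where the paper stays entirely inside the criterion of part (ii) of the main theorem.

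Two small repairs. First, faithfulness of $Q\omega_{K/H}$ does not follow from faithfulness of $\omega_{K/H}$, since $Q$ is not faithful; the correct reason, available in Section~\ref{s:splitting}, is that $\Mod_K(X)$ is semisimple, so $\Rad(\Mod_K(X)) = 0$ and every $k$\nd tensor functor from it to a non-zero category is faithful. Second, in your closing paragraph the principle should be that every object of $\Mod_{K_0}(X)$ is a direct summand of an object of the essential image of $f^*$, not that the essential image contains a direct summand of every object (the latter is vacuous for irreducibles); your main argument states and uses the correct version.
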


\begin{proof}
\ref{i:unequivun} $\implies$ \ref{i:unequivbij} is a particular case of Proposition~\ref{p:unconjbij}\ref{i:unbij}.

\ref{i:unequivbij} $\implies$ \ref{i:unequivmin}:
Assume \ref{i:unequivbij}.
Then if $Q:\Mod_H(X) \to \overline{\Mod_H(X)}$ is the projection,
$Q\omega_{K/H}$ is faithful.
Since $Q\omega_{K/H}$ preserves non-trivial indecomposables by Proposition~\ref{p:posfincons}\ref{i:poscons},
it is fully faithful by the above criterion.
Thus \ref{i:unequivmin} holds by Theorem~\ref{t:main}\ref{i:maincrit}.

\ref{i:unequivmin} $\implies$ \ref{i:unequivun} follows from Theorem~\ref{t:main}\ref{i:maincrit}. 
\end{proof}

Suppose that $H^0_H(X,\sO_X)$ is a local $k$\nd algebra with residue field $k$.
Then given an $H$\nd module $\sV$, we have a pairing
\begin{equation}\label{e:Hpairing}
\Hom_H(\sV,\sO_X) \otimes_k H^0_H(X,\sV) \to H^0_H(X,\sO_X) \to k
\end{equation}
where the first arrow sends $u \otimes v$ to $u(v)$
and the second arrow is the projection onto the residue field.
We write 
\[
{}^\mathrm{rad}H^0_H(X,\sV) \subset H^0_H(X,\sV)
\]
for the kernel on the right of this pairing.
It is an $H^0_H(X,\sO_X)$\nd submodule of $H^0_H(X,\sV)$ containing $\mathfrak{m}H^0_H(X,\sV)$,
where $\mathfrak{m}$ is the maximal ideal of $H^0_H(X,\sO_X)$.
To any morphism $\sV \to \sV'$ of $H$\nd modules
is associated a commutative square
\begin{equation}\label{e:Hpairingsquare}
\begin{gathered}
\xymatrix{
\Hom_H(\sV',\sO_X) \otimes_k H^0_H(X,\sV)  \ar[d] \ar[r] &  \Hom_H(\sV,\sO_X) \otimes_k H^0_H(X,\sV) \ar[d] \\
\Hom_H(\sV',\sO_X) \otimes_k H^0_H(X,\sV')  \ar[r] &         k
}
\end{gathered}
\end{equation}
of $k$\nd vector spaces.
Thus ${}^\mathrm{rad}H^0_H(X,-)$ is a subfunctor of $H^0_H(X,-)$ on $H$\nd modules.
If $\sV$ is a representation of $H$, then 
${}^\mathrm{rad}H^0_H(X,\sV)$ is the subspace of $\Hom_H(\sO_X,\sV) = H^0_H(X,\sV)$ annihilated 
by the projection from $\Mod_H(X)$ to $\overline{\Mod_H(X)}$.

\begin{cor}\label{c:minequiv}
Let $K$ be a transitive affine groupoid over $H$,
and suppose that $H^0_H(X,\sO_X)$ is as in Theorem~\textup{\ref{t:main}}.
Then the following conditions are equivalent:
\begin{enumerate}
\renewcommand{\theenumi}{(\alph{enumi})}
\item\label{i:minequivmin}
$K$ is minimally reductive over $H$;
\item\label{i:minequivrad}
$H^0_H(X,\sV) = {}^\mathrm{rad}H^0_H(X,\sV) \oplus H^0_K(X,\sV)$ for every representation $\sV$ of $K$;
\item\label{i:minequivindec}
$K$ has a reductive subgroupoid over $H$, and every non-trivial indecomposable representation of $K$ 
is a non-trivial indecomposable representation of $H$.
\end{enumerate}
\end{cor}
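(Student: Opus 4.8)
The plan is to prove the three equivalences in Corollary~\ref{c:minequiv} by reducing everything to properties of the projection functor $Q\colon\Mod_H(X)\to\overline{\Mod_H(X)}$, exactly as was done for Corollary~\ref{c:unequiv}. As before I may replace $k$ by the residue field of $H^0_H(X,\sO_X)$, so that $\overline{\Mod_H(X)}$ is semisimple Tannakian by Lemma~\ref{l:ssTann} and $Q$ induces a bijection on isomorphism classes of objects by Lemma~\ref{l:conservative}. The bridge between $K$ and $H$ is the functor $\omega_{K/H}\colon\Mod_K(X)\to\Mod_H(X)$, and Theorem~\ref{t:main}\ref{i:maincrit} says that $K$ is minimally reductive over $H$ iff $Q\,\omega_{K/H}$ is fully faithful.

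First I would establish \ref{i:minequivmin}$\iff$\ref{i:minequivrad}. The right-hand side of the pairing \eqref{e:Hpairing} restricted to the image $H^0_K(X,\sV)\subseteq H^0_H(X,\sV)$ computes, up to the projection onto the residue field, the composite $\Hom_K(\sV,\sO_X)\to\Hom_H(\sV,\sO_X)\otimes_k H^0_K(X,\sV)\to k$; by semisimplicity of $\Mod_K(X)$ (so $\sO_X$ is a direct summand of $\sV$ as a $K$-module whenever $H^0_K(X,\sV)\ne 0$) and the identification of $\Hom$-spaces in the semisimple Tannakian category $\overline{\Mod_H(X)}$ with those in $\Mod_K(X)$ when $Q\,\omega_{K/H}$ is fully faithful, one sees that the pairing is nondegenerate on $H^0_K(X,\sV)$ and that $H^0_K(X,\sV)\cap{}^{\mathrm{rad}}H^0_H(X,\sV)=0$. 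For the sum being everything: given $s\in H^0_H(X,\sV)$, full faithfulness of $Q\,\omega_{K/H}$ lets one lift the ``semisimple part'' of $s$ — more precisely, decompose $\sO_X\to\sV$ in $\overline{\Mod_H(X)}$ using that this category is semisimple and that every representation of $K$ maps to it — to produce an element of $H^0_K(X,\sV)$ whose difference with $s$ lies in ${}^{\mathrm{rad}}H^0_H(X,\sV)$. Conversely, if \ref{i:minequivrad} holds, I would recover full faithfulness of $Q\,\omega_{K/H}$ by testing on $\Hom$-spaces $\Hom_K(\sV,\sV')=H^0_K(X,\sV^\vee\otimes\sV')$ and noting that ${}^{\mathrm{rad}}$ is precisely the kernel of $Q$ on sections, so the complement is carried isomorphically; then invoke Theorem~\ref{t:main}\ref{i:maincrit}.

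Next, \ref{i:minequivmin}$\iff$\ref{i:minequivindec}. Here I would use the criterion stated just before Corollary~\ref{c:unequiv}: a faithful $k$-tensor functor $T\colon\sD\to\sD'$ between Tannakian categories with $\sD'$ semisimple is fully faithful iff it sends indecomposables not isomorphic to $\I$ to indecomposables not isomorphic to $\I$. Apply this with $\sD=\Mod_K(X)$, $\sD'=\overline{\Mod_H(X)}$, $T=Q\,\omega_{K/H}$ (which is faithful because $\omega_{K/H}$ is faithful and $Q$ is faithful on objects hit from a semisimple source, or simply because $Q\,\omega_H\,\omega_{K/H}=Q\,\omega_K$ is faithful up to the usual argument). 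Since $Q$ reflects and preserves isomorphism classes and indecomposability by Lemma~\ref{l:conservative} together with the Krull--Schmidt property of both categories (Proposition~\ref{p:KrullSchmidt}), $Q\,\omega_{K/H}$ sends a representation $\sV$ of $K$ to an indecomposable object not isomorphic to $\I$ exactly when $\omega_{K/H}(\sV)=\sV$, viewed as a representation of $H$, is indecomposable and non-trivial. Thus the indecomposable-preservation criterion is literally condition \ref{i:minequivindec}, and full faithfulness of $Q\,\omega_{K/H}$ — i.e. minimality by Theorem~\ref{t:main}\ref{i:maincrit} — is equivalent to it.

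The main obstacle I anticipate is the bookkeeping in \ref{i:minequivmin}$\Rightarrow$\ref{i:minequivrad}: one must produce a genuine $K$-invariant section realizing the ``non-radical part'' of an arbitrary $H$-invariant section $s$, and the natural construction lands a priori in $\overline{\Mod_H(X)}$ rather than in $\Mod_K(X)$. The point is that, under the hypothesis, $Q\,\omega_{K/H}$ is an equivalence onto its (full) image and $\Hom_{\overline{\Mod_H(X)}}(\I,Q(\sV))$ therefore equals $\Hom_K(\sO_X,\sV)=H^0_K(X,\sV)$; combining this with the definition of ${}^{\mathrm{rad}}H^0_H(X,\sV)$ as $\ker(Q\text{ on sections})$ gives the direct sum decomposition functorially. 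Once that identification is made precise — using \eqref{e:Hpairing}, \eqref{e:Hpairingsquare}, and the semisimplicity-plus-lifting argument of Lemma~\ref{l:ssTann} — the rest is formal. I would present \ref{i:minequivmin}$\iff$\ref{i:minequivrad} first, then \ref{i:minequivmin}$\iff$\ref{i:minequivindec}, each time quoting Theorem~\ref{t:main}\ref{i:maincrit} as the link to minimality.
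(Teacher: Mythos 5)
Your proposal is correct and follows essentially the same route as the paper: each of the three conditions is identified with full faithfulness of $Q\,\omega_{K/H}$, using Theorem~\ref{t:main}\ref{i:maincrit} for \ref{i:minequivmin}, the identification of ${}^{\mathrm{rad}}H^0_H(X,\sV)$ with the kernel of $Q$ on $\Hom_H(\sO_X,\sV)$ together with rigidity for \ref{i:minequivrad}, and the indecomposability criterion preceding Corollary~\ref{c:unequiv} for \ref{i:minequivindec}. (The initial replacement of $k$ by the residue field is unnecessary here, since the hypothesis of Theorem~\ref{t:main} already fixes the residue field to be $k$, but this is harmless.)
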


\begin{proof}
If $Q$ is the projection onto $\overline{\Mod_H(X)}$,
each of \ref{i:minequivmin}, \ref{i:minequivrad}, and \ref{i:minequivindec} is equivalent to the condition
that $Q\omega_{K/H}$ be fully faithful: for \ref{i:minequivmin} by Theorem~\ref{t:main}\ref{i:maincrit},
for \ref{i:minequivrad} because \ref{i:minequivrad} is equivalent to requiring that $Q\omega_{K/H}$ induce
an isomorphism on the $\Hom_K(\sO_X,\sV)$, and for \ref{i:minequivindec} by the criterion preceding 
Corollary~\ref{c:unequiv}.  
\end{proof}

\begin{exmp}\label{ex:tenspower}
Corollary~\ref{c:minequiv} can be used to decompose the tensor powers of a vector bundle
into its indecomposable summands by reducing to a question about representations of reductive groups.
The following is a simple example, which will be used in Section~\ref{s:curves1}.
Suppose that $k$ is algebraically closed,  that $H^0_H(X,\sO_X)$ is as in Theorem~\ref{t:main}, 
and that $X$ is geometrically simply $H$\nd connected.
Then every symmetric power of an indecomposable representation $\sV$ of $H$ of rank $2$ is indecomposable.
To see this, let $K$ be a minimal reductive subgroupoid over $H$ of $\underline{\Iso}_X(\sV)$.
By Lemma~\ref{l:funact} with $K_{\mathrm{\acute{e}t}}$ for $K$ and Corollary~\ref{c:minequiv}, 
we may replace $H$ by $K$ and hence suppose that $H$ is a reductive groupoid.
By Lemma~\ref{l:prereppull}
and Proposition~\ref{p:const}, we may suppose further that $H$ is constant, and hence finally that $X = \Spec(k)$
and $H$ is a reductive $k$\nd subgroup of $GL(\sV)$.
Then $H$ is non-commutative, because otherwise $\sV$ would be decomposable.
Also $H$ is connected, because any finite quotient $\ne 1$ would define a non-constant finite 
\'etale $H$\nd scheme.
Thus $H$ contains $SL(\sV)$, and the result follows.
\end{exmp}

\begin{cor}\label{c:minhomconj}
Let $K$ be a minimal reductive groupoid over $H$,
and suppose that $H^0_H(X,\sO_X)$ is as in Theorem~\textup{\ref{t:main}}.
\begin{enumerate}
\item\label{i:minhomconj}
Any two morphisms over $H$ from $K$ to a transitive affine groupoid over $H$ are conjugate.
\item\label{i:minhomconjcoh}
Any two representations of $K$ which are $H$\nd isomorphic are $K$\nd isomorphic.
\end{enumerate}
\end{cor}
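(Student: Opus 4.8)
The plan is to deduce both parts from the characterisation of minimality in Theorem~\ref{t:main}\ref{i:maincrit}: since $K$ is minimally reductive over $H$, the functor $\Phi$ obtained by composing $\omega_{K/H}$ with the projection $Q:\Mod_H(X) \to \overline{\Mod_H(X)}$ is fully faithful. I would first record two elementary consequences of full faithfulness of a functor $P$. First, $P$ is injective on isomorphism classes of objects: given an isomorphism $PA \iso PB$, lift it by fullness to a morphism $A \to B$, which is an isomorphism by faithfulness. Second, if $PG_1 = PG_2$ for functors $G_1,G_2$ into the source of $P$, then $G_1$ and $G_2$ are isomorphic, the isomorphism at an object $M$ being the lift through $P$ of the identity of $PG_1M = PG_2M$; naturality follows from faithfulness together with $PG_1 = PG_2$, and if in addition $P$, $G_1$, $G_2$ are $k$\nd tensor functors with $G_1$, $G_2$ strictly monoidal, this isomorphism is a tensor isomorphism.

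For \ref{i:minhomconjcoh} the argument is then immediate: if representations $\sV_1$, $\sV_2$ of $K$ become isomorphic after applying $\omega_{K/H}$, they become isomorphic after applying $\Phi$, so $\sV_1$ and $\sV_2$ are isomorphic in $\Mod_K(X)$ by injectivity of $\Phi$ on isomorphism classes. For \ref{i:minhomconj}, let $L$ be a transitive affine groupoid over $H$ and $g_1,g_2:K \to L$ be morphisms over $H$. The induced $k$\nd tensor functors $g_i{}\!^*:\Mod_L(X) \to \Mod_K(X)$ satisfy $\omega_{K/H}g_i{}\!^* = \omega_{L/H}$, both sides being the restriction functor along the structural morphism $H \to L$; hence $\Phi g_1{}\!^* = \Phi g_2{}\!^*$, and the second fact above provides a tensor isomorphism $g_1{}\!^* \iso g_2{}\!^*$. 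I would then apply the final clause of Lemma~\ref{l:Tannff}, with the groupoids called ``$K$'' and ``$K'$'' there taken to be $L$ and $K$ and with $F = g_1{}\!^*$: both $g_1$ and $g_2$ are morphisms $K \to L$ over $H$ whose associated $k$\nd tensor functor is tensor isomorphic to $g_1{}\!^*$, so they are conjugate.

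The only point needing care — and the one I would check most carefully in the write-up — is the bookkeeping behind the implication ``$PG_1 = PG_2$ implies $G_1 \cong G_2$'': one must verify that $\omega_{K/H}g_i{}\!^*$ and $\omega_{L/H}$ coincide on the nose (so that identity morphisms really do lift through $\Phi$) and that the natural isomorphism so obtained is compatible with the tensor structures carried by the restriction functors $g_i{}\!^*$. This is routine. As an alternative to Lemma~\ref{l:Tannff} in \ref{i:minhomconj}, one could take the universal reductive groupoid $K^{u}$ over $H$ given by Theorem~\ref{t:main}\ref{i:mainexist} together with the surjective morphism $f:K^{u} \to K$, note that $g_1f$ and $g_2f$ are conjugate by Proposition~\ref{p:unconj}, and descend the conjugating cross-section of $L^\mathrm{diag}$ along the faithfully flat morphism $f$; I would mention this but prefer the Tannakian argument, which avoids appealing to descent along $f$.
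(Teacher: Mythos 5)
Your proof is correct, but it takes a different route from the paper. The paper's argument is a two-line reduction to the universal reductive groupoid: it takes the surjection $h:K' \to K$ over $H$ with $K'$ universally reductive (Theorem~\ref{t:main}\ref{i:mainexist}), composes with $h$, and quotes Proposition~\ref{p:unconj} for \ref{i:minhomconj} and Corollary~\ref{c:unequiv}\ref{i:unequivbij} for \ref{i:minhomconjcoh}, the point being that $h$ is an epimorphism so statements proved after precomposition or restriction along $h$ descend to $K$. You instead argue directly from the minimality criterion of Theorem~\ref{t:main}\ref{i:maincrit}: full faithfulness of $Q\omega_{K/H}$ gives injectivity on isomorphism classes for \ref{i:minhomconjcoh}, and for \ref{i:minhomconj} it upgrades the equality $\omega_{K/H}g_1{}\!^* = \omega_{K/H}g_2{}\!^*$ to a tensor isomorphism $g_1{}\!^* \iso g_2{}\!^*$, after which Lemma~\ref{l:Tannff} (or directly Lemma~\ref{l:tensisoconj}) yields conjugacy; the bookkeeping you flag (strictness of the restriction functors and tensor compatibility of the lifted isomorphism) is indeed routine, and even if one only had a canonical tensor isomorphism rather than equality the argument would go through unchanged. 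What your route buys is a self-contained Tannakian argument at the level of the corollary, citing only the criterion \ref{i:maincrit} rather than both corollaries of universality; what the paper's route buys is brevity, and note that your sketched alternative is in fact the paper's proof, where no genuine descent of the conjugating cross-section is needed — one merely cancels the surjective $h$ from $g_2 h = \inn(\alpha)\circ g_1 h$.
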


\begin{proof}
A universal reductive groupoid $K'$ over $H$ exists by Theorem~\ref{t:main}\ref{i:mainexist},
and there is a surjective morphism $h:K' \to K$ over $H$.
After composing with $h$, it suffices to apply Proposition~\ref{p:unconjbij}.
\end{proof}

\begin{exmp}
Suppose that $k$ is algebraically closed.
By a theorem of Malcev \cite[\S 3 Theorem~2]{Mal}, the map from $\Hom^\mathrm{conj}_k(SL_2,G)$ to 
$\Hom^\mathrm{conj}_k(\bG_m,G)$
defined by an embedding $\bG_m \to SL_2$ is injective for any affine $k$\nd group $G$ of finite type.
That the same holds for any affine $k$\nd group $G$ can be seen using Proposition~\ref{p:printriv}.
Thus if $X = \Spec(k)$, $H = \bG_m$ and $K = SL_2$, then 
both \ref{i:minhomconj} and (taking $G = GL_n$) 
\ref{i:minhomconjcoh} of Corollary~\ref{c:minhomconj} hold, but $K$ is not minimally reductive over $H$.
\end{exmp}

\begin{cor}\label{c:minsubconj}
let $K$ be a transitive affine groupoid over $H$,
and suppose that $H^0_H(X,\sO_X)$ is as in Theorem~\textup{\ref{t:main}}.
Then any two minimal reductive subgroupoids of $K$ over $H$ are conjugate by an element of $H^0_H(X,K^\mathrm{diag})$.
\end{cor}

\begin{proof}
By Theorem~\ref{t:main}\ref{i:mainexist}, a universal reductive groupoid $K'$ over $H$ exists.
Any minimal reductive subgroupoid of $K$ over $H$ is the image of a morphism $K' \to K$ over $H$.
By Proposition~\ref{p:unconjbij}\ref{i:unconj},
any two morphisms $K' \to K$ over $H$ are conjugate by a cross section of $K^\mathrm{diag}$, which necessarily
lies in $H^0_H(X,K^\mathrm{diag})$.
\end{proof}

\begin{exmp}
That some condition on $X$ and $H$ is necessary for
Corollaries~\ref{c:minhomconj} and \ref{c:minsubconj} to hold can be seen as follows.
Suppose that $H = X$, that $\Pic(X) = 0$, and that there exists a non-constant
principal $\mu_p$\nd bundle $P$ over $X$ for some prime $p$ (e.g.\ $X$ is the punctured affine line).
Then the embedding and the trivial $k$\nd homomorphisms $\mu_p \to \bG_m$ extend to morphisms
$(\mu_p,P) \to (\bG_m,\bG_m \times_k X)$,   
and $K = \underline{\Iso}_{\mu_p}(P)$ is minimally reductive.
Applying $\underline{\Iso}_-(-)$ gives morphisms $f_1$ and $f_2$ from $K$ to 
$K' = \underline{\Iso}_{\bG_m}(\bG_m \times_k X)$
where $f_1$ is an embedding and $f_2$ factors through $[X]$.
If the hypothesis on $H^0_H(X,\sO_X)$ is dropped, then the $f_i$ give a counterexample to 
Corollary~\ref{c:minhomconj}\ref{i:minhomconj}, the $f_i(K)$ to 
Corollary~\ref{c:minsubconj} with $K'$ for $K$, and the $f_i{}\!^*\sV$ for any faithful
representation $\sV$ of $K'$ to Corollary~\ref{c:minhomconj}\ref{i:minhomconjcoh}.
\end{exmp}

For every $H$\nd module $\sV$ we have a pairing
\[
\Hom_H(\sV,\sO_X) \otimes_k H^0_H(X,\sV) \to H^0_H(X,\sO_X)
\]
which sends $u \otimes v$ to $u(v)$.
Its image is an ideal 
\[
I_H(\sV) \subset H^0_H(X,\sO_X).
\]
We have $I_H(\sV) + I_H(\sW) = I_H(\sV \oplus \sW)$ for every $\sV$ and $\sW$.

Let $K$ be a reductive groupoid over $H$.
We write 
\[
I_H(K) = \bigcup_{\sV \in \Mod_K(X), \; H^0_K(X,\sV) = 0}I_H(\sV) \; \subset H^0_H(X,\sO_X)
\]
for the filtered union of the $I_H(\sV)$ where $\sV$ runs over those representations of $K$
with no non-zero trivial direct summand.
It is an ideal of $H^0_H(X,\sO_X)$.

If $M$ and $N$ are objects in a pseudo-abelian category, then the composition
\begin{equation}\label{e:HomMN}
\Hom(M,N) \otimes \Hom(N,M) \to \End(N)
\end{equation}
is surjective if and only if $N$ is a direct summand of $M^n$ for some $n$.

\begin{lem}\label{l:Kideal}
Let $K$ be a reductive groupoid over $H$.
Then the following conditions are equivalent:
\begin{enumerate}
\renewcommand{\theenumi}{(\alph{enumi})}
\item
$I_H(K) \ne H^0_H(X,\sO_X)$;
\item
for every representation of $K$ with no non-zero trivial direct summand, 
the underlying representation of $H$ has no non-zero trivial direct summand.
\end{enumerate}
\end{lem}

\begin{proof}
Taking for $M$ in \eqref{e:HomMN} the representation $\sV$ of $H$ and for $N$ the trivial representation $\sO_X$ of $H$ shows that
$I_H(\sV) = H^0_H(X,\sO_X)$ if and only if $\sV^n$ has the direct summand $\sO_X$ for some $n$.  
The result follows.
\end{proof}

Let $k'$ be an extension of $k$ contained in $H^0(X,\sO_X)$.
Then we may regard $X$ as a $k'$\nd scheme.
A pregroupoid $H$ over $X$ is then a pregroupoid in $k'$\nd schemes if and only if $k'$ lies in $H^0_H(X,\sO_X)$.
Suppose that this condition holds, and let $K$ be a reductive groupoid over $H$.
The restriction $K'$ of $K$ above the subscheme $X \times_{k'} X$ of $X \times_k X$ is a reductive groupoid
in $k'$\nd schemes over $H$.
We have
\begin{equation}\label{e:IHKK'}
I_H(K) = I_H(K').
\end{equation}
Indeed $K'$ coincides, as a groupoid in $k'$\nd schemes over $X$, with the pullback of $K_{k'}$ along the canonical morphism
of $k'$\nd schemes $X \to X_{k'}$,
and since $X \to X_{k'}$ defines by Lemma~\ref{l:prereppull} an equivalence from 
$\Mod_{K_{k'}}(X_{k'})$ to $\Mod_{K'}(X)$,
the representations of $K'$ with no non-zero trivial direct summand 
are by Lemma~\ref{l:extquot} the direct summands of representations of $K$ with no non-zero trivial direct summand.

Let $k'$ be an extension of $k$.
Suppose that either $k'$ is finite over $k$ or that $X$ is quasi-compact.
By \eqref{e:Homextiso}, the embedding of $H^0_H(X,\sO_X)$ induces an isomorphism
\begin{equation}\label{e:H0Hscalext}
H^0_H(X,\sO_X)_{k'} \iso H^0_{H_{k'}}(X_{k'},\sO_{X_{k'}}),
\end{equation}
and $I_{H_{k'}}(\sV_{k'})$ is the image of $I_H(\sV)_{k'}$ under \eqref{e:H0Hscalext} for every
representation $\sV$ of $H$.

\begin{lem}\label{l:IHKscalext}
Let $K$ be a reductive groupoid over $H$ and $k'$ be an extension of $k$.
Suppose that ether $k'$ is finite over $k$ or that $X$ is quasi-compact.
Then $I_{H_{k'}}(K_{k'})$ is the image of $I_H(K)_{k'}$ under \eqref{e:H0Hscalext}.
\end{lem}

\begin{proof}
By Lemma~\ref{l:extquot}, every representation of $K_{k'}$ is a direct summand of $\sV_{k'}$
for some representation $\sV$ of $K$.
\end{proof}

\begin{cor}\label{c:IHKmin}
Let $K$ be a reductive groupoid over $H$.
If $I_H(K) \ne H^0_H(X,\sO_X)$, then $K$ is minimally reductive over $H$.
The converse holds when $H^0_H(X,\sO_X)$ is as in Theorem~\textnormal{\ref{t:main}}.
\end{cor}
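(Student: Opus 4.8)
The plan is to treat the two implications separately, and to obtain the converse (the one using Theorem~\ref{t:main}) essentially for free from Corollary~\ref{c:minequiv}. So suppose first that $H$ is as in Theorem~\ref{t:main}, so $H^0_H(X,\sO_X)$ is henselian local with residue field $k$; write $\mathfrak m$ for its maximal ideal, and assume $K$ is minimally reductive over $H$. Given a representation $\sV$ of $K$ with $H^0_K(X,\sV)=0$ --- equivalently, since $K$ is reductive and $\Mod_K(X)$ is semisimple, with no direct summand isomorphic to $\sO_X$ --- Corollary~\ref{c:minequiv}\ref{i:minequivrad} gives $H^0_H(X,\sV)={}^\mathrm{rad}H^0_H(X,\sV)$; by the very definition of ${}^\mathrm{rad}H^0_H(X,\sV)$ as the right kernel of the pairing~\eqref{e:Hpairing}, this means $u(v)\in\mathfrak m$ for every $u$ in $\Hom_H(\sV,\sO_X)$ and every $v$ in $H^0_H(X,\sV)$, so $I_H(\sV)\subseteq\mathfrak m$. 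Taking the union over all such $\sV$ yields $I_H(K)\subseteq\mathfrak m\subsetneq H^0_H(X,\sO_X)$, which is the converse.

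For the first, unconditional implication I would argue by contraposition, assuming $K$ is not minimally reductive over $H$ and aiming to prove $1\in I_H(K)$. Fix a reductive subgroupoid $K'\subsetneq K$ over $H$. Since $\mathrm{char}\,k=0$ and $K,K'$ are reductive, the quotient $Y=K/K'=\Spec(\sR)$ exists and is affine over $X$, its base section $y$ is an $H$\nd morphism (its $K$\nd stabiliser is $K'$, which contains the image of $H$), $K$ acts transitively on $Y$, and $Y\ne X$ because $K'\ne K$, so the augmentation $\varepsilon\colon\sR\to\sO_X$ at $y$ is an $H$\nd algebra homomorphism that is not $K$\nd equivariant. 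I would then use the Reynolds operator of the reductive groupoid $K$ --- a $K$\nd linear, hence $H$\nd linear, idempotent $e\colon\sR\to\sR$ with image $\sR^K$ --- to split $\sR=\sR^K\oplus\sR_1$ with $\sR_1=\ker e$, a decomposition of both $K$\nd modules and $H$\nd modules in which $H^0_K(X,\sR_1)=0$. Because $K$ acts trivially on $\sR^K$ and on $\sO_X$, $\varepsilon$ cannot vanish on $\sR_1$ (else it would factor through $\sR^K$ and be $K$\nd equivariant); and because $K'\ne K$ the residual $K'$\nd action on $Y$ is not transitive, which --- using reductivity of $K'$ to separate its closed orbits, in particular the base point $y$ --- forces $\sR^{K'}\supsetneq\sR^K$ and in fact $\varepsilon(\sR_1^{K'})\ne0$. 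Choosing a finite\nd type $K$\nd subrepresentation $\sV\subseteq\sR_1$ large enough to contain a section $v$ of $\sR_1^{K'}$ with $\varepsilon(v)\ne0$, I obtain a representation of $K$ with $H^0_K(X,\sV)=0$ together with the $H$\nd morphisms $v\colon\sO_X\to\sV$ (as $v$ is $K'$\nd, hence $H$\nd, invariant) and $u=\varepsilon|_\sV\colon\sV\to\sO_X$ with $u\circ v=\varepsilon(v)\ne0$ in $H^0_H(X,\sO_X)$; from this $\sO_X$ should be a genuine direct summand of $\sV$ as an $H$\nd module, whence $1\in I_H(\sV)\subseteq I_H(K)$.

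The main obstacle is precisely this last step of the first implication: upgrading the nonzero pairing $u(v)=\varepsilon(v)$ to the statement that $\sO_X$ is literally a direct summand of the underlying $H$\nd module of $\sV$. Over a field base this is immediate, since $\varepsilon(v)$ is then a nonzero scalar, hence invertible, and $\varepsilon(v)^{-1}u$ retracts $v$. The real work is therefore to reduce to the case $X=\Spec$ of a field --- via a faithfully flat base change, using the isomorphism~\eqref{e:H0Hscalext}, Proposition~\ref{p:IHKscalext} for the behaviour of $I_H(K)$ under scalar extension, and the fact that a proper reductive subgroupoid over $H$ stays proper over the base change --- and to check there that the invariant section $v\in\sR_1^{K'}$ can be taken with $\varepsilon(v)\ne0$, which rests on the non-transitivity of the residual $K'$\nd action on $K/K'$ and on the Reynolds operators of $K$ and $K'$. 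The converse, by contrast, is purely formal once Corollary~\ref{c:minequiv} is available.
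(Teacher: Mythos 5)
Your second half (the converse, under the hypotheses of Theorem~\ref{t:main}) is correct and is exactly the paper's argument via Corollary~\ref{c:minequiv}\ref{i:minequivrad}. The genuine gap is where you say it is, in the unconditional implication, and your proposed repair does not close it. What has to be produced is the \emph{unit ideal}: $I_H(K)=H^0_H(X,\sO_X)$, i.e.\ $1$ in the image of the pairing for some representation $\sV$ of $K$ with $H^0_K(X,\sV)=0$. Your construction only yields $u(v)=\varepsilon(v)\neq 0$, a possibly non-invertible element, and the reduction ``to the case $X=\Spec$ of a field'' cannot supply the missing invertibility: \eqref{e:H0Hscalext} and Proposition~\ref{p:IHKscalext} govern extension of the ground field $k$ (replacing $X$ by $X_{k'}$), not collapsing $X$ to a point, and in this direction there is no locality hypothesis on $H^0_H(X,\sO_X)$ whatsoever, so knowing that $\varepsilon(v)$ is nonzero (or nonzero in some residue field) says nothing about $I_H(\sV)$ being the unit ideal. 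The remark following the corollary in the paper ($I_X(K)=H^0(X,\sO_X)$ for every nontrivial reductive $K$ when $X$ is affine) illustrates how far $I_H(K)$ is from being computable fibrewise.

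The decisive idea you are missing --- and it makes the reduction to a field unnecessary --- is to take the retraction $K'$\nd equivariantly rather than using $\varepsilon$. Your section $v\in H^0_{K'}(X,\sV)$ is a nonzero morphism $\sO_X\to\sV$ in $\Mod_{K'}(X)$, which is a \emph{semisimple} category since $K'$ is reductive, and $\sO_X$ is simple there because $\End_{K'}(\sO_X)=H^0_{K'}(X,\sO_X)=k$ by transitivity of $K'$; hence $v$ splits, giving a $K'$\nd morphism $r\colon\sV\to\sO_X$ with $r(v)=1$. Since $r$ is in particular an $H$\nd morphism, $1=r(v)\in I_H(\sV)\subset I_H(K)$; equivalently, $\sO_X$ is an honest $H$\nd direct summand of $\sV$ and Proposition~\ref{p:Kideal} applies. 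This is precisely the paper's (much shorter) route: a proper reductive subgroupoid $K'$ makes the restriction $\Mod_K(X)\to\Mod_{K'}(X)$ fail to be fully faithful, so by the criterion preceding Corollary~\ref{c:unequiv} there is a representation $\sV$ of $K$ with no nonzero trivial direct summand whose restriction to $K'$, hence to $H$, has the direct summand $\sO_X$, and Proposition~\ref{p:Kideal} concludes. Your detour through $Y=K/K'$, the augmentation and Reynolds operators is essentially an explicit version of the non--full-faithfulness step (and would need extra care: existence of $K/K'$ when $K$ is not of finite type, and the claim $\varepsilon(\sR_1^{K'})\neq 0$ is again argued only over a point), but with the $K'$\nd equivariant splitting in place all you need from it is a nonzero $K'$\nd invariant in a $K$\nd representation without $K$\nd invariants.
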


\begin{proof}
Suppose that $K$ contains a reductive subgroupoid $K' \ne K$ over $H$.
Then by the remarks following Proposition~\ref{p:unconjbij},
the functor from $\Mod_K(X)$ to $\Mod_{K'}(X)$ defined by the embedding is not fully faithful.
Thus by \eqref{e:dualiso} there is a representation $\sV$ of $K$ with no non-zero trivial direct
summand such that the underlying representation of $K'$, and hence the underlying representation of $H$,
has a non-zero trivial direct summand.
This implies $I_H(K) = H^0_H(X,\sO_X)$, by Lemma~\ref{l:Kideal}.

Suppose that $H^0_H(X,\sO_X)$ is as in Theorem~\textnormal{\ref{t:main}} and that $K$ is minimally reductive over $H$.
Since ${}^\mathrm{rad}H^0_H(X,\sV)$ is the kernel on the right of \eqref{e:Hpairing},
it follows from Corollary~\ref{c:minequiv}\ref{i:minequivrad} that
$I_H(\sV) \ne H^0_H(X,\sO_X)$ for every representation $\sV$ of $K$
with no non-zero trivial direct summand.
Thus $I_H(K) \ne H^0_H(X,\sO_X)$.
\end{proof}

\begin{exmp}
Suppose that $X$ is affine.
Then $I_X(\sV) = H^0(X,\sO_X)$ for every $\sV$ in $\Mod(X)$ which is nowhere $0$.
Thus $I_X(K) = H^0(X,\sO_X)$ for every reductive groupoid $K \ne [X]$ over $X$.
Suppose that a non-constant reductive groupoid over $X$ exists,
as for example when there is a non-free $\sV$ in $\Mod(X)$ of constant rank.
Then a minimal reductive groupoid $\ne [X]$ over $X$ exists.
This shows that some condition on $X$ and $H$ is necessary for the final statement of Corollary~\ref{c:IHKmin}.
\end{exmp}

\begin{rem}\label{r:hensres}
Suppose that $H^0_H(X,\sO_X)$ is a henselian local $k$\nd algebra, and let $k'$ be a 
maximal extension of $k$ in $H^0_H(X,\sO_X)$.
Then $k'$ is a lifting of the residue field.
Let $K$ be a reductive groupoid over $H$ and $K'$ be the restriction of $K$ above the subscheme 
$X \times_{k'} X$ of $X \times_k X$.
We may regard $K'$ as a reductive groupoid in $k'$\nd schemes over $H$.
Then by \eqref{e:IHKK'}
and Corollary~\ref{c:IHKmin}, the conditions
\begin{enumerate}
\renewcommand{\theenumi}{(\alph{enumi})}
\item\label{i:hensresIH}
$I_H(K) \ne H^0_H(X,\sO_X)$
\item\label{i:hensresmin}
$K'$ is minimally reductive over $H$
\end{enumerate}
are equivalent.
These conditions are in general strictly stronger than the one that $K$ be minimally reductive over $H$.
The following are two examples of this, with $X = \Spec(k')$ and $H = X$.
Suppose first that $k$ is algebraically closed but that $k'$ is not.
Then there is a non-constant principal $G$\nd bundle $P$ over $X$ for some reductive $k$\nd group $G$,
with for example $P$ the spectrum of a non-trivial Galois extension of
$k'$ and $G$ the Galois group.
The reductive groupoid $\underline{\Iso}_G(P)$ over $X$ contains a minimal reductive subgroupoid $K \ne [X]$.
On the other hand reductive groupoids over the $k'$\nd scheme $X$ are just reductive $k'$\nd groups,
so that $K' \ne 1$ is not minimally reductive.
For the second example, suppose that $k'$ is an algebraic closure $\overline{k}$ of $k$,
and that the Brauer group of $k$ has an element $\alpha$ of order $n \ne 1$.
Then $\alpha$ is an element of the subgroup $H^2(\Gal(\overline{k}/k),\mu_n(\overline{k}))$ of the Brauer group.
If $E$ is an extension of $\Gal(\overline{k}/k)$ by $\mu_n(\overline{k})$ with class $\alpha$,
then $(\mu_n{}_{\overline{k}},E)$ is a Galois extended $\overline{k}$\nd group.
The groupoid $K$ over $X$ corresponding to it by Proposition~\ref{p:grpdgalequ} is minimally reductive, 
but $K'$, and by Lemma~\ref{l:finneutr} even $K_{k'}$, is not.
\end{rem}

We may regard $H$ as a groupoid in $S$\nd schemes, where
\[
S = \Spec(A) = \Spec(H^0_H(X,\sO_X)).
\]
If $S'$ is a scheme over $S$ and $X' = X \times_S S'$ is the constant $H$\nd scheme defined by $S'$,
then we have a canonical isomorphism
\[
H' = H \times_S S' \iso H \times_X X'
\]
of pregroupoids over $X'$.
Suppose that $S' = \Spec(A')$ is affine and flat over $S$, and that $X$ is quasi-compact and quasi-separated
and $H_{[1]}$ is quasi-compact.
Then for any representation $\sV$ of $H$ with pullback $\sV'$ onto $X'$, the canonical homomorphism
from $H^0_H(X,\sV)$ to $H^0_{H'}(X',\sV')$ extends to an isomorphism
\[
H^0_H(X,\sV) \otimes_A A' \iso H^0_{H'}(X',\sV').
\]
This follows from the fact that $H^0_H(X,\sV)$ for example is the equaliser of two morphisms from
$H^0(X,\sV)$ to $H^0(H_{[1]},d_0{}\!^*\sV)$, while the base change homomorphism along $S' \to S$ is
an isomorphism for $X$ and $\sV$ and injective for $H_{[1]}$ and $d_0{}\!^*\sV$.
In particular we have a canonical isomorphism
\[
A' \iso H^0_{H'}(X',\sO_{X'}),
\]
which induces for every reductive groupoid $K$ over $H$ an isomorphism
\[
I_H(K) \otimes_A A' \iso I_{H'}(K \times_{[X]} [X']).
\]
This can be used as follows to describe the support of the subscheme of $S$ defined by $I_H(K)$.
Let $s$ be a point of $S$. 
Write $S_{(s)}$ for the spectrum of the henselisation of $A$ at $s$ and $X_{(s)}$ for $X \times_S S_{(s)}$, 
and let $k_s$ be a maximal extension of $k$ in this henselisation.
Then by the equivalence of \ref{i:hensresIH} and \ref{i:hensresmin} of Remark~\ref{r:hensres}, 
$s$ lies in the support of $\Spec(A/I_H(K))$ if and only the groupoid in $k_s$\nd schemes
over
\[
H \times_S S_{(s)} = H \times_X X_{(s)}
\]
given by restricting above $X_{(s)} \times_{k_s} X_{(s)}$ the pullback of $K$ along  $X_{(s)} \to X$
is minimally reductive.

\begin{cor}\label{c:minscalext}
Let $K$ be a reductive groupoid over $H$ and $k'$ be an extension of $k$.
Suppose that $H^0_H(X,\sO_X)$ is as in Theorem~\textup{\ref{t:main}}, 
and that either $k'$ is finite over $k$ or that $X$ is quasi-compact.
Then $K$ is minimally reductive over $H$ if and only if $K_{k'}$ is minimally reductive over $H_{k'}$.
\end{cor}

\begin{proof}
The ``if'' is immediate.
The ``only if'' follows from Lemma~\ref{l:IHKscalext} and Corollary~\ref{c:IHKmin}.
\end{proof}

\begin{exmp}
That some condition on $X$ and $H$ is needed in Corollary~\ref{c:minscalext} can be seen by the example
of Remark~\ref{r:hensres} with $H = X$ and $X$ the spectrum of an algebraic closure of $k$. 
The following is an example with $H = X$ and $X$ geometrically connected.
Suppose that $X$ is the affine plane, and that $k$ is not algebraically closed.
Then by \cite[Theorem~B]{Rag89}, there exists a non-constant principal $G_1$\nd bundle $P_1$ over $X$
for some reductive $k$\nd group $G_1$ of finite type.
The reductive groupoid $\underline{\Iso}_{G_1}(P_1)$ contains a minimal reductive subgroupoid $K \ne [X]$.
By the equivalence \eqref{e:GPpequiv}, $K$ is of the form $\underline{\Iso}_G(P)$ for some 
reductive $k$\nd group $G$ of finite type. 
If $\overline{k}$ is an algebraically closed extension of $k$,
then $P_{\overline{k}}$ is trivial, by \cite{Rag78}.
Thus $K_{\overline{k}}$ is not minimally reductive.
\end{exmp}

Note that the condition on $H^0_H(X,\sO_X)$ in Theorem~\ref{t:main} is not in general preserved by extension of scalars.
However by \eqref{e:H0Hscalext} it is preserved by algebraic extension of scalars, provided that either
the extension is finite or $X$ is quasi-compact.

\begin{cor}\label{c:unscalext}
Let $K$ be a reductive groupoid over $H$ and $k'$ be an \emph{algebraic} extension of $k$.
Suppose that $H^0_H(X,\sO_X)$ is as in Theorem~\textup{\ref{t:main}}, 
and that either of the following conditions holds:
\begin{enumerate}
\renewcommand{\theenumi}{(\alph{enumi})}
\item\label{i:unscalextfin}
$k'$ is finite over $k$;
\item\label{i:unscalextalg}
$X$ is quasi-compact and quasi-separated and $H_{[1]}$
is quasi-compact.
\end{enumerate}
Then $K$ is universally reductive over $H$ if and only if $K_{k'}$ is universally reductive over $H_{k'}$.
\end{cor}

\begin{proof}
For the ``only if'', it suffices by Corollaries~\ref{c:unequiv} and \ref{c:minscalext}
to show that every representation
of $H$ is a direct summand of a representation of $K$ only if every representation of $H_{k'}$ is a 
direct summand of a representation of $K_{k'}$.
This follows from Lemma~\ref{l:Rsummand} with $X' = X_{k'}$
if \ref{i:unscalextfin} holds, together with Lemma~\ref{l:HRrepfp} if \ref{i:unscalextalg} holds.

By Theorem~\ref{t:main}, a universal reductive groupoid $K_0$ over $H$ exists, and by the ``only if'',
$(K_0)_{k'}$ is universally reductive over $H_{k'}$.
The ``if'' follows,
because $K_0 \to K$ is an isomorphism if $(K_0)_{k'} \to K_{k'}$ is.
\end{proof}

\section{Applications to principal bundles}\label{s:appprinbun}

\emph{In this section $k$ is a field of characteristic $0$, $\overline{k}$ is an algebraically closed extension of $k$,
$X$ is a non-empty $k$\nd scheme, and $H$ is a pregroupoid over $X$.}

\medskip

In this section we derive from Theorem~\ref{t:main} and its corollaries results applicable to principal bundles. 
These follow almost immediately from the corresponding results for groupoids, by the equivalences of Lemma~\ref{l:IsoHequivpt}
and Corollary~\ref{c:IsoHequivac}.

Suppose that $X$ has a $k$\nd point $x$, and let $G$ be an affine $k$\nd group.
If $Q$ is a principal $G$\nd bundle over $k$, then $Q$ is a principal 
$(G',G)$\nd bundle with $G'$ the inner form $\underline{\Iso}_G(Q)$ of $G$, and $- \times_k^{G'} Q$ defines an equivalence
from the category of principal $(H,G')$\nd bundles with fibre above $x$ isomorphic to $G'$ to the category of principal
$(H,G)$\nd bundles with fibre above $x$ isomorphic to $Q$.
To describe principal $(H,G)$\nd bundles we thus reduce, modulo a description of principal $G$\nd bundles over $k$,
to the case of those which are trivial above $x$.  
We write 
\[
\widetilde{H}^1_H(X,x,G)
\]
for the pointed subset of $H^1_H(X,G)$ consisting of the classes of those principal $(H,G)$\nd bundles
which have a $k$\nd point above $x$.
We have a short exact sequence of pointed sets
\[
1 \to \widetilde{H}^1_H(X,x,G) \to H^1_H(X,G) \to H^1(k,G) \to 1
\]
which is natural in $H$ and $G$, where the second arrow,
defined by taking the fibre at $x$,
is surjective because above the class of a principal $G$\nd bundle over $k$ lies the
class of a constant $(H,G)$\nd bundle.

In the general case, where $X$ need not have a $k$\nd point, we consider principal $(H,F)$\nd bundles over $X$ 
for $F$ a transitive affine groupoid over the algebraically closed extension $\overline{k}$ of $k$.
By Lemma~\ref{l:gpdprinHGG'}, principal $(H,G)$\nd bundles for $G$ an affine $k$\nd group are included as the case where $F$ is the constant groupoid $G \times_k {[\overline{k}]}$.
Affine $k$\nd groups which are inner forms of one another are then treated together, because their associated groupoids
over $\overline{k}$ are isomorphic.
However subgroupoids of constant groupoids over $\overline{k}$ need not be constant,
and the universal groupoids over $\overline{k}$ that arise need not be constant.
If $X$ has a $k$\nd point, a principal $F$\nd bundle over $X$ can exist only for $F$ constant, 
but even so the functor from affine $k$\nd groups up to conjugacy to affine groupoids over $\overline{k}$ up to conjugacy,
while faithful, need not be full.

When $k$ is algebraically closed, we may take $\overline{k} = k$, 
so that the results for groupoids over $\overline{k}$ reduce to results for $k$\nd groups.

The groupoid $\underline{\Iso}_G(P)$ over $X$ is reductive if and only if the affine $k$\nd group $G$ is reductive,
and similarly $\underline{\Iso}_F(P)$ is reductive  if and only if the transitive affine groupoid $F$ over an extension of $k$ is reductive.

\begin{lem}\label{l:cohreppt}
Let $x$ be a $k$\nd point of $X$.
\begin{enumerate}
\item\label{i:unrepablept}
The functor $\widetilde{H}^1_H(X,x,-)$ on the category of reductive $k$\nd groups up to conjugacy is representable
if and only if a universal reductive groupoid over $H$ exists.
\item\label{i:unreppt}
The functor of \textnormal{\ref{i:unrepablept}} is represented by $G$ and the class of $P$ if and only if $\underline{\Iso}_G(P)$
is universally reductive over $H$.
\end{enumerate}
\end{lem}

\begin{proof}
Immediate from Lemma~\ref{l:IsoHequivpt}.
\end{proof}

\begin{lem}\label{l:cohrepgpd}
\begin{enumerate}
\item\label{i:unrepablegpd}
The functor $H^1_H(X,-)$ on the category of reductive groupoids over $\overline{k}$ up to conjugacy is representable
if and only if a universal reductive groupoid over $H$ exists.
\item\label{i:unrepgpd}
The functor of \textnormal{\ref{i:unrepablegpd}} is represented by $F$ and the class of $P$ if and only if 
$\underline{\Iso}_F(P)$ is universally reductive over $H$.
\end{enumerate}
\end{lem}

\begin{proof}
Immediate from Corollary~\ref{c:IsoHequivac}
\end{proof}

\begin{cor}\label{c:cohomologyrep}
Suppose that $H^0_H(X,\sO_X)$ is as in Theorem~\textup{\ref{t:main}}.
\begin{enumerate}
\item\label{i:ptcohomologyrep}
For every $k$-point $x$ of $X$, the functor $\widetilde{H}^1_H(X,x,-)$ on the category of reductive $k$-groups up to 
conjugacy is representable.
\item\label{i:gpdcohomologyrep}
The functor $H^1_H(X,-)$ on the category of reductive groupoids over $\overline{k}$ up to conjugacy is representable.
\end{enumerate}
\end{cor}

\begin{proof}
Immediate from Theorem~\ref{t:main}\ref{i:mainexist} and Lemmas~\ref{l:cohreppt}\ref{i:unrepablept} and 
\ref{l:cohrepgpd}\ref{i:unrepablegpd}.
\end{proof}

\begin{exmp}\label{ex:affline}
The following is an example with $H = X$ and $X$ geometrically connected and of finite type over $k$
where a universal reductive groupoid over $H$ exists but the condition on $H^0_H(X,\sO_X)$ in Theorem~\ref{t:main}
does not hold. 
Suppose that $k$ is algebraically closed and that $X$ is the affine line.
Then \cite[4.3 and 4.4]{Ram83} for $G$ an affine $k$\nd group of finite
type, every principal $G$\nd bundle over $X$ is trivial.
A Zorn's lemma argument similar to the proof of Proposition~\ref{p:printriv} shows that the same
is true for an arbitrary affine $k$\nd group $G$.
The functor $H^1(X,-)$ on reductive $k$\nd groups up to conjugacy is thus represented by the trivial $k$\nd group $1$.
Hence $[X]$ is a universal reductive groupoid over $X$, by Lemma~\ref{l:cohrepgpd}\ref{i:unrepgpd} with $\overline{k} = k$.
\end{exmp}

\begin{exmp}
The following is an example, with $H = X$, where the $k$\nd algebra $H^0_H(X,\sO_X)$ is the same as in Example~\ref{ex:affline}
but a universal reductive groupoid over $H$ does not exist.
Suppose that $k$ is algebraically closed.
Let $Z$ be a non-empty reduced connected projective $k$\nd scheme with a fixed-point-free involution $e$, 
such as an elliptic curve with $e$ translation by a point of order $2$.
Now take
\[
X = (Z \times_k \bG_m)/(\Z/2),
\]
where the action of $\Z/2$ is the product of the action by $e$ on $Z$ and by the inverse involution on $\bG_m$.
Then
\[
H^0(X,\sO_X) = H^0(\bG_m,\sO_{\bG_m})/(\Z/2)
\]
is a polynomial $k$\nd algebra in one indeterminate, as in Example~\ref{ex:affline}.
We show that there exist a reductive $k$\nd group $G$, a principal $G$\nd bundle $P$, and for each $n \ge 1$ 
a finite \'etale $k$\nd subgroup $G_n$ of $G$ of rank $2n$,
such that $P$ has a connected principal $G_n$\nd subbundle $P_n$ for each $n$. 
Since the structure groups of such $P_n$ cannot be reduced, it will follow that $H^1(X,-)$ on reductive 
$k$\nd groups up to conjugacy is not representable,
so that by Lemma~\ref{l:cohrepgpd}\ref{i:unrepablegpd} a universal reductive groupoid over $X$ does not exist. 
For $G$ take $\bG_m \rtimes_k \Z/2$ with $\Z/2$ acting on $\bG_m$ 
as the inverse involution, and for $G_n$ the $k$\nd subgroup $\mu_n \rtimes_k \Z/2$.
For $P$ take $Z \times_k \bG_m \times_k \bG_m$, regarded as a scheme over $X$ by projection onto the first two factors
and then onto $X$.
The $k$\nd subgroup $\bG_m$ of $G$ acts on $P$ by translation on the last factor $\bG_m$, and the $k$\nd subgroup $\Z/2$
as the involution of $P$ that sends $(z,g,g')$ to $(e(z),g^{-1},g'{}^{-1})$.
The $G_n$\nd subbundle $P_n$ of $P$ is that with points $(z,g^n,g)$.
\end{exmp}

Let $G$ be an affine $k$\nd group.
By Lemma~\ref{l:IsoHequivpt}, if a principal $(H,G)$\nd bundle with a $k$\nd point $z$
has a principal $(H,G')$\nd subbundle for a reductive $k$\nd subgroup $G' \ne G$ of $G$, 
it also has for some reductive $k$\nd subgroup $G'' \ne G$ of $G$ a principal $(H,G'')$\nd subbundle containing $z$.

\begin{lem}\label{l:minbgpt}
Let $G$ be a reductive $k$\nd group and 
$P$ be a principal $(H,G)$\nd bundle which has a $k$\nd point.
Then $\underline{\Iso}_G(P)$ is minimally reductive over $H$ if and only if
$P$ has no principal $(H,G')$\nd subbundle for any reductive $k$\nd subgroup $G' \ne G$ of $G$.
\end{lem}

\begin{proof}
Immediate from Lemma~\ref{l:IsoHequivpt}.
\end{proof}

Unless $k$ is algebraically closed,
Lemma~\ref{l:minbgpt} does not hold without the hypothesis that $P$ has a $k$\nd point,
even if $H = X = \Spec(k)$.

\begin{lem}\label{l:minbggpd}
Let $F$ be a reductive groupoid over $\overline{k}$ and $P$ be a principal $(H,F)$\nd bundle.
Then $\underline{\Iso}_F(P)$ is minimally reductive over $H$ if and only if $P$
has no principal $(H,F')$\nd subbundle for any reductive subgroupoid $F' \ne F$ over $\overline{k}$ of $F$.
\end{lem}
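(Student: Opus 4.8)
The plan is to deduce both directions from the fullness and essential surjectivity of the functor $\underline{\Iso}_{-}(-)$ from transitive affine principal bundles over $(X,\overline{k})/k$ to transitive affine groupoids over $X/k$ (Lemma~\ref{l:Isoequivac}), together with the observation recorded in Section~\ref{s:bunundergpd} that, for a morphism $(h,q)$ of transitive affine principal bundles, $q$ is an $H$\nd morphism if and only if $\underline{\Iso}_h(q)$ is a morphism of groupoids over $H$. I shall also use that a transitive affine groupoid over $\overline{k}$ is reductive if and only if its associated isomorphism groupoid is.

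For the ``only if'' direction, suppose that $P$ has a principal $(H,F')$\nd subbundle $P'$ with $F'\ne F$ a reductive subgroupoid over $\overline{k}$ of $F$. The closed immersions $F'\hookrightarrow F$ and $P'\hookrightarrow P$ constitute a morphism of transitive affine principal bundles, and hence induce a morphism $\underline{\Iso}_{F'}(P')\to\underline{\Iso}_F(P)$ which is over $H$ (since $P'\hookrightarrow P$ is an $H$\nd morphism) and has reductive source. I would check that this morphism is a closed immersion with image distinct from $\underline{\Iso}_F(P)$; both can be verified after pulling back along the $fpqc$ covering morphism $P'\to X$, where, using \eqref{e:gpdprin} and the fact that a morphism of torsors is an isomorphism to trivialise the pullback of $P$, together with Lemma~\ref{l:biprin}, the morphism is identified with the pullback of the proper closed immersion $F'\hookrightarrow F$. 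Thus $\underline{\Iso}_F(P)$ has a proper reductive subgroupoid over $H$, so is not minimally reductive over $H$.

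For the ``if'' direction, suppose $\underline{\Iso}_F(P)$ is not minimally reductive over $H$, and let $K'$ be a reductive subgroupoid over $H$ with $K'\ne\underline{\Iso}_F(P)$. By essential surjectivity of $\underline{\Iso}_{-}(-)$ there is a transitive affine principal bundle $(F_1,P_1)$ over $(X,\overline{k})/k$ with $\underline{\Iso}_{F_1}(P_1)\cong K'$, and $F_1$ is reductive because $K'$ is. Composing $H\to K'$ with this isomorphism equips $P_1$ with a principal $(H,F_1)$\nd bundle structure for which $\underline{\Iso}_{F_1}(P_1)=K'$ as groupoids over $H$, so that the inclusion $K'\hookrightarrow\underline{\Iso}_F(P)$ becomes a morphism $\underline{\Iso}_{F_1}(P_1)\to\underline{\Iso}_F(P)$ over $H$. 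By fullness it equals $\underline{\Iso}_h(q)$ for some $(h,q)\colon(F_1,P_1)\to(F,P)$, and then $q$ is an $H$\nd morphism by the observation above. Taking $F'$ and $P'$ to be the images of $h$ and $q$, one gets a reductive subgroupoid $F'$ of $F$ over $\overline{k}$ (a quotient of the reductive $F_1$) and a principal $(H,F')$\nd subbundle $P'$ of $P$. Finally $F'\ne F$: were $F'=F$, the morphism $h$, hence also $\underline{\Iso}_h(q)$, would be faithfully flat, forcing its image $K'$ to be all of $\underline{\Iso}_F(P)$, contrary to assumption. This contradicts the hypothesis, so $\underline{\Iso}_F(P)$ is minimally reductive over $H$.

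The formal skeleton above is short; the main obstacle is the bookkeeping in the ``only if'' direction, namely identifying the various pullbacks along $P'\to X$ and confirming that $\underline{\Iso}_{F'}(P')\to\underline{\Iso}_F(P)$ is a proper closed immersion. A subsidiary point to settle is the equivalence between reductivity of a transitive affine groupoid over $\overline{k}$ and reductivity of its isomorphism groupoid, which holds because in characteristic $0$ the fibres of the respective diagonals are inner forms of one another and reductivity is preserved under forms, or else by comparing their categories of representations.
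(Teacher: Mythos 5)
Your argument is correct and follows the paper's own route: the paper treats the ``only if'' as immediate (precisely the embedding $\underline{\Iso}_{F'}(P')\subset\underline{\Iso}_F(P)$ you construct) and deduces the ``if'' from the fullness and essential surjectivity of $\underline{\Iso}_{-}(-)$ in Lemma~\ref{l:Isoequivac}, exactly as you do. The additional verifications you spell out (properness of the image after pullback along $P'\to X$, passing to the images of $(h,q)$, reductivity of $F_1$) are just the details the paper leaves implicit.
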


\begin{proof}
Immediate from Corollary~\ref{c:IsoHequivac}.
\end{proof}

\begin{cor}\label{c:minhomconjcohpt}
Let $G$ be a reductive $k$\nd group 
and $P$ be a principal $(H,G)$-bundle
with no principal $(H,G_0)$\nd subbundle for any reductive $k$\nd subgroup $G_0 \ne G$ of $G$.
Suppose that $H^0_H(X,\sO_X)$ is as in Theorem~\textup{\ref{t:main}},
and that $P$ has a $k$\nd point.
Then the push forwards of $P$ along two $k$\nd homomorphisms $h_1$ and $h_2$ from $G$ to an affine $k$\nd group $G'$
are $(H,G')$\nd isomorphic if and only if $h_1$ and $h_2$ are conjugate.
\end{cor}

\begin{proof}
Immediate from Lemmas~\ref{l:IsoHequivpt} and \ref{l:minbgpt} and Corollary~\ref{c:minhomconj}\ref{i:minhomconj}.
\end{proof}

\begin{cor}

Let $F$ be a reductive groupoid over $\overline{k}$, 
and $P$ be a principal $(H,F)$\nd bundle with no principal $(H,F_0)$\nd subbundle for any reductive subgroupoid $F_0 \ne F$
over $\overline{k}$ of $F$.
Suppose that $H^0_H(X,\sO_X)$ is as in Theorem~\textup{\ref{t:main}}.
Then the push forwards of $P$ along two morphisms $h_1$ and $h_2$ from $F$ to a transitive affine groupoid $F'$
over $\overline{k}$ are $(H,F')$\nd isomorphic if and only if $h_1$ and $h_2$ are conjugate.
\end{cor}

\begin{proof}
Immediate from Lemma~\ref{l:minbggpd} and Corollaries~\ref{c:IsoHequivac} and \ref{c:minhomconj}\ref{i:minhomconj}.
\end{proof}

Let $G$ be an affine $k$\nd group and $P$ be a principal $(H,G)$\nd bundle, 
Recall that \eqref{e:assocvecbun} with $S = \Spec(k)$ is the usual associated vector bundle functor
$P \times_k^G -$ from representations of $G$ to representations of $H$, with $H$ acting through $P$.

\begin{cor}\label{c:unequivpt}
Let $G$ be a affine $k$\nd group, $x$ be a $k$\nd point of $X$, and $P$ be a principal $(H,G)$-bundle
with a $k$\nd point above $x$. 
Suppose that $H^0_H(X,\sO_X)$ is as in Theorem~\textup{\ref{t:main}}.
Then the following conditions are equivalent:
\begin{enumerate}
\renewcommand{\theenumi}{(\alph{enumi})}
\item\label{i:unequivunpt}
$G$ is reductive, and represents the functor $\widetilde{H}^1_H(X,x,-)$ on the category of reductive $k$\nd groups up to conjugacy
with universal element the class of $P$;
\item\label{i:unequivbijpt}
$P$ has a principal $(H,G_0)$\nd subbundle for some reductive $k$\nd subgroup $G_0$ of $G$,
and $P \times^G_k -$ induces a bijection from isomorphism classes of representations of $G$
to isomorphism classes of representations of $H$;
\item\label{i:unequivminpt}
$G$ is reductive, $P$ has no principal $(H,G')$\nd subbundle for any reductive $k$\nd subgroup $G' \ne G$ of $G$,
and every representation of $H$ is a direct summand of $P \times^G_k V$ for some representation $V$ of $G$. 
\end{enumerate}
\end{cor}

\begin{proof}
It is enough to show that with $K = \underline{\Iso}_G(P)$, each of \ref{i:unequivunpt}, \ref{i:unequivbijpt} and \ref{i:unequivminpt}
is equivalent to the corresponding condition of Corollary~\ref{c:unequiv}. 
For \ref{i:unequivunpt} this follows from Lemma~\ref{l:cohreppt}\ref{i:unreppt}, and for \ref{i:unequivbijpt} and \ref{i:unequivminpt} from the fact that \eqref{e:assocvecbun} with $H = \underline{\Iso}_G(P)$ is an equivalence,
together with Lemma~\ref{l:IsoHequivpt} for \ref{i:unequivbijpt} and Lemma~\ref{l:minbgpt} for \ref{i:unequivminpt}.
\end{proof}

Let $F$ be a transitive affine groupoid over an extension $k'$ of $k$, 
and $P$ be a principal $(H,F)$\nd bundle.
Just as for the case $k' = k$ of affine $k$\nd groups, we may define a functor
$P \times_{k'}^F -$ from representations of $F$ to representations of $H$,
which is an equivalence when $H =\underline{\Iso}_F(P)$. 
When $k' = \overline{k}$, an analogue of Corollary~\ref{c:unequivpt} for principal $(H,F)$\nd bundles can then be deduced from 
Corollary~\ref{c:unequiv} in the same way as above, and similarly for Corollary~\ref{c:minequivpt} below.

Let $G$ be an affine $k$\nd group, $P$ be a principal $(H,G)$\nd bundle, and $V$ be a representation of $G$.
Since $P \times_k^G k = \sO_X$, 
we may by functoriality identify
the space $\Hom_G(k,V) = V^G$ of invariants of $G$
with a subspace of $H^0_H(X,P \times^G_k V)$.
This subspace $V^G$ coincides, since \eqref{e:assocvecbun} with $H =\underline{\Iso}_G(P)$ is an equivalence,
with $H^0_{\underline{\Iso}_G(P)}(X,P \times^G_k V)$.

\begin{cor}\label{c:minequivpt}
Let $G$ be an affine $k$\nd group and $P$ be a principal $(H,G)$-bundle
which has a $k$\nd point. 
Suppose that $H^0_H(X,\sO_X)$ is as in Theorem~\textup{\ref{t:main}}.
Then the following conditions are equivalent:
\begin{enumerate}
\renewcommand{\theenumi}{(\alph{enumi})}
\item\label{i:minequivminpt}
$G$ is reductive, and $P$ has no principal $(H,G')$\nd subbundle for any reductive $k$\nd subgroup $G' \ne G$ of $G$;
\item\label{i:minequivradpt}
$H^0_H(X,P \times^G_k V) = {}^\mathrm{rad}H^0_H(X,P \times^G_k V) \oplus V^G$ 
for every representation $V$ of $G$;
\item\label{i:minequivindecpt}
$P$ has a principal $(H,G_0)$\nd subbundle for some reductive $k$\nd subgroup $G_0$ of $G$,
and $P \times^G_k V$ is a non-trivial indecomposable representation of $H$ 
for every non-trivial indecomposable representation $V$ of $G$.
\end{enumerate}
\end{cor}

\begin{proof}
It is enough to show that with $K = \underline{\Iso}_G(P)$, each of \ref{i:minequivminpt}, \ref{i:minequivradpt} and \ref{i:minequivindecpt}
is equivalent to the corresponding condition of Corollary~\ref{c:minequiv}. 
For \ref{i:minequivminpt} this follows from Lemma~\ref{l:minbgpt}, and for \ref{i:minequivradpt} and \ref{i:minequivindecpt} 
from the fact that \eqref{e:assocvecbun} with $H = \underline{\Iso}_G(P)$  is an equivalence,
together with Lemma~\ref{l:IsoHequivpt} for \ref{i:minequivindecpt}.
\end{proof}

Let $G$ be a affine $k$\nd group, $P$ be a principal $(H,G)$\nd bundle, $G_1$ be a 
$k$\nd subgroup of $G$, and $P_1$ be a principal $(H,G_1)$\nd subbundle of $P$. 
We can form other principal subbundles of $P$ starting from $P_1$ using the following operations of
conjugation, extension and gauge transformation.
The conjugate of $P_1$ by $g$ in $G(k)$ is the principal $(H,g^{-1}G_1g)$\nd subbundle
\[
P_1g
\]
of $P$ defined as the image of $P_1$ under right translation $P \to P$ by $g$.
It is the push forward of $P_1$ along $G_1 \to g^{-1}G_1g$ defined by $g_1 \mapsto g^{-1}g_1g$.
The extension of $P_1$ to a $k$\nd subgroup $G_2$ of $G$ containing $G_1$ is the principal
$(H,G_2)$\nd subbundle
\[
P_1G_2
\]
of $P$ defined as the image of
$P_1 \times_k G_2$ under the action of $G$ on $P$.
It is the push forward of $P_1$ along $G_1 \to G_2$.
The gauge transform of $P_1$ by the $(H,G)$\nd automorphism $\theta$ of $P$ is the principal $(H,G_1)$\nd subbundle
\[
\theta P_1
\] 
of $P$ defined as the image of $P_1$ under $\theta$.
Two principal $(H,G_1)$\nd subbundles of $P$ are gauge transforms of each other if and only if they 
are $(H,G_1)$\nd isomorphic, because such an $(H,G_1)$\nd isomorphism extends to an $(H,G)$\nd automorphism of $P$.

Let $G'$ be a $k$\nd subgroup of $G$.
Then the principal $(H,G')$\nd subbundles of $P$ obtained from $P_1$ by iterating 
the above three operations are those of the form
\[
\theta P_1 g G'
\]
for $g$ in $G(k)$ with $g^{-1}G_1g$ contained in $G'$.
A principal $(H,G')$\nd subbundle $P'$ of $P$ can be written in this form for a given $g$ if and only if
conjugation
\[
\alpha_g:G_1 \to G'
\]
by $g^{-1}$ sends the class of $P_1$ in $H^1_H(X,G_1)$ to that of  $P'$ in $H^1_H(X,G')$.
Since $\alpha_{\widetilde{g}}$ and $\alpha_g$ are equal if and only if $\widetilde{g} = g'g$ for $g'$ in 
$G(k)$ centralising $G_1$, they are conjugate if and only if
$\widetilde{g} = g'gg''$ for $g'$ in $G(k)$ centralising $G_1$ and $g''$ in $G'(k)$.

\begin{cor}\label{c:minsubconjgp}
Let $G$ be an affine $k$-group and $P$ be a principal $(H,G)$-bundle.
Let $G_1$ be a reductive $k$\nd subgroup of $G$
and $P_1$ be a principal $(H,G_1)$\nd subbundle of $P$
with no principal $(H,G_2)$\nd
subbundle for any reductive $k$\nd subgroup $G_2 \ne G_1$ of $G_1$.
Let $G'$ be a $k$\nd subgroup of $G$
and $P'$ be a principal $(H,G')$\nd subbundle of $P$
which has a principal $(H,G'')$\nd
subbundle for some reductive $k$\nd subgroup $G''$ of $G'$.
Suppose that $H^0_H(X,\sO_X)$ is as in Theorem~\textup{\ref{t:main}},
and that $X$ has a $k$\nd point above which both $P_1$ and $P'$ have $k$\nd points.
\begin{enumerate}
\item\label{i:minsubconjgpexist}
There is a $k$-point $g$ of $G$ with $g^{-1}G_1g$ contained in $G'$ for which
$P'$ is the image of $P_1gG'$ under some $(H,G)$\nd automorphism of $P$.
\item\label{i:minsubconjgpun}
A $g$ as in \textnormal{\ref{i:minsubconjgpexist}} is unique up to multiplication on the left by a $k$\nd point of 
the centraliser of $G_1$ in $G$
and on the right by a $k$\nd point of $G'$. 
\end{enumerate}
\end{cor}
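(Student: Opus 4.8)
The strategy is to pass from principal $(H,G)$\nd bundles to transitive affine groupoids via $\underline{\Iso}_{-}(-)$ and the equivalence \eqref{e:GPpequiv}, reducing everything to statements about minimal reductive subgroupoids over $H$, where Corollary~\ref{c:minsubconj} and Theorem~\ref{t:main} apply. Throughout write $K = \underline{\Iso}_G(P)$, and let $x$ be the given $k$\nd point of $X$ above which both $P_0$ and $P'$ have a $k$\nd point; fix these $k$\nd points as base points $p_0$ and $p'$. Via the equivalence of \eqref{e:GPpequiv} between pointed affine principal bundles over $X/k$ and transitive affine groupoids over $X/k$, the subbundle $P_0$ (pointed by $p_0$) corresponds to a transitive affine subgroupoid $K_0 = \underline{\Iso}_{G_0}(P_0)$ of $K$ over $H$, and $P'$ (pointed by $p'$) to a transitive affine subgroupoid $K' = \underline{\Iso}_{G'}(P')$ of $K$ over $H$. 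The hypothesis that $P_0$ admits no principal $(H,G_1)$\nd subbundle for any reductive $G_1 \ne G_0$ says, by Lemma~\ref{l:minbgpt}, that $K_0$ is minimally reductive over $H$; the hypothesis that $P'$ has a reductive principal $(H,G'')$\nd subbundle says $K'$ contains a reductive subgroupoid over $H$.

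\textbf{First step: produce $g$.} Let $K''$ be the reductive subgroupoid of $K'$ over $H$ corresponding to $P''$. Since $K_0$ is minimally reductive over $H$ and $K''$ is reductive over $H$, both are minimal reductive subgroupoids of $K$ over $H$ (the minimality of $K''$ being irrelevant — replace $K''$ by a minimal reductive subgroupoid of $K$ over $H$ contained in it, which is still inside $K'$). By Corollary~\ref{c:minsubconj}, any two minimal reductive subgroupoids of $K$ over $H$ are conjugate by an element $\gamma$ of $H^0_H(X,K^\mathrm{diag})$, so $\gamma^{-1} K_0 \gamma = K''_1 \subset K'$ for a suitable minimal reductive subgroupoid $K''_1$ of $K'$. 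Now $H^0(X,K^\mathrm{diag}) = \Aut_G(P)$ is the group of $(H,G)$\nd automorphisms of $P$ once we restrict to $H^0_H$; and conjugation by an element of $\Aut_G(P)$ corresponds under \eqref{e:KKKiso} / \eqref{e:gpdprin} to a gauge transformation of the pointed picture together with a change of base point. Tracking the base points: $\gamma$ sends $p_0$ to some $k$\nd point of $P$ above $x$; since $P$ is a principal $G$\nd bundle and $p'$ is also above $x$, there is a unique $k$\nd point $g$ of $G$ with $\gamma(p_0)\cdot g = p'$ (using that the fibre of $P$ above $x$ is a $G$\nd torsor over $k$, trivial because it has the point $p'$). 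The group element $g$ conjugates $G_0$ into $G'$ precisely because $\gamma^{-1} K_0 \gamma \subset K'$ translates, under the dictionary of \eqref{e:GPpequiv}, into $g G_0 g^{-1} \subset G'$; and $P'$ is then the image of $P_0 g^{-1} G'$ under the $(H,G)$\nd automorphism $\theta$ of $P$ determined by $\gamma$, by unwinding the definitions of the operations of conjugation, extension, and gauge transformation given just before the Corollary. This proves \ref{i:minsubconjgpexist}.

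\textbf{Second step: uniqueness of $g$.} Suppose $g$ and $\widetilde{g}$ both satisfy \ref{i:minsubconjgpexist}. Using the parametrisation preceding the Corollary, $P'$ being the image of $P_0 g^{-1} G'$ (resp.\ $P_0 \widetilde{g}^{-1} G'$) under an $(H,G)$\nd automorphism means that the morphisms $\alpha_{g^{-1}}$ and $\alpha_{\widetilde{g}^{-1}}$ from $G_0$ (or rather from $\underline{\Iso}_{G_0}(P_0)$) to $\underline{\Iso}_{G'}(P')$ both carry $[P_0]$ to $[P']$; equivalently, the corresponding morphisms of groupoids over $H$ from $K_0$ to $K'$ induce the same element of $H^1_H$. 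By Corollary~\ref{c:minhomconjcohgp} (applicable since $K_0 = \underline{\Iso}_{G_0}(P_0)$ is minimally reductive over $H$), two morphisms $K_0 \to K'$ over $H$ with $(H,-)$\nd isomorphic push-forwards of $P_0$ are conjugate; hence $\alpha_{g^{-1}}$ and $\alpha_{\widetilde{g}^{-1}}$ are conjugate. By the explicit description — $\alpha_{\widetilde{g}} = \alpha_g$ iff $\widetilde{g} = g_0 g$ with $g_0 \in H^0_{G_0}(k, G)$, i.e.\ $g_0$ centralising $G_0$, and $\alpha_{\widetilde{g}}, \alpha_g$ conjugate iff $\widetilde{g} = g_0 g g'$ with $g_0 \in H^0_{G_0}(k,G)$ and $g' \in H^0(k, G'{}^\mathrm{diag})$ — this says exactly that $g$ is determined up to right multiplication by a $k$\nd point of the centraliser of $G_0$ in $G$ and left multiplication by a $k$\nd point of $G'$, which is \ref{i:minsubconjgpun} (after replacing $g$ by $g^{-1}$ and swapping sides, matching the stated direction of the corollary).

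\textbf{Main obstacle.} The routine steps are the bookkeeping with base points and the translation of the three operations (conjugation, extension, gauge transformation) into groupoid language; these are tedious but forced. The genuine content is concentrated in two imported facts: Corollary~\ref{c:minsubconj} (conjugacy of minimal reductive subgroupoids), which gives existence, and Corollary~\ref{c:minhomconjcohgp} together with the explicit ``$\alpha_g$'' parametrisation, which gives uniqueness. I expect the main delicacy to be checking that the element $\gamma \in H^0_H(X,K^\mathrm{diag})$ produced by Corollary~\ref{c:minsubconj} really does correspond to an $(H,G)$\nd automorphism of $P$ (rather than merely a $G$\nd automorphism) and that its effect on base points is captured by a \emph{single} $k$\nd point $g$ of $G$ — this uses crucially that $X$ has a $k$\nd point $x$ and that $P_0, P'$ are pointed there, so that the fibre of $P$ over $x$ is a trivial $G$\nd torsor over $k$; without a rational base point the element of $G$ would only be defined over $\overline{k}$ and the clean statement would fail.
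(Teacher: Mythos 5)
Your proposal is correct, but your existence step takes a genuinely different route from the paper's. The paper never leaves the level of the pointed sets $\widetilde{H}^1_H(X,x,-)$: it reduces the whole corollary to finding a $g$ with $\alpha_g[P_0]=[P']$, invokes the representability of $\widetilde{H}^1_H(X,x,-)$ on reductive $k$-groups (Theorem~\ref{t:cohomologyrep}) to get a universal pair $(\widetilde{G},[\widetilde{P}])$, produces a surjection $h_0\colon\widetilde{G}\to G_0$ with $h_0[\widetilde{P}]=[P_0]$ and a morphism $h'\colon\widetilde{G}\to G'$ with $h'[\widetilde{P}]=[P']$ (the pointed equivalence \eqref{e:GPpequiv} being used only to replace $P''$ by a reductive subbundle of $P'$ pointed above $x$), and then conjugates $e_0\circ h_0$ into $e'\circ h'$ by a $k$-point of $G$; surjectivity of $h_0$ gives the containment of subgroups and $h'=\alpha_g\circ h_0$. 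You instead stay in the groupoid picture: you conjugate the minimal reductive subgroupoid $\underline{\Iso}_{G_0}(P_0)$ of $\underline{\Iso}_G(P)$ into $\underline{\Iso}_{G'}(P')$ by a section $\gamma\in H^0_H(X,\underline{\Iso}_G(P)^{\mathrm{diag}})=\Aut_{(H,G)}(P)$ using Corollary~\ref{c:minsubconj}, and then extract $g\in G(k)$ from the base points, using that $P_x$ is a trivial $G$-torsor over $k$. Both arguments run on the same engine (the universal reductive groupoid over $H$, which underlies both Corollary~\ref{c:minsubconj} and the representability theorem), and your uniqueness step is the paper's; what your route buys is that the automorphism $\theta$ and the inclusion of the translated subbundle into $P'$ become geometrically visible (the subbundle is the orbit of the base point under its stabiliser subgroupoid, which is contained in the stabiliser of $P'$), at the cost of exactly the subbundle/subgroupoid bookkeeping you flag as delicate, whereas the paper hides all of that in the single reduction to the statement about $\alpha_g$.

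One point worth making explicit, which affects the paper's own citation as much as yours: Corollary~\ref{c:minhomconjcohgp} is stated for groupoids over $\overline{k}$, so taken literally it only makes the two homomorphisms $G_0\to G'$ conjugate by a point of $G'(\overline{k})$, whereas statement (ii) needs a $k$-point. The clean way to get $k$-rationality of the conjugator is the one your closing paragraph gestures at: work in $\widetilde{H}^1_H(X,x,-)$, use the injectivity of $\widetilde{H}^1_{K_0}(X,x,G')\to\widetilde{H}^1_H(X,x,G')$ supplied by minimality of $K_0$ (Corollary~\ref{c:minhomconj} with Lemma~\ref{l:conjcohomol}), and then the pointed representability for $K_0=\underline{\Iso}_{G_0}(P_0)$ (Lemma~\ref{l:Htilde}), which is precisely where the hypothesis of a common rational point of $P_0$ and $P'$ above $x$ enters.
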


\begin{proof}
Let  $x$ be a $k$\nd point of $X$ above which $P_1$ and $P'$ have $k$\nd points.
It is enough to show that there is a $g$, unique as in \ref{i:minsubconjgpun}, for which the push forward of $P_1$
along  $\alpha_g$ above is isomorphic to $P'$.
The uniqueness is clear by Corollary~\ref{c:minhomconjcohpt}.

Let $G_0$ represent the functor of Corollary~\ref{c:cohomologyrep}\ref{i:ptcohomologyrep}, 
and let $h_1:G_0 \to G_1$ and $h':G_0 \to G'$ send the universal element to the 
classes of $P_1$ and $P'$.
Then $h_1$ is surjective, and
the composites of the embeddings of $G_1$ and $G'$ into $G$ with $h_1$ and $h'$ are conjugate.
Thus $h' = \alpha_g \circ h_1$ for some $g$.
The existence follows. 
\end{proof}

Given a transitive affine groupoid $F$ over an extension $k'$ of $k$, a transitive affine subgroupoid $F_1$
of $F$ over $k'$, a principal $(H,F)$\nd bundle $P$, and a principal $(H,F_1)$\nd subbundle $P_1$ of $P$,
we define similarly to the above subbundles $P_1v$, $P_1F_2$ and $\theta P_1$ of $P$ for any $v$ in $H^0(k',F^\mathrm{diag})$, transitive affine subgroupoid 
$F_2$ of $F$ containing $F_1$, and $(H,F)$\nd automorphism $\theta$ of $P$.
The proof of Corollary~\ref{c:minsubconjgpd} is then similar to that of Corollary~\ref{c:minsubconjgp}, and is omitted.

\begin{cor}\label{c:minsubconjgpd}
Let $F$ be a transitive affine groupoid over $\overline{k}$ and 
$P$ be a principal $(H,F)$-bundle.
Let $F_1$ be a reductive subgroupoid of $F$ over $\overline{k}$
and $P_1$ be a principal $(H,F_1)$\nd subbundle of $P$
with no principal $(H,F_2)$\nd
subbundle for any reductive subgroupoid $F_2 \ne F_1$ of $F_1$ over $\overline{k}$.
Let $F'$ be a transitive affine subgroupoid of $F$ over $\overline{k}$
and $P'$ be a principal $(H,F')$\nd subbundle of $P$
which has a principal $(H,F'')$\nd
subbundle for some reductive subgroupoid $F''$ of $F'$ over $\overline{k}$.
Suppose that $H^0_H(X,\sO_X)$ is as in Theorem~\textup{\ref{t:main}}.
\begin{enumerate}
\item\label{i:minsubconjgpdexist}
There is a $v$ in $H^0(\overline{k},F^\mathrm{diag})$ with $v^{-1} \circ F_1 \circ v$ contained in $F'$ for which
$P'$ is the image of $P_1vF'$ under some $(H,F)$\nd automorphism of $P$.
\item
A $v$ as in \textnormal{\ref{i:minsubconjgpdexist}} is unique up to composition on the left by an element
of $H^0_{F_1}(\overline{k},F^\mathrm{diag})$
and on the right by an element of $H^0(\overline{k},F'{}^\mathrm{diag})$. 
\end{enumerate}
\end{cor}

Let $G$ be a reductive $k$\nd group and $P$ be a principal $(H,G)$\nd bundle.
With $I_H(\sV)$ as in Section~\ref{s:unmin}, write $I_{H,G}(P)$ for the union of the
ideals $I_H(P \times^G_k V)$ of $H^0_H(X,\sO_X)$, where $V$ runs over those representations of $G$ with $V^G = 0$.
It is an ideal of $H^0_H(X,\sO_X)$, and since \eqref{e:assocvecbun} with $H = \underline{\Iso}_F(P)$ is an equivalence, 
\[
I_{H,G}(P) = I_H(\underline{\Iso}_G(P)).
\]
The properties of $I_H(K)$ proved in Section~\ref{s:unmin} thus imply corresponding properties for $I_{H,G}(P)$.
In particular by Lemma~\ref{l:IHKscalext}, formation of $I_{H,G}(P)$ is compatible with finite extension of scalars,
and arbitrary extension of scalars when $X$ is quasi-compact.
If $H$ is as in Theorem~\ref{t:main} and $P$ has a $k$\nd point, then by Corollary~\ref{c:IHKmin} 
and Lemma~\ref{l:minbgpt},
$I_{H,G}(P) \ne H^0_H(X,\sO_X)$ if and only if $P$ has no principal $(H,G')$\nd subbundle 
for any reductive $k$\nd subgroup $G' \ne G$ of $G$.
Similar results hold with $G$ replaced by a reductive groupoid $F$ over $\overline{k}$, and $P \times^G_k -$ replaced by
$P \times^F_{\overline{k}} -$.

Using Lemmas~\ref{l:cohreppt}\ref{i:unreppt}, \ref{l:cohrepgpd}\ref{i:unrepgpd}, 
\ref{l:minbgpt} and \ref{l:minbggpd}, we obtain analogues for principal bundles
of  Corollaries~\ref{c:minscalext} and \ref{c:unscalext}.

Suppose $\overline{k}$ is an algebraic closure of $k$.
By Proposition~\ref{p:Galextiso}, Galois extended $\overline{k}$\nd groups as in Definition~\ref{d:Galext} are the same as loosely
Galois extended $\overline{k}$\nd groups as in of Definition~\ref{d:Galextloose}.
Using Proposition~\ref{p:grpdgalequ}, Corollary~\ref{c:grpdgalprin} and \eqref{e:grpdgalH1}, 
the results of this section on principal bundles under a groupoid may also be formulated in terms of principal bundles under a Galois extended $\overline{k}$\nd group.
We state explicitly the following form of Corollary~\ref{c:unscalext}, which is used in Sections~\ref{s:curves0}
and \ref{s:curves1}.
A Galois extended $\overline{k}$\nd group $(D,E)$ will be called \emph{reductive} if the $\overline{k}$\nd group $D$
is reductive.

\begin{cor}\label{c:unscalextac}
Let $\overline{k}$ be an algebraic closure of $k$
and $(D,E)$ be a reductive Galois extended $\overline{k}$\nd group.
Let $\alpha$ be an element of $H^1_H(X,D,E)$
and $\overline{\alpha}$ be the image of $\alpha$ in $H^1_{H_{\overline{k}}}(X_{\overline{k}},D)$.
Suppose that $H^0_H(X,\sO_X)$ is as in Theorem~\textup{\ref{t:main}}, and that
$X$ is quasi-compact and quasi-separated and $H_{[1]}$ is quasi-compact.
Then $(D,E)$ represents the functor $H^1_H(X,-,-)$ on the category of reductive Galois extended $\overline{k}$\nd groups up to conjugacy 
with universal element $\alpha$ 
if and only if $D$ represents the functor $H^1_{H_{\overline{k}}}(X_{\overline{k}},-)$ on the category of
reductive $\overline{k}$\nd groups up to conjugacy with universal element $\overline{\alpha}$.
\end{cor}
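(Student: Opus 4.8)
The plan is to transport the statement through the dictionary of Section~\ref{s:grpdgalext} and then reduce it to Corollary~\ref{c:unscalext}. First I would use Propositions~\ref{p:grpdgalequ} and \ref{p:grpdgalprin} together with the bijection \eqref{e:grpdgalH1}, which is natural in $F$, to replace $(D,E)$ by the reductive transitive affine groupoid $F$ over $\overline{k}$ with $(F^\mathrm{diag},F(\overline{k})_{\overline{k}}) \iso (D,E)$, replace $\alpha$ by the class $[P]$ of a principal $(H,F)$\nd bundle $P$ over $X$, and replace the functor $H^1_H(X,-,-)$ on reductive Galois extended $\overline{k}$\nd groups up to conjugacy by the functor $H^1_H(X,-)$ on reductive groupoids over $\overline{k}$ up to conjugacy. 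The first condition in the corollary then becomes: $F$ and $[P]$ represent $H^1_H(X,-)$ on reductive groupoids over $\overline{k}$ up to conjugacy; by Lemma~\ref{l:unrepgpd} this holds if and only if $\underline{\Iso}_F(P)$ is universally reductive over $H$.

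Next I would treat the second condition over the base field $\overline{k}$. Since $\Gal(\overline{k}/\overline{k})$ is trivial, Proposition~\ref{p:grpdgalequ} applied with $k$ replaced by $\overline{k}$ identifies reductive groupoids over $\overline{k}$ with reductive $\overline{k}$\nd groups, so the functor $H^1_{H_{\overline{k}}}(X_{\overline{k}},-)$ on reductive $\overline{k}$\nd groups up to conjugacy is the one treated in Lemma~\ref{l:unrepgpd}. Writing $\overline{P}$ for the principal $(H_{\overline{k}},D)$\nd bundle over $X_{\overline{k}}$ underlying $P$, whose class is $\overline{\alpha}$, Lemma~\ref{l:unrepgpd} applied with base field $\overline{k}$ shows that the second condition holds if and only if $\underline{\Iso}_D(\overline{P})$ is universally reductive over $H_{\overline{k}}$.

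The third step is to identify $(\underline{\Iso}_F(P))_{\overline{k}}$ with $\underline{\Iso}_D(\overline{P})$ as transitive affine groupoids over $X_{\overline{k}}/\overline{k}$. The formation of $\underline{\Iso}_F(P)$ commutes with base extension, as is immediate from its definition via Lemma~\ref{l:isorep}, so $(\underline{\Iso}_F(P))_{\overline{k}} = \underline{\Iso}_{F_{\overline{k}}}(P_{\overline{k}})$, where $F_{\overline{k}}$ is a transitive affine groupoid over $\Spec(\overline{k} \otimes_k \overline{k})/\overline{k}$. Pulling back along the multiplication morphism $\Spec(\overline{k}) \to \Spec(\overline{k} \otimes_k \overline{k})$, which is $fpqc$ covering over $\overline{k}$, Lemma~\ref{l:gpdprinGG'} gives an equivalence from principal $F_{\overline{k}}$\nd bundles over $X_{\overline{k}}$ to principal $D$\nd bundles over $X_{\overline{k}}$ carrying $P_{\overline{k}}$ to $\overline{P}$, under which $F_{\overline{k}}$\nd isomorphisms correspond to $D$\nd isomorphisms; hence $\underline{\Iso}_{F_{\overline{k}}}(P_{\overline{k}}) \iso \underline{\Iso}_D(\overline{P})$. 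Since $F$ is reductive, $\underline{\Iso}_F(P)$ is a reductive groupoid over $H$, so Corollary~\ref{c:unscalext} applies with $k' = \overline{k}$ — which is algebraic over $k$, the remaining hypotheses being exactly the assumed quasi-compactness and quasi-separatedness of $X$ and quasi-compactness of $H_{[1]}$ — and yields that $\underline{\Iso}_F(P)$ is universally reductive over $H$ if and only if $(\underline{\Iso}_F(P))_{\overline{k}}$ is universally reductive over $H_{\overline{k}}$. Combining this with the previous two steps gives the asserted equivalence.

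The part I expect to require the most care is this third step: verifying that the passage from $\underline{\Iso}_F(P)$ over $X/k$ to $\underline{\Iso}_D(\overline{P})$ over $X_{\overline{k}}/\overline{k}$ is precisely base extension followed by the equivalence of Lemma~\ref{l:gpdprinGG'} along the multiplication section of $\Spec(\overline{k}\otimes_k\overline{k})$, and in particular that the bundle $\overline{P}$ obtained there really is the one whose class is the given $\overline{\alpha}$. Everything else is a bookkeeping application of results already established.
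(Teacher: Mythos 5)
Your proposal is correct and follows essentially the same route as the paper: reduce via Proposition~\ref{p:grpdgalequ} and \eqref{e:grpdgalH1} to groupoids over $\overline{k}$, translate both representability conditions into universal reductivity of $\underline{\Iso}_F(P)$ over $H$ and of $\underline{\Iso}_D(\overline{P})$ over $H_{\overline{k}}$ by Lemma~\ref{l:unrepgpd}, and compare the two via Corollary~\ref{c:unscalext} together with the identification $\underline{\Iso}_F(P)_{\overline{k}} = \underline{\Iso}_{F^\mathrm{diag}}(\overline{P})$. The only difference is that your third step works out, via base change and Lemma~\ref{l:gpdprinGG'} along the multiplication section of $\Spec(\overline{k}\otimes_k\overline{k})$, the identification that the paper simply asserts, which is a harmless (indeed welcome) elaboration.
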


\begin{proof}
By Proposition~\ref{p:grpdgalequ}, we may suppose that $(D,E) = (F^\mathrm{diag},F(\overline{k})_{\overline{k}})$
for a reductive groupoid $F$ over $\overline{k}$.
By \eqref{e:grpdgalH1}, it is then enough to show that if $\beta$ in $H^1_H(X,F)$ has image $\overline{\beta}$ in 
$H^1_{H_{\overline{k}}}(X_{\overline{k}},F^\mathrm{diag})$, then $F$ and $\beta$ represent $H^1_H(X,-)$ 
on reductive groupoids over $\overline{k}$
up to conjugacy if and only if $F^\mathrm{diag}$ and $\overline{\beta}$ represent
$H^1_{H_{\overline{k}}}(X_{\overline{k}},-)$ on reductive
$\overline{k}$\nd groups up to conjugacy.
This is clear from Corollary~\ref{c:unscalext} and Lemma~\ref{l:cohrepgpd}\ref{i:unrepgpd}, because if $\beta$ is
the class of the principal $(H,F)$\nd bundle $P$, then $\overline{\beta}$ is the class of 
the underlying principal $(H_{\overline{k}},F^\mathrm{diag})$\nd bundle of $P$, while
$\underline{\Iso}_{F^\mathrm{diag}}(P) = \underline{\Iso}_{F_{\overline{k}}}(P_{\overline{k}}) = \underline{\Iso}_F(P)_{\overline{k}}$
by Lemma~\ref{l:gpdprinGG'}.
\end{proof}

\section{Gauge groups}\label{s:gauge}

\emph{In this section $k$ is a field of characteristic $0$, $X$ is a non-empty $k$\nd scheme,
and $H$ is a pregroupoid over $X$.}

\medskip

In this section we consider the $k$\nd groups of cross-sections $\underline{H}^0_H(X,J)$ of certain
affine $H$\nd groups $J$.
Such $k$\nd groups exist for example when $X$ is proper over $k$ and $J = K^\mathrm{diag}$
for a transitive affine groupoid over $H$.
In general, $\underline{H}^0_H(X,K^\mathrm{diag})$ is not reductive, even if $K$ is.
However by Proposition~\ref{p:LieRu} it is reductive when both $H$ and $K$ are reductive groupoids,
and by Theorem~\ref{t:semidirect} 
$\underline{H}^0_{K_0}(X,K^\mathrm{diag})$ is a Levi $k$\nd subgroup of 
$\underline{H}^0_H(X,K^\mathrm{diag})$ when $K$
is reductive and $K_0$ is a minimal reductive subgroupoid of $K$ over $H$.
For simplicity, the main results of this section will be given only for $H$\nd groups of finite type.
The general case is not much more difficult.

Let $\sV$ be an $H$\nd module and $V$ be a $k$\nd vector space.
The canonical homomorphism \eqref{e:HsVtensV} is always injective, and is an isomorphism if and only if
every element of its target lies in $H^0_H(X,\sV \otimes_k V_0)$ for some finite-dimensional
$k$\nd vector subspace $V_0$ of $V$.
It is an isomorphism if either $V$ is finite-dimensional over $k$ or $X$ is quasi-compact,
and hence, by restricting to an open subscheme of $X$, if $H$ is transitive affine.
If $\sV' \to \sV$ is an injective morphism of $H$\nd modules and if \eqref{e:HsVtensV} is an isomorphism, 
then \eqref{e:HsVtensV} with $\sV$ replaced by $\sV'$ is an isomorphism.

\begin{defn}
Let $K$ be a transitive affine groupoid over $H$.
We say that $H$ is \emph{$K$\nd finite} if $H^0_H(X,\sV)$ is finite-dimensional over $k$ for every
representation $\sV$ of $K$ and \eqref{e:HsVtensV} is an isomorphism for every 
representation $\sV$ of $K$ and $k$\nd vector space $V$.
\end{defn}

If $X$ is proper over $k$, or if $H$ is a transitive affine groupoid over $X$, then $H$ is $K$\nd finite
for every transitive affine groupoid $K$ over $H$.

Suppose that $H$ is $K$\nd finite.
Then for any extension $k'$ of $k$, \eqref{e:HsVtensV} with $V = k'$ and hence \eqref{e:Hexthom} is an isomorphism, 
so that formation of $H^0_H(X,\sV)$ for $\sV$ a representation of $K$ commutes with extension of scalars.
Thus by Lemma~\ref{l:extquot}, $H_{k'}$ is $K_{k'}$\nd finite for every extension $k'$ of $k$.

Let $K$ and $K'$ be transitive affine groupoids over $H$.
If there exists a morphism of groupoids from $K$ to $K'$ over $H$,
then $H$ is $K'$\nd finite when it is $K$\nd finite. 
If $K$ is a subgroupoid over $H$ of $K'$, and $K$ has a reductive subgroupoid over $H$,
then $H$ is $K$\nd finite when it is $K'$\nd finite: we may suppose $K$ reductive, and
by Lemma~\ref{l:subquot} every representation
of $K$ is then a direct summand of a representation of $K'$.

Let $Z$ be an $H$\nd scheme.
If the functor $\Hom_H(- \times_k X,Z)$ on $k$\nd schemes is representable, we write
\[
\underline{H}^0_H(X,Z)
\]
for the representing $k$\nd scheme.
It comes equipped with a universal morphism from the constant $H$\nd scheme $\underline{H}^0_H(X,Z) \times_k X$ to $Z$.
We have a functor $\underline{H}^0_H(X,-)$ to the category of $k$\nd schemes from the category of those 
$H$\nd schemes $Z$ for which 
$\underline{H}^0_H(X,Z)$ exists.
This last category is closed under the formation of any limits of $H$\nd schemes which exist, and
$\underline{H}^0_H(X,-)$ preserves such limits.
Also $\underline{H}^0_H(X,-)$ is compatible with extension of scalars, and sends monomorphisms of $H$\nd schemes
to monomorphisms of $k$\nd schemes.
Any morphism $H \to H'$ of pregroupoids over $X$ induces a monomorphism of $k$\nd schemes from $\underline{H}^0_{H'}(X,Z)$
to $\underline{H}^0_H(X,Z)$, when both exist.
If $H = X$, then $\underline{H}^0_H(X,Z)$ is the Weil restriction $R_{X/k}Z$.

\begin{prop}\label{p:sectexist}
Let $K$ be a transitive affine groupoid over $H$ and $Z$ be an affine $K$\nd scheme.
Suppose that $H$ is $K$\nd finite.
Then the $k$\nd scheme $\underline{H}^0_H(X,Z)$ exists and is affine.
It is of finite type over $k$ if $Z$ is of finite type over $X$. 
\end{prop}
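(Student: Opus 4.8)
The plan is to reduce the problem of representing $\Hom_H(-_S, Z)$ to the case where $Z$ is a representation of $K$, i.e.\ $Z = \Spec(\Sym \sV)$ for a representation $\sV$ of $K$, and then to the case of a trivial $H$\nd module, where the representing scheme is visibly an affine space. Concretely, writing $Z = \Spec(\sR)$ with $\sR$ a commutative $K$\nd algebra, I would present $\sR$ as a suitable quotient: since $K$ acts on $\sR$ and $\sR$ is a commutative $K$\nd algebra, I can write $\sR$ as a filtered colimit of its finitely generated $K$\nd subalgebras, each of which, being finite over $\sO_X$ (when $Z$ is of finite type over $X$), is a quotient of $\Sym \sV_0$ for a representation $\sV_0$ of $K$; in general $\sR$ is a colimit of such $\Sym \sV_0$. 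Since $\underline H^0_H(X,-)$ preserves limits and $\Spec$ of a colimit of algebras is a limit of schemes, it suffices to treat $Z$ of the form $\Spec(\Sym \sV)$, i.e.\ the ``affine space on $\sV$'' over $X$; here an $H$\nd morphism $S_X \to \Spec(\Sym \sV)$ over $X$ corresponds functorially to a morphism of $H$\nd modules $\sV \to \sO_X \otimes_k H^0(S,\sO_S) = \sO_X \otimes_k \sO(S)$ with $H$ acting trivially on $\sO(S)$, i.e.\ to an element of $\Hom_H(\sV, \sO_X \otimes_k \sO(S))$.

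The key computation is then the following: for $\sV$ a representation of $K$ and $R$ a commutative $k$\nd algebra, I claim $\Hom_H(\sV, \sO_X \otimes_k R)$ is naturally isomorphic to $\Hom_k(H^0_H(X,\sV^\vee) , R)$, or more precisely to the $R$\nd points of the affine $k$\nd scheme $\Spec(\Sym(H^0_H(X,\sV^\vee)))$, which exists precisely because $H^0_H(X,\sV^\vee)$ is a finite-dimensional $k$\nd vector space by $K$\nd finiteness. To see the isomorphism I would dualise: $\Hom_H(\sV, \sO_X \otimes_k R) = H^0_H(X, \sV^\vee \otimes_k R)$, and by the second condition in the definition of $K$\nd finiteness (that \eqref{e:HsVtensV} is an isomorphism for representations of $K$ and arbitrary $k$\nd vector spaces), this equals $H^0_H(X,\sV^\vee) \otimes_k R$, which is exactly the set of $R$\nd points of the affine space $\Spec(\Sym(H^0_H(X,\sV^\vee)^\vee))$. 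So $\underline H^0_H(X,\Spec(\Sym\sV))$ exists and is an affine space of finite dimension over $k$; in particular it is affine, and of finite type over $k$.

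Assembling: for general affine $K$\nd scheme $Z = \Spec(\sR)$, write $\sR = \colim_\lambda \sR_\lambda$ as a filtered colimit of commutative $K$\nd algebras each expressible as a coequaliser of maps between symmetric algebras $\Sym\sV' \rightrightarrows \Sym\sV$ of representations of $K$ (using that any finitely generated commutative $K$\nd algebra is such a coequaliser, as recalled in Section~\ref{s:splitting}); if $Z$ is of finite type over $X$ this colimit can be taken finite, i.e.\ $\sR$ itself is such a coequaliser. Applying $\Spec$ turns colimits into limits and coequalisers into equalisers, and applying the limit-preserving functor $\underline H^0_H(X,-)$ then exhibits $\underline H^0_H(X,Z)$ as a (filtered) limit of equalisers of maps between affine finite-type $k$\nd schemes; such a limit exists in the category of affine $k$\nd schemes and is affine, and is of finite type when the limit is finite, i.e.\ when $Z$ is of finite type over $X$. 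Finally, to know $\underline H^0_H(X,Z)$ genuinely represents $\Hom_H(-_S,Z)$ and not merely a formal limit, I would note that $\Hom_H(S_X, -)$ commutes with the relevant limits of $H$\nd schemes (it is a right adjoint on the nose, or directly: a morphism into a limit is a compatible family of morphisms, and an equaliser is cut out by an equation), so representability propagates through the construction.

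The main obstacle I expect is bookkeeping around the passage from the affine $K$\nd scheme $Z$ to symmetric algebras while keeping track of the $K$\nd action: one must check that the presentation of $\sR$ as a colimit of coequalisers of symmetric algebras can be chosen $K$\nd equivariantly (which is where ``finitely generated $K$\nd algebra has a finite-dimensional generating $K$\nd submodule'' from Section~\ref{s:splitting} is used, applied here with $K$ in place of $G$ via representations of $K$), and that when $Z$ is of finite type over $X$ one can arrange a \emph{finite} such presentation so that the resulting limit of $k$\nd schemes is finite and hence of finite type. None of this is deep, but it requires care that all the colimits in play are filtered and that $\underline H^0_H(X,-)$ and $\Hom_H(S_X,-)$ interact correctly with them; the genuinely substantive input, namely finite-dimensionality of $H^0_H(X,\sV)$ and the isomorphism \eqref{e:HsVtensV}, is exactly what $K$\nd finiteness supplies.
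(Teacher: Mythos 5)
Your argument is essentially the paper's own proof: write $Z=\Spec(\sR)$, present $\sR$ through symmetric algebras on $K$\nd modules so that $Z$ becomes a (filtered) limit of equalisers of affine spaces $\bV(\sV)$, use that $\Hom_H(-_X,-)$ preserves these limits, and identify $\Hom_H(\Spec(R)_X,\bV(\sV))$ with $H^0_H(X,\sV^\vee)\otimes_k R$ via exactly the two ingredients of $K$\nd finiteness, so that $\bV(H^0_H(X,\sV^\vee)^\vee)$ represents the building block. Two minor corrections: the representing scheme is $\bV(H^0_H(X,\sV^\vee)^\vee)$ (with the dual), as you write at the end but not in your first formulation; and for the finite-type assertion you need not (and, without noetherian hypotheses on $X$, cannot always) arrange a presentation with \emph{both} symmetric algebras on representations --- it suffices, as in the paper, that the generating module $\sV$ be finitely generated, since the equaliser of two morphisms out of the affine finite-type scheme $\bV(H^0_H(X,\sV^\vee)^\vee)$ is a closed subscheme of it and hence again affine of finite type.
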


\begin{proof}
Since $\Hom_H(- \times_k X,Z)$ on $k$\nd schemes is a sheaf for the Zariski topology and
$Z$ is $\Spec(\sR)$ for a commutative $K$\nd algebra $\sR$, it is enough to show that
\[
\Hom_{H\textrm{-alg}}(\sR,\sO_X \otimes_k -)
\]
on commutative $k$\nd algebras is represented by a commutative $k$\nd algebra $R$ which is of finite type if $\sR$ is.
Since $\sR$ is the coequaliser in the category
of commutative $K$\nd algebras of two morphisms from $\Sym \sV_1$ to $\Sym \sV_2$ for $K$\nd modules $\sV_i$, 
and since the $\sV_i$ are filtered colimits of representations of $K$, we may suppose that 
$\sR$ is $\Sym(\sV)$ for a representation $\sV$ of $K$.
It is then enough to show that
\[
\Hom_H(\sV,\sO_X \otimes_k -)
\]
on $k$\nd vector spaces is represented by a finite-dimensional vector space $W$, because we may take $R = \Sym W$.
By the $K$\nd finiteness of $H$, we have natural isomorphisms 
\[
\Hom_H(\sV,\sO_X \otimes_k V) \iso H^0_H(X,\sV^\vee \otimes_k V) \iso H^0_H(X,\sV^\vee) \otimes_k V
\]
with $H^0_H(X,\sV^\vee)$ finite-dimensional.
Thus we may take $W = H^0_H(X,\sV^\vee)^\vee$.
\end{proof}

Recall \cite[IV 2, Cor~4.5]{DeGa70} that the functor that assigns to an affine $k$\nd group of finite type its Lie algebra
induces an equivalence from the category of unipotent $k$\nd groups of finite type to the category of finite-dimensional
nilpotent Lie algebras over $k$.
If $M$ is an affine $k$\nd group of finite type with Lie algebra $\mathfrak{m}$, and $\mathfrak{m}'$ is a Lie subalgebra 
of $\mathfrak{m}$, then a connected $k$\nd subgroup $M'$ of $M$ with Lie algebra $\mathfrak{m}'$ is unique if exists, because
the intersection of two such subgroups also has Lie algebra $\mathfrak{m}'$.
Further if $\mathfrak{m}'$ is an $M$\nd submodule of $\mathfrak{m}$ for the adjoint action of $M$, 
then $M'$ is a normal $k$\nd subgroup of $M$ if it exists, because it is stable under conjugation by points of $M$ in an algebraic
closure of $k$.

Let $M$ be an affine $k$\nd group of finite type with Lie algebra $\mathfrak{m}$.
To every representation $V$ of $M$ there is associated a symmetric pairing on $\mathfrak{m}$, the trace pairing
\[
a \otimes b \mapsto \tr(\sigma(a) \circ \sigma(b))
\]
with $\sigma:\mathfrak{m} \to \End(V)$ the differential of the action of $M$ on $V$.
It is a homomorphism of $M$\nd modules.
If $V$ is faithful, it can be seen as follows that the trace pairing has kernel the Lie algebra of the unipotent
radical $R_uM$ of $M$.
Since $V$ has a filtration on whose steps $R_uM$ acts trivially, we may suppose after passing to the associated graded 
that $M$ is reductive.
We may also suppose $k$ is algebraically closed and $M$ is connected.
Next we may suppose $M$ is either a torus or simple,
and finally that $M$ is a torus, when the trace pairing arises from a positive definite
pairing over $\Q$.

Let $J$ be an $H$\nd group for which $\underline{H}^0_H(X,J)$ exists.
Then $\underline{H}^0_H(X,J)$ has a canonical structure of group scheme over $k$, and the universal morphism
\[
u:\underline{H}^0_H(X,J) \times_k X \to J
\]
is a morphism of $H$\nd groups.
Suppose that $J$ is smooth over $X$ and  $\underline{H}^0_H(X,J)$ is affine and of finite type over $k$.
Then the Lie algebra $\Lie(J)$ of $J$ is an
$H$\nd algebra (not necessarily unitary or associative), and taking points in the $k$\nd algebra
of dual numbers shows that
\[
\Lie(\underline{H}^0_H(X,J)) = H^0_H(X,\Lie(J)).
\]
The differential of $u$ is then the  homomorphism of Lie $H$\nd algebras
\[
H^0_H(X,\Lie(J)) \otimes_k \sO_X \to \Lie(J)
\]
that sends $\alpha \otimes 1$ to $\alpha$.

Suppose further that $H^0_H(X,\sO_X)$ is a local $k$\nd algebra with residue field $k$.
Recall that ${}^\mathrm{rad}H^0_H(X,\sV)$ denotes the kernel on the right of the pairing \eqref{e:Hpairing}.
If $\sR$ is an $H$\nd algebra (not necessarily unitary or associative),
then $H^0_H(X,\sR)$ is an $H^0_H(X,\sO_X)$\nd algebra, and ${}^\mathrm{rad}H^0_H(X,\sR)$ is a two-sided ideal of $H^0_H(X,\sR)$. 
Indeed if we write $\langle w, a \rangle$ for the image of $w \otimes a$ under \eqref{e:Hpairing},
then 
\[
\langle w(-a), a' \rangle = \langle w, a'a \rangle  = \langle w(a'-), a \rangle
\]
for every $a$ and $a'$ in $H^0_H(X,\sR)$ and $w:\sR \to \sO_X$.
In particular
\[
{}^\mathrm{rad}H^0_H(X,\Lie(J))
\]
is an ideal of $H^0_H(X,\Lie(J))$.
It is functorial in $J$,
and it is stable under the adjoint action of 
$\underline{H}^0_H(X,J)$, as can be seen starting from the fact that any $k$\nd point
of $\underline{H}^0_H(X,J)$ induces by conjugation an $H$\nd morphism $J \to J$.
We write
\[
{}^\mathrm{rad}\underline{H}^0_H(X,J)
\]
for the connected $k$\nd subgroup of $\underline{H}^0_H(X,J)$ with Lie algebra ${}^\mathrm{rad}H^0_H(X,\Lie(J))$, when it exists.
It is a normal $k$\nd subgroup of $\underline{H}^0_H(X,J)$ which is functorial in $J$.

By Proposition~\ref{p:sectexist}, the above conditions on $H$ and $J$ are satisfied in particular when $X$ is 
geometrically $H$\nd connected and $J$ is a $K$\nd group of finite type for some transitive affine groupoid $K$ over $H$ such that 
$H$ is $K$\nd finite.
In this case $H^0_H(X,\sO_X)$ is a finite local $k$\nd algebra with residue field $k$.

\begin{prop}\label{p:LieRu}
Let $K$ be a transitive affine groupoid over $H$ and $J$ be an affine $K$\nd group of finite type.
Suppose that $X$ is geometrically $H$\nd connected and $H$ is $K$\nd finite. 
Then ${}^\mathrm{rad}\underline{H}^0_H(X,J)$ exists, and is contained in $R_u\underline{H}^0_H(X,J)$.
If the fibres of $J$ are reductive, then ${}^\mathrm{rad}\underline{H}^0_H(X,J)$ coincides with $R_u\underline{H}^0_H(X,J)$.
\end{prop}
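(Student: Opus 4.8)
Write $M=\underline{H}^0_H(X,J)$. By hypothesis and Proposition~\ref{p:sectexist}, $M$ is an affine $k$\nd group of finite type with $\Lie M=H^0_H(X,\Lie J)$, and $R:=H^0_H(X,\sO_X)$ is a finite-dimensional, hence Artinian, local $k$\nd algebra with residue field $k$ and maximal ideal $\mathfrak n$; in particular $H$ is as in Theorem~\ref{t:main}. My plan is to fix a faithful representation $\sW$ of $J$ in $\Mod_K(X)$, so that $J\hookrightarrow\underline{GL}_X(\sW)$ as $K$\nd groups; this gives a monomorphism $\Lie J\hookrightarrow\sW^\vee\otimes_{\sO_X}\sW$ in $\Mod_H(X)$ and a closed embedding $M\hookrightarrow\underline{GL}_1(A)$ of $k$\nd groups, where $A:=\End_H(\sW)=H^0_H(X,\sW^\vee\otimes_{\sO_X}\sW)$ is a finite-dimensional $k$\nd algebra, and the left regular representation of $\underline{GL}_1(A)$ on $A$ restricts to a faithful representation of $M$ under which $\Lie M\subseteq A$ acts by left multiplication.

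\emph{The inclusion ${}^\mathrm{rad}M\subseteq R_uM$.} Since $\End_{\Mod_H(X)}(\sO_X)=R$ is local, $\Mod_H(X)$ has a unique maximal tensor ideal $\Rad(\Mod_H(X))$, and because $R$ has residue field $k$ one has ${}^\mathrm{rad}H^0_H(X,\sV)=\Rad(\Mod_H(X))(\sO_X,\sV)$ for every $\sV$ (an element of $H^0_H(X,\sV)$ has a left inverse precisely when it lies outside ${}^\mathrm{rad}H^0_H(X,\sV)$). First I would take $a\in{}^\mathrm{rad}H^0_H(X,\Lie J)$ and regard it, via $\Lie J\hookrightarrow\sW^\vee\otimes_{\sO_X}\sW$, as an endomorphism of $\sW$ in $\Mod_H(X)$; it lies in the tensor ideal $\Rad(\Mod_H(X))$, hence so does each exterior power $\Lambda^i a$, and therefore $\tr(\Lambda^i a)\in\Rad(\Mod_H(X))(\sO_X,\sO_X)\subseteq\mathfrak n$. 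By the Cayley--Hamilton theorem for the positive object $\sW$, $a$ is annihilated by a monic polynomial over $R$ with lower coefficients the $\pm\tr(\Lambda^i a)\in\mathfrak n$; as $R$ is Artinian, the finite commutative $R$\nd algebra $R[a]\subseteq A$ is then Artinian local with nilpotent maximal ideal, which contains $a$, so $a$ is nilpotent in $A$. Thus ${}^\mathrm{rad}H^0_H(X,\Lie J)={}^\mathrm{rad}(\Lie M)$ is an $\mathrm{Ad}(M)$\nd stable Lie ideal of $\Lie M$ all of whose elements act nilpotently on the faithful representation $A$; the connected subgroup ${}^\mathrm{rad}M$ of $M$ it defines is therefore normal and unipotent (being generated by unipotent one-parameter subgroups lying in a common unipotent subgroup of $\underline{GL}_1(A)$), whence ${}^\mathrm{rad}M\subseteq R_uM$.

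\emph{The reverse inclusion when the fibres of $J$ are reductive.} It suffices to prove that $M/{}^\mathrm{rad}M$ is reductive, for then the image of $R_uM$ in $M/{}^\mathrm{rad}M$ is a connected normal unipotent subgroup of a reductive group, hence trivial, so $R_uM\subseteq{}^\mathrm{rad}M$. By Theorem~\ref{t:main} there is a universal reductive groupoid $\widetilde K$ over $H$; composing $H\to\widetilde K$ with the unique morphism $\widetilde K\to K$ over $H$ makes $J$ a reductive $\widetilde K$\nd group. Put $L:=\underline{H}^0_{\widetilde K}(X,J)$, a closed $k$\nd subgroup of $M$ with $\Lie L=H^0_{\widetilde K}(X,\Lie J)$ (the Lie algebra formula applies to $\widetilde K$, which is $\widetilde K$\nd finite as a transitive affine groupoid over $X$, and to $J$, which is smooth over $X$). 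Since $\widetilde K$, being universal, is minimally reductive over $H$, Corollary~\ref{c:minequiv} applied to the representation $\Lie J$ of $\widetilde K$ gives $\Lie M={}^\mathrm{rad}(\Lie M)\oplus\Lie L$, whence $\Lie(M/{}^\mathrm{rad}M)=\Lie M/{}^\mathrm{rad}(\Lie M)\cong\Lie L$ as Lie algebras; so it is enough that $L$ be reductive. Reductivity of $L$ may be checked after extending scalars to an algebraic closure $\overline k$ of $k$, and by Lemma~\ref{l:Isoequivac} we may write $\widetilde K_{\overline k}=\underline{\Iso}_{G_0}(P)$ for a reductive $\overline k$\nd group $G_0$ and a principal $G_0$\nd bundle $P$ over $X_{\overline k}$. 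Now $\underline{H}^0_{\widetilde K}(X,-)$ commutes with this extension of scalars (by $\widetilde K$\nd finiteness) and with pullback along the torsor $P\to X_{\overline k}$, along which $\widetilde K_{\overline k}$ becomes the constant groupoid on $P$ defined by $G_0$ and $J_{\overline k}$ becomes a constant group scheme $\mathbf{J}_0\times_{\overline k}P$ with $\mathbf{J}_0$ a reductive $\overline k$\nd group carrying an action of $G_0$; hence $L_{\overline k}$ is identified with the fixed-point group $\mathbf{J}_0^{G_0}$, which is reductive, being the fixed points of a reductive group acting on a reductive group over a field of characteristic $0$.

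The step I expect to be the main obstacle is this last identification $L_{\overline k}\cong\mathbf{J}_0^{G_0}$: it requires pinning down precisely the compatibility of $\underline{H}^0_{\widetilde K}(X,-)$ with base extension and with pullback along the $(\widetilde K_{\overline k})^{\mathrm{diag}}$\nd torsor $P\to X_{\overline k}$, the fact that this pullback trivialises the $\widetilde K_{\overline k}$\nd equivariant structure of $J_{\overline k}$ and turns $\underline{H}^0$ over the resulting constant groupoid into the $G_0$\nd fixed-point functor, and the preservation of ``reductive fibres'' under the equivalence between affine $\widetilde K_{\overline k}$\nd schemes and affine $G_0$\nd schemes; everything else is either formal or reduces, via the faithful representation $A$, to standard facts about algebraic groups in characteristic $0$.
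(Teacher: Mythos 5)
Your first half (the inclusion ${}^\mathrm{rad}\underline{H}^0_H(X,J)\subseteq R_u\underline{H}^0_H(X,J)$) is essentially sound and takes a genuinely different route from the paper: you identify ${}^\mathrm{rad}H^0_H(X,\sV)$ with $\Rad(\Mod_H(X))(\sO_X,\sV)$, use Cayley--Hamilton for positive objects to get nilpotency of elements of ${}^\mathrm{rad}(\Lie M)$ in the finite algebra $\End_H(\sW)$, and conclude by Engel in characteristic $0$; the paper instead derives both inclusions at once from a single trace form on $H^0_H(X,\Lie J)$. Note, however, that you still owe the existence of a faithful representation $\sW$ of $J$ \emph{in} $\Mod_K(X)$: the paper obtains it only after extending scalars so that $k$ is algebraically closed and $X$ has a $k$\nd point, replacing $K$ by a finite-type quotient, and taking a faithful representation of the transitive affine groupoid $J\rtimes_X K$; you should at least record these reductions (the statement is stable under them).

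The second half has a genuine gap. You invoke ``the unique morphism $\widetilde K\to K$ over $H$'' from the universal reductive groupoid, but universality only produces morphisms over $H$ into \emph{reductive} groupoids over $H$; a morphism over $H$ from any reductive groupoid into $K$ exists precisely when $K$ has a reductive (equivalently, Levi) subgroupoid over $H$ (Theorem~\ref{t:requiv}, Corollary~\ref{c:Levifun}), and the proposition assumes nothing of the sort about $K$ --- only the \emph{fibres of $J$} are reductive. So in general $J$ carries no $\widetilde K$\nd group structure, $L=\underline{H}^0_{\widetilde K}(X,J)$ is undefined, and your argument only covers the case where $K$ admits a reductive subgroupoid over $H$ (as in the application to Theorem~\ref{t:semidirect}), not the stated generality; passing to the image of $K$ in an automorphism groupoid of $J$ does not repair this, since that image is again a quotient of $K$ and need not have a reductive subgroupoid over $H$. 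Two further points: concluding that $M/{}^\mathrm{rad}M$ is reductive from $\Lie(M/{}^\mathrm{rad}M)\cong\Lie L$ is insufficient (a reductive Lie algebra can belong to a non-reductive group, e.g.\ $\bG_a$); one must use that $L$ is an actual subgroup of $M$ with $L\cap{}^\mathrm{rad}M$ finite, so its image contains the identity component of $M/{}^\mathrm{rad}M$, which is then a quotient of $L^\circ$ by a finite subgroup. And the final fact that $\mathbf{J}_0^{G_0}$ is reductive, while true in characteristic $0$, is itself nontrivial: its standard proof is an invariant trace-form argument, which is exactly what the paper runs directly on $H^0_H(X,\Lie J)$ --- the kernel of the trace pairing is shown (via a cocharacter/weight-multiplicity argument) to equal $\Lie(R_u\underline{H}^0_H(X,J))$, and it coincides with ${}^\mathrm{rad}H^0_H(X,\Lie J)$ exactly when the fibres of $J$ are reductive --- with no appeal to universal reductive groupoids.
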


\begin{proof}
Since $R_u\underline{H}^0_H(X,J)$ is unipotent and hence connected, it is enough by the above remarks to prove that we
have an inclusion
\[
{}^\mathrm{rad}H^0_H(X,\Lie(J)) \subset \Lie(R_u\underline{H}^0_H(X,J))
\]
of Lie algebras, with equality if the fibres of $J$ are reductive.

Extending the scalars if necessary, we may assume that $k$ is algebraically closed
and that $X$ has a $k$\nd point $x$.
Since $J$ is of finite type, $K$ acts on $J$ through a quotient $K_1$ of finite type.
Replacing $K$ by $K_1$, we may assume that $K$ is of finite type.
Then $J \rtimes_X K$ is a transitive affine
groupoid of finite type over $X$,
and has thus a faithful representation $\sV$.
It defines a faithful representation of $J$ on $\sV$ which is compatible with the actions of $K$ on $J$ and $\sV$.

The differential of the action of $J$ on $\sV$ is an embedding 
\[
\rho:\Lie(J) \to \underline{\End}_{\sO_X}(\sV).
\]
of representations of $K$.
We then have a homomorphism
\begin{equation}\label{e:Liedual}
\Lie(J) \to \Lie(J)^\vee
\end{equation}
of representations of $K$
which sends the section $\alpha$ of $\Lie(J)$ to $\tr(\rho(\alpha) \circ \rho(-))$.
Taking the fibre at $x$ and using the trace pairing characterisation of $\Lie(R_uJ_x)$
shows that \eqref{e:Liedual} is an isomorphism if and only if the fibres of $J$ are reductive.

We have a commutative square
\[
\xymatrix{
H^0_H(X,\Lie(J)^\vee) \otimes_k H^0_H(X,\Lie(J)) \ar[r] &  k   \\
H^0_H(X,\Lie(J)) \otimes_k H^0_H(X,\Lie(J)) \ar[u] \ar[r] & H^0_H(X,\sO_X) \ar[u]
}
\]
where the top arrow, modulo the identification of $H^0_H(X,(-)^\vee)$ with $\Hom_H(-,\sO_X)$, 
is of the form \eqref{e:Hpairing}, 
the left arrow is defined using \eqref{e:Liedual},
the bottom arrow sends $\alpha \otimes \beta$ to $\tr(\rho(\alpha) \circ \rho(\beta))$, 
and the right arrow is the projection onto the residue field.
Write 
\[
M = \underline{H}^0_H(X,J).
\] 
Then $\Lie(M)= H^0_H(X,\Lie(J))$.
We show that the pairing $\pi$ on $H^0_H(X,\Lie(J))$ defined by the square has kernel $\Lie(R_uM)$.
The required results will follow, because the left arrow of the square is an isomorphism 
if the fibres of $J$ are reductive.

The right arrow of the square is the unique homomorphism of $k$\nd algebras from $H^0_H(X,\sO_X)$ to $k$,
and hence coincides with evaluation at $x$.
Thus
\[
\pi(\alpha \otimes \beta) = \tr(\rho_x(\alpha_x) \circ \rho_x(\beta_x)).
\]
Since the differential of the fibre
\[
u_x:M \to J_x
\]
above $x$ of the universal morphism $u:M \times_k X \to J$ sends $\alpha$ to $\alpha_x$, it follows that
$\pi$ is the trace pairing associated to the action of $M$ on $\sV_x$ through $u_x$.
To show that $\pi$ has kernel $\Lie(R_uM)$, it is thus enough by the trace pairing characterisation of $\Lie(R_u(M/\Ker u_x))$
to show that $\Ker u_x$ is unipotent.

Let $e:\mu_n \to M$ be a $k$\nd homomorphism.
The action of $\mu_n \times_k X$ on $\sV$ through $u \circ (e \times_k X)$
commutes with the action of $H$.
Thus the decomposition of $\sV$ as $\bigoplus_{i \in \Z/n} \sV_i$, where $\mu_n \times_k X$ 
acts on $\sV_i$ as the $i$th 
power character, is a decomposition of $H$\nd modules.
Since $X$ is geometrically $H$\nd connected, the $\sV_i$ have constant rank.
Suppose that $e(\mu_n)$ lies in $\Ker u_x$.
Then for $i \ne 0$
we have $(\sV_i)_x = 0$ and hence $\sV_i = 0$, so that $e(\mu_n) \times_k X$ lies in $\Ker u$.
Thus $e(\mu_n) = 1$, by the universal property of $M$.
Hence $\Ker u_x$ is unipotent.
\end{proof}

Let $H$, $K$ and $J$ be as in Proposition~\ref{p:LieRu}.
Then the canonical morphism from $\underline{H}^0_K(X,J)$ to $\underline{H}^0_H(X,J)$ is a monomorphism
of $k$\nd schemes and hence the embedding of a $k$\nd subgroup.
Suppose now that $K$ is minimally reductive over $H$.
Then taking $\sV = \Lie(J)$ in Corollary~\ref{c:minequiv}\ref{i:minequivrad} shows that the Lie algebra of 
$\underline{H}^0_H(X,J)$ is the semidirect product of the Lie algebras of $\underline{H}^0_K(X,J)$ 
and ${}^\mathrm{rad}\underline{H}^0_H(X,J)$. 
The more precise result that we have a semidirect product decomposition for the $k$\nd groups
themselves is proved in Theorem~\ref{t:semidirect} below.
In particular, it follows from Proposition~\ref{p:LieRu} and Theorem~\ref{t:semidirect} 
that if the fibres of $J$ are reductive then $\underline{H}^0_K(X,J)$
is a Levi $k$\nd subgroup of $\underline{H}^0_H(X,J)$.
The proof of Theorem~\ref{t:semidirect} is based on the characterisation of the $k$\nd points of 
${}^\mathrm{rad}\underline{H}^0_H(X,J)$ given by Lemma~\ref{l:radKerAut} below in the case where
$J = K^\mathrm{diag}$ where $K$ is a transitive affine groupoid of finite type over $H$ which has
a reductive subgroupoid over $H$.

Let $K$ be a transitive affine groupoid over $H$.
Then by Lemma~\ref{l:tensisoconj} with $f_1 = f_2$ the structural morphism of $H$, we have a group isomorphism
\begin{equation}\label{e:H0Aut}
H^0_H(X,K^\mathrm{diag}) \iso \Aut^\otimes(\omega_{K/H})
\end{equation}
whose composite with the homomorphism that sends a tensor automorphism of $\omega_{K/H}$ to
its component at the representation $\sV$ of $K$ is the homomorphism
\begin{equation}\label{e:H0AutV}
H^0_H(X,K^\mathrm{diag}) \to H^0_H(X,\underline{\Aut}_{\sO_X}(\sV)) = \Aut_H(\sV)
\end{equation}
induced by the action $K^\mathrm{diag} \to \underline{\Aut}_{\sO_X}(\sV)$ of $K^\mathrm{diag}$ on $\sV$.

Suppose that $H^0_H(X,\sO_X)$ is a local $k$\nd algebra with residue field $k$.
If
\[
Q:\Mod_H(X) \to \overline{\Mod_H(X)}
\]
is the projection, then we have a group homomorphism
\begin{equation}\label{e:Autproj}
\Aut^\otimes(\omega_{K/H}) \to \Aut^\otimes(Q\omega_{K/H})
\end{equation}
defined by composition with $Q$.

\begin{lem}\label{l:radKerAut}
Let $K$ be a transitive affine groupoid of finite type over $H$.
Suppose that $X$ is geometrically $H$\nd connected and $H$ is $K$\nd finite.
Then the group of $k$\nd points of ${}^\mathrm{rad}\underline{H}^0_H(X,K^\mathrm{diag})$ is contained
in the kernel of the composite of \eqref{e:Autproj} with \eqref{e:H0Aut}, and coincides with this kernel
if $K$ has a reductive subgroupoid over $H$. 
\end{lem}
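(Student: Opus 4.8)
The plan is to realise the kernel $N$ of the composite of \eqref{e:H0Aut} and \eqref{e:Autproj} as the group of $k$\nd points of a closed subgroup scheme $\underline N$ of $M := \underline{H}^0_H(X,L^{\mathrm{diag}})$, to compute its Lie algebra, and to show that $\underline N^{\circ} = {}^{\mathrm{rad}}M$. Write $\mathfrak g = H^0_H(X,\mathrm{Lie}(L^{\mathrm{diag}})) = \mathrm{Lie}(M)$, write $\mathfrak m$ for the maximal ideal of $H^0_H(X,\sO_X)$, and write $Q\colon\Mod_H(X)\to\overline{\Mod_H(X)}$ for the projection; under the hypotheses $H^0_H(X,\sO_X)$ is a finite-dimensional local $k$\nd algebra with residue field $k$ (as recalled before Proposition~\ref{p:LieRu}), so that ${}^{\mathrm{rad}}H^0_H(X,-)$, the category $\overline{\Mod_H(X)}$ and (by Proposition~\ref{p:sectexist}) the affine $k$\nd group $M$ are all defined. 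I would use throughout that for an $H$\nd module $\sV$ one has ${}^{\mathrm{rad}}H^0_H(X,\sV)=\Rad(\Mod_H(X))(\I,\sV)$ — both being the sections with no left inverse — and hence, by rigidity and the fact that $\Rad(\Mod_H(X))$ is a tensor ideal, that $\Rad(\Mod_H(X))(\sV,\sV)$ is identified with the two-sided ideal ${}^{\mathrm{rad}}H^0_H(X,\underline{\End}_{\sO_X}(\sV))$ of $\End_H(\sV)=H^0_H(X,\underline{\End}_{\sO_X}(\sV))$. In these terms $g\in M(k)$ lies in $N$ exactly when $g_\sV-1\in{}^{\mathrm{rad}}H^0_H(X,\underline{\End}_{\sO_X}(\sV))$ for every representation $\sV$ of $L$, where $g_\sV\in\Aut_H(\sV)$ is the action of $g$ coming from \eqref{e:H0AutV}. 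Note also that $g\mapsto g_\sV$ is a morphism of $k$\nd group schemes $M\to\underline{H}^0_H(X,\underline{\Aut}_{\sO_X}(\sV))$, obtained by applying the functor $\underline{H}^0_H(X,-)$ to the action $L^{\mathrm{diag}}\to\underline{\Aut}_{\sO_X}(\sV)$, with differential $H^0_H(X,\rho_\sV)$, where $\rho_\sV\colon\mathrm{Lie}(L^{\mathrm{diag}})\to\underline{\End}_{\sO_X}(\sV)$ is the morphism of $H$\nd modules given by the action.

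For the inclusion ${}^{\mathrm{rad}}M(k)\subseteq N$ I would argue as follows. By Proposition~\ref{p:LieRu} with $K=L$ and $J=L^{\mathrm{diag}}$, ${}^{\mathrm{rad}}M$ is contained in $R_uM$, so it is connected unipotent and $\exp$ restricts to a bijection ${}^{\mathrm{rad}}\mathfrak g\iso{}^{\mathrm{rad}}M(k)$ intertwining the actions on each representation. Since ${}^{\mathrm{rad}}H^0_H(X,-)$ is a subfunctor of $H^0_H(X,-)$, the morphism $H^0_H(X,\rho_\sV)$ carries ${}^{\mathrm{rad}}\mathfrak g$ into ${}^{\mathrm{rad}}H^0_H(X,\underline{\End}_{\sO_X}(\sV))$; so for $g=\exp(\alpha)$ with $\alpha\in{}^{\mathrm{rad}}\mathfrak g$, writing $a_\sV$ for the image of $\alpha$, one gets $g_\sV-1=\sum_{n\ge1}a_\sV^{\,n}/n!$, a finite sum (by nilpotence of $a_\sV$, see below) of elements of the ideal ${}^{\mathrm{rad}}H^0_H(X,\underline{\End}_{\sO_X}(\sV))$, whence $g\in N$.

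Next I would give $N$ a scheme structure. For each representation $\sV$ of $L$ the scheme-theoretic preimage $\underline N_\sV$ of the linear subspace ${}^{\mathrm{rad}}H^0_H(X,\underline{\End}_{\sO_X}(\sV))$ under the morphism $g\mapsto g_\sV-1$ from $M$ to the affine space $\underline{H}^0_H(X,\underline{\End}_{\sO_X}(\sV))$ is a closed subgroup scheme of $M$, because that subspace is a two-sided ideal; in characteristic $0$ it is smooth, with Lie algebra $\{\xi\in\mathfrak g:H^0_H(X,\rho_\sV)(\xi)\in{}^{\mathrm{rad}}H^0_H(X,\underline{\End}_{\sO_X}(\sV))\}$. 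Set $\underline N=\bigcap_\sV\underline N_\sV$, a closed (hence smooth) subgroup scheme of $M$ with $\underline N(k)=N$ and $\mathrm{Lie}(\underline N)=\bigcap_\sV\mathrm{Lie}(\underline N_\sV)$. Now fix a faithful representation $\sW$ of $L$ (one exists since $L$ is of finite type). Applying functoriality of ${}^{\mathrm{rad}}H^0_H(X,-)$ to the trace $\underline{\End}_{\sO_X}(\sW)\to\sO_X$ and to the traces of the exterior powers, and Cayley--Hamilton for the positive object $\sW$, every element of ${}^{\mathrm{rad}}H^0_H(X,\underline{\End}_{\sO_X}(\sW))$ is annulled by a monic polynomial whose lower coefficients lie in $\mathfrak m$; since $\mathfrak m$ is nilpotent, this ideal is nilpotent. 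Hence for $g\in N$ the element $g_\sW-1$ is nilpotent, $g_\sW$ is unipotent, and therefore $g$ — viewed in $M$ through the closed immersion $M\hookrightarrow\underline{H}^0_H(X,\underline{\Aut}_{\sO_X}(\sW))$ — is unipotent. Consequently $\overline{\langle g\rangle}$ is a closed connected subgroup of $M$ contained in $\underline N$, so $g\in\underline N^{\circ}(k)$; thus $N=\underline N^{\circ}(k)$, and combined with the previous paragraph, ${}^{\mathrm{rad}}M(k)\subseteq N\subseteq\underline N^{\circ}(k)$.

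Finally, assuming $L$ has a reductive subgroupoid $L_0$ over $H$, I would identify $\underline N^{\circ}$ with ${}^{\mathrm{rad}}M$. Both are connected normal closed subgroups of $M$, so by the uniqueness of a connected normal subgroup with given Lie algebra it is enough to prove $\mathrm{Lie}(\underline N)={}^{\mathrm{rad}}\mathfrak g$; the inclusion $\supseteq$ is immediate from ${}^{\mathrm{rad}}M\subseteq\underline N$. For $\subseteq$, let $\sW$ be a faithful representation of $L$. Restricted along $H\to L_0$, the $H$\nd module $\underline{\End}_{\sO_X}(\sW)$ is a representation of the reductive groupoid $L_0$, hence semisimple; and $\mathrm{Lie}(L^{\mathrm{diag}})$, being the Lie algebra of a closed immersion of smooth $X$\nd groups $L^{\mathrm{diag}}\hookrightarrow\underline{\Aut}_{\sO_X}(\sW)$, is a sub-bundle of $\underline{\End}_{\sO_X}(\sW)$ stable under $L_0$, so it admits an $L_0$\nd module, hence $H$\nd module, complement $\mathfrak c$, giving $\underline{\End}_{\sO_X}(\sW)=\mathrm{Lie}(L^{\mathrm{diag}})\oplus\mathfrak c$ in $\Mod_H(X)$. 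For $\xi\in\mathrm{Lie}(\underline N)$, taking $\sV=\sW$ identifies $\xi$ with an element of ${}^{\mathrm{rad}}H^0_H(X,\underline{\End}_{\sO_X}(\sW))$; then for any morphism of $H$\nd modules $u\colon\mathrm{Lie}(L^{\mathrm{diag}})\to\sO_X$, extending $u$ by zero on $\mathfrak c$ and using functoriality of ${}^{\mathrm{rad}}H^0_H(X,-)$ gives $u(\xi)\in{}^{\mathrm{rad}}H^0_H(X,\sO_X)=\mathfrak m$, so $\xi\in{}^{\mathrm{rad}}\mathfrak g$. The main obstacle is precisely this last step: splitting $\mathrm{Lie}(L^{\mathrm{diag}})$ off as an $H$\nd module inside the endomorphism module of a faithful representation, which is exactly where the reductive subgroupoid over $H$ enters and supplies the "coincides" assertion; the rest is bookkeeping with the identification ${}^{\mathrm{rad}}H^0_H(X,\underline{\End}_{\sO_X}(\sV))=\Rad(\Mod_H(X))(\sV,\sV)$ and with the differential of $g\mapsto g_\sV$.
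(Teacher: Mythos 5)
Your proposal is correct and follows essentially the same route as the paper: the easy inclusion comes from functoriality of ${}^\mathrm{rad}H^0_H(X,-)$ applied to the action on $\Lie(L^\mathrm{diag})$, the nilpotence of the ideal ${}^\mathrm{rad}H^0_H(X,\underline{\End}_{\sO_X}(\sV))$ controls unipotence, and the decisive step uses the reductive subgroupoid to split $\Lie(L^\mathrm{diag})$ off $\underline{\End}_{\sO_X}(\sW)$ in $\Mod_H(X)$ for a faithful $\sW$, giving the Lie-algebra equality and then the equality of $k$\nd points by a connectedness argument. Your repackaging (exponential map for the first inclusion, the scheme-theoretic kernel $\underline N$ and per-element unipotence via Cayley--Hamilton instead of the paper's direct observation that $\eta_{\sW}^{-1}(U_{\sW})$ is unipotent hence connected) is only a cosmetic variation of the same argument.
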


\begin{proof}
Let $\sV$ be a representation of $K$.
Then the $k$\nd group of automorphisms
\[
\underline{\Aut}_H(\sV) = \underline{H}^0_H(X,\underline{\Aut}_{\sO_X}(\sV))
\]
exists, and it is the $k$\nd group of units of the finite $k$\nd algebra
\[
\End_H(\sV) = H^0_H(X,\underline{\End}_{\sO_X}(\sV)),
\]
because \eqref{e:HsVtensV} with $\underline{\End}_{\sO_X}(\sV)$ for $\sV$ and a commutative $k$\nd algebra for $V$ is an isomorphism.
The action of $K^\mathrm{diag}$ on $\sV$ defines a $k$\nd homomorphism
\[
h_{\sV}:
\underline{H}^0_H(X,K^\mathrm{diag}) \to \underline{H}^0_H(X,\underline{\Aut}_{\sO_X}(\sV)),
\]
which induces on $k$\nd points the homomorphism \eqref{e:H0AutV}. 
By Cayley--Hamilton, the kernel $\Ker(Q_{\sV,\sV})$ of $Q$ on $\End_H(\sV)$ is a nilideal,
because the traces of the exterior powers of its elements 
lie in the maximal ideal of $H^0_H(X,\sO_X)$, and hence are nilpotent.
It follows that $\underline{\Aut}_H(\sV)$ has a unipotent $k$\nd subgroup $U_{\sV}$ such that
\[
\Lie(U_{\sV}) = \Ker(Q_{\sV,\sV}) = {}^\mathrm{rad}H^0_H(X,\underline{\End}_{\sO_X}(\sV))
\]
and such that the $k$\nd points of $U_{\sV}$ are those $H$\nd automorphism of $\sV$ lying above the identity 
in $\overline{\Mod_H(X)}$.
The kernel of the composite of \eqref{e:Autproj} with \eqref{e:H0Aut} then consists of those $k$\nd points
of $\underline{H}^0_H(X,K^\mathrm{diag})$ which lie in $h_{\sV}{}\!^{-1}(U_{\sV})$ for every $\sV$.

Denote by $\rho_\sV:\Lie(K^\mathrm{diag}) \to \underline{\End}_{\sO_X}(\sV)$ the action of 
$\Lie(K^\mathrm{diag})$ on $\sV$. 
Then
\[
\eta_{\sV} = H^0_H(X,\rho_\sV):H^0_H(X,\Lie(K^\mathrm{diag})) \to H^0_H(X,\underline{\End}_{\sO_X}(\sV))
\]
is the differential of $h_{\sV}$.
By functoriality of ${}^\mathrm{rad}H^0_H(X,-)$ we have 
\begin{equation}\label{e:Lieincl}
{}^\mathrm{rad}H^0_H(X,\Lie(K^\mathrm{diag})) \subset \eta_{\sV}{}\!^{-1}({}^\mathrm{rad}H^0_H(X,\underline{\End}_{\sO_X}(\sV))).
\end{equation}
Since ${}^\mathrm{rad}\underline{H}^0_H(X,K^\mathrm{diag})$ is unipotent and hence connected, this implies that
\begin{equation}\label{e:grpincl}
{}^\mathrm{rad}\underline{H}^0_H(X,K^\mathrm{diag}) \subset h_{\sV}{}\!^{-1}(U_{\sV}).
\end{equation}
The required inclusion of $k$\nd points follows.

Suppose now that $K$ has a reductive subgroupoid $K_0$ over $H$.
Let $\sV$ be a faithful representation of $K$.
Then $\rho_\sV$ is a monomorphism in $\Mod_K(X)$, and hence by semisimplicity of $\Mod_{K_0}(X)$ 
has a left inverse in $\Mod_H(X)$.
Thus $\Hom_H(\rho_\sV,\sO_X)$ is surjective,
so that by \eqref{e:Hpairingsquare} applied to $\rho_\sV$ we have equality in \eqref{e:Lieincl}.
Since $h_{\sV}$ is an embedding, $h_{\sV}{}\!^{-1}(U_{\sV})$ is unipotent and hence connected.
Thus we have equality in \eqref{e:grpincl}.
The required equality of $k$\nd points follows.
\end{proof}

\begin{thm}\label{t:semidirect}
Let $K$ be a minimal reductive groupoid over $H$ and $J$ be an affine $K$\nd group of finite type.
Suppose that $X$ is geometrically $H$\nd connected and $H$ is $K$\nd finite.
Then $\underline{H}^0_H(X,J)$ is the semidirect product of its $k$\nd subgroup $\underline{H}^0_K(X,J)$ 
with its normal $k$\nd subgroup ${}^\mathrm{rad}\underline{H}^0_H(X,J)$.
\end{thm}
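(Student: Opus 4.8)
The plan is to read off the semidirect product decomposition from the Lie algebra decomposition provided by the minimality of $K$, combined with the description of the $k$\nd points of the radical in Lemma~\ref{l:radKerAut}, after reducing to the case $J=L^{\mathrm{diag}}$. First I would set up notation and carry out the soft part of the argument. Since $H$ is $K$\nd finite and $X$ is geometrically $H$\nd connected, Proposition~\ref{p:sectexist} shows that $M_H:=\underline{H}^0_H(X,J)$ and $M_K:=\underline{H}^0_K(X,J)$, as well as the analogous groups with $J$ replaced by a diagonal below, exist and are affine of finite type over $k$, with $M_K$ a closed $k$\nd subgroup of $M_H$; and $R:={}^{\mathrm{rad}}\underline{H}^0_H(X,J)$ is a connected normal $k$\nd subgroup of $M_H$, contained in $R_uM_H$ by Proposition~\ref{p:LieRu}, hence connected unipotent. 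As $\Lie J$ is a representation of $K$, Corollary~\ref{c:minequiv}\ref{i:minequivrad} (this is the first place minimality is used) gives
\[
\Lie M_H=H^0_H(X,\Lie J)={}^{\mathrm{rad}}H^0_H(X,\Lie J)\oplus H^0_K(X,\Lie J)=\Lie R\oplus\Lie M_K,
\]
with $\Lie R$ an ideal. Thus $M_K\cap R$ is a closed subgroup of the connected unipotent group $R$ with zero Lie algebra, hence trivial; the multiplication morphism $M_K\ltimes R\to M_H$ (a homomorphism because $R$ is normal) then has trivial kernel, is a closed immersion, and its image is a closed subgroup with Lie algebra $\Lie M_H$, so contains $M_H^{\circ}$. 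It remains only to prove that $M_K$ meets every component of $M_H$, i.e.\ that every $\overline{k}$\nd point of $M_H$ lies in $M_K(\overline{k})\,R(\overline{k})$.

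Next I would treat the case $J=L^{\mathrm{diag}}$, where $L$ is a transitive affine groupoid of finite type over $H$ having a reductive subgroupoid over $H$ and admitting a morphism $K\to L$ over $H$. By \eqref{e:H0Aut}, $H^0_H(X,L^{\mathrm{diag}})=\Aut^{\otimes}(\omega_{L/H})$ and $H^0_K(X,L^{\mathrm{diag}})=\Aut^{\otimes}(\omega_{L/K})$. Writing $\omega_{L/H}=(Q\omega_{K/H})\circ\omega_{L/K}$, where $Q\omega_{K/H}$ is fully faithful by minimality of $K$ (Theorem~\ref{t:main}\ref{i:maincrit}), one obtains a canonical group isomorphism $\Aut^{\otimes}(Q\omega_{L/H})\iso\Aut^{\otimes}(\omega_{L/K})=M_K(k)$ under which \eqref{e:Autproj} composed with \eqref{e:H0Aut} becomes, by naturality, a group retraction $M_H(k)\to M_K(k)$ onto the subgroup $M_K(k)$. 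Its kernel is $R(k)$ by Lemma~\ref{l:radKerAut}, so $M_H(k)=M_K(k)\,R(k)$; running the same argument over $\overline{k}$, whose hypotheses are stable under this extension, gives $M_H(\overline{k})=M_K(\overline{k})\,R(\overline{k})$, and the theorem follows in this case.

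Finally I would reduce the general case to this one. Given a $K$\nd group $J$ of finite type, choose a finite type quotient $K_{\mu}$ of $K$ through which $K$ acts on $J$ and put $L=J\rtimes_X K_{\mu}$, a transitive affine groupoid of finite type over $H$ with reductive subgroupoid $K_{\mu}$ over $H$, a morphism $K\to L$ over $H$, and $L^{\mathrm{diag}}=J\rtimes_X K_{\mu}^{\mathrm{diag}}$. Since the actions of $H$ and of $K$ on $L^{\mathrm{diag}}$ are the products of their actions on $J$ and on $K_{\mu}^{\mathrm{diag}}$, one gets $\underline{H}^0_?(X,L^{\mathrm{diag}})=\underline{H}^0_?(X,J)\rtimes\underline{H}^0_?(X,K_{\mu}^{\mathrm{diag}})$ for $?=H$ and $?=K$, with the $J$\nd factor normal; and, using that ${}^{\mathrm{rad}}H^0_H(X,-)$ is additive on direct sums, ${}^{\mathrm{rad}}\underline{H}^0_H(X,J)={}^{\mathrm{rad}}\underline{H}^0_H(X,L^{\mathrm{diag}})\cap\underline{H}^0_H(X,J)$ and $\underline{H}^0_K(X,J)=\underline{H}^0_K(X,L^{\mathrm{diag}})\cap\underline{H}^0_H(X,J)$. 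Applying the previous step to $L$ and to $K_{\mu}$ and then projecting the resulting decomposition of $\underline{H}^0_H(X,L^{\mathrm{diag}})$ along $\underline{H}^0_H(X,L^{\mathrm{diag}})\to\underline{H}^0_H(X,K_{\mu}^{\mathrm{diag}})$ shows that every $\overline{k}$\nd point of $\underline{H}^0_H(X,J)$ lies in $\underline{H}^0_K(X,J)(\overline{k})\cdot{}^{\mathrm{rad}}\underline{H}^0_H(X,J)(\overline{k})$, which with the first step completes the proof. I expect the main obstacle to be exactly this component statement, and within it the identification of $\Aut^{\otimes}(Q\omega_{L/H})$ with $\underline{H}^0_K(X,L^{\mathrm{diag}})$ that allows Lemma~\ref{l:radKerAut} and the minimality of $K$ to be used in tandem; the bookkeeping in the reduction (tracking which subgroups are normal, additivity of the radicals, and that unipotent groups in characteristic $0$ are connected) is routine.
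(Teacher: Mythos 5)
Your proposal is correct and follows essentially the same route as the paper's proof: reduce to the case $J=L^{\mathrm{diag}}$ via $L=J\rtimes_X K$, pass to points over an algebraically closed field, and obtain a retraction of $\underline{H}^0_H(X,L^{\mathrm{diag}})$ onto $\underline{H}^0_K(X,L^{\mathrm{diag}})$ with kernel the radical by combining \eqref{e:H0Aut} and \eqref{e:Autproj} with Lemma~\ref{l:radKerAut} and the full faithfulness of $Q\omega_{K/H}$ coming from minimality (Theorem~\ref{t:main}). Your preliminary Lie-algebra step (Corollary~\ref{c:minequiv} together with Proposition~\ref{p:LieRu}) and the projection bookkeeping in the reduction are just more explicit renditions of what the paper asserts directly, not a different argument.
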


\begin{proof}
Replacing $K$ by a quotient through which it acts on $J$, we may suppose that $K$ is of finite type.
It is enough to prove the particular case where $J = K'{}^\mathrm{diag}$ for a transitive affine groupoid
$K'$ over $K$ of finite type:
the general case follows by taking $J \rtimes_X K$ and $K$ for  $K'$, 
because the semidirect product of $K$\nd groups $J \rtimes_X K^\mathrm{diag}$ is preserved
by each of $\underline{H}^0_H(X,-)$, $\underline{H}^0_K(X,-)$ and ${}^\mathrm{rad}\underline{H}^0_H(X,-)$.

After extending the scalars, we may assume that $k$ is algebraically closed.
It is then enough to show that we have a semidirect product decomposition on groups of $k$\nd points.
We have a commutative diagram
\[
\xymatrix{
H^0_K(X,K'{}^\mathrm{diag}) \ar[d]^{\wr} \ar[r] &   H^0_H(X,K'{}^\mathrm{diag}) \ar[d]^{\wr} \ar[r] &  
\Aut^\otimes(Q\omega_{K'/H}) \ar@{=}[d]\\
\Aut^{\otimes}(\omega_{K'/K}) \ar[r] &   \Aut^{\otimes}(\omega_{K'/H}) \ar[r] &   \Aut^\otimes(Q\omega_{K'/H})
}
\]
where the vertical isomorphisms are of the form \eqref{e:H0Aut}, the top left arrow is the embedding,
the top right arrow is the composite of Lemma~\ref{l:radKerAut} with $K'$ for $K$, 
the bottom left arrow sends $\varphi$ to $\omega_{K/H}\varphi$, and the bottom right arrow is as in \eqref{e:Autproj}.
The composite of the bottom and hence of the top arrows is an isomorphism,
because $Q\omega_{K/H}$ is fully faithful by Theorem~\ref{t:main}\ref{i:maincrit}.
Since $K'$ has the reductive subgroupoid $K$ over $H$, the required
semidirect product decomposition now follows from Lemma~\ref{l:radKerAut}.
\end{proof}

Let $K$ be a minimal reductive groupoid over $H$.
Suppose that $X$ is geometrically $H$\nd connected and $H$ is $K$\nd finite.
Then for any transitive affine groupoid $K'$ over $H$ with $\Hom_H(K,K')$ non-empty,
it follows from Theorem~\ref{t:semidirect} with $J = K'{}^\mathrm{diag}$ and Corollary~\ref{c:minhomconj}\ref{i:minhomconj}
that the group of $k$\nd points of 
${}^\mathrm{rad}\underline{H}^0_H(X,K'{}^\mathrm{diag})$ acts by conjugation simply transitively on $\Hom_H(K,K')$.

\begin{exmp}
Suppose that $X = \Spec(k)$ and $H$ is an affine $k$\nd group.
Then a transitive affine groupoid over $H$ is an affine $k$\nd group $G$ equipped with a $k$\nd homomorphism
$f:H \to G$.
We have $G^\mathrm{diag} = G$, 
and $\underline{H}^0_H(k,G)$ is the centraliser $Z(f)$ of $f(H)$ in $G$.
When $G$ is of finite type and $f$ factors through a reductive $k$\nd group, ${}^\mathrm{rad}\underline{H}^0_H(k,G)$ is the $k$\nd subgroup $Z_u(f)$
of $Z(f)$ defined in \cite[2.1.12]{O10}.
The case of Theorem~\ref{t:semidirect} where $J = K'{}^\mathrm{diag}$ for a transitive affine groupoid $K'$ over $K$
reduces to \cite[2.3.8]{O10}, and the case where further $K' = K$
to \cite[20.1.3.c)]{AndKah}.
The case where $J = K'{}^\mathrm{diag}$ with $K'$ over $K$ reductive, and $H \to K$
is the embedding $\bG_a \to SL_2$,
reduces to a result of Kostant \cite[3.6]{Kos59}. 
\end{exmp}

\begin{exmp}\label{e:GPAut}
Let $G$ be an affine $k$\nd group, $P$ be a principal $(H,G)$\nd bundle, $G_1$ be a $k$\nd subgroup of $G$, 
and $P_1$ be a principal $(H,G_1)$\nd subbundle of $P$.
Then $G_1$ acts on $G$ by conjugation,
and if $K$ is $\underline{\Iso}_{G_1}(P_1)$ and $J$ is the $K$\nd group $\underline{\Aut}_G(P) = P_1 \times_k^{G_1} G$,
it follows from the analogue of \eqref{e:assocvecbun} for affine $G_1$\nd schemes that 
$\underline{H}^0_K(X,J)$ is the centraliser $G^{G_1}$ of $G_1$ in $G$.
Thus if $G$ is of finite type and $\underline{\Iso}_{G_1}(P_1)$ is minimally reductive over $H$, then by  Theorem~\ref{t:semidirect}
the $k$\nd group of $(H,G)$\nd automorphisms of $P$ is the semidirect product of $G^{G_1}$
with a normal unipotent $k$\nd subgroup.
\end{exmp}

\begin{exmp}
Let $J$ be a smooth affine group scheme over $X$ with (connected) semisimple fibres.
Then the groupoid  over $X$ of isomorphisms $\underline{\Iso}_X(J)$ exists and is affine and of finite type with
reductive fibres above the diagonal \cite[XXIV, 7.3.1]{SGA3}.
Locally in the \'etale topology, $J$ is isomorphic to $G \times_k X$ for a split semisimple $k$\nd group $G$ 
\cite[XXIV, 4.1.5, 4.1.6]{SGA3}.
Thus if $X$ is connected, then $\underline{\Iso}_X(J)$ is transitive, and hence reductive.
Suppose $X$ is complete and geometrically connected.
Then by Proposition~\ref{p:LieRu} and Theorem~\ref{t:semidirect}, 
$\underline{H}^0_K(X,J)$ is a Levi $k$\nd subgroup of $\underline{H}^0_X(X,J)$
for any minimal reductive subgroupoid $K$ of $\underline{\Iso}_X(J)$ over $X$.
\end{exmp}

\section{Pullback of universal and minimal reductive groupoids}\label{s:pullback}

\emph{In this section $k$ is a field of characteristic $0$, $X$ is a non-empty $k$\nd scheme, and
$H$ is a pregroupoid over $X$.}

\medskip

In this section we study the behaviour of a universal or minimal reductive groupoid $K$ over $H$ 
under pullback, and under formation of groupoids $K \times_X X'$ for a $K$\nd scheme $X'$.
Proposition~\ref{p:etunpull}, which deals with the case where $X'$ is a pro\'etale $K$\nd scheme,
will be used in Section~\ref{s:curves1}

\begin{prop}
Let $K$ be a transitive affine groupoid over $H$, and $X'$ be a non-empty scheme over $X$.
Suppose that either $H$ is a transitive groupoid over $X$ or the structural morphism of $X'$ is $fpqc$ covering.
Then $K$ is minimally (resp.\ universally) reductive over $H$ if and only if $K \times_{[X]} [X']$ is minimally
(resp.\ universally) reductive over $H \times_{[X]} [X']$.
\end{prop}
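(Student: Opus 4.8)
The plan is to transport everything along the two pullback equivalences already available. Write $H' = H \times_{[X]} [X']$. Since $X'$ is a non-empty $k$\nd scheme, its structural morphism to $\Spec(k)$ is $fpqc$ covering, so in both alternatives of the hypothesis the assumptions of Lemmas~\ref{l:prereppull} and~\ref{l:relgrpdequiv} are met (taking $S = \Spec(k)$ in the transitive case). By Lemma~\ref{l:relgrpdequiv}, pullback along $X' \to X$ is an equivalence $\Phi$ from transitive affine groupoids over $H$ to transitive affine groupoids over $H'$; by Lemma~\ref{l:prereppull} it induces compatible equivalences on $H$\nd modules (a $k$\nd tensor equivalence), on representations of $H$, and on affine $H$\nd schemes. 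All of these are compatible with the functors attached to morphisms of pregroupoids, hence with the functors $f^{*}$ attached to morphisms $f$ of transitive affine groupoids over $H$; in particular $\Phi(K_1)^{\mathrm{diag}}$ is the pullback of the affine group scheme $K_1{}^{\mathrm{diag}}$ along $X' \to X$ for every transitive affine groupoid $K_1$ over $H$ (in the transitive case, up to the groupoid isomorphism of Lemma~\ref{l:tranpulliso}).

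First I would check that $\Phi$ preserves and reflects reductivity. Reductivity of $K_1{}^{\mathrm{diag}}$ is a condition on its fibres, each an affine group scheme over a field; since in characteristic $0$ reductivity of such a group scheme is unaffected by field extension, reductivity of $K_1{}^{\mathrm{diag}}$ passes to the pullback $\Phi(K_1)^{\mathrm{diag}}$, and conversely, using that $X' \to X$ is surjective when it is $fpqc$ covering, and that in the transitive case Lemma~\ref{l:tranpulliso} replaces the pullback along $X' \to X$, up to a groupoid isomorphism, by a pullback along an $fpqc$ (hence surjective) covering $Z \to X$. Next, since $\Phi$ is an equivalence and the property of a morphism of transitive affine groupoids over $H$ being the inclusion of a subgroupoid is preserved and reflected by faithfully flat descent (again reducing the transitive case to the $fpqc$ one via Lemma~\ref{l:tranpulliso}), the functor $\Phi$ restricts to an inclusion-preserving bijection between the transitive affine subgroupoids of $K$ over $H$ and those of $\Phi(K)$ over $H'$, compatibly with reductivity. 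Because $K$ is minimally reductive over $H$ precisely when it is reductive and has no reductive subgroupoid over $H$ strictly smaller than itself, the case of minimal reductivity follows at once.

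For the universal case: by the reductivity comparison together with essential surjectivity of $\Phi$, the reductive groupoids over $H'$ are, up to isomorphism, exactly the $\Phi(K_1)$ with $K_1$ reductive over $H$, and by full faithfulness of $\Phi$, morphisms $\Phi(K) \to \Phi(K_1)$ over $H'$ correspond bijectively to morphisms $K \to K_1$ over $H$. It remains to see that this correspondence respects conjugacy. Two morphisms $f, g : K \to K_1$ are conjugate if and only if the induced $k$\nd tensor functors $f^{*}, g^{*} : \Mod_{K_1}(X) \to \Mod_K(X)$ are tensor isomorphic, by Lemma~\ref{l:tensisoconj} applied with the transitive affine groupoid $K$ in the role of the pregroupoid there; since the pullback equivalences are tensor equivalences carrying $f^{*}$ to $\Phi(f)^{*}$ and $g^{*}$ to $\Phi(g)^{*}$, the condition $f^{*} \cong g^{*}$ is equivalent to $\Phi(f)^{*} \cong \Phi(g)^{*}$, that is, to $\Phi(f)$ and $\Phi(g)$ being conjugate. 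Hence $K$ is universally reductive over $H$ if and only if $\Phi(K)$ is universally reductive over $H'$.

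I expect the main obstacle to be the reductivity comparison and the attendant subgroupoid bookkeeping in the case where $H$ is transitive, because there $X' \to X$ need not be surjective and one cannot argue fibrewise on $X$ directly; the device of Lemma~\ref{l:tranpulliso}, which exhibits the pullback along $X' \to X$ as a pullback along an $fpqc$ covering composed with a groupoid isomorphism, is exactly what makes the fibrewise and faithfully flat descent arguments go through.
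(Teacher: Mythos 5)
Your proposal is correct and takes essentially the same route as the paper, whose entire proof is the one-line observation that $- \times_{[X]} [X']$ is an equivalence from transitive affine groupoids over $H$ to those over $H \times_{[X]} [X']$ (Lemma~\ref{l:relgrpdequiv}); the verifications you add (reductivity checked on fibres, the subgroupoid bijection, conjugacy via Lemma~\ref{l:tensisoconj} and the pullback tensor equivalences of Lemma~\ref{l:prereppull}) are exactly the routine transfers the paper leaves implicit. One small precision: to reflect the subgroupoid property of a morphism $L \to K$ in the transitive case, apply the construction of Lemma~\ref{l:tranpulliso} to the source $L$ rather than to $K$, so that the conjugating point lies in $L$ and, via the morphism, acts compatibly on $K$ as well; this reduces that step to descent along an $fpqc$ covering exactly as you intend.
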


\begin{proof}
Immediate from Lemmas~\ref{l:morphpull}\ref{i:morphpullgpd} and \ref{l:relgrpdequiv}.
\end{proof}

If $p:X' \to X$ is a quasi-compact quasi-separated morphism with $p_*\sO_{X'} = \sO_X$,
then the pullback of $p$ along any flat morphism is schematically dominant.
For such a $p$, the condition on $H$ in Proposition~\ref{p:pull} below is thus satisfied by any
$H$ over $X$ with either of $d_0,d_1:H_{[1]} \to X$ flat.

\begin{prop}\label{p:pull}
Let $K$ be a transitive affine groupoid over $H$,
and $X'$ be an $H$\nd scheme with structural morphism $p:X' \to X$.
Suppose that $p_*\sO_{X'} = \sO_X$,
and that the projection from $H_{[1]} \times_X X'$
to $H_{[1]}$ factors though no closed subscheme of $H_{[1]}$ other than $H_{[1]}$.
Then pullback along $p$ induces a bijection from the set of reductive subgroupoids of $K$ over $H$
to the set of reductive subgroupoids of $K \times_{[X]} [X']$ over $H \times_X X'$.
\end{prop}

\begin{proof}
Write $H'$ for $H \times_X X'$.
We first show that for $Z$ an affine $H$\nd scheme with pullback $Z'$ along $H' \to H$, pullback along 
$H' \to H$ defines a bijection
\begin{equation}\label{e:pbij}
\Hom_H(X,Z) \iso \Hom_{H'}(X',Z').
\end{equation}
Using the identification of $H'$\nd schemes as $H$\nd schemes equipped with an $H$\nd morphism to $X'$,
we may identify the target of \eqref{e:pbij} with $\Hom_H(X',Z)$, and \eqref{e:pbij} then sends $s$ to $s \circ p$.
Since $Z$ is affine over $X$ and $p_*\sO_{X'} = \sO_X$, composition with $p$ defines a bijection from $\Hom_X(X,Z)$
to $\Hom_X(X',Z)$.
Thus \eqref{e:pbij} will hold provided that $s$ is an $H$\nd morphism when $s \circ p$ is an $H$\nd morphism.
This follows from the diagram
\[
\xymatrix@C+2em@R+0.25cm{
H_{[1]} \times_X X' \ar[d] \ar[r]^{H_{[1]} \times_X p}  & H_{[1]} \times_X X \ar[d] \ar[r]^{H_{[1]} \times_X s}
& H_{[1]} \times_X Z \ar[d] \\
X' \ar[r]^{p} & X \ar[r]^{s} & Z
}
\]with vertical arrows the actions, where the left square and outer rectangle commute because $p$ and $s \circ p$
are $H$\nd morphisms, and hence the right square commutes because its two legs are morphisms from
$H_{[1]} = H_{[1]} \times_X X$ to $Z$ over $X$ whose equaliser
is a closed subscheme of $H_{[1]}$ through which $H_{[1]} \times_X p$ factors.

Let $K''$ be a reductive subgroupoid of $K' = K \times_{[X]} [X']$ over $H'$.
By Lemma~\ref{l:affquot}, $K'/K''$ exists and is affine over $X'$.
If $s'$ is the base cross-section
of $K'/K''$, there is
by Lemma~\ref{l:prereppull} and \eqref{e:pbij} a unique pair $(Z,s)$ up to isomorphism
with $Z$ a transitive affine 
$K$\nd scheme and $s:X \to Z$ an $H$\nd morphism, whose pullback along $p$ is isomorphic to $(K'/K'',s')$. 
The stabiliser of $s$ is then the unique reductive subgroupoid of $K$ over $H$ with pullback $K''$ along $p$.
\end{proof}

\begin{cor}
Let $K$ and $X'$ be as in Proposition~\textnormal{\ref{p:pull}}.
Then $K$ is minimally reductive over $H$ if and only if $K \times_{[X]} [X']$ is minimally reductive
over $H \times_X X'$.
\end{cor}

\begin{proof}
Immediate from Proposition~\ref{p:pull}.
\end{proof}

Let $K$ be a reductive groupoid over $H$.
Call $K$ \emph{almost minimally reductive over $H$} if every reductive subgroupoid of $K$ over $H$ 
contains the identity component $K^\mathrm{con}$ of $K^\mathrm{diag}$.
Then $K$ is almost minimally reductive over $H$ if and only if every quotient of $K$ of finite type
is almost minimally reductive over $H$.
Further $K$ is minimally reductive over $H$ if and only if $K$ is almost minimally reductive over $H$ and
$K_{\mathrm{\acute{e}t}} = K/K^\mathrm{con}$ is minimally reductive over $H$.
If $X$ is geometrically $H$\nd connected and $K'$ is
a reductive subgroupoid of $K$ over $H$ containing $K^\mathrm{con}$, 
then $K$ is almost minimally reductive over $H$ if and only if $K'$ is: we have 
$K' = K \times_{K_{\mathrm{\acute{e}t}}} K'{}\!_{\mathrm{\acute{e}t}}$, while by Proposition~\ref{p:etfunexist},
the intersection of two transitive affine subgroupoids of $K_{\mathrm{\acute{e}t}}$ over $H$ is transitive affine.

\begin{lem}\label{l:KK'almost}
Let $K$ be a reductive groupoid of finite type over $X$ and $K'$ be a reductive subgroupoid of $X$.
Then the following conditions are equivalent.
\begin{enumerate}
\renewcommand{\theenumi}{(\alph{enumi})}
\item\label{i:KK'almostconn}
$K'{}^\mathrm{con} = K^\mathrm{con}$.
\item\label{i:KK'almostfin}
There are only finitely many pairwise non-isomorphic irreducible representations $\sV$ of $K$ for which 
$H^0_{K'}(X,\sV) \neq 0$.
\end{enumerate}
\end{lem}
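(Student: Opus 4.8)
The plan is to prove the equivalence of \ref{i:KK'almostconn} and \ref{i:KK'almostfin} by reducing everything to a statement about finite-type reductive groups over an algebraically closed field and their representation theory. First I would extend scalars to an algebraic closure $\overline k$ of $k$; by the results relating minimal/almost-minimal reductivity to extension of scalars, and since $K^\mathrm{con}$ and $K'{}^\mathrm{con}$ are preserved and the set of irreducible representations can only grow (each irreducible over $\overline k$ being a summand of the base change of one over $k$, and conversely the $H^0$ can only be computed via the base-changed module), it suffices to treat the case $k = \overline k$ algebraically closed. Next, pulling back along $P \to X$ where $P$ is a principal bundle trivialising $K$ (which exists by Lemma~\ref{l:Isoequivac} and Theorem~\ref{t:neutral}, or simply by choosing an $fpqc$ cover splitting both groupoids), I would reduce to the case $X = \Spec(k)$, so that $K$ and $K'$ become reductive $k$\nd groups of finite type with $K' \subset K$, and $H^0_{K'}(X,\sV)$ becomes $\sV^{K'}$ for a representation $\sV$ of $K$. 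Here I use that restriction of representations along the pullback $X' \to X$ is faithful (Lemma~\ref{l:prereppull}) and that being a summand of the identity is detected after faithfully flat pullback, so neither condition changes.

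Once we are working with $K' \subset K$ reductive groups of finite type over an algebraically closed field of characteristic $0$, the statement becomes: $K'{}^\mathrm{con} = K^\mathrm{con}$ if and only if only finitely many irreducible $K$\nd representations $\sV$ have $\sV^{K'} \neq 0$. For the direction \ref{i:KK'almostconn} $\Rightarrow$ \ref{i:KK'almostfin}: if $K'{}^\mathrm{con} = K^\mathrm{con}$, then an irreducible $\sV$ of $K$ with $\sV^{K'} \ne 0$ has $\sV^{K^\mathrm{con}} \supset \sV^{K'} \neq 0$, so $K^\mathrm{con}$ acts trivially on the (nonzero) $K'$\nd invariants; but $K^\mathrm{con}$ normal in $K$ forces the $K^\mathrm{con}$\nd isotypic decomposition of $\sV$ to be $K$\nd stable, and by irreducibility $K^\mathrm{con}$ acts trivially on all of $\sV$, so $\sV$ is pulled back from the finite group $K/K^\mathrm{con}$; there are only finitely many such irreducibles. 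For the converse \ref{i:KK'almostfin} $\Rightarrow$ \ref{i:KK'almostconn}, I would argue contrapositively: if $K'{}^\mathrm{con} \subsetneq K^\mathrm{con}$, then $K^\mathrm{con}/(\text{normal closure of }K'{}^\mathrm{con})$, or more directly the quotient $K^\mathrm{con} \twoheadrightarrow K^\mathrm{con}/K'{}^\mathrm{con}$ (a nontrivial homogeneous space), produces infinitely many distinct irreducible $K^\mathrm{con}$\nd representations appearing in $\overline{k}[K^\mathrm{con}/K'{}^\mathrm{con}]$, hence — after inducing up to $K$ and decomposing — infinitely many pairwise non-isomorphic irreducible $K$\nd representations $\sV$ with a nonzero $K'$\nd (indeed $K'{}^\mathrm{con}$\nd) invariant vector, using Frobenius reciprocity $\Hom_K(\sV, \Ind_{K'}^K \mathbf 1) = (\sV^\vee)^{K'}$ together with the fact that a positive-dimensional reductive group has infinitely many irreducibles and $K'{}^\mathrm{con}$ having strictly smaller dimension (or the same dimension but $K'{}^\mathrm{con} \neq K^\mathrm{con}$, handled by passing to a common finite cover) cannot absorb them all.

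The main obstacle I expect is the bookkeeping in the converse direction: one must be careful that $K'{}^\mathrm{con} \neq K^\mathrm{con}$ genuinely forces infinitely many $K$\nd irreducibles with invariants, in the subtle case where $\dim K'{}^\mathrm{con} = \dim K^\mathrm{con}$ but they are distinct connected reductive subgroups (so neither contains the other, though both are normal in $K^\mathrm{con}$ — in fact two normal connected reductive subgroups of the same dimension in a connected reductive group must coincide up to the structure of the derived group and central torus, so this case may actually be vacuous, which I would verify). Modulo checking that degenerate case, the counting argument is: $\overline k[K^\mathrm{con}/K'{}^\mathrm{con}] = \bigoplus_\lambda V_\lambda \otimes (V_\lambda^\vee)^{K'{}^\mathrm{con}}$ over irreducibles $\lambda$ of $K^\mathrm{con}$, and this has infinitely many nonzero terms precisely when $K'{}^\mathrm{con} \neq K^\mathrm{con}$ (a positive-dimensional quotient has infinite-dimensional coordinate ring with infinitely many isotypic components); each such $V_\lambda$ is a summand of the restriction of some irreducible $\sV$ of $K$ (since $K^\mathrm{con}$ is normal of finite index modulo the reductive quotient, every irreducible $K^\mathrm{con}$\nd representation extends to an irreducible of the subgroup generated by $K^\mathrm{con}$ and a suitable stabiliser and then induces to $K$), and distinct $\lambda$'s can be carried by only finitely many $\sV$'s each, forcing infinitely many $\sV$. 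I will then translate this back through the reductions to conclude condition \ref{i:KK'almostfin} fails, completing the contrapositive.
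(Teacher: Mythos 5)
Your reductions and your argument for \ref{i:KK'almostconn} $\Rightarrow$ \ref{i:KK'almostfin} are essentially fine: once $K$ and $K'$ are reductive $k$\nd groups $G' \subset G$ of finite type over an algebraically closed field, invariants under the normal subgroup $G^{\mathrm{con}}$ form a $G$\nd stable subspace, so an irreducible with nonzero $G'$\nd invariants factors through the finite group $G/G^{\mathrm{con}}$, which has only finitely many irreducibles; and the equal-dimension case you worry about is vacuous, since $K'{}^{\mathrm{con}} \subset K^{\mathrm{con}}$ always, so proper containment forces a drop in dimension. (In the reduction, note that base change to $\overline{k}$ alone need not give $X$ a rational point; the paper extends to a sufficiently large field so that $X$ acquires one and then pulls back along it, which also keeps $K'$ a genuine subgroup rather than a possibly non-constant subgroupoid over your trivialising cover $P$.)

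The genuine gap is in the converse. Your counting takes place in $\overline{k}[K^{\mathrm{con}}/K'{}^{\mathrm{con}}]$ and produces infinitely many irreducible $K^{\mathrm{con}}$\nd representations with nonzero $K'{}^{\mathrm{con}}$\nd invariants; after passing to irreducible $K$\nd representations $\sV$ containing them you only know $\sV^{K'{}^{\mathrm{con}}} \neq 0$. But condition \ref{i:KK'almostfin} asks for $H^0_{K'}(X,\sV) \neq 0$, i.e.\ invariants under all of $K'$, and since $K'{}^{\mathrm{con}} \subset K'$ the $K'$\nd invariants are a \emph{subspace} of the $K'{}^{\mathrm{con}}$\nd invariants; your parenthetical ``(indeed $K'{}^{\mathrm{con}}$\nd)'' reads the implication backwards, so the contrapositive as written does not go through, and the Clifford-theoretic ``induce up and decompose'' step is exactly where the component group of $K'$ is lost. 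The repair is to run the Frobenius-reciprocity bookkeeping you mention against $K'$ itself, which is what the paper does in one stroke: both conditions are equivalent to finite-dimensionality of $k[G]^{G'}$, the invariants of $k[G]$ under right translation by $G'$. On the one hand $k[G]^{G'} = k[G/G']$ with $G/G'$ affine (Matsushima, both groups being reductive), and this is finite-dimensional exactly when $G/G'$ is finite, i.e.\ $G'{}^{\mathrm{con}} = G^{\mathrm{con}}$; on the other hand the $(G,G)$\nd bimodule decomposition $k[G] = \bigoplus_{\sV} \sV \otimes \sV^{\vee}$ over irreducibles gives $k[G]^{G'} = \bigoplus_{\sV} \sV \otimes (\sV^{\vee})^{G'}$ with each summand finite-dimensional, so finite-dimensionality is equivalent to \ref{i:KK'almostfin}. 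Working directly with $k[G/G']$ in this way both closes your gap and makes the detour through $K^{\mathrm{con}}/K'{}^{\mathrm{con}}$ unnecessary.
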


\begin{proof}
By Lemmas~\ref{l:prereppull} and \ref{l:extquot}, we may after extension of scalars and pullback
assume that $k$ is algebraically closed and $X = \Spec(k)$,
with $K$ a reductive $k$\nd group $G$ of finite type, $K'$ a reductive $k$\nd subgroup $G'$ of $G$,
and $H^0_{K'}(X,-) = (-)^{G'}$.

The $k$\nd group $G$ acts by left and right translation on the $k$\nd algebra $k[G]$.
Both \ref{i:KK'almostconn} and \ref{i:KK'almostfin} are equivalent to the finiteness of the $k$\nd algebra $k[G]^{G'}$
of invariants under $G'$ by right translation: for \ref{i:KK'almostconn} because $G/G'$ is affine
with $k[G/G'] = k[G]^{G'}$,
and for \ref{i:KK'almostfin} by the canonical decomposition of $k[G]$ as a $(G,G)$\nd bimodule.
\end{proof}

\begin{prop}\label{p:almin}
let $K$ be a transitive affine groupoid over $H$
and $X'$ be a non-empty finite locally free  $H$\nd scheme.
Suppose that $H^0_H(X,\sO_X)$ is as in Theorem~\textup{\ref{t:main}}.
Then $K$ is almost minimally reductive over $H$ if and only if  
$K \times_{[X]} [X']$ is almost minimally reductive over $H \times_X X'$.
\end{prop}

\begin{proof}
The ``if'' is clear.
To prove the ``only if'', we may suppose that $K$ is of finite type.
Replacing $K$ by the inverse image of an appropriate subgroupoid
of $K_\mathrm{\acute{e}t}$ along the projection $K \to K_\mathrm{\acute{e}t}$,
we may suppose further that $K$ is minimally reductive over $H$.
Write $p:X' \to X$ for the structural morphism of $X'$.
If $K_0$ is a universal reductive groupoid over $H$,
there is a surjective morphism of groupoids $K_0 \to K$ over $H$,
and by Corollary~\ref{c:unequiv} the representation $p_*\sO_{X'}$ of $H$ is
isomorphic to a representation of $K_0$.
Replacing $K$ by a sufficiently large quotient of finite type of $K_0$ through which $K_0 \to K$ factors, 
we may suppose that for some representation $\sW$ of $K$ there is an $H$\nd isomorphism
\[
\sW \iso p_*\sO_{X'}.
\] 
Write $H'$ and $K'$ for $H \times_X X'$ and $K \times_{[X]} [X']$, and $\sW'$ for the pullback of $\sW$ along $K' \to K$.

Let $K''$ be a reductive subgroupoid of $K'$ over $H'$.
We show that if $\sV'$ is an irreducible representation of $K'$ with
\[
H^0_{K''}(X',\sV') \neq 0,
\]
then $\sV'$ is a direct summand of the representation
\[
\sW' \otimes_{\sO_{X'}} \sW'{}^\vee
\]
of $K'$.
It will follow that \ref{i:KK'almostfin} and hence \ref{i:KK'almostconn} of Lemma~\ref{l:KK'almost} is satisfied with $K'$ and $K''$ 
for $K$ and $K'$, so that $K'$ is almost minimally reductive over $H'$.

By Lemma~\ref{l:prereppull}, $K$ has a representation $\sV$ with pullback along $K' \to K$ isomorphic to $\sV'$.
By hypothesis, $\sV'$ has as a representation of $K''$ and hence as a representation of $H'$ 
the direct summand $\sO_{X'}$.
Thus by \eqref{e:pOXV} the representation $p_*\sO_{X'} \otimes_{\sO_X} \sV$ of $H$ has the direct summand $p_*\sO_{X'}$.
By Krull--Schmidt for $\Mod_H(X)$ and $\Mod_K(X)$ and Corollaries~\ref{c:minequiv} and 
\ref{c:minhomconj}\ref{i:minhomconjcoh}, 
the representation $\sW \otimes_{\sO_X} \sV$ of $K$ has thus the direct summand $\sW$
and hence  the representation $\sW' \otimes_{\sO_{X'}} \sV'$ of $K'$ has the direct summand $\sW'$.
Since $\sV'$ is irreducible, it is therefore a direct summand of $\sW' \otimes_{\sO_{X'}} \sW'{}^\vee$.
\end{proof}

Suppose $H^0_H(X,\sO_X)$ is henselian local with residue field $k$, and let $X'$ be a non-empty geometrically 
$H$\nd connected $H$\nd scheme.
If $X'$ is finite and locally free over $X$, then $H^0_{H \times_X X'}(X',\sO_{X'})$ is henselian local
with residue field $k$, because by \eqref{e:Hpushequ} and Cayley--Hamilton, each of its elements is finite over $H^0_H(X,\sO_X)$.
The same holds if $X$ is quasi-compact and $X'$ is pro\'etale, by Lemma~\ref{l:colimHomR} with 
$\sV = \sV' = \sO_X$.

Let $K$ be a transitive affine groupoid over $X$ and $X'$ be a pro\'etale $K$\nd scheme.
By Lemma~\ref{l:transproet}\ref{i:transproetcon}, $X'$ is a transitive $K$\nd scheme if and only if 
it is non-empty and geometrically $K$\nd connected.
When these conditions are satisfied, $K$ is reductive if and only if $K \times_X X'$ is reductive.

Suppose that $X$ is geometrically $H$\nd connected.
Let $K$ be a minimal reductive groupoid over $H$ and $X'$ be a pro\'etale $K$\nd scheme. 
Then $X'$ geometrically $K$\nd connected if and only if it is geometrically $H$\nd connected:
the ``only if'' follows from  Corollary~\ref{c:etfunact} and Proposition~\ref{p:funscalext} after replacing $K$ first by 
$K_\mathrm{\acute{e}t}$ and then, using  
Proposition~\ref{p:etfunexist}, by an initial object in the category of pro\'etale groupoids over $H$.

\begin{prop}\label{p:etminpull}
Let $K$ be a transitive affine groupoid over $H$
and $X'$ be a non-empty, geometrically $H$\nd connected, pro\'etale $K$\nd scheme.
Suppose that $H^0_H(X,\sO_X)$ is as in Theorem~\textup{\ref{t:main}}, 
and that either $X'$ is finite over $X$ or
$X$ is quasi-compact.
Then $K$ is minimally reductive over $H$ if and only if 
$K \times_X X'$ is minimally reductive over $H \times_X X'$.
\end{prop}

\begin{proof}
Write $H'$ for $H \times_X X'$ and $K'$ for $K \times_X X'$.
Suppose that $K'$ is minimally reductive over $H'$.
If $K_1$ is a reductive subgroupoid of $K$ over $H$, then $K_1 \times_X X'$ is a reductive subgroupoid of 
$K'$ over $H'$.
Hence $K_1 \times_X X' = K'$, so that $K_1 = K$ because $X' \to X$ is faithfully flat.
Thus $K$ is minimally reductive over $H$.

Conversely suppose that $K$ is minimally reductive over $H$.
Since $K$ acts on $X'$ through $K_{\mathrm{\acute{e}t}}$, we have 
$K'{}\!_{\mathrm{\acute{e}t}} = K_{\mathrm{\acute{e}t}} \times_X X'$.
Thus if $X'$ is finite over $X$ then $K'$ is minimally reductive over $H'$,
because $K \times_{[X]} [X']$ and hence $K'$
is almost minimally reductive over $H'$ by Proposition~\ref{p:almin} and $K'{}\!_{\mathrm{\acute{e}t}}$ is minimally reductive over $H'$ by Lemma~\ref{l:funact}.
Suppose now that $X$ is quasi-compact,
and write $X'$ as the limit of a filtered inverse system $(X_\lambda)$ of finite \'etale $K$\nd schemes with faithfully flat projections.
By the finite case, $K_\lambda = K \times_X X_\lambda$ is minimally reductive over 
$H_\lambda = H \times_X X_\lambda$ for each $\lambda$.
By Lemmas~\ref{l:HRrepfp} and \ref{l:prereppull}, every representation of $K'$ is isomorphic to 
the pullback onto $X'$ of a representation of some $K_\lambda$.
Thus by Lemma~\ref{l:colimHomR}, \ref{i:minequivindec} of Corollary~\ref{c:minequiv} holds with $H'$ and $K'$ 
for $H$ and $K$ because
for each $\lambda$ it holds with $H_\lambda$ and $K_\lambda$ for $H$ and $K$.
\end{proof}

Suppose that $X$ is geometrically $H$\nd connected, and let $K$ be a universal reductive groupoid over $H$.
Then $K_\mathrm{\acute{e}t}$ is initial in the category of pro\'etale groupoids over $H$,
because it is initial up to conjugacy and an initial object exists by Proposition~\ref{p:etfunexist}.
Thus by Corollary~\ref{c:etfunact}, every pro\'etale $H$\nd scheme has a unique structure of pro\'etale $K$\nd scheme.

\begin{prop}\label{p:etunpull}
Let $K$ be a transitive affine groupoid over $H$
and $X'$ be a non-empty, geometrically $H$\nd connected, pro\'etale $K$\nd scheme.
Suppose that $H^0_H(X,\sO_X)$ is as in Theorem~\textup{\ref{t:main}}, 
and that either of the following conditions holds:
\begin{enumerate}
\renewcommand{\theenumi}{(\alph{enumi})}
\item\label{i:etunpullfin}
$X'$ is finite over X;
\item\label{i:etunpullqc}
$X$ is quasi-compact and quasi-separated and
$H_{[1]}$ is quasi-compact.
\end{enumerate}
Then $K$ is universally reductive over $H$ if and only if $K \times_X X'$ is universally reductive over $H \times_X X'$.
\end{prop}

\begin{proof}
By Lemma~\ref{l:Rsummand} when \ref{i:etunpullfin} holds, and hence by Lemma~\ref{l:HRrepfp} when \ref{i:etunpullqc} holds,
every representation of $H \times_X X'$ is a direct summand of the pullback onto $X'$ of a representation of $H$.
The ``only if'' thus follows from Corollary~\ref{c:unequiv} and  Proposition~\ref{p:etminpull}.

By Theorem~\ref{t:main}, a universal reductive groupoid $K_0$ over $H$ exists, and by the ``only if'',
$K_0 \times_X X'$ is universally reductive over $H \times_X X'$.
The ``if'' follows,
because $K_0 \to K$ is an isomorphism if $K_0 \times_X X' \to K \times_X X'$ is.
\end{proof}

\section{Curves of genus 0}\label{s:curves0}

\emph{In this section $k$ is a field of characteristic $0$ and $\overline{k}$ is an algebraic closure of $k$.}

\medskip

In this section we describe explicitly principal bundles with reductive structure group
over a smooth projective curve $X$ over $k$ of genus $0$.
The study of such bundles is reduced by Theorem~\ref{t:lineuniv} below to the study of the universal
groupoid $\pi_\mathrm{mult}(X)$ of multiplicative type.
The main result is Theorem~\ref{t:gen0rep}.
Theorem~\ref{t:linerep} is the characteristic $0$ case of the algebraic version 
(\cite{Har68}, \cite{Gil02}, \cite{Meh02}) of Grothendieck's original
classification \cite{Gro57} over the Riemann sphere.

Throughout this section, we suppose that $X$ is a $k$\nd scheme which satisfies the following conditions:
it is \emph{reduced, quasi-compact and quasi-separated, with}
\[
H^0(X,\sO_X) = k.
\]
These conditions are stable under extension of scalars.

A scheme over $X$ which is a pro\'etale $X$\nd scheme 
in the sense of Definition~\ref{d:Hproetale} will also be called a \emph{pro\'etale cover of $X$}.
The above conditions on $X$ are satisfied with $X$ replaced by a non-empty geometrically connected pro\'etale cover:
by Lemma~\ref{l:colimH0HV}, we reduce to the case of a finite \'etale scheme over $X$.

We begin by developing the properties of the groupoid $\pi_\mathrm{mult}(X)$ that will be required.
It will be convenient to consider at the same time the groupoids $\pi_\mathrm{\acute{e}t}(X)$ and $\pi_\mathrm{\acute{e}tm}(X)$,
which will be required for the next section.

By Proposition~\ref{p:etfunexist} (resp.\ \ref{p:multinit}, resp.\ \ref{p:etmultinit}) with $H = X$,
an initial object exists in the category of pro\'etale groupoids (resp.\ groupoids of multiplicative type,
resp.\ groupoids of pro\'etale by multiplicative type) over $X$,
which we write as $\pi_{\mathrm{\acute{e}t}}(X)$
(resp.\ $\pi_\mathrm{mult}(X)$, resp.\ $\pi_\mathrm{\acute{e}tm}(X)$).
Each of $\pi_{\mathrm{\acute{e}t}}(X)$, $\pi_\mathrm{mult}(X)$ and $\pi_\mathrm{\acute{e}tm}(X)$ is functorial in 
$X$ in the sense that given $f:X \to X'$, there is for example a unique $f_*$ with
\[
(f,f_*):(X,\pi_{\mathrm{\acute{e}t}}(X)) \to (X',\pi_{\mathrm{\acute{e}t}}(X'))
\] 
a morphism of groupoids in $k$\nd schemes,
by the universal property of $\pi_{\mathrm{\acute{e}t}}(X)$
applied to the pullback of $\pi_{\mathrm{\acute{e}t}}(X')$ onto $X$. 
Further $\pi_{\mathrm{\acute{e}t}}(X)$ coincides with $(\pi_\mathrm{\acute{e}tm}(X))_{\mathrm{\acute{e}t}}$
and $\pi_\mathrm{mult}(X)$ with $(\pi_\mathrm{\acute{e}tm}(X))_\mathrm{ab}$.
A universal reductive groupoid $K$ over $X$ exists by Theorem~\ref{t:main}\ref{i:mainexist}, 
and since $\pi_{\mathrm{\acute{e}t}}(X)$, $\pi_\mathrm{mult}(X)$ 
and $\pi_\mathrm{\acute{e}tm}(X)$ are initial up to conjugacy,
they coincide respectively with $K_{\mathrm{\acute{e}t}}$, $K_\mathrm{ab}$ 
and $K/(K^\mathrm{con})^\mathrm{der}$.

By Proposition~\ref{p:funscalext}, formation of $\pi_{\mathrm{\acute{e}t}}(X)$, $\pi_\mathrm{mult}(X)$ 
and $\pi_\mathrm{\acute{e}tm}(X)$ commutes with algebraic extension of scalars.

Let $X'$ be a pro\'etale cover of $X$.
By Corollary~\ref{c:etfunact}, $X'$ has a unique structure  of $\pi_\mathrm{\acute{e}tm}(X)$\nd scheme.
If $X'$ is non-empty and geometrically connected, then it is a transitive $\pi_\mathrm{\acute{e}tm}(X)$\nd scheme
by Lemma~\ref{l:transproet}\ref{i:transproetcon}, and
\begin{equation}\label{e:pietcov}
\pi_\mathrm{\acute{e}tm}(X') = \pi_\mathrm{\acute{e}tm}(X) \times_X X'
\end{equation}
by Proposition~\ref{p:funpull}.

Let $M$ be a Galois module, i.e.\ an abelian group with a continuous action of $\Gal(\overline{k}/k)$.
As in Section~\ref{s:fundgrpd}, we denote by $D(M)$ the $k$\nd group of multiplicative type 
with Galois module of characters $M$, and by
\[
\chi_\mu:D(M)_{\overline{k}} \to (\bG_m)_{\overline{k}}
\]
the character corresponding to $\mu \in M$.
By \eqref{e:multPic}, we have an equality 
\begin{equation}\label{e:diagPicX}
\pi_\mathrm{mult}(X)^\mathrm{diag} = D(\Pic(X_{\overline{k}})) \times_k X
\end{equation}
of constant $\pi_\mathrm{mult}(X)$\nd schemes, where $(\chi_\mu)  \times_{\overline{k}} {X_{\overline{k}}}$ 
for $\mu$ the class of $\overline{\sL}$
is the restriction to the diagonal of the unique morphism
from $\pi_\mathrm{mult}(X)_{\overline{k}}$ to $\underline{\Iso}_{X_{\overline{k}}}(\overline{\sL})$ over $X_{\overline{k}}$.
A $k$\nd morphism $f:X \to X'$ then induces the morphism
\begin{equation}\label{e:diagPicf}
D((f_{\overline{k}})^*) \times_k f:D(\Pic(X_{\overline{k}})) \times_k X \to 
D(\Pic(X'{}\!_{\overline{k}})) \times_k {X'}
\end{equation}
by restriction of $f_*$ to the diagonals.

When $k$ is algebraically closed, Corollary~\ref{c:IsoHequivac} with $H = X$ and $\overline{k} = k$ shows
that $\pi_\mathrm{mult}(X)$ is isomorphic over $X$ to $\underline{\Iso}_G(P)$ for some $G$ and $P$,
and that $G$ and the class of $P$ represent the functor $H^1(X,-)$ on $k$\nd groups of multiplicative type.

Let $M$ be a Galois submodule of $\Pic(X_{\overline{k}})$.
Since $H^1(X_{\overline{k}},-)$ on $\overline{k}$\nd groups of multiplicative type is representable
and $\Hom_{\overline{k}}(-,(\bG_m)_{\overline{k}})$
from $\overline{k}$\nd groups of multiplicative type to abelian groups is fully faithful,
there is a unique
\[
\tau \in H^1(X_{\overline{k}},D(M)_{\overline{k}})
\]
such that $\chi_\mu$ sends $\tau$ to $\mu$ in $\Pic(X_{\overline{k}}) = H^1(X_{\overline{k}},\bG_m{}_{\overline{k}})$ 
for every $\mu$ in $M$.
We call $\tau$ the \emph{tautological element}.
The functor $H^1(X_{\overline{k}},-)$ on $\overline{k}$\nd groups of multiplicative type is represented
by $D(\Pic(X_{\overline{k}}))_{\overline{k}}$, with universal element the tautological element.
For $M'$ a Galois submodule of $M$, the projection from $D(M)_{\overline{k}}$ onto $D(M')_{\overline{k}}$ respects
the tautological elements.
The tautological element is fixed by the action of $\Gal(\overline{k}/k)$ on $H^1(X_{\overline{k}},D(M)_{\overline{k}})$ through 
$\overline{k}$.

\begin{thm}\label{t:lineuniv}
Let $X$ be a non-empty smooth geometrically connected projective curve over $k$ of genus $0$.
Then $\pi_\mathrm{mult}(X)$ is universally reductive over $X$.
\end{thm}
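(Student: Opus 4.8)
The plan is to verify condition~\ref{i:unequivmin} of Corollary~\ref{c:unequiv} for $K=\pi_\mathrm{mult}(X)$ and $H=X$, after first reducing to the case of an algebraically closed ground field. Since $X$ is proper and geometrically connected we have $H^0(X,\sO_X)=k$ by \eqref{e:almostcomplete}, so $H^0_X(X,\sO_X)=k$ is trivially a henselian local ring with residue field $k$; thus Theorem~\ref{t:main} applies to the pregroupoid $X$ over $X$, and likewise to $X_{\overline k}$ over $X_{\overline k}$ (again $H^0(X_{\overline k},\sO_{X_{\overline k}})=\overline k$ as $X$ is proper and geometrically connected). First I would apply Corollary~\ref{c:unscalext} with $k'=\overline k$ — legitimate because $H_{[1]}=X$ is quasi-compact and quasi-separated — together with the compatibility of the formation of $\pi_\mathrm{mult}$ with algebraic extension of scalars recorded in the discussion preceding the theorem, to conclude that $\pi_\mathrm{mult}(X)$ is universally reductive over $X$ if and only if $\pi_\mathrm{mult}(X_{\overline k})$ is universally reductive over $X_{\overline k}$. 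As $X_{\overline k}$ is a smooth projective geometrically connected curve of genus $0$ over the algebraically closed field $\overline k$, it has a rational point, so $X_{\overline k}\cong\bP^1_{\overline k}$, and it suffices to treat the case $k=\overline k$, $X=\bP^1$.

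For the generation half of Corollary~\ref{c:unequiv}\ref{i:unequivmin} I would observe that if $P_1$ denotes the $\bG_m$\nd torsor associated to the line bundle $\sO(1)$ on $\bP^1$, then $\underline{\Iso}_{\bG_m}(P_1)$ is a transitive affine groupoid over $\bP^1$ with diagonal $(\bG_m)_{\bP^1}$, hence of multiplicative type. By the universal property of $\pi_\mathrm{mult}(\bP^1)$ (Proposition~\ref{p:multinit}) there is a morphism $\rho\colon\pi_\mathrm{mult}(\bP^1)\to\underline{\Iso}_{\bG_m}(P_1)$ of groupoids over $\bP^1$; restricting along $\rho$ the standard rank\nd $1$ representation of $\underline{\Iso}_{\bG_m}(P_1)$ — which under the equivalence \eqref{e:assocvecbun} corresponds to $P_1\times^{\bG_m}(-)$ applied to the tautological character and so has underlying vector bundle $\sO(1)$ — exhibits $\sO(1)$ as the vector bundle underlying a representation of $\pi_\mathrm{mult}(\bP^1)$. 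Since $\Mod_{\pi_\mathrm{mult}(\bP^1)}(\bP^1)$ is a rigid $k$\nd tensor category and its forgetful functor to $\Mod(\bP^1)$ is a $k$\nd tensor functor, every $\sO(n)=\sO(1)^{\otimes n}$ with $n\in\Z$, and every finite direct sum of such, is a representation of $\pi_\mathrm{mult}(\bP^1)$. By Grothendieck's splitting theorem \cite{Gro57}, every vector bundle on $\bP^1$ is of this form; hence every vector bundle on $\bP^1$ is a representation of $\pi_\mathrm{mult}(\bP^1)$, and in particular a direct summand of one.

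For the minimality half I would argue that $\pi_\mathrm{mult}(\bP^1)$ has no proper reductive subgroupoid over $\bP^1$. Let $K'$ be a transitive affine subgroupoid of $\pi_\mathrm{mult}(\bP^1)$ over $\bP^1$. Then $K'^{\mathrm{diag}}$ is a closed subgroup scheme of the multiplicative\nd type group scheme $\pi_\mathrm{mult}(\bP^1)^{\mathrm{diag}}$ over $\bP^1$, hence is itself of multiplicative type (characteristic $0$), so $K'$ is a groupoid of multiplicative type over $\bP^1$. By the initiality of $\pi_\mathrm{mult}(\bP^1)$ in that category there is a morphism $g\colon\pi_\mathrm{mult}(\bP^1)\to K'$ over $\bP^1$; composing with the inclusion $\iota\colon K'\hookrightarrow\pi_\mathrm{mult}(\bP^1)$ yields an endomorphism of the initial object, which is therefore the identity. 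Thus $\iota$ is a split epimorphism; being also a monomorphism (a closed immersion), it is an isomorphism, so $K'=\pi_\mathrm{mult}(\bP^1)$. Hence $\pi_\mathrm{mult}(\bP^1)$ is minimally reductive over $\bP^1$. Combining the two halves, Corollary~\ref{c:unequiv} gives that $\pi_\mathrm{mult}(\bP^1)$ is universally reductive over $\bP^1$, and the reduction of the first paragraph completes the proof.

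The main obstacle is the one genuinely non-formal ingredient, Grothendieck's splitting theorem for vector bundles on $\bP^1$, which is what makes $\pi_\mathrm{mult}(\bP^1)$ "see" every vector bundle; everything else is bookkeeping with the machinery of Sections~\ref{s:fundgrpd}--\ref{s:unmin}. A secondary point requiring care is the reduction to the algebraically closed case, where one must confirm the hypotheses of Corollary~\ref{c:unscalext} and the base-change compatibility of $\pi_\mathrm{mult}$ along the possibly infinite extension $\overline k/k$ — but this is precisely what is already recorded just before the statement of the theorem.
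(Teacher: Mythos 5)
Your proof is correct, and it shares the paper's two essential ingredients -- the reduction to an algebraically closed base via scalar extension (your appeal to Corollary~\ref{c:unscalext}, with $H_{[1]}=\emptyset$ quasi-compact, together with the recorded compatibility of $\pi_\mathrm{mult}$ with algebraic extension of scalars) and Grothendieck's classification of vector bundles on $\bP^1$ -- but it verifies a different clause of Corollary~\ref{c:unequiv}. The paper checks condition~\ref{i:unequivbij}: over $\overline{k}$ the indecomposable representations of $\pi_\mathrm{mult}(X)$ are exactly those of rank $1$, classified by $\Pic(X)$ through the underlying line bundle, while the indecomposable vector bundles on the genus-$0$ curve are exactly the line bundles, so $\omega_{K/X}$ is bijective on isomorphism classes; with Krull--Schmidt this is the entire proof, in a few lines. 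You instead check condition~\ref{i:unequivmin}: generation comes from the splitting theorem (every $\sO(n)$, hence every bundle, underlies a representation obtained by restricting along the morphism to $\underline{\Iso}_{\bG_m}(P_1)$), and minimality comes from your initial-object argument -- any transitive affine subgroupoid of $\pi_\mathrm{mult}(\bP^1)$ has multiplicative-type diagonal fibres, so the unique morphism from the initial object composed with the inclusion is the identity, and a split epic monomorphism is an isomorphism. That argument is sound, and it is a pleasant formal observation, but it is extra work that the route through~\ref{i:unequivbij} avoids, since the bijection on rank-$1$ objects already encodes both minimality and generation at once. One incidental point in your favour: for the reduction to $\overline{k}$ the paper's proof cites Corollary~\ref{c:minscalext}, whereas your citation of Corollary~\ref{c:unscalext} is the one whose statement actually concerns universal reductivity.
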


\begin{proof}
By Corollary~\ref{c:unscalext}, we may suppose that $k$ is algebraically closed.
The indecomposable representations of $\pi_\mathrm{mult}(X)$ are then those of rank $1$, 
and by \eqref{e:PicmultH} passage to the underlying line bundle
defines an isomorphism from the group of isomorphism classes such representations to $\Pic(X)$.
Condition \ref{i:unequivbij} of Corollary~\ref{c:unequiv} is thus satisfied with $K = \pi_\mathrm{mult}(X)$,
because the indecomposable vector bundles over $X$ are those of rank $1$.
\end{proof}

Let $x$ be a $k$\nd point of $X$.
Given an affine $k$\nd group $G$, we denote by
\[
H^1(X,x,G)
\]
the set of isomorphism classes of pairs $(P,z)$ with $P$ a principal $G$\nd bundle over $X$ and
$z$ a $k$\nd point of $P$ above $x$.
It is pointed set which is contravariant in $(X,x)$ and covariant in $G$.
The functor $H^1(X,x,-)$ does not in general factor through the category of affine $k$\nd groups up to conjugacy:
the action of $G(k)$ on $H^1(X,x,G)$ through its action by conjugation on $G$, or equivalently by shifting the $k$\nd point 
$z$ of $P$, is in general non-trivial.
Discarding $z$ defines a bijection
\begin{equation}\label{e:H1H1tilde}
H^1(X,x,G)/G(k) \iso \widetilde{H}^1(X,x,G)
\end{equation}
onto the subset of $H^1(X,G)$ consisting of the classes of the principal $G$\nd bundles trivial above $x$.
When $G$ is commutative, $G(k)$ acts trivially and $H^1(X,x,G)$ and $\widetilde{H}^1(X,x,G)$ coincide.
Since $H^1(k,\bG_m)$ is trivial, we have
\begin{equation}\label{e:Hilb90}
H^1(X,x,\bG_m) = \widetilde{H}^1(X,x,\bG_m) = H^1(X,\bG_m),
\end{equation}
and similarly with $\bG_m$ replaced by $\bG_a$ because $H^1(k,\bG_a)$ is trivial by \eqref{e:VViso}.

We write $\pi_{\mathrm{\acute{e}t}}(X,x)$, $\pi_\mathrm{mult}(X,x)$ and 
$\pi_\mathrm{\acute{e}tm}(X,x)$
for the respective fibres $\pi_{\mathrm{\acute{e}t}}(X)_{x,x}$, $\pi_\mathrm{mult}(X)_{x,x}$ 
and $\pi_\mathrm{\acute{e}tm}(X)_{x,x}$ above the diagonal.
The $k$\nd groups $\pi_{\mathrm{\acute{e}t}}(X,x)$,
$\pi_\mathrm{mult}(X,x)$ and $\pi_\mathrm{\acute{e}tm}(X,x)$ are functorial in $(X,x)$,
because $\pi_{\mathrm{\acute{e}t}}(X)$, $\pi_\mathrm{mult}(X)$ and 
$\pi_\mathrm{\acute{e}tm}(X)$ are functorial in $X$.
Since $\pi_\mathrm{mult}(X)$ is a commutative groupoid, its diagonal is constant, so that $\pi_\mathrm{mult}(X,x)$
is independent of $x$.
Formation of $\pi_{\mathrm{\acute{e}t}}(X,x)$, $\pi_\mathrm{mult}(X,x)$ and 
$\pi_\mathrm{\acute{e}tm}(X,x)$ commutes with algebraic extension of scalars.

By the equivalence \eqref{e:GPpequiv} and the isomorphism \eqref{e:KKKiso}, the triple
\[
(\pi_\mathrm{\acute{e}tm}(X,x),\pi_\mathrm{\acute{e}tm}(X)_{-,x},1_x)
\]
is initial in the category of $k$\nd pointed principal bundles over $(X,x)$ under a $k$\nd group of pro\'etale by multiplicative type.
Thus  the functor $H^1(X,x,-)$ on $k$\nd groups of pro\'etale by multiplicative type 
is represented by $\pi_\mathrm{\acute{e}tm}(X,x)$,
with the universal element in $H^1(X,x,\pi_\mathrm{\acute{e}tm}(X,x))$ the class of $(\pi_\mathrm{\acute{e}tm}(X)_{-,x},1_x)$.

The morphisms of $k$\nd groups $\pi_\mathrm{\acute{e}tm}(X,x)$ defined by functoriality are those
compatible with the universal elements.
Given a morphism $f$ from $(X.x)$ to $(X',x')$, we thus have a commutative square
\begin{equation}\label{e:H1funct}
\begin{gathered}
\xymatrix{
\Hom_k(\pi_\mathrm{\acute{e}tm}(X',x'),G) \ar[d]_{\Hom_k(f_*,G)} \ar[r]^-{\sim} &  H^1(X',x',G) \ar[d]^{f^*} \\
\Hom_k(\pi_\mathrm{\acute{e}tm}(X,x),G)  \ar[r]^-{\sim} &  H^1(X,x,G)
}
\end{gathered}
\end{equation}
for $G$ of pro\'etale by multiplicative type, where the horizontal arrows are the natural bijections
defined by the universal elements.

There are similar universal properties for $\pi_{\mathrm{\acute{e}t}}(X,x)$ and $\pi_\mathrm{mult}(X,x)$.
The universal elements in $H^1(X,x,\pi_{\mathrm{\acute{e}t}}(X,x))$ and $H^1(X,x,\pi_\mathrm{mult}(X,x))$ are the images
of the universal element in $H^1(X,x,\pi_\mathrm{\acute{e}tm}(X,x))$, and the analogues
\eqref{e:H1funct} hold.

The $k$\nd pointed geometric universal cover $(\widetilde{X},\widetilde{x})$ of $(X,x)$
(i.e.\ the $k$\nd pointed geometric universal $X$\nd cover of $(X,x)$ in the sense of
Section~\ref{s:fundgrpd}) exists and
\begin{equation}\label{e:uncovpiet}
(\widetilde{X},\widetilde{x}) = (\pi_{\mathrm{\acute{e}t}}(X)_{-,x},1_x),
\end{equation}
because $\pi_{\mathrm{\acute{e}t}}(X)_{-,x}$ is a simply transitive $\pi_{\mathrm{\acute{e}t}}(X)$\nd scheme.
It is functorial in $(X,x)$, and its formation commutes with algebraic extension of scalars.
Taking $\widetilde{X}$ for $X'$ in \eqref{e:pietcov} shows that we have a short exact sequence of $k$\nd groups
\begin{equation}\label{e:multetmet}
1 \to \pi_\mathrm{\acute{e}tm}(\widetilde{X},\widetilde{x}) \to \pi_\mathrm{\acute{e}tm}(X,x) \to 
\pi_{\mathrm{\acute{e}t}}(X,x) \to 1,
\end{equation}
functorial in $(X,x)$, with 
$\pi_\mathrm{\acute{e}tm}(\widetilde{X},\widetilde{x}) = \pi_\mathrm{mult}(\widetilde{X},\widetilde{x})$
connected and commutative.

For $G$ a $k$\nd group of pro\'etale by multiplicative type,
we have by the universal property of $\pi_\mathrm{\acute{e}tm}(X,x)$ and the compatibility of $\pi_\mathrm{\acute{e}tm}(X)$
with algebraic extension of scalars a canonical bijection
\[
H^1(X,x,G) \iso H^1(X_{\overline{k}},\overline{x},G_{\overline{k}})^{\Gal(\overline{k}/k)}
\]
with $\overline{x}$ the $\overline{k}$\nd point of $X_{\overline{k}}$ defined by $x$.
When $G = \pi_\mathrm{\acute{e}tm}(X,x)$, this bijection respects the universal elements.
For $G$ of multiplicative type, the bijection becomes
\[
H^1(X,x,G) \iso H^1(X_{\overline{k}},G_{\overline{k}})^{\Gal(\overline{k}/k)}
\]
by \eqref{e:H1H1tilde} and Proposition~\ref{p:printriv}.
In particular, taking $G = D(M)$ with $M$ a Galois submodule of $\Pic(X_{\overline{k}})$ shows that there
is a unique element of $H^1(X,x,D(M))$ with image in $H^1(X_{\overline{k}},D(M)_{\overline{k}})$ the tautological element.

If $\sL$ is a line bundle over $X$, then $\underline{\Iso}_X(\sL)_{x,x} = \bG_m$ and $\underline{\Iso}_X(\sL)_{-,x}$ is 
isomorphic to the principal $\bG_m$\nd bundle associated to $\sL$.
Thus by \eqref{e:diagPicX}
\[
\pi_\mathrm{mult}(X,x) = D(\Pic(X_{\overline{k}}))
\]
with the universal element in $H^1(X,x,\pi_\mathrm{mult}(X,x))$ the unique element with image in
$H^1(X_{\overline{k}},D(\Pic(X_{\overline{k}}))_{\overline{k}})$ the tautological element.

For projective $n$\nd space $\bP^n$ over $k$ with $n > 0$, the assignment to every line bundle of its degree identifies $\Pic(\bP^n{}\!_{\overline{k}})$ 
with the trivial Galois module $\Z$.
Thus
\[
\pi_\mathrm{mult}(\bP^n,x) = \bG_m
\]
for every $n > 0$ and $k$\nd point $x$ of $\bP^n$.
By \eqref{e:Hilb90}, we have
\[
H^1(\bP^n,x,\bG_m) = \widetilde{H}^1(\bP^n,x,\bG_m) = H^1(\bP^n,\bG_m) = \Z,
\]
with the universal element of degree $1$.

\begin{thm}\label{t:linerep}
Let $x$ be a $k$\nd point of $\bP^1$.
Then the functor $\widetilde{H}^1(\bP^1,x,-)$ on the category of reductive $k$\nd groups up to conjugacy is 
represented by $\bG_m$,
with universal element in
\[
\widetilde{H}^1(\bP^1,x,\bG_m) = H^1(\bP^1,\bG_m)
\]
the class of degree $1$. 
\end{thm}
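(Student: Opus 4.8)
The plan is to deduce Theorem~\ref{t:linerep} from the results already assembled, namely Theorem~\ref{t:lineuniv}, Lemma~\ref{l:unreppt}, and the computation of $\pi_\mathrm{mult}(\bP^1,x)$. First I would record that $\bP^1$ satisfies the standing hypotheses of this section: it is a smooth projective geometrically connected curve of genus $0$ over $k$, it is reduced, quasi-compact and quasi-separated, and $H^0(\bP^1,\sO_{\bP^1}) = k$. In particular $H^0_{\bP^1}(\bP^1,\sO_{\bP^1}) = k$ is a henselian local $k$\nd algebra with residue field $k$, so Theorem~\ref{t:main} and its corollaries apply with $H = X = \bP^1$.

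Next I would invoke Theorem~\ref{t:lineuniv}: $\pi_\mathrm{mult}(\bP^1)$ is universally reductive over $\bP^1$. By the discussion preceding the theorem we have $\pi_\mathrm{mult}(\bP^1,x) = D(\Pic(\bP^1_{\overline{k}})) = \bG_m$, since $\Pic(\bP^1_{\overline{k}})$ is the trivial Galois module $\Z$ via the degree map; and the canonical principal $(\pi_\mathrm{mult}(\bP^1),\bG_m)$\nd bundle $P = \pi_\mathrm{mult}(\bP^1)_{-,x}$ has its distinguished $k$\nd point $1_x$ above $x$. Since $\pi_\mathrm{mult}(\bP^1) = \underline{\Iso}_{\bG_m}(P)$ by \eqref{e:KKKiso} (or directly by the equivalence \eqref{e:GPpequiv}), Lemma~\ref{l:unreppt} with $G = \bG_m$ tells us that $\bG_m$ together with $[P]$ represents the functor $\widetilde{H}^1(\bP^1,x,-) = \widetilde{H}^1_{\bP^1}(\bP^1,x,-)$ on reductive $k$\nd groups up to conjugacy. (Here I use that for $H = X$ the pregroupoid-relative cohomology $\widetilde{H}^1_H(X,x,-)$ is just $\widetilde{H}^1(X,x,-)$.)

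It then remains to identify the set $\widetilde{H}^1(\bP^1,x,\bG_m)$ and the universal element inside it. Since $\bG_m$ is commutative, $\bG_m(k)$ acts trivially on $H^1(\bP^1,x,\bG_m)$, so by \eqref{e:H1H1tilde} the forgetful map $H^1(\bP^1,x,\bG_m) \to \widetilde{H}^1(\bP^1,x,\bG_m)$ is a bijection; and by Hilbert's Theorem~90, \eqref{e:Hilb90} gives $H^1(\bP^1,x,\bG_m) = H^1(\bP^1,\bG_m) = \Pic(\bP^1)$, which is $\Z$ via the degree map (the degree being insensitive to extension to $\overline{k}$, so $\Pic(\bP^1) = \Pic(\bP^1_{\overline{k}}) = \Z$). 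Finally the universal element: by the description of the universal element in $H^1(\bP^1,x,D(\Pic(\bP^1_{\overline{k}})))$ recorded just before the statement of the theorem — it is characterized by the condition that its image under the homomorphism induced by $\chi_\mu$ be $\mu$ — taking $\mu$ the generator $1 \in \Z = \Pic(\bP^1_{\overline{k}})$ shows the universal class is the line bundle of degree $1$. Assembling these identifications gives the statement.

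I do not anticipate a genuine obstacle here: every ingredient has been established earlier, and the proof is a matter of correctly matching up the general representability statement (Lemma~\ref{l:unreppt}) with the concrete identifications $\pi_\mathrm{mult}(\bP^1,x) = \bG_m$ and $\widetilde{H}^1(\bP^1,x,\bG_m) = H^1(\bP^1,\bG_m) = \Z$. The one point requiring a little care is the bookkeeping around the three potentially-different sets $H^1(\bP^1,x,\bG_m)$, $\widetilde{H}^1(\bP^1,x,\bG_m)$, and $H^1(\bP^1,\bG_m)$, and the verification that they all coincide for $\bG_m$ — this is exactly what \eqref{e:H1H1tilde} and \eqref{e:Hilb90} provide, so it is routine. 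A secondary point is to make sure the universal element is correctly tracked through the isomorphism $\pi_\mathrm{mult}(\bP^1,x) = D(\Z) = \bG_m$, but again this is handled by the explicit characterization of the universal element already given in the text.
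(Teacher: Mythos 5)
Your proof is correct and follows essentially the same route as the paper: identify $\pi_\mathrm{mult}(\bP^1,x)=\bG_m$, observe that the degree-one bundle $\pi_\mathrm{mult}(\bP^1)_{-,x}$ yields $\underline{\Iso}_{\bG_m}(P)\cong\pi_\mathrm{mult}(\bP^1)$, apply Theorem~\ref{t:lineuniv} and Lemma~\ref{l:unreppt}, and match up the sets via \eqref{e:H1H1tilde} and \eqref{e:Hilb90}. The only cosmetic difference is that the paper starts from an arbitrary bundle of degree one and identifies it with $\pi_\mathrm{mult}(\bP^1)_{-,x}$, whereas you start from that bundle directly; the arguments are the same.
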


\begin{proof}
With the identification $\pi_\mathrm{mult}(\bP^1,x) = \bG_m$,
a principal $\bG_m$\nd bundle $P$ over $X$ of degree $1$ is isomorphic to $\pi_\mathrm{mult}(\bP^1)_{-,x}$.
Thus by \eqref{e:KKKiso}, $\underline{\Iso}_{\bG_m}(P)$ is isomorphic to $\pi_\mathrm{mult}(\bP^1)$,
and hence by Theorem~\ref{t:lineuniv} is universally reductive over $\bP^1$.
The result now follows from Lemma~\ref{l:cohreppt}\ref{i:unreppt}.
\end{proof}

\begin{rem}
An equivariant analogue of the classification of vector bundles over $\bP^1$ has been given by Kumar \cite{Kum03}.
Theorem~\ref{t:linerep} can be adapted as follows to give a classification of equivariant principal
bundles over $\bP^1$ under reductive $k$\nd groups.
Let $G_0$ be a reductive $k$\nd group acting on $\bP^1$, and 
\[
1 \to \bG_m \to G_1 \to G_0 \to 1
\]
be the pullback along the action $G_0 \to \underline{\Aut}_k(\bP^1) = PGL_2$ of the extension $GL_2$ of $PGL_2$ by $\bG_m$.
Then $G_1$ acts through the projection $G_1 \to GL_2$ on $\bA^2$, and hence on the complement $Y$ of the origin 
in $\bA^2$.
If we write $H_0$, $H_1$, $H_1{}\!'$ for the groupoids $G_0 \times_k \bP^1$, $G_1 \times_k \bA^2$, $G_1 \times_k Y$
over $\bP^1$, $\bA^2$, $Y$ defined by the actions, then by Example~\ref{ex:equivariant}
principal $(H_0,G)$\nd bundles for example are the same as
$G_0$\nd equivariant principal $G$\nd bundles over $\bP^1$. 
Further $H_1{}\!'$ is the pullback of both $H_0$ along $Y \to \bP^1$ and of $H_1$ along $Y \to \bA^2$.
The principal $(G_1,G_1)$\nd bundle $G_1$ gives by pullback along the structural morphism $a$ of $\bA^2$ 
a principal $(H_1,G_1)$\nd bundle $P_1$. 
By Lemma~\ref{l:torspull} the restriction $P_1{}\!'$ of $P_1$ to $Y$ is the pullback along $Y \to \bP^1$ of a
principal $(H_0,G_1)$\nd bundle $P_0$.
For any $x$ in $\bP^1(k)$, it can be seen as follows that $G_1$ represents $\widetilde{H}^1_{H_0}(\bP^1,x,-)$ 
on reductive $k$\nd groups up to conjugacy with universal element the class of $P_0$.
Pullback onto $Y$ defines an equivalence from $\Mod_{H_0}(\bP^1)$ to $\Mod_{H_1{}\!'}(Y)$ by Lemma~\ref{l:prereppull},
and restriction to $Y$ defines an equivalence from $\Mod_{H_1}(\bA^2)$ to $\Mod_{H_1{}\!'}(Y)$ because every vector bundle on $Y$ extends to a vector bundle on $\bA^2$.
It is thus enough to show that $a^*$ from $\Mod_{G_1}(k)$ to $\Mod_{H_1}(\bA^2)$ is bijective on 
isomorphism classes of objects:
since \ref{i:unequivbijpt} of Corollary~\ref{c:unequivpt} clearly holds with $(G_1,G_1,G_1)$ for 
$(H,G,P)$, it will then hold with each of $(H_1,G_1,P_1)$, $(H_1{}\!',G_1,P_1{}\!')$ and 
$(H_0,G_1,P_0)$ for $(H,G,P)$.
If $e$ is the embedding of the origin into $\bA^2$, applying the exact functor 
$\Hom_{H_1}(\sO_{\bA^2},-) = (a_*-)^{G_1}$ to the epimorphism $\sV \to e_*e^*\sV$ in $\MOD_{H_1}(\bA^2)$
and using \eqref{e:dualiso} shows that $e^*$ from $\Mod_{H_1}(\bA^2)$ to $\Mod_{G_1}(k)$ is full.
Since every endomorphism in the kernel of $e^*$ is nilpotent by Cayley--Hamilton, $e^*$ is also conservative.
Thus $\sV$ in $\Mod_{H_1}(\bA^2)$ is isomorphic to $a^*V$ if and only if $V$ is isomorphic to $e^*\sV$.
\end{rem}

Let $M$ be a topological group and $N$ be a topological group with a continuous action of $M$ (by group automorphisms).
Write $H^0(M,N)$ for the subgroup of $N$ of invariants under $M$, and $H^1(M,N)$ for the pointed set of orbits
under the action by conjugation of $N$ on the set of sections $M \to N \rtimes M$ of topological groups to the projection onto $M$.
By projecting onto $N$, sections $M \to N \rtimes M$ may be identified with continuous $1$\nd cocyles on 
$M$ with values in $N$.
Both $H^0(M,N)$ and $H^1(M,N)$ are functorial in $N$.
When $M$ is profinite and $N$ is discrete, $H^0(M,N)$ is the usual group and $H^1(M,N)$ is the usual pointed set.

Let $N$ and $N'$ be topological groups.
We write $\sE(N,N')$ for the set of isomorphism classes of extensions of $N$ by $N'$ (in the sense of Section~\ref{s:grpdgalext},
i.e.\ topologically locally split over $N$).
Given an extension $E'$ of $N$ by $N'$ and an isomorphism $i:N' \iso N''$ of topological groups, 
a pair consisting of an extension $E''$ of $N$ by $N''$ and an isomorphism of extensions from $E'$ to $E''$
with component $1_N$ at $N$ and $i$ at $N'$ exists and is unique up to unique isomorphism.
Thus $i$ induces a map from $\sE(N,N')$ to $\sE(N,N'')$, which sends the class of $E'$ to the class of $E''$.
If $N'' = N'$ and $i$ is an inner automorphism of $N'$, then the map induced by $i$ is the identity. 
Thus we have a functor $\sE(N,-)$ on the category of topological groups and isomorphisms between them up to conjugacy.

Let $M$ be a topological group and $N$ be a commutative topological group with a continuous action of $M$.
Write $H^2(M,N)$ for the subset of $\sE(M,N)$ consisting of the classes of those extensions
for which the action on $N$ through $M$ by conjugation coincides with the given action.
The set $H^2(M,N)$ is functorial in $N$, with $f:N \to N'$ sending the class of $E$ to the class of its push forward 
along $f$, given by the quotient of $N' \rtimes E$ by $N$ embedded as $n \mapsto f(n)^{-1}n$.
Further $H^2(M,-)$ preserves finite products, so that $H^2(M,N)$ has a structure of abelian group.
When $M$ is profinite and $N$ is discrete, the abelian group $H^2(M,N)$
is the usual one.

Given a short exact sequence (i.e.\ an extension)
\begin{equation}\label{e:sextopgrp}
1 \to N' \to N \to N'' \to 1
\end{equation}
of topological groups with a continuous action of $M$, we have a connecting map
\[
H^0(M,N'') \to H^1(M,N')
\]
which sends $n''$ in $H^0(M,N'')$ to the class in  $H^1(M,N')$ of the section that sends $m$ in $M$ to
$nmn^{-1}$ in $N' \rtimes M \subset N \rtimes M$, where $n$ in $N$ lies above $n''$.
Also $N''$ acts on $\sE(M,N')$ through its action up to conjugacy on $N'$, and we have a map
\[
H^1(M,N'') \to \sE(M,N')/N''
\]
which sends the class of $s:M \to N'' \rtimes M$ to the class of the pullback of
\[
1 \to N' \to N \rtimes M \to N'' \rtimes M \to 1
\] 
along $s$.
We then have a cohomology sequence
\begin{multline}\label{e:lextopgrp}
1 \to H^0(M,N') \to H^0(M,N) \to H^0(M,N'') \to \\
\to H^1(M,N') \to H^1(M,N) \to H^1(M,N'') \to \sE(M,N')/N'',
\end{multline}
with the other maps defined by functoriality.
It is exact in the first five pairs of maps, and for the final pair the image of $H^1(M,N)$ in $H^1(M,N'')$ is the
inverse image of the subset $\sE_0(M,N')/N''$ of $\sE(M,N')/N''$, where $\sE_0(M,N')$ consists of the classes of the split extensions.
The sequence \eqref{e:lextopgrp} is functorial for those morphisms from \eqref{e:sextopgrp}
which are isomorphisms on $N'$.
When $N'$ is central in $N$, the action of $N''$ on $\sE(M,N')$ is trivial,
and $\sE(M,N')/N''$ in \eqref{e:lextopgrp} may be replaced by its subset
$H^2(M,N')$, which $\sE_0(M,N')/N''$ intersects in the $0$ element.
When $M$ is profinite, $N$, $N'$ and $N''$ are discrete, and $N'$ is central, \eqref{e:lextopgrp} with $\sE(M,N')/N''$
replaced by $H^2(M,N')$ is the usual long exact sequence.

The Galois group $\Gal(\overline{k}/k)$ acts on the set $G(\overline{k})$ of $\overline{k}$\nd points of an affine 
$k$\nd group $G$ through its action on $\overline{k}$.
This action is continuous for the Krull topology on $\Gal(\overline{k}/k)$ and
(in the sense of Section~\ref{s:grpdgalext}) on $G(\overline{k})$.
The cohomology sets
\[
H^n(\Gal(\overline{k}/k),G(\overline{k}))
\]
for $n = 0,1$, and when $G$ is commutative for $n=2$, will be understood
to be those for which $\Gal(\overline{k}/k)$ acts on $G(\overline{k})$ through $\overline{k}$.
Similarly when $G$ is commutative, extensions of $\Gal(\overline{k}/k)$ by $G(\overline{k})$ will be understood
to those for which $\Gal(\overline{k}/k)$ acts through $\overline{k}$.
For $H^1$ we have the natural bijection \eqref{e:Hbij}, which for $G$ of finite type reduces to 
the usual description of $H^1(k,G)$ as a Galois cohomology set.

Suppose now that $X$ is a Severi--Brauer variety over $k$ of dimension $n$.
Fix an isomorphism of $\overline{k}$\nd schemes
\[
i:\bP^n{}\!_{\overline{k}} \iso X_{\overline{k}}.
\]
With the identification $PGL_{n+1} = \underline{\Aut}_k(\bP^n)$,
we have a cartesian square
\[
\xymatrix{
F_i \ar[d] \ar[r] & [\overline{k}] \ar[d] \\
 GL_{n+1} \times_k [\overline{k}] \ar[r] & PGL_{n+1} \times_k [\overline{k}]
}
\]
of groupoids over $\overline{k}$, 
where the right arrow sends $(s_1,s_0)$ to $(i_{s_1}{}\!^{-1} \circ i_{s_0},s_1,s_0)$,
the bottom arrow is the constant morphism, and the left arrow is the embedding of a
transitive affine subgroupoid $F_i$ of $GL_{n+1} \times_k {[\overline{k}]}$ over $\overline{k}$ with
\[
F_i{}\!^\mathrm{diag} = \bG_m \times_k \overline{k}.
\]
The homomorphism from $\Gal(\overline{k}/k)$ to $PGL_{n+1}(\overline{k}) \rtimes \Gal(\overline{k}/k)$
induced as in Section~\ref{s:grpdgalext} by the right arrow on $\overline{k}$\nd points over $\overline{k}$
sends $\sigma$ to $(i^{-1} \circ i_{\sigma})\sigma$.
Its class in 
\[
H^1(\Gal(\overline{k}/k),PGL_{n+1}(\overline{k}))
\]
is thus the usual class of the Severi--Brauer variety $X$ over $k$ \cite[p. 168]{Ser68}. 
The class of the extension $F_i(\overline{k})_{\overline{k}}$ of $\Gal(\overline{k}/k)$ by $\bG_m(\overline{k}) = \overline{k}{}^*$ in
$H^2(\Gal(\overline{k}/k),\bG_m(\overline{k}))$
is then the image under connecting map associated to the short exact sequence
\[
1 \to \bG_m(\overline{k}) \to GL_{n+1}(\overline{k}) \to PGL_{n+1}(\overline{k}) \to 1
\]
of the class of $X$ in $H^1(\Gal(\overline{k}/k),PGL_{n+1}(\overline{k}))$.
It is thus the class of $X$ in the Brauer group $H^2(\Gal(\overline{k}/k),\bG_m(\overline{k}))$
of $k$ \cite[p. 166]{Ser68}.

Suppose that $n > 0$.
Define the degree of a line bundle over $X_{\overline{k}}$, or equivalently of a principal
$\bG_m{}_{\overline{k}}$\nd bundle over $X_{\overline{k}}$, as the degree (independent of the choice of $i$)
of its pullback along $i$.
Write $Q$ for the affine space $\bA^{n+1}$ with the origin removed.
There is a canonical projection from $Q$ to $\bP^n$,
and the standard linear action of $GL_{n+1}$ on $\bA^{n+1}$ induces an action on $Q$ above the 
action of $PGL_{n+1}$ on $\bP^n$.
By restricting to $\bG_m$ the corresponding right action of $GL_{n+1}$,
defined using the inverse involution, we obtain a structure on $Q$ of principal 
$\bG_m$\nd bundle over $\bP^n$ of degree $1$. 
Write $P_i$ for $Q_{\overline{k}}$ regarded as a scheme over $X_{\overline{k}}$ with structural morphism 
\[
Q_{\overline{k}} \to \bP^n\!_{\overline{k}} \xrightarrow{i} X_{\overline{k}}.
\]
The action of $GL_{n+1}$ on $Q$ defines an action of $GL_{n+1} \times_k {[\overline{k}]}$ on $Q_{\overline{k}}$.
Restricting to $F_i$, we obtain an action of $F_i$ on $P_i$ above the trivial action on $X_{\overline{k}}$.
The corresponding right action of $F_i$ defines on $P_i$ a structure of principal $F_i$\nd bundle,
and hence of principal $(F_i{}\!^\mathrm{diag},F_i(\overline{k})_{\overline{k}})$\nd bundle, over $X$, with underlying
principal bundle over $X_{\overline{k}}$ under 
$F_i{}\!^\mathrm{diag} = \bG_m{}_{\overline{k}}$ of degree $1$.

\begin{thm}\label{t:gen0rep}
Let $X$ be a non-empty smooth geometrically connected projective curve over $k$ of genus $0$, and $E$ be
a topological extension of $\Gal(\overline{k}/k)$ by $\bG_m(\overline{k})$ with class in the
Brauer group $H^2(\Gal(\overline{k}/k),\bG_m(\overline{k}))$ of $k$ that of $X$.
Then there exists a unique element $\alpha$ of $H^1(X,\bG_m{}_{\overline{k}},E)$ with image under
\[
H^1(X,\bG_m{}_{\overline{k}},E) \to H^1(X_{\overline{k}},\bG_m{}_{\overline{k}})
\]
of degree $1$.
The functor $H^1(X,-,-)$ on the category of reductive Galois extended $\overline{k}$\nd groups up to conjugacy 
is represented by $(\bG_m{}_{\overline{k}},E)$ with universal element $\alpha$. 
\end{thm}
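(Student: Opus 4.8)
The plan is to deduce Theorem~\ref{t:gen0rep} from the already-established representability result, Corollary~\ref{c:unscalextac}, together with the explicit identification of $\pi_\mathrm{mult}(X)$ carried out in this section. Let me lay out the argument.

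\textbf{Step 1: Reduce to the geometric situation.} A smooth geometrically connected projective curve of genus $0$ over $k$ satisfies the standing conditions of this section (it is reduced, quasi-compact, quasi-separated with $H^0(X,\sO_X) = k$), and moreover $X$ is quasi-compact and quasi-separated; taking $H = X$, the pregroupoid $X$ satisfies the hypotheses of Theorem~\ref{t:main}, and $H_{[1]} = X$ is quasi-compact. So Corollary~\ref{c:unscalextac} applies: a reductive Galois extended $\overline{k}$\nd group $(D,E_0)$ represents $H^1(X,-,-)$ on reductive Galois extended $\overline{k}$\nd groups up to conjugacy with universal element $\alpha$ if and only if $D$ represents $H^1(X_{\overline{k}},-)$ on reductive $\overline{k}$\nd groups up to conjugacy with universal element $\overline{\alpha}$, the image of $\alpha$ in $H^1(X_{\overline{k}},D)$.

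\textbf{Step 2: Identify $D$ and $\overline{\alpha}$.} By Theorem~\ref{t:lineuniv}, $\pi_\mathrm{mult}(X_{\overline{k}})$ is universally reductive over $X_{\overline{k}}$; since $X_{\overline{k}}$ is a curve of genus $0$ over an algebraically closed field, $\Pic(X_{\overline{k}}) = \Z$ via the degree, so $\pi_\mathrm{mult}(X_{\overline{k}})$ has constant diagonal $\bG_m{}_{\overline{k}} = D(\Z)_{\overline{k}}$, and $D = (\bG_m)_{\overline{k}}$ represents $H^1(X_{\overline{k}},-)$ on reductive $\overline{k}$\nd groups up to conjugacy, with universal element the tautological class $\tau \in H^1(X_{\overline{k}},(\bG_m)_{\overline{k}})$, which is the class of a line bundle of degree $1$. (This is the content of Lemma~\ref{l:unrepgpd} applied to the principal bundle realizing $\pi_\mathrm{mult}(X_{\overline{k}})$, together with the identification of the tautological element made earlier in the section.) Thus the $\overline{k}$\nd group appearing in the statement is correct, and the candidate universal element must have image of degree $1$.

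\textbf{Step 3: Construct $\alpha$ and show uniqueness.} This is where the extension $E$ enters. By the standing hypothesis, the class of $E$ in $H^2(\Gal(\overline{k}/k),\overline{k}{}^*)$ is that of $X$, which — by the computation carried out above via the groupoid $F \subset (GL_{n+1})_{[\overline{k}]}$ and the principal $\bG_m$\nd bundle $Q$ — equals the image of the degree-$1$ class under the map \eqref{e:HochSerre} with $G = \bG_m$. Hence the fibre of \eqref{e:HochSerre} above $[E]$ is non-empty, and by the discussion of \eqref{e:HXGEGalinv} that fibre is precisely the image of $H^1(X,(\bG_m)_{\overline{k}},E) \to H^1(X_{\overline{k}},(\bG_m)_{\overline{k}})$, with $H^1(\Gal(\overline{k}/k),\overline{k}{}^*)$ acting freely on $H^1(X,(\bG_m)_{\overline{k}},E)$ with orbits the fibres of this map. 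Since $H^1(\Gal(\overline{k}/k),\overline{k}{}^*) = 1$ by Hilbert's Theorem~90, the map is injective, so there is a \emph{unique} $\alpha \in H^1(X,(\bG_m)_{\overline{k}},E)$ mapping to the degree-$1$ class. This gives existence and uniqueness of $\alpha$, which is the first assertion.

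\textbf{Step 4: Conclude.} Now apply Corollary~\ref{c:unscalextac} with this $\alpha$: since the image $\overline{\alpha}$ of $\alpha$ in $H^1(X_{\overline{k}},(\bG_m)_{\overline{k}})$ is the degree-$1$ class $\tau$, which we identified in Step~2 as the universal element for $H^1(X_{\overline{k}},-)$ on reductive $\overline{k}$\nd groups up to conjugacy, the corollary yields that $((\bG_m)_{\overline{k}},E)$ represents $H^1(X,-,-)$ on reductive Galois extended $\overline{k}$\nd groups up to conjugacy with universal element $\alpha$. The main obstacle is Step~3: one must be careful that the identification of $[E]$ with the image of the degree-$1$ class under \eqref{e:HochSerre} is exactly what was computed in the Severi--Brauer discussion (there $n=1$, so $X$ is genuinely a genus-$0$ curve and a Severi--Brauer variety of dimension $1$), and that the fibre/freeness bookkeeping around \eqref{e:HXGEGalinv} is invoked correctly; once that is in place the rest is a direct appeal to the machinery already developed. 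I should also note explicitly that $(\bG_m)_{\overline{k}}$ is reductive, so that the representing object is indeed a \emph{reductive} Galois extended $\overline{k}$\nd group, matching the category in which representability is asserted.
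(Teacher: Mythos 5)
Your proposal is correct and follows essentially the paper's own route: the existence and uniqueness of $\alpha$ come from the Severi--Brauer computation of \eqref{e:HochSerre} together with Hilbert's Theorem~90 applied to \eqref{e:HXGEGalinv}, and the representability is obtained by passing through Corollary~\ref{c:unscalextac} to the geometric case over $\overline{k}$. The only cosmetic difference is that the paper, after reducing to $k=\overline{k}$, simply cites Theorem~\ref{t:linerep}, whereas you unwind that theorem into its ingredients (Theorem~\ref{t:lineuniv} plus Lemma~\ref{l:unrepgpd} and the identification of the tautological degree-$1$ class), which is the same argument.
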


\begin{proof}
We may suppose that $E = F_i(\overline{k})_{\overline{k}}$ for some 
$i:\bP^1{}\!_{\overline{k}} \iso X_{\overline{k}}$.
Taking the class of $P_i$ then shows that an $\alpha$ exists.
Since $i$ defines an isomorphism from $H(X_{\overline{k}},-)$ to 
$H^1(\bP^1{}\!_{\overline{k}},-) = \widetilde{H}^1(\bP^1{}\!_{\overline{k}},x,-)$ for any $\overline{k}$\nd point
$x$ over $\overline{k}$, the representation statement holds for any $\alpha$ by 
Corollary~\ref{c:unscalextac} and Theorem~\ref{t:linerep}.
If both $\alpha$ and $\alpha'$ have image in $H^1(X_{\overline{k}},\bG_m{}_{\overline{k}})$ of degree $1$,
there is thus an endomorphism $(h,l)$ of $(\bG_m{}_{\overline{k}},E)$ with $h$ the identity which sends 
$\alpha$ to $\alpha'$.
The map $e \mapsto l(e)e^{-1}$ factors through a continuous $1$\nd cocycle of $\Gal(\overline{k}/k)$ with values
in $\bG_m(\overline{k})$, which is a $1$\nd coboundary because $H^1(k,\bG_m)$ is trivial.
Thus $l$ is conjugate to the identity, and $\alpha' = \alpha$.
\end{proof}

\begin{exmp}
Suppose that $k = \R$ and $\overline{k} = \C$, 
and let $X$ be as in Theorem~\ref{t:gen0rep} with $X(\R)$ empty.
Then $E$ in Theorem~\ref{t:gen0rep} is the non-split extension of $\Gal(\C/\R) = \Z/2$ by $\bG_m(\C)$.
Explicitly, $E$ is the quotient of $\bG_m(\C) \rtimes (\Z/4)$ by $(-1,2)$, where $\Z/4$ acts through $\Z/2$ 
by complex conjugation.
If $G$ is a reductive $\R$\nd group, then by Theorem~\ref{t:gen0rep} principal $G$\nd bundles over $X$ 
are classified by morphisms from $(\bG_m{}_{\C},E)$ to $(G_{\C},G(\C) \rtimes (\Z/2))$ up to conjugacy.
Such morphisms may be identified with pairs $(h,\gamma)$ with $h$ a $\C$\nd homomorphism from $\bG_m{}_{\C}$
to $G_{\C}$ and $\gamma$ in $G(\C)$ such that $(\gamma,1)^2 = (h(-1),0)$ in $G(\C) \rtimes (\Z/2)$, i.e.\ 
\[
\gamma \overline{\gamma} = h(-1),
\]
and $(\gamma,1)(h(z),0)(\gamma,1)^{-1} = (h(\overline{z}),0)$ for every $z$ in $\C$, i.e.
\[
\gamma\overline{h(z)}\gamma^{-1} = h(\overline{z}).
\]
The conjugate ${}^g(h,\gamma)$ of $(h,\gamma)$ by $g$ in $G(\C)$ is then $({}^g h,g\gamma \overline{g}{}^{-1})$.
Denote by $G_0$ the real form of $\bG_m{}_{\C} \times_{\C} (\Z/2)$ with
$\overline{(z,n)} = ((-1)^n\overline{z},n)$.
It is a non-split extension of $\Z/2$ by $\bG_m$.
Up to isomorphism, there is a unique principal $G_0$\nd bundle $P_0$ over $X$ with $P_0{}_{\C}$ the push forward 
of the principal $\bG_m{}_{\C}$ bundle over $X_{\C}$ of degree $1$ along the embedding 
$h_0:\bG_m{}_{\C} \to G_0{}_{\C}$, corresponding to the conjugacy class of those $(h_0,\gamma_0)$ where 
$\gamma_0 = (z,1)$ with $z\overline{z} = 1$.
In \cite[1.1 and 4.12]{Bis08}, it is asserted that for $G$ connected reductive of finite type over $\R$, any principal $G$\nd bundle $P$ over $X$
is the push forward of $P_0$ along an $\R$\nd homomorphism $G_0 \to G$.
As it stands, this assertion is incorrect: for a counter-example take $G = SO_3$ and for $P$ the twist by a principal 
$(PGL_2,SO_3)$\nd bundle of the push forward along an embedding $\bG_m \to PGL_2$ of a non-trivial principal 
$\bG_m$\nd bundle over $X$.
However the assertion is correct under the additional hypothesis that $G$ be \emph{split}.
To see this it is enough to show that any $(h,\gamma)$ is conjugate to one with $h$ defined over $\R$ and 
$\gamma^2 = 1$, because such a conjugate will factor as $(h_0,\gamma_0)$ with $\gamma_0 = (1,1)$ followed by
the morphism defined by $G_0 \to G$ with $(z,n) \mapsto h(z)\gamma^n$.
By hypothesis, $G$ has a split torus $T$ with $T_{\C}$ maximal in $G_{\C}$.
Since $h$ has a conjugate which factors through $T_{\C}$, we may assume that $h = j_{\R}$ is defined over $\R$.
The centraliser $Z(j)$ of $j(\bG_m)$ in $G$ is then a connected reductive $\R$\nd subgroup of $G$ containing $\gamma$.
We now show that if $G'$ is a connected reductive $\R$\nd group of finite type then for any $\gamma'$
in $G'(\C)$ with $\gamma'\overline{\gamma}{}' = 1$ there is a $g'$ in $G'(\C)$ with 
$(g'\gamma'\overline{g}{}'{}^{-1})^2 = 1$.
Taking $G' = Z(j)/j(\bG_m)$ with $\gamma'$ the image of $\gamma$ and appropriately lifting $g'$ to $Z(j)$ 
will then give what is required. 
Write $\gamma'$ as $\gamma_u\gamma_s = \gamma_s\gamma_u$ with $\gamma_s$ semisimple and $\gamma_u$ unipotent.
Then $\gamma_s\overline{\gamma}_s = 1 = \gamma_u\overline{\gamma}_u$.
There is thus a connected reductive $\R$\nd subgroup $G''$ of $G'$ with $G''{}\!_{\C}$ the 
identity component of the centraliser of $\gamma_s$.
Further $\gamma_s$ lies in $G''(\C)$ because $\gamma_s$ is contained in a maximal torus of $G'{}\!_{\C}$,
and there is an $\R$\nd subgroup $\bG_a$ of $G''$ with $\gamma_u$ in $\bG_a(\C)$.
Replacing $\gamma'$ by $g'\gamma'\overline{g}{}'{}^{-1}$ with $g' = -\gamma_u/2$ in $\bG_a(\C)$, 
we may assume that $\gamma' = \gamma_s$.
Thus $\gamma'$ lies in the centre of $G''$, and hence in any maximal torus $T'$ of $G''$.
Then $(g'\gamma'\overline{g}{}'{}^{-1})^2 = 1$ for $g'$ in $T'(\C)$ 
with $g'{}^{-2} = \gamma'$.
\end{exmp}

\section{Curves of genus 1}\label{s:curves1}

\emph{In this section $k$ is a field of characteristic $0$ and $\overline{k}$ is an algebraic closure of $k$.}

\medskip

In this section we describe principal bundles with reductive structure group over a curve $X$ over $k$ of genus $1$.
The study of such bundles is reduced by Theorem~\ref{t:elluniv} below to the study of the universal
groupoid $\pi_\mathrm{\acute{e}tm}(X)$ of pro\'etale by multiplicative type.
The groupoid $\pi_\mathrm{\acute{e}tm}(X)$ will in fact be determined more generally for any $X$ with
$X_{\overline{k}}$ isomorphic to an abelian variety.
The main applications to curves of genus $1$ are Theorem~\ref{t:ellrep}, which deals with the case where
$X$ has a $k$\nd point, and Theorem~\ref{t:gen1rep}, which deals with the general case.

Throughout this section we use the following notation.
If $M$ is an abelian group, then ${}_nM$ denotes the $n$\nd torsion subgroup 
and ${}_{\mathrm{tors}}M$ the torsion subgroup of $M$.
If $G$ is a commutative group scheme over $k$, then $n_G$ denotes the multiplication by the integer $n$ on $G$,
and ${}_nG$ denotes the kernel of $n_G$.

Let $X$ be an abelian variety over $k$, with base point $x$.
We denote by $(\widetilde{X},\widetilde{x})$ the $k$\nd pointed geometric universal cover of $(X,x)$.
Thus $\widetilde{X}$ is geometrically simply connected, and
$(\widetilde{X},\widetilde{x})$ is initial in the category of  $k$\nd pointed pro\'etale covers of $(X,x)$.
We denote by $\widehat{X}$ the dual abelian variety of $X$, and by $NS(X_{\overline{k}})$ the N\'eron--Severi group of
$X_{\overline{k}}$.
Thus $NS(X_{\overline{k}})$ is a Galois module with finitely generated free underlying abelian group, and the
kernel of the projection from $\Pic(X_{\overline{k}})$ onto $NS(X_{\overline{k}})$ is the Galois submodule
\[
\widehat{X}(\overline{k}) = \Pic^0(X_{\overline{k}})
\]
of $\Pic(X_{\overline{k}})$ consisting of the classes of those line bundles algebraically equivalent to $0$.
For $y$ a $\overline{k}$\nd point of $X$,
we write 
\begin{equation}\label{e:translation}
T_y:X_{\overline{k}} \iso X_{\overline{k}}
\end{equation}
for the translation automorphism of the $\overline{k}$\nd scheme $X_{\overline{k}}$ that sends $x$ to $y$.
Given $\nu$ in $NS(X_{\overline{k}})$, we write
\begin{equation}\label{e:phinu}
\varphi_\nu:X_{\overline{k}} \to \widehat{X}_{\overline{k}}
\end{equation}
for the morphism over $\overline{k}$ that sends $y$ to the class of $(T_y)^*\sL \otimes \sL^\vee$, where $\sL$ is
a line bundle over $X$ with class $\nu$.

\begin{lem}\label{l:H1dim1}
Let $X_0$ be a smooth geometrically connected projective curve over $k$ of genus $1$, and $X$ be a
non-empty geometrically connected pro\'etale
cover of $X_0$. 
Then the canonical homomorphism from $H^1(X_0,\sO_{X_0})$ to $H^1(X,\sO_X)$ is an isomorphism.
\end{lem}

\begin{proof}
We may write $X$ as the filtered limit of finite \'etale schemes $X_\lambda$ over $X_0$, with each $X_\lambda$ 
a smooth geometrically connected projective curve over $k$ of genus $1$.
If $X_\lambda$ is the spectrum of the commutative $\sO_{X_0}$\nd algebra $\sR_\lambda$, then
\[
H^1(X,\sO_X) = H^1(X_0,\colim \sR_\lambda) = \colim H^1(X_0,\sR_\lambda),
\] 
with the interchange of colimits justified by a \v{C}ech calculation.
For each $\lambda$, the embedding $\sO_{X_0} \to \sR_\lambda$ has a left inverse $(1/r)\tr_{\sR_\lambda}$, with $r > 0$
the rank of $\sR_\lambda$. 
Thus the homomorphism from $H^1(X_0,\sO_{X_0})$ to $H^1(X_0,\sR_\lambda)$ is injective, 
and hence an isomorphism
since its source and target are $1$\nd dimensional.
The result follows.
\end{proof}

\begin{lem}\label{l:ellindecsym}
Let $X_0$ be a smooth geometrically connected projective curve over $k$ of genus $1$, and $X$ be a 
non-empty geometrically connected pro\'etale
cover of $X_0$. 
Then non-split extensions of $\sO_X$ by $\sO_X$ exist, and any two of them are isomorphic as $\sO_X$\nd modules.
If $\sE$ is such a non-split extension, then every symmetric power $S^r\sE$ of $\sE$ is indecomposable as an $\sO_X$\nd module.
\end{lem}

\begin{proof}
The existence and uniqueness statements follow from Lemma~\ref{l:H1dim1}.

To prove that $S^r\sE$ is indecomposable for $r=1$,
suppose the contrary.
Then $\sE$ is the direct sum of line bundles $\sE'$ and $\sE''$.
We may assume $H^0(X,\sE'{}^\vee)$ is $0$,
because $\sE^\vee$ is a non-split extension of $\sO_X$ by $\sO_X$ and hence $H^0(X,\sE^\vee)$ is $1$\nd dimensional.
The epimorphism from $\sE$ to $\sO_X$ then induces an epimorphism and hence an isomorphism from $\sE''$ to $\sO_X$.
Thus $\sE$ is isomorphic to $\sO_X \oplus \sO_X$, which is impossible.

To prove that $S^r\sE$ is indecomposable for arbitrary $r$, we may suppose after extending the scalars that 
$k$ is algebraically closed and that $X$ has a $k$\nd point $x$.
If $x$ lies above the $k$\nd point $x_0$ of $X_0$, we may  by Lemma~\ref{l:H1dim1} assume further,
after pullback along the unique morphism over $(X_0,x_0)$ from $(\widetilde{X}_0,\widetilde{x}_0)$ to $(X,x)$,
that $X = \widetilde{X}_0$.
It then suffices to use Example~\ref{ex:tenspower} with $H = X$.
\end{proof}

Let $X$ be an abelian variety over $k$, with base point $x$.
If we write $(X_{(n)},x_{(n)})$ for $(X,x)$ regarded as a $k$\nd pointed scheme over itself with structural morphism $n_X$,
then it can be seen as follows that
\begin{equation}\label{e:ucoverlim}
(\widetilde{X},\widetilde{x}) = \lim_{n \ne 0} (X_{(n)},x_{(n)}).
\end{equation}
Since $X_{(n)}$ is finite \'etale over $X$ for $n \ne 0$, it is enough
to check that every $k$\nd pointed finite \'etale scheme $(X',x')$ over $(X,x)$ is the target of a unique
morphism over $(X,x)$ from the limit in \eqref{e:ucoverlim}.
The uniqueness is clear, because the equaliser of two such morphisms is an open and closed subscheme of 
$\lim_{n \ne 0} X_{(n)}$ containing $(x_{(n)})$. 
For the existence, we may suppose $X'$ connected and $k$ algebraically closed.
Then by a theorem of Lang and Serre \cite[p.167, Theorem]{Mum70} $X'$ has a (unique) structure of abelian variety with base point $x'$,
and for some $n \ne 0$ there exists \cite[p.169, Remark]{Mum70} a morphism from $X_{(n)}$ to $X'$ over $X$.

We have an extension of Galois modules
\[
0 \to \widehat{X}(\overline{k}) \to \Pic(X_{\overline{k}}) \to NS(X_{\overline{k}}) \to 0,
\]
where $n_X$ acts as $n$ on $\widehat{X}(\overline{k})$ and as $n^2$ on $NS(X_{\overline{k}})$
\cite[p.75, (iii) and (iv)]{Mum70}.
By \eqref{e:ucoverlim} together with Lemmas~\ref{l:colimHomR} and \ref{l:HRrepfp},
$\Pic(\widetilde{X}_{\overline{k}})$ is the filtered colimit of copies of $\Pic(X_{\overline{k}})$
with transition homomorphisms induced by morphisms $m_X$.
Thus $\Pic(\widetilde{X}_{\overline{k}})$ is uniquely divisible, 
and $\Pic(X_{\overline{k}}) \to \Pic(\widetilde{X}_{\overline{k}})$ 
induces an isomorphism 
\begin{equation}\label{e:PictensQ}
\Pic(X_{\overline{k}})_\Q \iso \Pic(\widetilde{X}_{\overline{k}})
\end{equation}
of Galois modules.

Suppose that $k$ is algebraically closed and that $X$ is an elliptic curve.
Then \cite[Theorem~5(i)]{Ati57} for $r > 0$ there is an indecomposable vector bundle $\sF_r$ over $X$ of rank $r$ 
and degree $0$
with $H^0(X,\sF_r) \ne 0$, and $\sF_r$ is unique up to isomorphism.
Further \cite[Theorem~5(ii)]{Ati57} any indecomposable vector bundle over $X$ of rank $r$ and degree $0$ is isomorphic to 
$\sL_0 \otimes_{\sO_X} \sF_r$ for a line bundle $\sL_0$ over $X$.
If $\sE_0$ is a non-split extension of $\sO_X$ by $\sO_X$, it follows from Lemma~\ref{l:ellindecsym} that 
$\sF_{r+1}$ is isomorphic to $S^r\sE_0$ for $r \ge 0$, because the $r$th symmetric power of the embedding of $\sO_X$ into $\sE_0$ defines a non-zero element of $H^0(X,S^r\sE_0)$.

\begin{lem}\label{l:elllinesym}
Let $X$ be an elliptic curve over $k$,
and $\sE$ be a non-split extension of $\sO_{\widetilde{X}}$ by $\sO_{\widetilde{X}}$.
Suppose that $k$ is algebraically closed.
Then any indecomposable vector bundle over $\widetilde{X}$ of rank 
$r+1$ is isomorphic to $\sL \otimes_{\sO_{\widetilde{X}}} S^r\sE$ for a line bundle $\sL$ over $\widetilde{X}$,
determined uniquely up to isomorphism.
\end{lem}

\begin{proof}
Let $\sV$ be an indecomposable vector bundle over $\widetilde{X}$ of rank $r+1$.
If a line bundle $\sL$ over $\widetilde{X}$ with 
$\sV$ isomorphic to $\sL \otimes_{\sO_{\widetilde{X}}} S^r\sE$ 
exists, then by \eqref{e:PictensQ} it is unique up to isomorphism, 
because $\sL^{\otimes (r+1)}$ must be isomorphic to
the determinant of $\sV$.
To prove that $\sL$ exists, we may suppose by \eqref{e:PictensQ} that $\sV$ has trivial determinant.
By Lemmas~\ref{l:colimHomR} and \ref{l:HRrepfp}, 
there is a projection $\widetilde{X} \to X_{(n)}$ of \eqref{e:ucoverlim} along which $\sV$ 
is the pullback of a vector bundle $\sV_0$ over $X = X_{(n)}$
and $\sE$ is the pullback of an extension $\sE_0$ of $\sO_X$ by $\sO_X$.
Then $\sV_0$ is indecomposable, with determinant of degree $0$ by \eqref{e:PictensQ}, and $\sE_0$ is non-split.
By the above, $\sV_0$ is isomorphic to $\sL_0 \otimes_{\sO_X} S^r\sE_0$ for a line
bundle $\sL_0$ over $X$.
\end{proof}

Taking $H = X$ and $\sV = \sO_X$ in \eqref{e:VViso} gives an isomorphism
\begin{equation}\label{e:OXGaiso}
H^1(X,\sO_X) \iso H^1(X,\bG_a)
\end{equation}
which sends the class of the extension $\sE$ of $\sO_X$ by $\sO_X$ to the class of the principal $\bG_a$\nd bundle 
$P_0$ over $X$
of splittings $\sO_X \to \sE$ of $\sE$,
where $\bG_a$ acts by restricting along the embedding $\sO_X \to \sE$ the translation of $\sE$.
Fix an $\sE$ and write $P_1$ for the scheme over $X$ of isomorphisms from $\sO_X^2$ to $\sE$.
Composition on the left and right defines on $P_1$
a structure of principal $(\underline{\Iso}_X(\sE),GL_2)$\nd bundle with the action of $\underline{\Iso}_X(\sE)$ simply transitive.
Thus by Lemma~\ref{l:simpleprin} we have an isomorphism
\begin{equation}\label{e:IsoEIsoP}
\underline{\Iso}_X(\sE) \iso \underline{\Iso}_{GL_2}(P_1)
\end{equation}
 of groupoids over $X$.
The morphism $\alpha \mapsto (\varepsilon,\alpha)$ from $P_0$ to $P_1$ with $\varepsilon:\sO_X \to \sE$ the embedding
is compatible with the actions of $\bG_a$ and $GL_2$ and the
upper triangular embedding $\bG_a \to GL_2$.
Thus $P_1$ is the push forward of $P_0$ along $\bG_a \to GL_2$.

For $X_0$ and $X$ as in Lemma~\ref{l:H1dim1}, it follows from \eqref{e:OXGaiso} that the sets of isomorphism 
classes of non-trivial principal $\bG_a$\nd bundles over $X_0$ and over $X$ are non-empty, and that pullback
from $X_0$ to $X$ defines a bijection between them.

\begin{thm}\label{t:elluniv}
Let $X_0$ be a smooth geometrically connected projective curve over $k$ of genus $1$, and 
$X$ be a non-empty geometrically connected pro\'etale
cover of $X_0$. 
Let $P$ be the push forward of a non-trivial principal $\bG_a$\nd bundle
over $X$ along an embedding $\bG_a \to SL_2$.
Then $\pi_\mathrm{\acute{e}tm}(X) \times_{[X]} \Iso_{SL_2}(P)$ is universally reductive over $X$.
\end{thm}

\begin{proof}
By Corollary~\ref{c:minscalext}, we may suppose that $k$ is algebraically closed.
Then $X_0$ has a $k$\nd point $x_0$.
Applying Proposition~\ref{p:etunpull} and \eqref{e:pietcov} to $X \to X_0$ and $\widetilde{X}_0 \to X_0$,
we may suppose further that $X = \widetilde{X}_0$.
Then $\pi_\mathrm{\acute{e}tm}(X)$ coincides with $\pi_\mathrm{mult}(X)$, and it is to be shown that
\[
K = \pi_\mathrm{mult}(X) \times_{[X]} \Iso_{SL_2}(P)
\]
is universally reductive over $X$.

Restricting to $\widetilde{x}_0$ and using Lemma~\ref{l:prereppull}
shows that the indecomposable representations of $K$ are the tensor products of 
those of $\pi_\mathrm{mult}(X)$ with those of $\Iso_{SL_2}(P)$.
The indecomposable representations of $\pi_\mathrm{mult}(X)$ are those of rank $1$, and by \eqref{e:PicmultH},
discarding the action of $\pi_\mathrm{mult}(X)$ defines an isomorphism from the group of isomorphism classes 
such representations to $\Pic(X)$.
If $P$ is the push forward of the principal $\bG_a$\nd bundle $P_0$ over $X$ corresponding under \eqref{e:OXGaiso}
to the non-split extension $\sE$ of $\sO_X$ by $\sO_X$,
then $P_1$ in \eqref{e:IsoEIsoP} is the push forward of $P$ along
the embedding of $SL_2$ into $GL_2$.
Thus $\underline{\Iso}_{SL_2}(P)$ embeds into $\underline{\Iso}_{GL_2}(P_1)$,
and $\sE$ has by \eqref{e:IsoEIsoP} a structure of faithful 
representation of $\underline{\Iso}_{SL_2}(P)$.
Restricting to $\widetilde{x}_0$ shows that the indecomposable representations of $\underline{\Iso}_{SL_2}(P)$ are the 
symmetric powers of $\sE$.
By Lemma~\ref{l:elllinesym}, condition \ref{i:unequivbij} of Corollary~\ref{c:unequiv}
is thus satisfied.
\end{proof}

Let $X$ be an abelian variety over $k$ with base point $x$.
By \eqref{e:ucoverlim}, formation of the  $k$\nd pointed geometric universal cover $(\widetilde{X},\widetilde{x})$
of $(X,x)$ commutes with finite products of abelian varieties.
Since the unique action of $\pi_\mathrm{\acute{e}t}(X)$ on $\widetilde{X}$ is simply transitive, it follows
that the same holds for formation of the groupoid in $k$\nd schemes $(X,\pi_\mathrm{\acute{e}t}(X))$.
By functoriality of $\pi_\mathrm{\acute{e}t}(X)$, there is thus a unique structure
\begin{equation}\label{e:pietgroup}
(X \times_k X, \pi_\mathrm{\acute{e}t}(X) \times_k \pi_\mathrm{\acute{e}t}(X)) \to
(X,\pi_\mathrm{\acute{e}t}(X))
\end{equation}
of commutative group on $(X,\pi_\mathrm{\acute{e}t}(X))$ above that on $X$.
The usual $k$\nd group structure on $\pi_\mathrm{\acute{e}t}(X,x)$ defined by composition necessarily coincides with the one 
defined by restriction of \eqref{e:pietgroup}, and is thus commutative.

By functoriality of $(\widetilde{X},\widetilde{x})$, there is a unique structure of commutative $k$\nd group 
on $\widetilde{X}$ with identity $\widetilde{x}$ such that $\widetilde{X} \to X$ is a morphism of $k$\nd groups.
This structure necessarily coincides with that given by \eqref{e:ucoverlim}, and with that given by restriction of
\eqref{e:pietgroup} using \eqref{e:uncovpiet}.
In particular
\begin{equation}\label{e:pietKern}
\pi_\mathrm{\acute{e}t}(X,x) = \lim_{n \ne 0} {}_nX
\end{equation}
is the kernel of $\widetilde{X} \to X$.
Further the structure of principal $\pi_\mathrm{\acute{e}t}(X,x)$\nd bundle over $X$ on $\pi_\mathrm{\acute{e}t}(X)_{-,x}$
defined by composition coincides with that defined by translation by the $k$\nd subgroup $\pi_\mathrm{\acute{e}t}(X,x)$.

Since $\pi_\mathrm{\acute{e}t}(X,x)$ is commutative, it is the maximal pro\'etale $k$\nd quotient of
$\pi_\mathrm{mult}(X,x) = D(\Pic(X_{\overline{k}}))$.
Thus
\begin{equation}\label{e:DPicpi}
\pi_\mathrm{\acute{e}t}(X,x) = D({}_\mathrm{tors}\widehat{X}(\overline{k})),
\end{equation}
where the universal element in $H^1(X,x,D({}_\mathrm{tors}\widehat{X}(\overline{k})))$ is the unique element with
image in $H^1(X_{\overline{k}},D({}_\mathrm{tors}\widehat{X}(\overline{k}))_{\overline{k}})$ the tautological element.

It follows from \eqref{e:pietKern} and \eqref{e:DPicpi} that for $n \ne 0$
\begin{equation}\label{e:nXDnXhat}
{}_n X = D({}_n \widehat{X}(\overline{k})),
\end{equation}
where $\chi_\lambda:{}_nX_{\overline{k}} \to \bG_m{}_{\overline{k}}$ for $\lambda$ in 
${}_n\widehat{X}(\overline{k}) \subset \Pic(X_{\overline{k}})$ is the unique character with the following property:
if $X_{(n)}$ as in \eqref{e:ucoverlim} is regarded as a principal ${}_nX$\nd bundle over $X$ by translation,
then $\lambda$ is the class of
the push forward of $X_{(n)}{}_{\overline{k}}$ along $\chi_\lambda$.
For $m \ne 0$, \eqref{e:nXDnXhat} is compatible with the projection ${}_{mn}X \to {}_n X$,
and hence, as follows by factoring $m_{{}_{mn} X}$ through ${}_n X$, with the embedding ${}_n X \to {}_{mn}X$.

Given $n \ne 0$, \eqref{e:pietcov} with $X' = X_{(n)}$ shows that
the endomorphism $n_X{}_*$ of $\pi_\mathrm{\acute{e}tm}(X)$ induces by restriction an automorphism
\begin{equation}\label{e:nXpietmx}
n_X{}_*:\pi_\mathrm{\acute{e}tm}(X)_{-,x} \iso \pi_\mathrm{\acute{e}tm}(X)_{-,x}
\end{equation}
of $\pi_\mathrm{\acute{e}tm}(X)_{-,x}$ over $n_X$.
When $\pi_\mathrm{\acute{e}tm}$ is
replaced by $\pi_\mathrm{\acute{e}t}$, \eqref{e:nXpietmx} is the automorphism $n_{\widetilde{X}}$
of $\pi_\mathrm{\acute{e}t}(X)_{-,x} = \widetilde{X}$. 
A point of $\pi_\mathrm{\acute{e}tm}(X)$ above $(x,x)$ with image $y$ under
\[
\pi_\mathrm{\acute{e}tm}(X,x) \to \pi_\mathrm{\acute{e}t}(X,x) \to {}_n X,
\]
is the image under \eqref{e:nXpietmx} of a point above $(y,x)$.

We may identify $\pi_\mathrm{\acute{e}tm}(\widetilde{X},\widetilde{x}) = \pi_\mathrm{mult}(\widetilde{X},\widetilde{x})$ with
$D(\Pic(X_{\overline{k}})_\Q)$ using \eqref{e:PictensQ}.
The embedding of \eqref{e:multetmet} then becomes
\begin{equation}\label{e:DPicQembed}
D(\Pic(X_{\overline{k}})_\Q) \to \pi_\mathrm{\acute{e}tm}(X,x).
\end{equation}
Both \eqref{e:DPicQembed} and the projection 
\begin{equation}\label{e:DPicproject}
\pi_\mathrm{\acute{e}tm}(X,x) \to D(\Pic(X_{\overline{k}}))
\end{equation}
onto $\pi_\mathrm{mult}(X,x)$ are natural in the abelian variety $X$.
The composite of \eqref{e:DPicproject} with \eqref{e:DPicQembed} is 
the $k$\nd homomorphism induced by the canonical homomorphism
from $\Pic(X_{\overline{k}})$ to $\Pic(X_{\overline{k}})_\Q$.
Conjugation by a point of $\pi_\mathrm{\acute{e}tm}(X,x)$ thus induces the identity on
$D(\Pic(X_{\overline{k}})_\Q)$ because it induces the identity on $D(\Pic(X_{\overline{k}}))$.
It follows that \eqref{e:DPicQembed} is the embedding of a central $k$\nd subgroup,
so that the extension \eqref{e:multetmet} is central for $X$ an abelian variety.

Let $M$ be a Galois submodule of $\Pic(X_{\overline{k}})$ containing 
${}_\mathrm{tors}\widehat{X}(\overline{k}) ={}_\mathrm{tors}\Pic(X_{\overline{k}})$, 
and denote by $M'$ the cokernel 
\[
M \to \Pic(X_{\overline{k}})_\Q \to M' \to 0.
\]
We obtain from \eqref{e:multetmet} using \eqref{e:DPicpi} a central extension of $k$\nd groups 
\begin{equation}\label{e:DMDM'ext}
1 \to D(M') \to \pi_\mathrm{\acute{e}tm}(X,x) \to D(M) \to 1,
\end{equation}
where the first arrow is the restriction of \eqref{e:DPicQembed} to $D(M')$, and the second is
the projection onto $D(M)$ composed with \eqref{e:DPicproject}.
The second arrow sends the universal element in $H^1(X,x,\pi_\mathrm{\acute{e}tm}(X,x))$ 
to the unique element of $H^1(X,x,D(M))$
with image in $H^1(X_{\overline{k}},D(M)_{\overline{k}})$ the tautological element.

Taking $M = \widehat{X}(\overline{k})$ and $M' = NS(X_{\overline{k}})_\Q$ in \eqref{e:DMDM'ext},
we obtain a central extension
\begin{equation}\label{e:NSQPic0ext}
1 \to D(NS(X_{\overline{k}})_\Q) \to \pi_\mathrm{\acute{e}tm}(X,x) \to D(\widehat{X}(\overline{k})) \to 1
\end{equation}
of $k$\nd groups, which is functorial in $X$.
The $k$\nd endomorphism $n_{X*}$ of $\pi_\mathrm{\acute{e}tm}(X,x)$
induces the $n$th power on $D(\widehat{X}(\overline{k}))$ and the $n^2$th power on $D(NS(X_{\overline{k}})_\Q)$.

The commutative $k$\nd groups $D(\Pic(X_{\overline{k}})_\Q)$ and $D(NS(X_{\overline{k}})_\Q)$ are uniquely
divisible, in the sense that the $n$th power morphism is an isomorphism for every $n \ne 0$,
because $\Pic(X_{\overline{k}})_\Q$ and $NS(X_{\overline{k}})_\Q$ are $\Q$\nd vector spaces.
Rational powers of any point of these $k$\nd groups are thus uniquely defined.
Similarly for $n \ne 0$ the $k$\nd group $D(\widehat{X}(\overline{k}))$ has no $n$\nd torsion,
in the sense that the $n$th power morphism is a monomorphism, because $\widehat{X}(\overline{k})$ is divisible.

Let $G'$ and $G''$ be commutative affine $k$\nd groups.
We write $\Alt^2(G',G'')$ for the abelian group of those morphisms of $k$\nd schemes
\[
z:G' \times_k G' \to G''
\]
which are bilinear, i.e.\ $z(g',-)$ and $z(-,g')$ are group homomorphisms for every point $g'$ of $G'$,
and alternating, i.e.\ $z(g',g') = 1$ for every $g'$.
To every central extension $G$ of $G'$ by $G''$ is associated its  \emph{commutator morphism} in $\Alt^2(G',G'')$,
given by factoring through $G' \times_k G'$ the commutator of $G$, which sends the point 
$(g,h)$ of $G \times_k G$ to the point
\[
[g,h] = ghg^{-1}h^{-1}
\]
of the $k$\nd subgroup $G''$ of $G$.

Write the commutator morphism associated to the central extension \eqref{e:NSQPic0ext} as
\[
c_X \in \Alt^2(D(\widehat{X}(\overline{k})),D(NS(X_{\overline{k}})_\Q)).
\]
By \eqref{e:DMDM'ext}, the cokernel of the central embedding \eqref{e:DPicQembed} is 
$D({}_\mathrm{tors}\widehat{X}(\overline{k}))$ and 
the kernel $\pi_\mathrm{\acute{e}tm}(X,x)^\mathrm{der}$ of the projection  \eqref{e:DPicproject} onto the maximal commutative $k$\nd quotient is $D(NS(X_{\overline{k}}) \otimes \Q/\Z)$.
Thus $c_X$ factors as
\[
\xymatrix{
D(\widehat{X}(\overline{k})) \times_k D(\widehat{X}(\overline{k})) \ar[d]_{r \times r} \ar[r]^-{c_X} 
& D(NS(X_{\overline{k}})_\Q) \\
D({}_\mathrm{tors}\widehat{X}(\overline{k})) \times_k D({}_\mathrm{tors}\widehat{X}(\overline{k})) \ar[r] 
& D(NS(X_{\overline{k}}) \otimes \Q/\Z) \ar[u]_{v}
}
\]
where $r$ is the projection and $v$ is the embedding.
Further $v$ is the embedding of the smallest $k$\nd subgroup through which $c_X$ factors.

Let $j_1$ and $j_2$ be endomorphisms of the $k$\nd group $D(NS(X_{\overline{k}})_\Q)$ with 
\begin{equation}\label{e:jcX}
j_1 \circ c_X = j_2 \circ c_X.
\end{equation}
Then $j_1 = j_2$.
Indeed  $j_1 \circ v = j_2 \circ v$ because $c_X$ and hence $v$ factors through the equaliser of $j_1$ and $j_2$.
Since the cokernel of $v$ is the $k$\nd torus $D(NS(X_{\overline{k}}))$,
the image in $D(NS(X_{\overline{k}})_\Q)$ of the difference of $j_1$ and $j_2$ is a $k$\nd torus 
with no $n$\nd torsion for $n \ne 0$ and hence trivial.

Let $G'$ and $G''$ be affine $k$\nd groups.
If $G_1$ and $G_2$ are extensions of $G'$ by $G''$,
we write $\Hom_{G'}(G_1,G_2)$ for the set of $k$\nd homomorphisms
from $G_1$ to $G_2$ compatible with the projection onto $G'$ and $\Hom^{G''}_{G'}(G_1,G_2)$ for the subset of those
also compatible with the embedding of $G''$.
We write $\Aut^{G''}_{G'}(G)$ for the group $\Hom^{G''}_{G'}(G,G)$.
If $G$ is a central extension of $G'$ by $G''$, we have an isomorphism 
\[
\Hom_k(G',G'') \iso \Aut^{G''}_{G'}(G)
\]
of abelian groups which sends $f$ to $g \mapsto f(g')g$ with $g'$ the image of $g$ in $G'$.
In particular if $M$ and $M'$ are Galois modules and $G$ is a central extension of $D(M)$ by $D(M')$,
we have an isomorphism
\begin{equation}\label{e:HomAutiso}
\Hom(M',M) \iso \Hom_{\overline{k}}(D(M)_{\overline{k}},D(M')_{\overline{k}})
\iso \Aut^{D(M')_{\overline{k}}}_{D(M)_{\overline{k}}}(G_{\overline{k}})
\end{equation}
of abelian groups, which is compatible with the actions of the Galois group. 

Let $G$ be a central extension of $D(\widehat{X}(\overline{k}))$ by $D(NS(X_{\overline{k}})_\Q)$ with commutator
morphism $c_X$.
Then we have
\begin{equation}\label{e:AutEnd}
\Hom^{D(NS(X_{\overline{k}})_\Q)}_{D(\widehat{X}(\overline{k}))}(\pi_\mathrm{\acute{e}tm}(X,x),G) =
\Hom_{D(\widehat{X}(\overline{k}))}(\pi_\mathrm{\acute{e}tm}(X,x),G)
\end{equation}
because \eqref{e:jcX} holds with $j_1$ the identity and $j_2$ any induced $k$\nd endomorphism.

Given $t$ in $\widetilde{X}(\overline{k})$, define as follows an automorphism
\[
\xi_t:\pi_\mathrm{\acute{e}tm}(X,x)_{\overline{k}} \iso \pi_\mathrm{\acute{e}tm}(X,x)_{\overline{k}}
\]
of the $\overline{k}$\nd group $\pi_\mathrm{\acute{e}tm}(X,x)_{\overline{k}} = 
\pi_\mathrm{\acute{e}tm}(X_{\overline{k}},x)$.
Since \eqref{e:multetmet} is here a central extension, $\pi_\mathrm{\acute{e}tm}(X)$ acts on $\pi_\mathrm{\acute{e}tm}(X)^\mathrm{diag}$ 
through $\pi_\mathrm{\acute{e}t}(X)$. 
The action $\alpha_s$ of the $\overline{k}$\nd point $s$ of $\pi_\mathrm{\acute{e}t}(X)$ is
conjugation by a $\overline{k}$\nd point of $\pi_\mathrm{\acute{e}tm}(X)$ above $s$, 
which exists by Proposition~\ref{p:printriv}.
If $t$ in $\widetilde{X}(\overline{k}) = \pi_\mathrm{\acute{e}t}(X)_{-,x}(\overline{k})$ lies above 
$y$ in $X(\overline{k})$, then $\xi_t$ is the composite
\begin{equation}\label{e:xidef}
\pi_\mathrm{\acute{e}tm}(X_{\overline{k}},x) \xrightarrow{T_y{}_*} \pi_\mathrm{\acute{e}tm}(X_{\overline{k}},y)
\xrightarrow{\alpha_{t^{-1}}} \pi_\mathrm{\acute{e}tm}(X_{\overline{k}},x),
\end{equation}
with $T_y$ the translation \eqref{e:translation}.
Up to conjugacy, $\xi_t$ depends only on $y$.
We have
\[
T_y{}_* \circ \alpha_{t'{}^{-1}} = \alpha_{T_y{}_*(t')^{-1}} \circ T_y{}_*
\]
for $t'$ in $\widetilde{X}(\overline{k})$, by compatibility of $T_y{}_*$ with composition, while
\[
\alpha_{t^{-1}} \circ \alpha_{T_y{}_*(t')^{-1}} = \alpha_{(T_y{}_*(t') \circ t)^{-1}}
=
\alpha_{(t+t')^{-1}},
\]
by \eqref{e:pietgroup} applied to $(t',1_y) \circ (1_x,t) = (t',t)$, because $T_y{}_* = - + 1_y$.
Thus
\[
\xi_{t+t'} = \xi_t \circ \xi_{t'},
\]
so that $t \mapsto \xi_t$ defines an action of $\widetilde{X}(\overline{k})$ on $\pi_\mathrm{\acute{e}tm}(X,x)_{\overline{k}}$,
and an action up to conjugacy of $X(\overline{k})$ on $\pi_\mathrm{\acute{e}tm}(X,x)_{\overline{k}}$.
Also 
\begin{equation}\label{e:xintxit}
\xi_{nt} \circ n_X{}_* = n_X{}_* \circ \xi_t,
\end{equation}
because $T_{ny}{}_* \circ n_X{}_* = n_X{}_* \circ T_y{}_*$ and $n_X{}_*(t) = nt$.

For $y$ in $X(\overline{k})$, define a $\overline{k}$\nd automorphism $\zeta_y$ of 
$\pi_\mathrm{mult}(X,x)_{\overline{k}} = \pi_\mathrm{mult}(X_{\overline{k}},x)$ by
\[
\pi_\mathrm{mult}(X_{\overline{k}},x) \xrightarrow{T_y{}_*} \pi_\mathrm{mult}(X_{\overline{k}},y)
\iso \pi_\mathrm{mult}(X_{\overline{k}},x)
\]
with the second arrow that defined by the $[X]$\nd scheme structure on $\pi_\mathrm{mult}(X)^\mathrm{diag}$.
For $t$ in $\widetilde{X}(\overline{k})$ above $y$, the $\overline{k}$\nd automorphisms
$\xi_t$ and $\zeta_y$ are compatible with the projection from $\pi_\mathrm{\acute{e}tm}(X,x)_{\overline{k}}$
onto $\pi_\mathrm{mult}(X,x)_{\overline{k}}$, because $\pi_\mathrm{\acute{e}tm}(X) \to \pi_\mathrm{mult}(X)$ 
commutes with translation by the universal property of $\pi_\mathrm{\acute{e}tm}$.
With the identification \eqref{e:diagPicX},
the second arrow of the composite defining $\zeta_y$ is the identity.
Hence by the case $k = \overline{k}$ of \eqref{e:diagPicf}, $\zeta_y$ is the $\overline{k}$\nd automorphism of 
$D(\Pic(X_{\overline{k}}))_{\overline{k}}$ defined by $T_y{}\!^*$, or equivalently for every $\mu$ in $\Pic(X_{\overline{k}})$
we have
\begin{equation}\label{e:zetaDT}
\chi_\mu \circ \zeta_y = \chi_{T_y{}\!^*\mu}.
\end{equation}
Since $T_y{}\!^*$ acts on $\widehat{X}(\overline{k})$ and $NS(X_{\overline{k}})$ trivially,
$\zeta_y$ is thus compatible with the projection onto $D(\widehat{X}(\overline{k}))_{\overline{k}}$ and the embedding of
$D(NS(X_{\overline{k}}))_{\overline{k}}$.
By compatibility of $\xi_t$ and $\zeta_y$ with the projection,
it follows that $\xi_t$ is compatible with the projection onto $D(\widehat{X}(\overline{k}))_{\overline{k}}$
and hence by \eqref{e:AutEnd} with the embedding of $D(NS(X_{\overline{k}})_\Q)_{\overline{k}}$.
Thus we have a commutative diagram of abelian groups
\begin{equation}\label{e:XkbarAut}
\begin{gathered}
\xymatrix{
\widetilde{X}(\overline{k}) \ar[d] \ar[r]^-{\xi_-} &
\Aut^{D(NS(X_{\overline{k}})_\Q)_{\overline{k}}}_{D(\widehat{X}(\overline{k}))_{\overline{k}}}(\pi_\mathrm{\acute{e}tm}(X,x)_{\overline{k}}) \ar[d]  \ar[r]^-{\sim} &
\Hom(NS(X_{\overline{k}})_\Q,\widehat{X}(\overline{k})) \ar[d]  \\
X(\overline{k}) \ar[r]^-{\zeta_-} & 
\Aut^{D(NS(X_{\overline{k}}))_{\overline{k}}}_{D(\widehat{X}(\overline{k}))_{\overline{k}}}(D(\Pic(X_{\overline{k}}))_{\overline{k}})  \ar[r]^-{\sim} &
\Hom(NS(X_{\overline{k}}),\widehat{X}(\overline{k}))
}
\end{gathered}
\end{equation}
which is compatible with the actions of the Galois group,
where the isomorphisms are the inverses of those of the form
\eqref{e:HomAutiso}.

Denote by $\widetilde{\xi}_t$ and $\widetilde{\zeta}_y$  the images of $t$ and $y$ under the top 
and bottom rows of \eqref{e:XkbarAut}. 
For $\nu$ in $NS(X_{\overline{k}})$ and $\varphi_\nu$ as in \eqref{e:phinu}, we have by \eqref{e:zetaDT}
\[
\chi_\nu(\zeta_y(g)g^{-1}) = \chi_{T_y{}\!^*\mu - \mu}(g) = \chi_{\varphi_\nu(y)}(g)
\]
for every point $g$ of $D(\Pic(X_{\overline{k}}))_{\overline{k}}$, where $\mu$ is any element 
of $\Pic(X_{\overline{k}})$ above $\nu$.
Thus
\begin{equation}\label{e:zetatilde}
\widetilde{\zeta}_y(\nu) = \varphi_\nu(y).
\end{equation}
Since $\widetilde{X}(\overline{k})$ is uniquely divisible, it follows from \eqref{e:XkbarAut}
and \eqref{e:zetatilde} that
\[
\widetilde{\xi}_t(\nu/n) = \widetilde{\xi}_{t/n}(\nu) = \varphi_\nu((t/n)_1) = \varphi_\nu(t_n)
\]
for $\nu$ in $NS(X_{\overline{k}})$ and $n \ne 0$, where a subscript $n$ denotes the component of a point of $\widetilde{X}$
in the copy $X_{(n)}$
of $X$ in the limit \eqref{e:ucoverlim}.
Thus 
\begin{equation}\label{e:chinuchiphi}
\chi_{\nu/n}(\xi_t(g)g^{-1}) = \chi_{\varphi_\nu(t_n)}(g)
\end{equation}
for every point $g$ of $\pi_\mathrm{\acute{e}tm}(X,x)_{\overline{k}}$, where $\chi_{\varphi_\nu(t_n)}$ is regarded as a character of 
$\pi_\mathrm{\acute{e}tm}(X,x)_{\overline{k}}$ by inflation.

For $t$ a $\overline{k}$\nd point of the kernel $\pi_\mathrm{\acute{e}t}(X,x) = \lim_{n \ne 0} {}_n X$
of $\widetilde{X} \to X$, the $\overline{k}$\nd automorphism $\xi_t$ 
is by definition conjugation by any $\overline{k}$\nd point of $\pi_\mathrm{\acute{e}tm}(X,x)$ above $t^{-1}$.
Thus if $a$ and $a'$ are $\overline{k}$\nd points of $D(\widehat{X}(\overline{k}))$ we have
\[
c_X(a,a') = c_X(a,a'{}^{-1})^{-1} = g'{}^{-1}gg'g^{-1} = \xi_{t'}(g)g^{-1},
\] 
where $g$ and $g'$ are $\overline{k}$\nd points of
$\pi_\mathrm{\acute{e}tm}(X,x)$ above $a$ and $a'$, and $t'$ is the image of $a'$ in $\pi_\mathrm{\acute{e}t}(X,x)$.
Hence using \eqref{e:nXDnXhat} it follows from \eqref{e:chinuchiphi} that
\begin{equation}\label{e:chicom}
\chi_{\nu/n}(c_X(a,a')) = \chi_{\varphi_\nu(a'{}\!_n)}(a_n)
\end{equation}
for $\nu$ in $NS(X_{\overline{k}})$ and $n \ne 0$, where $a_n$ and $a'{}_n$ are the images of $a$ and $a'$ in ${}_nX(\overline{k})$.

It follows from \eqref{e:chicom} that the largest $k$\nd subgroup $G$ of $D(\widehat{X}(\overline{k}))$ for which
$c_X(-,G)$ is trivial is its identity component
\[
D(\widehat{X}(\overline{k})/{}_\mathrm{tors}\widehat{X}(\overline{k})) = D(\widehat{X}(\overline{k})_\Q).
\]
Indeed if $\nu$ the class of an ample line bundle on $X$,
then given $a'$ and $n \ne 0$ with $a'{}\!_n \ne 0$,
we have $\varphi_\nu(a'{}\!_m) \ne 0$ for some multiple $m \ne 0$ of $n$, 
and hence $\chi_{\varphi_\nu(a'{}\!_m)}(a_m) \ne 1$ for some $a$.
The centre of $\pi_\mathrm{\acute{e}tm}(X,x)$ is thus its identity component, 
given by the image of the embedding \eqref{e:DPicQembed}. 

\begin{rem}
The pairing of Galois modules $(a,\lambda) \mapsto \chi_\lambda(a)$ occurring in \eqref{e:chicom} 
and elsewhere is the inverse of the usual Weil pairing
\[
\widetilde{e}_n:{}_nX(\overline{k}) \times {}_n\widehat{X}(\overline{k}) \to \overline{k}{}^*
\]
associated to $X$.
Recall \cite[p.183]{Mum70} that $\widetilde{e}_n(a,\lambda)$ is defined as $\chi(a)$,
where $\chi$ is the unique character of ${}_nX_{\overline{k}}$ with the following property:
if $X_{(n)}$ is $X$
regarded as a principal ${}_nX$\nd bundle over itself using $n_X$,
then $\lambda$ is the class of the line bundle 
$X_{(n)}{}_{\overline{k}} \times_{\overline{k}}^{{}_nX_{\overline{k}}} V$ with
$V$ the $1$\nd dimensional representation of ${}_nX_{\overline{k}}$ with character $\chi^{-1}$, or equivalently $\lambda$ is the
class of the push forward of $X_{(n)}{}_{\overline{k}}$ along $\chi^{-1}$. 
That
\[
\widetilde{e}_n(a,\lambda) = \chi_\lambda (a)^{-1}
\]
follows from the fact that by \eqref{e:nXDnXhat} we have $\chi^{-1} = \chi_\lambda$.
Suppose that $X$ is an elliptic curve.
Then we may identify $NS(X_{\overline{k}})$ with $\Z$ by assigning to a line bundle its degree.
The usual identification of $X_{\overline{k}}$ with $\widehat{X}_{\overline{k}}$, 
by assigning to $y$ the class of the divisor $(y) - (x)$, is then given by $\varphi_{-1}$.
With this identification, $\widetilde{e}_n$ becomes a pairing on ${}_nX(\overline{k})$, and by \eqref{e:chicom}
\[
\chi_{1/n}(c_X(a,a')) = \widetilde{e}_n(a_n,a'{}\!_n)
\]
for $\overline{k}$\nd points $a$ and $a'$ of $D(\widehat{X}(\overline{k})) = D(X(\overline{k}))$ with images $a_n$ and $a'{}\!_n$ 
in ${}_nX(\overline{k})$.
\end{rem}

Given commutative affine $k$\nd groups $G'$ and $G''$ and $z$ in $\Alt^2(G',G'')$, we write
\[
C_z
\]
for the affine $k$\nd group with 
underlying $k$\nd scheme $G'' \times_k G'$ and product given by
\[
(g'',g')(h'',h') = (g''h''z(g',h'),g'h').
\]
The embedding $g'' \mapsto (g'',1)$ of $G''$ into $C_z$ and the projection onto $G'$ define on $C_z$
a structure of central extension of $G'$ by $G''$ with commutator morphism $z^2$.
Given also commutative affine $k$\nd groups $G'{}\!_1$ and $G''{}\!_1$
and $z_1$ in $\Alt^2(G'{}\!_1,G''{}\!_1)$, any pair $j':G' \to G'{}\!_1$ and $j'':G'' \to G''{}\!_1$ of $k$\nd homomorphisms
such that
\[
z_1 \circ (j' \times_k j') = j'' \circ z
\]
induces a $k$\nd homomorphism $C_z \to C_{z_1}$ with underlying morphism of $k$\nd schemes $j'' \times_k j'$.
In particular for any integer $n$ we write
\[
n_z:C_z \to C_z
\]
for the endomorphism of $k$\nd groups induced by the pair $n_{G'}$ and $(n^2)_{G''}$.

A commutative affine $k$\nd group $G$ will be called uniquely $2$\nd divisible if $2_G$ is an isomorphism.
Every point $g$ of $G$ has then a unique square root $\sqrt{g}$.

Let $G$ be an affine $k$\nd group equipped with an involution, i.e.\ a $k$\nd automorphism $\iota$ with $\iota^2 = 1_G$.
Suppose that the $k$\nd subgroup $G^\iota$ of invariants is central and uniquely $2$\nd divisible, and
that $G/G^\iota$ is commutative.
Then $[\iota(g),g]$ lies in $G^\iota$ for every point $g$ of $G$,
so that
\[
[\iota(g),g] = \iota([\iota(g),g]) = [g,\iota(g)] = [\iota(g),g]^{-1}
\]
and hence $[\iota(g),g] = 1$.
Thus $\iota(g)$ commutes with $g$, so that $g\iota(g)$ lies in $G^\iota$.
It follows that $g$ can be written uniquely in the form 
\[
g = g_+g_-
\]
with $\iota(g_+) = g_+$ and $\iota(g_-) = (g_-)^{-1}$.
Indeed $g_+ = \sqrt{g\iota(g)}$.
If $c$ in $\Alt^2(G/G^\iota,G^\iota)$ is the commutator and $u$ in $\Alt^2(G/G^\iota,G^\iota)$ is given by
\[
u(a,a') = \sqrt{c(a,a')},
\]  
we then have an isomorphism
\begin{equation}\label{e:grpinviso}
(G,\iota) \iso (C_u,(-1)_u)
\end{equation}
of $k$\nd groups with an involution, which sends $g$ to $(g_+,\overline{g})$ with 
$\overline{g}$ the image of $g$ in $G/G^\iota$.
The isomorphism \eqref{e:grpinviso} is natural in $(G,\iota)$.
It is the unique $k$\nd homomorphism from $G$ to $C_u$ which is compatible with both the involutions and
the structures of extension of $G/G^\iota$ by $G^\iota$.

Let $X$ be an abelian variety over $k$ with base point $x$.
Then there is a unique
\[
u_X \in \Alt^2(D(\widehat{X}(\overline{k})),D(NS(X_{\overline{k}})_\Q))
\]
such that for every $\nu$ in $NS(X_{\overline{k}})$ and $n \ne 0$ we have
\begin{equation}\label{e:chiu}
\chi_{\nu/n}(u_X(a,a')) = \chi_{\varphi_\nu(a'{}\!_{2n})}(a_{2n})
\end{equation}
for every pair of $\overline{k}$\nd points $a$ and $a'$ of $D(\widehat{X}(\overline{k}))$ with respective images 
$a_{2n}$ and $a'{}\!_{2n}$ in $D({}_{2n}\widehat{X}(\overline{k})) = {}_{2n}X$.
Indeed by \eqref{e:chicom}
\[
u_X(a,a') = \sqrt{c_X(a,a')}.
\]
The involution $(-1)_{X*}$ of $\pi_\mathrm{\acute{e}tm}(X,x)$ induces through
the short exact sequence \eqref{e:NSQPic0ext} the identity on $D(NS(X_{\overline{k}})_\Q)$
and the inverse involution on $D(\widehat{X}(\overline{k}))$.
Since $D(\widehat{X}(\overline{k}))$ has no $2$\nd torsion, it follows that
\[
\pi_\mathrm{\acute{e}tm}(X,x)^{(-1)_{X*}} = D(NS(X_{\overline{k}})_\Q).
\]
Thus \eqref{e:grpinviso} with $G = \pi_\mathrm{\acute{e}tm}(X,x)$ and $\iota = (-1)_{X*}$ gives
by \eqref{e:NSQPic0ext} an isomorphism
\begin{equation}\label{e:piCuX}
\pi_\mathrm{\acute{e}tm}(X,x) \iso C_{u_X}
\end{equation}
of $k$\nd groups with involution.
It is natural in $X$, by naturality of \eqref{e:grpinviso} and functoriality of \eqref{e:NSQPic0ext},
and is the unique $k$\nd homomorphism compatible with 
the involutions and the structures of extension of $D(\widehat{X}(\overline{k}))$ by $D(NS(X_{\overline{k}})_\Q)$.

\begin{prop}\label{p:repCuX}
Let $X$ be an abelian variety over $k$ with base point $x$.
Then there is a unique element of $H^1(X,x,C_{u_X})$ which has image under
\[
H^1(X,x,C_{u_X}) \to H^1(X_{\overline{k}},C_{u_X}{}_{\overline{k}}) \to 
H^1(X_{\overline{k}},D(\widehat{X}(\overline{k}))_{\overline{k}})
\]
the tautological element and is fixed by the involution $((-1)_X,(-1)_{u_X})$ of $(X,C_{u_X})$.
For any element $\alpha$ of $H^1(X,x,C_{u_X})$ with image in $H^1(X_{\overline{k}},D(\widehat{X}(\overline{k}))_{\overline{k}})$ 
the tautological
element, the functor $H^1(X,x,-)$ on the category of $k$\nd groups of pro\'etale by multiplicative type
is represented by $C_{u_X}$ with universal element $\alpha$.
\end{prop}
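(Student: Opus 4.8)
The plan is to bootstrap from the representability of $H^1(X,x,-)$ by $\pi_\mathrm{\acute{e}tm}(X,x)$, transported across the isomorphism \eqref{e:piCuX}, and then to pin down automorphisms of $C_{u_X}$ using the commutator morphism $c_X=u_X^2$. Write $N=D(NS(X_{\overline{k}})_\Q)$ and $Q=D(\Pic^0(X_{\overline{k}}))$, so that \eqref{e:piCuX} presents $C_{u_X}$ as a central extension of $Q$ by $N$ with commutator morphism $c_X$, and let $\alpha_0\in H^1(X,x,C_{u_X})$ denote the image under \eqref{e:piCuX} of the class of $(\pi_\mathrm{\acute{e}tm}(X)_{-,x},1_x)$. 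Then, by the universal property of $\pi_\mathrm{\acute{e}tm}(X,x)$ recalled above, $C_{u_X}$ represents $H^1(X,x,-)$ on $k$\nd groups of pro\'etale by multiplicative type with universal element $\alpha_0$, and since \eqref{e:piCuX} is compatible with the projections onto $Q$, the push forward of $\alpha_0$ along $\pr:C_{u_X}\to Q$ equals, by the case $M=\Pic^0(X_{\overline{k}})$ of \eqref{e:DMDM'ext} (i.e.\ \eqref{e:NSQPic0ext}), the tautological element. I will also use that $C_{u_X}$, being an extension of one $k$\nd group of multiplicative type by another, is itself commutative of multiplicative type, so that any $k$\nd homomorphism of such groups is automatically a morphism of $k$\nd groups of pro\'etale by multiplicative type.

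First I would prove the representability assertion. Let $\alpha\in H^1(X,x,C_{u_X})$ have image the tautological element in $H^1(X,x,Q)$; write $\alpha=h_*\alpha_0$ for the unique $k$\nd homomorphism $h:C_{u_X}\to C_{u_X}$ (unique as $C_{u_X}$ is commutative). Pushing $\alpha$ forward to $H^1(X,x,Q)$ gives $(\pr\circ h)_*\alpha_0=\pr_*\alpha_0$, and since $g\mapsto g_*\alpha_0$ is injective on $\Hom_k(C_{u_X},Q)$ (representability applied to the group $Q$ of multiplicative type), we get $\pr\circ h=\pr$. Hence $h$ carries $N=\Ker\pr$ into itself and induces the identity on $Q$, so by compatibility of $h$ with commutators, $(h|_N)\circ c_X=c_X$; as $N$ is uniquely $2$\nd divisible, \eqref{e:juX} yields $h|_N=1_N$. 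A homomorphism which is the identity on both $N$ and $Q$ has the form $g\mapsto g\,\eta(\bar g)$ for a $k$\nd homomorphism $\eta:Q\to N$, hence is an automorphism of $C_{u_X}$; therefore $\alpha=h_*\alpha_0$ is again a universal element, which is the second statement.

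For the first statement, existence follows once $\alpha_0$ itself is shown to be fixed by the involution $\iota=((-1)_X,(-1)_{u_X})$. Applying the naturality square \eqref{e:H1funct} to the automorphism $(-1)_X$ of $(X,x)$, with $G=\pi_\mathrm{\acute{e}tm}(X,x)$ and evaluating at the identity, gives $((-1)_X)^*[(\pi_\mathrm{\acute{e}tm}(X)_{-,x},1_x)]=((-1)_{X*})_*[(\pi_\mathrm{\acute{e}tm}(X)_{-,x},1_x)]$; transporting across \eqref{e:piCuX}, which carries $(-1)_{X*}$ to $(-1)_{u_X}$, this becomes $((-1)_X)^*\alpha_0=((-1)_{u_X})_*\alpha_0$, and since $(-1)_{u_X}$ is an involution, $\iota(\alpha_0)=((-1)_{u_X})_*((-1)_X)^*\alpha_0=\alpha_0$. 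For uniqueness, let $\alpha$ be any element with image the tautological element and fixed by $\iota$. By the representability just proved, $\alpha=h_*\alpha_0$ with $h(g)=g\,\eta(\bar g)$ as above; the relation $\iota(\alpha)=\alpha$ together with $((-1)_X)^*\alpha_0=((-1)_{u_X})_*\alpha_0$ and uniqueness of the representing homomorphism forces $(-1)_{u_X}\circ h\circ(-1)_{u_X}=h$. Since $(-1)_{u_X}$ is $n_{u_X}$ for $n=-1$, hence the identity on $N$ and inversion on $Q$, a direct computation shows $\eta(\bar g)^2=1$ for every point $\bar g$ of $Q$, so $\eta=1$ by unique $2$\nd divisibility of $N$, giving $h=1$ and $\alpha=\alpha_0$.

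I expect the main obstacle to be the bookkeeping in the existence step, namely matching the pullback along $(-1)_X$ with the push forward along $(-1)_{u_X}$: this is where one must use precisely the functoriality packaged in \eqref{e:H1funct} and the fact that \eqref{e:piCuX} is compatible with the two involutions. The rest is the essentially formal rigidity argument for homomorphisms of the extension $C_{u_X}$, which runs on \eqref{e:juX} and on the unique divisibility of $N$.
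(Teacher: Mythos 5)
Your argument is correct and is essentially the paper's own proof: you use the pointed universal property of $\pi_\mathrm{\acute{e}tm}(X,x)$ transported across \eqref{e:piCuX}, the commutator argument via \eqref{e:juX} to force compatibility on $D(NS(X_{\overline{k}})_\Q)$, and \eqref{e:H1funct} with $f=(-1)_X$ to translate involution-fixedness into compatibility of the classifying homomorphism with $(-1)_{X*}$ and $(-1)_{u_X}$; the only cosmetic difference is that you phrase everything through endomorphisms $h$ of $C_{u_X}$ and finish uniqueness by the explicit computation $\eta^2=1$, where the paper instead invokes the uniqueness clause attached to \eqref{e:grpinviso}/\eqref{e:piCuX}.

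One incidental claim you make is false and should be removed: $C_{u_X}$ is \emph{not} commutative of multiplicative type — its commutator morphism is $c_X=u_X^2$, which is nontrivial whenever $\dim X>0$ (this is exactly what Proposition~\ref{p:commutator} computes, and the paper notes that the centre of $\pi_\mathrm{\acute{e}tm}(X,x)$ is only its identity component). Fortunately neither use you make of it is needed. First, $C_{u_X}$ does lie in the category of $k$\nd groups of pro\'etale by multiplicative type, because it is isomorphic to $\pi_\mathrm{\acute{e}tm}(X,x)$ (equivalently, it is an extension of the profinite group $D(\Pic(X_{\overline{k}})_\mathrm{tors})$ by $D(\Pic(X_{\overline{k}})_\Q)$), and this full subcategory has all $k$\nd homomorphisms as morphisms. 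Second, the uniqueness of $h$ with $h_*\alpha_0=\alpha$ (and likewise the step forcing $(-1)_{u_X}\circ h\circ(-1)_{u_X}=h$) does not need commutativity: the pointed functor $H^1(X,x,-)$ is represented on the nose, i.e.\ $\Hom_k(C_{u_X},G)\iso H^1(X,x,G)$ is a bijection with no conjugacy quotient — the same bijection whose injectivity you already invoke elsewhere. With those justifications substituted, the proof stands as written.
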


\begin{proof}
Let $j$ be a $k$\nd homomorphism from $\pi_\mathrm{\acute{e}tm}(X,x)$ to $C_{u_X}$.
Then the image $\alpha_j$ of $j$ under the bijection
\[
\Hom_k(\pi_\mathrm{\acute{e}tm}(X,x),C_{u_X}) \iso H^1(X,x,C_{u_X})
\]
defined by the universal property of $\pi_\mathrm{\acute{e}tm}(X,x)$ has image
in $H^1(X_{\overline{k}},D(\widehat{X}(\overline{k}))_{\overline{k}})$ the tautological
element if and only if $j$ is compatible with the
projections onto $D(\widehat{X}(\overline{k}))$.
When this condition holds, $j$ is also compatible with the embeddings of $D(NS(X_{\overline{k}})_\Q)$,
by \eqref{e:AutEnd}.
Since $\alpha_j$ is universal if and only if $j$ is an isomorphism, the second statement is clear.
By \eqref{e:H1funct} with $f = (-1)_X$ and $G = C_{u_X}$, the element $\alpha_j$ is fixed by the involution
$((-1)_X,(-1)_{u_X})$ if and only if $j$ is compatible with the involutions $(-1)_{X*}$ and $(-1)_{u_X}$.
Thus \eqref{e:piCuX} is the unique $j$ for which $\alpha_j$ has image the tautological element and is fixed
by $((-1)_X,(-1)_{u_X})$.
\end{proof}

We may identify $\pi_\mathrm{\acute{e}tm}(X,x)$ with $C_{u_X}$ using \eqref{e:piCuX}.
With this identification, the universal element in 
\[
H^1(X,x,C_{u_X})
\]
is the unique element of Proposition~\ref{p:repCuX} which is fixed by
$((-1)_X,(-1)_{u_X})$ and which has image in $H^1(X_{\overline{k}},D(\widehat{X}(\overline{k}))_{\overline{k}})$
the tautological element, because by \eqref{e:H1funct} the universal element in $H^1(X,x,\pi_\mathrm{\acute{e}tm}(X,x))$
is fixed by $((-1)_X,(-1)_{X*})$.
Similarly, by naturality of \eqref{e:piCuX},
the $k$\nd homomorphism $f_*:C_{u_X} \to C_{u_{X'}}$
defined by a commutative diagram of the form \eqref{e:H1funct} for a morphism $f:(X,x) \to (X',x')$ of abelian varieties
coincides with that defined by functoriality of $C_{u_X}$.
In particular
\[
n_{X*} = n_{u_X}
\]
for any $n$.

With the identification of $\pi_\mathrm{\acute{e}tm}(X,x)$ and $C_{u_X}$, we have a commutative diagram
\begin{equation}\label{e:CuXdiagram}
\begin{gathered}
\xymatrix{
D(\Pic(X_{\overline{k}})_\Q) \ar[r]^-{e} &  C_{u_X} \ar@{=}[d] \ar[r]^-{p} & D(\Pic(X_{\overline{k}})) \ar[d] \\
D(NS(X_{\overline{k}})_\Q) \ar[u] \ar[r] & C_{u_X} \ar[r] & D(\widehat{X}(\overline{k}))
}
\end{gathered}
\end{equation}
where $e$ is \eqref{e:DPicQembed}, $p$ is \eqref{e:DPicproject}, the bottom row is \eqref{e:NSQPic0ext},
the left vertical arrow is the embedding and the right vertical arrow is the projection.
It is functorial in $X$.
By compatibility of \eqref{e:piCuX} with the extensions, the bottom row of \eqref{e:CuXdiagram} is the 
canonical structure of extension defined by $C_{u_X}$.
The $k$\nd homomorphism $p \circ e$ is that  defined by the canonical homomorphism from 
$\Pic(X_{\overline{k}})$ to $\Pic(X_{\overline{k}})_\Q$.

When applied with $G = D(\Pic(X_{\overline{k}})_\Q)$ and $\iota$ defined by $(-1)_X$, \eqref{e:grpinviso} is
the decomposition of $D(\Pic(X_{\overline{k}})_\Q)$ as the product of $k$\nd subgroups
\[
D(NS(X_{\overline{k}})_\Q) \times_k D(\widehat{X}(\overline{k})_\Q),
\]
where $\widehat{X}(\overline{k})_\Q$ is identified with the quotient of
$\Pic(X_{\overline{k}})_\Q$ by its invariants under $(-1)_X$.
Since $e$ in \eqref{e:CuXdiagram} is compatible with the involutions, it is uniquely determined
by the induced $k$\nd homomorphisms from $D(NS(X_{\overline{k}})_\Q)$ to $D(NS(X_{\overline{k}})_\Q)$ and
from $D(\widehat{X}(\overline{k})_\Q)$ to $D(\widehat{X}(\overline{k}))$.
The first is the identity, by the left square of \eqref{e:CuXdiagram}, and the
second is defined by the projection of $\widehat{X}(\overline{k})$ onto $\widehat{X}(\overline{k})_\Q$, by the right square.

Since the $k$\nd subgroup of invariants of $D(\Pic(X_{\overline{k}}))$ under the involution induced by $(-1)_X$ is
the torus $D(NS(X_{\overline{k}}))$, and hence not uniquely $2$\nd divisible, 
it is less trivial to determine $p$ in \eqref{e:CuXdiagram} explicitly.
To do so, note first that by \eqref{e:CuXdiagram}, the restriction of $p$ to the $k$\nd subgroup $D(NS(X_{\overline{k}})_\Q)$
of $C_{u_X}$ is defined by the canonical homomorphism from $\Pic(X_{\overline{k}})$ to $NS(X_{\overline{k}})_\Q$.
It thus suffices to determine the restriction
\[
p_-:D(\widehat{X}(\overline{k})) \to D(\Pic(X_{\overline{k}}))
\]
of $p$ to $D(\widehat{X}(\overline{k}))$ embedded as a $k$\nd subscheme of $C_{u_X}$ by $g \mapsto (1,g)$.
This $k$\nd subscheme is not a $k$\nd subgroup of $C_{u_X}$ for $X$ of dimension $> 0$,
and $p_-$ is not in general a $k$\nd homomorphism:
by the definition of the product of $C_{u_X}$ and \eqref{e:chiu},
\begin{equation}\label{e:pbilin}
\chi_\lambda(p_-(ss')p_-(s)^{-1}p_-(s')^{-1}) = \chi_\nu(u_X(s,s')) = \chi_{\varphi_\nu(y')}(y)
\end{equation}
for $\lambda$ in $\Pic(X_{\overline{k}})$ above $\nu$ in $NS(X_{\overline{k}})$ 
and $\overline{k}$\nd points $s$ and $s'$ of $D(\widehat{X}(\overline{k}))$ above
the $\overline{k}$\nd points $y$ and $y'$ of $D({}_2\widehat{X}(\overline{k})) = {}_2X$.

To determine $p_-$, we use the decomposition 
\[
D(\Pic(X_{\overline{k}})) = D(\Pic(X_{\overline{k}})^{(-1)_X}) \times_{D({}_2\widehat{X}(\overline{k}))} D(\widehat{X}(\overline{k})),
\]
with the projections defined by the embeddings into $\Pic(X_{\overline{k}})$.
This decomposition holds because $\widehat{X}(\overline{k})$ is $2$\nd divisible, 
so that above every element of $NS(X_{\overline{k}})$ there exists
an element of $\Pic(X_{\overline{k}})$ fixed by $(-1)_X$.
By the right square of \eqref{e:CuXdiagram}, the component of $p_-$ at $D(\widehat{X}(\overline{k}))$ is the identity.
It remains to determine the component
\[
p_{+-}:D(\widehat{X}(\overline{k})) \to D(\Pic(X_{\overline{k}})^{(-1)_X})
\]
of $p_-$ at $D(\Pic(X_{\overline{k}})^{(-1)_X})$.

To determine $p_{+-}$, define for each $\lambda$ in $\Pic(X_{\overline{k}})^{(-1)_X}$ a map
\[
\varepsilon_\lambda:{}_2X(\overline{k}) \to \{\pm 1\} \subset \overline{k}{}^*,
\]
as follows.
Let $\sL$ be a line bundle over $X_{\overline{k}}$ with class $\lambda$.
Then there is a unique involution $i$ of $\sL$ above $(-1)_{X_{\overline{k}}}$ which acts as $+1$ on $\sL_x$.
Now ${}_2X(\overline{k})$ is the fixed point set of $X(\overline{k})$ under $(-1)_X$.
We define $\varepsilon_\lambda(y)$ as the action of $i$ on $\sL_y$.
In general, $\varepsilon_\lambda$ is not a group homomorphism.
Since $\varepsilon_\lambda(y)$ is additive in $\lambda$ for given $y$
and is compatible with the action of the Galois group on $\lambda$ and $y$,
there is a unique morphism of $k$\nd schemes
\[
\varepsilon:{}_2X \to D(\Pic(X_{\overline{k}})^{(-1)_X})
\]
such that
\[
\chi_\lambda(\varepsilon(y)) = \varepsilon_\lambda(y)
\]
for each $y$ in ${}_2X(\overline{k})$ and $\lambda$ in $\Pic(X_{\overline{k}})^{(-1)_X}$.
We now show that $p_{+-}$ factors as
\[
D(\widehat{X}(\overline{k})) \to {}_2 X \xrightarrow{\varepsilon} D(\Pic(X_{\overline{k}})^{(-1)_X}),
\]
where the first arrow is the projection onto $D({}_2\widehat{X}(\overline{k})) = {}_2X$.

To see that $p_{+-}$ factors as above, we may suppose that $k = \overline{k}$.
Let $s$ be a $k$\nd point of $\pi_\mathrm{\acute{e}tm}(X,x) = C_{u_X}$ which lies in the $k$\nd subscheme $D(\widehat{X}(k))$.
If $s$ has image $y$ in ${}_2 X$,
it is to be shown that for every $\lambda$ in $\Pic(X)^{(-1)_X}$ we have
\begin{equation}\label{e:chipepsilon}
\chi_\lambda(p_{+-}(s)) = \varepsilon_\lambda(y).
\end{equation}
Let $\sL$ and $i$ be as above.
Then there is a unique morphism of groupoids over $X$
\[
\rho:\pi_\mathrm{\acute{e}tm}(X) \to \underline{\Iso}_X(\sL).
\]
It factors uniquely through a morphism from $\pi_\mathrm{mult}(X)$ to $\underline{\Iso}_X(\sL)$
whose restriction to the diagonal is $\chi_\lambda \times_k X$ by \eqref{e:diagPicX}, 
where $\chi_\lambda$ is regarded as a character of $D(\Pic(X))$.
With $s$ regarded as a $k$\nd point of $\pi_\mathrm{\acute{e}tm}(X)$ above $(x,x)$, we thus have
\[
\chi_\lambda(p_{+-}(s)) = \rho(s).
\]
We have an involution $(-1)_X{}_*$ of $\pi_\mathrm{\acute{e}tm}(X)$ above $(-1)_X$,
and an involution of $\underline{\Iso}_X(\sL)$ above $(-1)_X$ which sends an isomorphism from $\sL_{z_0}$ to $\sL_{z_1}$
to the unique isomorphism from $\sL_{-z_0}$ to $\sL_{-z_1}$ compatible with it and $i_{z_0}$ and $i_{z_1}$.
By the universal property of $\pi_\mathrm{\acute{e}tm}(X)$, the morphism $\rho$ is compatible with these involutions.
By \eqref{e:nXpietmx} with $n = 2$, there is a unique $k$\nd point $t$ of $\pi_\mathrm{\acute{e}tm}(X)_{-,x}$
such that
\[
2_X{}_*(t) = s.
\]
Further $t$ lies above $(y,x)$.
Now $n_X{}_*(s) = s^n$, because $n_X{}_*$ is $n_{u_X}$ on the fibre $C_{u_X}$ above $(x,x)$.
Hence
\[
2_X{}_*((-1)_X{}_*(t) \circ s) = (-1)_X{}_*(2_X{}_*(t))s^2
= (-1)_X{}_*(s)s^2 = s.
\]
By uniqueness of $t$, it follows that
\[
(-1)_X{}_*(t) \circ s = t.
\]
Since $\rho((-1)_X{}_*(t)) = \varepsilon_\lambda(y) \rho(t)$ by compatibility of $\rho$ with the involutions,
applying $\rho$ shows that $\rho(s) = \varepsilon_\lambda(y)$, so that \eqref{e:chipepsilon} holds as required.

It follows from \eqref{e:pbilin} and  \eqref{e:chipepsilon} that for
$\lambda$ in $\Pic(X_{\overline{k}})^{(-1)_X}$ above $\nu$ in $NS(X_{\overline{k}})$ the map
$\varepsilon_\lambda$ is quadratic with associated bilinear form $(y,y') \mapsto \chi_{\varphi_\nu(y')}(y)$.

\begin{rem}
Let $Y$ be a smooth projective curve over $k$ of genus $g \ge 1$ and $X$ be the scheme of divisor classes of degree $g-1$ on $Y$.
The involution $\iota$ of $X$ defined by $\Omega_Y \otimes_{\sO_Y} (-)^\vee$ sends the theta divisor $\Theta$ on $X$ to itself.
There is thus a unique involution $i$ of $\sO_X(\Theta)$ above $\iota$ with the section of 
$\sO_X(\Theta)$ defining $\Theta$ compatible with $\iota$ and $i$.
If $y$ is a fixed $\overline{k}$\nd point of $\iota$, considering the action of $i$ on the ideal $\sO_X(-\Theta)$ of $\sO_X$
shows that $i$ acts on $\sO_X(\Theta)_y$ as $(-1)^{m(y)}$ with $m(y)$ the multiplicity of $\Theta$ at $y$.
Now the fixed $\overline{k}$\nd points of $\iota$ are the theta characteristics of $Y$, and $(-1)^{m(y)}$ is the parity of the theta characteristic $y$.
If $x$ is an even theta characteristic of $Y$, then $X$ with base point $x$ is an abelian variety with $(-1)_X = \iota$, 
and the parity of any theta characteristic $y$ of $Y$ is $\varepsilon_\lambda(y)$ with $\lambda$ the class of $\sO_X(\Theta)_{\overline{k}}$.
\end{rem}

Let $X$ be an elliptic curve over $k$.
Then $NS(X_{\overline{k}})$ is the trivial Galois module $\Z$.
Thus $D(NS(X_{\overline{k}}) \otimes \Q)$ is $D(\Q)$, and $u_X$ is an element of $\Alt^2(D(\widehat{X}(\overline{k})),D(\Q))$.

\begin{thm}\label{t:ellrep}
Let $X$ be an elliptic curve over $k$ with base point $x$.
Then there exists an element of $\widetilde{H}^1(X,x,C_{u_X})$ with image under
\[
\widetilde{H}^1(X,x,C_{u_X}) \to H^1(X_{\overline{k}},(C_{u_X})_{\overline{k}}) \to 
H^1(X_{\overline{k}},D(\widehat{X}(\overline{k}))_{\overline{k}})
\]
the tautological element.
If $\alpha$ is such an element, and if $\beta$ in 
$\widetilde{H}^1(X,x,SL_2)$
is the image of a non-zero element of 
$\widetilde{H}^1(X,x,\bG_a) = H^1(X,\bG_a)$
under an embedding $\bG_a \to SL_2$, 
then the functor $\widetilde{H}^1(X,x,-)$ on the category of reductive $k$\nd groups
up to conjugacy is represented by 
\[
C_{u_X} \times_k SL_2
\]
with universal element $(\alpha,\beta)$. 
\end{thm}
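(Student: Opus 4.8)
The plan is to deduce the statement from Theorem~\ref{t:elluniv}, the identification $\pi_\mathrm{\acute{e}tm}(X,x) = C_{u_X}$ of \eqref{e:piCuX}, and Lemma~\ref{l:unreppt}. For the existence of $\alpha$: by Proposition~\ref{p:repCuX} the functor $H^1(X,x,-)$ on $k$\nd groups of pro\'etale by multiplicative type is represented by $C_{u_X}$, with a universal element whose image in $H^1(X,x,D(\Pic^0(X_{\overline{k}})))$ is the tautological element. Its image under the projection $H^1(X,x,C_{u_X}) \to \widetilde{H}^1(X,x,C_{u_X})$ of \eqref{e:H1H1tilde} then has image in $\widetilde{H}^1(X,x,D(\Pic^0(X_{\overline{k}}))) = H^1(X,x,D(\Pic^0(X_{\overline{k}})))$ the tautological element, by naturality of that projection and the fact that $D(\Pic^0(X_{\overline{k}}))$ is commutative; this gives one admissible $\alpha$, which I shall denote $\alpha_0$ when I take it to be $[\pi_\mathrm{\acute{e}tm}(X)_{-,x}]$ (admissible by the characterisation of the universal element of $H^1(X,x,C_{u_X})$ recorded after \eqref{e:piCuX}).

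Now fix $\alpha$ and $\beta$ as in the statement, with $\beta$ the image of the class of a non-trivial principal $\bG_a$\nd bundle $P_0$ over $X$ (such bundles exist since $H^1(X,\bG_a) = H^1(X,\sO_X)$ is $1$\nd dimensional by Lemma~\ref{l:H1dim1} applied to $X$ itself), and let $P$ be the push forward of $P_0$ along the chosen embedding $\bG_a \to SL_2$, so that $[P] = \beta$. The $k$\nd groups $C_{u_X}$ and $SL_2$, and hence $C_{u_X} \times_k SL_2$, are reductive (a $k$\nd group of pro\'etale by multiplicative type has all its finite\nd type quotients reductive in characteristic $0$, and products of reductive $k$\nd groups are reductive). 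Since $(\alpha,\beta)$ lies in $\widetilde{H}^1(X,x,C_{u_X}\times_k SL_2)$, there is a principal $(C_{u_X}\times_k SL_2)$\nd bundle $Q$ over $X$ with $[Q] = (\alpha,\beta)$ and a $k$\nd point above $x$; one may take $Q = R \times_X P$, where $R$ is the reduction of $Q$ along $C_{u_X} \to C_{u_X}\times_k SL_2$, so that $R$ is a $C_{u_X}$\nd bundle with $[R] = \alpha$ (it is trivial above $x$; note $P$ too is trivial above $x$, since $P_0$ is, by the $\bG_a$\nd analogue of Hilbert's Theorem~90 noted after \eqref{e:Hilb90}). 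By Lemma~\ref{l:unreppt} with $H = X$, it now suffices to prove that $\underline{\Iso}_{C_{u_X}\times_k SL_2}(Q)$ is universally reductive over $X$.

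A principal bundle under a product of $k$\nd groups decomposes as the fibre product over $X$ of its two reductions, and isomorphisms decompose accordingly, so there is a canonical isomorphism $\underline{\Iso}_{C_{u_X}\times_k SL_2}(R\times_X P) \cong \underline{\Iso}_{C_{u_X}}(R) \times_{[X]} \underline{\Iso}_{SL_2}(P)$ of transitive affine groupoids over $X$. Consider first the case $\alpha = \alpha_0$, so $R = \pi_\mathrm{\acute{e}tm}(X)_{-,x}$. Then \eqref{e:KKKiso} gives $\underline{\Iso}_{C_{u_X}}(\pi_\mathrm{\acute{e}tm}(X)_{-,x}) \cong \pi_\mathrm{\acute{e}tm}(X)$ over $X$, whence $\underline{\Iso}_{C_{u_X}\times_k SL_2}(Q) \cong \pi_\mathrm{\acute{e}tm}(X) \times_{[X]} \underline{\Iso}_{SL_2}(P)$, which is universally reductive over $X$ by Theorem~\ref{t:elluniv}. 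This settles the theorem for $\alpha = \alpha_0$.

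For an arbitrary admissible $\alpha$, I would argue that the passage to $\alpha_0$ is harmless. The universal elements of the representable functor $H^1(X,x,-)$ on $k$\nd groups of pro\'etale by multiplicative type form a single orbit in $H^1(X,x,C_{u_X})$ under the automorphism group of $C_{u_X}$, and by Proposition~\ref{p:repCuX} this orbit is exactly the set of elements whose image in $H^1(X,x,D(\Pic^0(X_{\overline{k}})))$ is tautological; lifting $\alpha$ and $\alpha_0$ from $\widetilde{H}^1(X,x,C_{u_X})$ to $H^1(X,x,C_{u_X})$ (possible since the admissibility condition only involves the commutative group $D(\Pic^0(X_{\overline{k}}))$) then shows $\alpha = \theta_*\alpha_0$ in $\widetilde{H}^1(X,x,C_{u_X})$ for an automorphism $\theta$ of $C_{u_X}$, well defined up to conjugacy. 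The automorphism $\theta \times_k 1_{SL_2}$ of $C_{u_X}\times_k SL_2$ then carries $(\alpha_0,\beta)$ to $(\alpha,\beta)$, so $\underline{\Iso}_{C_{u_X}\times_k SL_2}(Q)$ for $[Q] = (\alpha,\beta)$ is isomorphic over $X$, via $\underline{\Iso}_{\theta\times_k 1}$, to the groupoid already shown universally reductive; hence it is universally reductive over $X$, and Lemma~\ref{l:unreppt} finishes the proof. I expect this last step — the careful bookkeeping of the distinction between $H^1(X,x,-)$ and $\widetilde{H}^1(X,x,-)$ (i.e.\ the action of $C_{u_X}(k)$), and the verification that the difference of two admissible choices of $\alpha$ is realised by a genuine automorphism of $C_{u_X}$ and not merely a cohomological coincidence — to be the only delicate point; the decomposition of product bundles, the reductivity of the groups, and the existence of $k$\nd points above $x$ are routine.
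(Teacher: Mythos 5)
Your proposal is correct and follows essentially the same route as the paper: existence via the universal element of $H^1(X,x,C_{u_X})$, reduction of an arbitrary admissible $\alpha$ to $\alpha_0=[\pi_\mathrm{\acute{e}tm}(X)_{-,x}]$ by a $k$\nd automorphism of $C_{u_X}$ furnished by \eqref{e:H1H1tilde} and Proposition~\ref{p:repCuX}, the decomposition of $\underline{\Iso}_{C_{u_X}\times_k SL_2}$ of the product bundle together with \eqref{e:KKKiso} and Theorem~\ref{t:elluniv}, and finally Lemma~\ref{l:unreppt}. The only differences are cosmetic (you transport universal reductivity along $\underline{\Iso}_{\theta\times 1}$ after treating $\alpha_0$, rather than assuming $\alpha=\alpha_0$ at the outset, and you spell out routine checks the paper leaves implicit), and the "delicate point" you flag at the end is handled correctly by your own lifting argument.
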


\begin{proof}
The existence statement is clear, because the image $\alpha_0$ in $\widetilde{H}^1(X,x,C_{u_X})$ 
of the universal element in $H^1(X,x,C_{u_X})$
has image in $H^1(X_{\overline{k}},D(\widehat{X}(\overline{k}))_{\overline{k}})$ the tautological element.
If also $\alpha$ has image the tautological element, then by \eqref{e:H1H1tilde} and Proposition~\ref{p:repCuX}
there is a $k$\nd automorphism of $C_{u_X}$ which sends $\alpha_0$ to $\alpha$.
To prove the representation statement, we may thus suppose that $\alpha = \alpha_0$.
Then $\alpha$ is the class of $\pi_\mathrm{\acute{e}tm}(X)_{-,x}$.
Let $P$ be a principal $SL_2$\nd bundle over $X$ with class $\beta$.
Then  \eqref{e:KKKiso} and 
Theorem~\ref{t:elluniv} show that
\[
\underline{\Iso}_{C_{u_X} \times_k SL_2}(\pi_\mathrm{\acute{e}tm}(X)_{-,x} \times_X P) = 
\underline{\Iso}_{C_{u_X}}(\pi_\mathrm{\acute{e}tm}(X)_{-,x}) \times_{[X]} \underline{\Iso}_{SL_2}(P)
\]
is universally reductive over $X$.
The representation statement now follows from Lemma~\ref{l:cohreppt}\ref{i:unreppt}.
\end{proof}

Let $X_0$ be an abelian variety over $k$ with base point $x_0$.
We identify $\pi_\mathrm{\acute{e}tm}(X_0,x_0)$ with $C_{u_{X_0}}$ using \eqref{e:piCuX} with $X_0$ for $X$.
For every $n \ne 0$ we have a projection from $C_{u_{X_0}}$ to $D({}_n\widehat{X}_0(\overline{k})) = {}_nX_0$,
and  a short exact sequence of affine $k$\nd groups
\[
1 \to C_{u_{X_0}} \xrightarrow{n_{u_{X_0}}} C_{u_{X_0}} \to {}_n X_0 \to 1.
\]
Taking points in $\overline{k}$ with the Krull topology gives a short exact sequence 
\begin{equation}\label{e:sexC}
1 \to C_{u_{X_0}}(\overline{k}) \xrightarrow{n_{u_{X_0}}} C_{u_{X_0}}(\overline{k}) \to {}_nX_0(\overline{k}) \to 1
\end{equation}
of topological groups with a continuous action of $\Gal(\overline{k}/k)$.
For every $m \ne 0$ we have a morphism
\begin{equation}\label{e:sexCnm}
\begin{gathered}
\xymatrix@C+1.6em@R+0.2cm{
1 \ar[r] & C_{u_{X_0}}(\overline{k}) \ar[r]^{(nm)_{u_{X_0}}} & C_{u_{X_0}}(\overline{k}) 
\ar[r] & {}_{nm}X_0(\overline{k}) \ar[r] & 1 \\
1 \ar[r] & C_{u_{X_0}}(\overline{k}) \ar@{=}[u] \ar[r]^{n_{u_{X_0}}} & C_{u_{X_0}}(\overline{k}) 
\ar[u]_{m_{u_{X_0}}} \ar[r] & {}_nX_0(\overline{k}) \ar[u] \ar[r] & 1
}
\end{gathered}
\end{equation}
of short exact sequences of topological groups with an action of $\Gal(\overline{k}/k)$.

With $\xi_t$ defined as in \eqref{e:xidef}, we have an action $t \mapsto \xi_t$ of $\widetilde{X}_0(\overline{k})$ on 
the $\overline{k}$\nd group $(C_{u_{X_0}})_{\overline{k}}$, and hence on the topological group $C_{u_{X_0}}(\overline{k})$.
It factors through an action up to conjugacy of $X_0(\overline{k})$ on $C_{u_{X_0}}(\overline{k})$.
By \eqref{e:xintxit}, the restriction of this action along the negative of the embedding
${}_nX_0(\overline{k}) \to X_0(\overline{k})$ coincides with that defined by \eqref{e:sexC}.
Taking \eqref{e:sexC} for \eqref{e:sextopgrp}, we thus obtain from \eqref{e:lextopgrp} a map
\begin{equation}\label{e:nXconnect}
H^1(\Gal(\overline{k}/k),{}_n X_0(\overline{k})) \to \sE(\Gal(\overline{k}/k),C_{u_{X_0}}(\overline{k}))/{}_nX_0(\overline{k})
\end{equation}
for each $n \ne 0$.
By \eqref{e:sexCnm} and functoriality of \eqref{e:lextopgrp}, these maps are
compatible with the embeddings ${}_nX_0(\overline{k}) \to {}_{nm}X_0(\overline{k})$.
Considering the appropriate involution of \eqref{e:sexC} shows that \eqref{e:nXconnect} is compatible with the inverse
involution of the source and the involution defined by $(-1)_{u_{X_0}}$ of the target.

For $n \ne 0$ the embedding of ${}_n X_0(\overline{k})$ into $X_0(\overline{k})$ defines a homomorphism
\begin{equation}\label{e:WCntors}
H^1(\Gal(\overline{k}/k),{}_n X_0(\overline{k})) \to H^1(\Gal(\overline{k}/k),X_0(\overline{k}))
\end{equation}
to the Weil--Ch\^atelet group of $X_0$, with image the $n$\nd torsion subgroup.
Let $\kappa$ be an element of its kernel.
If $\psi$ is a $1$\nd cocycle with class $\kappa$, then for some $y$ in $X_0(\overline{k})$
\[
\psi(\sigma) = {}^\sigma y - y
\]
for every $\sigma$.
Since $n\psi = 0$, the $\overline{k}$\nd point $y$ lies in the subgroup
\[
{}_{(n)}X_0(\overline{k}) \subset X_0(\overline{k})
\]
of the $a$ in $X_0(\overline{k})$ with $na$ in $X_0(k)$.
Choose a $\overline{k}$\nd point $t$ of $\widetilde{X}_0 = \pi_{\mathrm{\acute{e}t}}(X_0)_{-,x_0}$
above $y$.
By Proposition~\ref{p:printriv}, the principal $C_{u_{X_0}}$\nd bundle 
$\pi_{\mathrm{\acute{e}tm}}(X_0)_{ny,x_0}$ over $k$ has a $\overline{k}$\nd point $v$ above 
the $\overline{k}$\nd point $nt$ of $\pi_{\mathrm{\acute{e}t}}(X_0)_{ny,x_0}$.
Then we have a diagram
\[
\xymatrix{
1 \ar[r] & C_{u_{X_0}}(\overline{k}) \ar[d]_{\xi_t} \ar[r]  & C_{u_{X_0}}(\overline{k}) \rtimes \Gal(\overline{k}/k) 
\ar[d] \ar[r] & {}_nX_0(\overline{k}) \rtimes \Gal(\overline{k}/k) \ar[d] \ar[r] & 1 \\
1 \ar[r] & C_{u_{X_0}}(\overline{k}) \ar[r] & C_{u_{X_0}}(\overline{k}) \rtimes \Gal(\overline{k}/k) 
\ar[r] & {}_nX_0(\overline{k}) \rtimes \Gal(\overline{k}/k) \ar[r] & 1
}
\]
with rows the semidirect product of $\Gal(\overline{k}/k)$ by \eqref{e:sexC}, 
where the middle arrow  sends $(g,\sigma)$ to $(\xi_{nt}(g)(v^{-1} \circ {}^\sigma v),\sigma)$ 
and the right arrow sends $(a,\sigma)$ to $(a + \psi(\sigma),\sigma)$.
Writing $\pi_{\mathrm{\acute{e}tm}}(X_0)$ as the limit of its quotients of finite type shows that
$\sigma \mapsto v^{-1} \circ {}^\sigma v$ and hence the middle arrow is continuous,
and \eqref{e:xidef} shows that conjugation of $(C_{u_{X_0}})_{\overline{k}}$ by $v^{-1} \circ {}^\sigma v$ coincides with $\xi_{nt - n{}^\sigma t}$ and hence that the middle arrow is a group homomorphism.
The left square commutes by \eqref{e:xintxit}, and the right square commutes
because the $\xi$ are by \eqref{e:XkbarAut} compatible with the projection onto ${}_nX_0(\overline{k})$  
and the image of $n{}^\sigma t - nt$ in ${}_nX_0(\overline{k})$ is ${}^\sigma y -y$.
Considering pullbacks of the top row of the diagram along sections of ${}_nX_0(\overline{k}) \rtimes \Gal(\overline{k}/k)$
shows that translation by $\kappa$ of the source of the map \eqref{e:nXconnect} corresponds to the bijection of the 
target defined by $y \in  {}_{(n)}X_0(\overline{k})$,
because $y$ acts through the action of $\xi_t$ on $C_{u_{X_0}}(\overline{k})$.

It follows from the above that for each $n \ne 0$, \eqref{e:nXconnect} defines a map
\begin{equation}\label{e:nWCconnect}
{}_nH^1(\Gal(\overline{k}/k),X_0(\overline{k})) \to 
\sE(\Gal(\overline{k}/k),C_{u_{X_0}}(\overline{k}))/{}_{(n)}X_0(\overline{k}).
\end{equation}
The maps so defined are compatible with the embeddings of the $n$\nd torsion subgroup into the $nm$\nd torsion subgroup
and of ${}_{(n)}X_0(\overline{k})$ into ${}_{(nm)}X_0(\overline{k})$.
Since the Weil--Ch\^atelet group is a torsion group, we thus obtain a map
\[
H^1(\Gal(\overline{k}/k),X_0(\overline{k})) \to 
\sE(\Gal(\overline{k}/k),C_{u_{X_0}}(\overline{k}))/{}_{(\mathrm{tors})}X_0(\overline{k})
\]
where ${}_{(\mathrm{tors})}X_0(\overline{k})$ denotes the subgroup of the $y$ in $X_0(\overline{k})$
with $ny$ in $X_0(k)$ for some $n \ne 0$.
For Theorem~\ref{t:gen1rep} below we require only its composite
\begin{equation}\label{e:WCconnect}
H^1(\Gal(\overline{k}/k),X_0(\overline{k})) \to 
\sE(\Gal(\overline{k}/k),C_{u_{X_0}}(\overline{k}))/X_0(\overline{k})
\end{equation}
with the projection onto the coarser quotient by the full group $X_0(\overline{k})$.
All three of the above maps are compatible with the involutions.

Let $X$ be a principal homogeneous space under $X_0$, i.e.\ a $k$\nd scheme equipped with an action of the commutative $k$\nd group
$X_0$ such that $X_{\overline{k}}$ is isomorphic to $X_0{}_{\overline{k}}$ with the action of $X_0{}_{\overline{k}}$
by translation.
If $x$ and $x'$ are $\overline{k}$\nd points of $X$, we write $x' - x$ for the $\overline{k}$\nd point of $X_0$ that sends $x$ to $x'$.
We write ${}_{(n)}X(\overline{k})$ for the set of $x$ in $X(\overline{k})$ with $n({}^\sigma x - x) = 0$ for every
$\sigma$ in $\Gal(\overline{k}/k)$, and ${}_{(\mathrm{tors})}X(\overline{k})$ for the set of $x$ in $X(\overline{k})$ with 
${}^\sigma x - x$ in the torsion subgroup of $X_0(\overline{k})$ for every $\sigma$.  
The subgroup ${}_{(n)}X_0(\overline{k})$ (resp.\ ${}_{(\mathrm{tors})}X_0(\overline{k})$) of $X_0(\overline{k})$
acts simply transitively on the subset ${}_{(n)}X(\overline{k})$ (resp.\ ${}_{(\mathrm{tors})}X(\overline{k})$) of $X(\overline{k})$
when that subset is non-empty,
and coincides with it when $X = X_0$.

Denote by $\delta_X$ the class of $X$ in the Weil--Ch\^atelet group of $X_0$.
The $1$\nd cocycles with class $\delta_X$ are exactly those of the form $\sigma \mapsto {}^\sigma x - x$
for some $x$ in $X(\overline{k})$.
Since the image of \eqref{e:WCntors} is the $n$\nd torsion subgroup, it follows that $n\delta_X = 0$ if and only if
${}_{(n)}X(\overline{k})$ is non-empty.
In particular ${}_{(\mathrm{tors})}X(\overline{k})$ is always non-empty,
because $n\delta_X = 0$ for some $n \ne 0$. 

Let $x$ be a $\overline{k}$\nd point of $X$.
Then there is a unique isomorphism of $\overline{k}$\nd schemes
\[
i_x:X_{\overline{k}} \iso X_0{}_{\overline{k}}
\]
which sends $x$ to $x_0$ and is compatible with the actions of $X_0$.
The pullback
\[
F_x = \pi_\mathrm{\acute{e}tm}(X)_{x \times x}
\]
of $\pi_\mathrm{\acute{e}tm}(X)$ along $x$ is a transitive affine groupoid
over $\overline{k}$ with diagonal $\pi_\mathrm{\acute{e}tm}(X_{\overline{k}},x)$.

We have an isomorphism of $\overline{k}$\nd groups
\[
i_x{}_*:F_x{}\!^\mathrm{diag} = \pi_\mathrm{\acute{e}tm}(X_{\overline{k}},x) \iso 
\pi_\mathrm{\acute{e}tm}(X_0{}_{\overline{k}},x_0) =  (C_{u_{X_0}})_{\overline{k}}.
\]
Denote by $E_x$ the push forward along $i_x{}_*$ of the extension $F_x(\overline{k})_{\overline{k}}$ associated to $F_x$.
Then there is an isomorphism of Galois extended $\overline{k}$\nd groups 
\begin{equation}\label{e:FxExiso}
(F_x{}\!^\mathrm{diag},F_x(\overline{k})_{\overline{k}}) \iso ((C_{u_{X_0}})_{\overline{k}},E_x)
\end{equation}
with underlying isomorphism of $\overline{k}$\nd groups $i_x{}_*$.
Let $x'$ be a $\overline{k}$\nd point of $X$ and $w$ be a $\overline{k}$\nd point of $\pi_\mathrm{\acute{e}tm}(X)$ above $(x',x)$.
Conjugation by $w$ as in \eqref{e:Kconjiso} defines an isomorphism $j_w$ from $F_x$ to $F_{x'}$.
Since $i_{x'} \circ i_x{}\!^{-1}$ is $T_{x - x'}$, we have
\[
i_{x'}{}_* \circ j_w{}\!^\mathrm{diag} \circ i_x{}_*{}\!^{-1} = \xi_t
\]
where $t$ above $(x-x',x_0)$ is the image in $\pi_\mathrm{\acute{e}t}(X_0)$ of $i_{x'}{}_*(w^{-1})$.
Modulo isomorphisms of the form \eqref{e:FxExiso}, $j_w$ thus defines an isomorphism
\[
((C_{u_{X_0}})_{\overline{k}},E_x) \iso ((C_{u_{X_0}})_{\overline{k}},E_{x'})
\]
of Galois extended $\overline{k}$\nd groups with underlying isomorphism of $\overline{k}$\nd groups
$\xi_t$.
It follows that the classes of the $E_x$ in $\sE(\Gal(\overline{k}/k),C_{u_{X_0}}(\overline{k}))$ 
for $x$ in $X(\overline{k})$ form a single orbit under the action of $X_0(\overline{k})$, and similarly with $X(\overline{k})$ 
and $X_0(\overline{k})$ replaced by ${}_{(n)}X(\overline{k})$ and ${}_{(n)}X_0(\overline{k})$ or by 
${}_{(\mathrm{tors})}X(\overline{k})$ and ${}_{(\mathrm{tors})}X_0(\overline{k})$.

\begin{lem}\label{l:WCclass}
Let $X_0$ be an abelian variety over $k$ and $X$ be a principal homogeneous space under $X_0$.
Denote by $\delta_X$ the class of $X$ in the Weil--Ch\^atelet group of $X_0$,
and let $n \ne 0$ be an integer with $n\delta_X = 0$. 
Then the image of $-\delta_X$ under \eqref{e:nWCconnect} is the class of the extensions
$E_x$ for $x$ in ${}_{(n)}X(\overline{k})$.
\end{lem}

\begin{proof}
Fix an $x$ in ${}_{(n)}X(\overline{k})$.
The map $X(\overline{k}) \to X_0(\overline{k})$ induced by the $\overline{k}$\nd morphism
$n_{X_0}{}_{\overline{k}} \circ i_x:X_{\overline{k}} \to X_0{}_{\overline{k}}$ sends the translate of $x$ by $x'{}\!_0$
to $nx'{}\!_0$, and hence commutes with the actions of $\Gal(\overline{k}/k)$.
There is thus a morphism
\[
p:X \to X_0
\]
of $k$\nd schemes such that $p_{\overline{k}} = n_{X_0}{}_{\overline{k}} \circ i_x$.
It sends $x$ to the base point $x_0$ of $X_0$, and $(n_{X_0},p)$ is compatible with the actions of $X_0$.

The morphism $p_*:\pi_\mathrm{\acute{e}tm}(X) \to \pi_\mathrm{\acute{e}tm}(X_0)$ over $p$
induces a morphism
\[
F_x \to \pi_\mathrm{\acute{e}tm}(X_0,x_0) \times_k [\overline{k}] = C_{u_{X_0}} \times_k [\overline{k}]
\]
of groupoids over $\overline{k}$.
Taking $\overline{k}$\nd points over $\overline{k}$ gives the middle vertical arrow of the diagram
of short exact sequences of topological groups
\[
\xymatrix{
1 \ar[r] & F_x{}\!^\mathrm{diag}(\overline{k})_{\overline{k}} \ar[d]^{i_x{}_*} \ar[r] & F_x(\overline{k})_{\overline{k}} 
\ar[d] \ar[r]  & \Gal(\overline{k}/k) \ar[d]^{s} \ar[r] & 1\\
1 \ar[r] & C_{u_{X_0}}(\overline{k}) \ar[r] & C_{u_{X_0}}(\overline{k}) \rtimes \Gal(\overline{k}/k) 
\ar[r] & {}_nX_0(\overline{k}) \rtimes \Gal(\overline{k}/k) \ar[r] & 1
}
\]
where the top row is the extension associated to $F_x$, 
the bottom row is the semidirect product of $\Gal(\overline{k}/k)$ by \eqref{e:sexC},
and  $s(\sigma) = (x - {}^\sigma x,\sigma)$. 
The left square commutes because $p_{\overline{k}} = n_{X_0}{}_{\overline{k}} \circ i_x$.
To see that the right square commutes, consider the unique action of $\pi_\mathrm{\acute{e}tm}(X_0)$
on $X$, regarded as a scheme over $X_0$ using $p$.
By \eqref{e:pietcov}, $p_*$ factors uniquely through an isomorphism from $\pi_\mathrm{\acute{e}tm}(X)$
to $\pi_\mathrm{\acute{e}tm}(X_0) \times_{X_0} X$ over $X$, so that we may identify $F_x(\overline{k})_{\overline{k}}$
with the set of  pairs
$(v,{}^\sigma 1_{\overline{k}})$ with $v$ in $C_{u_{X_0}}(\overline{k})$ and $\sigma$ in $\Gal(\overline{k}/k)$
such that $v{}^\sigma x = x$.
The image of $(v,{}^\sigma 1_{\overline{k}})$ under the middle arrow is then
$(v,\sigma)$ and under the top right arrow is $\sigma$.
Now $C_{u_{X_0}}(\overline{k})$ acts on $X(\overline{k})$ through the action 
of ${}_n X_0(\overline{k})$ by translation, because $i_x$ is compatible with the actions of $X_0$.
Thus the right square commutes.

Since $-\delta_X$ is the image under \eqref{e:WCntors} of the class of $s$,
composing the above diagram
with the isomorphism of extensions from $E_x$ to $F_x(\overline{k})_{\overline{k}}$ defined by the inverse of \eqref{e:FxExiso} shows that the image of $-\delta_X$ under \eqref{e:nWCconnect} is the class of $E_x$.
\end{proof}

Denote by $\Pic^0(X_{\overline{k}})$ the Galois submodule of $\Pic(X_{\overline{k}})$ consisting of the classes
of line bundles algebraically equivalent to zero.
Since translation acts trivially on $\Pic^0(X_0{}_{\overline{k}})$, pullback along $i_x{}\!^{-1}$ for any $x$ in $X(\overline{k})$
defines an isomorphism
\[
\Pic^0(X_{\overline{k}}) \iso \Pic^0(X_0{}_{\overline{k}}) = \widehat{X}_0(\overline{k})
\]
of Galois modules which is independent of $x$.
Thus we have a canonical isomorphism
\[
D(\widehat{X}_0(\overline{k}))_{\overline{k}} \iso D(\Pic^0(X_{\overline{k}}))_{\overline{k}}
\]
which for any $x$ sends the tautological element in $H^1(X_0{}_{\overline{k}},D(\widehat{X}_0(\overline{k}))_{\overline{k}})$ to
the pullback along $i_x{}^{-1}$ of the tautological element in 
$H^1(X_{\overline{k}},D(\Pic^0(X_{\overline{k}}))_{\overline{k}})$. 
By \eqref{e:diagPicf}, it is compatible with $(i_x{}\!^{-1})_*$ and the projections from 
$(C_{u_{X_0}})_{\overline{k}}$ and $F_x{}\!^\mathrm{diag}$.

For any $x$ in $X(\overline{k})$, the right action by composition of $F_x = \pi_\mathrm{\acute{e}tm}(X)_{x \times x}$ on the scheme $\pi_\mathrm{\acute{e}tm}(X)_{- \times x}$ over $X_{\overline{k}}$
defines on  $\pi_\mathrm{\acute{e}tm}(X)_{- \times x}$ a structure of principal $F_x$\nd bundle, and hence of principal 
$(F_x{}\!^\mathrm{diag},F_x(\overline{k})_{\overline{k}})$\nd bundle, over $X$.
Pushing forward along \eqref{e:FxExiso}, we obtain a principal $((C_{u_{X_0}})_{\overline{k}},E_x)$\nd bundle $P_x$ over $X$.
The push forward of the underlying principal $(C_{u_{X_0}})_{\overline{k}}$\nd bundle over $X_{\overline{k}}$ of $P_x$ along 
\[
(C_{u_{X_0}})_{\overline{k}} \to D(\widehat{X}_0(\overline{k}))_{\overline{k}} \iso
D(\Pic^0(X_{\overline{k}}))_{\overline{k}}
\]
has class in $H^1(X_{\overline{k}},D(\Pic^0(X_{\overline{k}}))_{\overline{k}})$ the tautological element.

In Theorem~\ref{t:gen1rep}, we apply the above with $X$ a smooth projective curve over $k$ of genus $1$ and $X_0$
the Jacobian of $X$.
In that case $NS(X_0{}_{\overline{k}}) = \Z$, and by \eqref{e:zetatilde} the horizontal arrows of 
\eqref{e:XkbarAut} with $X_0$ for $X$ are isomorphisms.
By \eqref{e:AutEnd}, the group of those automorphisms up to conjugacy of the $\overline{k}$\nd group
$\pi_\mathrm{\acute{e}tm}(X_0,x_0)_{\overline{k}} = (C_{u_{X_0}})_{\overline{k}}$ that lie above the identity of
$D(\widehat{X}_0(\overline{k}))_{\overline{k}}$ thus coincides with $X_0(\overline{k})$.

\begin{thm}\label{t:gen1rep}
Let $X$ be a smooth geometrically connected projective curve over $k$ of genus $1$.
Denote by $X_0$ the Jacobian of $X$ and by $\delta_X$ the class of $X$ in the Weil--Ch\^atelet group of $X_0$.
Let $E$ be a topological extension of $\Gal(\overline{k}/k)$ by 
$C_{u_{X_0}}(\overline{k})$ whose class in the target of \eqref{e:WCconnect} is the image of $-\delta_X$.
Then $((C_{u_{X_0}})_{\overline{k}},E)$ is a Galois extended $\overline{k}$\nd group, and there exists an element
in $H^1(X,(C_{u_{X_0}})_{\overline{k}},E)$ with image under
\begin{multline*}
H^1(X,(C_{u_{X_0}})_{\overline{k}},E) \to H^1(X_{\overline{k}},(C_{u_{X_0}})_{\overline{k}}) \to \\
\to H^1(X_{\overline{k}},D(\widehat{X}_0(\overline{k}))_{\overline{k}}) \iso
H^1(X_{\overline{k}},D(\Pic^0(X_{\overline{k}}))_{\overline{k}})
\end{multline*}
the tautological element.
If $\alpha$ is such an element, and if $\beta$ in 
\[
H^1(X,SL_2) = H^1(X,SL_2{}_{\overline{k}},SL_2(\overline{k}) \rtimes \Gal(\overline{k}/k))
\] 
is the image of a non-zero element of $H^1(X,\bG_a)$ under an embedding $\bG_a \to SL_2$, then the functor $H^1(X,-,-)$ 
on the category of reductive Galois extended $\overline{k}$\nd groups up to conjugacy is represented by
\[
((C_{u_{X_0}})_{\overline{k}},E) \times (SL_2{}_{\overline{k}},SL_2(\overline{k}) \rtimes \Gal(\overline{k}/k))
\]
with universal element $(\alpha,\beta)$.
\end{thm}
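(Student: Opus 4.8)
The plan is to imitate the proof of Theorem~\ref{t:gen0rep}: reduce the representation statement by Corollary~\ref{c:unscalextac} to a statement over the algebraically closed field $\overline{k}$, where it follows from Theorem~\ref{t:ellrep}. The first two assertions of the theorem — that $((C_{u_X})_{\overline{k}},E)$ is a Galois extended $\overline{k}$\nd group and that an element $\alpha$ of the stated kind exists — have already been established in the discussion immediately preceding the theorem (one checks both for the single extension $E = F(\overline{k})_{\overline{k}}$ with $F = \pi_\mathrm{\acute{e}tm}(X) \times_{[X]} [\Spec(\overline{k})]$, using the principal $F$\nd bundle $\pi_\mathrm{\acute{e}tm}(X)_{-,\overline{x}}$, and any other $E$ with the same class in \eqref{e:extquot} differs by an $\overline{k}$\nd automorphism of $(C_{u_X})_{\overline{k}}$ over $D(\Pic^0(X_{\overline{k}}))_{\overline{k}}$, which transports both the Galois extended structure and such a bundle); so I would simply cite that discussion.

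For the representation statement I would fix $\alpha$ and $\beta$ and put $(D_0,E_0) = ((C_{u_X})_{\overline{k}},E) \times ((SL_2)_{\overline{k}},SL_2(\overline{k}) \rtimes \Gal(\overline{k}/k))$. First note that $D_0$ is reductive: $C_{u_X}$ is an extension of the $k$\nd group of multiplicative type $D(\Pic^0(X_{\overline{k}}))$ by the $k$\nd group of multiplicative type $D(NS(X_{\overline{k}}) \otimes \Q)$, hence is itself of multiplicative type, so $(D_0,E_0)$ is a reductive Galois extended $\overline{k}$\nd group; and $(\alpha,\beta)$ defines a class in $H^1_X(X,D_0,E_0) = H^1(X,D_0,E_0)$. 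Since $X$ is a geometrically connected smooth projective curve, $H^0(X,\sO_X) = k$, so the initial pregroupoid $X$ over $X$ satisfies the hypotheses of Theorem~\ref{t:main} as well as the finiteness hypotheses of Corollary~\ref{c:unscalextac}. Applying that corollary with $H$ the initial pregroupoid $X$ over $X$, it is then enough to show that $D_0$ represents the functor $H^1(X_{\overline{k}},-)$ on reductive $\overline{k}$\nd groups up to conjugacy, with universal element the image $\overline{\alpha}_0$ of $(\alpha,\beta)$ in $H^1(X_{\overline{k}},D_0)$.

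To prove this over $\overline{k}$ I would apply Theorem~\ref{t:ellrep} to the elliptic curve $X_{\overline{k}}$ over $\overline{k}$ with a chosen base point $\overline{x}$. Since $\overline{k}$ is algebraically closed, every principal bundle over $X_{\overline{k}}$ has a $\overline{k}$\nd point above $\overline{x}$ by Lemma~\ref{l:printriv}, so $\widetilde{H}^1(X_{\overline{k}},\overline{x},-) = H^1(X_{\overline{k}},-)$. By functoriality of $\pi_\mathrm{\acute{e}tm}$ and its compatibility with algebraic extension of scalars (Corollary~\ref{c:unscalext}), the central extension \eqref{e:NSQPic0ext} for $X_{\overline{k}}$ is the base extension along $\overline{k}$ of that for the Jacobian $X_0$ (whose base extension $(X_0)_{\overline{k}}$ is the Jacobian of $X_{\overline{k}}$, with N\'eron--Severi and $\Pic^0$ Galois modules identified accordingly), so $u_{X_{\overline{k}}} = (u_X)_{\overline{k}}$ and $C_{u_{X_{\overline{k}}}} = (C_{u_X})_{\overline{k}}$, compatibly with the projection onto $D(\Pic^0)$ and the embedding of $D(NS \otimes \Q)$. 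Writing $\overline{\alpha}_0 = (\overline{\alpha},\overline{\beta})$ under the product decomposition of $H^1(X_{\overline{k}},D_0)$, the component $\overline{\alpha}$ has image in $H^1(X_{\overline{k}},D(\Pic^0(X_{\overline{k}})))$ the tautological element (by the choice of $\alpha$) and lies in $\widetilde{H}^1(X_{\overline{k}},\overline{x},C_{u_{X_{\overline{k}}}})$, and $\overline{\beta}$ is the push forward along $\bG_a \to SL_2$ of a non-zero element of $H^1(X_{\overline{k}},\bG_a)$ — non-zero because $H^1(X,\sO_X) \to H^1(X_{\overline{k}},\sO_{X_{\overline{k}}})$ is the injection $-\otimes_k\overline{k}$ of a one-dimensional $k$\nd space. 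Hence $(\overline{\alpha},\overline{\beta})$ is a universal element of the kind furnished by Theorem~\ref{t:ellrep} for $X_{\overline{k}}$, and that theorem yields the required representation statement over $\overline{k}$, completing the argument.

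The work here is almost entirely formal; the part requiring care is the base-change bookkeeping — checking that $C_{u_{X_{\overline{k}}}} \cong (C_{u_X})_{\overline{k}}$ respects the relevant sub- and quotient groups and that the tautological element over $\overline{k}$ matches the image of the $\Pic^0$-component of $\alpha$, and that the hypotheses of Corollary~\ref{c:unscalextac} and Theorem~\ref{t:ellrep} are met verbatim after extension to $\overline{k}$. The only genuinely non-formal ingredient, the uniqueness-up-to-automorphism argument behind ``it suffices to treat a single $E$'', is already carried out in the discussion preceding the theorem.
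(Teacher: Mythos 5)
Your proposal is correct and takes essentially the same route as the paper: cite the discussion preceding the theorem for the Galois-extended-group structure and the existence of $\alpha$, then reduce the representation statement via Corollary~\ref{c:unscalextac} to the algebraically closed case, where $X_{\overline{k}}$ has a point, $\widetilde{H}^1 = H^1$, and Theorem~\ref{t:ellrep} applies; your extra base-change bookkeeping ($C_{u_{X_{\overline{k}}}} = (C_{u_X})_{\overline{k}}$, matching of tautological elements, non-vanishing of $\overline{\beta}$) is exactly what the paper leaves implicit. One small slip: $C_{u_X}$ is \emph{not} of multiplicative type, since it is non-commutative (its commutator morphism is $c_X = u_X^2$, nontrivial by Proposition~\ref{p:commutator}); reductivity of $D_0$ nevertheless holds, e.g.\ because $C_{u_X} \cong \pi_{\mathrm{\acute{e}tm}}(X_0,x_0)$ is an extension of a pro\'etale group by a group of multiplicative type, or because in characteristic $0$ an extension of reductive groups is reductive.
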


\begin{proof}
By Lemma~\ref{l:WCclass} we may suppose that $E = E_x$ for some $x$ in $X(\overline{k})$.
Then $((C_{u_{X_0}})_{\overline{k}},E)$ is a Galois extended $\overline{k}$\nd group by \eqref{e:FxExiso},
and  the class of $P_x$ in $H^1(X,(C_{u_{X_0}})_{\overline{k}},E)$ has image in  
$H^1(X_{\overline{k}},D(\Pic^0(X_{\overline{k}}))_{\overline{k}})$ the tautological element.
Since pullback along $i_x{}\!^{-1}$ defines an isomorphism from $H^1(X_{\overline{k}},-)$ to
$H^1(X_0{}_{\overline{k}},-) = \widetilde{H}^1(X_0{}_{\overline{k}},x_0,-)$, 
the representation statement follows from Corollary~\ref{c:unscalextac} and Theorem~\ref{t:ellrep}.
\end{proof}

\providecommand{\bysame}{\leavevmode\hbox to3em{\hrulefill}\thinspace}
\providecommand{\MR}{\relax\ifhmode\unskip\space\fi MR }
\providecommand{\MRhref}[2]{%
  \href{http://www.ams.org/mathscinet-getitem?mr=#1}{#2}
}
\providecommand{\href}[2]{#2}


\begin{thebibliography}{{Sta}15}

\bibitem[AK02]{AndKah}
Yves Andr{\'e} and Bruno Kahn, \emph{Nilpotence, radicaux et structures
  mono\"\i dales}, Rend. Sem. Mat. Univ. Padova \textbf{108} (2002), 107--291.
  \MR{MR1956434 (2003m:18009)}

\bibitem[Art69]{Art69}
M.~Artin, \emph{Algebraic approximation of structures over complete local
  rings}, Inst. Hautes \'Etudes Sci. Publ. Math. (1969), no.~36, 23--58.
  \MR{0268188 (42 \#3087)}

\bibitem[Ati57]{Ati57}
M.~F. Atiyah, \emph{Vector bundles over an elliptic curve}, Proc. London Math.
  Soc. \textbf{7} (1957), 414--452. \MR{0131423 (24 \#A1274)}

\bibitem[BH08]{Bis08}
Indranil Biswas and Johannes Huisman, \emph{Principal bundles over a smooth
  real projective curve of genus zero}, Adv. Geom. \textbf{8} (2008), no.~3,
  451--472. \MR{2427470}

\bibitem[Bou85]{BAC-1}
Nicolas Bourbaki, \emph{\'{E}l\'ements de math\'ematique}, Masson, Paris, 1985,
  Alg{\`e}bre commutative. Chapitres 1 {\`a} 4. \MR{MR782296 (86k:13001a)}

\bibitem[CTS87]{CS87}
Jean-Louis Colliot-Th\'{e}l\`ene and Jean-Jacques Sansuc, \emph{La descente sur
  les vari\'{e}t\'{e}s rationnelles. {II}}, Duke Math. J. \textbf{54} (1987),
  no.~2, 375--492. \MR{899402}

\bibitem[Del90]{Del90}
P.~Deligne, \emph{Cat\'egories tannakiennes}, The {G}rothendieck {F}estschrift,
  {V}ol.\ {II}, Progr. Math., vol.~87, Birkh\"auser Boston, Boston, MA, 1990,
  pp.~111--195. \MR{1106898 (92d:14002)}

\bibitem[DG70]{DeGa70}
Michel Demazure and Pierre Gabriel, \emph{Groupes alg\'ebriques. {T}ome {I}:
  {G}\'eom\'etrie alg\'ebrique, g\'en\'eralit\'es, groupes commutatifs}, Masson
  \& Cie, \'Editeur, Paris; North-Holland Publishing Co., Amsterdam, 1970.
  \MR{0302656}

\bibitem[EGA]{EGA}
Jean Dieudonn{\'e} and Alexander Grothendieck, \emph{\'{E}l\'ements de
  g\'eom\'etrie alg\'ebrique}, Inst. Hautes \'Etudes Sci. Publ. Math.
  (1961--1967), no.~4, 8, 11, 17, 20, 24, 28, 32.

\bibitem[GD71]{DG71}
Alexander Grothendieck and Jean Dieudonn{\'e}, \emph{{\'E}l{\'e}ments de
  g{\'e}om{\'e}trie alg{\'e}brique {I}}, Grundlehren der Mathematischen
  Wissenschaften, vol. 166, Springer-Verlag, 1971.

\bibitem[Gil02]{Gil02}
P.~Gille, \emph{Torseurs sur la droite affine}, Transform. Groups \textbf{7}
  (2002), no.~3, 231--245. \MR{1923972}

\bibitem[{Gil}11]{Gil}
William~D. {Gillam}, \emph{{Localization of ringed spaces.}}, {Adv. Pure Math.}
  \textbf{1} (2011), no.~5, 250--263.

\bibitem[Gro57]{Gro57}
A.~Grothendieck, \emph{Sur la classification des fibr\'es holomorphes sur la
  sph\`ere de {R}iemann}, Amer. J. Math. \textbf{79} (1957), 121--138.
  \MR{0087176 (19,315b)}
  
\bibitem[Har68]{Har68}
G\"unter Harder, \emph{Halbeinfache {G}ruppenschemata \"uber vollst\"andigen
  {K}urven}, Invent. Math. \textbf{6} (1968), 107--149. \MR{0263826}  

\bibitem[Hoc81]{Hoc81}
Gerhard~P. Hochschild, \emph{Basic theory of algebraic groups and {L}ie
  algebras}, Graduate Texts in Mathematics, vol.~75, Springer-Verlag, New
  York-Berlin, 1981. \MR{620024}

\bibitem[Kos59]{Kos59}
Bertram Kostant, \emph{The principal three-dimensional subgroup and the {B}etti
  numbers of a complex simple {L}ie group}, Amer. J. Math. \textbf{81} (1959),
  973--1032. \MR{0114875}

\bibitem[Kum03]{Kum03}
Shrawan Kumar, \emph{Equivariant analogue of {G}rothendieck's theorem for
  vector bundles on {$\Bbb P^1$}}, A tribute to {C}. {S}. {S}eshadri
  ({C}hennai, 2002), Trends Math., Birkh\"{a}user, Basel, 2003, pp.~500--501.
  \MR{2017599}

\bibitem[Lun73]{Lun73}
Domingo Luna, \emph{Slices \'etales}, Sur les groupes alg\'ebriques, Soc. Math.
  France, Paris, 1973, pp.~81--105. Bull. Soc. Math. France, Paris, M\'emoire
  33. \MR{0342523}

\bibitem[Mag87]{Mag}
Andy~R. Magid, \emph{Equivariant completions of rings with reductive group
  action}, J. Pure Appl. Algebra \textbf{49} (1987), no.~1-2, 173--185.
  \MR{MR920520 (88k:14029)}

\bibitem[Mal50]{Mal}
A.~I. Malcev, \emph{On semi-simple subgroups of {L}ie groups}, Amer. Math. Soc.
  Translation \textbf{1950} (1950), no.~33, 43. \MR{0037848}

\bibitem[MS02]{Meh02}
V.~B. Mehta and S.~Subramanian, \emph{Principal bundles on the projective
  line}, Proc. Indian Acad. Sci. Math. Sci. \textbf{112} (2002), no.~3,
  393--397. \MR{1921788}
  
\bibitem[Mum66]{Mum66}
D.~Mumford, \emph{On the equations defining abelian varieties. {I}}, Invent.
  Math. \textbf{1} (1966), 287--354. \MR{0204427}

\bibitem[Mum70]{Mum70}
David Mumford, \emph{Abelian varieties}, Tata Institute of Fundamental Research
  Studies in Mathematics, No. 5, Published for the Tata Institute of
  Fundamental Research, Bombay; Oxford University Press, London, 1970.
  \MR{0282985 (44 \#219)}

\bibitem[Nor82]{Nor82}
Madhav~V. Nori, \emph{The fundamental group-scheme}, Proc. Indian Acad. Sci.
  Math. Sci. \textbf{91} (1982), no.~2, 73--122. \MR{682517}

\bibitem[O'S05]{O}
Peter O'Sullivan, \emph{The structure of certain rigid tensor categories}, C.
  R. Math. Acad. Sci. Paris \textbf{340} (2005), no.~8, 557--562. \MR{MR2138703
  (2006a:18006)}

\bibitem[O'S10]{O10}
\bysame, \emph{The generalised {J}acobson-{M}orosov theorem}, Mem. Amer. Math.
  Soc. \textbf{207} (2010), no.~973, viii+119. \MR{2667427 (2011m:14074)}

\bibitem[O'S11]{O11}
\bysame, \emph{Algebraic cycles on an abelian variety}, J. Reine Angew. Math.
  \textbf{654} (2011), 1--81. \MR{2795752}

\bibitem[Rag78]{Rag78}
M.~S. Raghunathan, \emph{Principal bundles on affine space}, C. {P}.
  {R}amanujam---a tribute, Tata Inst. Fund. Res. Studies in Math., vol.~8,
  Springer, Berlin, 1978, pp.~187--206. \MR{541022 (82g:14021)}

\bibitem[Rag89]{Rag89}
\bysame, \emph{Principal bundles on affine space and bundles on the projective
  line}, Math. Ann. \textbf{285} (1989), no.~2, 309--332. \MR{1016097
  (91b:11045)}

\bibitem[Ram83]{Ram83}
A.~Ramanathan, \emph{Deformations of principal bundles on the projective line},
  Invent. Math. \textbf{71} (1983), no.~1, 165--191. \MR{688263}

\bibitem[Ser68]{Ser68}
Jean-Pierre Serre, \emph{Corps locaux}, Hermann, Paris, 1968, Deuxi{\`e}me
  {\'e}dition, Publications de l'Universit{\'e} de Nancago, No. VIII.
  \MR{0354618}

\bibitem[SGA1]{SGA1}
Alexander Grothendieck, \emph{Rev\^etements \'etales et groupe fondamental},
  Lecture notes in mathematics, vol. 224, Springer-Verlag, 1971.

\bibitem[SGA3]{SGA3}
Michel Demazure and Alexander Grothendieck (eds.), \emph{Sch\'emas en groupes},
  Lecture Notes in Mathematics, vol. 151, 152, 153, Springer-Verlag, 1970.

\bibitem[SGA4]{SGA4}
Michael Artin, Alexander Grothendieck, and Jean-Louis Verdier (eds.),
  \emph{Theorie de topos et cohomologie etale des schemas}, Lecture Notes in
  Mathematics, vol. 269, 270, 305, Springer-Verlag, 1971.

\bibitem[Sha94]{ShaAlgIV}
I.~R. Shafarevich (ed.), \emph{Algebraic geometry. {IV}}, Encyclopaedia of
  Mathematical Sciences, vol.~55, Springer-Verlag, Berlin, 1994. \MR{MR1309681
  (95g:14002)}

\bibitem[Stacks]{stacks-project}
The {Stacks Project Authors}, \emph{\itshape stacks project},
  \url{http://stacks.math.columbia.edu}, 2015.

\bibitem[Wat79]{Wat}
William~C. Waterhouse, \emph{Introduction to affine group schemes}, Graduate
  Texts in Mathematics, vol.~66, Springer-Verlag, New York, 1979. \MR{MR547117
  (82e:14003)}

\end{thebibliography}
\end{document}